\newtheorem{theorem}{Theorem}[section]
\newtheorem{lemma}[theorem]{Lemma}
\newtheorem{proposition}[theorem]{Proposition}
\newtheorem{corollary}[theorem]{Corollary}
\theoremstyle{definition}
\newtheorem{definition}[theorem]{Definition}
\theoremstyle{remark}
\newtheorem{remark}[theorem]{Remark}
\DeclareMathOperator*{\esssup}{ess\,sup}
\numberwithin{equation}{section}
\newcommand{\bu}{\mathbf{u}}
\begin{document}

\title[Martingale solutions of the stochastic Navier--Stokes--Poisson system]{Dissipative martingale solutions of the stochastically forced Navier--Stokes--Poisson system on domains without boundary}
%

\author{Donatella Donatelli}
\address{Department of Information Engineering, Computer Science and Mathematics \\
University of L’Aquila\\
67100 L’Aquila, Italy}
\email{donatella.donatelli@univaq.it}

\author{Pierangelo Marcati}
\address{GSSI - Gran Sasso Science Institute \\
Viale F. Crispi, 7 \\
67100 L’Aquila, Italy.}
\email{pierangelo.marcati@univaq.it, \, pierangelo.marcati@gssi.infn.it}

\author{Prince Romeo Mensah}
\address{GSSI - Gran Sasso Science Institute \\
Viale F. Crispi, 7 \\
67100 L’Aquila, Italy.}
\email{romeo.mensah@gssi.it}

\subjclass[2010]{Primary 35R60, 35Q35; Secondary  76N10}

\date{\today}


\keywords{Stochastic compressible fluid, Navier--Stokes--Poisson system, weak martingale solution}

\begin{abstract}
We construct solutions to the randomly-forced Navier--Stokes--Poisson system in periodic three-dimensional domains  or in the whole three-dimensional Euclidean space. These solutions are weak in the sense of PDEs and also weak in the sense of probability. As such, they satisfy the system in the sense of distributions and the underlying probability space and the stochastic driving force are also unknowns of the problem. Additionally, these solutions dissipate energy, satisfies a relative energy inequality in the sense of \cite{dafermos1979second} and satisfy a renormalized form of the continuity equation in the sense of \cite{diperna1989ordinary}.
\end{abstract}

\maketitle

\section{Introduction}
\label{sec:intro}
Let $t\geq 0$ and $x\in \mathcal{O}$,  where $\mathcal{O}=\mathbb{T}^3$ is the three dimensional torus or $\mathcal{O}=\mathbb{R}^{3}$, $\vartheta, \nu^S>0$, $\nu^B\geq0$ be  constants and $f=f(x)$ be a given function. We will construct a class of solution to the following stochastically forced Navier--Stokes--Poisson system
\begin{align}
&\mathrm{d} \varrho  + \mathrm{div}(\varrho  \mathbf{u} ) \, \mathrm{d}t =0, 
\label{contEq}
\\
&\mathrm{d}(\varrho  \mathbf{u} ) +
\big[
\mathrm{div} (\varrho  \mathbf{u}  \otimes \mathbf{u} ) + \nabla  p( \varrho) \big]\, \mathrm{d}t 
=\big[ \nu^S \Delta \mathbf{u}  +(\nu^B + \nu^S)\nabla \mathrm{div} \mathbf{u} 
\nonumber\\
&\qquad \qquad \qquad   \qquad 
  + \vartheta \varrho  \nabla V  \big] \, \mathrm{d}t + \varrho \, \mathbf{G} (\varrho , f,  \varrho \mathbf{u} ) \, \mathrm{d}W(t) ,
\label{momEq}
\\
&\pm \Delta V  = \varrho  - f &
\label{elecField}
\end{align}
which are simultaneously weak  in  the sense of PDEs and in the sense of probability. The former notion of \textit{weak} means that each individual equation \eqref{contEq}, \eqref{momEq} and \eqref{elecField} is satisfied in the sense of distributions, and the latter notion of \textit{weak} means that the underlying probability space and the Wiener process $W$ are also unknowns of the problem. Furthermore,  equation \eqref{elecField}
is satisfied pointwise almost everywhere in its domain meaning that it indeed satisfied \textit{strongly} in the sense of PDEs.
\\
For simplicity, we assume that the system \eqref{contEq}--\eqref{elecField} is \textit{isentropic} so that the equation of state is
\begin{equation}
p(\varrho)=a \varrho^\gamma, \quad a>0, \quad \gamma >\frac{3}{2}.
\end{equation}
\begin{remark}
The result of this work also holds for generalized pressure laws. In particular, the result holds for any pressure satisfying
\begin{align}
\label{arbitraryPressure}
p\in C^1(\mathbb{R}_{\geq0}), \,\, p(0)=0, \,\, \text{and for all } \varrho\geq0, \,\, \frac{1}{a}\varrho^{\gamma-1} -b \leq p'(\varrho) \leq  a \varrho^{\gamma-1} +b
\end{align}
where $a>0$ and $b$ are constants. For some physical models covered by the relation \eqref{arbitraryPressure}, we refer the reader to \cite[Section 1.1]{ducomet2004global}.
\end{remark}
\begin{remark}
The result also holds in the lower one and two dimensions under an even stronger pressure law, i.e., when the adiabatic exponent satisfies $\gamma\geq 1$ and $\gamma>1$ respectively.
\end{remark}
The  system \eqref{contEq}--\eqref{elecField} under study has various applications including the study of semiconductor devices, nuclear fluids, stellar dynamics, amongst others. In particular, in  the momentum equation \eqref{momEq}, the force term $\vartheta \varrho  \nabla V$ may represent the  extraneous force acting on the fluid due  to the electric field or to self-gravitation. Accordingly, the Poisson equation represents the balance of the related potentials $V$ (electric or gravitational). The novelty of the above model is that the momentum equations incorporates also the possible influence of external forces (be it random or deterministic), possible noise and errors that the classical purely-deterministic system may fail to capture.

If we set $\mathbf{G}=0$ in \eqref{momEq}, then we obtain a deterministic system \eqref{contEq}--\eqref{elecField} for which existence of weak solution has been studied by \cite{donatelli2003local} for bounded domains and by \cite{ducomet2004global} on unbounded domains. The former  relies primarily on a fixed-point argument whereas the latter uses the now standard multi-layer approximation scheme for the construction of weak solutions to compressible systems. A relevant application of this multi-layer approximation scheme is in the construction of weak martingale solutions for the stochastic compressible Navier--Stokes system \eqref{contEq}--\eqref{momEq} with $V=0$. This was independently accomplished by \cite{Hof} (see also, the manuscript \cite{breit2018stoch}) for periodic boundaries and by \cite{smith2015random} for bounded domain. By building on \cite{breit2018stoch}, the author of \cite{mensah2016existence} extended the result to the whole space. Our result will therefore follow the manuscript \cite{breit2018stoch} to construct solutions to \eqref{contEq}--\eqref{elecField} on periodic domains and then follow the approach of \cite{mensah2016existence} to complete the picture for the whole space. Refer to Section \ref{sec:plan} for more details.

\subsection{Plan of paper}
\label{sec:plan}
In the next section, Section \ref{sec:prelim}, we present some notations, conventions and definitions that we will use throughout this paper. Since this paper discusses an existence result, we will in particular define the precise concept of a solution that we are interested in constructing. We will finally complete the section by stating the main results.

Our proof of existence of solution to \eqref{contEq}--\eqref{elecField} will rely on the energy method. Since this system is stochastic, we will present in Section \ref{sec:energyIneq}, the formal equivalent derivation of the energy inequality from which regularity of the solution variables are obtained. Unlike deterministic PDEs where such energy estimates are obtained by formally testing the system with the solution, the stochastic energy estimate will be obtained from a formal application of an infinitesimal It\^o's formula.

The main part of the proof of our first main result will commence in Section \ref{sec:firstLayer}. This is a preliminary approximation layer that uses Cauchy's collocation method to construct a stochastically strong solution to a finite-dimensional auxiliary system of \eqref{contEq}--\eqref{elecField} containing an additional artificial pressure term, an artificial viscosity terms and some cut-off of the velocity field in the mass and momentum balance equations \eqref{contEq}--\eqref{momEq}. In the deterministic sense, this solution will be strong in the auxiliary version of the continuity equation \eqref{contEq} and the Poisson equation \eqref{elecField} but weak in the auxiliary version of the momentum equation \eqref{momEq}. In order words, the former equations are satisfied pointwise almost everywhere in spacetime whereas the latter is satisfied in the sense of distributions. The construction of the solution to the fluid part \eqref{contEq}--\eqref{momEq} of the system, albeit the extra electric field term and the data $f$ input in the noise, follow the approach in \cite{breit2018stoch} by simply freezing these components in time. Given the information about the fluid system and further information on the data $f$, we are then able to construct a solution to the Poisson equation \eqref{elecField} by using standard regularity theorem for the Poisson equation.

The cut-off function of the velocity field mentioned in the previous paragraph is meant to deal with any anticipated vacuum region or potential blowup due to the noise. This cut-off is coupled with a
stopping time argument to establish the existence of a
unique solution to a finite-dimensional approximation derived from the previous layer. In particular, on each stopping time, we are able to construct a Faedo--Galerkin approximation with no potential blowup once these cut-offs are activated.
By passing to the limit in this cut-off parameter (which coincide with passing to the limit in the family of stopping times), we establish the existence of a unique Faedo--Galerkin approximation on the entire time interval in Section \ref{sec:secondLayer}. The notion of a solution being constructed in this layer is inherited from the previous layer with the obvious exemption of any cut-offs which essentially converge in various norms to the identity operator. Furthermore, this solution is shown to conserve energy so we obtain an energy \textit{equality} rather than an \textit{inequality}.

We finally transition from a finite-dimensional system as constructed in the previous layers to an infinitesimal system by passing to the limit in the finite-dimensional projection parameter in Section \ref{sec:thirdLayer}. The notion of a solution in the sense of PDEs is again inherited from the previous layers. However, from a strong solution in the sense of probabilities, we only obtain a weak solution in this infinitesimal approximation layer. Also, energy is dissipated in this layer rather than conserved. As a result of the solution being weak in the sense of probabilities, one is expected to prescribe an initial law and the underlying probability space and stochastic driving force becomes unknowns as well. The steps in the construction of this solution -- which will be replicated in subsequent approximation layers -- is a follows.
\begin{itemize}
\item We approximate a suitable law prescribed on  data defined in a infinitesimal space by a family of laws -- indexed by the discretization parameter -- prescribed on the data from the previous section. This constructed law satisfies the hypothesis of the main result, Theorem \ref{thm:mainThirdLayer} of this approximation layer.
\item By using the finiteness of the `limit' initial law construction above, we obtain a priori bounds for any family of solutions -- indexed by the discretization parameter -- uniformly in this parameter.
\item The laws on these approximate solutions are shown to be tight on the corresponding spaces in which they live and by using the stochastic compactness method by Jakubowski
 and Skorokhod, we obtain `limit' random variables for these solutions as a result of the finiteness of the `limit' initial law.
 \item We then show in that these `limit' random variables solves the required system without the  discretization parameter.
\end{itemize}
The bullet points above gives the general structure of the proof which is the same for the corresponding version of the strictly fluid system studied in \cite{breit2018stoch}. The extra detail which needs to be tackled is the establishment of compactness for the family of electric fields due to the additional Poisson equation. As is the case for stochastic fluid systems, this will follow from a tightness argument which requires a further degree of regularity for these fields besides the regularity obtained due to the conservation of energy. Fortunately, since the Possion equation is linear, this is quite straightforward and all we need is information on the data $f$ and the corresponding family of densities. We have the former by way of assumption and the information on the latter is derived from the tightness argument on the family of densities as shown in \cite{breit2018stoch}.

Section \ref{sec:fourthLayer} deals with the vanishing artificial viscosity limit. The main difference in the concept of a solution from the previous sections is that the solution to the continuity equation \ref{contEq} is now weak in the sense of PDEs. However, the construction of this solution will mimic the itemized steps in the previous paragraph. That is, we first construct a suitable law from a family of laws from the previous layer, show that any solution indexed by the current approximation parameter satisfies suitable bounds due to energy dissipation from the previous layer, show compactness of these approximate solutions and finally, identify the limit system. However, an intermediate step between obtaining the usual uniform estimates from the energy inequality and compactness is the requirement to improve the regularity of the density sequence. This problem already exists in the analyses of the deterministic Navier--Stokes(--Poisson) system and the stochastic Navier--Stokes system and they results from the fact that the continuity equation is only expected to be satisfied weakly in the sense of distributions after the passage to the limit in this artificial viscous term. This improvement in regularity of the density sequence will rely on the stochastic adaptation of the analysis of the \textit{effective viscous flux}. At the level of the stochastic Navier--Stokes system for fluids, \cite{breit2018stoch}, this aforementioned improved regularity is obtained from the application of It\^o's lemma which results in an equation corresponding to formally `testing' the momentum equation with the continuity equation. This crucial step is performed differently in our case. Instead of testing with the continuity equation, we use an equation derived from the combination of the continuity equation and the Poisson equation with the help of the inverse Laplace operator. This gives a much cleaner expression and more importantly, allow us to estimate the term containing the electric field.

The final step in the construction of a solution to \eqref{contEq}--\eqref{elecField} on the torus is the vanishing pressure limit which is presented in Section \ref{sec:fifthLayer}. The analyses is similar to Section \ref{sec:fourthLayer} but a further loss of regularity due to the loss of the artificial viscosity means an additional analyses of the \textit{oscillation defect measure} in order to obtain an improved regularity of the density sequence. Furthermore, unlike the preceding layer where the analyses of  the \textit{effective viscous flux} was performed by testing the momentum equation with a combined equation derived from the two other equations, we are unable to do that in this present layer due to the fact that a variational power of density--which no longer solves the continuity equation--is required. As a result, the analyses of both the \textit{effective viscous flux} and the \textit{oscillation defect measure} will have to follow the corresponding arguments in \cite{breit2018stoch} and thus requiring  a new treatment of electric field terms in both case. Fortunately, the conservation of mass and its boundedness together with the regularity of the electric field obtained from the energy estimate is enough to resolve this problem.

Section \ref{sec:existenceWholeSpace} studies a corresponding version of Theorem \ref{thm:one}
on the whole space. This is stated in Theorem \ref{thm:WS}. This involves approximating the problem on $\mathbb{R}^3$ by an increasing sequence of periodic problems, establish local-in-space uniform bounds for this family, show tightness of prescribed laws then proceed to apply Jakubowski-Skorokhod compactness to
obtain limit variables which are identified in the limit. The crucial term to be identified is  the pressure and again, requires on the analyses of the  \textit{effective viscous flux} and the  \textit{oscillation defect measure}. By combining the techniques in dealing with these two quantities from the preceding  layer with the result for the stochastic Navier--Stokes system on the whole space studied in \cite{mensah2016existence} and \cite{mensah2019theses}, we are able to show existence on the whole space.

A property of the solution that we will construct in this paper is that it satisfies a \textit{relative energy inequality}. This estimate dates back to work by Dafermos \cite{dafermos1979second} on the
connection between the second law of thermodynamics and the  stability property of
these processes. This inequality has several uses amongst which is the study of singular limits of the system  \eqref{contEq}--\eqref{elecField}. A formal derivation of this estimate is therefore presented in Section \ref{sec:relativeEnergy} for potential future application.
\section{Preliminaries}
\label{sec:prelim}
\subsection{Notations}
Beside the standard conventions and notations used in the literature, we will also make use of the following  notations. 
\begin{itemize}
\item For functions $F$ and $G$ and a variable $p$, we denote by $F \lesssim G$ and $F \lesssim_p G$, the existence of  a generic constant $c>0$ and another such constant $c(p)>0$ which now depends on $p$ such that $F \leq c\,G$ and $F \leq c(p) G$ respectively.
\item We denote by $\mathfrak{P}(X)$, the set of probability measures on $X$.
\item $\mathcal{M}_b(X)$ denotes the space of bounded Borel measures on $X$.
\item We use the following notations
\begin{align*}
\Vert \cdot \Vert_{L^p_x}:= \Vert \cdot \Vert_{L^p(\mathbb{T}^3)}, \quad \Vert \cdot \Vert_{L^p_t}:= \Vert \cdot \Vert_{L^p(0,T)}
\end{align*}
for the Lebesgue norm/ Lebesgue integrable functions. Analogous representation holds for  Sobolev functions, Bochner integrable functions, continuous functions and so on. We emphasis that this notation will only be used when working on the torus. The $L^p$-norm on any other geometry or set will be shown explicitly.
\item $\big[ L^p_x\big]^q=L^p_x \times \ldots \times L^p_x$ $q$-times for any $p\in[1,\infty]$.
\end{itemize}

\subsection{Assumptions on the stochastic force}
\label{sec:noiseAssumption}
We start our analysis in the periodic setting, namely $\mathcal{O}=\mathbb{T}^{3}$. Let $\big(\Omega, \mathscr{F}, (\mathscr{F}_t)_{t\geq 0}, \mathbb{P} \big)$ be a stochastic basis endowed with  a right-continuous filtration $(\mathscr{F}_t)_{t\geq 0}$ and let $W$ be an $(\mathscr{F}_t)$-cylindrical Wiener process, i.e.,
\begin{align}
\label{wiener}
W(t) = \sum_{k\in \mathbb{N}} \beta_k(t) e_k, \quad t\in [0,T]
\end{align}
where $(\beta_k)_{k\in \mathbb{N}}$ is a family of mutually independent real-valued Brownian motions and $(e_k)_{k\in \mathbb{N}}$ are orthonormal basis of a separable Hilbert space $\mathfrak{U}$. Since the formal sum \eqref{wiener} is  not expected to converge in $\mathfrak{U}$, we can construct a larger space $\mathfrak{U}_0 \supset  \mathfrak{U}$ as follows
\begin{align}
\label{auxiliarySpace}
\mathfrak{U}_0  =\left\{ v = \sum_{k\geq1}c_ke_k\,;\quad  \sum_{k\geq 1} \frac{c^2_k}{k^2}<\infty \right\}
\end{align}
and endow it with the norm
\begin{align*}
\Vert  v  \Vert^2_{\mathfrak{U}_0}  =  \sum_{k\in\mathbb{N}} \frac{c^2_k}{k^2}, \quad   v=\sum_{k\in\mathbb{N}}c_ke_k.
\end{align*}
One can now check that \eqref{wiener} converges in $\mathfrak{U}_0$. Furthermore, $W$ has $\mathbb{P}$-a.s. $C([0,T];\mathfrak{U}_0)$ sample paths and the  embedding $\mathfrak{U}\hookrightarrow \mathfrak{U}_0$ is Hilbert--Schmidt. See \cite{da2014stochastic}.

To ensure that the stochastic integral $\int_0^\cdot\varrho \,\mathbf{G}(\varrho, f, \varrho \mathbf{u})\mathrm{d}W$ is a well-defined $(\mathscr{F}_t)$-martingale taking value in  a suitable Hilbert space $W^{-l,2}({\mathbb{T}^3})$, $l \geq0$ say, we set $\mathbf{m}=\varrho \mathbf{u}$ and for $(x,\varrho,f, \mathbf{u}) \in {\mathbb{T}^3}\times \mathbb{R}_{\geq0}\times \mathbb{R}_{\geq0}\times{\mathbb{R}^3} $,  we  assume that there exists some functions $(\mathbf{g}_k)_{k\in \mathbb{N}}$  such that 
\begin{align}
\label{noiseSupportExi}
\mathbf{g}_k: {\mathbb{T}^3}\times \mathbb{R}_{\geq0}\times \mathbb{R}_{\geq0}\times{\mathbb{R}^3}  \rightarrow {\mathbb{R}^3}, \quad \quad \mathbf{g}_k\in C^1 \left( {\mathbb{T}^3}\times \mathbb{R}_{>0}\times \mathbb{R}_{\geq 0}\times{\mathbb{R}^3}  \right)
\end{align}
for any $k\in\mathbb{N}$  and in addition, $ \mathbf{g}_k$ satisfies the following growth conditions:
\begin{align}
&\vert  \mathbf{g}_k(x, \varrho, f, \mathbf{m})   \vert  \leq c_k \,\left( \varrho + f+ \vert \mathbf{m}\vert \right),
\label{stochCoeffBoundExi0}
\\
&\big\vert\nabla_{\varrho,\mathbf{m}} \, \mathbf{g}_k(x, \varrho, f, \mathbf{m}) \big) \big\vert   \leq c_k,
\label{stochCoeffBound1Exis0}
\\
&\sum_{k\in \mathbb{N}} c_k^2 \lesssim 1 \label{stochsummableConst}
\end{align}
for some constant $(c_k)_{k\in \mathbb{N}} \in [0,\infty)$.
Now if we define the map $\mathbf{G}(\varrho, f, \mathbf{m}):\mathfrak{U}\rightarrow    L^1(  \mathbb{T}^3  )$ by
$\mathbf{G} (\varrho, f, \mathbf{m}) e_k   =   \mathbf{g}_k(\cdot, \varrho(\cdot), f(\cdot), \mathbf{m} (\cdot))$,  then we can conclude that $ \varrho\,\mathbf{G}(\varrho, f, \mathbf{m})$
is uniformly bounded  in $L_2(\mathfrak{U};
W^{-l,2}({\mathbb{T}^3} ))$  provided that, $\varrho^\gamma$, $f^\frac{2\gamma}{\gamma-1}$ and  $\varrho \vert \mathbf{u} \vert^2$ are  integrable in ${\mathbb{T}^3}$. Indeed, if we let $(\varrho)_{\mathbb{T}^3}<\infty$ represent the mean density on the torus,  then it follows from Young's inequality that
\begin{equation}
\begin{aligned}
\label{rhof}
 \int_{\mathbb{T}^3}\varrho^{2} f^2  \, \mathrm{d}x 
 \lesssim (\varrho)_{\mathbb{T}^3}
 \int_{\mathbb{T}^3}\varrho \,f^2  \, \mathrm{d}x
\lesssim
  \int_{\mathbb{T}^3} \varrho^{\gamma}  \, \mathrm{d}x
  +
  \int_{\mathbb{T}^3} f^\frac{2\gamma}{\gamma-1}  \, \mathrm{d}x
\end{aligned}
\end{equation}
where $2 < \frac{2\gamma}{\gamma -1} <6$. Also, by using $\varrho \leq 1+\varrho^\gamma$, we also gain
\begin{equation}
\begin{aligned}
\label{rho2m}
 \int_{\mathbb{T}^3} \varrho^{2} ( \varrho^2 + \mathbf{m}^2)  \, \mathrm{d}x 
 \lesssim
  (\varrho)_{\mathbb{T}^3}^3
 \int_{\mathbb{T}^3}\varrho^{-1} ( \varrho^2 + \vert  \mathbf{m} \vert^2)  \, \mathrm{d}x
\lesssim
  \int_{\mathbb{T}^3} ( 1+\varrho^\gamma + \varrho\vert  \mathbf{u} \vert^2)   \, \mathrm{d}x.
\end{aligned}
\end{equation}
Finally, by using \eqref{stochCoeffBoundExi0}--\eqref{stochsummableConst} and \eqref{rhof}--\eqref{rho2m}, as well as Sobolev's embedding, it follows that
\begin{equation}
\begin{aligned}
\label{eq:gk1}
\big\Vert &\varrho\,\mathbf{G}(\varrho, f,  \mathbf{m})  \big\Vert^2_{L_2(\mathfrak{U};
W^{-l,2}_x)} 
=
 \sum_{k\in \mathbb{N}}\big\Vert \varrho\, \mathbf{g}_k(x, \varrho, f, \mathbf{m}) \big\Vert^2_{W^{-l,2}_x}
\lesssim
 \int_{\mathbb{T}^3} \sum_{k\in \mathbb{N}}\varrho^{2} \big\vert   \mathbf{g}_k(x, \varrho, f, \mathbf{m}) \big\vert^2 \, \mathrm{d}x
  \\&
\lesssim
 \int_{\mathbb{T}^3} \sum_{k\in \mathbb{N}} c_k\varrho^{2} ( \varrho^2 + \mathbf{m}^2)  \, \mathrm{d}x
 +
  \int_{\mathbb{T}^3} \sum_{k\in \mathbb{N}} c_k\varrho^{2} f^2  \, \mathrm{d}x
\lesssim
\int_{\mathbb{T}^3} \big( 1+ \varrho^\gamma + \varrho\vert \mathbf{u}\vert^2 + f^\frac{2\gamma}{\gamma-1} \big) \, \mathrm{d}x
\end{aligned}
\end{equation}
provided that $l \geq \frac{3}{2}$. Boundedness thus follow if $\varrho^\gamma$, $f^\frac{2\gamma}{\gamma-1}$ and  $\varrho \vert \mathbf{u} \vert^2$ are  integrable in ${\mathbb{T}^3}$.

\subsection{Concept of a solution}
\label{sec:solution}
 To continue, let us define the notions of  solution that we wish to construct in this paper. 
\begin{definition}
\label{def:martSolutionExis}
Let $\Lambda= \Lambda(\varrho, \mathbf{m},f)$ be a Borel probability measure on $\big[ L^1_x\big]^3 $. We say that
\begin{align}
\label{weakMartSol}
\big[(\Omega ,\mathscr{F} ,(\mathscr{F} _t)_{t\geq0},\mathbb{P} );\varrho , \mathbf{u} , V , W   \big]
\end{align}
is a \textit{dissipate martingale solution} of \eqref{contEq}--\eqref{elecField} with initial law $\Lambda$ provided
\begin{enumerate}
\item $(\Omega,\mathscr{F},(\mathscr{F}_t),\mathbb{P})$ is a stochastic basis with a complete right-continuous filtration;
\item $W$ is a $(\mathscr{F}_t)$-cylindrical Wiener process;
\item the density $\varrho \in C_w \big( [0,T]; L^\gamma_x\big) $ $\mathbb{P}$-a.s., it is $(\mathscr{F}_t)$-progressively measurable  and 
\begin{align*}
\mathbb{E}\, \bigg[ \sup_{t\in(0,T)}\Vert  \varrho  (t)   \Vert^{\overline{\gamma}p}_{L^{\overline{\gamma}}_x}\bigg]<\infty 
\end{align*}
for all $1\leq p<\infty$ where $\overline{\gamma}=\min\{2,\gamma\}$,
\item the velocity field $\mathbf{u} $ is an $(\mathscr{F}_t)$-adapted random distribution and 
\begin{align*}
\mathbb{E}\, \bigg[ \int_0^T\Vert  \mathbf{u}  \Vert^2_{W^{1,2}_x }\,\mathrm{d}t\bigg]^p<\infty 
\end{align*}
for all $1\leq p<\infty$,
\item the field $\Delta V \in C_w \big( [0,T]; L^\gamma_x\big) $ $\mathbb{P}$-a.s., it is $(\mathscr{F}_t)$-progressively measurable  and 
\begin{align*}
\mathbb{E}\, \bigg[ \sup_{t\in(0,T)}\Vert \Delta V  (t)   \Vert^{\overline{\gamma}p}_{L^{\overline{\gamma}}_x}\bigg]
+
\mathbb{E}\, \bigg[ \sup_{t\in(0,T)}\Vert \nabla V  (t)   \Vert^{2p}_{L^{2}_x}\bigg]
<\infty 
\end{align*}
for all $1\leq p<\infty$ where $\overline{\gamma}=\min\{2,\gamma\}$,
\item the momentum $\varrho  \mathbf{u}  \in C_w \big( [0,T]; L^{\frac{2\gamma}{\gamma+1}}_x\big) $ $\mathbb{P}$-a.s., it is $(\mathscr{F}_t)$-progressively measurable and 
\begin{align*}
\mathbb{E}\, \bigg[ \sup_{t\in(0,T)} \big\Vert  \varrho \vert \mathbf{u} \vert^2(t)  \big\Vert_{L^1_x}^p\bigg]
+
\mathbb{E}\, \bigg[ \sup_{t\in(0,T)} \big\Vert  \varrho  \mathbf{u}  (t)\big\Vert_{L^\frac{2\gamma}{\gamma+1}_x}^{\frac{2\gamma}{\gamma+1}p}\bigg]
<\infty 
\end{align*}
for all $1\leq p<\infty$,
\item there exists $\mathscr{F}_0$-measurable random variables $(\varrho_0, \varrho_0\mathbf{u}_0)= (\varrho(0), \varrho\mathbf{u}(0))$ such that $\Lambda = \mathbb{P}\circ (\varrho_0, \varrho_0 \mathbf{u}_0)^{-1}$;
\item for all $\psi \in C^\infty_c ([0,T))$ and $\phi \in C^\infty ({\mathbb{T}^3})$, the following
\begin{equation}
\begin{aligned}
\label{eq:distributionalSolMass}
&-\int_0^T \partial_t \psi \int_{{\mathbb{T}^3}} \varrho (t) \phi \, \mathrm{d}x  \mathrm{d}t  =  \psi(0) \int_{{\mathbb{T}^3}} \varrho _0 \phi \,\mathrm{d}x  +  \int_0^T \psi \int_{{\mathbb{T}^3}} \varrho  \mathbf{u} \cdot \nabla \phi \, \mathrm{d}x  \mathrm{d}t,
\end{aligned}
\end{equation}
hold $\mathbb{P}$-a.s.;
\item for all $\psi \in C^\infty_c ([0,T))$ and $\bm{\phi} \in C^\infty ({\mathbb{T}^3})$, the following
\begin{equation}
\begin{aligned}
\label{eq:distributionalSolMomen}
&-\int_0^T \partial_t \psi \int_{{\mathbb{T}^3}} \varrho  \mathbf{u} (t) \cdot \bm{\phi} \, \mathrm{d}x    \mathrm{d}t
=
\psi(0)  \int_{{\mathbb{T}^3}} \varrho _0 \mathbf{u} _0\cdot \bm{\phi} \, \mathrm{d}x  
+  \int_0^T \psi \int_{{\mathbb{T}^3}} \varrho  \mathbf{u} \otimes \mathbf{u} : \nabla \bm{\phi}\, \mathrm{d}x  \mathrm{d}t 
\\&- \nu^S\int_0^T \psi \int_{{\mathbb{T}^3}} \nabla \mathbf{u} : \nabla \bm{\phi} \, \mathrm{d}x  \mathrm{d}t 
-(\nu^B+ \nu^S )\int_0^T \psi \int_{{\mathbb{T}^3}} \mathrm{div}\,\mathbf{u} \, \mathrm{div}\, \bm{\phi}\, \mathrm{d}x  \mathrm{d}t    
\\&+   
\int_0^T \psi \int_{{\mathbb{T}^3}} p(\varrho)\, \mathrm{div} \, \bm{\phi} \, \mathrm{d}x  \mathrm{d}t
+\int_0^T \psi \int_{{\mathbb{T}^3}} \vartheta\varrho  \nabla V  \cdot \bm{\phi} \, \mathrm{d}x    \mathrm{d}t
\\&+
\int_0^T \psi \int_{{\mathbb{T}^3}}  \varrho\,\mathbf{G}(\varrho , f, \mathbf{m} ) \cdot \bm{\phi}\, \mathrm{d}x \mathrm{d}W   
\end{aligned}
\end{equation}
hold $\mathbb{P}$-a.s.;
\item equation \eqref{elecField} holds $\mathbb{P}$-a.s. for a.e.  $(t,x) \in(0,T)\times \mathbb{T}^3$; 
\item the  energy inequality 
\begin{equation}
\begin{aligned}
\label{augmentedEnergy}
-
\int_0^T& \partial_t \psi \int_{{\mathbb{T}^3}}\bigg[ \frac{1}{2} \varrho  \vert \mathbf{u}  \vert^2  
+ P(\varrho ) \pm
\vartheta \vert \nabla V \vert^2
\bigg]\,\mathrm{d}x \, \mathrm{d}t
+(\nu^B+\nu^S)
\int_0^T \psi \int_{{\mathbb{T}^3}}\vert  \mathrm{div}\, \mathbf{u}  \vert^2 \,\mathrm{d}x  \,\mathrm{d}t
\\&+\nu^S
\int_0^T\psi \int_{{\mathbb{T}^3}}\vert  \nabla \mathbf{u}  \vert^2 \,\mathrm{d}x  \,\mathrm{d}t
\leq
\psi(0)
 \int_{{\mathbb{T}^3}}\bigg[\frac{1}{2} \varrho _0\vert \mathbf{u} _0 \vert^2  
+ P(\varrho _0) \pm
\vartheta \vert \nabla V_0 \vert^2 \bigg]\,\mathrm{d}x
\\&
+\frac{1}{2}\int_0^T \psi
 \int_{{\mathbb{T}^3}}
 \varrho
 \sum_{k\in\mathbb{N}} \big\vert
 \mathbf{g}_k(x,\varrho ,f,\mathbf{m}  ) 
\big\vert^2\,\mathrm{d}x\,\mathrm{d}t
+\int_0^T \psi  \int_{{\mathbb{T}^3}}   \varrho\mathbf{u} \cdot\mathbf{G}(\varrho , f, \mathbf{m} )\,\mathrm{d}x\,\mathrm{d}W ;
\end{aligned}
\end{equation}
holds $\mathbb{P}$-a.s. for all $\psi \in C^\infty_c ([0,T))$, $\psi\geq0$ where
\begin{equation} 
\begin{aligned}
\label{PVarrho}
P(\varrho ) =\frac{1}{\gamma-1}p(\gamma)
\end{aligned}
\end{equation}
\item and  \eqref{contEq} holds in the renormalized sense, i.e., for any $\phi\in C^\infty_c([0,T)\times{\mathbb{T}^3})$ and $b \in C^1_b(\mathbb{R})$ such that $ b'(z)=0$ for all $z\geq M_b$,  we have that
\begin{equation}
\begin{aligned}
\label{renormalizedContExis}
-&\int_0^T\int_{{\mathbb{T}^3}}  b(\varrho )\,\partial_t\phi \,\mathrm{d}x \, \mathrm{d}t =
\int_{{\mathbb{T}^3}}  b\big(\varrho (0)\big)\,\phi(0) \,\mathrm{d}x 
\\&
+ \int_0^T\int_{{\mathbb{T}^3}} \big[b(\varrho ) \mathbf{u}  \big] \cdot \nabla\phi \, \mathrm{d}x \, \mathrm{d}t  
- \int_0^T\int_{{\mathbb{T}^3}} \big[ \big(  b'(\varrho ) \varrho   
-
 b(\varrho )\big)\mathrm{div}\mathbf{u} \big]\,  \phi\, \mathrm{d}x \, \mathrm{d}t
\end{aligned}
\end{equation} 
\end{enumerate}
holds $\mathbb{P}$-a.s.
\end{definition}

\subsection{Main result}
We now state the first main result of this work.
\begin{theorem}
\label{thm:one}
Let $\Lambda = \Lambda(\varrho, \mathbf{m},f)$ be a Borel probability measure on $\big[ L^1_x\big]^3$ such that
\begin{align*}
\Lambda\bigg\{\varrho\geq0, \quad M \leq  \varrho  \leq M^{-1}, \quad f \leq \overline{f}, \quad \mathbf{m} =0 \quad \text{ when } \varrho=0 \bigg\} =1, \quad
\end{align*}
holds for some deterministic constants $M, \overline{f}>0$. Also, 
 assume that
\begin{align*}
\int_{[L^1_x]^3 } \bigg\vert  
\int_{\mathbb{T}^3} \bigg[ \frac{\vert \mathbf{m} \vert^2}{2\varrho} + P(\varrho)  \pm
\vartheta \vert \nabla V \vert^2
\bigg] \, \mathrm{d}x
\bigg\vert^p \, \mathrm{d}\Lambda(\varrho, \mathbf{m},f) \lesssim 1
\end{align*}
holds for some $p\geq 1$ and that
\begin{align*}
f\in L^1_x \cap L^\infty_x \quad \mathbb{P}\text{-a.s.}
\end{align*} Finally assume that \eqref{noiseSupportExi}--\eqref{stochsummableConst} are satisfied. Then the exists a dissipative martingale solution of \eqref{contEq}--\eqref{elecField} in the sense of Definition \ref{def:martSolutionExis}.
\end{theorem}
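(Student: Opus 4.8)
The plan is to build the solution by the five-layer approximation scheme used for the stochastic compressible Navier--Stokes system in \cite{breit2018stoch} (and \cite{mensah2016existence}), paying extra attention to the Poisson coupling at each stage. First I would regularize \eqref{contEq}--\eqref{elecField} by adding an artificial pressure $\delta\varrho^\beta$ with $\beta$ large, an artificial viscosity $\varepsilon\Delta\varrho$ in the continuity equation, and a cut-off of the velocity field in the convective and noise terms, and then project the momentum balance onto the span of the first $N$ eigenfunctions of the Laplacian. On this finite-dimensional level the continuity equation is a linear parabolic problem solvable for given $\mathbf{u}$, the Poisson equation \eqref{elecField} is solved by standard elliptic regularity using $f\in L^1_x\cap L^\infty_x$ and the two-sided bound $M\le\varrho\le M^{-1}$ inherited from $\Lambda$, and the momentum equation becomes a system of It\^o SDEs. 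A fixed-point argument together with a stopping-time/cut-off truncation (Sections~\ref{sec:firstLayer} and~\ref{sec:secondLayer}) yields a unique stochastically strong solution, for which It\^o's formula applied to the finite-dimensional energy produces an energy \emph{equality}.

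The core of the proof is the iterated passage to the limit: first $N\to\infty$ (Section~\ref{sec:thirdLayer}, Theorem~\ref{thm:mainThirdLayer}), then $\varepsilon\to0$ (Section~\ref{sec:fourthLayer}), then $\delta\to0$ (Section~\ref{sec:fifthLayer}); from the step $N\to\infty$ on, solutions are only probabilistically weak, so the relevant initial law must be reconstructed as a limit of laws on the data of the previous level. Each limit step follows the same four moves. First, uniform-in-parameter a priori bounds are read off the stochastic energy balance formalized in Section~\ref{sec:energyIneq}: moment estimates for $\varrho$ in $L^\infty_tL^{\overline{\gamma}}_x$, for $\mathbf{u}$ in $L^2_tW^{1,2}_x$, for $\varrho|\mathbf{u}|^2$ in $L^\infty_tL^1_x$, and for $\nabla V$ in $L^\infty_tL^2_x$, the last coming from \eqref{elecField} and the density bound; here one closes the energy inequality by absorbing the electric-field term $\pm\vartheta\|\nabla V\|_{L^2_x}^2$ into $P(\varrho)$ via interpolation and conservation of mass, which works because $\gamma>\tfrac{3}{2}$, while the noise is handled through \eqref{eq:gk1}. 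Second, where necessary the density regularity is upgraded: this is not needed for $N\to\infty$, but for $\varepsilon\to0$ one gains a small exponent above $L^\gamma_x$ from the effective viscous flux, and for $\delta\to0$ one additionally controls the oscillation defect measure. Third, tightness of the joint laws of $(\varrho,\mathbf{u},\varrho\mathbf{u},V,W)$ on suitable path spaces is established by combining Aubin--Lions type compactness with the stochastic time regularity of $\varrho\mathbf{u}$ and, crucially, the compactness of the electric fields: since $\Delta V=\pm(\varrho-f)$ is linear, elliptic estimates give $\nabla V$ bounded in $L^\infty_tW^{1,\gamma}_x$ and a time-derivative bound from $\partial_t\Delta V=\mp\mathrm{div}(\varrho\mathbf{u})$, so compactness of $V$ follows once $\varrho$ is compact --- and here $\gamma>\tfrac{3}{2}$ is exactly what makes $\varrho_n\nabla V_n\to\varrho\nabla V$. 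Fourth, the Jakubowski--Skorokhod representation theorem produces, on a new probability space, a.s.-converging copies, and all nonlinear terms and the stochastic integral are identified by the standard martingale-characterization argument (cf. \cite{da2014stochastic}).

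The step I expect to be the main obstacle is the identification of the pressure, i.e. proving $\overline{p(\varrho)}=p(\varrho)$ for the weak limits in the $\varepsilon\to0$ and $\delta\to0$ layers. For $\varepsilon\to0$ the plan is to apply It\^o's formula not to the product of the momentum with $\nabla\Delta^{-1}\varrho$ as in the purely fluid case, but to a test function built by combining the continuity equation with the Poisson equation through the inverse Laplacian; this produces a cleaner effective-viscous-flux identity and, more importantly, lets one estimate the awkward electric-field contribution using only the energy bound for $\nabla V$ together with conservation and boundedness of the total mass, $\|\varrho(t)\|_{L^1_x}=\|\varrho_0\|_{L^1_x}$. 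For $\delta\to0$ a variational power of $\varrho$ is required which no longer solves the continuity equation, so this device is unavailable; instead one runs the effective-viscous-flux and oscillation-defect-measure arguments of \cite{breit2018stoch} verbatim, inserting at each occurrence a new estimate for the electric-field terms, again closed by the uniform $L^\infty_tL^2_x$ bound on $\nabla V$ and mass conservation. Once strong convergence of the density is in hand, the renormalized continuity equation \eqref{renormalizedContExis} follows from the DiPerna--Lions theory \cite{diperna1989ordinary}, the dissipative energy inequality \eqref{augmentedEnergy} passes to the limit by lower semicontinuity of the convex functionals, and the relative energy inequality in the sense of \cite{dafermos1979second} is obtained by a further It\^o expansion as sketched in Section~\ref{sec:relativeEnergy}; this completes the construction of the dissipative martingale solution on $\mathbb{T}^3$.
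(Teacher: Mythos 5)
Your proposal is correct and follows essentially the same route as the paper: the same five-layer scheme (Galerkin projection with velocity cut-off and artificial viscosity/pressure, then the limits $R\to\infty$, $N\to\infty$, $\varepsilon\to0$, $\delta\to0$), the same treatment of the effective viscous flux in the $\varepsilon\to0$ layer by testing with the combined continuity--Poisson equation via $\nabla\Delta^{-1}$, the same fallback to the classical $T_k$/oscillation-defect-measure machinery for $\delta\to0$ with the electric-field terms closed by the $L^\infty_tL^2_x$ bound on $\nabla V$ and mass conservation, and the same Jakubowski--Skorokhod identification of limits. The only cosmetic difference is at the base layer, where the paper builds the solution by a time-discretization (collocation) scheme plus Gy\"ongy--Krylov rather than a direct fixed-point argument, but either device works there.
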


\section{Formal derivation of the energy inequality}
\label{sec:energyIneq}
Let us first recall the following It\^o lemma. See \cite[Theorem A.4.1]{breit2018stoch} for the stronger version of this result.
\begin{theorem}
\label{thm:itoLemma}
Let $W$ be an $(\mathscr{F}_t)$-cylindrical Wiener process on the stochastic basis  $(\Omega, \mathscr{F}, (\mathscr{F}_t)_{t\geq0}, \mathbb{P})$. Let $(r,s)$ be a pair of stochastic processes on $(\Omega, \mathscr{F}, (\mathscr{F}_t)_{t\geq0}, \mathbb{P})$ satisfying 
\begin{equation}
\begin{aligned}
\mathrm{d}r &=[D r]\, \mathrm{d}t + [\mathbb{D}r]\, \mathrm{d}W,
\quad \quad
\mathrm{d}s &=[D s]\, \mathrm{d}t +[\mathbb{D}s]\, \mathrm{d}W
\end{aligned}
\end{equation}
on the cylinder $(0,T)\times \mathbb{T}^d$.
Now suppose that the following 
\begin{equation}
\begin{aligned}
r &\in C^\infty\big( [0,T]\times \mathbb{T}^d\big),
\quad
s \in C^\infty\big( [0,T]\times \mathbb{T}^d\big)
\end{aligned}
\end{equation}
holds $\mathbb{P}$-a.s. and that for all $1\leq q<\infty$
\begin{equation}
\begin{aligned}
\mathbb{E}\, \bigg[\sup_{t\in [0,T]} \Vert r  &\Vert^2_{W^{1,q}_x} \bigg]^q
+
\mathbb{E}\, \bigg[\sup_{t\in [0,T]} \Vert s \Vert^2_{W^{1,q}_x} \bigg]^q \lesssim_q 1.
\end{aligned}
\end{equation}
Furthermore, assume that $[Dr],\, [Ds],\,[\mathbb{D}r],\,[\mathbb{D}s]$ are progressively measurable and that
\begin{equation}
\begin{aligned}
[Dr], \, [Ds] &\in  L^q \big(\Omega;L^q( 0,T; W^{1,q}_x\big)
\\
[\mathbb{D}r], \, [\mathbb{D}s] &\in  L^2 \Big(\Omega;L^2 \big( 0,T;L_2(\mathfrak{U}; L^2_x\big)\Big)
\end{aligned}
\end{equation}
and 
\begin{equation}
\begin{aligned}
\bigg(\sum_{k\in \mathbb{N}} \big\vert [Dr](e_k) \big\vert^q \bigg)^\frac{1}{q},
\bigg(\sum_{k\in \mathbb{N}} \big\vert [Ds](e_k) \big\vert^q \bigg)^\frac{1}{q}
\in
L^q\big( \Omega\times(0,T) \times \mathbb{T}^d\big)
\end{aligned}
\end{equation}
holds. Finally, for some $\lambda\geq 0$, let $Q$ be $(\lambda+2)$-continuously differentiable function such that
\begin{equation}
\begin{aligned}
\mathbb{E}\, \sup_{t\in [0,T]}\, \Vert Q^j(r) \Vert^2_{W^{\lambda, q'}_x \cap C_x}
 <\infty, \quad j=0,1,2.
\end{aligned}
\end{equation}
Then
\begin{equation}
\begin{aligned}
\int_{\mathbb{T}^d}&\big(sQ(r) \big)(t)\, \mathrm{d}x 
=
\int_{\mathbb{T}^d}s_0Q(r_0) \, \mathrm{d}x
+
\int_0^t
\int_{\mathbb{T}^d}\bigg[ sQ'(r)\, [Dr] + \frac{1}{2}\sum_{k\in \mathbb{N}}s\,Q''(r)\big\vert [  \mathbb{D}r](e_k)\big\vert^2   \bigg] \, \mathrm{d}x\, \mathrm{d}t'
\\
&+
\int_0^t
\int_{\mathbb{T}^d} Q(r) [{D}s]  \, \mathrm{d}x\, \mathrm{d}t' 
+
\sum_{k\in \mathbb{N}}\int_0^t \int_{\mathbb{T}^d} [\mathbb{D} s](e_k) \, [\mathbb{D}r](e_k) \, \mathrm{d}x\, \mathrm{d}t'
\\
&+
\sum_{k\in \mathbb{N}}
\int_0^t\int_{\mathbb{T}^d}
\Big[sQ'(r) [\mathbb{D}r](e_k)  +Q(r) [\mathbb{D}s] (e_k) \Big]\, \mathrm{d}x\, \mathrm{d}\beta_k(t').
\end{aligned}
\end{equation}
\end{theorem}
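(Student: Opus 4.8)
The plan is to reduce the assertion to the classical, real‑valued It\^o formula applied \emph{pointwise} in $x\in\mathbb{T}^d$, and then to integrate over $\mathbb{T}^d$, pushing the spatial integral inside the stochastic integral by a stochastic Fubini theorem. To organise the computation, set $F(a,b)=bQ(a)$, so that $\partial_aF(a,b)=bQ'(a)$, $\partial_bF(a,b)=Q(a)$, $\partial^2_{aa}F(a,b)=bQ''(a)$, $\partial^2_{ab}F(a,b)=Q'(a)$ and $\partial^2_{bb}F\equiv0$. These five partial derivatives will produce, respectively, the drift term carrying $[Dr]$, the half‑trace term carrying $[\mathbb{D}r]$, the drift term carrying $[Ds]$, the cross‑variation term pairing $[\mathbb{D}r]$ with $[\mathbb{D}s]$, and (via $\partial_aF$, $\partial_bF$) the martingale part in the conclusion. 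Note that only $Q\in C^2$ is used for It\^o's formula itself; the extra $\lambda$ derivatives and the $W^{\lambda,q'}_x\cap C_x$ bounds enter only when we check integrability of the resulting terms.

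First I would observe that, for a.e.\ fixed $x\in\mathbb{T}^d$, the pair $t\mapsto\big(r(t,x),s(t,x)\big)$ is a genuine $\mathbb{R}^2$‑valued It\^o process: its drift is $\big([Dr](t,x),[Ds](t,x)\big)$ and its diffusion coefficients against the independent Brownian motions $(\beta_k)_{k\in\mathbb{N}}$ are $\big([\mathbb{D}r](t,x)(e_k),[\mathbb{D}s](t,x)(e_k)\big)_{k}$, the series for the quadratic (co)variations converging because $[\mathbb{D}r],[\mathbb{D}s]\in L^2\big(\Omega;L^2(0,T;L_2(\mathfrak{U};L^2_x))\big)$ forces $\sum_k\big|[\mathbb{D}r](e_k)\big|^2+\sum_k\big|[\mathbb{D}s](e_k)\big|^2<\infty$ for a.e.\ $(t,x)$, $\mathbb{P}$‑a.s. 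Applying the standard It\^o formula to $F$ then gives, pathwise in $t$ and for a.e.\ $x$,
\begin{equation*}
\begin{aligned}
\mathrm{d}\big(sQ(r)\big)
={}&\Big(sQ'(r)[Dr]+Q(r)[Ds]+\tfrac12 sQ''(r)\sum_{k}\big|[\mathbb{D}r](e_k)\big|^2\\
&{}+Q'(r)\sum_{k}[\mathbb{D}r](e_k)[\mathbb{D}s](e_k)\Big)\mathrm{d}t
+\sum_{k}\Big(sQ'(r)[\mathbb{D}r](e_k)+Q(r)[\mathbb{D}s](e_k)\Big)\mathrm{d}\beta_k .
\end{aligned}
\end{equation*}
If one prefers to avoid infinitely many noises at this stage, one first replaces $W$ by the truncation $W_N=\sum_{k\le N}\beta_ke_k$, applies the classical formula, and lets $N\to\infty$, the uniform‑in‑$N$ control needed for that passage being exactly the integrability already assumed.

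Next I would integrate this identity over $\mathbb{T}^d$. For the drift contributions the interchange of $\int_{\mathbb{T}^d}$ with $\int_0^t(\cdot)\,\mathrm{d}t'$ is an ordinary Fubini theorem once integrability is checked: the terms carrying $[Dr]$ and $[Ds]$ are handled by H\"older's inequality in the $L^q$--$L^{q'}$ duality, using $[Dr],[Ds]\in L^q\big(\Omega;L^q(0,T;W^{1,q}_x)\big)$ together with $s\in C([0,T];C_x)$ and $Q^j(r)\in C([0,T];C_x)$ for $j=0,1,2$ (valid by the assumed bounds, choosing $q>d$ so that $W^{1,q}_x\hookrightarrow C_x$), while the half‑trace and cross‑variation terms are integrable because $\sum_k\big|[\mathbb{D}r](e_k)\big|^2$ and $\sum_k\big|[\mathbb{D}s](e_k)\big|^2$ lie in $L^1\big(\Omega\times(0,T)\times\mathbb{T}^d\big)$ and the remaining factors are essentially bounded. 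For the martingale contribution I would invoke the stochastic Fubini theorem (cf.\ \cite{da2014stochastic} and \cite{breit2018stoch}) to rewrite $\int_{\mathbb{T}^d}\!\big(\sum_k(\cdots)\,\mathrm{d}\beta_k\big)\mathrm{d}x$ as $\sum_k\big(\int_0^t\!\int_{\mathbb{T}^d}(\cdots)\,\mathrm{d}x\big)\mathrm{d}\beta_k$; its hypotheses are progressive measurability, which is assumed, and square‑integrability of $(t,\omega)\mapsto\big\|\int_{\mathbb{T}^d}\big(sQ'(r)[\mathbb{D}r]+Q(r)[\mathbb{D}s]\big)\mathrm{d}x\big\|_{L_2(\mathfrak{U};\mathbb{R})}$, which follows from Cauchy--Schwarz in $x$ and the same bounds. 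Collecting the terms yields the claimed identity for a.e.\ $t$, $\mathbb{P}$‑a.s.; since $r,s\in C^\infty([0,T]\times\mathbb{T}^d)$ and the It\^o integral has a continuous modification, both sides are continuous in $t$, so the identity in fact holds for every $t\in[0,T]$, $\mathbb{P}$‑a.s.

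The only genuinely delicate point is this last step: the rigorous application of the stochastic Fubini theorem (and, in the truncation approach, the passage $N\to\infty$ in the It\^o integral), together with the bookkeeping that places each of the five term‑families in its correct integrability class — in particular the cross term $Q'(r)\sum_k[\mathbb{D}r](e_k)[\mathbb{D}s](e_k)$ coming from $\partial^2_{ab}F$, which must not be overlooked. All of this is underwritten by the quantitative assumptions in the statement, so beyond these verifications the argument is the conceptually transparent ``It\^o pointwise in $x$ plus stochastic Fubini''; a more general version, carried out in full detail, is \cite[Theorem A.4.1]{breit2018stoch}.
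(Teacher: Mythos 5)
The paper does not prove this theorem at all: it is stated as a recalled auxiliary result with a pointer to \cite[Theorem A.4.1]{breit2018stoch}, so there is no in-paper argument to compare against. Your sketch — classical It\^o formula applied pointwise in $x$ to $F(a,b)=bQ(a)$ (or to a finite truncation $W_N$ of the noise), followed by integration over $\mathbb{T}^d$ and the stochastic Fubini theorem, with the hypotheses used only to justify the integrability and the interchanges — is correct and is essentially the proof of the cited reference, so no gap there. One point worth flagging explicitly: your derivation produces the cross-variation term $Q'(r)\sum_k[\mathbb{D}r](e_k)\,[\mathbb{D}s](e_k)$, with the factor $Q'(r)$ coming from $\partial^2_{ab}F$, whereas the displayed conclusion in the statement above omits $Q'(r)$ and reads $\sum_k\int_0^t\int_{\mathbb{T}^d}[\mathbb{D}s](e_k)[\mathbb{D}r](e_k)\,\mathrm{d}x\,\mathrm{d}t'$. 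Your formula is the right one (and agrees with \cite[Theorem A.4.1]{breit2018stoch}); the statement as printed here has a typo, so your closing remark that ``collecting the terms yields the claimed identity'' should really say that it yields the corrected identity.
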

In the following argument, we assume that all the unknowns are sufficiently regular and that no vacuum state exists. Subsequently, we rewrite our system as follows
\begin{align}
&\mathrm{d} \varrho  + \mathrm{div}(\varrho  \mathbf{u} ) \, \mathrm{d}t =0, 
\label{contEqFormal}
\\
&\mathrm{d}  \mathbf{u}  +
\big[
  \mathbf{u}  \cdot \nabla \mathbf{u}  + \varrho^{-1}\nabla  p( \varrho) \big]\, \mathrm{d}t 
=\big[ \nu^S \varrho^{-1}\Delta \mathbf{u}  +(\nu^B + \nu^S) \varrho^{-1} \nabla \mathrm{div} \mathbf{u} 
\nonumber\\
&\qquad   \qquad \qquad  \qquad \qquad + \vartheta   \nabla V  \big] \, \mathrm{d}t +  \sum_{k\in \mathbb{N}}  \mathbf{g}_k(x,\varrho , f,  \varrho \mathbf{u} ) \, \mathrm{d}\beta_k ,
\label{momEqFormal}
\\
&\pm \Delta V  = \varrho  - f .
\label{elecFieldFormal}
\end{align}
For $s=\varrho$ and $Q(r)=\frac{1}{2}\vert \mathbf{u} \vert^2$, applying It\^o's formula, Theorem \ref{thm:itoLemma} to the functional
\begin{align}
F(\varrho,  \mathbf{u})(t)=\int_{\mathbb{T}^3} \frac{1}{2} \varrho(t) \vert \mathbf{u}(t) \vert^2 \, \mathrm{d}x
\end{align}
yields
\begin{equation}
\begin{aligned}
F&(\varrho,  \mathbf{u})(t) = \int_{\mathbb{T}^3} \frac{1}{2}\varrho_0 \vert \mathbf{u}_0 \vert^2 \, \mathrm{d}x
-
\int_0^t\int_{\mathbb{T}^3}  \big[ \varrho\mathbf{u}\cdot \mathbf{u}\cdot \nabla \mathbf{u}+ \mathbf{u} \nabla p \big]\mathrm{d}x \, \mathrm{d}s
\\&
+
\int_0^t\int_{\mathbb{T}^3}  
\big[ \nu^S \mathbf{u}\cdot\Delta \mathbf{u}  +(\nu^B + \nu^S) \mathbf{u}\cdot\nabla \mathrm{div} \mathbf{u} 
+ \vartheta \varrho\mathbf{u} \cdot  \nabla V
  \big] \, \mathrm{d}x \, \mathrm{d}s 
\\
&-\int_0^t\int_{\mathbb{T}^3} \frac{1}{2}\vert \mathbf{u} \vert^2 \mathrm{div}(\varrho \mathbf{u}) \mathrm{d}x \, \mathrm{d}s +\sum_{k \in \mathbb{N}} \int_0^t\int_{\mathbb{T}^3} \frac{1}{2} \varrho \vert \mathbf{g}_k(x,\varrho , f, \varrho \mathbf{u} ) \vert^2 \mathrm{d}x\, \mathrm{d}s
\\
&+\sum_{k \in \mathbb{N}} \int_0^t\int_{\mathbb{T}^3} \varrho\mathbf{u} \cdot  \mathbf{g}_k(x,\varrho , f,  \varrho \mathbf{u}) \mathrm{d}x\, \mathrm{d}\beta_k.
\end{aligned}
\end{equation}
Now since $\varrho\mathbf{u}\cdot \mathbf{u}\cdot \nabla \mathbf{u}=\frac{1}{2}\varrho\mathbf{u}\cdot \nabla \vert \mathbf{u} \vert^2$, it cancels with the last non-noise term above after integrating by parts. 
For the pressure term, we use the following elementary identities  
\begin{align*}
\frac{a}{\varrho} \nabla \varrho^\gamma =a \gamma \varrho^{\gamma-2} \nabla \varrho =
\frac{a\gamma}{\gamma-1} \nabla \varrho^{\gamma-1}
\end{align*}
so that the use of the continuity equation yields
\begin{equation}
\begin{aligned}
\int_0^t\int_{\mathbb{T}^3}  
 \mathbf{u} \nabla p  \, \mathrm{d}x \, \mathrm{d}s 
& =
 -
 \int_0^t\int_{\mathbb{T}^3}  \mathrm{div}(\varrho
 \mathbf{u}) \frac{a\gamma}{\gamma -1} \varrho^{\gamma -1}  \, \mathrm{d}x \, \mathrm{d}s 
 =
 \int_0^t\int_{\mathbb{T}^3} \partial_s  P(\varrho)  \, \mathrm{d}x \, \mathrm{d}s.
\end{aligned}
\end{equation}
recall \eqref{PVarrho}. Finally, by integrating by parts in the $V$-term, we can substitute in the following continuity--Poisson equation
\begin{align}
\label{pmV}
- \mathrm{div}(\varrho  \mathbf{u} ) \, \mathrm{d}s=\mathrm{d} (\varrho - f)   =\mathrm{d} (\pm \Delta V)
\end{align}
which holds because $f$ is independent of time. By collecting the above arguments, we obtain
\begin{equation}
\begin{aligned}
\label{ito1}
F&(\varrho,  \mathbf{u})(t) 
+
\int_{\mathbb{T}^3}  
P(\varrho(t))
 \, \mathrm{d}x
\pm
\int_{\mathbb{T}^3}  
\vartheta \vert \nabla V(t) \vert^2
 \, \mathrm{d}x
= \int_{\mathbb{T}^3} \frac{1}{2} \varrho_0 \vert \mathbf{u}_0 \vert^2 \, \mathrm{d}x
+
\int_{\mathbb{T}^3}  
P(\varrho_0)
 \, \mathrm{d}x
\\&\pm
\int_{\mathbb{T}^3}  
\vartheta \vert \nabla V_0 \vert^2
 \, \mathrm{d}x
-
\int_0^t\int_{\mathbb{T}^3}  
\big[ \nu^S \vert \nabla \mathbf{u} \vert^2 +(\nu^B + \nu^S) \vert \mathrm{div} \mathbf{u}\vert^2 \big] \, \mathrm{d}x \, \mathrm{d}s 
\\&
 +\sum_{k \in \mathbb{N}} \int_0^t\int_{\mathbb{T}^3} \frac{1}{2} \varrho \vert \mathbf{g}_k(x,\varrho , f, \varrho \mathbf{u} ) \vert^2 \mathrm{d}x\, \mathrm{d}s
+\sum_{k \in \mathbb{N}} \int_0^t\int_{\mathbb{T}^3} \varrho\mathbf{u} \cdot  \mathbf{g}_k(x,\varrho , f,  \varrho \mathbf{u}) \mathrm{d}x\, \mathrm{d}\beta_k.
\end{aligned}
\end{equation}
Alternative to \eqref{pmV}, one can use the combination of integration by parts and the continuity equation so that
\begin{align*}
\int_0^t\int_{\mathbb{T}^3} \varrho \mathbf{u} \nabla V \, \mathrm{d}x \, \mathrm{d}s
= -
\int_0^t\int_{\mathbb{T}^3} \mathrm{div}(\varrho \mathbf{u}) V \, \mathrm{d}x \, \mathrm{d}s
= -
\int_0^t\int_{\mathbb{T}^3} \varrho \,\partial_s V \, \mathrm{d}x \, \mathrm{d}s
\end{align*}
holds.
The equation \eqref{ito1} is therefore equivalent to
\begin{equation}
\begin{aligned}
\label{ito2}
F&(\varrho,  \mathbf{u})(t) 
+
\int_{\mathbb{T}^3}  
P(\varrho(t))
 \, \mathrm{d}x
\mp
\int_{\mathbb{T}^3} \vartheta \varrho \partial_tG(t,x)\, \mathrm{d}x
= \int_{\mathbb{T}^3}\frac{1}{2} \varrho_0 \vert \mathbf{u}_0 \vert^2 \, \mathrm{d}x
+
\int_{\mathbb{T}^3}  
P(\varrho_0)
 \, \mathrm{d}x
 \\&\mp
\int_{\mathbb{T}^3} \varrho_0\partial_tG(t,x)\vert_{t=0} \, \mathrm{d}x
-
\int_0^t\int_{\mathbb{T}^3}  
\big[ \nu^S \vert \nabla \mathbf{u} \vert^2 +(\nu^B + \nu^S) \vert \mathrm{div} \mathbf{u}\vert^2  \big] \, \mathrm{d}x \, \mathrm{d}s 
\\&
 +\sum_{k \in \mathbb{N}} \int_0^t\int_{\mathbb{T}^3} \frac{1}{2} \varrho \vert \mathbf{g}_k(x,\varrho , f, \varrho \mathbf{u} ) \vert^2 \mathrm{d}x\, \mathrm{d}s
+\sum_{k \in \mathbb{N}} \int_0^t\int_{\mathbb{T}^3} \varrho\mathbf{u} \cdot  \mathbf{g}_k(x,\varrho , f,  \varrho \mathbf{u}) \mathrm{d}x\, \mathrm{d}\beta_k
\end{aligned}
\end{equation}
where $G(t,x)$ is the Green's function of the Poisson equation. For example, in $\mathbb{R}^3$, we can use the Green's function
\begin{align}
\label{greenFunction}
G(s,x) =\mp c \int_{\mathbb{R}^3} \frac{(\varrho - f )(s,y)}{\vert x-y \vert} \, \mathrm{d}y
\end{align}
where $c=c(\pi)$, which solves the Poisson equation so that
\begin{align*}
\varrho \partial_s G(s,x) &=\mp  \frac{c}{2} \int_{\mathbb{R}^3} \frac{\partial_s(\varrho^2 - 2f)(s,y)}{\vert x-y \vert} \, \mathrm{d}y
=\mp \partial_s \frac{c}{2}\varrho \int_{\mathbb{R}^3}  \frac{(\varrho - 2f)(s,y)}{\vert x-y \vert} \, \mathrm{d}y
=\mp \partial_s \frac{c}{2} \varrho\bigg( (\varrho - 2f)*_x \frac{1}{\vert x \vert} \bigg).
\end{align*}
It follows that
\begin{equation}
\begin{aligned}
\label{poissonPartAlt}
\int_0^t\int_{\mathbb{T}^3}\vartheta \varrho \mathbf{u} \nabla V \, \mathrm{d}x \, \mathrm{d}s
&= \pm
\int_{\mathbb{T}^3} \frac{c}{2} \vartheta \varrho \bigg( (\varrho - 2f)*_x \frac{1}{\vert x \vert} \bigg) (t)\, \mathrm{d}x
\\&
\mp
\int_{\mathbb{T}^3} \frac{c}{2} \vartheta \varrho_0\bigg( (\varrho_0 - 2f)*_x \frac{1}{\vert x \vert} \bigg) \, \mathrm{d}x.
\end{aligned}
\end{equation}
It may be useful to treat the pressure differently when one wants to study singular limits. To do so, we rewrite the continuity equation \eqref{contEqFormal} as
\begin{align}
\label{contEqFormalA}
\mathrm{d} (\varrho - \overline{\varrho})  + \mathrm{div}(\varrho  \mathbf{u} ) \, \mathrm{d}t =0
\end{align}
and also replace the pressure term in \eqref{momEqFormal} by
\begin{align*}
\varrho^{-1}\nabla [p(\varrho) - p(\overline{\varrho})]
\end{align*}
where $\overline{\varrho} \geq0$, so that if we define 
\begin{align*}
\overline{P}(\varrho) = \varrho \int_{\overline{\varrho}}^\varrho  \frac{p(z)}{z^2}\, \mathrm{d}z
\end{align*}
as the new pressure potential, we have that
\begin{align*}
\overline{P}(\overline{\varrho})=0, \quad p(\varrho)=\varrho\overline{P}'(\varrho) -\overline{P}(\varrho).
\end{align*}
Furthermore, by applying  Gauss's theorem to \eqref{contEqFormalA} in order to calculate the flux through $\mathbb{T}^3$, then we able to obtain  the mass conservation relation from which we gain
\begin{align*}
\frac{\mathrm{d}}{\mathrm{d}t}\int_{\mathbb{T}^3}\overline{P}\,'(\overline{\varrho})(\varrho -\overline{\varrho}) \, \mathrm{d}x=0.
\end{align*}
By collecting the above information, we obtain the following versions of the energy inequality
\begin{equation}
\begin{aligned}
\label{ito3}
F&(\varrho,  \mathbf{u})(t) 
+
\int_{\mathbb{T}^3}  
H\big(\varrho(t) , \overline{\varrho}\big)
 \, \mathrm{d}x
\pm
\int_{\mathbb{T}^3}  
\vartheta \vert \nabla V(t) \vert^2
 \, \mathrm{d}x
= \int_{\mathbb{T}^3} \frac{1}{2} \varrho_0 \vert \mathbf{u}_0 \vert^2 \, \mathrm{d}x
+
\int_{\mathbb{T}^3}  
H\big(\varrho_0 , \overline{\varrho} \big)
 \, \mathrm{d}x
\\&\pm
\int_{\mathbb{T}^3}  
\vartheta \vert \nabla V_0 \vert^2
 \, \mathrm{d}x
-
\int_0^t\int_{\mathbb{T}^3}  
\big[ \nu^S \vert \nabla \mathbf{u} \vert^2 +(\nu^B + \nu^S) \vert \mathrm{div} \mathbf{u}\vert^2 \big] \, \mathrm{d}x \, \mathrm{d}s 
\\&
 +\sum_{k \in \mathbb{N}} \int_0^t\int_{\mathbb{T}^3} \frac{1}{2} \varrho \vert \mathbf{g}_k(x,\varrho , f, \varrho \mathbf{u} ) \vert^2 \mathrm{d}x\, \mathrm{d}s
+\sum_{k \in \mathbb{N}} \int_0^t\int_{\mathbb{T}^3} \varrho\mathbf{u} \cdot  \mathbf{g}_k(x,\varrho , f,  \varrho \mathbf{u}) \mathrm{d}x\, \mathrm{d}\beta_k
\end{aligned}
\end{equation}
and 
\begin{equation}
\begin{aligned}
\label{ito2}
F&(\varrho,  \mathbf{u})(t) 
+
\int_{\mathbb{T}^3}  
H\big(\varrho(t) , \overline{\varrho} \big)
 \, \mathrm{d}x
\mp
\int_{\mathbb{T}^3}\varrho_0\partial_tG(t,x)\, \mathrm{d}x
= \int_{\mathbb{T}^3}\frac{1}{2} \varrho_0 \vert \mathbf{u}_0 \vert^2 \, \mathrm{d}x
+
\int_{\mathbb{T}^3}  
H\big(\varrho_0 , \overline{\varrho} \big)
 \, \mathrm{d}x
\\&\mp
\int_{\mathbb{T}^3}\varrho_0\partial_tG(t,x)\vert_{t=0}  \, \mathrm{d}x
-
\int_0^t\int_{\mathbb{T}^3}  
\big[ \nu^S \vert \nabla \mathbf{u} \vert^2 +(\nu^B + \nu^S) \vert \mathrm{div} \mathbf{u}\vert^2  \big] \, \mathrm{d}x \, \mathrm{d}s 
\\&
 +\sum_{k \in \mathbb{N}} \int_0^t\int_{\mathbb{T}^3} \frac{1}{2} \varrho \vert \mathbf{g}_k(x,\varrho , f, \varrho \mathbf{u} ) \vert^2 \mathrm{d}x\, \mathrm{d}s
+\sum_{k \in \mathbb{N}} \int_0^t\int_{\mathbb{T}^3} \varrho\mathbf{u} \cdot  \mathbf{g}_k(x,\varrho , f,  \varrho \mathbf{u}) \mathrm{d}x\, \mathrm{d}\beta_k
\end{aligned}
\end{equation}
where
\begin{align*}
H\big(\varrho , \overline{\varrho} \big)= \overline{P}(\varrho) - \overline{P}\,'(\overline{\varrho})(\varrho -\overline{\varrho})- \overline{P}(\overline{\varrho}).
\end{align*}

\section{The first approximation layer}
\label{sec:firstLayer}
Let $H_N$ be a finite dimensional space with an associated $L^2$-orthonormal projection
\begin{align*}
\Pi_N : L^2(\mathbb{T}^2) \rightarrow H_N.
\end{align*}
Then for any $k \in  \mathbb{N}_0$, $p\in (1,\infty)$ and $v \in W^{k,p}(\mathbb{T}^3)$, it follows that
\begin{equation}
\begin{aligned}
\label{projectionContinuity}
\Vert \Pi_N v\Vert_{W^{k,p}(\mathbb{T}^3)} \lesssim_{k,p} \Vert v \Vert_{W^{k,p}(\mathbb{T}^3)},
\, \quad
\Pi_N v \rightarrow v \quad \mathrm{in} \quad W^{k,p}(\mathbb{T}^3)
\end{aligned}
\end{equation}
as $N\rightarrow \infty$, c.f. \cite[Chapter 3]{grafakos2008classical}. We now consider the following cut--off
\begin{equation}
\begin{aligned}
\label{cuttOff}
\chi \in C^\infty(\mathbb{R}), \quad \chi(f) 
=
  \begin{cases}
    1     & \quad \text{if }v \leq 0\\
    \chi'(v)\leq 0 & \quad \text{if } 0<v <1 \\
    \chi(v)=0  & \quad \text{if }v \geq1
  \end{cases}
\end{aligned}
\end{equation}
and define the following
\begin{align}
&\chi_{\mathbf{u}_R}= \chi \big( \Vert \mathbf{u}\Vert_{H_N} -R \big), \label{chiUr}
\\
&\mathbf{g}_{k,\varepsilon}(x,\varrho,f, \varrho\mathbf{u})
=
\chi \bigg(\frac{\varepsilon}{\varrho} -1 \bigg)
\chi \bigg(\vert \mathbf{u}\vert -\frac{1}{\varepsilon} \bigg) \mathbf{g}_k(x, \varrho,f, \varrho\mathbf{u}).
\end{align}
From the definition of the cut--off function \eqref{cuttOff},  it follows from \eqref{stochCoeffBound1Exis0}--\eqref{stochsummableConst} that
\begin{align}
&\esssup_{(x, \varrho, f,  \mathbf{m}) }\Big(\vert  \mathbf{g}_{k,\varepsilon}  \vert 
+
\big\vert\nabla_{\varrho,  \mathbf{m}} \, \mathbf{g}_{k,\varepsilon} \big\vert \Big)   \leq c_{k,\varepsilon},
\label{stochCoeffBound1Exis0a}
\\
&\sum_{k\in \mathbb{N}} c_{k,\varepsilon}^2 \lesssim 1 \label{stochsummableConsta}
\end{align}
holds for some constants $(c_{k,\varepsilon})_{k\in \mathbb{N}} \subset [0,\infty)$.
The aim of this section is to now construct a solution to the following auxiliary problem
\begin{align}
&\mathrm{d} \varrho  + \mathrm{div}\big(\varrho (\chi_{\mathbf{u}_R}  \mathbf{u}) \big) \, \mathrm{d}t =\varepsilon \Delta \varrho\, \mathrm{d}t, 
\label{contEq1}
\\
&\mathrm{d}\Pi_N(\varrho  \mathbf{u} ) +
\Pi_N 
\big[
\mathrm{div} \big(\varrho  (\chi_{\mathbf{u}_R}  \mathbf{u})  \otimes \mathbf{u} \big) + \chi_{\mathbf{u}_R}  \nabla   p_{\delta}^{ \Gamma}(\varrho) \big]\, \mathrm{d}t =
\Pi_N
\big[\varepsilon\Delta(\varrho\mathbf{u})+ \nu^S \Delta \mathbf{u}   
\nonumber\\&
\qquad+(\nu^B + \nu^S)\nabla \mathrm{div} \mathbf{u} + \vartheta \varrho  \nabla V  \big] \, \mathrm{d}t   
+ \sum_{k\in \mathbb{N}}\Pi_N\big[ \varrho\Pi_N   \mathbf{g}_{k,\varepsilon}(\varrho , f, \varrho \mathbf{u} ) \big] \, \mathrm{d}\beta_k ,
\label{momEq1}
\\
&\pm \Delta V  = \varrho  - f, \label{elecField1}
\end{align}
where
\begin{align}
\label{higherPressure}
 p_{\delta}^{ \Gamma}(\varrho)
 =
 p(\varrho) +\delta(\varrho +\varrho^\Gamma)
\end{align}
for some $\delta>0$ and $\Gamma \geq \max\{6, \gamma\}$.
\begin{definition}
\label{def:pathSolFiniteDim}
Let $(\Omega, \mathscr{F}, (\mathscr{F}_t)_{t\geq0 }, \mathbb{P})$ be a stochastic basis where the filtration is complete and right-continuous. Let $W$ be an $(\mathscr{F}_t)$-cylindrical Wiener process and further let $(\varrho_0, \mathbf{u_0},f)$ be $\mathscr{F}_0$-measurable random variables belonging to $C^{2+\nu}(\mathbb{T}^3) \times H_N \times C^{\nu}(\mathbb{T}^3)$. Then 
$
[\varrho , \mathbf{u} , V ]$
is a \textit{pathwise solution} of \eqref{contEq1}--\eqref{elecField1} with data  $(\varrho_0, \mathbf{u_0},f)$ if
\begin{enumerate}
\item the density  $\varrho>0$, $\varrho \in C \big( [0,T]; C^{2+\nu}_x\big)$
 $\mathbb{P}$-a.s. and it is $(\mathscr{F}_t)$-adapted;
\item the velocity field $\mathbf{u} \in C \big( [0,T]; H_N\big) $ $\mathbb{P}$-a.s. and it is $(\mathscr{F}_t)$-adapted;
\item the electric field $V \in C \big( [0,T]; C^{2+\nu}_x\big)$ $\mathbb{P}$-a.s. and it is $(\mathscr{F}_t)$-adapted;
\item the following $(\varrho(0), \mathbf{u}(0)) = (\varrho_0, \mathbf{u}_0)$ holds $\mathbb{P}$-a.s;
\item equation \eqref{contEq1} holds $\mathbb{P}$-a.s. for a.e.  $(t,x) \in(0,T)\times \mathbb{T}^3$;
\item equation \eqref{elecField1} holds $\mathbb{P}$-a.s. for a.e.  $(t,x) \in(0,T)\times \mathbb{T}^3$;
\item for all $\psi \in C^\infty_c ([0,T))$ and $\bm{\phi} \in H_N$, the following
\begin{equation}
\begin{aligned}
&-\int_0^T \partial_t \psi \int_{{\mathbb{T}^3}} \varrho  \mathbf{u} (t) \cdot \bm{\phi} \, \mathrm{d}x    \mathrm{d}t
=
\psi(0)  \int_{{\mathbb{T}^3}} \varrho _0 \mathbf{u} _0 \cdot \bm{\phi} \, \mathrm{d}x  
+  \int_0^T \psi \int_{{\mathbb{T}^3}}  \varrho  (\chi_{\mathbf{u}_R}  \mathbf{u}) \otimes \mathbf{u} : \nabla \bm{\phi} \, \mathrm{d}x  \mathrm{d}t 
\\&-
 \nu^S\int_0^T \psi \int_{{\mathbb{T}^3}} \nabla \mathbf{u} : \nabla \bm{\phi} \, \mathrm{d}x  \mathrm{d}t 
-(\nu^B+ \nu^S )\int_0^T \psi \int_{{\mathbb{T}^3}} \mathrm{div}\,\mathbf{u} \, \mathrm{div}\, \bm{\phi} \, \mathrm{d}x  \mathrm{d}t    
\\&+   
\int_0^T \psi \int_{{\mathbb{T}^3}}  \chi_{\mathbf{u}_R}  p_{\delta}^{\Gamma}\, \mathrm{div} \, \bm{\phi} \, \mathrm{d}x  \mathrm{d}t
+\int_0^T \psi \int_{{\mathbb{T}^3}} \varepsilon\varrho\mathbf{u}    \Delta\bm{\phi} \, \mathrm{d}x   \mathrm{d}t
+\int_0^T \psi \int_{{\mathbb{T}^3}} \vartheta\varrho  \nabla V  \cdot \bm{\phi} \, \mathrm{d}x    \mathrm{d}t
\\&+
\int_0^T \psi \sum_{k\in \mathbb{N}} \int_{{\mathbb{T}^3}}  \Pi_N\big[ \varrho\Pi_N \mathbf{g}_{k,\varepsilon}(\varrho , f, \varrho \mathbf{u} ) \big] \cdot \bm{\phi}  \, \mathrm{d}x \mathrm{d}\beta_k  
\end{aligned}
\end{equation}
hold $\mathbb{P}$-a.s.
\end{enumerate}
\end{definition} 
We now state the main result of this section.
\begin{theorem}
\label{thm:mainFirstLayer}
Let $(\Omega, \mathscr{F}, (\mathscr{F}_t)_{t\geq0}, \mathbb{P})$ be a stochastic basis and let $\varrho_0 \in C^{2+\nu}(\mathbb{T}^3)$ for $\nu\in(0,1)$, $\mathbf{u}_0 \in H_N$, $f \in C^{\nu}(\mathbb{T}^3)$ be some $\mathscr{F}_0$-measurable random variables such that
\begin{align}
  &\varrho_0 \geq \underline{\varrho}>0, \quad \Vert \varrho_0\Vert_{C^{2+\nu}_x} \leq \overline{\varrho}, \quad  \Vert f \Vert_{C^{\nu}_x} \leq \overline{f} \quad  \mathbb{P}\text{-a.s.}, 
  \\
  &\mathbb{E} 
\Vert \mathbf{u} \Vert_{H_N}^p \leq \overline{\mathbf{u}}
\end{align}
for some $p\geq1$ and some deterministic constants $\underline{\varrho},\overline{\varrho}, \overline{f}, \overline{\mathbf{u}} >0$. Then there exists a unique pathwise solution $(\varrho, \mathbf{u}, V)$ of \eqref{contEq1}--\eqref{elecField1} in the sense of Definition \ref{def:pathSolFiniteDim} such that the estimates
\begin{align}
&\esssup_{t\in [0,T]} \bigg(\Vert \varrho(t) \Vert_{C^{2+\nu}_x}
+
\Vert \partial_t\varrho(t) \Vert_{C^{\nu}_x} 
+
\Vert \varrho^{-1}(t) \Vert_{C_x}
 \bigg)\lesssim 1, \quad \mathbb{P}\text{-a.s.}
\label{esssupDensity}\\
&\esssup_{t\in [0,T]} \bigg(\Vert V(t) \Vert_{C^{2+\nu}_x}
+
\Vert \partial_t V(t) \Vert_{C^{2+\nu}_x} 
 \bigg) \lesssim 1, \quad \mathbb{P}\text{-a.s.}
\label{esssupCarge}\\
&
\mathbb{E}
\esssup_{t\in [0,T]} \Vert \mathbf{u}(t) \Vert_{H_N}^p 
\lesssim 1 + 
\mathbb{E} \Vert \mathbf{u}_0 \Vert_{H_N}^p \label{esssupVelocity}
\end{align}
holds for some $p\geq1$.
\end{theorem}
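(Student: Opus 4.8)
The plan is to decouple the system along the lines of \cite{breit2018stoch}: treat the coupling through the electric field and the data $f$ as frozen inputs, solve the fluid subsystem by a fixed-point/Galerkin argument on each fixed realization, and then close the loop for $V$ via elliptic regularity for the Poisson equation. First I would fix a velocity field $\mathbf{v} \in C([0,T];H_N)$ that is $(\mathscr{F}_t)$-adapted and, given $\mathbf{v}$, solve the parabolic continuity equation \eqref{contEq1} with $\chi_{\mathbf{v}_R}\mathbf{v}$ in place of $\chi_{\mathbf{u}_R}\mathbf{u}$; this is a linear (in $\varrho$) uniformly parabolic equation with smooth coefficients, so classical Schauder theory gives a unique solution $\varrho = \varrho[\mathbf{v}] \in C([0,T];C^{2+\nu}_x)$, together with the lower bound $\varrho \geq \underline{\varrho}\,e^{-cT}>0$ by the maximum principle (here the cut-off $\chi_{\mathbf{v}_R}$ guarantees $\|\mathbf{v}\|_{H_N}$, hence $\|\mathrm{div}(\chi_{\mathbf{v}_R}\mathbf{v})\|_{L^\infty_x}$, is bounded by $R$ and a dimensional constant). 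The estimates \eqref{esssupDensity} follow directly from this fixed-point input together with parabolic smoothing; note they are pathwise and $\mathbf{v}$-independent precisely because of the $R$-cut-off. Given $\varrho[\mathbf{v}]$, the Poisson equation \eqref{elecField1} with right-hand side $\varrho[\mathbf{v}]-f \in C^\nu_x$ is solved uniquely (up to the usual normalization on the torus) by standard elliptic Schauder estimates, giving $V=V[\mathbf{v}] \in C([0,T];C^{2+\nu}_x)$; differentiating in time and using $\partial_t\varrho \in C^\nu_x$ yields the bound on $\partial_t V$ in \eqref{esssupCarge}.

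With $\varrho=\varrho[\mathbf{v}]$ and $V=V[\mathbf{v}]$ now determined, I would plug them into the finite-dimensional momentum equation \eqref{momEq1}, which becomes a stochastic ODE in the finite-dimensional space $H_N$ for $\mathbf{u}$: writing $\mathbf{u}(t) = \sum_{i} c_i(t) e_i$, the projected equation is $\mathrm{d}\mathbf{u} = \Pi_N F(\varrho,\mathbf{u},V)\,\mathrm{d}t + \sum_k \Pi_N[\varrho\Pi_N\mathbf{g}_{k,\varepsilon}]\,\mathrm{d}\beta_k$ with coefficients that are locally Lipschitz in $\mathbf{u}$ on the set $\{\varrho>0\}$ — this uses $\varrho>0$ established above (so $\mathbf{u}=(\varrho\mathbf{u})/\varrho$ is a smooth function of $\Pi_N(\varrho\mathbf{u})$), the regularity of the convective, viscous, pressure and electric-field terms, and the bounds \eqref{stochCoeffBound1Exis0a}--\eqref{stochsummableConsta} on the truncated noise. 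Standard SDE theory then gives a unique local-in-time strong (hence pathwise) solution up to a blow-up time, and the It\^o-formula energy estimate of Section \ref{sec:energyIneq} — applied to $\|\mathbf{u}\|_{H_N}^2$, with the truncation $\chi_{\mathbf{u}_R}$ and $\mathbf{g}_{k,\varepsilon}$ rendering all nonlinear terms subquadratic — combined with a Burkholder--Davis--Gundy and Gronwall argument and the $\varrho$-bounds already in hand, yields the a priori bound $\mathbb{E}\esssup_{t}\|\mathbf{u}(t)\|_{H_N}^p \lesssim 1 + \mathbb{E}\|\mathbf{u}_0\|_{H_N}^p$. This both rules out blow-up, giving a global solution, and proves \eqref{esssupVelocity}.

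Finally I would close the fixed point: the map $\mathbf{v}\mapsto \mathbf{u}=\mathcal{T}(\mathbf{v})$ built above has a unique fixed point on $C([0,T];H_N)$, which is then the desired pathwise solution of the coupled system. The cleanest way is a contraction argument on a short interval $[0,\tau]$ — one estimates $\|\mathcal{T}(\mathbf{v}_1)-\mathcal{T}(\mathbf{v}_2)\|$ by tracking the Lipschitz dependence of $\varrho[\cdot]$ (parabolic energy estimates for the difference of two continuity equations), then of $V[\cdot]$ (linearity of Poisson), then of $\mathbf{u}[\cdot]$ (Gronwall on the difference of the two SDEs, using the uniform bounds to control the quadratic nonlinearities) — after which the a priori bounds allow iteration to cover $[0,T]$; uniqueness on the whole interval follows the same way. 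The main obstacle is organizational rather than conceptual: one must carefully verify that every constant in the contraction and energy estimates is deterministic and independent of the iterate $\mathbf{v}$ (which is exactly what the cut-offs $\chi_{\mathbf{u}_R}$ and $\mathbf{g}_{k,\varepsilon}$, the artificial viscosity $\varepsilon\Delta$, and the lower bound $\varrho>0$ are designed to provide), and that the adaptedness and progressive measurability are preserved through each step of the construction so that the final object genuinely satisfies Definition \ref{def:pathSolFiniteDim}. All of this is by now standard for the stochastic compressible Navier--Stokes system \cite{breit2018stoch}, and the only genuinely new ingredient is the harmless extra elliptic solve for $V$, whose contribution $\vartheta\varrho\nabla V$ to the momentum balance is controlled by \eqref{esssupDensity}--\eqref{esssupCarge}.
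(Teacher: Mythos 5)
Your proposal is essentially correct in outline but takes a genuinely different route from the paper. You propose a direct construction: freeze a velocity $\mathbf{v}$, solve the continuity equation classically, solve the Poisson equation by elliptic regularity, solve the momentum equation as a finite-dimensional SDE with locally Lipschitz coefficients, and close the loop by a Banach contraction on a short time interval. The paper instead avoids any fixed-point argument at this stage: it uses a semi-implicit time discretization (freezing $\varrho$, $\mathbf{u}$, $V$ and the noise coefficients at the nodes $nh$, so that on each sub-interval the momentum equation reduces to the explicit formula \eqref{velEq} via the operator $\mathcal{M}^{-1}[\varrho]$), establishes tightness of the laws of $(\varrho_h,\mathbf{u}_h,V_h,W)$ on H\"older spaces as $h\to0$, extracts a martingale solution by Prokhorov--Skorokhod (Proposition \ref{prop:martSolFinite}), proves pathwise uniqueness separately (Proposition \ref{prop:uniqueMarting}), and upgrades to a pathwise solution by the Gy\"ongy--Krylov characterization. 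Your route, if it works, is more elementary (no Skorokhod representation, and uniqueness comes for free from the contraction); the paper's route is more robust and rehearses exactly the compactness machinery needed in the later infinite-dimensional layers. The difference estimates you would need for the contraction are precisely those the paper derives for uniqueness, namely \eqref{contEq5}, \eqref{elecField5} and \eqref{momEq5}, so that part of your plan is sound.

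One point needs repair before your argument is complete: the fixed point cannot be performed ``on each fixed realization,'' as you write in your opening paragraph. The stochastic integral $\sum_k\int\Pi_N[\varrho\Pi_N\mathbf{g}_{k,\varepsilon}]\,\mathrm{d}\beta_k$ is not defined pathwise, so the map $\mathbf{v}\mapsto\mathcal{T}(\mathbf{v})$ is not an $\omega$-by-$\omega$ map on $C([0,T];H_N)$. The deterministic stages ($\mathbf{v}\mapsto\varrho[\mathbf{v}]\mapsto V[\mathbf{v}]$) can indeed be done per realization, but the momentum solve and the final contraction must be set up in a Banach space of $(\mathscr{F}_t)$-adapted processes, e.g.\ $L^2(\Omega;C([0,\tau];H_N))$, with the contraction constant coming from the Burkholder--Davis--Gundy inequality (which contributes a factor $\sqrt{\tau}$) together with the deterministic Lipschitz bounds furnished by the cut-offs $\chi_{\mathbf{u}_R}$, $\mathbf{g}_{k,\varepsilon}$ and the lower bound on $\varrho$ via \eqref{holderInvertibleMap}. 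You must also check that the stopping-time localization needed to exploit the merely local Lipschitz continuity of the quadratic convective term (only one factor of $\mathbf{u}$ is truncated) is compatible with the expectation norm; this is exactly the role of the stopping times $\tau_n$ in the paper's uniqueness proof. With the fixed point recast in this form, your argument goes through and yields the same conclusion as Theorem \ref{thm:mainFirstLayer}.
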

The proof of Theorem \ref{thm:mainFirstLayer} requires two main ingredients: a stochastically weak solution and pathwise uniqueness. The precise definition of the former is given below.
\begin{definition}
\label{def:martSolFiniteDim}
If $\Lambda$ is a Borel probability measure on $C^{2+\nu}(\mathbb{T}^3) \times H_N \times C^{\nu}(\mathbb{T}^3)$. We say that 
\begin{align}
\big[(\Omega ,\mathscr{F} ,(\mathscr{F} _t)_{t\geq0},\mathbb{P} );\varrho , \mathbf{u} , V , W   \big]
\end{align}
is a \textit{martingale solution} of \eqref{contEq1}--\eqref{elecField1} with initial law $\Lambda$ provided
\begin{enumerate}
\item $(\Omega,\mathscr{F},(\mathscr{F}_t),\mathbb{P})$ is a stochastic basis with a complete right-continuous filtration;
\item $W$ is a $(\mathscr{F}_t)$-cylindrical Wiener process;
\item the density $\varrho \in C \big( [0,T]; C^{2+\nu}_x\big) $, it is $(\mathscr{F}_t)$-adapted  and 
$\varrho>0$ $\mathbb{P}$-a.s.;
\item the velocity field $\mathbf{u} \in C \big( [0,T]; H_N\big) $ $\mathbb{P}$-a.s. and it is $(\mathscr{F}_t)$-adapted;
\item the electric field $V \in C \big( [0,T]; C^{2+\nu}_x\big)$ $\mathbb{P}$-a.s. and it is $(\mathscr{F}_t)$-adapted;
\item there exists $\mathscr{F}_0$-measurable random variables $(\varrho_0, \mathbf{u}_0,f)= (\varrho(0), \varrho\mathbf{u}(0),f)$ such that $\Lambda = \mathbb{P}\circ (\varrho_0,  \mathbf{u}_0, f)^{-1}$;
\item we have that $\varrho(0)=\varrho_0$ $\mathbb{P}$-a.s. and \eqref{contEq1} holds $\mathbb{P}$-a.s. for a.e.  $(t,x) \in(0,T)\times \mathbb{T}^3$;
\item equation \eqref{elecField1} holds $\mathbb{P}$-a.s. for a.e.  $(t,x) \in(0,T)\times \mathbb{T}^3$;
\item for all $\psi \in C^\infty_c ([0,T))$ and $\bm{\phi} \in H_N$, the following
\begin{equation}
\begin{aligned}
&-\int_0^T \partial_t \psi \int_{{\mathbb{T}^3}} \varrho  \mathbf{u} (t) \cdot \bm{\phi} \, \mathrm{d}x    \mathrm{d}t
=
\psi(0)  \int_{{\mathbb{T}^3}} \varrho _0 \mathbf{u} _0 \cdot \bm{\phi} \, \mathrm{d}x  
+  \int_0^T \psi \int_{{\mathbb{T}^3}}  \varrho  (\chi_{\mathbf{u}_R}  \mathbf{u}) \otimes \mathbf{u} : \nabla \bm{\phi} \, \mathrm{d}x  \mathrm{d}t 
\\&-
 \nu^S\int_0^T \psi \int_{{\mathbb{T}^3}} \nabla \mathbf{u} : \nabla \bm{\phi} \, \mathrm{d}x  \mathrm{d}t 
-(\nu^B+ \nu^S )\int_0^T \psi \int_{{\mathbb{T}^3}} \mathrm{div}\,\mathbf{u} \, \mathrm{div}\, \bm{\phi} \, \mathrm{d}x  \mathrm{d}t    
\\&+   
\int_0^T \psi \int_{{\mathbb{T}^3}}  \chi_{\mathbf{u}_R}  p_{\delta}^{ \Gamma}\, \mathrm{div} \, \bm{\phi} \, \mathrm{d}x  \mathrm{d}t
+\int_0^T \psi \int_{{\mathbb{T}^3}} \varepsilon\varrho\mathbf{u}    \Delta\bm{\phi} \, \mathrm{d}x   \mathrm{d}t
+\int_0^T \psi \int_{{\mathbb{T}^3}} \vartheta\varrho  \nabla V  \cdot \bm{\phi} \, \mathrm{d}x    \mathrm{d}t
\\&+
\int_0^T \psi \sum_{k\in \mathbb{N}} \int_{{\mathbb{T}^3}}  \Pi_N\big[ \varrho\Pi_N \mathbf{g}_{k,\varepsilon}(\varrho , f, \varrho \mathbf{u} ) \big] \cdot \bm{\phi}  \, \mathrm{d}x \mathrm{d}\beta_k  
\end{aligned}
\end{equation}
hold $\mathbb{P}$-a.s.
\end{enumerate}
\end{definition} 
The following result is reminiscent of \cite[Theorem 4.1.3]{breit2018stoch} and it is establishes the existence of a stochastically weak solution in the sense of Definition \ref{def:martSolFiniteDim} with further uniform bounds on the observables.
\begin{proposition}
\label{prop:martSolFinite}
If $\Lambda$ is a Borel probability measure on $C^{2+\nu}(\mathbb{T}^3) \times H_N \times C^{\nu}(\mathbb{T}^3)$, $\nu \in (0,1)$ such that
\begin{align}
\label{firstLayerInitLawA}
\Lambda \Big\{  \varrho \geq \underline{\varrho}>0, \quad \Vert \varrho\Vert_{C^{2+\nu}_x} \leq \overline{\varrho}, \quad  \Vert f \Vert_{C^{\nu}_x} \leq \overline{f} \Big\} =1,
\end{align}
and
\begin{align}
\label{firstLayerInitLawB}
\int\Vert \mathbf{u} \Vert_{H_N}^p \, \mathrm{d}\Lambda \leq \overline{\mathbf{u}}
\end{align}
holds for some $p\geq1$ and some deterministic constants $\underline{\varrho},\overline{\varrho}, \overline{f}, \overline{\mathbf{u}} $. Then there exists a martingale solution of \eqref{contEq1}--\eqref{elecField1} in the sense of Definition \ref{def:martSolFiniteDim} such that the estimates
\begin{align}
&\esssup_{t\in [0,T]} \bigg(\Vert \varrho(t) \Vert_{C^{2+\nu}_x}
+
\Vert \partial_t\varrho(t) \Vert_{C^{\nu}_x} 
+
\Vert \varrho^{-1}(t) \Vert_{C_x}
 \bigg)\lesssim 1, \quad \mathbb{P}\text{-a.s.}
\label{esssupDensity1}\\
&\esssup_{t\in [0,T]} \bigg(\Vert V(t) \Vert_{C^{2+\nu}_x}
+
\Vert \partial_t V(t) \Vert_{C^{2+\nu}_x} 
 \bigg)\lesssim 1, \quad \mathbb{P}\text{-a.s.} \label{esssupCarge1}\\
&
\mathbb{E}
\esssup_{t\in [0,T]} \Vert \mathbf{u}(t) \Vert_{H_N}^p 
+
\mathbb{E}
 \Vert \mathbf{u}(t) \Vert_{C^\beta(0,T;H_N)}^p
\lesssim 1 + 
\mathbb{E} \Vert \mathbf{u}_0 \Vert_{H_N}^p \label{esssupVelocity1}
\end{align}
holds whenever $p>2$ and $\beta\in (0, \frac{1}{2}-\frac{1}{p})$.
\end{proposition}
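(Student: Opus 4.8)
The plan is to follow the Galerkin-type scheme of \cite[Theorem~4.1.3]{breit2018stoch}, the only genuinely new feature being the Poisson equation, which is handled by elliptic regularity. The idea is to decouple the system by viewing the density and the electric field as functionals of the velocity path and recasting the projected momentum balance \eqref{momEq1} as a stochastic fixed-point equation on the finite-dimensional space $H_N$. First, fix an $(\mathscr{F}_t)$-adapted path $\mathbf{u}\in C([0,T];H_N)$; since $\chi_{\mathbf{u}_R}\Vert\mathbf{u}\Vert_{H_N}\leq R+1$ by \eqref{cuttOff}--\eqref{chiUr} and all norms on the finite-dimensional space $H_N$ are comparable, the transport coefficient $\chi_{\mathbf{u}_R}\mathbf{u}$ is bounded, together with all its spatial derivatives, by a deterministic constant depending only on $R$ and $N$. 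Standard linear parabolic theory applied to \eqref{contEq1} then yields a unique $\varrho=\varrho[\mathbf{u}]\in C([0,T];C^{2+\nu}_x)$ with $\partial_t\varrho\in C([0,T];C^\nu_x)$, the maximum principle gives $\varrho\geq\underline{\varrho}\,e^{-cT}>0$, and all of these bounds are deterministic; this is \eqref{esssupDensity1}. A Gr\"onwall argument on the equation solved by $\varrho[\mathbf{u}^1]-\varrho[\mathbf{u}^2]$ shows that $\mathbf{u}\mapsto\varrho[\mathbf{u}]$ is locally Lipschitz from $C([0,t];H_N)$ into $C([0,t];C^1_x)$ for every $t\leq T$. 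Solving \eqref{elecField1} by standard elliptic theory gives $V=V[\varrho]\in C([0,T];C^{2+\nu}_x)$, with $\partial_t V$ determined by $\pm\Delta\partial_t V=\partial_t\varrho$; this yields \eqref{esssupCarge1} as well as the (Lipschitz) dependence of $V$ on $\varrho$.

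Next, since $\varrho$ is bounded and bounded below, the operator $\mathcal{M}[\varrho]\colon H_N\to H_N$, $\mathcal{M}[\varrho]\mathbf{v}=\Pi_N(\varrho\mathbf{v})$, is a symmetric isomorphism with $\Vert\mathcal{M}[\varrho]^{-1}\Vert$ controlled in terms of $\underline{\varrho}$ and $T$. Integrating \eqref{momEq1} in time, testing with elements of $H_N$, and inverting $\mathcal{M}$, the momentum balance becomes equivalent — for an adapted $\mathbf{u}$ with $\varrho=\varrho[\mathbf{u}]$, $V=V[\varrho[\mathbf{u}]]$ — to the fixed-point equation
\[
\mathbf{u}(t)=\mathcal{M}[\varrho[\mathbf{u}]](t)^{-1}\Big(\Pi_N(\varrho_0\mathbf{u}_0)+\int_0^t\mathcal{N}(\mathbf{u})\,\mathrm{d}s+\int_0^t\mathcal{G}(\mathbf{u})\,\mathrm{d}W\Big),
\]
where $\mathcal{N}(\mathbf{u})\in H_N$ gathers the convective, viscous, artificial-viscosity, pressure and electric-field contributions — each, by the previous paragraph, an adapted functional of the path $\mathbf{u}$ with at most linear growth and locally Lipschitz in the sup-norm — while $\mathcal{G}(\mathbf{u})e_k=\Pi_N[\varrho\,\Pi_N\mathbf{g}_{k,\varepsilon}(\varrho,f,\varrho\mathbf{u})]$, which by \eqref{stochCoeffBound1Exis0a}--\eqref{stochsummableConsta} is \emph{bounded} in $L_2(\mathfrak{U};H_N)$ and likewise locally Lipschitz. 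Realising $\Lambda$ on a stochastic basis that also carries a cylindrical Wiener process $W$ (with the usual augmentation of the natural filtration), a Banach fixed-point argument in the adapted subspace of $C([0,\tau];H_N)$ with an exponentially weighted norm — localised in a stopping time and then iterated — produces a unique local adapted solution; the a priori bound below rules out blow-up, so the solution is global, and all items of Definition~\ref{def:martSolFiniteDim} are then readily checked, so that $[(\Omega,\mathscr{F},(\mathscr{F}_t),\mathbb{P});\varrho,\mathbf{u},V,W]$ is a martingale solution.

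It remains to establish the uniform bounds. Applying It\^o's formula to $t\mapsto\Vert\mathbf{u}(t)\Vert_{H_N}^p$, the linear growth of $\mathcal{N}$ and the boundedness of $\mathcal{G}$ give $\mathrm{d}\Vert\mathbf{u}\Vert_{H_N}^p\leq C(1+\Vert\mathbf{u}\Vert_{H_N}^p)\,\mathrm{d}t+\mathrm{d}M_t$ with $M$ a martingale, whence the Burkholder--Davis--Gundy inequality together with Gr\"onwall's lemma yields $\mathbb{E}\,\esssup_{t\in[0,T]}\Vert\mathbf{u}(t)\Vert_{H_N}^p\lesssim 1+\mathbb{E}\Vert\mathbf{u}_0\Vert_{H_N}^p$. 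For the H\"older norm, the drift part of $\mathbf{u}(t)-\mathbf{u}(s)$ is Lipschitz in $t$ by the moment bound just obtained, while $\mathbb{E}\,\big\Vert\int_s^t\mathcal{M}[\varrho]^{-1}\mathcal{G}(\mathbf{u})\,\mathrm{d}W\big\Vert_{H_N}^p\lesssim|t-s|^{p/2}$ by Burkholder--Davis--Gundy; Kolmogorov's continuity criterion then gives $\mathbf{u}\in C^\beta([0,T];H_N)$ with $\mathbb{E}\Vert\mathbf{u}\Vert_{C^\beta(0,T;H_N)}^p\lesssim 1+\mathbb{E}\Vert\mathbf{u}_0\Vert_{H_N}^p$ for every $\beta<\frac{1}{2}-\frac{1}{p}$, provided $p>2$. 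Combined with the deterministic bounds for $\varrho$ and $V$ from the first step, this establishes \eqref{esssupDensity1}--\eqref{esssupVelocity1}.

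The only genuinely delicate point is the fixed-point step: one must verify that the solution operator of the parabolic continuity equation — and, through it, of the Poisson equation — depends on the velocity path in an \emph{adapted} and locally Lipschitz way, with constants depending only on the deterministic data (and the size of the path), so that the resulting stochastic functional fixed-point equation for $\mathbf{u}$ falls within the scope of the contraction principle and is then globalised by the moment estimate. The underlying parabolic and elliptic regularity estimates, the maximum principle, and the It\^o/Burkholder--Davis--Gundy/Gr\"onwall computations are routine.
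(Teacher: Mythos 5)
Your route is genuinely different from the paper's. The paper does not attempt a fixed point at all: it constructs the martingale solution by the time--discretization (coefficient--freezing) scheme \eqref{contEq2}--\eqref{elecField2} of \cite[Section 4.1]{breit2018stoch}, derives bounds uniform in the step size $h$, proves tightness of the joint laws, and passes to the limit $h\to0$ via Prokhorov/Skorokhod (Lemma \ref{lem:Jakow}); the output is only probabilistically weak, and pathwise uniqueness plus Gy\"ongy--Krylov are invoked \emph{separately} to upgrade to a pathwise solution in Theorem \ref{thm:mainFirstLayer}. Your contraction argument, if completed, would produce the pathwise solution directly on a prescribed basis, subsuming both Proposition \ref{prop:martSolFinite} and the uniqueness step and making the Gy\"ongy--Krylov machinery unnecessary; the price is that all the Lipschitz estimates that the paper only needs for uniqueness (essentially \eqref{contEq5}, \eqref{elecField5} and \eqref{holderInvertibleMap}) must be established up front and fed into the iteration. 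Your treatment of the new ingredient, the Poisson equation, matches the paper's: $V$ is a linear, hence Lipschitz, elliptic functional of $\varrho$, and $\partial_tV$ inherits the regularity of $\partial_t\varrho$.

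There is, however, one step that does not work as written. You pose the fixed point for
\[
\mathbf{u}(t)=\mathcal{M}[\varrho[\mathbf{u}](t)]^{-1}\Big(\Pi_N(\varrho_0\mathbf{u}_0)+\int_0^t\mathcal{N}(\mathbf{u})\,\mathrm{d}s+\int_0^t\mathcal{G}(\mathbf{u})\,\mathrm{d}W\Big),
\]
with the random, path-dependent operator $\mathcal{M}[\varrho[\mathbf{u}](t)]^{-1}$ sitting \emph{outside} the stochastic integral. A pathwise ($\omega$-by-$\omega$) contraction is not available because $\int_0^t\mathcal{G}(\mathbf{u})\,\mathrm{d}W$ is not a pathwise-defined functional of $\mathbf{u}$; and in $L^2\big(\Omega;C([0,\tau];H_N)\big)$ the difference of two iterates produces the term
\[
\big(\mathcal{M}[\varrho^1(t)]^{-1}-\mathcal{M}[\varrho^2(t)]^{-1}\big)\int_0^t\mathcal{G}(\mathbf{u}^1)\,\mathrm{d}W,
\]
whose second-moment estimate requires controlling $\mathbb{E}\big[\sup_t\Vert\mathbf{u}^1-\mathbf{u}^2\Vert^2_{H_N}\sup_t\Vert\int_0^t\mathcal{G}(\mathbf{u}^1)\,\mathrm{d}W\Vert^2\big]$; Cauchy--Schwarz then demands fourth moments of the increment, which is not the norm in which you are contracting. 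The repair is to differentiate the product, i.e.\ to write the equation actually satisfied by $\mathbf{u}$ — precisely the form \eqref{momEq4a} that the paper derives for the uniqueness proof, $\mathrm{d}\mathbf{u}=B(t,\mathbf{u}_{\cdot})\,\mathrm{d}t+\Sigma(t,\mathbf{u}_{\cdot})\,\mathrm{d}W$ with $B,\Sigma$ non-anticipative, locally Lipschitz functionals of the path of $\mathbf{u}$ through $\varrho[\mathbf{u}]$ and $\partial_t\varrho[\mathbf{u}]$ — and run the Picard iteration (with stopping-time localisation and the moment bound for globalisation) on that equation instead. Note also that in this reformulation the small factor in the contraction for the drift comes from the time integral, not from the Lipschitz constant of $\mathbf{u}\mapsto\varrho[\mathbf{u}]$ in $C^{2+\nu}_x$, which by \eqref{contEq5} does \emph{not} degenerate as $\tau\to0$; only the weaker $L^1_x$-norm of the density difference, which is all that \eqref{holderInvertibleMap} requires, gains a factor $\sqrt{\tau}$. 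With these modifications your argument goes through; without them the contraction estimate, as stated, cannot be closed.
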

In order to avoid repeating the results in \cite[Section 4.1.1--4.1.2.3]{breit2018stoch}, we only give the main ideas to solving Proposition \ref{prop:martSolFinite}. Just as was done in the cited book, we construct this weak solution via an iterative scheme exactly as was done in \cite[Section 4.1.1--4.1.2.3]{breit2018stoch}. So we consider the data
\begin{align}
\label{arbitraryData}
\varrho(t)=\varrho_0, \quad \mathbf{u}(t) = \mathbf{u}_0, \quad t\leq 0
\end{align}
the following invertible linear map
\begin{align}
\label{invertibleLinearMap}
\mathcal{M}[\varrho] :H_N \rightarrow H_N
\end{align}
satisfying
\begin{align*}
\mathcal{M}[\varrho](\mathbf{u}) =\Pi_N(\varrho\mathbf{u}), \quad
\mathcal{M}^{-1}[\varrho]\big(\Pi_N[\varrho\mathbf{u}] \big) =\mathbf{u}
\end{align*}
for any $\mathbf{u}\in H_N$ in the distributional sense, that is, the following
\begin{align*}
\int_{\mathbb{T}^3}\mathcal{M}[\varrho](\mathbf{u}) \cdot \bm{\psi} \, \mathrm{d}x =\int_{\mathbb{T}^3}(\varrho\mathbf{u})  \cdot \bm{\psi} \, \mathrm{d}x
\\
\int_{\mathbb{T}^3}\mathcal{M}^{-1}[\varrho](\varrho\mathbf{u}) \cdot \bm{\psi} \, \mathrm{d}x =\int_{\mathbb{T}^3}\mathbf{u}  \cdot \bm{\psi} \, \mathrm{d}x
\end{align*}
holds for all $\bm{\psi} \in H_N$.  See  \cite{feireisl2001existence} for further properties of this map but in particular,
\begin{align*}
\big\Vert  \mathcal{M}^{-1}[\varrho] \big\Vert_{\mathcal{L}(H_N,H_N^*)} \leq \Big[  \inf_{x\in \mathbb{T}^3} \varrho(x) \Big]^{-1}
\end{align*}
and
\begin{align}
\label{holderInvertibleMap}
\big\Vert  \mathcal{M}^{-1}[\varrho_1]
-
\mathcal{M}^{-1}[\varrho_2]
 \big\Vert_{\mathcal{L}(H_N,H_N^*)} \leq \Vert \varrho_1 -\varrho_2 \Vert_{L^1_x} 
\end{align}
holds for any $\varrho_1, \varrho_2 \in \mathbb{R}_{>0}$.
Then for integers $n \in \{0, \lfloor h^{-1}T \rfloor\}$, we examine the following system
\begin{align}
&\partial_t \varrho  + \mathrm{div}\big(\varrho (\chi_{\mathbf{u}_R(nh)}  \mathbf{u}(nh)) \big)  =\varepsilon \Delta \varrho(t), 
\label{contEq2}
\\
&\mathrm{d}\Pi_N(\varrho  \mathbf{u} ) +
\Pi_N 
\big[
\mathrm{div} \big(\varrho(t)  (\chi_{\mathbf{u}_R(nh)}  \mathbf{u}(nh))  \otimes \mathbf{u}(nh) \big) + \chi_{\mathbf{u}_R(nh)}  \nabla   p_{\delta}^{ \Gamma}(\varrho(t)) \big]\, \mathrm{d}t  \nonumber
\\&\qquad
=
\Pi_N
\big[\varepsilon\Delta(\varrho(t)\mathbf{u}(nh))+ \nu^S \Delta \mathbf{u}(nh) +(\nu^B + \nu^S)\nabla \mathrm{div} \mathbf{u}(nh)   \nonumber
\\&\qquad
+ \vartheta \varrho(t)  \nabla V(nh)  \big] \, \mathrm{d}t   
+ \sum_{k\in \mathbb{N}}\Pi_N\big[ \varrho(t)\Pi_N   \mathbf{g}_{k,\varepsilon}(\varrho(nh) , f(nh), (\varrho \mathbf{u})(nh) ) \big] \, \mathrm{d}\beta_k,
\label{momEq2}
\\
&\pm \Delta V  = \varrho(t)  - f(t), \label{elecField2}
\end{align}
for $t \in [ nh, (n+1)h)$ where
\begin{align*}
\varrho(nh) =\varrho(nh-) := \lim_{s\nearrow nh} \varrho(s), \quad
\mathbf{u}(nh) =\mathbf{u}(nh-) := \lim_{s\nearrow nh} \mathbf{u}(s).
\end{align*}
Since $\mathbf{u}(nh) \in H_N$ is frozen in time and smooth in space, by Proposition \ref{maximumPrin}, Equation \eqref{contEq2} has a unique classical solution
for any initial data $\varrho(nh)$ and by \eqref{maxPrinIneq}, this solution  is strictly positive so long as the initial data $\varrho(nh)$ is. This unique solution of \eqref{contEq2}, uniquely define a solution $V$ of \eqref{elecField2} since $f\in C^\nu_x$ is a given. 
\\
Now since the additional terms $V$ and $f$  in \eqref{momEq2} are frozen in time, the analysis in \cite[Section 4.1.1]{breit2018stoch} holds true in our case. In particular, by rewriting \eqref{momEq2} as follows
\begin{equation}
\begin{aligned}
\label{velEq}
&\mathbf{u}(t)
=
\mathcal{M}^{-1}[\varrho(t)](\varrho \mathbf{u})(nh)
-
\mathcal{M}^{-1}[\varrho(t)]
\int_{nh}^t \Pi_N\chi_{\mathbf{u}_R(nh)}
\nabla   p_{\delta}^{ \Gamma}(\varrho(s))\, \mathrm{d}s
\\&
-
\mathcal{M}^{-1}[\varrho(t)]
\int_{nh}^t
\Pi_N 
\Big[
\mathrm{div} \Big(\varrho(s)(  \chi_{\mathbf{u}_R(nh)} \mathbf{u}(nh))  \otimes \mathbf{u}(nh) \Big)   \Big]\, \mathrm{d}s \\&+
\mathcal{M}^{-1}[\varrho(t)]
\int_{nh}^t
\Pi_N
\Big[ \varepsilon\Delta \big(\varrho(s)\mathbf{u}(nh) \big)+ \nu^S \Delta \mathbf{u}(nh)  
+(\nu^B + \nu^S)\nabla \mathrm{div} \mathbf{u}(nh)   
\\
&+ \vartheta \varrho(s)  \nabla V(nh)  \Big] \, \mathrm{d}s + 
\mathcal{M}^{-1}[\varrho(t)]
\int_{nh}^t
\sum_{k\in \mathbb{N}}\Pi_N\big[\varrho(t) \Pi_N  \mathbf{g}_{k,\varepsilon} \big(\varrho(nh) , f(nh), (\varrho \mathbf{u})(nh) \big)\big] \, \mathrm{d}\beta_k , 
\end{aligned}
\end{equation}
then given that there exist a unique solutions to \eqref{contEq2} and \eqref{elecField2}, we obtain a unique solution for  \eqref{velEq} having the data \eqref{arbitraryData} and indeed any initial data $\varrho(nh)$. 
We can therefore deduce that \eqref{arbitraryData} and \eqref{contEq2}--\eqref{elecField2} uniquely generates  solutions $\varrho, \mathbf{u}$ and  $V$ which are $(\mathfrak{F}_t)$-progressively measurable and for any $n \in \mathbb{Z}_{>0}$, the following
\begin{align*}
\varrho>0, \quad\varrho \in C([0,T]; C^{2+\nu}_x), \quad \mathbf{u} \in C([0,T]; H_N), \quad  V\in C([0,T]; C^{2+\nu}_x),
\end{align*}
holds $\mathbb{P}$-a.s. It goes without saying but since $V$ solves an elliptic equation, there is natural gain of two spatial derivatives for $V$ given the regularity of $f$. Now using the equivalence of norms in the finite-dimensional space $H_N$ and the fact that $\varrho_0 \in C^{2+\nu}(\mathbb{T}^3)$, we deduce from \eqref{contEq2} that
\begin{align*}
\partial_t\varrho \in C([0,T]; C^{\nu}_x)
\end{align*}
holds $\mathbb{P}$-a.s. H\"older continuity in time of $\mathbf{u}$ follows the same argument as in \cite[Section 4.1.2.3]{breit2018stoch}.
Finally, having gained  H\"older regularity for the density and its time derivative and given that $f\in C^\nu(\mathbb{T}^3)$, standard regularity theorem for the Poisson equation, see for instant \cite[Chapters 2 and 4]{gilbarg2001elliptic},  yields \eqref{esssupCarge1}. In order words, given $f$, $V$ is uniquely determined by the density.
\subsection{Compactness}
In order to explore compactness, we denote by $(\varrho_h, \mathbf{u}_h, V_h, W)$,  the unique solution of \eqref{contEq2}--\eqref{elecField2} given in the summary above and define the following spaces 
\begin{align*}
\chi_\varrho = C^{\nu_0}\left([0,T];C^{2+\nu_0}_x\right), 
&\, \quad
\chi_{ \mathbf{u} } = C^{\beta_0}\left([0,T];H_N\right),
\\
\chi_V = C^{\nu_0}\left([0,T];C^{2+\nu_0}_x\right),
&\, \quad
\chi_W= C\left([0,T];\mathfrak{U}_0\right) ,
\end{align*}
where $\nu_0 \in (0,\nu)$ and $\beta_0 \in (0, \beta)$. Here $\nu$ and $\beta$ are the H\"older exponents in Proposition \ref{prop:martSolFinite}. We now let $\mu_{\varrho_h}$, $\mu_{\mathbf{u}_h}$,  $\mu_{V_h}$ and $\mu_{W}$ be the respective laws of  $\mathbf{u}_h$, $\varrho_h$, $V_h$ and $W$ on the respective spaces $\chi_\varrho$,  $\chi_{ \mathbf{u} }$, $\chi_V$ and $\chi_W$. Furthermore, we set $\mu_h$ as the joint law of $ \varrho_h, \mathbf{u}_h, V_h$ and $W$ on the space $\chi = \chi_{ \mathbf{u} } \times \chi_\varrho \times \chi_V \times \chi_W$.
\begin{lemma}
The set $\{\mu_h :  h\in (0,1) \}$  is tight on $\chi$.
\end{lemma}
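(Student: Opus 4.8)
\emph{Strategy.} The plan is to prove tightness of each of the four marginal laws $\mu_{\varrho_h}$, $\mu_{\mathbf{u}_h}$, $\mu_{V_h}$ and $\mu_W$ separately and then to combine them: since $\chi_\varrho$, $\chi_{\mathbf{u}}$, $\chi_V$ and $\chi_W$ are Polish, given $\epsilon>0$ one selects compact sets $K_\varrho\subset\chi_\varrho$, $K_{\mathbf{u}}\subset\chi_{\mathbf{u}}$, $K_V\subset\chi_V$ and $K_W\subset\chi_W$ whose complements carry mass $<\epsilon/4$ under the respective marginal, uniformly in $h$; then $K:=K_{\mathbf{u}}\times K_\varrho\times K_V\times K_W$ is compact in $\chi$ and $\mu_h(\chi\setminus K)<\epsilon$ for all $h\in(0,1)$. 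The only ingredients are the uniform-in-$h$ estimates \eqref{esssupDensity1}--\eqref{esssupVelocity1}, which hold uniformly in $h$ because the initial law $\Lambda$ is fixed and obeys \eqref{firstLayerInitLawA}--\eqref{firstLayerInitLawB}, so in particular $\mathbb{E}\Vert\mathbf{u}_0\Vert_{H_N}^p\le\overline{\mathbf{u}}$.

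\emph{Density and electric field.} Here no Chebyshev step is needed. From \eqref{esssupDensity1}, $\mathbb{P}$-a.s.\ and with a \emph{deterministic} constant, $(\varrho_h)$ is bounded in $L^\infty(0,T;C^{2+\nu}_x)$ while $(\partial_t\varrho_h)$ is bounded in $L^\infty(0,T;C^{\nu}_x)$; interpolating these two bounds gives a uniform bound on $\Vert\varrho_h\Vert_{C^{\alpha_0}([0,T];C^{2+\nu_0}_x)}$ for a suitable $\alpha_0\in(\nu_0,1)$ when $\nu_0$ is taken small. The set of functions bounded by a fixed constant both in $L^\infty(0,T;C^{2+\nu}_x)$ and in $C^{\alpha_0}([0,T];C^{2+\nu_0}_x)$ is, by Arzel\`a--Ascoli and the compact embedding $C^{2+\nu}_x\hookrightarrow\hookrightarrow C^{2+\nu_0}_x$ on $\mathbb{T}^3$, relatively compact in $\chi_\varrho=C^{\nu_0}([0,T];C^{2+\nu_0}_x)$, so its closure $K_\varrho$ is a compact subset of $\chi_\varrho$ carrying $\mu_{\varrho_h}$ for \emph{every} $h$, i.e.\ $\mu_{\varrho_h}(\chi_\varrho\setminus K_\varrho)=0$. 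The electric field is treated identically from \eqref{esssupCarge1}, which bounds $V_h$ \emph{and} $\partial_t V_h$ in $L^\infty(0,T;C^{2+\nu}_x)$, so that $(V_h)$ is bounded in $L^\infty(0,T;C^{2+\nu}_x)$ and Lipschitz in time with values in $C^{2+\nu}_x$; the same Arzel\`a--Ascoli argument supports $\mu_{V_h}$ on a fixed compact subset $K_V$ of $\chi_V$.

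\emph{Velocity and Wiener process.} For the velocity only the stochastic bound \eqref{esssupVelocity1} is available: $\mathbb{E}\Vert\mathbf{u}_h\Vert_{C^\beta(0,T;H_N)}^p\lesssim1+\overline{\mathbf{u}}$ with $p>2$ and $\beta\in(0,\tfrac12-\tfrac1p)$, uniformly in $h$. Since $H_N$ is finite-dimensional, bounded sets of $C^\beta([0,T];H_N)$ are, by Arzel\`a--Ascoli, relatively compact in $\chi_{\mathbf{u}}=C^{\beta_0}([0,T];H_N)$ for $\beta_0<\beta$; taking $K_{\mathbf{u}}$ to be the closure in $\chi_{\mathbf{u}}$ of the closed $C^\beta$-ball of radius $R$, Chebyshev's inequality gives $\mu_{\mathbf{u}_h}(\chi_{\mathbf{u}}\setminus K_{\mathbf{u}})\le R^{-p}\,\mathbb{E}\Vert\mathbf{u}_h\Vert_{C^\beta(0,T;H_N)}^p\lesssim R^{-p}<\epsilon/4$ for $R$ large, uniformly in $h$. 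Finally $\mu_W$ does not depend on $h$ and is a Radon probability measure on the Polish space $\chi_W=C([0,T];\mathfrak{U}_0)$, hence tight. Assembling the four compact sets as in the first paragraph completes the proof.

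\emph{Main difficulty.} There is no real obstacle here; this is the routine compactness step preparing the Jakubowski--Skorokhod argument. The only points deserving a little care are that the velocity estimate is only an $L^p(\Omega)$-bound and not an a.s.\ bound, which forces the Chebyshev step (trivial since $H_N$ is finite-dimensional), and the space--time interpolation upgrading the $L^\infty_tC^{2+\nu}_x$ and Lipschitz-in-time $C^\nu_x$ bounds on $\varrho_h$ to a H\"older-in-time bound with values in $C^{2+\nu_0}_x$; both are entirely standard.
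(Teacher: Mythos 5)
Your proposal is correct and follows essentially the same route as the paper: tightness of each marginal (a.s.\ deterministic bounds plus compact H\"older/Arzel\`a--Ascoli embeddings for $\varrho_h$ and $V_h$, the moment bound \eqref{esssupVelocity1} plus Chebyshev and the compact embedding $C^\beta\hookrightarrow C^{\beta_0}$ for $\mathbf{u}_h$, and Radon-on-Polish for $W$), assembled into a product of compacts. The only difference is presentational — the paper delegates the density, velocity and Wiener components to \cite[Proposition 4.1.5]{breit2018stoch} and only writes out the argument for $V_h$, whereas you spell out all four marginals; your observation that the a.s.\ bounds make the Chebyshev step superfluous for $\varrho_h$ and $V_h$ is a harmless refinement of what the paper does.
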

\begin{proof}
To proof the above lemma, we first note the following:
\begin{itemize}
\item From \eqref{esssupDensity}, the set $\{\mu_{\varrho_h}:  h\in (0,1) \}$ is tight on $\chi_\varrho$.
\item From \eqref{esssupVelocity}, the set $\{\mu_{\mathbf{u}_h}:  h\in (0,1) \}$ is tight on $\chi_{\mathbf{u}}$.
\item The set $\{ \mu_W\}$ is tight on $\chi_W$ since its a Radon measure on a Polish space.
\end{itemize}
For further details, please refer to \cite[Proposition 4.1.5.]{breit2018stoch}.
\\
For $V$, we note from \eqref{esssupCarge1} that since $V\in W^{1,\infty}([0,T]; C^{2+\nu}_x)$, it has a Lipschitz continuous representation (not relabelled) so in particular, it follows from \eqref{esssupCarge1} that
\begin{align*}
V \in C^\nu\big([0,T]; C^{2+\nu}_x \big) \cap  W^{1,\infty}\big([0,T]; C^{2+\nu}_x \big)
\end{align*}
$\mathbb{P}$-a.s. It therefore follow from the compact embedding
\begin{align*}
C^\nu\big([0,T]; C^{2+\nu}_x \big) \cap  W^{1,\infty}\big(0,T; C^{2+\nu}_x\big)
\hookrightarrow
C^{\nu_0}\big([0,T]; C^{2+\nu_0}_x \big) 
\end{align*}
where $\nu_0\in(0,\nu)$ that the set
\begin{align*}
A_L &:= \Big\{ V\in C^\nu\big([0,T]; C^{2+\nu}_x \big) \cap  W^{1,\infty}\big([0,T]; C^{2+\nu}_x \big)
\\&
\, :\,
\Vert V(t) \Vert_{C([0,T]C^{2+\nu}_x)}
+
\Vert  V(t) \Vert_{W^{1,\infty}([0,T];C^{2+\nu}_x) }
\leq L
 \Big\}
\end{align*}
is relatively compact in $\chi_V$. Finally, by using Chebyshev's inequality, we deduce that the measure of the complement of the set above
\begin{align*}
\mu_{V_h}\Big((A_L)^C \Big) \leq \frac{1}{L} \mathbb{E}\Big(\Vert V(t) \Vert_{C([0,T]C^{2+\nu}_x)}
+
\Vert  V(t) \Vert_{W^{1,\infty}([0,T];C^{\nu}_x) } \Big) = 0
\end{align*}
as $L\rightarrow \infty$. It follows that $\{\mu_{V_h}:  h\in (0,1) \}$ is tight on $\chi_V$.
\end{proof}
Now since $\chi$ is a Polish space, we may use the classical Prokhorov’s and Skorokhod’s theorems to obtain the following result.
\begin{lemma}
\label{lem:Jakow}
The exists a subsequence (not relabelled) $\{\mu_h :  h\in (0,1) \}$, a complete probability space $(\tilde{\Omega}, \tilde{\mathscr{F}}, \tilde{\mathbb{P}})$ with $\chi$-valued random variables
\begin{align*}
(\tilde{\varrho}_h, \tilde{\mathbf{u}}_h,\tilde{V}_h, \tilde{W}_h) \text{ and } (\tilde{\varrho}, \tilde{\mathbf{u}},\tilde{V}, \tilde{W}), \quad h\in (0,1)
\end{align*}
such that 
\begin{itemize}
\item the law of $(\tilde{\varrho}_h, \tilde{\mathbf{u}}_h,\tilde{V}_h, \tilde{W}_h)$ on $\chi$ is $\mu_h$, $h\in(0,1)$,
\item the law of $(\tilde{\varrho}, \tilde{\mathbf{u}},\tilde{V}, \tilde{W})$ on $\chi$ is a Radon measure,
\item the following convergence
\begin{align*}
\tilde{\varrho}_h \rightarrow \tilde{\varrho} \quad \text{in}\quad \chi_\varrho, & \quad \qquad
\tilde{\mathbf{u}}_h \rightarrow \tilde{\mathbf{u}} \quad \text{in}\quad \chi_\mathbf{u}, \\
\tilde{V}_h \rightarrow \tilde{V} \quad  \text{in}\quad \chi_V, & \quad \qquad
\tilde{W}_h \rightarrow \tilde{W} \quad \text{in}\quad \chi_W
\end{align*}
holds $\tilde{
\mathbb{P}}$-a.s.
\end{itemize}
\end{lemma}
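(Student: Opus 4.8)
The statement is a direct consequence of the tightness just established together with the classical abstract compactness machinery, so the plan is short. First I would combine the previous lemma with Prokhorov's theorem: since $\chi=\chi_{\mathbf{u}}\times\chi_\varrho\times\chi_V\times\chi_W$ is a Polish space and the family $\{\mu_h:h\in(0,1)\}$ is tight on $\chi$, it is relatively compact in the topology of weak convergence of probability measures. Hence there is a subsequence (not relabelled) and a Borel probability measure $\mu$ on $\chi$ with $\mu_h\rightharpoonup\mu$. Because $\chi$ is a separable metric space, $\mu$ is automatically a Radon measure; this will be the announced law of the limiting variables.

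Next I would invoke the classical Skorokhod representation theorem. Since the weak convergence $\mu_h\rightharpoonup\mu$ takes place on the Polish space $\chi$, there exists a probability space $(\tilde\Omega,\tilde{\mathscr{F}},\tilde{\mathbb{P}})$ — one may simply take $([0,1],\mathcal{B}([0,1]),\mathrm{Leb})$ — carrying $\chi$-valued random variables $(\tilde\varrho_h,\tilde{\mathbf{u}}_h,\tilde V_h,\tilde W_h)$, $h\in(0,1)$, and $(\tilde\varrho,\tilde{\mathbf{u}},\tilde V,\tilde W)$ such that the law of $(\tilde\varrho_h,\tilde{\mathbf{u}}_h,\tilde V_h,\tilde W_h)$ on $\chi$ equals $\mu_h$, the law of $(\tilde\varrho,\tilde{\mathbf{u}},\tilde V,\tilde W)$ on $\chi$ equals the Radon measure $\mu$, and $(\tilde\varrho_h,\tilde{\mathbf{u}}_h,\tilde V_h,\tilde W_h)\to(\tilde\varrho,\tilde{\mathbf{u}},\tilde V,\tilde W)$ in $\chi$, $\tilde{\mathbb{P}}$-a.s., along the chosen subsequence. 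Passing to the completion of $(\tilde\Omega,\tilde{\mathscr{F}},\tilde{\mathbb{P}})$ if necessary yields the complete probability space required in the statement.

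Finally, since $\chi$ carries the product topology, convergence in $\chi$ is equivalent to component-wise convergence in $\chi_\varrho$, $\chi_{\mathbf{u}}$, $\chi_V$ and $\chi_W$, which furnishes the four displayed convergences. There is essentially no obstacle here: the only point needing care is the applicability of the classical Skorokhod theorem, and this is guaranteed precisely because every factor $\chi_\varrho$, $\chi_{\mathbf{u}}$, $\chi_V$, $\chi_W$ — and hence $\chi$ — is Polish (the relevant Hölder spaces are separable, $H_N$ is finite-dimensional, and $\mathfrak{U}_0$ is a separable Hilbert space). I note for later use that in the subsequent approximation layers, where some of the path spaces will carry weak topologies and are no longer Polish, this step will instead rely on the Jakubowski--Skorokhod representation theorem; at the present level the classical version is enough.
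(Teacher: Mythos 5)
Your argument — tightness plus Prokhorov's theorem to extract a weakly convergent subsequence with Radon limit law, followed by the classical Skorokhod representation theorem on the Polish space $\chi$ and component-wise identification of the a.s.\ convergences — is exactly the route the paper takes, which it states in one line ("since $\chi$ is a Polish space, we may use the classical Prokhorov's and Skorokhod's theorems"). Your proposal is correct and simply spells out the same standard argument in more detail, including the correct observation that the Jakubowski--Skorokhod variant is only needed in the later layers where weak topologies appear.
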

We have therefore constructed stochastic processes $(\tilde{\varrho}, \tilde{\mathbf{u}},\tilde{V}, \tilde{W})$ which are progressively measurable with respect to the following complete right-continuous canonical filtration
\begin{align*}
\tilde{\mathscr{F}}_t:= \sigma \bigg(\sigma_t[\tilde{\varrho}], \sigma_t[\tilde{\mathbf{u}}], \sigma_t[\tilde{V}], \bigcup_{k\in \mathbb{N}}\sigma_t[\tilde{\beta}_k]  \bigg), \quad t\in [0,T].
\end{align*}
Furthermore, by \cite[Lemma 2.1.35, Corollary 2.1.36]{breit2018stoch}, the stochastic process $\tilde{W}=\sum_{k\in \mathbb{N}}\tilde{\beta}_k(t)e_k$ is an $(\tilde{\mathscr{F}}_t)$-cylindrical Wiener process.
\subsection{Identification of the limit}
\label{sec:limitIdentificationFirstLayer}
We now show that the limit random variables derived from Lemma \ref{lem:Jakow} satisfies \eqref{contEq1}--\eqref{elecField1}.
\begin{lemma}
The equation \eqref{contEq1} is satisfied for $(\tilde{\varrho}, \tilde{\mathbf{u}})$ $\mathbb{P}$-a.s. for a.e.  $(t,x) \in(0,T)\times \mathbb{T}^3$. In addition, \eqref{momEq1} is satisfied for $(\tilde{\varrho}, \tilde{\mathbf{u}}, \tilde{V}, \tilde{W})$ for all $\psi\in C^\infty_c([0,T))$ and $\bm{\phi} \in H_N$ $\tilde{\mathbb{P}}$-a.s.
\end{lemma}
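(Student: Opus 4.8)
The plan is to proceed in two stages: first transfer the time-discrete collocation scheme \eqref{contEq2}--\eqref{elecField2} to the random variables $(\tilde{\varrho}_h,\tilde{\mathbf{u}}_h,\tilde{V}_h,\tilde{W}_h)$ furnished by Lemma \ref{lem:Jakow}, and then pass to the limit $h\to 0$. For the transfer I would follow the argument behind \cite[Theorem 4.1.3]{breit2018stoch}: the pointwise identities \eqref{contEq2} and \eqref{elecField2} and the deterministic part of the weak momentum balance \eqref{velEq} are Borel-measurable relations between trajectories in $\chi$, hence they hold $\tilde{\mathbb{P}}$-a.s. for $(\tilde{\varrho}_h,\tilde{\mathbf{u}}_h,\tilde{V}_h)$ by equality of laws. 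The It\^o term is not a functional of the trajectory alone, so for it one writes, for each $\bm{\phi}\in H_N$, the process $M_h^{\bm{\phi}}(t)$ obtained from \eqref{velEq} by subtracting all deterministic contributions, verifies that for the original variables $M_h^{\bm{\phi}}$ is a square-integrable $(\mathscr{F}_t)$-martingale whose quadratic variation and cross-variations with the $\beta_k$ are prescribed by $\Pi_N[\varrho_h\Pi_N\mathbf{g}_{k,\varepsilon}(\cdots)]$, re-expresses these martingale/variation identities as the vanishing of expectations of bounded continuous functionals of the paths, and transfers them to $(\tilde{\varrho}_h,\tilde{\mathbf{u}}_h,\tilde{V}_h,\tilde{W}_h)$ using equality of laws together with the fact (recorded after Lemma \ref{lem:Jakow}) that $\tilde{W}_h$ is an $(\tilde{\mathscr{F}}_t)$-cylindrical Wiener process. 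A martingale-representation step then shows that each $(\tilde{\varrho}_h,\tilde{\mathbf{u}}_h,\tilde{V}_h,\tilde{W}_h)$ solves \eqref{contEq2}--\eqref{momEq2} with $\tilde{W}_h$ in place of $W$.

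For the limit in the continuity equation I would use the $\tilde{\mathbb{P}}$-a.s. convergences $\tilde{\varrho}_h\to\tilde{\varrho}$ in $C^{\nu_0}([0,T];C^{2+\nu_0}_x)$ and $\tilde{\mathbf{u}}_h\to\tilde{\mathbf{u}}$ in $C^{\beta_0}([0,T];H_N)$. Since the H\"older bounds of Proposition \ref{prop:martSolFinite} are uniform in $h$, the families are equicontinuous in time, so the frozen samples $\tilde{\mathbf{u}}_h(h\lfloor t/h\rfloor)$ and $\tilde{\varrho}_h(h\lfloor t/h\rfloor)$ converge to $\tilde{\mathbf{u}}(t)$ and $\tilde{\varrho}(t)$ uniformly in $t$, and the cut-off $\chi(\Vert\tilde{\mathbf{u}}_h(h\lfloor t/h\rfloor)\Vert_{H_N}-R)$ converges to $\chi_{\mathbf{u}_R}$ by continuity of $\chi$ and of the norm. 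Then $\varepsilon\Delta\tilde{\varrho}_h\to\varepsilon\Delta\tilde{\varrho}$ and $\mathrm{div}\big(\tilde{\varrho}_h(\chi_{\mathbf{u}_R}\tilde{\mathbf{u}}_h)\big)\to\mathrm{div}\big(\tilde{\varrho}(\chi_{\mathbf{u}_R}\tilde{\mathbf{u}})\big)$ in $C([0,T];C^{\nu_0}_x)$ (using $H_N\hookrightarrow C^\infty_x$), while $\partial_t\tilde{\varrho}_h$, bounded in $L^\infty(0,T;C^{\nu}_x)$ by \eqref{esssupDensity1}, converges to $\partial_t\tilde{\varrho}$ distributionally; passing to the limit in \eqref{contEq2} yields \eqref{contEq1} pointwise a.e. on $(0,T)\times\mathbb{T}^3$, $\tilde{\mathbb{P}}$-a.s. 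The same passage in \eqref{elecField2} gives $\pm\Delta\tilde{V}=\tilde{\varrho}-f$.

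For the limit in the momentum balance, summing \eqref{momEq2} over the subintervals produces a weak formulation on $[0,T)$; every deterministic term converges under the above strong convergences — no compactness issue arises since $H_N$ is finite-dimensional and all its norms are equivalent — namely the convective term, the two viscous terms, the pressure term (continuity of $p_\delta^{\Gamma}$ from \eqref{higherPressure} together with the uniform two-sided density bound), the artificial-viscosity term $\varepsilon\varrho\mathbf{u}\,\Delta\bm{\phi}$, and the electric term $\vartheta\tilde{\varrho}_h\nabla\tilde{V}_h$ (a product of two strongly convergent sequences). For the stochastic integral I would invoke the convergence lemma for stochastic integrals, \cite[Lemma 2.6.6]{breit2018stoch}: by \eqref{stochCoeffBound1Exis0a}, \eqref{stochsummableConsta}, the uniform density bound \eqref{esssupDensity1}, and the time-equicontinuity used above, the integrand $\sum_{k}\Pi_N[\tilde{\varrho}_h\Pi_N\mathbf{g}_{k,\varepsilon}(\cdots)]\otimes e_k$ converges $\tilde{\mathbb{P}}$-a.s. in $L^2(0,T;L_2(\mathfrak{U};H_N))$ to $\sum_{k}\Pi_N[\tilde{\varrho}\Pi_N\mathbf{g}_{k,\varepsilon}(\tilde{\varrho},f,\tilde{\varrho}\tilde{\mathbf{u}})]\otimes e_k$, while $\tilde{W}_h\to\tilde{W}$ in $\chi_W$ with $\tilde{W}$ an $(\tilde{\mathscr{F}}_t)$-cylindrical Wiener process; the lemma then gives convergence of the stochastic integrals in probability and identifies the limit as the It\^o term of \eqref{momEq1}. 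This establishes \eqref{momEq1} for $(\tilde{\varrho},\tilde{\mathbf{u}},\tilde{V},\tilde{W})$ against all $\bm{\phi}\in H_N$, $\psi\in C^\infty_c([0,T))$.

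The main obstacle, as usual, is the stochastic integral: both its transfer to the new stochastic basis — which forces the martingale-characterization route rather than a plain path-space identity, since the It\^o integral does not depend on the trajectory of $(\varrho,\mathbf{u},V,W)$ alone — and the passage to the limit, which requires uniform-in-$h$ integrability of the integrands and careful bookkeeping of the frozen time arguments in $\mathbf{g}_{k,\varepsilon}$ and in $\chi_{\mathbf{u}_R}$. Both points are controlled by the $h$-uniform H\"older estimates of Proposition \ref{prop:martSolFinite}, after which the remaining terms reduce to continuity of the nonlinearities and the $\tilde{\mathbb{P}}$-a.s.\ convergences of Lemma \ref{lem:Jakow}.
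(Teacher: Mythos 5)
Your proposal is correct and follows essentially the same route as the paper: the Hölder estimates \eqref{uh}--\eqref{vh} (obtained by equality of laws) control the frozen time arguments and give convergence of all deterministic terms, while the noise term is handled by showing $\tilde{\mathbb{P}}$-a.s. convergence of the integrands in $L^2(0,T;L_2(\mathfrak{U};L^2_x))$ together with $\tilde W_h\to\tilde W$ and the stochastic-integral convergence lemma. The only difference is that you make the transfer of the equation to the new probability space explicit via the martingale-characterization/representation argument, whereas the paper performs this transfer directly by equality of laws (relying on the machinery of \cite{breit2018stoch}); both are standard and valid.
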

\begin{proof}
For the proof of the first part of the above lemma, see \cite[Lemma 4.1.7]{breit2018stoch}. For the second part, we follow the argument of \cite[Proposition 4.1.8]{breit2018stoch}. First of all, by using the equality of laws given by Lemma \ref{lem:Jakow}, we get that the estimates
\begin{align}
\label{uh}
\big\Vert  \tilde{\mathbf{u}}_h(nh) -\tilde{\mathbf{u}}_h(t)  \big\Vert_{H_N} \lesssim h^\beta \Vert \tilde{\mathbf{u}}_h \Vert_{C^\beta([0,T];H_N)}
\end{align}
and
\begin{align}
\label{rhoh}
\big\Vert  \tilde{\varrho}_h(nh) -\tilde{\varrho}_h(t)  \big\Vert_{C^{2+\nu}_x} \lesssim h^\nu \Vert \tilde{\varrho}_h \Vert_{C^\nu([0,T];C^{2+\nu}_x)}
\end{align}
holds. In addition,
\begin{align}
\label{vh}
\big\Vert  \tilde{V}_h(nh) -\tilde{V}_h(t)  \big\Vert_{C^{2+\nu}_x} \lesssim h^\nu \Vert \tilde{V}_h \Vert_{C^\nu([0,T];C^{2+\nu}_x)}
\end{align}
The information above is enough to pass to the limit $h \rightarrow 0$ in the `deterministic' part of the corresponding momentum equation \eqref{momEq2} defined on the stochastic basis $(\tilde{\Omega}, \tilde{\mathscr{F}},(\tilde{\mathscr{F}}_t)_{t\geq0}  , \tilde{\mathbb{P}})$.
\\
For the noise term, we combine \eqref{uh}--\eqref{vh} with Proposition \ref{prop:martSolFinite} and the continuity properties of the operator $\Pi_N$ and the coefficients $\mathbf{g}_{k,\varepsilon}$  to get that for all $p\in(1,\infty)$, the convergence
\begin{align}
\Pi_N\big[ \tilde{\varrho}_h\Pi_N   \mathbf{g}_{k,\varepsilon}(\tilde{\varrho}_h(nh) , f(nh), (\tilde{\varrho}_h \tilde{\mathbf{u}}_h)(nh) ) \big] 
\rightarrow
\Pi_N\big[ \tilde{\varrho}\Pi_N   \mathbf{g}_{k,\varepsilon}(\tilde{\varrho} , f, \tilde{\varrho} \tilde{\mathbf{u}})  \big] 
\end{align}
holds $\tilde{\mathbb{P}}$-a.s. in $L^p((0,T) \times \mathbb{T}^3)$ for any $k\in \mathbb{N}$.
Furthermore, by using the estimates \eqref{stochCoeffBoundExi0}--\eqref{stochsummableConst} (which holds on the new probability space because of the equality of laws established by Lemma \ref{lem:Jakow}) together with It\^o's isometry, \eqref{projectionContinuity} and \eqref{esssupDensity},  we also gain
\begin{equation}
\begin{aligned}
 \tilde{\mathbb{E}} &\bigg\Vert \int_0^T  \Pi_N\big[ \tilde{\varrho}_h\Pi_N   \mathbf{G}_{\varepsilon}(\tilde{\varrho}_h(nh) , f(nh), (\tilde{\varrho}_h \tilde{\mathbf{u}}_h)(nh) ) \big]
  \, \mathrm{d}\tilde{W}_h \bigg\Vert^2_{L^2(\mathbb{T}^2)}
\lesssim 1
\end{aligned}
\end{equation}
just as in the proof of \cite[Proposition 4.1.8]{breit2018stoch}. Consequently, in combination with the strong convergence of $\tilde{W}_h$ in Lemma \ref{lem:Jakow},  we obtain the following convergence
\begin{align}
\Pi_N\big[ \tilde{\varrho}_h\Pi_N   \mathbf{G}_{\varepsilon}(\tilde{\varrho}_h(nh) , f(nh), (\tilde{\varrho}_h \tilde{\mathbf{u}}_h)(nh) ) \big] 
\rightarrow
\Pi_N\big[ \tilde{\varrho}\Pi_N   \mathbf{G}_{\varepsilon}(\tilde{\varrho} , f, \tilde{\varrho} \tilde{\mathbf{u}})  \big] 
\end{align}
$\tilde{\mathbb{P}}$-a.s. in $L^2(0,T; L_2(\mathfrak{U};L^2_x))$ where
\begin{align*}
\mathbf{G}_{\varepsilon}(\varrho,f, \varrho\mathbf{u})
=
\chi \bigg(\frac{\varepsilon}{\varrho} -1 \bigg)
\chi \bigg(\vert \mathbf{u}\vert -\frac{1}{\varepsilon} \bigg) \mathbf{G}( \varrho,f, \varrho\mathbf{u}).
\end{align*}
This finishes the proof.
\end{proof}
Finally, the following lemma identifies the Poisson equation. 
\begin{lemma}
The equation \eqref{elecField1} is satisfied for $(\tilde{\varrho}, \tilde{V},f)$ $\mathbb{P}$-a.s. for a.e.  $(t,x) \in(0,T)\times \mathbb{T}^3$. 
\end{lemma}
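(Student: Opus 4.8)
\emph{Proof proposal.} The plan is to transfer the Poisson equation from the original to the new probability space via the equality of laws in Lemma \ref{lem:Jakow}, and then simply pass to the limit $h\to0$, using that \eqref{elecField1} is linear and that the Skorokhod convergence is strong in a topology controlling two spatial derivatives of $V$.

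First I would record that, on the original probability space, each pair $(\varrho_h,V_h)$ satisfies $\pm\Delta V_h=\varrho_h-f$ for a.e.\ $(t,x)$, $\mathbb{P}$-a.s.; this is built into the construction, cf.\ Definition \ref{def:martSolFiniteDim}(8). Testing against a fixed countable dense family of $\phi\in C^\infty_c((0,T)\times\mathbb{T}^3)$, this is equivalent to a countable collection of identities $\int_0^T\!\!\int_{\mathbb{T}^3}(\pm\Delta V_h)\phi\dx\dt=\int_0^T\!\!\int_{\mathbb{T}^3}(\varrho_h-f)\phi\dx\dt$, each of which is a continuous — hence Borel measurable — functional of the trajectory $(\varrho_h,V_h)\in\chi_\varrho\times\chi_V$ (the datum $f$ being carried along unchanged under the Skorokhod representation). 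Since by Lemma \ref{lem:Jakow} the law of $(\tilde{\varrho}_h,\tilde{V}_h)$ on $\chi_\varrho\times\chi_V$ equals that of $(\varrho_h,V_h)$, the same identities hold for $(\tilde{\varrho}_h,\tilde{V}_h,f)$, $\tilde{\mathbb{P}}$-a.s.; equivalently, $\pm\Delta\tilde{V}_h=\tilde{\varrho}_h-f$ for a.e.\ $(t,x)$, $\tilde{\mathbb{P}}$-a.s.

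Then I would pass to the limit. By Lemma \ref{lem:Jakow}, $\tilde{\varrho}_h\to\tilde{\varrho}$ and $\tilde{V}_h\to\tilde{V}$ in $C^{\nu_0}([0,T];C^{2+\nu_0}_x)$, $\tilde{\mathbb{P}}$-a.s.; in particular $\Delta\tilde{V}_h\to\Delta\tilde{V}$ and $\tilde{\varrho}_h\to\tilde{\varrho}$ in $C([0,T];C^{\nu_0}_x)$, uniformly in $(t,x)$. Subtracting, $\pm\Delta\tilde{V}-\tilde{\varrho}+f=\lim_{h\to0}(\pm\Delta\tilde{V}_h-\tilde{\varrho}_h+f)=0$ uniformly on $(0,T)\times\mathbb{T}^3$, so $\pm\Delta\tilde{V}=\tilde{\varrho}-f$ holds not merely for a.e.\ but for every $(t,x)$, $\tilde{\mathbb{P}}$-a.s. (Alternatively, one may invoke the representation $\tilde{V}(t)=(\pm\Delta)^{-1}(\tilde{\varrho}(t)-f)$ on the torus together with continuity of the elliptic solution operator.)

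I do not expect a genuine obstacle here, which matches the remark in the introduction that the electric field is the easy addition: the Poisson equation is linear and the Skorokhod limit is already strong in a norm dominating two spatial derivatives of $V$, so the limit passage is immediate. The only step deserving care is the equality-of-laws transfer, i.e.\ recognizing the solution set $\{(\varrho,V):\pm\Delta V=\varrho-f\text{ a.e.}\}$ as a Borel (indeed closed) subset of $\chi_\varrho\times\chi_V$, so that "$\mathbb{P}$-a.s.\ on the old space" upgrades to "$\tilde{\mathbb{P}}$-a.s.\ on the new space."
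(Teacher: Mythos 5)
Your proposal is correct and follows essentially the same route as the paper: transfer the Poisson relation for $(\tilde{\varrho}_h,\tilde{V}_h)$ to the new probability space via the equality of laws from Lemma \ref{lem:Jakow}, then pass to the limit $h\to0$ using the $\tilde{\mathbb{P}}$-a.s.\ strong convergence in $\chi_\varrho\times\chi_V$, which controls $\Delta\tilde{V}_h$ uniformly. Your additional care in justifying the equality-of-laws step (the solution set being closed in $\chi_\varrho\times\chi_V$) and your observation that the identity in fact holds for every $(t,x)$ are both fine refinements of what the paper asserts more briefly.
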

\begin{proof}
As $f \in C^\nu_x$ is given, by the equality of laws, i.e. Lemma \ref{lem:Jakow}, $(\tilde{\varrho}_h, \tilde{V}_h)$ satisfies \eqref{elecField2} as well as the estimates \eqref{esssupDensity1}--\eqref{esssupCarge1}  on the new probability space. Given that \eqref{rhoh}--\eqref{vh} holds, we are able to pass to the limit  and we get that \eqref{elecField1} is satisfied for $(\tilde{\varrho}, \tilde{V},f)$ $\mathbb{P}$-a.s. for a.e.  $(t,x) \in(0,T)\times \mathbb{T}^3$ where in addition, $(\tilde{\varrho}, \tilde{V},f)$ satisfies \eqref{esssupDensity1}--\eqref{esssupCarge1}.
\end{proof}
Combining the results we have established above in Section \ref{sec:limitIdentificationFirstLayer} completes the proof of Proposition \ref{prop:martSolFinite}. In order to solve Theorem \ref{thm:mainFirstLayer}, we require uniqueness.
\subsection{Pathwise uniqueness}
\label{sec:pathUniqFirstLayer}
The pathwise uniqueness of the martingale solution constructed in Proposition \ref{prop:martSolFinite} is given in the following result.
\begin{proposition}
\label{prop:uniqueMarting}
Consider two martingale solutions
\begin{align*}
\big[(\Omega ,\mathscr{F} ,(\mathscr{F} _t)_{t\geq0},\mathbb{P} );\varrho_1 , \mathbf{u}_1 , V_1 , W   \big] \quad \text{and} \quad \big[(\Omega ,\mathscr{F} ,(\mathscr{F} _t)_{t\geq0},\mathbb{P} );\varrho_2 , \mathbf{u}_2 , V_2 , W   \big]
\end{align*}
of \eqref{contEq1}--\eqref{elecField1} is the sense of Definition \ref{def:martSolFiniteDim} sharing a data
\begin{align*}
(\varrho_0, \mathbf{u}_0, f) \in C^{2+\nu}(\mathbb{T}^3) \times H_N \times C^{\nu}(\mathbb{T}^3) \quad \mathbb{P}\text{-a.s.}
\end{align*}
and with both satisfying  \eqref{esssupDensity1}--\eqref{esssupVelocity1}. Then
\begin{align*}
(\varrho_1, \mathbf{u}_1, V_1) = (\varrho_2, \mathbf{u}_2, V_2)\in C \big([0,T];C^{2+\nu}(\mathbb{T}^3) \times H_N \times C^{\nu}(\mathbb{T}^3) \big) \quad \mathbb{P}\text{-a.s.}
\end{align*}
\end{proposition}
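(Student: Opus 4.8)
The plan is to close a single Gronwall estimate for the three differences. Put $r:=\varrho_1-\varrho_2$, $\mathbf{v}:=\mathbf{u}_1-\mathbf{u}_2$, $Y:=V_1-V_2$; by hypothesis $r(0)=0$ and $\mathbf{v}(0)=0$. Since $\mathbf{u}_i\in C([0,T];H_N)$ $\mathbb{P}$-a.s., the stopping times
\begin{equation*}
\tau_K:=\inf\big\{t\in[0,T]\,:\,\Vert\mathbf{u}_1(t)\Vert_{H_N}+\Vert\mathbf{u}_2(t)\Vert_{H_N}\geq K\big\}\wedge T
\end{equation*}
increase to $T$ $\mathbb{P}$-a.s., so it suffices to prove $(\varrho_1,\mathbf{u}_1,V_1)=(\varrho_2,\mathbf{u}_2,V_2)$ on $[0,\tau_K]$ for each fixed $K\in\mathbb{N}$. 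Below $c$ denotes a constant that may depend on $N,R,\varepsilon,\delta,K$ and on the structural bounds \eqref{esssupDensity1}--\eqref{esssupCarge1}, but not on the solutions; we use throughout that all norms on the finite-dimensional space $H_N$ are equivalent, and abbreviate $\chi^i:=\chi(\Vert\mathbf{u}_i\Vert_{H_N}-R)$, the value of $\chi_{\mathbf{u}_R}$ along the $i$-th solution.

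\emph{Continuity and Poisson equations.} Subtracting the two copies of \eqref{contEq1} and writing the flux difference as $r\,\chi^1\mathbf{u}_1+\varrho_2(\chi^1\mathbf{u}_1-\chi^2\mathbf{u}_2)$, one tests the resulting linear parabolic equation for $r$ with $r$ itself. Since $\varrho_2$ and $\chi^1\mathbf{u}_1$ are bounded in $L^\infty_x$ (by \eqref{esssupDensity1}, the cut-off \eqref{cuttOff}--\eqref{chiUr} and norm equivalence on $H_N$) and $\mathbf{w}\mapsto\chi(\Vert\mathbf{w}\Vert_{H_N}-R)\,\mathbf{w}$ is globally Lipschitz from $H_N$ into $L^2_x$, absorbing the parabolic term $\varepsilon\Vert\nabla r\Vert_{L^2_x}^2$ and invoking Gronwall's lemma gives the pathwise bound $\sup_{s\in[0,t]}\Vert r(s)\Vert_{L^2_x}^2\leq c\int_0^t\Vert\mathbf{v}(s)\Vert_{H_N}^2\,\mathrm{d}s$ for $t\in[0,T]$. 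As \eqref{contEq1} conserves mass, $\int_{\mathbb{T}^3}r\,\mathrm{d}x=0$; subtracting the two copies of \eqref{elecField1} gives $\pm\Delta Y=r$, so Poincar\'e's inequality yields $\Vert\nabla Y(t)\Vert_{L^2_x}\leq c\Vert r(t)\Vert_{L^2_x}$, and hence
\begin{equation*}
\sup_{s\in[0,t]}\big(\Vert r(s)\Vert_{L^2_x}^2+\Vert\nabla Y(s)\Vert_{L^2_x}^2\big)\;\leq\;c\int_0^t\Vert\mathbf{v}(s)\Vert_{H_N}^2\,\mathrm{d}s,\qquad t\in[0,T].
\end{equation*}

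\emph{Momentum equation.} Set $\mathbf{q}_i:=\Pi_N(\varrho_i\mathbf{u}_i)$, so that \eqref{momEq1} becomes a genuine It\^o equation $\mathrm{d}\mathbf{q}_i=\mathbf{F}_i\,\mathrm{d}t+\sum_k\mathbf{H}_{i,k}\,\mathrm{d}\beta_k$ in $H_N$, with $\mathbf{u}_i=\mathcal{M}^{-1}[\varrho_i]\mathbf{q}_i$ and $\chi^i$ entering $\mathbf{F}_i$ and $\mathbf{H}_{i,k}$. By \eqref{holderInvertibleMap} and \eqref{esssupDensity1}, on $[0,\tau_K]$,
\begin{equation*}
\Vert\mathbf{v}\Vert_{H_N}\leq\big\Vert\mathcal{M}^{-1}[\varrho_1](\mathbf{q}_1-\mathbf{q}_2)\big\Vert_{H_N}+\big\Vert\big(\mathcal{M}^{-1}[\varrho_1]-\mathcal{M}^{-1}[\varrho_2]\big)\mathbf{q}_2\big\Vert_{H_N}\leq c\Vert\mathbf{q}_1-\mathbf{q}_2\Vert_{H_N}+c\Vert r\Vert_{L^2_x},
\end{equation*}
since $\Vert\mathbf{q}_2\Vert_{H_N}\leq c\Vert\mathbf{u}_2\Vert_{H_N}\leq cK$ and $\Vert r\Vert_{L^1_x}\leq c\Vert r\Vert_{L^2_x}$ there. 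Applying It\^o's formula to $\Vert\mathbf{q}_1-\mathbf{q}_2\Vert_{H_N}^2$ one checks, on $[0,\tau_K]$,
\begin{equation*}
\Vert\mathbf{F}_1-\mathbf{F}_2\Vert_{H_N}\leq c\big(\Vert\mathbf{q}_1-\mathbf{q}_2\Vert_{H_N}+\Vert r\Vert_{L^2_x}+\Vert\nabla Y\Vert_{L^2_x}\big),\qquad\Vert\mathbf{H}_{1,k}-\mathbf{H}_{2,k}\Vert_{H_N}\leq c\,c_{k,\varepsilon}\big(\Vert\mathbf{q}_1-\mathbf{q}_2\Vert_{H_N}+\Vert r\Vert_{L^2_x}\big).
\end{equation*}
Indeed: the convective term splits as $r(\cdots)+\varrho_2(\cdots)$ and uses that $\mathbf{w}\mapsto\chi(\Vert\mathbf{w}\Vert_{H_N}-R)\,\mathbf{w}\otimes\mathbf{w}$ is globally Lipschitz on $H_N$ together with the displayed bound for $\Vert\mathbf{v}\Vert_{H_N}$; the pressure term uses $\Vert\Pi_N\nabla g\Vert_{H_N}\leq c\Vert g\Vert_{L^2_x}$, the Lipschitz continuity of $p_{\delta}^{\Gamma}$ on the compact density range fixed by \eqref{esssupDensity1}, and $\vert\chi^1-\chi^2\vert\leq c\Vert\mathbf{v}\Vert_{H_N}$; the artificial-viscosity and viscosity terms are linear, with $\varepsilon\Delta(\varrho\mathbf{u})$, $\nu^S\Delta\mathbf{u}$, $(\nu^B+\nu^S)\nabla\mathrm{div}\,\mathbf{u}$ acting boundedly into $H_N$; the force term $\vartheta\Pi_N(r\nabla V_1+\varrho_2\nabla Y)$ is controlled by \eqref{esssupCarge1} and the previous step; and for the noise, \eqref{stochCoeffBound1Exis0a} makes each $\mathbf{g}_{k,\varepsilon}$ globally Lipschitz with constant $c_{k,\varepsilon}$, so the difference of $\Pi_N[\varrho_i\Pi_N\mathbf{g}_{k,\varepsilon}(\varrho_i,f,\varrho_i\mathbf{u}_i)]$ is estimated through the displayed bound on $\Vert\mathbf{v}\Vert_{H_N}$ and the bound $\Vert\mathbf{u}_i\Vert_{H_N}\leq K$, with $\sum_kc_{k,\varepsilon}^2\lesssim1$ from \eqref{stochsummableConsta} bounding both the It\^o correction $\tfrac12\sum_k\Vert\mathbf{H}_{1,k}-\mathbf{H}_{2,k}\Vert_{H_N}^2$ and, via the Burkholder--Davis--Gundy inequality, the martingale term after taking $\sup_t$. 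Inserting the continuity/Poisson estimate to eliminate $r$ and $\nabla Y$ and taking expectations gives
\begin{equation*}
\mathbb{E}\sup_{s\in[0,t\wedge\tau_K]}\Vert(\mathbf{q}_1-\mathbf{q}_2)(s)\Vert_{H_N}^2\;\leq\;c\int_0^t\mathbb{E}\sup_{\sigma\in[0,s\wedge\tau_K]}\Vert(\mathbf{q}_1-\mathbf{q}_2)(\sigma)\Vert_{H_N}^2\,\mathrm{d}s,
\end{equation*}
so Gronwall's lemma forces $\mathbf{q}_1=\mathbf{q}_2$, hence $\mathbf{v}\equiv0$, $r\equiv0$, $Y\equiv0$, on $[0,\tau_K]$; letting $K\to\infty$ and recalling $\tau_K\uparrow T$ $\mathbb{P}$-a.s. yields $(\varrho_1,\mathbf{u}_1,V_1)=(\varrho_2,\mathbf{u}_2,V_2)$ on $[0,T]$, the asserted regularity being built into Definition \ref{def:martSolFiniteDim}.

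\emph{Main obstacle.} The delicate point is the It\^o estimate for $\Vert\mathbf{q}_1-\mathbf{q}_2\Vert_{H_N}^2$, where the operator $\mathcal{M}[\varrho]$ appearing in \eqref{momEq1} must be disposed of: each occurrence of $\mathcal{M}^{-1}$ is split as $\mathcal{M}^{-1}[\varrho_2]+(\mathcal{M}^{-1}[\varrho_1]-\mathcal{M}^{-1}[\varrho_2])$, the second piece being absorbed via the Lipschitz bound \eqref{holderInvertibleMap} at the price of the $K$-dependent constant, and the various nonlinearities are then closed against the mixed quantity $\Vert\mathbf{q}_1-\mathbf{q}_2\Vert_{H_N}+\Vert r\Vert_{L^2_x}$ using the continuity/Poisson estimate. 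This is precisely the scheme carried out for the purely fluid finite-dimensional system in \cite{breit2018stoch}, to which we refer for the routine computations; the only genuinely new terms are those carrying $\nabla V_1$ and $\nabla Y$, and they are absorbed exactly as the analogous lower-order terms there thanks to $\Vert\nabla Y\Vert_{L^2_x}\leq c\Vert r\Vert_{L^2_x}$.
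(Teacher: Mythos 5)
Your argument is correct, and the overall scheme (localization by stopping times, difference equations for the three unknowns, Gronwall) is the one the paper uses; but two implementation choices differ genuinely from the paper's proof and are worth recording. First, for the density and Poisson differences the paper propagates the full H\"older norms: it invokes the $C^{2+\nu}_x$ estimate \eqref{contEq5} for $\varrho_1-\varrho_2$ (imported from \cite{breit2018stoch}) and Schauder theory for $V_1-V_2$ in \eqref{elecField5}, and must then absorb a $\kappa\,\mathbb{E}\sup\Vert\varrho_1-\varrho_2\Vert^2_{C^{2+\nu}_x}$ term in \eqref{momEq5} by choosing $\kappa$ small. You instead close the loop entirely at the $L^2$ level, using only the parabolic energy estimate for $r$ and the elementary bound $\Vert\nabla Y\Vert_{L^2_x}\lesssim\Vert r\Vert_{L^2_x}$ (which does require, as you note, the zero-mean property of $r$ on the torus); this is more elementary and avoids the absorption step, at the cost of not reproving the stronger stability in $C^{2+\nu}_x$ that the paper's route yields as a by-product. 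Second, the paper converts \eqref{momEq1} into an SDE for $\mathbf{u}$ itself via $\mathcal{M}^{-1}[\varrho]$ — which forces it to carry the extra terms $\mathcal{M}^{-1}[\varrho]\Pi_N[\mathbf{u}\,\partial_t\varrho]$ in \eqref{momEq4a} — and applies It\^o to $\int\frac12\vert\mathbf{u}_1-\mathbf{u}_2\vert^2$, whereas you apply It\^o directly to $\Vert\mathbf{q}_1-\mathbf{q}_2\Vert^2_{H_N}$ for the projected momenta, which is the variable that genuinely satisfies the SDE, and use $\mathcal{M}^{-1}$ and \eqref{holderInvertibleMap} only algebraically to recover $\Vert\mathbf{v}\Vert_{H_N}\lesssim\Vert\mathbf{q}_1-\mathbf{q}_2\Vert_{H_N}+\Vert r\Vert_{L^2_x}$ afterwards; this sidesteps the time-differentiation of $\mathcal{M}^{-1}[\varrho(t)]$ entirely. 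Your simpler stopping time (velocities only) is also legitimate, since \eqref{esssupDensity1}--\eqref{esssupCarge1} already bound the densities, their inverses and the fields by deterministic constants $\mathbb{P}$-a.s. One small imprecision: the terms $\varepsilon\Delta(\varrho_i\mathbf{u}_i)$ are bilinear rather than linear, so their difference $\varepsilon\Delta(r\mathbf{u}_1+\varrho_2\mathbf{v})$ needs the $K$-dependent bound $\Vert r\mathbf{u}_1\Vert_{L^2_x}\leq cK\Vert r\Vert_{L^2_x}$ on $[0,\tau_K]$ — this is available and consistent with your constants, but the word ``linear'' understates it.
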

\begin{proof}
The proof of Proposition \ref{prop:uniqueMarting} follow the ideas of \cite[Proposition 4.1.9]{breit2018stoch}, i.e., for $i=1,2$, we introduce the stopping times
\begin{align*}
\tau_n^i := \inf_{t\in [0,T]} \Big\{
\Vert \varrho_i(t) \Vert_{C^{2+\nu}_x}
+
\Vert (\varrho_i)^{-1}(t) \Vert_{C_x}
+
\Vert V_i(t) \Vert_{C^{2+\nu}_x}
+
\Vert\mathbf{u}_i(t)\Vert_{H_N}>n \Big\}
\end{align*}
where $\inf\emptyset=T$ and set $\tau_n = \tau_n^1 \wedge \tau_n^2$. Note that  the collection $(\tau_n)_{n\in \mathbb{N}}$ is increasing and that $\tau_n \rightarrow T$ almost surely by virtue of \eqref{esssupDensity1}--\eqref{esssupVelocity1}.
\\
We now consider the following difference equations 
\begin{align}
&\partial_t( \varrho_1 -\varrho_2)  + \mathrm{div}\big(\varrho_1 (\chi_{\mathbf{u}_{1,R}}  \mathbf{u}_1) -\varrho_2 (\chi_{\mathbf{u}_{2,R}}  \mathbf{u}_2)  \big)  =\varepsilon \Delta (\varrho_1 -\varrho_2), 
\label{contEq4}
\\
&\pm \Delta (V_1 -V_2)  = (\varrho_1 -\varrho_2) \label{elecField4}
\end{align} 
of two solutions to \eqref{contEq1} and \eqref{elecField1} where as in \eqref{cuttOff}--\eqref{chiUr},
\begin{align}
\label{chiUri}
\chi_{\mathbf{u}_{i,R}}= \chi \big( \Vert \mathbf{u}_i\Vert_{H_N} -R \big).
\end{align}
By using \eqref{chiUri}, we recall from \cite[(4.43)--(4.44)]{breit2018stoch}
that the estimate
\begin{equation}
\begin{aligned}
\label{contEq5}
&\sup_{t' \in [0,\tau]}
\bigg[  
\Vert (\varrho_1 -\varrho_2)(t') \Vert_{C^{2+\nu}_x} 
+
\Vert \partial_t(\varrho_1 -\varrho_2)(t') \Vert_{C^{\nu}_x}
\bigg]
\lesssim
\sup_{t' \in [0,\tau]}
\Vert (\mathbf{u}_1 - \mathbf{u}_2)(t') \Vert_{H_N} 
\\
&+\bigg( \int_0^\tau
\Vert \varrho_1 -\varrho_2 \Vert^2_{C^{2+\nu}_x} \, \mathrm{d}t
\bigg)^\frac{1}{2}
+\Vert (\varrho_1 -\varrho_2)(0) \Vert_{C^{2+\nu}_x} 
\end{aligned}
\end{equation}
follow from \eqref{contEq4}. For $0\leq l$, we can therefore infer from \eqref{elecField4} that estimate
\begin{equation}
\begin{aligned}
\label{elecField5}
& \sup_{t' \in [0,\tau]} \sum_{l=0}^1
\Vert \nabla^l (V_1 -V_2)(t') \Vert_{C^{2+\nu}_x} 
\lesssim
\sup_{t' \in [0,\tau]} 
\Vert \Delta(V_1 -V_2)(t') \Vert_{C^{2+\nu}_x} 
\lesssim
\sup_{t' \in [0,\tau]}
\Vert (\mathbf{u}_1 - \mathbf{u}_2)(t') \Vert_{H_N} 
\\
&+\bigg( \int_0^\tau
\Vert \varrho_1 -\varrho_2 \Vert^2_{C^{2+\nu}(\mathbb{T}^3)} \, \mathrm{d}t
\bigg)^\frac{1}{2}
+\Vert (\varrho_1 -\varrho_2)(0) \Vert_{C^{2+\nu}_x} 
\end{aligned}
\end{equation} 
holds true. Also, by using \eqref{invertibleLinearMap}--\eqref{holderInvertibleMap} and the continuity equation \eqref{contEq1}, we rewrite the momentum equation \eqref{momEq1} as
\begin{align}
&\mathrm{d}  \mathbf{u} +\mathcal{M}^{-1}[\varrho] \Pi_N\big[ \mathbf{u}\partial_t \varrho\big]\mathrm{d}t +
\mathcal{M}^{-1}[\varrho]\Pi_N 
\mathrm{div} \big(\varrho  (\chi_{\mathbf{u}_R}  \mathbf{u})  \otimes \mathbf{u} \big) \, \mathrm{d}t + \mathcal{M}^{-1}[\varrho]\Pi_N \chi_{\mathbf{u}_R}  \nabla   p_{\delta}^{ \Gamma}(\varrho) \, \mathrm{d}t 
\nonumber\\&
=
\mathcal{M}^{-1}[\varrho]\Pi_N
\big[\varepsilon\Delta(\varrho\mathbf{u})+ \nu^S \Delta \mathbf{u}    
+(\nu^B + \nu^S)\nabla \mathrm{div} \mathbf{u}\big] \, \mathrm{d}t
+ \vartheta   \nabla V  \, \mathrm{d}t 
\nonumber\\
&\quad 
+ \sum_{k\in \mathbb{N}} \Pi_N  \mathbf{g}_{k,\varepsilon}(\varrho , f, \varrho \mathbf{u} )  \, \mathrm{d}\beta_k.
\end{align}
It follows that $(\mathbf{u}_1- \mathbf{u}_2)$ satisfies
\begin{equation}
\begin{aligned}
\label{momEq4a}
&\mathrm{d} (\mathbf{u}_1- \mathbf{u}_2) =\big( \mathcal{M}^{-1}[\varrho_2] - \mathcal{M}^{-1}[\varrho_1] \big) \Pi_N\big[ \mathbf{u}_1\partial_t \varrho_1 \big]\mathrm{d}t 
+
 \mathcal{M}^{-1}[\varrho_2]\Pi_N\big[ \mathbf{u}_2\partial_t \varrho_2 -  \mathbf{u}_1\partial_t \varrho_1 \big]\mathrm{d}t
\\&-
\mathcal{M}^{-1}[\varrho_1]\Pi_N \Big[ 
\mathrm{div} \big(\varrho_1  (\chi_{\mathbf{u}_{1,R}}  \mathbf{u}_1)  \otimes \mathbf{u}_1 \big)
+
\chi_{\mathbf{u}_{1,R}}  \nabla   p_{\delta}^{ \Gamma}(\varrho_1) 
\Big] \, \mathrm{d}t 
\\&+
\mathcal{M}^{-1}[\varrho_2]\Pi_N \Big[ 
\mathrm{div} \big(\varrho_2  (\chi_{\mathbf{u}_{2,R}}  \mathbf{u}_2)  \otimes \mathbf{u}_2 \big)
+
\chi_{\mathbf{u}_{2,R}}  \nabla   p_{\delta}^{ \Gamma}(\varrho_2) 
\Big] \, \mathrm{d}t 
\\&+
\big( \mathcal{M}^{-1}[\varrho_1] - \mathcal{M}^{-1}[\varrho_2] \big) 
 \Pi_N
\big[\varepsilon\Delta(\varrho_1\mathbf{u}_1)+ \nu^S \Delta \mathbf{u}_1   +(\nu^B + \nu^S)\nabla \mathrm{div} \mathbf{u}_1 \big] \, \mathrm{d}t  
\\&
 +\mathcal{M}^{-1}[\varrho_2]
 \Pi_N
\big[\varepsilon\Delta(\varrho_1\mathbf{u}_1 - \varrho_2\mathbf{u}_2)+ \nu^S \Delta (\mathbf{u}_1 - \mathbf{u}_2)   
+(\nu^B + \nu^S)\nabla \mathrm{div}( \mathbf{u}_1- \mathbf{u}_2)  \big] \, \mathrm{d}t  
\\&+ \vartheta \nabla (V_1-  V_2)\, \mathrm{d}t  
+ \sum_{k\in \mathbb{N}} \Pi_N \big[  \mathbf{g}_{k,\varepsilon}(\varrho_1 , f, \varrho_1 \mathbf{u}_1 ) - \mathbf{g}_{k,\varepsilon}(\varrho_2 , f, \varrho_2 \mathbf{u}_2 ) \big] \, \mathrm{d}\beta_k.
\end{aligned}
\end{equation} 
By applying It\^o's product rule to \eqref{momEq4a}, we obtain
\begin{equation}
\begin{aligned}
\label{momEq4}
&\mathrm{d}  \int_{\mathbb{T}^3}\frac{1}{2} \vert  \mathbf{u}_1- \mathbf{u}_2\vert^2 \, \mathrm{d}x
=
 \int_{\mathbb{T}^3} 
 \big( \mathcal{M}^{-1}[\varrho_2] - \mathcal{M}^{-1}[\varrho_1] \big) \Pi_N\big[ \mathbf{u}_1\partial_t \varrho_1 \big]\cdot (\mathbf{u}_1 -\mathbf{u}_2) \, \mathrm{d}x \mathrm{d}t 
\\&+
 \int_{\mathbb{T}^3} 
 \mathcal{M}^{-1}[\varrho_2]\Pi_N\big[ \mathbf{u}_2\partial_t \varrho_2 -  \mathbf{u}_1\partial_t \varrho_1 \big]\cdot (\mathbf{u}_1 -\mathbf{u}_2) \, \mathrm{d}x \mathrm{d}t
\\&-
 \int_{\mathbb{T}^3} 
\mathcal{M}^{-1}[\varrho_1]\Pi_N \Big[ 
\mathrm{div} \big(\varrho_1  (\chi_{\mathbf{u}_{1,R}}  \mathbf{u}_1)  \otimes \mathbf{u}_1 \big)
+
\chi_{\mathbf{u}_{1,R}}  \nabla   p_{\delta}^{ \Gamma}(\varrho_1) 
\Big]
 (\mathbf{u}_1 -\mathbf{u}_2) \, \mathrm{d}x \, \mathrm{d}t 
\\&+
 \int_{\mathbb{T}^3} 
\mathcal{M}^{-1}[\varrho_2]\Pi_N \Big[ 
\mathrm{div} \big(\varrho_2  (\chi_{\mathbf{u}_{2,R}}  \mathbf{u}_2)  \otimes \mathbf{u}_2 \big)
+
\chi_{\mathbf{u}_{2,R}}  \nabla   p_{\delta}^{ \Gamma}(\varrho_2) 
\Big] 
(\mathbf{u}_1 -\mathbf{u}_2) \, \mathrm{d}x \, \mathrm{d}t 
\\&+
 \int_{\mathbb{T}^3} 
\big( \mathcal{M}^{-1}[\varrho_1] - \mathcal{M}^{-1}[\varrho_2] \big) 
 \Pi_N
\big[\varepsilon\Delta(\varrho_1\mathbf{u}_1)+ \nu^S \Delta \mathbf{u}_1   +(\nu^B + \nu^S)\nabla \mathrm{div} \mathbf{u}_1 \big] 
(\mathbf{u}_1 -\mathbf{u}_2) \, \mathrm{d}x \, \mathrm{d}t 
\\&
 + \int_{\mathbb{T}^3} \mathcal{M}^{-1}[\varrho_2]
 \Pi_N
\big[\varepsilon\Delta(\varrho_1\mathbf{u}_1 - \varrho_2\mathbf{u}_2)+ \nu^S \Delta (\mathbf{u}_1 - \mathbf{u}_2)   
\\&
+(\nu^B + \nu^S)\nabla \mathrm{div}( \mathbf{u}_1- \mathbf{u}_2)  \big]\cdot (\mathbf{u}_1 -\mathbf{u}_2) \, \mathrm{d}x \, \mathrm{d}t  
+ \vartheta 
 \int_{\mathbb{T}^3} \nabla (V_1-  V_2) \cdot (\mathbf{u}_1 -\mathbf{u}_2) \, \mathrm{d}x\, \mathrm{d}t  
\\&
+ \frac{1}{2}\sum_{k\in \mathbb{N}} \int_{\mathbb{T}^3} \Big\vert \Pi_N \big[  \mathbf{g}_{k,\varepsilon}(\varrho_1 , f, \varrho_1 \mathbf{u}_1 ) - \mathbf{g}_{k,\varepsilon}(\varrho_2 , f, \varrho_2 \mathbf{u}_2 ) \big] 
\Big\vert^2 \mathrm{d}x\, \mathrm{d}t
\\&
+ \sum_{k\in \mathbb{N}} \int_{\mathbb{T}^3} \Pi_N \big[  \mathbf{g}_{k,\varepsilon}(\varrho_1 , f, \varrho_1 \mathbf{u}_1 ) - \mathbf{g}_{k,\varepsilon}(\varrho_2 , f, \varrho_2 \mathbf{u}_2 ) \big] 
\cdot (\mathbf{u}_1 -\mathbf{u}_2) \, \mathrm{d}x\, \mathrm{d}\beta_k.
\end{aligned}
\end{equation} 
Now just as in \cite[(4.40)--(4.42)]{breit2018stoch}, by using  the definition of the stopping time $\tau_n$, \eqref{holderInvertibleMap}, \eqref{stochCoeffBound1Exis0a}--\eqref{stochsummableConsta}, 
as well as the Burkholder--Davis--Gundy inequality to tackle the noise term, it follows from \eqref{momEq4} that for any $\kappa>0$, the estimate
\begin{equation}
\begin{aligned}
\label{momEq5}
&\mathbb{E}\sup_{t' \in [0,T]}
\Vert (\mathbf{u}_1 -\mathbf{u}_2)(t' \wedge \tau_n) \Vert_{H_N}^2 
\leq
 \kappa \,\mathbb{E}
\sup_{t' \in [0,T]}
\Vert (\varrho_1 - \varrho_2)(t') \Vert_{C^{2+\nu}_x}^2 
\\&+c_{n, \kappa} \, \mathbb{E}
\int_0^{T\wedge\tau_n} \big(
\Vert \mathbf{u}_1 -\mathbf{u}_2 \Vert^2_{H_N}
+
\Vert \varrho_1 -\varrho_2 \Vert^2_{L^{2}_x}
+
\Vert \nabla( V_1 - V_2) \Vert_{L^{2}_x}^2 \big) \mathrm{d}t
+ \mathbb{E}\, \Vert (\mathbf{u}_1 -\mathbf{u}_2)(0) \Vert^2_{H_N}
\end{aligned}
\end{equation}
holds true. For $\tau =T\wedge \tau_n$, we can combine \eqref{contEq5}--\eqref{elecField5} with \eqref{momEq5} and we get
\begin{equation}
\begin{aligned}
\mathbb{E}&\sup_{t' \in [0,T]}  \bigg(
\Vert (\mathbf{u}_1 -\mathbf{u}_2)(t' \wedge \tau_n) \Vert_{H_N}^2 +\Vert (\varrho_1 - \varrho_2)(t') \Vert_{C^{2+\nu}_x}^2 
+
 \sum_{l=0}^1
\Vert \nabla^l  (V_1 - V_2)(t') \Vert_{C^{2+\nu}_x}^2
\bigg)
\\&\lesssim
 \mathbb{E}
\int_0^{T\wedge\tau_n} \big(
\Vert \mathbf{u}_1 -\mathbf{u}_2 \Vert^2_{H_N}
+
\Vert \varrho_1 -\varrho_2 \Vert^2_{C^{2+\nu}_x}
+
\Vert \nabla( V_1 - V_2) \Vert_{C^{2+\nu}_x}^2 \big) \mathrm{d}t
\\&
+\mathbb{E}\,\Vert (\varrho_1 -\varrho_2)(0) \Vert_{C^{2+\nu}_x}  + \mathbb{E}\, \Vert (\mathbf{u}_1 -\mathbf{u}_2)(0) \Vert^2_{H_N}.
\end{aligned}
\end{equation}
Gronwall's lemma, the fact that $\mathbf{u}_1\vert_{t=0}= \mathbf{u}_2\vert_{t=0}=\mathbf{u}_0$ and $\varrho_1\vert_{t=0}= \varrho_2\vert_{t=0}=\varrho_0$ finishes the proof.
\end{proof}
\subsection{Conclusion}
Pathwise uniqueness as shown in  Section \ref{sec:pathUniqFirstLayer} and martingale solution given by Proposition \ref{prop:martSolFinite} combines to give the existence of a pathwise solution Theorem \ref{thm:mainFirstLayer}. This relies on
Gy\"ongy--Krylov characterization of convergence given in  \cite[Theorem 2.10.3]{gyongy1996existence}. In order to show that the hypothesis of the Gy\"ongy--Krylov result are satisfied, we refer the reader to the short proof of \cite[Theorem 4.1.12]{breit2018stoch}.
\begin{remark}
As in \cite[Corollary 4.1.13]{breit2018stoch}, we can relax the assumption on the data $(\varrho_0\mathbf{u}_0,f )$ in Theorem \ref{thm:mainFirstLayer}. In particular, this can be replaced  by any $\mathscr{F}_0$-measurable random variables $(\varrho_0\mathbf{u}_0,f )$ satisfying
\begin{align*}
\varrho_0 \geq \underline{\varrho}>0, \quad \varrho_0\in C^{2+\nu}_x, \quad  f \in C^{\nu}_x, \quad \mathbf{u}_0 \in H_N
\end{align*}
$\mathbb{P}$-a.s.
\end{remark}
\subsection{Energy equality}
We now show that the solution constructed in Theorem \ref{thm:mainFirstLayer} in the sense of Definition \ref{def:pathSolFiniteDim} satisfies an energy equality. This is given the following result.
\begin{proposition}
\label{prop:energyEquality}
Let  $(\varrho, \mathbf{u}, V)$ be a pathwise solution of \eqref{contEq1}--\eqref{elecField1} in the sense of Definition \ref{def:pathSolFiniteDim}. Then the energy equality
\begin{equation}
\begin{aligned}
\label{energyEquality}
-
&\int_0^T\partial_t \psi \int_{{\mathbb{T}^3}}\bigg[ \frac{1}{2} \varrho  \vert \mathbf{u}  \vert^2  
+ P^\Gamma_\delta(\varrho ) 
\pm
\vartheta \vert \nabla V \vert^2 
\bigg]\,\mathrm{d}x \, \mathrm{d}t
+\nu^S
\int_0^T\psi \int_{{\mathbb{T}^3}}\vert  \nabla \mathbf{u}  \vert^2 \,\mathrm{d}x  \,\mathrm{d}x
\\&+(\nu^B+\nu^S)
\int_0^T \psi \int_{{\mathbb{T}^3}}\vert  \mathrm{div}\, \mathbf{u}  \vert^2 \,\mathrm{d}x  \,\mathrm{d}t
+\varepsilon
\int_0^T\psi \int_{{\mathbb{T}^3}} \varrho\vert  \nabla \mathbf{u}  \vert^2 \,\mathrm{d}x  \,\mathrm{d}x
\\&+\varepsilon 
\int_0^T \psi \int_{{\mathbb{T}^3}}(P{_\delta^\Gamma}) ''(\varrho)\vert  \nabla \varrho  \vert^2 \,\mathrm{d}x  \,\mathrm{d}t
=
\psi(0)
 \int_{{\mathbb{T}^3}}\bigg[\frac{1}{2} \varrho _0\vert \mathbf{u} _0 \vert^2  
+ P^\Gamma_\delta(\varrho _0)
\pm
\int_{\mathbb{T}^3}  
\vartheta \vert \nabla V_0 \vert^2
 \bigg]\,\mathrm{d}x
\\&
+\frac{1}{2}\int_0^T \psi
 \int_{{\mathbb{T}^3}}
 \varrho
 \sum_{k\in\mathbb{N}} \big\vert \Pi_N
 \mathbf{g}_{k, \varepsilon}(x,\varrho ,f,\mathbf{m}  ) 
\big\vert^2\,\mathrm{d}x\,\mathrm{d}t
+\int_0^T \psi  \int_{{\mathbb{T}^3}}   \varrho\mathbf{u} \cdot \Pi_N \mathbf{G}_\varepsilon(\varrho , f, \mathbf{m} )\,\mathrm{d}x\,\mathrm{d}W ;
\end{aligned}
\end{equation}
holds $\mathbb{P}$-a.s. for all $\psi \in C^\infty_c ([0,T))$.
\end{proposition}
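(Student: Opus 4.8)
The plan is to carry out, rigorously, the formal computation of Section~\ref{sec:energyIneq}: apply It\^o's formula to the $\varrho$--weighted kinetic energy and then eliminate the pressure and electric--field contributions by means of the (pathwise, $\mathrm{d}t$--only) equations \eqref{contEq1} and \eqref{elecField1}. Since $\mathbf u$ is $H_N$--valued and $\varrho$ carries no martingale part, this can be done entirely within the equivalent finite--dimensional It\^o dynamics furnished by Definition~\ref{def:pathSolFiniteDim}, so no spatial mollification is required; the bounds of Theorem~\ref{thm:mainFirstLayer} (density bounded and bounded away from zero, velocity in $H_N$, $V\in C^{2+\nu}_x$ with $\partial_tV\in C^{2+\nu}_x$) legitimise all the integrations by parts below and make $P_\delta^\Gamma(\varrho)$ and $|\nabla V|^2$ absolutely continuous in time.

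Writing $\mathbf m_N:=\Pi_N(\varrho\mathbf u)=\mathcal M[\varrho]\mathbf u$, one has $\int_{\mathbb T^3}\tfrac12\varrho|\mathbf u|^2\,\mathrm dx=\tfrac12\langle\mathbf u,\mathbf m_N\rangle_{L^2_x}$, so I would apply the It\^o product rule to $t\mapsto\langle\mathbf u(t),\mathbf m_N(t)\rangle$. Differentiating the $\varrho$--dependent coefficient $\mathcal M[\varrho]$ yields the extra drift $-\tfrac12\int_{\mathbb T^3}\partial_t\varrho\,|\mathbf u|^2\,\mathrm dx$; inserting $\partial_t\varrho=-\mathrm{div}(\varrho\,\chi_{\mathbf u_R}\mathbf u)+\varepsilon\Delta\varrho$ from \eqref{contEq1} (and using that $\chi_{\mathbf u_R}$ is constant in $x$) this cancels the drift obtained by testing the convective term $\mathrm{div}(\varrho\,\chi_{\mathbf u_R}\mathbf u\otimes\mathbf u)$ of \eqref{momEq1} with $\mathbf u$, and leaves the piece $-\tfrac{\varepsilon}{2}\int\nabla\varrho\cdot\nabla|\mathbf u|^2$. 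Testing the remaining drift terms of \eqref{momEq1} with $\mathbf u$: the Lam\'e operators give $-\nu^S\|\nabla\mathbf u\|_{L^2_x}^2-(\nu^B+\nu^S)\|\mathrm{div}\,\mathbf u\|_{L^2_x}^2$, and the artificial viscosity $\varepsilon\Delta(\varrho\mathbf u)$ gives $-\varepsilon\int\varrho|\nabla\mathbf u|^2-\tfrac{\varepsilon}{2}\int\nabla\varrho\cdot\nabla|\mathbf u|^2$, whose second summand annihilates the leftover above and produces the dissipation $\varepsilon\int\varrho|\nabla\mathbf u|^2$.

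For the pressure term I would use the elementary identity $\varrho\,(P_\delta^\Gamma)''(\varrho)=(p_\delta^\Gamma)'(\varrho)$, valid by construction of the pressure potential associated with \eqref{higherPressure}: testing $-\chi_{\mathbf u_R}\nabla p_\delta^\Gamma(\varrho)$ with $\mathbf u$ and comparing with $\tfrac{\mathrm d}{\mathrm dt}\!\int P_\delta^\Gamma(\varrho)=\int(P_\delta^\Gamma)'(\varrho)\big(-\mathrm{div}(\varrho\,\chi_{\mathbf u_R}\mathbf u)+\varepsilon\Delta\varrho\big)$ gives, after one integration by parts, exactly $\tfrac{\mathrm d}{\mathrm dt}\!\int P_\delta^\Gamma(\varrho)$ together with the dissipation $\varepsilon\int(P_\delta^\Gamma)''(\varrho)|\nabla\varrho|^2$. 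For the term $\vartheta\varrho\nabla V$ I would integrate by parts in space, substitute the continuity--Poisson relation \eqref{pmV} (available since $f$ is time--independent), and integrate by parts once more, which produces the electric--field term $\pm\vartheta|\nabla V|^2$ of \eqref{energyEquality}, just as in Section~\ref{sec:energyIneq}. Finally, the quadratic--variation correction of the product rule equals $\tfrac12\sum_k\int\varrho\,|\Pi_N\mathbf g_{k,\varepsilon}|^2$ (match the martingale parts of $\mathbf u$ and $\mathbf m_N$ and use $\langle a,\Pi_N b\rangle=\langle a,b\rangle$ for $a\in H_N$), while the martingale part is $\sum_k\int\varrho\mathbf u\cdot\Pi_N\mathbf g_{k,\varepsilon}\,\mathrm d\beta_k=\int\varrho\mathbf u\cdot\Pi_N\mathbf G_\varepsilon\,\mathrm dW$.

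Collecting these identities gives the energy balance in integrated form on $[0,t]$, valid for every $t\in[0,T]$ $\mathbb P$--a.s.; to reach the distributional--in--time statement \eqref{energyEquality} one multiplies by $\psi'(t)$, integrates over $(0,T)$ and integrates by parts in time in the energy, the dissipation and the drift of the It\^o correction, the boundary term at $t=T$ vanishing because $\psi\in C^\infty_c([0,T))$ and the one at $t=0$ producing the $\psi(0)$--terms (the stochastic integral pairs with the deterministic $\psi$ with no difficulty). I expect the only real obstacle to be organisational: executing the It\^o product rule for the quadratic form $\tfrac12\langle\mathbf u,\mathcal M[\varrho]\mathbf u\rangle$ with a time--dependent, martingale--free coefficient and tracking every cancellation — the convective one, the $\nabla\varrho\cdot\nabla|\mathbf u|^2$ one, and the pressure identity $\varrho(P_\delta^\Gamma)''=(p_\delta^\Gamma)'$. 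The single ingredient absent from the purely fluid case of \cite{breit2018stoch} is the electric--field term, which is handled by the linear continuity--Poisson substitution and is essentially immediate.
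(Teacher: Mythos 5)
Your proposal is correct and follows essentially the same route as the paper: the paper applies the It\^o formula of Theorem~\ref{thm:itoLemma} to $F(\varrho,\mathbf u)=\int_{\mathbb T^3}\tfrac12\varrho|\mathbf u|^2\,\mathrm dx$ (equivalently, your product rule for $\tfrac12\langle\mathbf u,\mathcal M[\varrho]\mathbf u\rangle$), performs exactly the cancellations you describe for the convective and artificial-viscosity terms, recovers $P^\Gamma_\delta$ and the $\varepsilon\int(P^\Gamma_\delta)''(\varrho)|\nabla\varrho|^2$ dissipation from the renormalized continuity equation, and treats the electric-field term via the continuity--Poisson substitution of Section~\ref{sec:energyIneq}. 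Aside from a harmless sign slip in the intermediate $\tfrac{\varepsilon}{2}\int\nabla\varrho\cdot\nabla|\mathbf u|^2$ bookkeeping, the argument matches the paper's proof.
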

In order to show \eqref{energyEquality}, we  mimic the formal argument of Section \ref{sec:energyIneq}, i.e., we consider
\begin{align}
&\mathrm{d} \varrho  + \mathrm{div}\big(\varrho (\chi_{\mathbf{u}_R}  \mathbf{u}) \big) \, \mathrm{d}t =\varepsilon \Delta \varrho\, \mathrm{d}t, 
\label{contEq3}
\\
&\mathrm{d}  \Pi_N\mathbf{u}  + \Pi_N
\big[ \varepsilon \varrho^{-1} \Delta \varrho\cdot \mathbf{u} + \chi_{\mathbf{u}_R}
  \mathbf{u}  \cdot \nabla \mathbf{u}  + \varrho^{-1}\chi_{\mathbf{u}_R}\nabla  p^\Gamma_\delta( \varrho) \big]\, \mathrm{d}t 
  =\Pi_N \big[\varepsilon \varrho^{-1} \Delta(\varrho \mathbf{u}) + \nu^S \varrho^{-1}\Delta \mathbf{u} 
\nonumber \\&
\qquad +(\nu^B + \nu^S) \varrho^{-1} \nabla \mathrm{div} \mathbf{u} 
  + \vartheta   \nabla V  \big] \, \mathrm{d}t +  \sum_{k\in \mathbb{N}} \Pi_N  \mathbf{g}_{k, \varepsilon}(x,\varrho , f,  \varrho \mathbf{u} ) \, \mathrm{d}\beta_k ,
\label{momEq3}
\\
&\pm \Delta V  = \varrho  - f .
\end{align}
and apply It\^o's lemma, Theorem \ref{thm:itoLemma}, to  the functional
\begin{align*}
F(\varrho,  \mathbf{u})(t)=\int_{\mathbb{T}^3}\frac{1}{2} \varrho(t) \vert \mathbf{u}(t) \vert^2 \, \mathrm{d}x
\end{align*}
and we obtain
\begin{equation}
\begin{aligned}
\label{energyEqu}
\int_{\mathbb{T}^3}&\frac{1}{2} \varrho(t) \vert \mathbf{u}(t) \vert^2 \, \mathrm{d}x
= \int_{\mathbb{T}^3} \frac{1}{2}\varrho_0 \vert \mathbf{u}_0 \vert^2 \, \mathrm{d}x
-
\int_0^t\int_{\mathbb{T}^3}  \big[\varepsilon \vert \mathbf{u} \vert^2 \Delta \varrho  
+ \varrho\mathbf{u}\cdot \chi_{\mathbf{u}_R}\mathbf{u}\cdot \nabla \mathbf{u}
\\&+ \mathbf{u} \,\chi_{\mathbf{u}_R} \nabla p^\Gamma_\delta \big]\mathrm{d}x \, \mathrm{d}s
+
\int_0^t\int_{\mathbb{T}^3}  
\big[\varepsilon \mathbf{u} \cdot \Delta (\varrho  \mathbf{u}) 
+
 \nu^S \mathbf{u}\cdot\Delta \mathbf{u}  
 \\&+(\nu^B + \nu^S) \mathbf{u}\cdot\nabla \mathrm{div} \mathbf{u} 
+ \vartheta \varrho\mathbf{u} \cdot  \nabla V
  \big] \, \mathrm{d}x \, \mathrm{d}s 
-\int_0^t\int_{\mathbb{T}^3} \frac{1}{2}\vert \mathbf{u} \vert^2 \big[ \mathrm{div}(\varrho\chi_{\mathbf{u}_R} \mathbf{u}) - \varepsilon \Delta \varrho \big] \, \mathrm{d}x \, \mathrm{d}s 
\\&+
\sum_{k \in \mathbb{N}} \int_0^t\int_{\mathbb{T}^3} \frac{1}{2} \varrho \vert \Pi_N \mathbf{g}_{k,\varepsilon}(x,\varrho , f, \varrho \mathbf{u} ) \vert^2 \mathrm{d}x\, \mathrm{d}s
+\sum_{k \in \mathbb{N}} \int_0^t\int_{\mathbb{T}^3} \varrho\mathbf{u} \cdot  \Pi_N  \mathbf{g}_{k,\varepsilon}(x,\varrho , f,  \varrho \mathbf{u}) \mathrm{d}x\, \mathrm{d}\beta_k.
\end{aligned}
\end{equation}
$\mathbb{P}$-a.s. 
However, note that
\begin{equation}
\begin{aligned}
\int_{\mathbb{T}^3}\varepsilon \mathbf{u} \cdot \Delta(\varrho \mathbf{u}) \, \mathrm{d}x 
&=
-
\int_{\mathbb{T}^3} \varepsilon \nabla\mathbf{u}: \nabla(\varrho \mathbf{u}) \, \mathrm{d}x 
=
-
\int_{\mathbb{T}^3} \varepsilon \nabla\mathbf{u}: \nabla\varrho \otimes \mathbf{u} \, \mathrm{d}x
-
\int_{\mathbb{T}^3} \varepsilon \nabla\mathbf{u}: \varrho \nabla \mathbf{u} \, \mathrm{d}x
\\&
=
\frac{1}{2}
\int_{\mathbb{T}^3} \varepsilon \vert\mathbf{u}\vert^2 \Delta\varrho  \, \mathrm{d}x
-
\int_{\mathbb{T}^3} \varepsilon \varrho  \vert \nabla\mathbf{u} \vert^2 \, \mathrm{d}x.
\end{aligned}
\end{equation}
Also,
\begin{equation}
\begin{aligned}
-\int_{\mathbb{T}^3}& \varrho\mathbf{u}\cdot \chi_{\mathbf{u}_R}\mathbf{u}\cdot \nabla \mathbf{u} \, \mathrm{d}x 
=
\int_{\mathbb{T}^3} \frac{1}{2}\vert\mathbf{u} \vert^2 \mathrm{div}( \varrho \chi_{\mathbf{u}_R} \mathbf{u})  \, \mathrm{d}x.
\end{aligned}
\end{equation}
If we combine the computations above with \eqref{greenFunction}--\eqref{poissonPartAlt}, equation \eqref{energyEqu} therefore reduces to
\begin{equation}
\begin{aligned}
\label{energyEquFirstLayerFirstPart}
\int_{\mathbb{T}^3}&\bigg[ \frac{1}{2} \varrho  \vert \mathbf{u}  \vert^2  
\pm
\vartheta \vert \nabla V \vert^2 
\bigg] \, \mathrm{d}x
= \int_{\mathbb{T}^3} \bigg[ \frac{1}{2} \varrho  \vert \mathbf{u}  \vert^2  
\pm
\vartheta \vert \nabla V_0 \vert^2
\bigg] \, \mathrm{d}x
-
\int_0^t\int_{\mathbb{T}^3}  \varepsilon \varrho \vert \nabla \mathbf{u} \vert^2 \,\mathrm{d}x \, \mathrm{d}s   
\\&+ \int_0^t\int_{\mathbb{T}^3}
  p^\Gamma_\delta \mathrm{div}\big( \chi_{\mathbf{u}_R}\mathbf{u} \big) \,\mathrm{d}x \, \mathrm{d}s
+
\int_0^t\int_{\mathbb{T}^3}  
\big[
 \nu^S \mathbf{u}\cdot\Delta \mathbf{u}  +(\nu^B + \nu^S) \mathbf{u}\cdot\nabla \mathrm{div} \mathbf{u} 
  \big] \, \mathrm{d}x \, \mathrm{d}s 
\\&+
\sum_{k \in \mathbb{N}} \int_0^t\int_{\mathbb{T}^3} \frac{1}{2} \varrho \vert \Pi_N  \mathbf{g}_{k,\varepsilon}(x,\varrho , f, \varrho \mathbf{u} ) \vert^2 \mathrm{d}x\, \mathrm{d}s
+\sum_{k \in \mathbb{N}} \int_0^t\int_{\mathbb{T}^3} \varrho\mathbf{u} \cdot  \Pi_N  \mathbf{g}_{k,\varepsilon}(x,\varrho , f,  \varrho \mathbf{u}) \mathrm{d}x\, \mathrm{d}\beta_k.
\end{aligned}
\end{equation}
$\mathbb{P}$-a.s. Now if we multiply \eqref{contEq3} by  $P^\Gamma_\delta(\varrho)'$, then we get the following \textit{renormalized continuity equation}
\begin{equation}
\begin{aligned}
\label{renormalisedConEqFirstLayer}
\int_{\mathbb{T}^3}&P^\Gamma_\delta(\varrho(t)) \, \mathrm{d}x
=
\int_{\mathbb{T}^3} P^\Gamma_\delta(\varrho_0) \, \mathrm{d}x
+
\int_0^t \int_{\mathbb{T}^3} \Big[\varepsilon P^\Gamma_\delta(\varrho)' \Delta \varrho - \mathrm{div} \big(P^\Gamma_\delta(\varrho)  \chi_{\mathbf{u}_R} \mathbf{u} \big)  
\\&- 
\big[P^\Gamma_\delta(\varrho)' \varrho -P^\Gamma_\delta(\varrho) \big] \mathrm{div} \big(  \chi_{\mathbf{u}_R}  \mathrm{u}\big)\Big]  \, \mathrm{d}x \, \mathrm{d}s.
\end{aligned}
\end{equation}
Recall that by Definition \ref{def:pathSolFiniteDim}, \eqref{contEq3} is satisfied pointwise almost everywhere. The proof is done once we use the identities
\begin{align*}
p^\Gamma_\delta(\varrho) = P^\Gamma_\delta(\varrho)' \varrho -P^\Gamma_\delta(\varrho) 
\end{align*}
and
\begin{align*}
\int_{\mathbb{T}^3} \varepsilon P^\Gamma_\delta(\varrho)' \Delta \varrho  \, \mathrm{d}x
=
-
\int_{\mathbb{T}^3} \varepsilon P^\Gamma_\delta(\varrho)'' \vert \nabla \varrho \vert^2  \, \mathrm{d}x
\end{align*}
together with the divergence theorem
in \eqref{renormalisedConEqFirstLayer} and substitute the resulting equation into \eqref{energyEquFirstLayerFirstPart}.

\section{The second approximation layer}
\label{sec:secondLayer}
We now wish to construct a class of solution to the following system 
\begin{align}
&\mathrm{d} \varrho  + \mathrm{div}(\varrho  \mathbf{u})  \, \mathrm{d}t =\varepsilon \Delta \varrho\, \mathrm{d}t, 
\label{contEqSecond1}
\\
&\mathrm{d}\Pi_N(\varrho  \mathbf{u} ) +
\Pi_N 
\big[
\mathrm{div} (\varrho  \mathbf{u}  \otimes \mathbf{u} ) +  \nabla   p_{\delta}^{ \Gamma}(\varrho) \big]\, \mathrm{d}t =
\Pi_N
\big[\varepsilon\Delta(\varrho\mathbf{u})+ \nu^S \Delta \mathbf{u} +(\nu^B + \nu^S)\nabla \mathrm{div} \mathbf{u}
\nonumber\\&
\quad  + \vartheta \varrho  \nabla V  \big] \, \mathrm{d}t   
+ \sum_{k\in \mathbb{N}}\Pi_N\big[ \varrho\Pi_N   \mathbf{g}_{k,\varepsilon}(\varrho , f, \varrho \mathbf{u} ) \big] \, \mathrm{d}\beta_k ,
\label{momEqSecond1}
\\
&\pm \Delta V  = \varrho  - f, \label{elecFieldSecond1}
\end{align}
which conserves energy by passing to the limit $R\rightarrow \infty$ in \eqref{contEq1}--\eqref{momEq1}. We now give the precise definition of this solution.
\begin{definition}
\label{def:pathSolSecondLayer}
Let $(\Omega, \mathscr{F}, (\mathscr{F}_t)_{t\geq0 }, \mathbb{P})$ be a stochastic basis where the filtration is complete and right-continuous. Let $W$ be an $(\mathscr{F}_t)$-cylindrical Wiener process and further let $(\varrho_0, \mathbf{u_0},f)$ be $\mathscr{F}_0$-measurable random variables belonging to $C^{2+\nu}(\mathbb{T}^3) \times H_N \times C^{\nu}(\mathbb{T}^3)$. Then 
$
[\varrho , \mathbf{u} , V ]$
is a \textit{pathwise solution} of \eqref{contEqSecond1}--\eqref{elecFieldSecond1} with data  $(\varrho_0, \mathbf{u_0},f)$ if
\begin{enumerate}
\item the density  $\varrho>0$, $\varrho \in C \big( [0,T]; C^{2+\nu}_x\big)$, $\partial_t \varrho \in C \big( [0,T]; C^{\nu}_x \big)$
 $\mathbb{P}$-a.s. and it is $(\mathscr{F}_t)$-progressively measurable;
\item the velocity field $\mathbf{u} \in C \big( [0,T]; H_N\big) $ $\mathbb{P}$-a.s. and it is $(\mathscr{F}_t)$-progressively measurable;
\item the electric field $V \in C \big( [0,T]; C^{2+\nu}_x\big)$ $\mathbb{P}$-a.s. and it is $(\mathscr{F}_t)$-progressively measurable;
\item the following $(\varrho(0), \mathbf{u}(0)) = (\varrho_0, \mathbf{u}_0)$ holds $\mathbb{P}$-a.s;
\item equation \eqref{contEqSecond1} holds $\mathbb{P}$-a.s. for a.e.  $(t,x) \in(0,T)\times \mathbb{T}^3$;
\item equation \eqref{elecFieldSecond1} holds $\mathbb{P}$-a.s. for a.e.  $(t,x) \in(0,T)\times \mathbb{T}^3$;
\item for all $\psi \in C^\infty_c ([0,T))$ and $\bm{\phi} \in H_N$, the following
\begin{equation}
\begin{aligned}
&-\int_0^T \partial_t \psi \int_{{\mathbb{T}^3}} \varrho  \mathbf{u} (t) \cdot \bm{\phi} \, \mathrm{d}x    \mathrm{d}t
=
\psi(0)  \int_{{\mathbb{T}^3}} \varrho _0 \mathbf{u} _0 \cdot \bm{\phi} \, \mathrm{d}x  
+  \int_0^T \psi \int_{{\mathbb{T}^3}}  \varrho   \mathbf{u} \otimes \mathbf{u} : \nabla \bm{\phi} \, \mathrm{d}x  \mathrm{d}t 
\\&-
 \nu^S\int_0^T \psi \int_{{\mathbb{T}^3}} \nabla \mathbf{u} : \nabla \bm{\phi} \, \mathrm{d}x  \mathrm{d}t 
-(\nu^B+ \nu^S )\int_0^T \psi \int_{{\mathbb{T}^3}} \mathrm{div}\,\mathbf{u} \, \mathrm{div}\, \bm{\phi} \, \mathrm{d}x  \mathrm{d}t    
\\&+   
\int_0^T \psi \int_{{\mathbb{T}^3}}    p_{\delta}^{\Gamma}\, \mathrm{div} \, \bm{\phi} \, \mathrm{d}x  \mathrm{d}t
+\int_0^T \psi \int_{{\mathbb{T}^3}} \varepsilon\varrho\mathbf{u}    \Delta\bm{\phi} \, \mathrm{d}x   \mathrm{d}t
\\&+\int_0^T \psi \int_{{\mathbb{T}^3}} \vartheta\varrho  \nabla V  \cdot \bm{\phi} \, \mathrm{d}x    \mathrm{d}t
+
\int_0^T \psi \sum_{k\in \mathbb{N}} \int_{{\mathbb{T}^3}}  \Pi_N\big[ \varrho\Pi_N \mathbf{g}_{k,\varepsilon}(\varrho , f, \varrho \mathbf{u} ) \big] \cdot \bm{\phi}  \, \mathrm{d}x \mathrm{d}\beta_k  
\end{aligned}
\end{equation}
hold $\mathbb{P}$-a.s;
\item the energy equality
\begin{equation}
\begin{aligned}
\label{energyEqualitySecondLayer}
-
\int_0^T& \partial_t \psi \int_{{\mathbb{T}^3}}\bigg[ \frac{1}{2} \varrho  \vert \mathbf{u}  \vert^2  
+ P^\Gamma_\delta(\varrho ) 
\pm
\vartheta \vert \nabla V \vert^2 
\bigg]\,\mathrm{d}x \, \mathrm{d}t
+\nu^S
\int_0^T\psi \int_{{\mathbb{T}^3}}\vert  \nabla \mathbf{u}  \vert^2 \,\mathrm{d}x  \,\mathrm{d}x
\\&+(\nu^B+\nu^S)
\int_0^T \psi \int_{{\mathbb{T}^3}}\vert  \mathrm{div}\, \mathbf{u}  \vert^2 \,\mathrm{d}x  \,\mathrm{d}t
+\varepsilon
\int_0^T\psi \int_{{\mathbb{T}^3}} \varrho\vert  \nabla \mathbf{u}  \vert^2 \,\mathrm{d}x  \,\mathrm{d}x
\\&+\varepsilon 
\int_0^T \psi \int_{{\mathbb{T}^3}}(P{_\delta^\Gamma}) ''(\varrho)\vert  \nabla \varrho  \vert^2 \,\mathrm{d}x  \,\mathrm{d}t
=
\psi(0)
 \int_{{\mathbb{T}^3}}\bigg[\frac{1}{2} \varrho _0\vert \mathbf{u} _0 \vert^2  
+ P^\Gamma_\delta(\varrho _0) 
\pm
\vartheta \vert \nabla V_0 \vert^2
 \bigg]\,\mathrm{d}x
\\&
+\frac{1}{2}\int_0^T \psi
 \int_{{\mathbb{T}^3}}
 \varrho
 \sum_{k\in\mathbb{N}} \big\vert \Pi_N
 \mathbf{g}_{k, \varepsilon}(x,\varrho ,f,\mathbf{m}  ) 
\big\vert^2\,\mathrm{d}x\,\mathrm{d}t
+\int_0^T \psi  \int_{{\mathbb{T}^3}}   \varrho\mathbf{u} \cdot  \Pi_N\mathbf{G}_\varepsilon(\varrho , f, \mathbf{m} )\,\mathrm{d}x\,\mathrm{d}W ;
\end{aligned}
\end{equation}
holds $\mathbb{P}$-a.s. for all $\psi \in C^\infty_c ([0,T))$.
\end{enumerate}
\end{definition}

We now state the main result of this section.
\begin{theorem}
\label{thm:mainSecondLayer}
Let $(\Omega, \mathscr{F}, (\mathscr{F}_t)_{t\geq0}, \mathbb{P})$ be a stochastic basis with a complete right-continuous filtration $(\mathscr{F}_t)_{t\geq0}$. Let $W$ be an $(\mathscr{F}_t)$-cylindrical Wiener process and let $\varrho_0 \in C^{2+\nu}_x$ for $\nu\in(0,1)$, $\mathbf{u}_0 \in H_N$, $f \in C^{\nu}_x$ be some $\mathbb{P}$-a.s. $\mathscr{F}_0$-measurable random variables such that
\begin{align}
\overline{\varrho} \geq  \varrho_0 \geq \underline{\varrho}>0
\end{align}
holds for some deterministic constants $\underline{\varrho}$ and $\overline{\varrho}$. Finally, assume that the following moment estimate
\begin{align}
\label{initialEnergySecondLayer}
\mathbb{E} \Bigg[  \int_{{\mathbb{T}^3}}\bigg[\frac{1}{2} \varrho _0\vert \mathbf{u} _0 \vert^2  
+ P^\Gamma_\delta(\varrho _0) 
\pm
\vartheta \vert \nabla V_0 \vert^2
 \bigg]\,\mathrm{d}x \bigg]^p \lesssim 1
\end{align}
holds for some $p\in(1,\infty)$.
Then there exists a unique pathwise solution $(\varrho, \mathbf{u}, V)$ of \eqref{contEqSecond1}--\eqref{elecFieldSecond1} in the sense of Definition \ref{def:pathSolSecondLayer}.
\end{theorem}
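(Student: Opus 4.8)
The plan is to construct $(\varrho,\mathbf{u},V)$ as the $R\to\infty$ limit of the cut-off solutions of the previous section by a stopping-time argument, following \cite[Section~4.2]{breit2018stoch}. For every $R>0$, Theorem~\ref{thm:mainFirstLayer} provides the unique pathwise solution $(\varrho_R,\mathbf{u}_R,V_R)$ of \eqref{contEq1}--\eqref{elecField1} on the given stochastic basis, and Proposition~\ref{prop:energyEquality} provides the associated energy equality \eqref{energyEquality}. The decisive structural observation — already apparent in the derivation of Proposition~\ref{prop:energyEquality} — is that the cut-off $\chi_{\mathbf{u}_R}$ has cancelled out of \eqref{energyEquality}: it cancels against the convective term and is reabsorbed through the renormalized continuity equation, so the right-hand side of \eqref{energyEquality} contains only the artificial-pressure energy, the It\^o correction $\tfrac12\int_{\mathbb{T}^3}\varrho_R\sum_k|\Pi_N\mathbf{g}_{k,\varepsilon}|^2$ and the stochastic integral, none of which depends on $R$.

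First I would fix the stopping times $\tau_R:=\inf\{t\in[0,T]:\Vert\mathbf{u}_R(t)\Vert_{H_N}\geq R\}$ (with $\inf\emptyset=T$). By the definition \eqref{chiUr}, $\chi_{\mathbf{u}_R}\equiv1$ on $[0,\tau_R]$, so $(\varrho_R,\mathbf{u}_R,V_R)$ restricted to $[0,\tau_R]$ is a pathwise solution of the cut-off-free system \eqref{contEqSecond1}--\eqref{elecFieldSecond1}. Evaluating \eqref{energyEquality} at $t\wedge\tau_R$, applying the Burkholder--Davis--Gundy inequality to the martingale term and Gronwall's lemma, using the hypothesis \eqref{initialEnergySecondLayer}, the pointwise bounds \eqref{stochCoeffBound1Exis0a}--\eqref{stochsummableConsta} on $\mathbf{g}_{k,\varepsilon}$ and the boundedness of $f$, and controlling the Poisson contribution $\vartheta\Vert\nabla V_R(t)\Vert_{L^2_x}^2$ by the artificial pressure term via elliptic regularity and Young's inequality (here $\Gamma\geq\max\{6,\gamma\}$ is used), one obtains
\[
\mathbb{E}\Big[\sup_{t\in[0,\tau_R]}\Big(\tfrac12\Vert\sqrt{\varrho_R}\,\mathbf{u}_R(t)\Vert_{L^2_x}^2+\Vert\varrho_R(t)\Vert_{L^\Gamma_x}^{\Gamma}+\Vert\nabla V_R(t)\Vert_{L^2_x}^2\Big)\Big]^{p}+\mathbb{E}\Big(\int_0^{\tau_R}\Vert\mathbf{u}_R\Vert_{W^{1,2}_x}^2\,\mathrm{d}t\Big)^{p}\lesssim1,
\]
uniformly in $R$.

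Next I would establish pathwise uniqueness for \eqref{contEqSecond1}--\eqref{elecFieldSecond1}: this is obtained by repeating the argument of Proposition~\ref{prop:uniqueMarting} with every $\chi_{\mathbf{u}_{i,R}}$ replaced by $1$ — in fact a simpler situation, since the terms produced by the difference of the two cut-offs disappear, and for a fixed pair of solutions the a priori bounds needed to run the Gronwall estimate are supplied by stopping times $\tau_n\uparrow T$ built, as in that proof, from the sample-path continuity required in Definition~\ref{def:pathSolSecondLayer}. Uniqueness then forces $(\varrho_R,\mathbf{u}_R,V_R)\equiv(\varrho_{R'},\mathbf{u}_{R'},V_{R'})$ on $[0,\tau_R]$ whenever $R'\geq R$; in particular $(\tau_R)_{R>0}$ is nondecreasing, and I may set $\tau:=\lim_{R\to\infty}\tau_R$ and obtain a well-defined, $(\mathscr{F}_t)$-adapted (hence progressively measurable, being path-continuous) process $(\varrho,\mathbf{u},V)$ on $[0,\tau)$ solving \eqref{contEqSecond1}--\eqref{elecFieldSecond1} there, which on each $[0,\tau_R]$ coincides with $(\varrho_R,\mathbf{u}_R,V_R)$.

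The hard part is to show $\tau=T$ $\mathbb{P}$-a.s. Since $\{\tau<T\}\subset\{\tau_R<T\}$ and, by sample-path continuity, $\Vert\mathbf{u}_R(\tau_R)\Vert_{H_N}=R$ on $\{\tau_R<T\}$, it suffices to prove $\mathbb{P}\big(\sup_{t\in[0,\tau_R]}\Vert\mathbf{u}_R(t)\Vert_{H_N}\geq R\big)\to0$ as $R\to\infty$. For this one combines the uniform bound above with the parabolic maximum principle for \eqref{contEqSecond1}: on $[0,\tau_R]$ the transport velocity is $\mathbf{u}_R$ itself, whence $\varrho_R(t)\geq\underline{\varrho}\exp\big(-c_N\int_0^t\Vert\nabla\mathbf{u}_R\Vert_{L^2_x}\,\mathrm{d}s\big)$ (using equivalence of norms on $H_N$, so $\Vert\mathrm{div}\,\mathbf{u}_R\Vert_{L^\infty_x}\lesssim_N\Vert\nabla\mathbf{u}_R\Vert_{L^2_x}$); inserting $\Vert\mathbf{u}_R(t)\Vert_{H_N}^2\lesssim_N\Vert\varrho_R^{-1}(t)\Vert_{L^\infty_x}\Vert\sqrt{\varrho_R}\,\mathbf{u}_R(t)\Vert_{L^2_x}^2$ and using Chebyshev's inequality with the moment control of $\int_0^{\tau_R}\Vert\nabla\mathbf{u}_R\Vert_{L^2_x}^2\,\mathrm{d}t$ and of the energy gives $\mathbb{P}(\tau_R<T)\to0$; this is precisely the delicate point carried out in \cite[Section~4.2]{breit2018stoch}. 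Once $\tau=T$ a.s., the process $(\varrho,\mathbf{u},V)$ is defined on all of $[0,T)$, satisfies $(\varrho(0),\mathbf{u}(0))=(\varrho_0,\mathbf{u}_0)$, inherits the regularity in items~(1)--(3) of Definition~\ref{def:pathSolSecondLayer} from the first layer, solves the weak momentum balance and \eqref{contEqSecond1}--\eqref{elecFieldSecond1}, and satisfies the energy equality \eqref{energyEqualitySecondLayer} by letting $R\to\infty$ in \eqref{energyEquality} (legitimate since $\chi_{\mathbf{u}_R}$ is no longer present and $(\varrho,\mathbf{u},V)$ agrees locally in time with $(\varrho_R,\mathbf{u}_R,V_R)$). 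Uniqueness in the sense of Definition~\ref{def:pathSolSecondLayer} is then immediate from the pathwise uniqueness of the underlying PDE system.
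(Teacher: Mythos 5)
Your proposal is correct and follows essentially the same route as the paper: uniform-in-$R$ bounds extracted from the cut-off-free energy equality of Proposition \ref{prop:energyEquality} (via Burkholder--Davis--Gundy and Gronwall), the stopping times $\tau_R=\inf\{t:\Vert\mathbf{u}_R(t)\Vert_{H_N}>R\}$ together with consistency of the family $(\varrho_R,\mathbf{u}_R,V_R)$, and the Chebyshev/maximum-principle argument showing $\mathbb{P}(\sup_R\tau_R=T)=1$, which the paper delegates to \cite[Proposition 4.2.3]{breit2018stoch}. The only cosmetic difference is that you derive consistency from pathwise uniqueness of the cut-off-free system, whereas the paper obtains it from the uniqueness already proved for the cut-off system in Theorem \ref{thm:mainFirstLayer}; both are legitimate.
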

To proof Theorem \ref{thm:mainSecondLayer}, we first require uniform estimates in $R$ since we intend to pass to the limit $R\rightarrow \infty$ in \eqref{contEq1}--\eqref{elecField1} in order to gain \eqref{contEqSecond1}--\eqref{elecFieldSecond1}. As it turns out, besides uniform-in-$R$ estimates, these bounds are also independent of $N$.
To see this, we follow the proof of \cite[Proposition 4.2.3]{breit2018stoch}  and consider  the unique pathwise solution $(\varrho, \mathbf{u}, V)$ of \eqref{contEq1}--\eqref{elecField1} in the sense of Definition \ref{def:pathSolFiniteDim}. By using \eqref{projectionContinuity} and \eqref{stochCoeffBound1Exis0a}--\eqref{stochsummableConsta} combined with the continuous embedding $L^\infty_x\hookrightarrow L^{\Gamma'}_x$, we obtain  the following bound
\begin{equation}
\begin{aligned}
\label{noiseBoundSecodLayer1}
\sum_{k\in\mathbb{N}}  \int_{{\mathbb{T}^3}} \varrho
  \big\vert \Pi_N
 \mathbf{g}_{k, \varepsilon}(x,\varrho ,f,\mathbf{m}  ) 
\big\vert^2\,\mathrm{d}x
&\leq 
\Vert \varrho  \Vert_{L^\Gamma_x}
\sum_{k\in\mathbb{N}} \Vert\mathbf{g}_{k, \varepsilon}(x,\varrho ,f,\mathbf{m}  )  \Vert_{L^\infty_x}^2 
\lesssim_{p, k, \varepsilon, \Gamma}
\Vert \varrho  \Vert_{L^\Gamma_x}.
\end{aligned}
\end{equation}
%
%
uniformly in $N$ and $R$. Similarly, for $r=\frac{2\Gamma}{\Gamma-1}$, we gain by using \eqref{projectionContinuity}, \eqref{stochCoeffBound1Exis0a}--\eqref{stochsummableConsta}, the embedding $L^\infty_x\hookrightarrow L^r_x$ as well as Young's inequality
 \begin{equation}
\begin{aligned}
\label{noiseBoundSecodLayer2}
\bigg\vert\int_{{\mathbb{T}^3}}&   \varrho \mathbf{u}
\cdot \Pi_N
 \mathbf{g}_{k, \varepsilon}(x,\varrho ,f,\mathbf{m}  ) 
\mathrm{d}x \bigg\vert^2
\leq 
  \bigg\vert \Vert \sqrt{\varrho}  \Vert_{L^{2\Gamma}_x}
\Vert \sqrt{\varrho} \mathbf{u} \Vert_{L^2_x}
\Vert\mathbf{g}_{k, \varepsilon}(x,\varrho ,f,\mathbf{m}  )  \Vert_{L^\infty_x} \bigg\vert^2
\\& \times 
\lesssim_{ \Gamma} c^2_{k, \varepsilon}
\big\vert \Vert \sqrt{\varrho}  \Vert_{L^{2\Gamma}_x}
\Vert \sqrt{\varrho} \mathbf{u} \Vert_{L^2_x} \big\vert^2
\lesssim_{ \Gamma} c^2_{k, \varepsilon}\, \Big(
 \Vert \varrho  \Vert_{L^{\Gamma}_x}^2
+
\big\Vert \varrho \vert\mathbf{u} \vert^2 \big\Vert_{L^1_x}^2 \Big).
\end{aligned}
\end{equation}
It therefore follow from \eqref{noiseBoundSecodLayer2} and the Burkholder--Davis--Gundy inequality that the estimate
\begin{equation}
\begin{aligned}
\label{noiseBoundSecodLayer3}
&\mathbb{E} \Bigg[ \sup_{t\in[0,T]}\bigg\vert\int_0^t\int_{\mathbb{T}^3} \varrho\mathbf{u} \cdot  \Pi_N\mathbf{G}_\varepsilon(\varrho , f, \mathbf{m} )\,\mathrm{d}x\,\mathrm{d}W
 \bigg\vert^p \bigg]
\\&\lesssim
\mathbb{E} \Bigg[ \int_0^T \sum_{k\in \mathbb{N}} \bigg\vert \int_{\mathbb{T}^3} \varrho\mathbf{u} \cdot  \Pi_N\mathbf{g}_{k,\varepsilon}(x,\varrho , f, \mathbf{m} )\,\mathrm{d}x
 \bigg\vert^2 \,\mathrm{d}t \bigg]^{\frac{p}{2}}
\\&
\lesssim_{ k, \varepsilon,\Gamma}
\mathbb{E} \Bigg[\int_0^T\Big(
 \Vert \varrho  \Vert_{L^{\Gamma}_x}^2
+
\big\Vert \varrho \vert\mathbf{u} \vert^2 \big\Vert_{L^1_x}^2 \Big)\,\mathrm{d}t \bigg]^{\frac{p}{2}}
\end{aligned}
\end{equation}
is true for any $p\in(1, \infty)$.
Finally, it follows from Korn's inequality, \cite[Theorem A.1.8]{breit2018stoch} and Proposition \ref{energyEquality} combined with \eqref{noiseBoundSecodLayer1}, \eqref{noiseBoundSecodLayer3} and Gronwall's lemma that the estimate
\begin{equation}
\begin{aligned}
\label{noiseBoundSecodLayer4}
\mathbb{E}
&\bigg\vert \sup_{t\in [0,T]}  \int_{{\mathbb{T}^3}}\bigg[ \frac{1}{2} \varrho  \vert \mathbf{u}  \vert^2  
+ P^\Gamma_\delta(\varrho ) 
\pm
\vartheta \vert \nabla V \vert^2 
\bigg]\,\mathrm{d}x \bigg\vert^p
+
\mathbb{E} \bigg\vert
\int_0^T \int_{{\mathbb{T}^3}} \Big(\vert  \nabla \mathbf{u}  \vert^2 + \varepsilon\varrho\vert  \nabla \mathbf{u}  \vert^2 \\&+\varepsilon 
(P{_\delta^\Gamma}) ''(\varrho)\vert  \nabla \varrho  \vert^2 \Big) \,\mathrm{d}x  \,\mathrm{d}t \bigg\vert^p
\lesssim 
1 +
\mathbb{E}
\bigg[  \int_{{\mathbb{T}^3}}\bigg[ \frac{1}{2} \varrho_0  \vert \mathbf{u}_0  \vert^2  
+ P^\Gamma_\delta(\varrho_0 ) 
\pm
\vartheta \vert \nabla V_0 \vert^2 
\bigg]\,\mathrm{d}x \bigg]^p
\end{aligned}
\end{equation}
holds uniformly in $R$ and $N$ for every $p\in (1, \infty)$. Uniform boundedness thus follow from \eqref{initialEnergySecondLayer}.

The completion of the proof of Theorem \ref{thm:mainSecondLayer} will now rely on a stopping time argument. Denote the unique pathwise solution from Theorem \ref{thm:mainFirstLayer} by $(\varrho_R, \mathbf{u}_R, V_R)$ and assume that it has data $(\varrho_0, \mathbf{u}_0, f)$ satisfying the assumptions in Theorem \ref{thm:mainSecondLayer}. Define the increasing family $(\tau_R)_{R>0}$ of $(\mathscr{F}_t)$-stopping times by
\begin{align*}
\tau_R := \inf_{t\in [0,T]}\Big\{ \Vert\mathbf{u}_R(t) \Vert_{H_N}>R \Big\}
\end{align*}
where we set $\inf \emptyset = T$. By uniqueness, as given by Theorem \ref{thm:mainFirstLayer}, $\tau_{R_0} \leq \tau_R$ if ${R_0} \leq R$ and also, $(\varrho_{R_0}, \mathbf{u}_{R_0}, V_{R_0})=(\varrho_{R}, \mathbf{u}_{R}, V_{R}) $
solves \eqref{contEqSecond1}--\eqref{elecFieldSecond1} on $(0, \tau_R)$ in the sense of Definition \ref{def:pathSolSecondLayer}. In particular, note that the energy equality remains unchanged from Proposition \ref{energyEquality}, at least on $(0, \tau_R)$.
\\
Now since $(\varrho_{R}, \mathbf{u}_{R}, V_{R}) $ satisfies an energy estimate uniformly in $R$, the proof is done once
\begin{align}
\label{tauRT}
\mathbb{P}\bigg( \sup_{R} \tau_R =   T \bigg)=1
\end{align}
is shown. The proof of \eqref{tauRT} is exactly the same as was shown in the proof of \cite[Proposition 4.2.3]{breit2018stoch}.

\section{The third approximation layer}
\label{sec:thirdLayer}
In this section, we establish a class of solution to the following system 
\begin{align}
&\mathrm{d} \varrho  + \mathrm{div}(\varrho  \mathbf{u})  \, \mathrm{d}t =\varepsilon \Delta \varrho\, \mathrm{d}t, 
\label{contEqThird1}
\\
&\mathrm{d}(\varrho  \mathbf{u} ) +
\big[
\mathrm{div} (\varrho  \mathbf{u}  \otimes \mathbf{u} ) +  \nabla   p_{\delta}^{ \Gamma}(\varrho) \big]\, \mathrm{d}t 
=
\big[\varepsilon\Delta(\varrho\mathbf{u})+ \nu^S \Delta \mathbf{u} 
\nonumber\\&
\quad +(\nu^B + \nu^S)\nabla \mathrm{div} \mathbf{u} + \vartheta \varrho  \nabla V  \big] \, \mathrm{d}t   
+ \sum_{k\in \mathbb{N}} \varrho\,   \mathbf{g}_{k,\varepsilon}(\varrho , f, \varrho \mathbf{u} ) \, \mathrm{d}\beta_k ,
\label{momEqThird1}
\\
&\pm \Delta V  = \varrho  - f, \label{elecFieldThird1}
\end{align}
which dissipates energy by passing to the limit $N\rightarrow \infty$ in \eqref{momEqSecond1}. The precise notion of a solution is as follows.
\begin{definition}
\label{def:martSolThirdLayer}
If $\Lambda$ is a Borel probability measure on $C^{2+\nu}_x \times L^1_x \times C^{\nu}_x$. We say that 
\begin{align}
\big[(\Omega ,\mathscr{F} ,(\mathscr{F} _t)_{t\geq0},\mathbb{P} );\varrho , \mathbf{u} , V , W   \big]
\end{align}
is a \textit{dissipative martingale solution} of \eqref{contEqThird1}--\eqref{elecFieldThird1} with initial law $\Lambda$ provided
\begin{enumerate}
\item $(\Omega,\mathscr{F},(\mathscr{F}_t),\mathbb{P})$ is a stochastic basis with a complete right-continuous filtration;
\item $W$ is a $(\mathscr{F}_t)$-cylindrical Wiener process;
\item the density $\varrho \in C_w \big( [0,T]; C^{2+\nu}_x\big) $, it is $(\mathscr{F}_t)$-adapted  and 
$\varrho>0$ $\mathbb{P}$-a.s.;
\item the velocity field $\mathbf{u} \in L^2 \big( 0,T; W^{1,2}_x\big) $ $\mathbb{P}$-a.s.  is an $(\mathscr{F}_t)$-adapted random distribution;
\item given $f\in C^\nu_x$, there exists $\mathscr{F}_0$-measurable random variables $(\varrho_0, \mathbf{u}_0)$ such that $\Lambda = \mathbb{P}\circ (\varrho_0, \varrho_0 \mathbf{u}_0, f)^{-1}$;
\item we have that $\varrho(0)=\varrho_0$ $\mathbb{P}$-a.s. and \eqref{contEqThird1} holds $\mathbb{P}$-a.s. for a.e.  $(t,x) \in(0,T)\times \mathbb{T}^3$;
\item equation \eqref{elecFieldThird1} holds $\mathbb{P}$-a.s. for a.e.  $(t,x) \in(0,T)\times \mathbb{T}^3$;
\item for all $\psi \in C^\infty_c ([0,T))$ and $\bm{\phi} \in C^\infty_x$, the following
\begin{equation}
\begin{aligned}
\label{weakMomEqThirdLayer}
&-\int_0^T \partial_t \psi \int_{{\mathbb{T}^3}} \varrho  \mathbf{u} (t) \cdot \bm{\phi} \, \mathrm{d}x    \mathrm{d}t
=
\psi(0)  \int_{{\mathbb{T}^3}} \varrho _0 \mathbf{u} _0 \cdot \bm{\phi} \, \mathrm{d}x  
+  \int_0^T \psi \int_{{\mathbb{T}^3}}  \varrho    \mathbf{u} \otimes \mathbf{u} : \nabla \bm{\phi} \, \mathrm{d}x  \mathrm{d}t 
\\&-
 \nu^S\int_0^T \psi \int_{{\mathbb{T}^3}} \nabla \mathbf{u} : \nabla \bm{\phi} \, \mathrm{d}x  \mathrm{d}t 
-(\nu^B+ \nu^S )\int_0^T \psi \int_{{\mathbb{T}^3}} \mathrm{div}\,\mathbf{u} \, \mathrm{div}\, \bm{\phi} \, \mathrm{d}x  \mathrm{d}t    
\\&+   
\int_0^T \psi \int_{{\mathbb{T}^3}}    p_{\delta}^{ \Gamma}\, \mathrm{div} \, \bm{\phi} \, \mathrm{d}x  \mathrm{d}t
+\int_0^T \psi \int_{{\mathbb{T}^3}} \varepsilon\varrho\mathbf{u}    \Delta\bm{\phi} \, \mathrm{d}x   \mathrm{d}t
\\&+\int_0^T \psi \int_{{\mathbb{T}^3}} \vartheta\varrho  \nabla V  \cdot \bm{\phi} \, \mathrm{d}x    \mathrm{d}t
+
\int_0^T \psi \sum_{k\in \mathbb{N}} \int_{{\mathbb{T}^3}}   \varrho\, \mathbf{g}_{k,\varepsilon}(\varrho , f, \varrho \mathbf{u} ) \cdot \bm{\phi}  \, \mathrm{d}x \mathrm{d}\beta_k  
\end{aligned}
\end{equation}
hold $\mathbb{P}$-a.s;
\item the energy inequality
\begin{equation}
\begin{aligned}
\label{energyInequalityThirdLayer}
-
&\int_0^T \partial_t \psi \int_{{\mathbb{T}^3}}\bigg[ \frac{1}{2} \varrho  \vert \mathbf{u}  \vert^2  
+ P^\Gamma_\delta(\varrho ) 
\pm
\vartheta \vert \nabla V \vert^2 
\bigg]\,\mathrm{d}x \, \mathrm{d}t
+\nu^S
\int_0^T\psi \int_{{\mathbb{T}^3}}\vert  \nabla \mathbf{u}  \vert^2 \,\mathrm{d}x  \,\mathrm{d}x
\\&+(\nu^B+\nu^S)
\int_0^T \psi \int_{{\mathbb{T}^3}}\vert  \mathrm{div}\, \mathbf{u}  \vert^2 \,\mathrm{d}x  \,\mathrm{d}t
+\varepsilon
\int_0^T\psi \int_{{\mathbb{T}^3}} \varrho\vert  \nabla \mathbf{u}  \vert^2 \,\mathrm{d}x  \,\mathrm{d}x
\\&+\varepsilon 
\int_0^T \psi \int_{{\mathbb{T}^3}}(P{_\delta^\Gamma}) ''(\varrho)\vert  \nabla \varrho  \vert^2 \,\mathrm{d}x  \,\mathrm{d}t
\leq
\psi(0)
 \int_{{\mathbb{T}^3}}\bigg[\frac{1}{2} \varrho _0\vert \mathbf{u} _0 \vert^2  
+ P^\Gamma_\delta(\varrho _0) 
\pm
\vartheta \vert \nabla V_0 \vert^2
 \bigg]\,\mathrm{d}x
\\&
+\frac{1}{2}\int_0^T \psi
 \int_{{\mathbb{T}^3}}
 \varrho
 \sum_{k\in\mathbb{N}} \big\vert 
 \mathbf{g}_{k, \varepsilon}(x,\varrho ,f,\mathbf{m}  ) 
\big\vert^2\,\mathrm{d}x\,\mathrm{d}t
+\int_0^T \psi  \int_{{\mathbb{T}^3}}   \varrho\mathbf{u} \cdot  \mathbf{G}_\varepsilon(\varrho , f, \mathbf{m} )\,\mathrm{d}x\,\mathrm{d}W ;
\end{aligned}
\end{equation}
holds $\mathbb{P}$-a.s. for all $\psi \in C^\infty_c ([0,T))$, $\psi \geq 0$.
\end{enumerate}
\end{definition} 
The main theorem of this section is the following.
\begin{theorem}
\label{thm:mainThirdLayer}
Let $\Lambda$ be a Borel probability measure on $C^{2+\nu}_x \times L^1_x \times C^{\nu}_x$ such that
\begin{align}
\label{lawMainThmThirdLayer}
\Lambda\bigg\{M \leq  \varrho \leq M^{-1}, \quad f \leq \overline{f} \bigg\} =1, \quad
\end{align}
holds for  deterministic constants $M, \overline{f}>0$. Also assume that
\begin{equation}
\begin{aligned}
\label{momEstMainThmThirdLayer}
\int_{C^{2+\nu}_x \times L^1_x \times C^{\nu}} &\bigg(\bigg\vert  
\int_{\mathbb{T}^3} \bigg[ \frac{\vert \mathbf{m} \vert^2}{2\varrho} + P^\Gamma_\delta(\varrho)  \pm 
\vartheta \vert \nabla V \vert^2
\bigg] \, \mathrm{d}x
\bigg\vert^p  
\\&+ \Vert \varrho \Vert^p_{C^{2+\nu}_x} + \Vert f \Vert^p_{C^{\nu}_x} \bigg) \, \mathrm{d}\Lambda(\varrho, \mathbf{m},f) \lesssim 1
\end{aligned}
\end{equation}
holds for some $p\geq 1$ .
Then there exists a dissipative martingale solution of \eqref{contEqThird1}--\eqref{elecFieldThird1} in the sense of Definition \ref{def:martSolThirdLayer}.
\end{theorem}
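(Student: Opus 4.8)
The plan is to run the standard four-step stochastic-compactness scheme outlined in Section~\ref{sec:plan}, which here is the exact analogue of \cite[Section~4.3]{breit2018stoch} for the fluid subsystem, the only genuinely new ingredient being the compactness of the electric fields. \textbf{Step 1 (approximate law and approximate solutions).} On a fixed stochastic basis carrying an $(\mathscr{F}_t)$-cylindrical Wiener process $W$, take $(\varrho_0,\mathbf{m}_0,f)$ with law $\Lambda$. By \eqref{lawMainThmThirdLayer} $\varrho_0\geq M>0$, and by \eqref{momEstMainThmThirdLayer} (with $M\leq\varrho_0\leq M^{-1}$) one has $\mathbf{m}_0\in L^2_x$ a.s., so $\mathbf{u}_0:=\mathbf{m}_0/\varrho_0\in L^2_x$; set $\mathbf{u}_0^N:=\Pi_N\mathbf{u}_0\in H_N$ and $\Lambda_N:=\mathbb{P}\circ(\varrho_0,\mathbf{u}_0^N,f)^{-1}$. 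Using \eqref{projectionContinuity}, \eqref{lawMainThmThirdLayer} and \eqref{momEstMainThmThirdLayer}, each $\Lambda_N$ satisfies the hypotheses of Theorem~\ref{thm:mainSecondLayer} with constants independent of $N$, and $\varrho_0\mathbf{u}_0^N\to\mathbf{m}_0$ a.s.; Theorem~\ref{thm:mainSecondLayer} then produces, for every $N$, a pathwise solution $(\varrho_N,\mathbf{u}_N,V_N)$ of \eqref{contEqSecond1}--\eqref{elecFieldSecond1} with data $(\varrho_0,\mathbf{u}_0^N,f)$ satisfying the energy equality \eqref{energyEqualitySecondLayer}.

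\textbf{Step 2 ($N$-uniform estimates).} Feeding \eqref{noiseBoundSecodLayer1}--\eqref{noiseBoundSecodLayer4} with \eqref{momEstMainThmThirdLayer} gives, uniformly in $N$ and for every $1<q<\infty$: $\varrho_N$ bounded in $L^q(\Omega;L^\infty(0,T;L^\Gamma_x))$, $\varrho_N^{\Gamma/2}$ in $L^q(\Omega;L^2(0,T;W^{1,2}_x))$ (hence $\varrho_N$ in $L^q(\Omega;L^{5\Gamma/3}_{t,x})$ by interpolation), $\mathbf{u}_N$ in $L^q(\Omega;L^2(0,T;W^{1,2}_x))$, and $\varrho_N\mathbf{u}_N$ in $L^q(\Omega;L^\infty(0,T;L^{2\Gamma/(\Gamma+1)}_x))$. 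Since $\pm\Delta V_N=\varrho_N-f$ with $f$ time-independent, the energy bound on $\nabla V_N$ together with elliptic regularity upgrades $V_N$ to a bound in $L^q(\Omega;L^\infty(0,T;W^{1,2}_x))\cap L^q(\Omega;L^2(0,T;W^{2,\Gamma}_x))$, while $\partial_t V_N=(\pm\Delta)^{-1}\partial_t\varrho_N$ and $\partial_t\varrho_N=\varepsilon\Delta\varrho_N-\mathrm{div}(\varrho_N\mathbf{u}_N)$ give a bound on $\partial_t V_N$ in $L^q(\Omega;L^2(0,T;W^{-1,r}_x))$ for a suitable $r>1$. The continuity equation and the momentum equation (the latter via the Burkholder--Davis--Gundy inequality applied to the stochastic term) further furnish fractional-in-time Hölder bounds for $\varrho_N$, $V_N$ and $\varrho_N\mathbf{u}_N$ in appropriate (negative Sobolev) spaces.

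\textbf{Step 3 (tightness and representation).} Combining Step~2 with Aubin--Lions-type embeddings, the laws of $\varrho_N$ are tight in $L^2(0,T;L^{q_0}_x)\cap C([0,T];W^{-1,2}_x)$ for some $q_0>1$, the laws of $V_N$ are tight in $C([0,T];W^{1,2}_x)\cap L^2(0,T;W^{2,2}_x)$ --- this being the one step where linearity of the Poisson equation is used, so that compactness of $\{V_N\}$ reduces entirely to the already available compactness of $\{\varrho_N\}$ and the fixed datum $f$ --- the laws of $\varrho_N\mathbf{u}_N$ are tight in $C([0,T];W^{-1,2}_x)$, those of $\mathbf{u}_N$ and $p^\Gamma_\delta(\varrho_N)$ tight in the weak topologies of $L^2(0,T;W^{1,2}_x)$ and $L^{5/3}_{t,x}$, those of $(\varrho_0,\varrho_0\mathbf{u}_0^N,f)$ tight on $C^{2+\nu}_x\times L^1_x\times C^{\nu}_x$ (and converging to $\Lambda$), and the single law of $W$ tight on $C([0,T];\mathfrak{U}_0)$. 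The Jakubowski--Skorokhod theorem on the resulting quasi-Polish product space yields a subsequence, a new probability space $(\tilde\Omega,\tilde{\mathscr{F}},\tilde{\mathbb{P}})$ and random variables with identical joint laws converging $\tilde{\mathbb{P}}$-a.s.\ to limits $(\tilde\varrho,\tilde{\mathbf{u}},\tilde V,\overline{\varrho\mathbf{u}},\overline{p},(\tilde\varrho_0,\widetilde{\varrho_0\mathbf{u}_0},\tilde f),\tilde W)$; equality of laws propagates the bounds of Step~2 and the equations \eqref{contEqSecond1}--\eqref{elecFieldSecond1}, \eqref{energyEqualitySecondLayer} to the new variables, and $\tilde W$ is a cylindrical Wiener process for the canonical filtration.

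\textbf{Step 4 (limit passage) and the main obstacle.} From the strong $L^2_{t,x}$ convergence $\tilde\varrho_N\to\tilde\varrho$ one identifies $\overline{\varrho\mathbf{u}}=\tilde\varrho\tilde{\mathbf{u}}$, $\overline{p}=p^\Gamma_\delta(\tilde\varrho)$ (Vitali, via the $L^{5\Gamma/3}$ bound), and $\pm\Delta\tilde V=\tilde\varrho-f$ a.e.\ with $\nabla\tilde V_N\to\nabla\tilde V$ strongly in $L^2_{t,x}$; then \eqref{contEqThird1} passes to the limit, in \eqref{weakMomEqThirdLayer} the convective term converges because $\widetilde{\varrho_N\mathbf{u}_N}\to\tilde\varrho\tilde{\mathbf{u}}$ in $C([0,T];W^{-1,2}_x)$ and $\tilde{\mathbf{u}}_N\rightharpoonup\tilde{\mathbf{u}}$ in $L^2(0,T;W^{1,2}_x)\hookrightarrow L^2(0,T;L^6_x)$, the viscous and $\varepsilon$-terms are linear, the force term $\vartheta\tilde\varrho_N\nabla\tilde V_N\to\vartheta\tilde\varrho\nabla\tilde V$ by the two strong convergences above, the pressure converges by Vitali, and the noise term converges because the cut-offs make $\mathbf{g}_{k,\varepsilon}$ bounded (by $c_{k,\varepsilon}$) and Lipschitz, so that $\tilde\varrho_N\mathbf{g}_{k,\varepsilon}(\tilde\varrho_N,f,\widetilde{\varrho_N\mathbf{u}_N})\to\tilde\varrho\mathbf{g}_{k,\varepsilon}(\tilde\varrho,f,\tilde\varrho\tilde{\mathbf{u}})$ in $L^2(0,T;L_2(\mathfrak{U};L^2_x))$ in probability and the stochastic integrals converge by the standard convergence lemma (\cite[Lemma~2.6.6]{breit2018stoch}). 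Finally, starting from the energy \emph{equality} \eqref{energyEqualitySecondLayer}, weak lower semicontinuity of the convex kinetic and internal energies and of the dissipation terms, strong convergence of $\pm\vartheta|\nabla\tilde V_N|^2$, convergence of the initial energy (uniform integrability from \eqref{momEstMainThmThirdLayer} with $p>1$) and convergence of the noise terms yield the energy \emph{inequality} \eqref{energyInequalityThirdLayer}; the remaining items of Definition~\ref{def:martSolThirdLayer}, including positivity of $\tilde\varrho$ and the stated regularity of the limiting fields, follow from the uniform bounds, weak lower semicontinuity of norms and parabolic regularity for the (now a.e.\ valid) continuity equation. I expect no single hard obstacle here: the argument is a careful transcription of \cite[Section~4.3]{breit2018stoch}, and the only new point --- tightness of $\{V_N\}$ in a space compatible with the limit passage in $\vartheta\varrho_N\nabla V_N$ and in the electric energy --- is, as anticipated in Section~\ref{sec:plan}, routine thanks to the linearity of the Poisson equation and the assumed regularity of $f$.
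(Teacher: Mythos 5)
Your proposal is correct and follows essentially the same route as the paper: approximate the law by $\Lambda_N=\mathbb{P}\circ(\varrho_0,\Pi_N\mathbf{u}_0,f)^{-1}$, invoke Theorem \ref{thm:mainSecondLayer}, derive $N$-uniform bounds from the energy equality (plus maximal parabolic regularity for the density), establish tightness — with the electric field handled exactly as you say, by reducing its compactness to that of the density through the linear Poisson equation — apply Jakubowski--Skorokhod, and identify the limit system and the energy inequality. The only cosmetic difference is the choice of topology for $\{V_N\}$ (the paper works in $C_w([0,T];W^{2,\Gamma}_x)$ via \cite[Corollary B.2]{ondrejat2010stochastic} rather than your $C([0,T];W^{1,2}_x)\cap L^2(0,T;W^{2,2}_x)$), which does not affect the argument.
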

We now devote the rest of this section to the proof of Theorem \ref{thm:mainThirdLayer}. 
\subsection{Construction of law}
Given $f\in C^\nu_x$, consider the $(\mathfrak{F}_0)$-measurable random variables $(\varrho_0, \mathbf{m}_0)$ ranging in $C^{2+\nu}_x \times L^1_x $ so that the triplet $(\varrho_0, \mathbf{m}_0,f)$ has the law $\Lambda$ as prescribed in the assumption of Theorem \ref{thm:mainThirdLayer}. Since $\varrho_0>0$, it follows that $\mathbf{u}_0:= \frac{\mathbf{m}_0}{\varrho_0} \in L^2_x$ $\mathbb{P}$-a.s. Furthermore, by setting $\mathbf{u}_{0,N}:= \Pi_N \mathbf{u}_0$, it follows that $(\varrho_0, \mathbf{u}_{0,N},f)$ satisfies the assumptions of Theorem \ref{thm:mainSecondLayer} and for $\Lambda_N= \mathbb{P}\circ (\varrho_0,  \mathbf{u}_{0,N}, f)^{-1}$, 
\begin{equation}
\begin{aligned}
\label{somethingBd}
&\int_{C^{2+\nu}_x \times L^2_x \times C^{\nu}} \bigg(\bigg\vert  
\int_{\mathbb{T}^3} \bigg[ \frac{1}{2}\varrho\vert \mathbf{u} \vert^2 + P^\Gamma_\delta(\varrho)  \pm 
\vartheta \vert \nabla V \vert^2
\bigg] \, \mathrm{d}x
\bigg\vert^p  
\\&+ \Vert \varrho \Vert^p_{C^{2+\nu}_x} + \Vert f \Vert^p_{C^{\nu}_x} \bigg) \, \mathrm{d}\Lambda_N(\varrho, \mathbf{u},f) 
 \lesssim 1
 \end{aligned}
\end{equation}
holds uniformly in $N$. Furthermore, we can use the bound in \eqref{projectionContinuity} to obtain
\begin{equation}
\begin{aligned}
&\int_{C^{2+\nu}_x \times L^2_x \times C^{\nu}} \bigg(\bigg\vert  
\int_{\mathbb{T}^3} \bigg[ \frac{1}{2}\varrho\vert \mathbf{u} \vert^2 + P^\Gamma_\delta(\varrho)  \pm 
\vartheta \vert \nabla V \vert^2
\bigg] \, \mathrm{d}x
\bigg\vert^p  
+ \Vert \varrho \Vert^p_{C^{2+\nu}_x} + \Vert f \Vert^p_{C^{\nu}_x} \bigg) \, \mathrm{d}\Lambda_N(\varrho, \mathbf{u},f) 
\\&
=
\mathbb{E}\bigg(\bigg\vert  
\int_{\mathbb{T}^3} \bigg[  \frac{1}{2}\varrho\vert \mathbf{u}_N \vert^2 + P^\Gamma_\delta(\varrho)  \pm 
\vartheta \vert \nabla V \vert^2
\bigg] \, \mathrm{d}x
\bigg\vert^p  
+ \Vert \varrho \Vert^p_{C^{2+\nu}_x} + \Vert f \Vert^p_{C^{\nu}_x} \bigg)
\longrightarrow
\\&
\mathbb{E}\bigg(\bigg\vert  
\int_{\mathbb{T}^3} \bigg[  \frac{1}{2}\varrho\vert \mathbf{u} \vert^2 + P^\Gamma_\delta(\varrho)  \pm 
\vartheta \vert \nabla V \vert^2
\bigg] \, \mathrm{d}x
\bigg\vert^p  
+ \Vert \varrho \Vert^p_{C^{2+\nu}_x} + \Vert f \Vert^p_{C^{\nu}_x} \bigg)
\\&
=\int_{C^{2+\nu}_x \times L^1_x \times C^{\nu}} \bigg(\bigg\vert  
\int_{\mathbb{T}^3} \bigg[ \frac{\vert \mathbf{m} \vert^2}{2\varrho}\ + P^\Gamma_\delta(\varrho)  \pm 
\vartheta \vert \nabla V \vert^2
\bigg] \, \mathrm{d}x
\bigg\vert^p  
+ \Vert \varrho \Vert^p_{C^{2+\nu}_x} + \Vert f \Vert^p_{C^{\nu}_x} \bigg) \, \mathrm{d}\Lambda(\varrho, \mathbf{m},f) .
\end{aligned}
\end{equation}
\subsection{Uniform estimates}
Now denote by $
[\varrho_N , \mathbf{u}_N , V_N ]$, the  pathwise solution constructed in Theorem \ref{thm:mainSecondLayer} with data $(\varrho, \mathbf{u}_{0,N},f)$. Since \eqref{somethingBd} holds uniformly in $N$ (and clearly of $R$), the argument that the estimate \eqref{noiseBoundSecodLayer4} held uniformly in $R$ and $N$ remains valid for $
[\varrho_N , \mathbf{u}_N , V_N ]$. It follows that
\begin{equation}
\begin{aligned}
\label{uniBoundThirdLayer1}
&\mathbb{E} \bigg\vert \sup_{t\in [0,T]}\big\Vert \varrho_N \vert \mathbf{u}_N\vert^2 \big\Vert_{L^1_x} \bigg\vert^p 
+
\mathbb{E} \bigg\vert \sup_{t\in [0,T]} \Vert \varrho_N  \Vert_{L^\Gamma_x}^\Gamma \bigg\vert^p 
+
\mathbb{E} \bigg\vert \sup_{t\in [0,T]} \Vert \nabla V_N \Vert_{L^2_x}^2 \bigg\vert^p
+
\mathbb{E} \Vert \nabla \mathbf{u}_N \Vert_{L^2_{t,x}}^{2p}  
\\&+
\mathbb{E} \big\Vert \varrho^{-\frac{1}{2}}_N \nabla\varrho_N \big\Vert_{L^2_{t,x}}^{2p}
+
\mathbb{E} \big\Vert \varrho^{\frac{\Gamma-2}{2}}_N \nabla\varrho_N \big\Vert_{L^2_{t,x}}^{2p}
+
\mathbb{E} \Vert  \mathbf{u}_N \Vert_{L^2_{t}W^{1,2}_x}^{2p}  
+
\mathbb{E} \bigg\vert \sup_{t\in [0,T]}\big\Vert \varrho_N \mathbf{u}_N \big\Vert_{L^\frac{2\Gamma}{\Gamma+1}_x}^\frac{2\Gamma}{\Gamma+1} \bigg\vert^p 
\\&+
\mathbb{E} \Vert \varrho_N \mathbf{u}_N\otimes \mathbf{u}_N \Vert_{L^2_{t}L^\frac{6\Gamma}{4\Gamma+3}_x}^{\frac{6p\Gamma}{4\Gamma+3}}  
+
\mathbb{E} \Vert \varrho_N \mathbf{u}_N \Vert_{L^1_tL^3_x}^{3p} 
+
\mathbb{E} \Vert \varrho_N \mathbf{u}_N \Vert_{L^q_{t,x}}^{pq} 
+
\mathbb{E} \Vert\mathrm{div}( \varrho_N \mathbf{u}_N) \Vert_{L^q_t W^{-1,q}_x}^{pq} 
\\&\lesssim_{\varepsilon, \delta, \Gamma, p,\mathcal{E}_{0,N}} 1
\end{aligned}
\end{equation}
holds uniformly in $N$ for some $q>2$ and any $p\in(1,\infty)$ where 
\begin{align*}
 \mathcal{E}_{0,N}
 := 
 \frac{1}{2} \varrho_0  \vert \mathbf{u}_{0,N}  \vert^2  
+ P^\Gamma_\delta(\varrho_0 ) 
\pm
\vartheta \vert \nabla V_0 \vert^2 
\end{align*}
is uniformly bounded in $N$ in $L^p(\Omega; L^1(\mathbb{T}^3))$ by \eqref{somethingBd}.
Furthermore, by using the maximal regularity property of the inhomogeneous continuity equation \eqref{contEqThird1}, see \cite[Theorem A.2.2]{breit2018stoch} or \cite{hieber1997heat}, it follows from \eqref{contEqThird1} that the inequality
\begin{equation}
\begin{aligned}
\sup_{t\in [0,T]} \Vert \varrho_N \Vert_{X}&+ \Vert \varrho_N \Vert_{L^{r_t}_t W^{2+l, r_x}_x} +
\Vert \partial_t \varrho_N \Vert_{L^{r_t}_t W^{l, r_x}_x}
\lesssim
\Vert \mathrm{div}(\varrho_N \mathbf{u}_N) \Vert_{L^{r_t}_t W^{l, r_x}_x} + \Vert \varrho_0 \Vert_X
\end{aligned}
\end{equation}
holds $\mathbb{P}$-a.s. for some $l\in \mathbb{R}$,  and $1< r_t, r_x < \infty$ (or for $r_t=1$ with $r_x=\infty$) where
$X:= \big[  W^{l, r_x}_x,  W^{2+l, r_x}_x\big]_{1-r_t^{-1}, r_t}$ is the interpolation space, see \cite{tartar2007introduction} for definition. So by combining this maximal regularity with \eqref{uniBoundThirdLayer1} and the assumption that the initial density is highly regular, we get in particular that the estimate
\begin{equation}
\begin{aligned}
\label{uniBoundThirdLayer2}
\mathbb{E} \Vert \varrho_N \Vert_{L^q_t W^{2, q}_x}^p +
\mathbb{E}
\Vert \partial_t \varrho_N \Vert_{L^q_t L^{ q}_x}^p
\lesssim_{\varepsilon, \delta, \Gamma, p,\mathcal{E}_{0,N}}
1 
\end{aligned}
\end{equation}
holds uniformly in $N$ for some $q>2$.
\subsection{Compactness}
To derive compactness, we first define the following spaces 
\begin{align*}
\chi_{\varrho_0} = C_x, \, \quad \chi_{\mathbf{u}_0}= L^2_x,
 \, \quad
\chi_{ \mathbf{u} } =  \big(L^2\big(0,T;W^{1,2}_x\big), w\big),
 \, \quad
\chi_V = C_w\left([0,T];W^{2,\Gamma}_x\right),
\\
\chi_W= C\left([0,T];\mathfrak{U}_0\right) ,
 \, \quad
\chi_{\varrho\mathbf{u}} =C_w\big([0,T]; L^\frac{2\Gamma}{\Gamma+1} _x\big) \cap C\big([0,T]; W^{-k,2}_x\big), 
\\
\chi_\varrho = C_w \left([0,T];L^{\Gamma}_x\right)
\cap
\big(L^q\big(0,T;W^{2,q}_x\big), w\big)
\cap \big( W^{1,q}\big(0,T;L^q_x\big), w \big) \cap \, L^p\big(0,T;W^{1,p}_x\big) 
\end{align*}
for some $p>2$, $q>1$ and $k\in \mathbb{N}$. We now let $\mu_{\mathbf{u}_{0,N}}$, $\mu_{\varrho_0}$,  $\mu_{\mathbf{u}_N}$, $\mu_{\varrho_N}$, $\mu_{\Pi_N\varrho_N\mathbf{u}_N}$, $\mu_{V_N}$ and $\mu_{W}$ be the respective laws of  $\mathbf{u}_{0,N}$, $\varrho_0$,  $\mathbf{u}_N$, $\varrho_N$, $\Pi_N\varrho_N\mathbf{u}_N$, $V_N$ and $W$ on the respective spaces $\chi_{\mathbf{u}_0} $, $\chi_{\varrho_0} $,  $\chi_{ \mathbf{u} }$, $\chi_\varrho$, $\chi_{\varrho\mathbf{u}}$, $\chi_V$ and $\chi_W$. Furthermore, we set $\mu_N$ as their joint law on the space $\chi = \chi_{\mathbf{u}_0} \times \chi_{\varrho_0} \times \chi_{ \mathbf{u} } \times \chi_\varrho \times \chi_{\varrho\mathbf{u}} \times \chi_V \times \chi_W$.
\begin{lemma}
\label{tighnessThirdLAyer}
The set $\{\mu_N :  N\in \mathbb{N} \}$  is tight on $\chi$.
\end{lemma}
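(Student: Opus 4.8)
The plan is to establish tightness of each marginal law separately and then invoke the fact that a finite product of tight families is tight on the product space. For the laws $\mu_{\varrho_0}$ and $\mu_{\mathbf{u}_{0,N}}$ on $\chi_{\varrho_0}=C_x$ and $\chi_{\mathbf{u}_0}=L^2_x$, tightness follows directly from the uniform moment bounds in \eqref{somethingBd} together with \eqref{projectionContinuity}: the law of $\varrho_0$ is fixed (independent of $N$), hence automatically tight, and the uniform bound $\mathbb{E}\Vert\mathbf{u}_{0,N}\Vert_{L^2_x}^p\lesssim 1$ (a consequence of $\varrho_0\geq M$ and $\mathbf{u}_{0,N}=\Pi_N\mathbf{u}_0$ with $\Pi_N$ bounded on $L^2_x$) gives tightness by a standard Chebyshev argument once one notes that the closed balls of $W^{1,2}_x$ embed compactly in $L^2_x$ — except that here we do not even have gradient control on $\mathbf{u}_{0,N}$, so instead one uses that $\mathbf{u}_{0,N}\to\mathbf{u}_0$ in $L^2_x$ $\mathbb{P}$-a.s. and the fact that a $\mathbb{P}$-a.s.\ convergent sequence has tight laws. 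The law $\mu_W$ is a single Radon measure on the Polish space $C([0,T];\mathfrak{U}_0)$, hence tight.

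The substantive marginals are $\mu_{\varrho_N}$, $\mu_{\mathbf{u}_N}$, $\mu_{\Pi_N\varrho_N\mathbf{u}_N}$ and $\mu_{V_N}$. For $\mu_{\mathbf{u}_N}$ on $\big(L^2(0,T;W^{1,2}_x),w\big)$, tightness is immediate from $\mathbb{E}\Vert\mathbf{u}_N\Vert_{L^2_tW^{1,2}_x}^{2p}\lesssim 1$ in \eqref{uniBoundThirdLayer1}, since bounded sets are relatively weakly compact in a reflexive space and Chebyshev controls the complements. For $\mu_{\varrho_N}$, I would combine the three ingredients in \eqref{uniBoundThirdLayer1}–\eqref{uniBoundThirdLayer2}: the bound $\mathbb{E}\sup_t\Vert\varrho_N\Vert_{L^\Gamma_x}^{\Gamma p}\lesssim 1$ gives tightness in $C_w([0,T];L^\Gamma_x)$ via the Arzelà–Ascoli-type criterion for weak continuity together with the equicontinuity furnished by $\mathbb{E}\Vert\partial_t\varrho_N\Vert_{L^q_tL^q_x}^p\lesssim 1$; the bounds $\mathbb{E}\Vert\varrho_N\Vert_{L^q_tW^{2,q}_x}^p\lesssim 1$ and $\mathbb{E}\Vert\partial_t\varrho_N\Vert_{L^q_tL^q_x}^p\lesssim 1$ give tightness in the weak topologies of $L^q(0,T;W^{2,q}_x)$ and $W^{1,q}(0,T;L^q_x)$ respectively (again reflexivity plus Chebyshev); and the bound $\mathbb{E}\Vert\mathbf{u}_N\Vert_{L^p_tW^{1,p}_x}^{\cdot}\lesssim 1$ — more precisely the density bounds $\mathbb{E}\Vert\varrho_N^{-1/2}\nabla\varrho_N\Vert_{L^2_{t,x}}^{2p}$ and $\mathbb{E}\Vert\varrho_N^{(\Gamma-2)/2}\nabla\varrho_N\Vert_{L^2_{t,x}}^{2p}$ together with the $L^\Gamma$ bound — yield a bound in $L^p(0,T;W^{1,p}_x)$ for some $p>2$, which is the last factor in $\chi_\varrho$; since $\chi_\varrho$ is a countable intersection, tightness there follows from tightness in each factor (intersecting the corresponding relatively compact sets). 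For $\mu_{\Pi_N\varrho_N\mathbf{u}_N}$ on $C_w([0,T];L^{2\Gamma/(\Gamma+1)}_x)\cap C([0,T];W^{-k,2}_x)$, I would use $\mathbb{E}\sup_t\Vert\varrho_N\mathbf{u}_N\Vert_{L^{2\Gamma/(\Gamma+1)}_x}^{\cdot}\lesssim 1$ for the first factor, and for the second factor the momentum equation \eqref{momEqSecond1}: writing $\Pi_N(\varrho_N\mathbf{u}_N)(t)$ as the sum of its initial value, a time integral of terms bounded (after applying $\Pi_N$, whose operator norm on negative Sobolev spaces is uniformly controlled) in $W^{-k,2}_x$ for $k$ large, and a stochastic integral, one obtains uniform Hölder-in-time estimates in a sufficiently negative Sobolev space — the deterministic part by the $L^q$-in-time bounds on the flux and pressure already recorded, the stochastic part by the Burkholder–Davis–Gundy / Kolmogorov argument exactly as in the noise estimates \eqref{noiseBoundSecodLayer2}–\eqref{noiseBoundSecodLayer3}; Aldous-type or fractional-Sobolev compactness then gives tightness in $C([0,T];W^{-k,2}_x)$.

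The genuinely new marginal is $\mu_{V_N}$ on $C_w([0,T];W^{2,\Gamma}_x)$, and this is where the extra work relative to \cite{breit2018stoch} lies. Since $V_N$ solves the linear elliptic problem $\pm\Delta V_N=\varrho_N-f$ pointwise, elliptic regularity gives $\Vert V_N(t)\Vert_{W^{2,\Gamma}_x}\lesssim \Vert\varrho_N(t)\Vert_{L^\Gamma_x}+\Vert f\Vert_{L^\Gamma_x}$, so the uniform bound $\mathbb{E}\sup_t\Vert\varrho_N(t)\Vert_{L^\Gamma_x}^{\Gamma p}\lesssim 1$ together with $f\in L^\infty_x\subset L^\Gamma_x$ yields $\mathbb{E}\sup_t\Vert V_N(t)\Vert_{W^{2,\Gamma}_x}^{p}\lesssim 1$; for equicontinuity in time one differentiates the Poisson equation to get $\pm\Delta\partial_t V_N=\partial_t\varrho_N$ and applies elliptic regularity again, so that $\Vert V_N(t)-V_N(s)\Vert_{W^{2,\Gamma}_x}\lesssim\int_s^t\Vert\partial_t\varrho_N\Vert_{L^\Gamma_x}\,\mathrm{d}r$, which the bound $\mathbb{E}\Vert\partial_t\varrho_N\Vert_{L^q_tL^q_x}^p\lesssim 1$ (with $q>2$, and $L^q_x\hookrightarrow L^\Gamma_x$ on the torus since $q$ can be taken larger, or else directly controlling $\Vert\partial_t\varrho_N\Vert_{L^q_tL^\Gamma_x}$ from \eqref{uniBoundThirdLayer1}) turns into a uniform modulus of continuity in the $W^{2,\Gamma}_x$-norm; the criterion for relative compactness in $C_w([0,T];W^{2,\Gamma}_x)$ (boundedness in $C([0,T];W^{2,\Gamma}_x)$ plus equicontinuity into some larger space, combined with weak compactness of balls in the reflexive space $W^{2,\Gamma}_x$) is then met, and Chebyshev as before converts the moment bounds into tightness. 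Collecting the marginal tightness statements, tightness of the joint law $\mu_N$ on $\chi$ follows since the product of finitely many tight families is tight. The main obstacle I anticipate is bookkeeping: making sure the exponents $p,q,k,\Gamma$ are chosen compatibly across all the factors of $\chi_\varrho$ and $\chi_{\varrho\mathbf{u}}$ so that every relatively compact set is simultaneously accessible from the available moment bounds, and handling the $\Pi_N$-dependence in $\chi_{\varrho\mathbf{u}}$ uniformly in $N$ — but this is routine given the uniform operator bounds in \eqref{projectionContinuity}.
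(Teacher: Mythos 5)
Your overall strategy coincides with the paper's: tightness marginal by marginal, with the electric field $V_N$ being the genuinely new marginal, everything else delegated to the estimates \eqref{uniBoundThirdLayer1}--\eqref{uniBoundThirdLayer2} and the corresponding results of \cite{breit2018stoch}. Two points deserve comment. First, for $\mu_{V_N}$ you obtain time equicontinuity by differentiating the Poisson equation and invoking $\mathbb{E}\Vert\partial_t\varrho_N\Vert^p_{L^q_tL^q_x}\lesssim 1$, whereas the paper transfers the Lipschitz-in-time bound $\mathbb{E}\Vert\varrho_N\Vert_{C^{0,1}_tW^{-2,\frac{2\Gamma}{\Gamma+1}}_x}\lesssim 1$ through the (linear) Poisson equation and concludes via the compact embedding of \cite[Corollary B.2]{ondrejat2010stochastic}; both routes end at the same criterion for relative compactness in $C_w([0,T];W^{2,\Gamma}_x)$ (uniform bound in the reflexive space plus equicontinuity into a larger space). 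However, be careful with your parenthetical claim that ``$q$ can be taken larger'' so that $L^q_x\hookrightarrow L^\Gamma_x$: the exponent $q>2$ in \eqref{uniBoundThirdLayer2} is inherited from the integrability of $\mathrm{div}(\varrho_N\mathbf{u}_N)$ and is a fixed number only slightly above $2$, while $\Gamma\geq 6$, so on the torus the embedding goes the wrong way. The correct statement of your argument is that elliptic regularity in $L^q$ gives equicontinuity with values in $W^{2,q}_x$ --- a \emph{larger} space than $W^{2,\Gamma}_x$ --- which is exactly what the weak-continuity compactness criterion you quote requires; with that reading your argument is fine and in fact yields a stronger modulus of continuity than the paper's. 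Second, in your treatment of $\mu_{\Pi_N\varrho_N\mathbf{u}_N}$ you bound the ``deterministic part by the $L^q$-in-time bounds on the flux and pressure,'' but the one term in \eqref{momEqThird1} that is absent from the fluid system of \cite{breit2018stoch} is $\vartheta\varrho_N\nabla V_N$, and the paper makes a point of checking that it is uniformly bounded in $L^p(\Omega;L^r(0,T;W^{-l,2}_x))$ (via $\varrho_N\nabla V_N\in L^\infty_tL^{\frac{2\gamma}{\gamma+1}}_x$ from \eqref{uniBoundThirdLayer1} and a Sobolev embedding) before invoking \cite[Proposition 4.3.8]{breit2018stoch}. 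You should add this one-line verification; it is the only place where the Poisson coupling actually enters the momentum marginal.
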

\begin{proof}
First of all, as shown in \cite[Corollary 4.3.9]{breit2018stoch}, the following holds true.
\begin{itemize}
\item The law $\mu_{\varrho_0}$ is tight on $\chi_{\varrho_0}$ since its a Radon measure on a Polish space.
\item The set $\{\mu_{\mathbf{u}_{0,N}}:  N\in \mathbb{N}\}$ is tight on $\chi_{\mathbf{u}_0}$.
\item The set $\{\mu_{\varrho_N}:  N\in \mathbb{N} \}$ is tight on $\chi_\varrho$.
\item The set $\{\mu_{\mathbf{u}_N}:  N\in \mathbb{N}\}$ is tight on $\chi_{\mathbf{u}}$.
\item The set $\{ \mu_W\}$ is tight on $\chi_W$ since its a Radon measure on a Polish space.
\end{itemize}
We now recall again that $[\varrho_N , \mathbf{u}_N , V_N ]$ satisfied \eqref{energyEqualitySecondLayer}
from which we concluded that it satisfied the estimates \eqref{uniBoundThirdLayer1}--\eqref{uniBoundThirdLayer2}. So in particular, $\varrho_N \in L^\infty(0,T;L^\gamma_x )$ and $\nabla V_N \in L^\infty(0,T;L^2_x)$ in moments and as such, $\varrho_N \nabla V_N \in L^\infty(0,T;L^\frac{2\gamma}{\gamma+1}_x)$. Since the embedding
\begin{align*}
L^\infty(0,T;L^\frac{2\gamma}{\gamma+1}_x) \hookrightarrow L^r(0,T; W^{-l,2}_x)
\end{align*}
is continuous for all $r\in(1,\infty)$ and for all $l \geq \frac{3}{2\gamma}$ but $\frac{3}{2\gamma} \in (0,1)$, indeed, the aforementioned embedding holds for all $r,l> 1$. It follows that
\begin{align*}
{\vartheta \varrho_N \nabla V_N} \quad \text{is bounded in} \quad L^p \big( \Omega; L^r(0,T; W^{-l,2}_x)\big)
\end{align*}
for all $p,r,l\in(1,\infty)$. It follows from \cite[Proposition 4.3.8]{breit2018stoch} applied to \eqref{momEqThird1} that
\begin{itemize}
\item The set $\{\mu_{\Pi_N \varrho_N\mathbf{u}_N}:  N\in \mathbb{N}\}$ is tight on $\chi_{\varrho\mathbf{u}}$.
\end{itemize}
The proof of Lemma \ref{tighnessThirdLAyer} is done once we show that
\begin{itemize}
\item The set $\{\mu_{V_N}:  N\in \mathbb{N} \}$ is tight on $\chi_V$.
\end{itemize}
To see this, we first note that since the given function $f(x)$ is continuous in $\mathbb{T}^3$, it is $p$-Lebesgue integrable for any $p\in [1,\infty)$. And since the moment estimates of $\varrho_N$ in $L^\infty_tL^\Gamma_x$ are uniformly bounded in $N$, recall the second summand in \eqref{uniBoundThirdLayer1}, we can deduce from the Poisson equation \eqref{elecFieldThird1} that any moment estimate of $V_N$ in $L^\infty_t W^{2,\Gamma}_x$ are uniformly bounded in $N$. Additionally, $V_N$ can inherit the following regularity of  the density sequence
\begin{align}
\label{inherit}
\mathbb{E}\, \Vert \varrho_N \Vert_{C^{0,1}_tW^{-2, \frac{2\Gamma}{\Gamma+1}}_x} \lesssim 1
\end{align}
uniformly in $N$, see \cite[Page 138]{breit2018stoch}, through \eqref{elecFieldThird1} and the fact that $f(x)$ is Lebesque integrable for any finite integrability exponent. Indeed, the spatial regularity in \eqref{inherit} can be improved for $V_N$ but this does not take anything away from the analysis. Tightness of $\mu_{V_N}$ will therefore follow from the following compact embedding
\begin{align}
L^\infty(0,T; W^{2,\Gamma}(\mathbb{T}^3)) \cap 
C^{0,1}([0,T];W^{-2, \frac{2\Gamma}{\Gamma+1}}(\mathbb{T}^3)) \hookrightarrow C_w([0,T];W^{2,\Gamma}(\mathbb{T}^3)) 
\end{align}
established in \cite[Corollary B.2.]{ondrejat2010stochastic}.
\end{proof}
We now use the Jakubowski--Skorokhod representation theorem to obtain the following result
\begin{lemma}
\label{lem:JakowThirdLayer}
The exists a subsequence (not relabelled) $\{\mu_N :  N \in \mathbb{N} \}$, a complete probability space $(\tilde{\Omega}, \tilde{\mathscr{F}}, \tilde{\mathbb{P}})$ with $\chi$-valued random variables
\begin{align*}
(\tilde{\varrho}_{0,N}, \tilde{\mathbf{u}}_{0,N},  \tilde{\varrho}_N, \tilde{\mathbf{u}}_N, \tilde{\mathbf{m}}_N ,\tilde{V}_N, \tilde{W}_N) \quad N \in \mathbb{N}
\end{align*}
and 
\begin{align*}
 (\tilde{\varrho}_0, \tilde{\mathbf{u}}_0,  \tilde{\varrho}, \tilde{\mathbf{u}}, \tilde{\mathbf{m}}, \tilde{V}, \tilde{W})
\end{align*}
such that 
\begin{itemize}
\item the law of $(\tilde{\varrho}_{0,N}, \tilde{\mathbf{u}}_{0,N},  \tilde{\varrho}_N, \tilde{\mathbf{u}}_N, \tilde{\mathbf{m}}_N ,\tilde{V}_N, \tilde{W}_N)$ on $\chi$ is $\mu_N$, $N \in \mathbb{N}$,
\item the law of $ (\tilde{\varrho}_0, \tilde{\mathbf{u}}_0,  \tilde{\varrho}, \tilde{\mathbf{u}}, \tilde{\mathbf{m}}, \tilde{V}, \tilde{W})$ on $\chi$ is a Radon measure,
\item the following convergence (with each $\rightarrow$ interpreted with respect to the corresponding topology)
\begin{align*}
\tilde{\varrho}_{0,N} \rightarrow \tilde{\varrho}_0 \quad \text{in}\quad \chi_{\varrho_0}, & \quad \qquad
\tilde{\mathbf{u}}_{0,N} \rightarrow \tilde{\mathbf{u}}_0 \quad \text{in}\quad \chi_{\mathbf{u}_0}, 
\\
\tilde{\varrho}_N \rightarrow \tilde{\varrho} \quad \text{in}\quad \chi_\varrho, & \quad \qquad
\tilde{\mathbf{u}}_N \rightarrow \tilde{\mathbf{u}} \quad \text{in}\quad \chi_\mathbf{u}, 
\\
\tilde{V}_N \rightarrow \tilde{V} \quad  \text{in}\quad \chi_V, & \quad \qquad
\tilde{\mathbf{m}}_N \rightarrow \tilde{\mathbf{m}} \quad \text{in}\quad \chi_\mathbf{m}, 
\\
\tilde{W}_N \rightarrow \tilde{W} \quad \text{in}\quad \chi_W
\end{align*}
holds $\tilde{
\mathbb{P}}$-a.s.
\end{itemize}
\end{lemma}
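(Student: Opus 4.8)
The plan is to obtain Lemma~\ref{lem:JakowThirdLayer} as a direct application of the Jakubowski--Skorokhod representation theorem, in exact analogy with the corresponding step for the stochastic Navier--Stokes system in \cite[Section 4.3]{breit2018stoch}. First I would record that the product space $\chi = \chi_{\mathbf{u}_0} \times \chi_{\varrho_0} \times \chi_{\mathbf{u}} \times \chi_\varrho \times \chi_{\varrho\mathbf{u}} \times \chi_V \times \chi_W$ lies within the scope of that theorem. Indeed, $\chi_{\mathbf{u}_0}$, $\chi_{\varrho_0}$, $\chi_W$ (and the strong component $L^p(0,T;W^{1,p}_x)$ inside $\chi_\varrho$, as well as $C([0,T];W^{-k,2}_x)$ inside $\chi_{\varrho\mathbf{u}}$) are separable metric spaces, while the remaining factors, namely $\big(L^2(0,T;W^{1,2}_x),w\big)$, $C_w([0,T];W^{2,\Gamma}_x)$, $\big(L^q(0,T;W^{2,q}_x),w\big)$, $\big(W^{1,q}(0,T;L^q_x),w\big)$, $C_w([0,T];L^\Gamma_x)$ and $C_w\big([0,T];L^{2\Gamma/(\Gamma+1)}_x\big)$, carry weak-type topologies on which there is a countable family of continuous real-valued functionals separating points: for the weak topologies one takes a countable norming subset of the dual, and for the $C_w$ spaces one combines these with evaluation at rational times. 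Hence $\chi$ is a ``sub-Polish'' (quasi-Polish) space of the kind for which the generalized Skorokhod representation is valid.

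Next I would invoke tightness. By Lemma~\ref{tighnessThirdLAyer} the family $\{\mu_N : N\in\mathbb{N}\}$ of joint laws is tight on $\chi$, so, after passing to a subsequence (not relabelled), the Jakubowski--Skorokhod theorem furnishes a complete probability space $(\tilde{\Omega},\tilde{\mathscr{F}},\tilde{\mathbb{P}})$ together with $\chi$-valued random variables $(\tilde{\varrho}_{0,N}, \tilde{\mathbf{u}}_{0,N}, \tilde{\varrho}_N, \tilde{\mathbf{u}}_N, \tilde{\mathbf{m}}_N, \tilde{V}_N, \tilde{W}_N)$, $N\in\mathbb{N}$, and $(\tilde{\varrho}_0, \tilde{\mathbf{u}}_0, \tilde{\varrho}, \tilde{\mathbf{u}}, \tilde{\mathbf{m}}, \tilde{V}, \tilde{W})$ such that the former have laws $\mu_N$, the latter has a Radon law, and the displayed $\tilde{\mathbb{P}}$-a.s.\ convergences hold in the respective topologies (the component $\tilde{\mathbf{m}}_N$ being the one associated with $\Pi_N\varrho_N\mathbf{u}_N$). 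Equality of laws in particular transfers the uniform bounds \eqref{uniBoundThirdLayer1}--\eqref{uniBoundThirdLayer2} to the tilded variables, which is what the subsequent identification of the limit system will use.

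The one point genuinely requiring care is the topological verification in the first paragraph --- that $\chi$ admits a countable point-separating family of continuous functions, so that the generalized Skorokhod theorem (rather than only the classical Polish-space version) is applicable --- since several of the factor spaces are non-metrizable. Everything else is routine: once the theorem is applied, the a.s.\ convergences and equality of laws are immediate, and the fact that $\tilde{W}=\sum_{k\in\mathbb{N}}\tilde{\beta}_k e_k$ (and each $\tilde{W}_N$) is again an $(\tilde{\mathscr{F}}_t)$-cylindrical Wiener process relative to the canonical right-continuous completed filtration generated by $\tilde{\varrho}$, $\tilde{\mathbf{u}}$, $\tilde{V}$ and the $\tilde{\beta}_k$ follows, exactly as in \cite[Section 2.1, Lemma 2.1.35, Corollary 2.1.36]{breit2018stoch}, from the preservation of laws together with a martingale-representation argument.
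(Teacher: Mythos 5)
Your proposal is correct and follows essentially the same route as the paper: tightness of $\{\mu_N\}$ on $\chi$ (Lemma \ref{tighnessThirdLAyer}) combined with the Jakubowski--Skorokhod representation theorem on quasi-Polish spaces, exactly as in \cite[Section 4.3]{breit2018stoch}. Your added verification that each weak-topology factor of $\chi$ admits a countable point-separating family of continuous functionals is the standard (and correct) justification that the paper leaves implicit, and the remarks on equality of laws, transfer of the uniform bounds, and the Wiener-process structure of $\tilde{W}$ match what the paper carries out immediately after the lemma.
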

\subsection{Identifying the limit system}
\label{sec:limSystemThirdLayer}
Since our constructed velocity field $\tilde{\mathbf{u}}$ lives in a function space endowed with a weak topology, it follows that it is a random distribution in the sense of \cite[Definition 2.2.1]{breit2018stoch} rather than the classical notion of a stochastic process as one would usually expects. Loosely speaking, whereas a  stochastic process is defined pointwise for all times, our velocity field can only be interpreted in the PDE sense of distributions.
\\
We also have that $\tilde{\mathbf{u}}$ is adapted to its complete right-continuous history $(\sigma_t[\tilde{\mathbf{u}}])_{t\geq 0}$, see comments on \cite[Page 37]{breit2018stoch}. It follows from \cite[Lemma 2.2.18]{breit2018stoch} that there exist a classical stochastic process $\tilde{\mathbf{u}}$ (not relabelled) that coincides with our constructed velocity almost always in $\tilde{\Omega} \times [0,T]$, it is $\sigma_t[\tilde{\mathbf{u}}]$-progressively measurable and 
\begin{align*}
\tilde{\mathbf{u}} \in   L^2\big(0,T;W^{1,2}_x\big) \quad \tilde{\mathbb{P}}\text{-a.s.}
\end{align*}
Subsequently, we can build the following complete right-continuous filtration
\begin{align*}
\sigma \bigg(\sigma_t[\tilde{\varrho}_N], \sigma_t[\tilde{\mathbf{u}}_N], \sigma_t[\tilde{V}_N], \bigcup_{k\in \mathbb{N}}\sigma_t[\tilde{\beta}_{N,k}]  \bigg), \quad t\in [0,T]
\end{align*}
which is non-anticipative with respect to $\tilde{W}_N$. This ensures a well-defined stochastic integral, see \cite[Remark 2.3.7]{breit2018stoch},
and by Lemma \ref{lem:JakowThirdLayer} and  \cite[Lemma 2.9.3]{breit2018stoch}, allows the passage to the limit $N\rightarrow\infty$ to obtain the following filtration
\begin{align*}
\tilde{\mathscr{F}}_t  :=
\sigma \bigg(\sigma_t[\tilde{\varrho}], \sigma_t[\tilde{\mathbf{u}}], \sigma_t[\tilde{V}], \bigcup_{k\in \mathbb{N}}\sigma_t[\tilde{\beta}_{k}]  \bigg), \quad t\in [0,T]
\end{align*}
which is non-anticipative with respect to $\tilde{W}$.
\begin{lemma}
The following results
\begin{align}
&\tilde{\mathbf{m}} = \tilde{\varrho}\tilde{\mathbf{m}},
\\
&\tilde{\mathbf{m}}_N =  \Pi_N(\tilde{\varrho}_N \tilde{\mathbf{u}}_N),
\\
&\sqrt{\tilde{\varrho}_N} \tilde{\mathbf{u}}_N  \rightharpoonup
\sqrt{\tilde{\varrho}} \tilde{\mathbf{u}}  \quad \text{in} \quad L^2\big((0,T) \times \mathbb{T}^3 \big),
\\
&\tilde{\varrho}_N \tilde{\mathbf{u}}_N \otimes \tilde{\mathbf{u}}_N \rightharpoonup
\tilde{\varrho} \tilde{\mathbf{u}} \otimes \tilde{\mathbf{u}} \quad \text{in} \quad L^1\big((0,T) \times \mathbb{T}^3 \big)
\end{align}
holds $\tilde{\mathbb{P}}$a.s.
\end{lemma}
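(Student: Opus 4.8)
The plan is to proceed along the lines of \cite[Section~4.3]{breit2018stoch}, upgrading the stochastic compactness of Lemma~\ref{lem:JakowThirdLayer} to the stated identifications of products. I would first dispose of $\tilde{\mathbf{m}}_N=\Pi_N(\tilde\varrho_N\tilde{\mathbf{u}}_N)$ by the equality of laws: since $\Pi_N$ maps $L^1_x$ continuously into the finite-dimensional space $H_N$, the set $\{(\varrho,\mathbf{u},\mathbf{m}):\mathbf{m}=\Pi_N(\varrho\mathbf{u})\}$ is Borel in the relevant product of path spaces and carries full mass under the joint law $\mu_N$ of the original fields (for which $\mathbf{m}_N=\Pi_N(\varrho_N\mathbf{u}_N)$ holds by construction), hence it carries full mass under the law of $(\tilde\varrho_N,\tilde{\mathbf{u}}_N,\tilde{\mathbf{m}}_N)$, i.e.\ the identity holds $\tilde{\mathbb{P}}$-a.s. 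In the same way the estimates \eqref{uniBoundThirdLayer1}--\eqref{uniBoundThirdLayer2} transfer to the new sequence, and because the Skorokhod convergences of Lemma~\ref{lem:JakowThirdLayer} take place in the actual (partly weak) topologies of $\chi_\varrho$, $\chi_{\mathbf{u}}$, $\chi_V$ and $\chi_{\varrho\mathbf{u}}$, the uniform boundedness principle furnishes, for $\tilde{\mathbb{P}}$-a.e.\ $\omega$, bounds for $(\tilde\varrho_N,\tilde{\mathbf{u}}_N,\tilde{\mathbf{m}}_N,\tilde V_N)(\omega)$ that are uniform in $N$; all subsequent convergences are then understood $\tilde{\mathbb{P}}$-a.s.

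Next I would identify $\tilde{\mathbf{m}}=\tilde\varrho\tilde{\mathbf{u}}$. From Lemma~\ref{lem:JakowThirdLayer} one has $\tilde\varrho_N\to\tilde\varrho$ strongly in $L^p(0,T;W^{1,p}_x)$ for some $p>2$ (in particular strongly in $L^p((0,T)\times\mathbb{T}^3)$), while $\tilde{\mathbf{u}}_N\rightharpoonup\tilde{\mathbf{u}}$ weakly in $L^2(0,T;W^{1,2}_x)\hookrightarrow L^2(0,T;L^6_x)$; the product of a strongly and a weakly convergent sequence then gives $\tilde\varrho_N\tilde{\mathbf{u}}_N\rightharpoonup\tilde\varrho\tilde{\mathbf{u}}$ weakly in $L^r((0,T)\times\mathbb{T}^3)$ for some $r>1$. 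On the other hand, $\Pi_N$ is selfadjoint on $L^2_x$ and, by \eqref{projectionContinuity}, $\Pi_N\bm{\phi}\to\bm{\phi}$ uniformly in $(t,x)$ for every smooth $\bm{\phi}$; combining this with the uniform bound of $\tilde\varrho_N\tilde{\mathbf{u}}_N$ in $L^\infty(0,T;L^{2\Gamma/(\Gamma+1)}_x)\hookrightarrow L^1((0,T)\times\mathbb{T}^3)$ from \eqref{uniBoundThirdLayer1} yields $\Pi_N(\tilde\varrho_N\tilde{\mathbf{u}}_N)\rightharpoonup\tilde\varrho\tilde{\mathbf{u}}$ in the sense of distributions as well. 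Since at the same time $\tilde{\mathbf{m}}_N=\Pi_N(\tilde\varrho_N\tilde{\mathbf{u}}_N)\to\tilde{\mathbf{m}}$ in $\chi_{\varrho\mathbf{u}}$, the two distributional limits coincide, so $\tilde{\mathbf{m}}=\tilde\varrho\tilde{\mathbf{u}}$ a.e.\ in $(0,T)\times\mathbb{T}^3$.

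For the square-root momentum I would first upgrade the density convergence: from $|\sqrt{a}-\sqrt{b}|\le\sqrt{|a-b|}$ and $\tilde\varrho_N\to\tilde\varrho$ in $L^p((0,T)\times\mathbb{T}^3)$, $p>2$, one gets $\sqrt{\tilde\varrho_N}\to\sqrt{\tilde\varrho}$ strongly in $L^{2p}((0,T)\times\mathbb{T}^3)$ with $2p>4$. Together with $\tilde{\mathbf{u}}_N\rightharpoonup\tilde{\mathbf{u}}$ weakly in $L^2(0,T;L^6_x)$ this gives $\sqrt{\tilde\varrho_N}\tilde{\mathbf{u}}_N\rightharpoonup\sqrt{\tilde\varrho}\tilde{\mathbf{u}}$ weakly in $L^s((0,T)\times\mathbb{T}^3)$ for some $s>1$; but $\sqrt{\tilde\varrho_N}\tilde{\mathbf{u}}_N$ is bounded in $L^2((0,T)\times\mathbb{T}^3)$ — because $\sqrt{\tilde\varrho_N}$ is bounded in $L^\infty(0,T;L^{2\Gamma}_x)$ with $2\Gamma\ge12$ and $\tilde{\mathbf{u}}_N$ in $L^2(0,T;L^6_x)$ — hence weakly relatively compact there, and by uniqueness of the weak limit the convergence holds in $L^2((0,T)\times\mathbb{T}^3)$.

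The last identity, for the convective term, is the step I expect to cost the most work. Writing $\tilde\varrho_N\tilde{\mathbf{u}}_N\otimes\tilde{\mathbf{u}}_N=(\tilde\varrho_N\tilde{\mathbf{u}}_N)\otimes\tilde{\mathbf{u}}_N$, the crux is to promote the $\chi_{\varrho\mathbf{u}}$-convergence to a strong convergence $\tilde\varrho_N\tilde{\mathbf{u}}_N\to\tilde{\mathbf{m}}=\tilde\varrho\tilde{\mathbf{u}}$ in $L^2(0,T;W^{-1,2}_x)$. First, $\Pi_N(\tilde\varrho_N\tilde{\mathbf{u}}_N)-\tilde\varrho_N\tilde{\mathbf{u}}_N\to0$ in $C([0,T];W^{-k,2}_x)$, by the compact embedding $L^{2\Gamma/(\Gamma+1)}_x\hookrightarrow\hookrightarrow W^{-k,2}_x$ (for $k$ large) and the uniform convergence $\Pi_N\to\mathrm{Id}$ on compacta, so $\tilde\varrho_N\tilde{\mathbf{u}}_N\to\tilde{\mathbf{m}}$ in $C([0,T];W^{-k,2}_x)$; interpolating this against the uniform bound in $L^\infty(0,T;L^{2\Gamma/(\Gamma+1)}_x)$ and using the compact embedding $L^{2\Gamma/(\Gamma+1)}_x\hookrightarrow\hookrightarrow W^{-1,2}_x$ gives the claimed strong convergence in $L^2(0,T;W^{-1,2}_x)$. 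Since the duality pairing of $L^2(0,T;W^{-1,2}_x)$ with $L^2(0,T;W^{1,2}_x)$ is sequentially continuous for strong-times-weak convergence, testing $\tilde\varrho_N\tilde{\mathbf{u}}_N\otimes\tilde{\mathbf{u}}_N$ against a smooth matrix field and rewriting the spatial integral as such a pairing (the second argument converging weakly in $L^2(0,T;W^{1,2}_x)$, since multiplication by a fixed smooth field is weakly continuous there) identifies the distributional limit as $\tilde\varrho\tilde{\mathbf{u}}\otimes\tilde{\mathbf{u}}$; the uniform bound of $\tilde\varrho_N\tilde{\mathbf{u}}_N\otimes\tilde{\mathbf{u}}_N$ in $L^2(0,T;L^{6\Gamma/(4\Gamma+3)}_x)$ from \eqref{uniBoundThirdLayer1} then upgrades this to weak convergence in $L^1((0,T)\times\mathbb{T}^3)$, as required.
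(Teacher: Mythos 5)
Your argument is correct and reconstructs essentially the argument the paper itself defers to, namely \cite[Lemma 4.3.11, Corollary 4.3.12]{breit2018stoch}: equality of laws for the $N$-level identity $\tilde{\mathbf{m}}_N=\Pi_N(\tilde\varrho_N\tilde{\mathbf{u}}_N)$, strong-times-weak convergence for $\tilde\varrho_N\tilde{\mathbf{u}}_N$ and $\sqrt{\tilde\varrho_N}\tilde{\mathbf{u}}_N$, and the upgrade of the $\chi_{\varrho\mathbf{u}}$-convergence to strong convergence in $L^2(0,T;W^{-1,2}_x)$ via the Ehrling/compact-embedding interpolation, paired against the weakly convergent velocity for the convective term. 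The only point stated loosely is the blanket appeal to the uniform boundedness principle for all $\tilde{\mathbb{P}}$-a.s.\ uniform-in-$N$ bounds (the kinetic-energy bound $\sup_t\Vert\tilde\varrho_N\vert\tilde{\mathbf{u}}_N\vert^2\Vert_{L^1_x}$ is not directly controlled by any of the Skorokhod topologies and is obtained from the transferred moment estimates \eqref{uniBoundThirdLayer1}), but this is the same standard technicality handled identically in the cited reference.
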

The proof of the above lemma is shown in \cite[Lemma 4.3.11 and Corollary 4.3.12]{breit2018stoch}.
Furthermore, due to the high enough regularity enjoyed by $[\tilde{\varrho}, \tilde{\mathbf{u}}]$, refer to the convergence results in Lemma \ref{lem:JakowThirdLayer}, we obtain the following result.
\begin{lemma}
\label{lem:identifyContThirdLayer}
The random distributions  $[\tilde{\varrho}, \tilde{\mathbf{u}}]$ satisfies \eqref{contEqThird1}  a.e. in $(0,T) \times \mathbb{T}^3$ $\mathbb{P}$-a.s.
\end{lemma}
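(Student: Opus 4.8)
The plan is to obtain \eqref{contEqThird1} by passing to the limit $N\to\infty$ in the regularised continuity equation \eqref{contEqSecond1} written for the Skorokhod variables $(\tilde{\varrho}_N,\tilde{\mathbf{u}}_N)$. First I would transfer the equation to the new probability space. Since \eqref{contEqSecond1} is a purely deterministic (pathwise) relation, the map $(\varrho,\mathbf{u})\mapsto \partial_t\varrho+\mathrm{div}(\varrho\mathbf{u})-\varepsilon\Delta\varrho$ is Borel measurable from $\chi_\varrho\times\chi_{\mathbf{u}}$ into, say, $L^1((0,T)\times\mathbb{T}^3)$, and by Theorem \ref{thm:mainSecondLayer} it vanishes $\mathbb{P}$-a.s. for the original pathwise solution; by the equality of laws in Lemma \ref{lem:JakowThirdLayer} it therefore vanishes $\tilde{\mathbb{P}}$-a.s. for $(\tilde{\varrho}_N,\tilde{\mathbf{u}}_N)$ as well, and likewise $\tilde{\varrho}_N(0)=\tilde{\varrho}_{0,N}$ $\tilde{\mathbb{P}}$-a.s. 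In the same way the uniform bounds \eqref{uniBoundThirdLayer1}--\eqref{uniBoundThirdLayer2} hold for $\tilde{\varrho}_N$, so in particular $\tilde{\varrho}_N$ is bounded, uniformly in $N$, in $L^q(0,T;W^{2,q}_x)\cap W^{1,q}(0,T;L^q_x)$ for some $q>1$ and in $L^p(0,T;W^{1,p}_x)$ for some $p>2$.

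Next I would fix $\tilde{\omega}$ outside a $\tilde{\mathbb{P}}$-null set and pass to the limit term by term. By Lemma \ref{lem:JakowThirdLayer}, $\tilde{\varrho}_N\to\tilde{\varrho}$ strongly in $L^p(0,T;W^{1,p}_x)$, so $\tilde{\varrho}_N\to\tilde{\varrho}$ and $\nabla\tilde{\varrho}_N\to\nabla\tilde{\varrho}$ strongly in $L^p((0,T)\times\mathbb{T}^3)$, while $\partial_t\tilde{\varrho}_N\rightharpoonup\partial_t\tilde{\varrho}$ and $\Delta\tilde{\varrho}_N\rightharpoonup\Delta\tilde{\varrho}$ weakly in $L^q((0,T)\times\mathbb{T}^3)$, and $\tilde{\mathbf{u}}_N\rightharpoonup\tilde{\mathbf{u}}$ weakly in $L^2(0,T;W^{1,2}_x)$. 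For the convective term I would split $\mathrm{div}(\tilde{\varrho}_N\tilde{\mathbf{u}}_N)=\nabla\tilde{\varrho}_N\cdot\tilde{\mathbf{u}}_N+\tilde{\varrho}_N\,\mathrm{div}\tilde{\mathbf{u}}_N$; a product of a strongly $L^p$-convergent sequence (with $p>2$) and a weakly $L^2$-convergent sequence converges weakly in $L^r((0,T)\times\mathbb{T}^3)$ with $\tfrac1r=\tfrac1p+\tfrac12<1$, so each summand converges weakly in this $L^r$ space, $r>1$, whence $\mathrm{div}(\tilde{\varrho}_N\tilde{\mathbf{u}}_N)\rightharpoonup\mathrm{div}(\tilde{\varrho}\tilde{\mathbf{u}})$ in $L^r((0,T)\times\mathbb{T}^3)$. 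Passing to the limit in \eqref{contEqSecond1} then yields $\partial_t\tilde{\varrho}+\mathrm{div}(\tilde{\varrho}\tilde{\mathbf{u}})=\varepsilon\Delta\tilde{\varrho}$ as an identity in $L^r((0,T)\times\mathbb{T}^3)$, i.e. for a.e. $(t,x)$, $\tilde{\mathbb{P}}$-a.s.; the strong convergence $\tilde{\varrho}_{0,N}\to\tilde{\varrho}_0$ in $\chi_{\varrho_0}=C_x$ together with $\tilde{\varrho}_N\in C_w([0,T];L^\Gamma_x)$ identifies the trace $\tilde{\varrho}(0)=\tilde{\varrho}_0$.

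I expect the only point requiring genuine care to be the passage to the limit in the nonlinear term $\mathrm{div}(\tilde{\varrho}_N\tilde{\mathbf{u}}_N)$: it really uses the improved, artificial-viscosity-level regularity of the density recorded in \eqref{uniBoundThirdLayer2} — equivalently the \emph{strong} $L^p(0,T;W^{1,p}_x)$-convergence with $p>2$ delivered by Skorokhod's theorem — and not merely the energy-level bounds, so that the $L^p$-strong $\times$ $L^2$-weak pairing closes with a summable exponent. The law-transfer/measurability step and the weak limits in the linear terms $\partial_t\tilde\varrho$ and $\varepsilon\Delta\tilde\varrho$ are routine; this is precisely the scalar argument used for the purely fluid system in \cite{breit2018stoch}, the electric field playing no role in the continuity equation.
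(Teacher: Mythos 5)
Your proposal is correct and follows essentially the same route as the paper, which simply invokes the convergences of Lemma \ref{lem:JakowThirdLayer} together with the maximal-regularity bounds \eqref{uniBoundThirdLayer1}--\eqref{uniBoundThirdLayer2} (the argument being the scalar one from \cite{breit2018stoch}). Your filling-in of the details — law transfer of the pathwise identity, strong $L^p(0,T;W^{1,p}_x)$ convergence of the density with $p>2$ paired against weak $L^2$ convergence of the velocity to close the convective term, and weak $L^q$ limits for $\partial_t\tilde\varrho_N$ and $\Delta\tilde\varrho_N$ — is exactly what is intended.
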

Again, the high regularity enjoyed by $[\tilde{\varrho},  \tilde{V}]$ and the fact that the Poisson equation is linear results in the following.
\begin{lemma}
\label{lem:identifyPoisThirdLayer}
The random distributions  $[\tilde{\varrho},   \tilde{V}]$ satisfies  \eqref{elecFieldThird1} a.e. in $(0,T) \times \mathbb{T}^3$ $\mathbb{P}$-a.s.
\end{lemma}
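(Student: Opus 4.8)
The plan is to pass to the limit $N\to\infty$ directly in the Poisson equation $\pm\Delta\tilde V_N=\tilde\varrho_N-f$, exploiting its linearity. First I would record that, thanks to the equality of laws in Lemma~\ref{lem:JakowThirdLayer}, each pair $(\tilde\varrho_N,\tilde V_N)$ still satisfies \eqref{elecFieldThird1} pointwise a.e.\ on $(0,T)\times\mathbb{T}^3$, $\tilde{\mathbb{P}}$-a.s. This transfer is legitimate because the set of pairs $(\varrho,V)\in\chi_\varrho\times\chi_V$ for which $\pm\Delta V=\varrho-f$ holds in $\mathcal{D}'$ can be described by countably many of the continuous test functionals generating the Borel $\sigma$-algebra of $\chi_\varrho\times\chi_V$, hence is a Borel set; the datum $f$ being held fixed throughout the third-layer construction, it is simply carried along.

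Next I would fix an arbitrary $\varphi\in C^\infty_c((0,T)\times\mathbb{T}^3)$ and use the weak form $\pm\int_0^T\int_{\mathbb{T}^3}\tilde V_N\,\Delta\varphi\,\mathrm{d}x\,\mathrm{d}t=\int_0^T\int_{\mathbb{T}^3}(\tilde\varrho_N-f)\varphi\,\mathrm{d}x\,\mathrm{d}t$. By Lemma~\ref{lem:JakowThirdLayer} we have $\tilde\varrho_N\to\tilde\varrho$ weakly in $L^q(0,T;W^{2,q}_x)$ and in $C_w([0,T];L^\Gamma_x)$, and $\tilde V_N\to\tilde V$ in $C_w([0,T];W^{2,\Gamma}_x)$; together with the uniform moment bounds \eqref{uniBoundThirdLayer1} and the bound of $V_N$ in $L^\infty_t W^{2,\Gamma}_x$ noted in the proof of Lemma~\ref{tighnessThirdLAyer}, both integrals pass to the limit $\tilde{\mathbb{P}}$-a.s., yielding $\pm\Delta\tilde V=\tilde\varrho-f$ in $\mathcal{D}'((0,T)\times\mathbb{T}^3)$, $\tilde{\mathbb{P}}$-a.s.

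Finally, since $\tilde\varrho-f\in L^\infty(0,T;L^\Gamma_x)$ $\tilde{\mathbb{P}}$-a.s.\ with $\Gamma\geq 6$, I would invoke the standard $W^{2,\Gamma}$-regularity theory for the Laplacian on the torus (the compatibility condition $\int_{\mathbb{T}^3}(\tilde\varrho(t)-f)\,\mathrm{d}x=0$ being inherited from conservation of mass for \eqref{contEqThird1}) to upgrade the distributional identity to the claimed pointwise a.e.\ statement and to obtain $\tilde V\in L^\infty(0,T;W^{2,\Gamma}_x)$ $\tilde{\mathbb{P}}$-a.s. Adaptedness of $\tilde V$ to $(\tilde{\mathscr{F}}_t)$ is then immediate, since $\tilde V(t)$ is a measurable function of $\tilde\varrho(t)$ alone (namely $(\pm\Delta)^{-1}$ applied to $\tilde\varrho(t)-f$, with the additive constant fixed by the mean of $\tilde V(t)$, itself the limit of the means of $\tilde V_N(t)$). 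I do not expect a genuine obstacle here, as the Poisson equation is linear; the only mildly delicate points are the measurable transfer of the PDE across the Skorokhod representation when $\chi_V$ carries a weak topology, and the bookkeeping of the mean-value/solvability condition on the torus, both of which are handled exactly as in the corresponding step for the first approximation layer.
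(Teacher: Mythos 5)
Your argument is correct and follows the same route the paper (implicitly) takes: the paper simply asserts the lemma from the linearity of the Poisson equation and the regularity of $[\tilde{\varrho},\tilde{V}]$, exactly as in the corresponding step of the first approximation layer, i.e.\ transfer of the equation to the new probability space by equality of laws and passage to the limit using the convergences of Lemma~\ref{lem:JakowThirdLayer}. Your write-up merely supplies details the paper omits (Borel measurability of the solution set, the distributional limit passage, elliptic regularity, and the mean-value condition), all of which are handled appropriately.
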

Also, by relying on \cite[Theorem 2.9.1]{breit2018stoch} and Lemma \ref{lem:JakowThirdLayer}, we gain analogously to \cite[Proposition 4.3.14]{breit2018stoch}, the following result.
\begin{lemma}
\label{lem:identifyMomThirdLayer}
The random distributions  $[\tilde{\varrho}, \tilde{\mathbf{u}},  \tilde{V}]$ satisfies \eqref{weakMomEqThirdLayer} for all $\psi \in C^\infty_c ([0,T))$ and $\bm{\phi} \in C^\infty(\mathbb{T}^3)$ $\mathbb{P}$-a.s.
\end{lemma}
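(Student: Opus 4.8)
The plan is to pass to the limit $N\to\infty$ in the Galerkin momentum balance of item~(7) of Definition~\ref{def:pathSolSecondLayer}, read on the new probability space for the Skorokhod variables $[\tilde\varrho_N,\tilde{\mathbf u}_N,\tilde V_N,\tilde W_N]$ — it holds there $\tilde{\mathbb P}$-a.s.\ by the equality of laws in Lemma~\ref{lem:JakowThirdLayer} and the transfer argument of \cite[Section~2.9]{breit2018stoch} — after testing the $N$-level equation against $\Pi_N\bm\phi\in H_N$ for fixed $\psi\in C^\infty_c([0,T))$ and $\bm\phi\in C^\infty(\mathbb T^3)$, which is licit since $\Pi_N\bm\phi\to\bm\phi$ in every $W^{k,p}_x$ by \eqref{projectionContinuity}. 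The inputs are the convergences of Lemma~\ref{lem:JakowThirdLayer} and of the preceding lemma: $\tilde\varrho_N\to\tilde\varrho$ strongly in $L^p(0,T;W^{1,p}_x)$ and a.e.\ on $(0,T)\times\mathbb T^3$; $\tilde{\mathbf u}_N\rightharpoonup\tilde{\mathbf u}$ in $L^2(0,T;W^{1,2}_x)$; $\Pi_N(\tilde\varrho_N\tilde{\mathbf u}_N)\to\tilde\varrho\tilde{\mathbf u}$ in $C_w([0,T];L^{2\Gamma/(\Gamma+1)}_x)\cap C([0,T];W^{-k,2}_x)$; $\tilde\varrho_N\tilde{\mathbf u}_N\otimes\tilde{\mathbf u}_N\rightharpoonup\tilde\varrho\tilde{\mathbf u}\otimes\tilde{\mathbf u}$ in $L^1((0,T)\times\mathbb T^3)$; and $\tilde V_N\to\tilde V$ in $C_w([0,T];W^{2,\Gamma}_x)$ with $\nabla\tilde V_N$ bounded in $L^\infty(0,T;L^2_x)$.

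For the deterministic terms the passage is termwise. The time-derivative term and the initial-datum term use the convergence of $\Pi_N(\tilde\varrho_N\tilde{\mathbf u}_N)$ and of $(\tilde\varrho_{0,N},\tilde{\mathbf u}_{0,N})$ in $\chi_{\varrho_0}\times\chi_{\mathbf u_0}=C_x\times L^2_x$; the convective term is precisely the recorded weak $L^1$-convergence, tested against $\nabla\bm\phi$; the two Lamé viscosity terms are linear in $\nabla\tilde{\mathbf u}_N$ and pass by the weak $L^2(0,T;W^{1,2}_x)$-convergence; the artificial-viscosity term $\varepsilon\tilde\varrho_N\tilde{\mathbf u}_N\cdot\Delta\bm\phi$ follows from $\tilde\varrho_N\tilde{\mathbf u}_N\rightharpoonup\tilde\varrho\tilde{\mathbf u}$ (weakly in, say, $L^r(0,T;L^{2\Gamma/(\Gamma+1)}_x)$, obtained from the convergence of $\Pi_N(\tilde\varrho_N\tilde{\mathbf u}_N)$ and $\Pi_N\to\mathrm{id}$); the pressure term uses the strong $L^\Gamma((0,T)\times\mathbb T^3)$-convergence of $\tilde\varrho_N$ — a.e.\ convergence together with the equi-integrability of $\{p_\delta^\Gamma(\tilde\varrho_N)\}$ furnished by \eqref{uniBoundThirdLayer1} and Vitali's theorem, using $\vert p_\delta^\Gamma(\varrho)\vert\lesssim 1+\varrho^\Gamma$. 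The one genuinely new term is the Poisson forcing $\vartheta\int_{\mathbb T^3}\tilde\varrho_N\nabla\tilde V_N\cdot\bm\phi$: here $\tilde\varrho_N\to\tilde\varrho$ strongly in $L^2((0,T)\times\mathbb T^3)$ (since $\Gamma>2$), while $\nabla\tilde V_N\rightharpoonup\nabla\tilde V$ weakly in $L^2((0,T)\times\mathbb T^3)$ — the latter because $\tilde V_N(t)\rightharpoonup\tilde V(t)$ in $W^{2,\Gamma}_x$ for each $t$ and $\nabla\tilde V_N$ is bounded in $L^\infty(0,T;L^2_x)$ — so the product converges weakly in $L^1$, which is enough against the smooth $\bm\phi$ and the bounded $\psi$.

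The main obstacle is the stochastic term $\int_0^\cdot\int_{\mathbb T^3}\Pi_N\big[\tilde\varrho_N\Pi_N\mathbf g_{k,\varepsilon}(\tilde\varrho_N,f,\tilde\varrho_N\tilde{\mathbf u}_N)\big]\cdot\bm\phi\,\mathrm{d}x\,\mathrm{d}\tilde\beta_{N,k}$, whose identification I would route through the abstract lemma \cite[Theorem~2.9.1]{breit2018stoch}. On top of $\tilde W_N\to\tilde W$ in $C([0,T];\mathfrak U_0)$ $\tilde{\mathbb P}$-a.s., that lemma needs the convergence of the diffusion coefficients $\Pi_N[\tilde\varrho_N\Pi_N\mathbf g_{k,\varepsilon}(\tilde\varrho_N,f,\tilde\varrho_N\tilde{\mathbf u}_N)]\to\tilde\varrho\,\mathbf g_{k,\varepsilon}(\tilde\varrho,f,\tilde\varrho\tilde{\mathbf u})$ in $L^2(0,T;L_2(\mathfrak U;W^{-l,2}_x))$ $\tilde{\mathbb P}$-a.s.\ together with a uniform moment bound. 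The difficulty is that $\tilde{\mathbf u}_N$ converges only weakly, so the cut-off nonlinearity $\mathbf g_{k,\varepsilon}$ cannot be passed to the limit pointwise in the velocity; one has to go through the momentum, exploiting that $\tilde\varrho_N\tilde{\mathbf u}_N$ in fact converges \emph{strongly} in some $L^{r_0}(0,T;L^{s_1}_x)$. This last upgrade is obtained as in \cite[Proposition~4.3.14]{breit2018stoch}: the parabolic regularity \eqref{uniBoundThirdLayer2} of $\tilde\varrho_N$ supplies a uniform spatial bound for $\tilde\varrho_N\tilde{\mathbf u}_N$ and the momentum equation a uniform time-regularity bound, so an Aubin--Lions argument yields the strong convergence; combined with the Lipschitz bound \eqref{stochCoeffBound1Exis0a}, the summability \eqref{stochsummableConsta}, the strong convergence of $\tilde\varrho_N$, and $\Pi_N\to\mathrm{id}$ on $W^{-l,2}_x$, this gives the required convergence of the coefficients, while the moment bound follows from \eqref{uniBoundThirdLayer1} exactly as in \eqref{noiseBoundSecodLayer1}--\eqref{noiseBoundSecodLayer3}. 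Feeding these inputs into \cite[Theorem~2.9.1]{breit2018stoch} identifies the stochastic integral driven by $\tilde W$, and assembling all the terms yields \eqref{weakMomEqThirdLayer} for $[\tilde\varrho,\tilde{\mathbf u},\tilde V]$, $\tilde{\mathbb P}$-a.s., first for a countable dense family of pairs $(\psi,\bm\phi)$ and then, by density, for all of them.
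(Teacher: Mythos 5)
Your proposal is correct and follows essentially the same route as the paper, which itself defers the term-by-term limit passage and the identification of the stochastic integral to \cite[Proposition 4.3.14, Theorem 2.9.1]{breit2018stoch} (using the strong convergence of the momenta and the Lipschitz bounds \eqref{stochCoeffBound1Exis0a}--\eqref{stochsummableConsta} for the cut-off coefficients), the only new term being the Poisson forcing. For that term your strong-$\tilde\varrho_N$ $\times$ weak-$\nabla\tilde V_N$ pairing works; the paper's compactness in $\chi_V=C_w([0,T];W^{2,\Gamma}_x)$ with $\Gamma>3$ in fact yields uniform convergence of $\nabla\tilde V_N$, so either pairing closes the argument.
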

Note that since $f$ is a given, the proof of \cite[Proposition 4.3.14]{breit2018stoch} which emphasis convergence in the stochastic forcing term can be carried over, almost verbatim, to the proof of Lemma \ref{lem:identifyMomThirdLayer}. Finally, we are able to identify the energy with the help of the convergence results established in Lemma \ref{lem:JakowThirdLayer}. Again, note that these random variables are still regular enough to pass to the limit in the energy inequality for the sequence variables and that the noise term can be treated in the same vain as \cite[Proposition 4.3.15]{breit2018stoch}.
\begin{lemma}
\label{lem:identifyEnerThirdLayer}
The random distributions  $[\tilde{\varrho}, \tilde{\mathbf{u}},  \tilde{V}]$ satisfies \eqref{energyInequalityThirdLayer} for all $\psi \in C^\infty_c ([0,T))$, $\psi \geq0$ $\mathbb{P}$-a.s.
\end{lemma}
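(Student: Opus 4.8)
The plan is to pass to the limit $N\to\infty$ in the energy \emph{equality} \eqref{energyEqualitySecondLayer}, which, by the equality of laws in Lemma \ref{lem:JakowThirdLayer}, is satisfied $\tilde{\mathbb{P}}$-a.s.\ on the new probability space by $[\tilde{\varrho}_N,\tilde{\mathbf{u}}_N,\tilde{V}_N]$ together with $\tilde{W}_N$ (with data $(\tilde{\varrho}_{0,N},\tilde{\mathbf{u}}_{0,N},f)$); the inequality in \eqref{energyInequalityThirdLayer} will arise solely from the dissipative terms. First I would upgrade the velocity compactness: combining the weak convergence $\sqrt{\tilde{\varrho}_N}\tilde{\mathbf{u}}_N\rightharpoonup\sqrt{\tilde{\varrho}}\tilde{\mathbf{u}}$ in $L^2((0,T)\times\mathbb{T}^3)$ with the convergence of the kinetic energies $\int_0^T\!\int_{\mathbb{T}^3}\tilde{\varrho}_N|\tilde{\mathbf{u}}_N|^2\to\int_0^T\!\int_{\mathbb{T}^3}\tilde{\varrho}|\tilde{\mathbf{u}}|^2$, which follows by testing the $L^1$-convergence $\tilde{\varrho}_N\tilde{\mathbf{u}}_N\otimes\tilde{\mathbf{u}}_N\rightharpoonup\tilde{\varrho}\tilde{\mathbf{u}}\otimes\tilde{\mathbf{u}}$ (equi-integrable thanks to \eqref{uniBoundThirdLayer1}) against the constant function, one obtains $\sqrt{\tilde{\varrho}_N}\tilde{\mathbf{u}}_N\to\sqrt{\tilde{\varrho}}\tilde{\mathbf{u}}$ \emph{strongly} in $L^2((0,T)\times\mathbb{T}^3)$, and hence, using $\tilde{\varrho}_N\to\tilde{\varrho}$ a.e.\ and $\tilde{\varrho}>0$, also $\tilde{\varrho}_N\tilde{\mathbf{u}}_N\to\tilde{\varrho}\tilde{\mathbf{u}}$ and $\tilde{\mathbf{u}}_N\to\tilde{\mathbf{u}}$ a.e.\ along a further subsequence.

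Fix $\psi\in C^\infty_c([0,T))$, $\psi\geq0$, and write $\mathcal{E}_N(t)=\int_{\mathbb{T}^3}\big[\tfrac12\tilde{\varrho}_N|\tilde{\mathbf{u}}_N|^2+P^\Gamma_\delta(\tilde{\varrho}_N)\pm\vartheta|\nabla\tilde{V}_N|^2\big]\,\mathrm{d}x$. A point to watch is that $-\int_0^T\partial_t\psi\,\mathcal{E}_N\,\mathrm{d}t$ has to converge \emph{genuinely}, not merely in the sense of a $\liminf$, because $\partial_t\psi$ changes sign: the pressure potential passes by the strong convergence $\tilde{\varrho}_N\to\tilde{\varrho}$ in $L^p(0,T;W^{1,p}_x)$, the uniform-in-$N$ bound on $\sup_t\Vert\tilde{\varrho}_N(t)\Vert_{L^\Gamma_x}$ from convergence in $C_w([0,T];L^\Gamma_x)$, and dominated convergence; the electric-field part passes by the strong convergence of $\nabla\tilde{V}_N$ furnished by the linear relation $\pm\Delta\tilde{V}_N=\tilde{\varrho}_N-f$ and elliptic regularity; and the kinetic part converges weakly in $L^1(0,T)$ after taking the trace in $\tilde{\varrho}_N\tilde{\mathbf{u}}_N\otimes\tilde{\mathbf{u}}_N\rightharpoonup\tilde{\varrho}\tilde{\mathbf{u}}\otimes\tilde{\mathbf{u}}$, which suffices for testing against $\partial_t\psi\in L^\infty(0,T)$. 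For the dissipative terms $\nu^S\int_0^T\psi\int_{\mathbb{T}^3}|\nabla\tilde{\mathbf{u}}_N|^2$, $(\nu^B+\nu^S)\int_0^T\psi\int_{\mathbb{T}^3}|\mathrm{div}\,\tilde{\mathbf{u}}_N|^2$, $\varepsilon\int_0^T\psi\int_{\mathbb{T}^3}\tilde{\varrho}_N|\nabla\tilde{\mathbf{u}}_N|^2$ and $\varepsilon\int_0^T\psi\int_{\mathbb{T}^3}(P^\Gamma_\delta)''(\tilde{\varrho}_N)|\nabla\tilde{\varrho}_N|^2$, I would combine $\psi\geq0$ with weak lower semicontinuity of the corresponding quadratic functionals, using $\nabla\tilde{\mathbf{u}}_N\rightharpoonup\nabla\tilde{\mathbf{u}}$ in $L^2_{t,x}$, the strong convergence $\sqrt{\tilde{\varrho}_N}\to\sqrt{\tilde{\varrho}}$, and the bounds on $\tilde{\varrho}_N^{-1/2}\nabla\tilde{\varrho}_N$ and $\tilde{\varrho}_N^{(\Gamma-2)/2}\nabla\tilde{\varrho}_N$ in \eqref{uniBoundThirdLayer1}; this is the only place where the equality turns into an inequality.

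It remains to treat the right-hand side. The initial term converges since $\tilde{\varrho}_{0,N}\to\tilde{\varrho}_0$ in $C_x$ and $\tilde{\mathbf{u}}_{0,N}\to\tilde{\mathbf{u}}_0$ in $L^2_x$, with $\nabla\tilde{V}_0$ determined by $\tilde{\varrho}_0-f$. In the It\^o correction $\tfrac12\int_0^T\psi\int_{\mathbb{T}^3}\tilde{\varrho}_N\sum_k|\Pi_N\mathbf{g}_{k,\varepsilon}(\tilde{\varrho}_N,f,\tilde{\varrho}_N\tilde{\mathbf{u}}_N)|^2$, the cut-offs defining $\mathbf{g}_{k,\varepsilon}$ make each summand a bounded continuous function of $(\tilde{\varrho}_N,\tilde{\varrho}_N\tilde{\mathbf{u}}_N)$, so a.e.\ convergence of these arguments, the summability \eqref{stochsummableConsta}, $\Pi_N\to\mathrm{id}$ and dominated convergence give the limit, the projection disappearing. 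The stochastic integral $\int_0^T\psi\int_{\mathbb{T}^3}\tilde{\varrho}_N\tilde{\mathbf{u}}_N\cdot\Pi_N\mathbf{G}_\varepsilon(\tilde{\varrho}_N,f,\tilde{\varrho}_N\tilde{\mathbf{u}}_N)\,\mathrm{d}x\,\mathrm{d}\tilde{W}_N$ is handled exactly as in \cite[Proposition 4.3.15]{breit2018stoch}: the integrands converge in $L^2(0,T;L_2(\mathfrak{U};\mathbb{R}))$ (again by the cut-offs and \eqref{stochsummableConsta}), $\tilde{W}_N\to\tilde{W}$ in $C([0,T];\mathfrak{U}_0)$, the filtration built in Section \ref{sec:limSystemThirdLayer} is non-anticipative, and the stability of stochastic integrals under Skorokhod convergence yields convergence in probability, hence $\tilde{\mathbb{P}}$-a.s.\ along a further subsequence. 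Since the right-hand side thus converges $\tilde{\mathbb{P}}$-a.s., so does the left-hand side; passing to $\liminf_N$ in \eqref{energyEqualitySecondLayer} and using the genuine convergence of $-\int_0^T\partial_t\psi\,\mathcal{E}_N$ together with lower semicontinuity of the dissipative terms yields \eqref{energyInequalityThirdLayer} $\tilde{\mathbb{P}}$-a.s., first for a countable dense family of test functions and then for all admissible $\psi$ by approximation.

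The main obstacle is the identification of the limit of the stochastic integral: this is where the stochastic-compactness machinery of \cite{breit2018stoch} is genuinely needed, and it is essential that the coefficients $\mathbf{g}_{k,\varepsilon}$ carry the cut-offs, since these supply the boundedness — hence the equi-integrability in $k$ and in $\omega$ — without which the passage to the limit in the two noise terms would fail. A secondary, more PDE-flavoured subtlety is securing \emph{true} rather than merely lower semicontinuous convergence of the energy functional against the sign-changing weight $\partial_t\psi$; this is precisely what forces the use of the compensated-compactness-type convergence of $\tilde{\varrho}_N\tilde{\mathbf{u}}_N\otimes\tilde{\mathbf{u}}_N$, so that lower semicontinuity is invoked only for the genuinely dissipative terms. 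The electric-field contributions, by contrast, are harmless here, since after the Poisson equation has been used they appear only through $\pm\vartheta|\nabla V|^2$ and $\nabla\tilde{V}_N$ converges strongly by linearity.
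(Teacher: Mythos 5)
Your proposal is correct and follows the same route the paper takes: the paper's own "proof" is a one-line remark that the convergences of Lemma \ref{lem:JakowThirdLayer} suffice to pass to the limit in \eqref{energyEqualitySecondLayer} and that the noise terms are handled as in \cite[Proposition 4.3.15]{breit2018stoch}, and your argument is exactly the detailed expansion of that sketch (strong convergence of $\sqrt{\tilde{\varrho}_N}\tilde{\mathbf{u}}_N$ from weak convergence plus convergence of the kinetic energies, genuine convergence of the energy functional against $\partial_t\psi$, lower semicontinuity only for the dissipative terms, and the cut-offs $\mathbf{g}_{k,\varepsilon}$ supplying the bounds needed for the two noise terms).
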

\section{The fourth approximation layer}
\label{sec:fourthLayer}
We devote this section to the  establishment of a class of solution to the following system 
\begin{align}
&\mathrm{d} \varrho  + \mathrm{div}(\varrho  \mathbf{u})  \, \mathrm{d}t =0,
\label{contEqFourth1}
\\
&\mathrm{d}(\varrho  \mathbf{u} ) +
\big[
\mathrm{div} (\varrho  \mathbf{u}  \otimes \mathbf{u} ) +  \nabla   p_{\delta}^{ \Gamma}(\varrho) \big]\, \mathrm{d}t 
=
\big[ \nu^S \Delta \mathbf{u} 
\nonumber\\&
\quad +(\nu^B + \nu^S)\nabla \mathrm{div} \mathbf{u} + \vartheta \varrho  \nabla V  \big] \, \mathrm{d}t   
+ \sum_{k\in \mathbb{N}} \varrho\,   \mathbf{g}_{k}(\varrho , f, \varrho \mathbf{u} ) \, \mathrm{d}\beta_k ,
\label{momEqFourth1}
\\
&\pm \Delta V  = \varrho  - f, \label{elecFieldFourth1}
\end{align}
which dissipates energy by passing to the limit $\varepsilon \rightarrow 0$ in \eqref{contEqThird1}--\eqref{momEqThird1}. The precise definition of this solution is as follows.
\begin{definition}
\label{def:martSolFourthLayer}
Let $\Lambda$ be a Borel probability measure on $\big[ L^1_x \big]^3$. We say that 
\begin{align}
\big[(\Omega ,\mathscr{F} ,(\mathscr{F} _t)_{t\geq0},\mathbb{P} );\varrho , \mathbf{u} , V , W   \big]
\end{align}
is a \textit{dissipative martingale solution} of \eqref{contEqFourth1}--\eqref{elecFieldFourth1} with initial law $\Lambda$ provided
\begin{enumerate}
\item $(\Omega,\mathscr{F},(\mathscr{F}_t),\mathbb{P})$ is a stochastic basis with a complete right-continuous filtration;
\item $W$ is a $(\mathscr{F}_t)$-cylindrical Wiener process;
\item the density $\varrho \in C_w \big( [0,T]; L^\gamma_x\big) $ is a random distribution, it is $(\mathscr{F}_t)$-adapted  and 
$\varrho>0$ $\mathbb{P}$-a.s.;
\item the velocity field $\mathbf{u} \in L^2 \big( 0,T; W^{1,2}_x\big) $ $\mathbb{P}$-a.s.  is an $(\mathscr{F}_t)$-adapted random distribution;
\item given $f\in L^\infty_x$, there exists $\mathscr{F}_0$-measurable random variables $(\varrho_0, \mathbf{m}_0)$ such that $\Lambda = \mathbb{P}\circ (\varrho_0, \varrho_0 \mathbf{u}_0, f)^{-1}$;
\item for all $\psi \in C^\infty_c ([0,T))$ and $\phi \in C^\infty ({\mathbb{T}^3})$, the following
\begin{equation}
\begin{aligned}
\label{weakContEqFourthLayer}
&-\int_0^T \partial_t \psi \int_{{\mathbb{T}^3}} \varrho (t) \phi \, \mathrm{d}x  \mathrm{d}t  =  \psi(0) \int_{{\mathbb{T}^3}} \varrho _0 \phi \,\mathrm{d}x  +  \int_0^T \psi \int_{{\mathbb{T}^3}} \varrho  \mathbf{u} \cdot \nabla \phi \, \mathrm{d}x  \mathrm{d}t,
\end{aligned}
\end{equation}
 $\mathbb{P}$-a.s.;
\item equation \eqref{elecFieldThird1} holds $\mathbb{P}$-a.s. for a.e.  $(t,x) \in(0,T)\times \mathbb{T}^3$;
\item for all $\psi \in C^\infty_c ([0,T))$ and $\bm{\phi} \in C^\infty(\mathbb{T}^3)$, the following
\begin{equation}
\begin{aligned}
\label{weakMomEqFourthLayer}
&-\int_0^T \partial_t \psi \int_{{\mathbb{T}^3}} \varrho  \mathbf{u} (t) \cdot \bm{\phi} \, \mathrm{d}x    \mathrm{d}t
=
\psi(0)  \int_{{\mathbb{T}^3}} \varrho _0 \mathbf{u} _0 \cdot \bm{\phi} \, \mathrm{d}x  
+  \int_0^T \psi \int_{{\mathbb{T}^3}}  \varrho    \mathbf{u} \otimes \mathbf{u} : \nabla \bm{\phi} \, \mathrm{d}x  \mathrm{d}t 
\\&-
 \nu^S\int_0^T \psi \int_{{\mathbb{T}^3}} \nabla \mathbf{u} : \nabla \bm{\phi}  \mathrm{d}x  \mathrm{d}t 
-(\nu^B+ \nu^S )\int_0^T \psi \int_{{\mathbb{T}^3}} \mathrm{div}\mathbf{u}  \mathrm{div}\, \bm{\phi} \mathrm{d}x  \mathrm{d}t    
+   
\int_0^T \psi \int_{{\mathbb{T}^3}}    p_{\delta}^{ \Gamma}\, \mathrm{div} \, \bm{\phi}  \mathrm{d}x  \mathrm{d}t
\\&+\int_0^T \psi \int_{{\mathbb{T}^3}} \vartheta\varrho  \nabla V  \cdot \bm{\phi}  \mathrm{d}x    \mathrm{d}t
+
\int_0^T \psi \sum_{k\in \mathbb{N}} \int_{{\mathbb{T}^3}}   \varrho\, \mathbf{g}_{k}(\varrho , f, \varrho \mathbf{u} ) \cdot \bm{\phi}  \, \mathrm{d}x \mathrm{d}\beta_k  
\end{aligned}
\end{equation}
hold $\mathbb{P}$-a.s;
\item equation \eqref{contEqFourth1} holds in the renormalized sense, i.e., for any $\phi\in C^\infty_c([0,T)\times{\mathbb{T}^3})$ and $b \in C^1_b(\mathbb{R})$ such that $ b'(z)=0$ for all $z\geq M_b$,  we have that
\begin{equation}
\begin{aligned}
\label{renormalizedContFourthLayer} 
-\int_0^T\int_{{\mathbb{T}^3}}  b(\varrho )\,\partial_t\phi \,\mathrm{d}x \, \mathrm{d}t &=
\int_{{\mathbb{T}^3}}  b\big(\varrho (0)\big)\,\phi(0) \,\mathrm{d}x 
+ \int_0^T\int_{{\mathbb{T}^3}} \big[b(\varrho ) \mathbf{u}  \big] \cdot \nabla\phi \, \mathrm{d}x \, \mathrm{d}t  
\\&- \int_0^T\int_{{\mathbb{T}^3}} \big[ \big(  b'(\varrho ) \varrho   
-
 b(\varrho )\big)\mathrm{div}\mathbf{u} \big]\,  \phi\, \mathrm{d}x \, \mathrm{d}t
\end{aligned}
\end{equation} 
holds $\mathbb{P}$-a.s.;
\item the energy inequality
\begin{equation}
\begin{aligned}
\label{energyInequalityFourthLayer}
-
\int_0^T& \partial_t \psi \int_{{\mathbb{T}^3}}\bigg[ \frac{1}{2} \varrho  \vert \mathbf{u}  \vert^2  
+ P^\Gamma_\delta(\varrho ) 
\pm
\vartheta \vert \nabla V \vert^2 
\bigg]\,\mathrm{d}x \, \mathrm{d}t
+\nu^S
\int_0^T\psi \int_{{\mathbb{T}^3}}\vert  \nabla \mathbf{u}  \vert^2 \,\mathrm{d}x  \,\mathrm{d}x
\\&+(\nu^B+\nu^S)
\int_0^T \psi \int_{{\mathbb{T}^3}}\vert  \mathrm{div}\, \mathbf{u}  \vert^2 \,\mathrm{d}x  \,\mathrm{d}t
\leq
\psi(0)
 \int_{{\mathbb{T}^3}}\bigg[\frac{1}{2} \varrho _0\vert \mathbf{u} _0 \vert^2  
+ P^\Gamma_\delta(\varrho _0) 
\pm
\vartheta \vert \nabla V_0 \vert^2
 \bigg]\,\mathrm{d}x
\\&
+\frac{1}{2}\int_0^T \psi
 \int_{{\mathbb{T}^3}}
 \varrho
 \sum_{k\in\mathbb{N}} \big\vert 
 \mathbf{g}_{k}(x,\varrho ,f,\mathbf{m}  ) 
\big\vert^2\,\mathrm{d}x\,\mathrm{d}t
+\int_0^T \psi  \int_{{\mathbb{T}^3}}   \varrho\mathbf{u} \cdot  \mathbf{G}(\varrho , f, \mathbf{m} )\,\mathrm{d}x\,\mathrm{d}W ;
\end{aligned}
\end{equation}
holds $\mathbb{P}$-a.s. for all $\psi \in C^\infty_c ([0,T))$, $\psi \geq0$.
\end{enumerate}
\end{definition} 
The main result in this section is the following.
\begin{theorem}
\label{thm:mainFourthLayer}
Let $\Gamma \geq 6$ and $\Lambda$ be a Borel probability measure on $\big[L^1_x \big]^3$ such that
\begin{align}
\label{lawMainThmFourthLayer}
\Lambda\bigg\{M \leq  \varrho \leq M^{-1}<\infty, \quad f \leq \overline{f}\bigg\} =1, \quad
\end{align}
holds for  deterministic constants $M, \overline{f}>0$. Also assume that
\begin{align}
\label{momEstMainThmFourthLayer}
\int_{[L^1_x ]^3} &\bigg\vert  
\int_{\mathbb{T}^3} \bigg[ \frac{\vert \mathbf{m} \vert^2}{2\varrho} + P^\Gamma_\delta(\varrho)  \pm 
\vartheta \vert \nabla V \vert^2
\bigg] \, \mathrm{d}x
\bigg\vert^p   \mathrm{d}\Lambda(\varrho, \mathbf{m},f) \lesssim 1
\end{align}
holds for some $p\geq 1$ .
Then the exists a dissipative martingale solution of \eqref{contEqFourth1}--\eqref{elecFieldFourth1} in the sense of Definition \ref{def:martSolFourthLayer}.
\end{theorem}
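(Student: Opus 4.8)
\emph{Overview.} The plan is to run, for the vanishing viscosity parameter $\varepsilon$, the same four-step scheme already executed in Section~\ref{sec:thirdLayer} (construct approximating laws, derive $\varepsilon$-uniform a priori bounds, prove tightness and apply Jakubowski--Skorokhod, identify the limit system), the only genuinely new ingredient being the stochastic \emph{effective viscous flux} analysis in the presence of the self-consistent field, which I would carry out via the inverse Laplacian trick anticipated in Section~\ref{sec:intro}.

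\emph{Steps 1--3 (law, estimates, compactness).} Given $\Lambda$ on $[L^1_x]^3$ satisfying \eqref{lawMainThmFourthLayer}--\eqref{momEstMainThmFourthLayer}, I would realise a random triple $(\varrho_0,\mathbf m_0,f)$ with law $\Lambda$, set $\varrho_{0,\varepsilon}:=\kappa_\varepsilon*\varrho_0\in C^{2+\nu}_x$ (a spatial mollification, which preserves $M\le\varrho_{0,\varepsilon}\le M^{-1}$) and $\mathbf u_{0,\varepsilon}:=\mathbf m_0/\varrho_{0,\varepsilon}$, and let $\Lambda_\varepsilon$ be the law of $(\varrho_{0,\varepsilon},\varrho_{0,\varepsilon}\mathbf u_{0,\varepsilon},f)$; convexity of $P^\Gamma_\delta$ and Jensen's inequality show that $\Lambda_\varepsilon$ obeys the hypotheses of Theorem~\ref{thm:mainThirdLayer} with a moment bound uniform in $\varepsilon$ and that $\Lambda_\varepsilon\to\Lambda$ with convergence of the energy moments. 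Let $[\varrho_\varepsilon,\mathbf u_\varepsilon,V_\varepsilon]$ be the dissipative martingale solution furnished by Theorem~\ref{thm:mainThirdLayer}. Exactly as in \eqref{noiseBoundSecodLayer1}--\eqref{noiseBoundSecodLayer4}, the energy inequality \eqref{energyInequalityThirdLayer} together with Korn's, Burkholder--Davis--Gundy and Gronwall's inequalities gives, uniformly in $\varepsilon$ and in all moments, control of $\varrho_\varepsilon$ in $L^\infty_tL^\Gamma_x$, of $\sqrt{\varrho_\varepsilon}\mathbf u_\varepsilon$ in $L^\infty_tL^2_x$, of $\mathbf u_\varepsilon$ in $L^2_tW^{1,2}_x$ and of $\nabla V_\varepsilon$ in $L^\infty_tL^2_x$, while the genuinely $\varepsilon$-weighted dissipation $\varepsilon\varrho_\varepsilon|\nabla\mathbf u_\varepsilon|^2$ and $\varepsilon(P^\Gamma_\delta)''(\varrho_\varepsilon)|\nabla\varrho_\varepsilon|^2$ stays $O(1)$, whence $\varepsilon\nabla\varrho_\varepsilon\to0$. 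The decisive extra bound is the improved pressure integrability: testing \eqref{momEqThird1} against $\bm\phi=\mathcal B\big[\varrho_\varepsilon^\theta-(\varrho_\varepsilon^\theta)_{\mathbb T^3}\big]$, $\mathcal B$ the Bogovskii operator and $\theta>0$ small, yields $\mathbb E\int_0^T\int_{\mathbb T^3}\varrho_\varepsilon^{\Gamma+\theta}\dx\dt\lesssim1$ uniformly in $\varepsilon$, the electric-field contribution being bounded by $\|\varrho_\varepsilon\|_{L^\Gamma_x}\|\nabla V_\varepsilon\|_{L^2_x}\|\mathcal B[\cdots]\|_{L^\infty_x}$ with $\|V_\varepsilon\|_{W^{2,\Gamma}_x}\lesssim\|\varrho_\varepsilon\|_{L^\Gamma_x}+\|f\|_{L^\Gamma_x}$ (here $f\in L^\infty_x\subset L^\Gamma_x$). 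One also records a uniform time-modulus for $\varrho_\varepsilon$ in a negative Sobolev norm from \eqref{contEqThird1}. Tightness of the joint laws on
\[
C_x\times L^2_x\times C_w([0,T];L^\Gamma_x)\times\big(L^2_tW^{1,2}_x,w\big)\times\big(C_w([0,T];L^{\frac{2\Gamma}{\Gamma+1}}_x)\cap C([0,T];W^{-k,2}_x)\big)\times C_w([0,T];W^{2,\Gamma}_x)\times C([0,T];\mathfrak U_0)
\]
then follows as in Lemma~\ref{tighnessThirdLAyer}; tightness of $\mu_{V_\varepsilon}$ uses only the linearity of \eqref{elecFieldFourth1}, the $L^\infty_tW^{2,\Gamma}_x$ bound and the time-Hölder regularity inherited from $\varrho_\varepsilon$, via the compact embedding of \cite[Corollary~B.2]{ondrejat2010stochastic}. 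Jakubowski--Skorokhod produces a new stochastic basis with a.s.\ convergent copies $[\tilde\varrho_\varepsilon,\tilde{\mathbf u}_\varepsilon,\tilde V_\varepsilon,\tilde W_\varepsilon]\to[\tilde\varrho,\tilde{\mathbf u},\tilde V,\tilde W]$ and a non-anticipative filtration built from their histories.

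\emph{Step 4 (limit system and effective viscous flux).} The limit satisfies \eqref{weakContEqFourthLayer} (the term $\varepsilon\Delta\tilde\varrho_\varepsilon$ is a vanishing remainder), \eqref{elecFieldFourth1} (linear equation, direct passage), and \eqref{weakMomEqFourthLayer} with $p^\Gamma_\delta(\tilde\varrho)$ replaced by its weak limit $\overline{p^\Gamma_\delta(\tilde\varrho)}$, the stochastic integral being handled as in \cite[Proposition~4.3.14]{breit2018stoch} and $\varepsilon\tilde\varrho_\varepsilon\tilde{\mathbf u}_\varepsilon\Delta\bm\phi\to0$. To identify the pressure I would establish the effective viscous flux identity
\[
\overline{\big(p^\Gamma_\delta(\tilde\varrho)-(\nu^B+2\nu^S)\,\mathrm{div}\tilde{\mathbf u}\big)\,\tilde\varrho}
=\big(\overline{p^\Gamma_\delta(\tilde\varrho)}-(\nu^B+2\nu^S)\,\mathrm{div}\tilde{\mathbf u}\big)\,\tilde\varrho
\]
$\tilde{\mathbb P}$-a.s., a.e.\ in $(0,T)\times\mathbb T^3$, by applying It\^o's formula (Theorem~\ref{thm:itoLemma}) to the functional obtained by testing the momentum equation against $\nabla\Delta^{-1}[\varrho_\varepsilon]$; the key observation — made possible by $f$ being time-independent and by $\Gamma\ge6$ giving enough integrability to dispense with truncation at this layer — is that $\nabla\Delta^{-1}[\varrho_\varepsilon-f]=\pm\nabla V_\varepsilon$ and, via the continuity equation, $\nabla\Delta^{-1}\,\mathrm{div}(\varrho_\varepsilon\mathbf u_\varepsilon)=\mp\partial_t\nabla V_\varepsilon+\varepsilon\nabla\varrho_\varepsilon$, so the awkward coupling $\vartheta\varrho_\varepsilon\nabla V_\varepsilon$ is paired with $\pm\nabla V_\varepsilon$ and produces the manageable quantity $\pm\vartheta\int\varrho_\varepsilon|\nabla V_\varepsilon|^2$, while the $\varepsilon$-terms are controlled by the weighted bounds of Step~2. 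Combining this identity with the renormalized continuity equation \eqref{renormalizedContFourthLayer} (which passes to the limit since $\tilde{\mathbf u}\in L^2_tW^{1,2}_x$) and the monotonicity of $P^\Gamma_\delta$ gives, by the usual Lions--Feireisl argument, that the defect $\overline{\varrho\log\varrho}-\varrho\log\varrho$ solves a differential inequality starting from zero with non-positive right-hand side, hence vanishes; therefore $\tilde\varrho_\varepsilon\to\tilde\varrho$ strongly in $L^1((0,T)\times\mathbb T^3)$, and in $L^{\Gamma+\theta}$ by interpolation with Step~2. Strong density convergence identifies $\overline{p^\Gamma_\delta(\tilde\varrho)}=p^\Gamma_\delta(\tilde\varrho)$, validates \eqref{renormalizedContFourthLayer} in the limit, and — together with weak lower semicontinuity and the strong convergence $\nabla\tilde V_\varepsilon\to\nabla\tilde V$ in $L^2$ (from $W^{2,\Gamma}_x\hookrightarrow\hookrightarrow W^{1,2}_x$ and the $C_w$-convergence) for the field energy — yields the energy inequality \eqref{energyInequalityFourthLayer}.

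\emph{Main obstacle.} As always for compressible fluids, the hard part is the identification of the pressure, i.e.\ proving the effective viscous flux identity and deducing strong $L^1$ compactness of $\tilde\varrho_\varepsilon$; here the extra difficulty is propagating the $\vartheta\varrho\nabla V$ term through the commutator estimates attached to $\nabla\Delta^{-1}$, which the substitution $\nabla\Delta^{-1}[\varrho_\varepsilon-f]=\pm\nabla V_\varepsilon$ is precisely designed to neutralise, and a subsidiary nuisance is checking that the artificial viscous terms $\varepsilon\Delta\varrho$ and $\varepsilon\Delta(\varrho\mathbf u)$ contribute only remainders that vanish in the It\^o computation, which relies on the $\varepsilon$-weighted estimates of Step~2.
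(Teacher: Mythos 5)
Your proposal is correct and its overall architecture (approximating laws, $\varepsilon$-uniform energy bounds, tightness and Jakubowski--Skorokhod, effective viscous flux via the inverse Laplacian, then DiPerna--Lions and strict convexity of $\varrho\log\varrho$) coincides with the paper's; the effective-viscous-flux step in particular uses exactly the paper's substitution $\nabla\Delta^{-1}_{\mathbb{T}^3}[\varrho_\varepsilon-f]=\pm\nabla V_\varepsilon$ and the resulting cancellation $\vartheta\varrho_\varepsilon\nabla V_\varepsilon\cdot\nabla V_\varepsilon=\vartheta\varrho_\varepsilon\vert\nabla V_\varepsilon\vert^2$.

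The one place where you genuinely diverge is the improved pressure estimate. You test the momentum equation against the Bogovskii-type potential of $\varrho_\varepsilon^\theta$ minus its mean with a small power $\theta>0$, obtaining $\varrho_\varepsilon\in L^{\Gamma+\theta}_{t,x}$ uniformly; the paper instead pairs the momentum equation directly with $\nabla V_\varepsilon=\pm\nabla\Delta^{-1}_{\mathbb{T}^3}(\varrho_\varepsilon-f)$, i.e.\ effectively with the full power $\theta=1$, obtaining the stronger bound \eqref{pressureEstFouthLayer1} on $\varrho_\varepsilon^{\Gamma+1}$ from the single It\^o computation \eqref{momEqItoFourthLayer1}. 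The paper's choice is economical because the same pairing is reused verbatim for the effective viscous flux, and because the electric-field term is absorbed by the sign-definite quantity $\vartheta\varrho_\varepsilon\vert\nabla V_\varepsilon\vert^2$ rather than estimated by H\"older as in your version; your route costs one extra It\^o/regularization computation (the $\varrho^\theta$ pairing, in which the $\varepsilon$-terms generated by renormalizing the parabolic continuity equation must also be controlled by the $\sqrt{\varepsilon}\nabla\varrho_\varepsilon$ bound) but yields the same equi-integrability of $p^\Gamma_\delta(\varrho_\varepsilon)$, which is all that is needed downstream. Two points to tighten: with the global test function $\nabla\Delta^{-1}[\varrho_\varepsilon-f]$ the flux identity is obtained only in space-time-integrated form (as in Lemma \ref{lem:effectiveFluxFourthLayer}), not pointwise a.e.\ as you state --- this suffices for the $\varrho\log\varrho$ defect argument; and in the renormalization step it is cleaner to invoke DiPerna--Lions directly on the limit pair $(\tilde{\varrho},\tilde{\mathbf{u}})$, which is legitimate here precisely because $\Gamma\geq 6$ makes $\tilde{\varrho}$ square-integrable, rather than speaking of passing the $\varepsilon$-level renormalized equation to the limit.
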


\subsection{Construction of law}
\label{sec:initialLawFourthLayer}
We now construct the law $\Lambda$ as described in Theorem \ref{thm:mainFourthLayer} through an approximation of laws $\Lambda_\varepsilon$ constructed from the previous section in Theorem \ref{thm:mainThirdLayer}. In order words, given $\Lambda_\varepsilon$ satisfying \eqref{lawMainThmThirdLayer}--\eqref{momEstMainThmThirdLayer} , we obtain \eqref{lawMainThmFourthLayer}--\eqref{momEstMainThmFourthLayer} uniformly in $\varepsilon$.
\\
Now since $[L^1_x ]^3$ is a Polish space, by \cite[Corollary 2.6.4]{breit2018stoch}, there exists a  stochastic basis $(\Omega ,\mathscr{F} ,(\mathscr{F} _t)_{t\geq0},\mathbb{P} )$ containing $\mathscr{F}_0$-measurable random variables $(\varrho_0, \mathbf{m}_0)$ so that given $f\in L^\infty_x$, the triplet $(\varrho_0, \mathbf{m}_0, f)$ has values in $[L^1_x]^3$ as well as having $\Lambda$ as their law. It follows in particular that 
\begin{align*}
f \leq \overline{f}
\end{align*}
holds $\mathbb{P}$-a.s. for a deterministic constant $\overline{f}>0$.  Also, one can find random variables $\varrho_{0,\varepsilon} \in C^{2+\nu}_x$ for some $\nu\in (0,1)$ such that
\begin{align*}
M \leq  \varrho_{0,\varepsilon} \leq M^{-1}, 
\end{align*}
hold $\mathbb{P}$-a.s. for a deterministic constant $M> 0$ and that
\begin{align}
\label{converApproInitiDem}
\varrho_{0,\varepsilon} \rightarrow \varrho_0 \quad\text{in}\quad L^{p_0}(\Omega;L^\Gamma_x) \quad \forall p_0 \in [1,p\Gamma].
\end{align}
If we also set $V_{0,\varepsilon} =V_0 =\pm \Delta^{-1}_{\mathbb{T}^3}(\varrho_0-f)$, then we have that
\begin{align}
\mathbb{E}\bigg[ \int_{\mathbb{T}^3} \vartheta \vert \nabla V_{0,\varepsilon} \vert^2 \, \mathrm{d}x \bigg]^{p_0} \lesssim 1
\end{align}
uniformly in $\delta$ for all $p_0\in [1,p]$ and that
\begin{align}
\nabla V_{0,\varepsilon} \rightarrow  \nabla V_{0} \quad &\text{in}\quad L^{p_0}(\Omega; L^2_x) \quad \text{for all}\quad  p_0\in [1,2p].
\end{align}
Furthermore, as in \cite[Page 154]{breit2018stoch}, one can find a $C^2_x$-valued random variable $h_\varepsilon$, such that for $\mathbf{m}_{0,\varepsilon}=h_\varepsilon \sqrt{\varrho_{0,\varepsilon}}$,  we have
\begin{align*}
\frac{\vert \mathbf{m}_{0,\varepsilon} \vert^2}{\varrho_{0,\varepsilon}} \in L^{p_0}(\Omega; L^1_x) \quad \forall p_0 \in [1,p]
\end{align*}
uniformly in $\varepsilon$ and with the help of \eqref{converApproInitiDem},
\begin{align}
\mathbf{m}_{0,\varepsilon} \rightarrow \mathbf{m}_0  &\quad\text{in} \quad L^{p_0}(\Omega; L^1_x) \quad \forall p_0 \in [1,p],
\label{converApproInitiMom}
\\
\frac{\mathbf{m}_{0,\varepsilon}}{\sqrt{\varrho_{0,\varepsilon}}} \rightarrow \frac{\mathbf{m}_0}{\sqrt{\varrho_0}}  &\quad\text{in} \quad L^{p_0}(\Omega; L^2_x) \quad \forall p_0 \in [1,2p].
\end{align}
We now set $\Lambda_\varepsilon=\mathbb{P}\circ (\varrho_{0,\varepsilon}, \mathbf{m}_{0,\varepsilon}, f)^{-1}$
where
$ f\leq \overline{f}
$
holds $\mathbb{P}$-a.s. for a deterministic constant $\overline{f}>0$
and from \eqref{converApproInitiDem} and \eqref{converApproInitiMom}, it follows that $\Lambda_\varepsilon \rightarrow \Lambda$ in the sense of measures in $[L^1_x]^3$. Note that $f \in L^p_x$ for all $p\in [1,\infty]$.

\subsection{Uniform estimates}
\label{sec:UniformEstFourthLayer}
From Section \ref{sec:initialLawFourthLayer}, it follows from Theorem \ref{thm:mainThirdLayer} that  for any $\varepsilon\in(0,1)$, there exists a dissipative martingale solution
\begin{align*}
\big[(\Omega ,\mathscr{F} ,(\mathscr{F} _t)_{t\geq0},\mathbb{P} );\varrho_\varepsilon , \mathbf{u}_\varepsilon , V_\varepsilon , W   \big]
\end{align*}
of \eqref{contEqThird1}--\eqref{elecFieldThird1} with law $\Lambda_\varepsilon$ in the sense of Definition \ref{def:martSolThirdLayer}. The justification of the choice of a single stochastic basis and a single Wiener process for these solutions is standard, see for instant, \cite[Remark 5.3.1]{mensah2019theses} or \cite[Remark 4.0.4]{breit2018stoch}.
\\
Since the solution satisfies the energy estimate \eqref{energyInequalityThirdLayer}, by approximating the characteristic function $\chi_{[0,s]}$ for any $s\in(0,T]$, by a sequence of test functions $\psi\in C^\infty_c([0,T))$, it follows that
\begin{equation}
\begin{aligned}
\label{energyInequalityFourthLayerNonDissi}
\int_{{\mathbb{T}^3}}&\bigg[ \frac{1}{2} \varrho_\varepsilon  \vert \mathbf{u}_\varepsilon  \vert^2  
+ P^\Gamma_\delta(\varrho_\varepsilon ) 
\pm
\vartheta \vert \nabla V_\varepsilon \vert^2 
\bigg](s)\,\mathrm{d}x 
+\nu^S
\int_0^s \int_{{\mathbb{T}^3}}\vert  \nabla \mathbf{u}_\varepsilon  \vert^2 \,\mathrm{d}x  \,\mathrm{d}x
\\&+(\nu^B+\nu^S)
\int_0^s \int_{{\mathbb{T}^3}}\vert  \mathrm{div}\, \mathbf{u}_\varepsilon  \vert^2 \,\mathrm{d}x  \,\mathrm{d}t
+\varepsilon
\int_0^s\int_{{\mathbb{T}^3}} \varrho_\varepsilon\vert  \nabla \mathbf{u}_\varepsilon  \vert^2 \,\mathrm{d}x  \,\mathrm{d}x
\\&+\varepsilon 
\int_0^s \int_{{\mathbb{T}^3}}(P{_\delta^\Gamma}) ''(\varrho_\varepsilon)\vert  \nabla \varrho_\varepsilon  \vert^2 \,\mathrm{d}x  \,\mathrm{d}t
\leq
 \int_{{\mathbb{T}^3}}\bigg[\frac{1}{2} \varrho_{0,\varepsilon }\vert \mathbf{u}_{0,\varepsilon} \vert^2  
+ P^\Gamma_\delta(\varrho_{0,\varepsilon}) 
\pm
\vartheta \vert \nabla V_{0,\varepsilon} \vert^2
 \bigg]\,\mathrm{d}x
\\&
+\frac{1}{2}\int_0^s \int_{{\mathbb{T}^3}}
 \varrho_\varepsilon
 \sum_{k\in\mathbb{N}} \big\vert 
 \mathbf{g}_{k, \varepsilon}(x,\varrho_\varepsilon ,f,\mathbf{m}_\varepsilon  ) 
\big\vert^2\,\mathrm{d}x\,\mathrm{d}t
+\int_0^s  \int_{{\mathbb{T}^3}}   \varrho_\varepsilon\mathbf{u}_\varepsilon \cdot  \mathbf{G}_\varepsilon(\varrho_\varepsilon , f, \mathbf{m}_\varepsilon )\,\mathrm{d}x\,\mathrm{d}W 
\end{aligned}
\end{equation}
holds $\mathbb{P}$-a.s. for a.e. $s\in(0,T]$.
 Similar to \eqref{noiseBoundSecodLayer1}--\eqref{noiseBoundSecodLayer3}, we can use \eqref{stochCoeffBound1Exis0}--\eqref{stochsummableConst}, the fact that the cut-off function \eqref{cuttOff} is uniformly bounded  by one   to obtain  the bound
\begin{equation}
\begin{aligned}
\label{noiseBoundFourthLayer1}
&\mathbb{E} \Bigg[ \sup_{s\in[0,T]}\bigg\vert\int_0^s\sum_{k\in\mathbb{N}}  \int_{{\mathbb{T}^3}} \varrho_\varepsilon
  \big\vert
 \mathbf{g}_{k, \varepsilon}(x,\varrho_\varepsilon ,f,\mathbf{m}_\varepsilon  ) 
\big\vert^2\,\mathrm{d}x\,\mathrm{d}t
 \bigg\vert^p \bigg]
\lesssim_{p, k,  \Gamma}
\mathbb{E} \Bigg[\int_0^T\Vert \varrho _\varepsilon \Vert_{L^\Gamma_x} \,\mathrm{d}t \bigg]^p
\end{aligned}
\end{equation}
and together with the Burkholder--Davis--Gundy inequality, also the bound
\begin{equation}
\begin{aligned}
\label{noiseBoundFourthLayer3}
&\mathbb{E} \Bigg[ \sup_{t\in[0,T]}\bigg\vert\int_0^t\int_{\mathbb{T}^3} \varrho_\varepsilon \mathbf{u}_\varepsilon \cdot  \mathbf{G}_\varepsilon(\varrho_\varepsilon , f, \mathbf{m}_\varepsilon )\,\mathrm{d}x\,\mathrm{d}W
 \bigg\vert^p \bigg]
\\&
\lesssim_{ k, \Gamma}
\mathbb{E} \Bigg[\int_0^T\Big(
 \Vert \varrho_\varepsilon  \Vert_{L^{\Gamma}_x}^2
+
\big\Vert \varrho_\varepsilon \vert\mathbf{u}_\varepsilon \vert^2 \big\Vert_{L^1_x}^2 \Big)\,\mathrm{d}t \bigg]^{\frac{p}{2}}
\end{aligned}
\end{equation}
uniformly in $\varepsilon$ for any $p\in(1, \infty)$. By taking moments in \eqref{energyInequalityFourthLayerNonDissi}, it therefore follow from Gronwall's lemma that the exact same estimate \eqref{noiseBoundSecodLayer4} still holds true except that additionally, it now holds uniformly in $\varepsilon$ for any $p\in(1, \infty)$. So as in \eqref{uniBoundThirdLayer1}, we gain from \eqref{energyInequalityFourthLayerNonDissi}, the following crucial estimates
\begin{equation}
\begin{aligned}
\label{uniBoundFourthLayer1}
&\mathbb{E} \bigg\vert \sup_{t\in [0,T]}\big\Vert \varrho_\varepsilon \vert \mathbf{u}_\varepsilon\vert^2 \big\Vert_{L^1_x} \bigg\vert^p 
+
\mathbb{E} \bigg\vert \sup_{t\in [0,T]} \Vert \varrho_\varepsilon  \Vert_{L^\Gamma_x}^\Gamma \bigg\vert^p 
+
\mathbb{E} \bigg\vert \sup_{t\in [0,T]} \Vert \nabla V_\varepsilon \Vert_{L^2_x}^2 \bigg\vert^p
+
\mathbb{E} \bigg\vert \sup_{t\in [0,T]} \Vert \Delta V_\varepsilon \Vert_{L^\Gamma_x}^\Gamma \bigg\vert^p 
\\&+
\mathbb{E} \big\Vert \sqrt{\varepsilon} \nabla\varrho_\varepsilon \big\Vert_{L^2_{t,x}}^{2p}
+
\mathbb{E} \Vert  \mathbf{u}_\varepsilon \Vert_{L^2_{t}W^{1,2}_x}^{2p}  
+
\mathbb{E} \bigg\vert \sup_{t\in [0,T]}\big\Vert \varrho_\varepsilon \mathbf{u}_\varepsilon \big\Vert_{L^\frac{2\Gamma}{\Gamma+1}_x}^\frac{2\Gamma}{\Gamma+1} \bigg\vert^p 
\lesssim_{ \delta, \Gamma, p,\mathcal{E}_{0,\varepsilon}} 1
\end{aligned}
\end{equation}
which holds uniformly in $\varepsilon$ where 
\begin{align*}
\mathcal{E}_{0, \varepsilon} :=\frac{1}{2} \varrho_0  \vert \mathbf{u}_0  \vert^2  
+ P^\Gamma_\delta(\varrho_0 ) 
\pm
\vartheta \vert \nabla V_0 \vert^2
\end{align*}
is $\varepsilon$-uniformly bounded in $L^p(\Omega ; L^1_x)$, c.f. Section \ref{sec:initialLawFourthLayer}. 
In addition, by H\"older inequality and Young's inequality, we have that
\begin{equation}
\begin{aligned}
\bigg( \int_0^T &\Vert \varrho_\varepsilon \mathbf{u}_\varepsilon \Vert_{L^\frac{6\Gamma}{\Gamma+6}_x}^2 \, \mathrm{d}t \bigg)^\frac{1}{2}
\lesssim
\sup_{t\in[0,T]} \Vert \varrho_\varepsilon \Vert_{L^\Gamma_x}
\bigg( \int_0^T\Vert  \mathbf{u}_\varepsilon \Vert_{L^6_x}^2 \, \mathrm{d}t \bigg)^\frac{1}{2}
\\&
\lesssim
\bigg\vert
\sup_{t\in[0,T]} \Vert \varrho_\varepsilon \Vert_{L^\Gamma_x}
\bigg\vert^\Gamma +
\bigg( \int_0^T\Vert  \mathbf{u}_\varepsilon \Vert_{L^6_x}^2 \, \mathrm{d}t \bigg)^\frac{\Gamma'}{2}
\end{aligned}
\end{equation}
where $1< \Gamma' \leq \frac{6}{5}$. Therefore, it follows from \eqref{uniBoundFourthLayer1} that
\begin{align}
\label{uniBoundFourthLayer2}
\mathbb{E}\Vert \varrho_\varepsilon \mathbf{u}_\varepsilon \Vert_{L^2_tL^\frac{6\Gamma}{\Gamma+6}_x}^{2p} \lesssim_{ \delta, \Gamma, p,\mathcal{E}_{0,\varepsilon}} 1
\end{align}
holds uniformly in $\varepsilon$ for all $p\in(1,\infty)$ where $3\leq \frac{6\Gamma}{\Gamma+6} <6$.
At this point, we note that the only information we have on the pressure is given by the second summand in \eqref{uniBoundFourthLayer1}. However, unlike the previous sections where this regularity and the fact that the continuity equations were satisfied in the strong PDE sense combined to enable us perform our analysis, this aforementioned summand is not enough at this stage. Indeed, any family of pressures enjoying this regularity may only converge to a measure  which is not useful in identifying the limit pressure term. We therefore improve the pressure (or density) regularity in the following.
\subsection{Improved pressure estimate}
\label{sec:PressureEstFourthLay}
Let $\Delta^{-1}_{\mathbb{T}^3}(\varrho_\varepsilon -f)$ be a  solution of the Poisson equation \eqref{elecFieldThird1}. Notice that since $f$ is independent of time, we can combine the continuity equation \eqref{contEqThird1} and Poisson equation \ref{elecFieldThird1} and then obtain the following
\begin{align*}
\pm \mathrm{d} \Delta V_\varepsilon= \mathrm{d}(\varrho_\varepsilon -f)= [\varepsilon \Delta \varrho_\varepsilon - \mathrm{div}(\varrho_\varepsilon \mathbf{u}_\varepsilon) ]\, \mathrm{d}t.
\end{align*}
So by applying the operator $\nabla \Delta^{-1}_{\mathbb{T}^3}$ to the resulting combined equation, we obtain
\begin{align}
\label{contPoissEqItoFourthLayer}
\pm \mathrm{d} \nabla V_\varepsilon =[\varepsilon \nabla \varrho_\varepsilon - \nabla\Delta^{-1}_{\mathbb{T}^3}\mathrm{div}(\varrho_\varepsilon \mathbf{u}_\varepsilon)]\mathrm{d}t.
\end{align}
 Also, as the momemtum equation \eqref{momEqThird1} is only satisfied weakly in the sense of \eqref{weakMomEqThirdLayer}, a suitable approximation of the characteristic function $\chi_{[0,s]}$ for any $s\in(0,T]$, by a sequence of test functions $\psi\in C^\infty_c([0,T))$ results in the following 
\begin{equation}
\begin{aligned}
\label{momEqItoFourthLayer}
&\int_{{\mathbb{T}^3}} \varrho_\varepsilon  \mathbf{u}_\varepsilon (t) \cdot \bm{\phi} \, \mathrm{d}x   
=
 \int_{{\mathbb{T}^3}} \varrho_{0,\varepsilon } \mathbf{u}_{0,\varepsilon } \cdot \bm{\phi} \, \mathrm{d}x  
+  \int_0^s \int_{{\mathbb{T}^3}}  \varrho_\varepsilon    \mathbf{u}_\varepsilon \otimes \mathbf{u}_\varepsilon : \nabla \bm{\phi} \, \mathrm{d}x  \mathrm{d}t 
\\&-
 \nu^S\int_0^s\int_{{\mathbb{T}^3}} \nabla \mathbf{u}_\varepsilon : \nabla \bm{\phi} \, \mathrm{d}x  \mathrm{d}t 
-(\nu^B+ \nu^S )\int_0^s\int_{{\mathbb{T}^3}} \mathrm{div}\,\mathbf{u}_\varepsilon \, \mathrm{div}\, \bm{\phi} \, \mathrm{d}x  \mathrm{d}t    
\\&+   
\int_0^s \int_{{\mathbb{T}^3}}    p_{\delta}^{ \Gamma}(\varrho_\varepsilon)\, \mathrm{div} \, \bm{\phi} \, \mathrm{d}x  \mathrm{d}t
+\int_0^s\int_{{\mathbb{T}^3}} \varepsilon\varrho_\varepsilon\mathbf{u}_\varepsilon    \Delta\bm{\phi} \, \mathrm{d}x   \mathrm{d}t
\\&+\int_0^s\int_{{\mathbb{T}^3}} \vartheta \varrho_\varepsilon  \nabla V_{\varepsilon}  \cdot \bm{\phi} \, \mathrm{d}x    \mathrm{d}t
+
\int_0^s\sum_{k\in \mathbb{N}} \int_{{\mathbb{T}^3}}   \varrho_\varepsilon\, \mathbf{g}_{k,\varepsilon}(\varrho_\varepsilon , f, \varrho_\varepsilon \mathbf{u}_\varepsilon ) \cdot \bm{\phi}  \, \mathrm{d}x \mathrm{d}\beta_k  
\end{aligned}
\end{equation}
$\mathbb{P}$-a.s. Notice that \eqref{momEqItoFourthLayer} can easily be rewritten in differential form. By applying the generalized It\^o formula \cite[Theorem A.4.1]{breit2018stoch} to \eqref{contPoissEqItoFourthLayer} and \eqref{momEqItoFourthLayer}, we obtain the following
\begin{equation}
\begin{aligned}
\label{momEqItoFourthLayer1}
&
\int_0^s \int_{{\mathbb{T}^3}}    p_{\delta}^{ \Gamma}(\varrho_\varepsilon)\, \Delta V_\varepsilon \, \mathrm{d}x  \mathrm{d}t
=
\int_{{\mathbb{T}^3}} \varrho_\varepsilon  \mathbf{u}_\varepsilon (t) \cdot \nabla V_\varepsilon \, \mathrm{d}x
-
 \int_{{\mathbb{T}^3}} \varrho_{0,\varepsilon} \mathbf{u}_{0,\varepsilon} \cdot \nabla V_{0,\varepsilon} \, \mathrm{d}x     
\\&
-  \int_0^s \int_{{\mathbb{T}^3}}  \varrho_\varepsilon    \mathbf{u}_\varepsilon \otimes \mathbf{u}_\varepsilon : \nabla^2 V_\varepsilon \, \mathrm{d}x  \mathrm{d}t 
+(\nu^B+ \nu^S )\int_0^s\int_{{\mathbb{T}^3}} \mathrm{div}\,\mathbf{u}_\varepsilon \, \Delta V_\varepsilon \, \mathrm{d}x  \mathrm{d}t   
\\&
+
 \nu^S\int_0^s\int_{{\mathbb{T}^3}} \nabla \mathbf{u}_\varepsilon : \nabla^2 V_\varepsilon \, \mathrm{d}x  \mathrm{d}t  
-
\int_0^s\int_{{\mathbb{T}^3}} \varepsilon\nabla( \varrho_\varepsilon \mathbf{u}_\varepsilon):\nabla^2 V_\varepsilon \, \mathrm{d}x   \mathrm{d}t
\\&
-\int_0^s\int_{{\mathbb{T}^3}} \vartheta \varrho_\varepsilon \vert  \nabla V_\varepsilon  \vert^2 \, \mathrm{d}x    \mathrm{d}t
-
\int_0^s\int_{{\mathbb{T}^3}} \varrho_\varepsilon\mathbf{u}_\varepsilon \big[ \varepsilon \nabla \varrho_\varepsilon -  \nabla\Delta^{-1}_{\mathbb{T}^3}\mathrm{div}(\varrho_\varepsilon \mathbf{u}_\varepsilon) \big] \, \mathrm{d}x    \mathrm{d}t
\\&-
\int_0^s\sum_{k\in \mathbb{N}} \int_{{\mathbb{T}^3}}   \varrho_\varepsilon\, \mathbf{g}_{k,\varepsilon}(\varrho_\varepsilon , f, \varrho_\varepsilon \mathbf{u}_\varepsilon ) \cdot \nabla V_\varepsilon  \, \mathrm{d}x \mathrm{d}\beta_k
=:I_1+ \ldots + I_9.
\end{aligned}
\end{equation}
Without loss of generality (as we intend to take norms anyways), the equation \eqref{momEqItoFourthLayer1} was deduced for the $+$ sign in \eqref{contPoissEqItoFourthLayer}.
\\
Now if we denote the left-hand side of \eqref{momEqItoFourthLayer1} as $I_0$, then from \eqref{higherPressure} and \eqref{elecFieldThird1}, we have that
\begin{align*}
I_0 &= \int_0^s \int_{{\mathbb{T}^3}}    [ p(\varrho_\varepsilon) +\delta(\varrho_\varepsilon +\varrho^\Gamma_\varepsilon)] \, \varrho\, \mathrm{d}x\,  \mathrm{d}t
-
\int_0^s \int_{{\mathbb{T}^3}}    p_{\delta}^{ \Gamma}(\varrho_\varepsilon)\, f \, \mathrm{d}x\,  \mathrm{d}t
=:I_0^1 + I_0^2
\end{align*}
where from \eqref{lawMainThmFourthLayer} and \eqref{uniBoundFourthLayer1},
\begin{equation}
\begin{aligned}
&\mathbb{E}\vert   I_0^2 \vert^p \lesssim_{\delta, \gamma} \mathbb{E} \bigg\vert \sup_{t\in [0,T]} \Vert \varrho_\varepsilon  \Vert_{L^\Gamma_x}^\Gamma \Vert f \Vert_{L^\infty_x}\bigg\vert^p 
\lesssim_{\delta, \gamma, \mathcal{E}_{0,\varepsilon}, \overline{f}} \mathbb{E} \bigg\vert \sup_{t\in [0,T]} \Vert \varrho_\varepsilon  \Vert_{L^\Gamma_x}^\Gamma \bigg\vert^p  \lesssim_{\delta, \gamma, \mathcal{E}_{0,\varepsilon}, \overline{f}}1
\end{aligned}
\end{equation}
holds uniformly in $\varepsilon$. 
\\
Notice that by using the Poisson equation and the continuity of the operator $\nabla^2 \Delta^{-1}_{\mathbb{T}^2}$, we  have that
\begin{align}
\label{gradSquaredV}
\Vert \nabla^2 V_\varepsilon \Vert_{L^p_x}
\approx
\Vert \nabla^2 \Delta^{-1}_{\mathbb{T}^2} \Delta V_\varepsilon \Vert_{L^p_x}
\lesssim
\Vert  \Delta V_\varepsilon \Vert_{L^p_x}.
\end{align}
To estimate $I_1$, we use the continuous embedding $W^{1,\Gamma}_x \hookrightarrow L^\infty_x$ which holds for $\Gamma \geq 3$, \eqref{gradSquaredV} and \eqref{uniBoundFourthLayer1} so that
\begin{equation}
\begin{aligned}
\mathbb{E}\vert I_1 \vert^p
\lesssim
\mathbb{E} \big\vert \Vert \varrho_\varepsilon \mathbf{u}_\varepsilon(t) \Vert_{L^1_x} \Vert \nabla V_\varepsilon(t) \Vert_{L^\infty_x} \big\vert^p
&\lesssim
\mathbb{E} \bigg\vert 
\sup_{t\in [0,T]}\Vert \varrho_\varepsilon \mathbf{u}_\varepsilon \Vert_{L^\frac{2\Gamma}{\Gamma+ 2}_x}^\frac{2\Gamma}{\Gamma+ 2} \bigg\vert^p
+
\mathbb{E} \bigg\vert 
\sup_{t\in [0,T]}
 \Vert \Delta V_\varepsilon \Vert_{L^\Gamma_x}^\Gamma \bigg\vert^p
  \\&
 \lesssim_{ \delta, \Gamma, p,\mathcal{E}_{0,\varepsilon}} 1
\end{aligned}
\end{equation}
holds uniformly in $\varepsilon$ since $\frac{3}{2} \leq \frac{2\Gamma}{\Gamma+ 2} <2$ and  $\Gamma \geq 6$. An analogous estimate holds for $I_2$. 
Again, since for $\Gamma \geq6$, we have that $\frac{3}{2} < r \leq 2$ where $\frac{1}{r}=1- \frac{2}{6} -\frac{1}{\Gamma}$, it follows from \eqref{gradSquaredV} and \eqref{uniBoundFourthLayer1} that
\begin{equation}
\begin{aligned}
\mathbb{E}\vert I_3 \vert^p
&\leq
\mathbb{E}\bigg\vert
\int_0^s \Vert \varrho_\varepsilon \Vert_{L^\Gamma_x} \Vert \mathbf{u}_\varepsilon \Vert_{L^6_x}^2 \Vert \nabla^2 V_\varepsilon \Vert_{L^\frac{3\Gamma}{2\Gamma -3}_x} \, \mathrm{d}t \bigg\vert^p
\\
& 
\lesssim_\Gamma
\mathbb{E}\bigg\vert
\sup_{t\in[0,T]} \Vert \varrho_\varepsilon \Vert_{L^\Gamma_x} \Vert \nabla \mathbf{u}_\varepsilon \Vert_{L^2_{t,x}}^2 \sup_{t\in [0,T]}\Vert \Delta V_\varepsilon \Vert_{L^2_x} \bigg\vert^p
\\
& 
\lesssim_\Gamma
\mathbb{E}\bigg\vert
\sup_{t\in[0,T]} \Vert \varrho_\varepsilon \Vert_{L^\Gamma_x}^\Gamma \bigg\vert^p
+
\mathbb{E}
 \Vert \nabla \mathbf{u}_\varepsilon \Vert_{L^2_{t,x}}^\frac{2\Gamma p}{\Gamma-2}
 + 
 \mathbb{E}\bigg\vert
 \sup_{t\in [0,T]}\Vert \Delta V_\varepsilon \Vert_{L^\Gamma_x}^\Gamma \bigg\vert^p
 \\&
 \lesssim_{ \delta, \Gamma, p,\mathcal{E}_{0,\varepsilon}} 1
\end{aligned}
\end{equation}
uniformly in $\varepsilon$. We obtain from H\"older inequality and $\Gamma \geq 2$ that the estimate
\begin{equation}
\begin{aligned}
\mathbb{E}\vert I_4 \vert^p
\lesssim
\mathbb{E}  \bigg\vert \int_0^s\big( \Vert \nabla \mathbf{u}_\varepsilon \Vert_{L^2_{x}}^{2}
+
\Vert  \Delta V_\varepsilon \Vert_{L^2_{x}}^{2} \big) \, \mathrm{d}t  \bigg\vert^p
&\lesssim
\mathbb{E}
 \Vert \nabla \mathbf{u}_\varepsilon \Vert_{L^2_{t,x}}^{2p} + \mathbb{E} \bigg\vert \sup_{t\in [0,T]} \Vert \Delta V_\varepsilon \Vert_{L^\Gamma_x}^\Gamma \bigg\vert^p  \\&\lesssim_{k, \Gamma, \delta, \gamma, \mathcal{E}_{0,\varepsilon}, \overline{f}}1
\end{aligned}
\end{equation}
holds uniformly in $\varepsilon$ as a result of \eqref{uniBoundFourthLayer1}. A similar estimate holds for $I_5$.
We now note that since $\varepsilon\in(0,1)$,
\begin{align*}
\varepsilon\nabla(\varrho_\varepsilon \mathbf{u}_\varepsilon)
\leq\sqrt{\varepsilon} \big(\sqrt{\varepsilon} \nabla \varrho_\varepsilon \otimes \mathbf{u}_\varepsilon + \varrho_\varepsilon \nabla \mathbf{u}_\varepsilon \big).
\end{align*}
And since $2 < \frac{2\Gamma}{\Gamma -2} \leq 3$, it therefore follow from \eqref{gradSquaredV} and \eqref{uniBoundFourthLayer1} that
\begin{equation}
\begin{aligned}
\mathbb{E}&\vert I_6 \vert^p
\lesssim
\varepsilon^\frac{p}{2}
\mathbb{E}\bigg\vert \int_0^s \bigg(
\Vert \sqrt{\varepsilon} \nabla \varrho_\varepsilon \Vert_{L^2_x} \Vert \mathbf{u}_\varepsilon \Vert_{L^6_x} \Vert \nabla^2 V_\varepsilon \Vert_{L^3_x} 
+
\Vert \varrho_\varepsilon \Vert_{L^\Gamma_x} \Vert \nabla \mathbf{u}_\varepsilon \Vert_{L^2_x}
\Vert \nabla^2 V_\varepsilon \Vert_{L^\frac{2\Gamma}{\Gamma-2}_x}
  \bigg) \, \mathrm{d}t \bigg\vert^p
  \\
&\lesssim
\varepsilon^\frac{p}{2} \mathbb{E}
\Vert \sqrt{\varepsilon} \nabla \varrho_\varepsilon \Vert_{L^2_{t,x}}^{3p}
+
\varepsilon^\frac{p}{2} \mathbb{E}\bigg\vert
\sup_{t\in[0,T]} \Vert \varrho_\varepsilon \Vert_{L^\Gamma_x}^3 \bigg\vert^p 
+
\varepsilon^\frac{p}{2} \mathbb{E} \Vert \nabla \mathbf{u}_\varepsilon \Vert_{L^2_{t,x}}^{3p}
+
\varepsilon^\frac{p}{2} \mathbb{E}\bigg\vert
\sup_{t\in[0,T]} \Vert \Delta V_\varepsilon \Vert_{L^\Gamma_x}^3 \bigg\vert^p 
\\
& \lesssim_{ \delta, \Gamma, p,\mathcal{E}_{0,\varepsilon}} \varepsilon^\frac{p}{2}
\end{aligned}
\end{equation}
holds uniformly in $\varepsilon$ (note that $\varepsilon^\frac{p}{2} <1$). For $I_7$, we use the continuous embedding $W^{1,\Gamma}_x \hookrightarrow L^\infty_x$, \eqref{gradSquaredV} and \eqref{uniBoundFourthLayer1} to obtain
\begin{equation}
\begin{aligned}
\mathbb{E}\vert   I_7 \vert^p
\lesssim
\mathbb{E}\bigg\vert \int_0^s  \Vert \varrho_\varepsilon \Vert_{L^1_x} \Vert \Delta V_\varepsilon \Vert^2_{L^\Gamma_x} \, \mathrm{d}t \bigg\vert^p 
&\lesssim
\varepsilon^\frac{p}{2} \mathbb{E}\bigg\vert
\sup_{t\in[0,T]} \Vert \varrho_\varepsilon \Vert_{L^\Gamma_x}^\Gamma \bigg\vert^p 
+
\varepsilon^\frac{p}{2} \mathbb{E}\bigg\vert
\sup_{t\in[0,T]} \Vert \Delta V_\varepsilon \Vert_{L^\Gamma_x}^\Gamma \bigg\vert^p 
\\
& \lesssim_{ \delta, \Gamma, p,\mathcal{E}_{0,\varepsilon}} 1
\end{aligned}
\end{equation}
uniformly in $\varepsilon$.
The estimate for $I_8$ is as follows:
\begin{equation}
\begin{aligned}
\mathbb{E}\vert   I_8 \vert^p 
&\lesssim   
 \mathbb{E}\big\vert  \Vert \varrho_\varepsilon \mathbf{u}_\varepsilon \Vert_{L^2_{t,x}}  \Vert \varepsilon\nabla \varrho_\varepsilon \Vert_{L^2_{t,x}}
 +
 \Vert \varrho_\varepsilon \mathbf{u}_\varepsilon \Vert_{L^2_{t,x}}^2
 \big\vert^p
\lesssim   
\mathbb{E}  \Vert\varepsilon \nabla \varrho_\varepsilon \Vert_{L^2_{t,x}}^{2p}
 +
 \mathbb{E}
 \Vert \varrho_\varepsilon \mathbf{u}_\varepsilon \Vert_{L^2_{t,x}}^{2p}
  \\&\lesssim   
\mathbb{E} \big\Vert \sqrt{\varepsilon} \nabla\varrho_\varepsilon \big\Vert_{L^2_{t,x}}^{2p}
 +
\mathbb{E}\Vert \varrho_\varepsilon \mathbf{u}_\varepsilon \Vert_{L^2_tL^\frac{6\Gamma}{\Gamma+6}_x}^{2p}
\lesssim_{k, \Gamma, \delta, \gamma, \mathcal{E}_{0,\varepsilon}, \overline{f}}1
\end{aligned}
\end{equation}
which holds uniformly in $\varepsilon$ for all $p\geq1$ because of \eqref{uniBoundFourthLayer1} and \eqref{uniBoundFourthLayer2}.
To estimate $I_9$ we first note 
\begin{equation}
\begin{aligned}
\bigg\vert \int_{{\mathbb{T}^3}}   \varrho_\varepsilon\, \mathbf{g}_{k,\varepsilon}(\varrho_\varepsilon , f, \varrho_\varepsilon \mathbf{u}_\varepsilon ) \cdot \nabla V_\varepsilon  \, \mathrm{d}x\bigg\vert^2
\leq
c_k \Vert \mathbf{g}_{k,\varepsilon} \Vert_{L^\infty_x}^2
\Big(
\Vert \varrho_\varepsilon  \Vert_{L^\Gamma_x}^{2\Gamma} 
+
\Vert \nabla V_\varepsilon  \Vert_{L^{\Gamma'}_x}^{2\Gamma'}
 \Big)
\end{aligned}
\end{equation}
and so similar to the argument in \eqref{noiseBoundFourthLayer1}--\eqref{noiseBoundFourthLayer3}
we can invoke the Burkholder--Davis--Gundy inequality and use \eqref{stochCoeffBound1Exis0}--\eqref{stochsummableConst}, and the fact that the cut-off function \eqref{cuttOff} is uniformly bounded  by one to obtain from \eqref{uniBoundFourthLayer1}, the following estimate
\begin{equation}
\begin{aligned}
\mathbb{E}&\vert   I_9 \vert^p 
\leq  
\mathbb{E} \Bigg\vert \int_0^s\sum_{k\in \mathbb{N}}\bigg\vert \int_{{\mathbb{T}^3}}   \varrho_\varepsilon\, \mathbf{g}_{k,\varepsilon}(\varrho_\varepsilon , f, \varrho_\varepsilon \mathbf{u}_\varepsilon ) \cdot \nabla V_\varepsilon  \, \mathrm{d}x\bigg\vert^2 \mathrm{d}t \Bigg\vert^\frac{p}{2} 
\\&
\lesssim_{k, \Gamma} \mathbb{E} \bigg\vert \sup_{t\in [0,T]} \Vert \varrho_\varepsilon  \Vert_{L^\Gamma_x}^{2\Gamma} \bigg\vert^\frac{p}{2} 
+
\mathbb{E} \bigg\vert \sup_{t\in [0,T]} \Vert \nabla V_\varepsilon  \Vert_{L^{\Gamma'}_x}^{2\Gamma'} \bigg\vert^\frac{p}{2} 
\lesssim_{k, \Gamma, \delta, \gamma, \mathcal{E}_{0,\varepsilon}, \overline{f}}1
\end{aligned}
\end{equation}
uniformly in $\varepsilon$ for all $p\geq2$. The fact that $p\geq 2$ follow from the fact that $\vert \cdot\vert^{p/2}$ is convex only if $p/2\geq1$.
\\
By collecting the various estimates above, we have shown that
\begin{equation}
\begin{aligned}
\label{pressureEstFouthLayer0}
\mathbb{E}\bigg\vert \int_0^t \int_{{\mathbb{T}^3}}    [ \varrho_\varepsilon\, p(\varrho_\varepsilon) +\delta(\varrho^2_\varepsilon +\varrho^{\Gamma+1}_\varepsilon)] \, \mathrm{d}x\,  \mathrm{d}t \bigg\vert^p \lesssim_{k, \Gamma, \delta, \gamma, \mathcal{E}_{0,\varepsilon}, \overline{f}}1
\end{aligned}
\end{equation}
holds uniformly in $\varepsilon$ for all $p\geq2$ and in particular, for $\Gamma\geq 6$, the estimate
\begin{equation}
\begin{aligned}
\label{pressureEstFouthLayer1}
\mathbb{E}\bigg\vert \int_0^t \int_{{\mathbb{T}^3}}    \varrho_\varepsilon^{\Gamma+1} \, \mathrm{d}x\,  \mathrm{d}t \bigg\vert^p \lesssim_{p,k, \Gamma, \delta, \gamma, \mathcal{E}_{0,\varepsilon} , \overline{f}}1
\end{aligned}
\end{equation}
holds uniformly in $\varepsilon$ for all $p\geq2$.

\subsection{Compactness}
\label{sec:compactnessFourthLayer}
In order to establish compactness, we first need some preparation. We denote the energy by
\begin{align}
\label{energyFunctionalFourthLayer}
\mathcal{E}_\varepsilon := \frac{1}{2} \varrho_\varepsilon  \vert \mathbf{u}_\varepsilon  \vert^2  
+ P^\Gamma_\delta(\varrho_\varepsilon ) 
\pm
\vartheta \vert \nabla V_\varepsilon \vert^2 
\end{align}
and let the weakly-$*$ measurable mapping
\begin{align*}
\nu_\varepsilon : [0,T] \times \mathbb{T}^3 \rightarrow \mathfrak{P}(\mathbb{R}^{20})
\end{align*}
defined by 
\begin{align*}
\nu_{\varepsilon,t,x}(\cdot)= \delta_{[\varrho_\varepsilon, \mathbf{u}_\varepsilon, \nabla \mathbf{u}_\varepsilon, \varrho_\varepsilon \mathbf{u}_\varepsilon, f, \nabla V_\varepsilon](t,x)}(\cdot)
\end{align*}
be the canonical Young measure associated to  $[\varrho_\varepsilon, \mathbf{u}_\varepsilon, \nabla \mathbf{u}_\varepsilon, \varrho_\varepsilon \mathbf{u}_\varepsilon, f, \nabla V_\varepsilon]$. See the  discussion in \cite[ Section 2.8, Section 4.4.3.1]{breit2018stoch} [15, Section 2.8] on how this allows us to interpret $\nu_\varepsilon$ as a random variable taking values in the non-Polish space $\big(L^\infty\big((0,T)\times (\mathbb{T}^3)\big); \mathfrak{P}(\mathbb{R}^{20}) , w^*\big)$ endowed with the weak-$*$ topology which is determined by
\begin{align*}
L^\infty\big( (0,T)\times\mathbb{T}^3;\mathfrak{P}(\mathbb{R}^{20})\big)
\rightarrow
\mathbb{R},
\quad
\nu \mapsto
\int_0^T\int_{\mathbb{T}^3}\psi(t,x) \int_{\mathbb{R}^{20}} \phi(\xi)\, \mathrm{d}\nu_{\omega, t,x}(\xi)\, \mathrm{d}x\, \mathrm{d}t
\end{align*}
for all $\psi\in L^1((0,T)\times\mathbb{T}^3)$, for all $\phi\in C_b(\mathbb{R}^{20})$. 
We now define the following spaces 
\begin{align*}
\chi_{\varrho_0} = L^\Gamma_x, \, \quad \chi_{\mathbf{m}_0}= L^1_x, \, \quad \chi_{\frac{\mathbf{m}_0}{\sqrt{\varrho_0}}}= L^2_x,
\, \quad
\chi_{ \mathbf{u} } =  \big(L^2\big(0,T;W^{1,2}_x\big), w\big),
\, \quad
\chi_V = C_w\left([0,T];W^{2,\Gamma}_x\right),
\\
\chi_W= C\left([0,T];\mathfrak{U}_0\right) ,
\, \quad
\chi_{\varrho\mathbf{u}} =C_w\big([0,T]; L^\frac{2\Gamma}{\Gamma+1} _x\big) \cap C\big([0,T]; W^{-k,2}_x\big), 
\\
\chi_\varrho = C_w \left([0,T];L^{\Gamma}_x\right)
\cap
\big(L^{\Gamma+1}_{t,x}, w\big)
\quad
\chi_{\mathcal{E}}= \big(L^\infty_t; \mathcal{M}_b(\mathbb{T}^3), w^*\big),
 \quad
\chi_{\nu}= \big(L^\infty_{w^*}\big((0,T)\times \mathbb{T}^3; \mathfrak{P}(\mathbb{R}^{20})\big) , w^*\big)
\end{align*}
for $\Gamma\geq6$ and $k> \frac{5}{2}$. We now let  $\mu_{\varrho_{0,\varepsilon}}$, $\mu_{\mathbf{m}_{0,\varepsilon}}$, $\mu_{\frac{\mathbf{m}_{0,\varepsilon}}{\sqrt{\varrho_{0,\varepsilon}}}}$, $\mu_{\varrho_\varepsilon }$, $\mu_{\mathbf{u}_\varepsilon }$, $\mu_{\varrho_\varepsilon\mathbf{u}_\varepsilon }$,  $\mu_{V_\varepsilon }$,$\mu_{\mathcal{E}_\varepsilon }$, $\mu_{\nu_\varepsilon }$ and $\mu_{W}$ be the respective laws of  $\varrho_{0,\varepsilon}$, $\mathbf{m}_{0,\varepsilon}$,   $\frac{\mathbf{m}_{0,\varepsilon}}{\sqrt{\varrho_{0,\varepsilon}}}$,  $\varrho_\varepsilon$, $\mathbf{u}_\varepsilon$, $\varrho_\varepsilon \mathbf{u}_\varepsilon $, $V_\varepsilon $, $\mathcal{E}_\varepsilon$, $\nu_\varepsilon$ and $W$ on the respective spaces 
$\chi_{\varrho_0}$, $\chi_{\mathbf{m}_0}$, $\chi_{\frac{\mathbf{m}_0}{\sqrt{\varrho_0}}}$,  $\chi_{\varrho} $,
$\chi_{\mathbf{u}} $,  $\chi_{ \varrho\mathbf{u} }$, $\chi_V$, $\chi_{\mathcal{E}}$, $\chi_\nu$ and $\chi_W$. Furthermore, we set $\mu_\varepsilon $ as their joint law on the space 
\begin{align*}
\chi = \chi_{\varrho_0} \times \chi_{\mathbf{m}_0}\times \chi_{\frac{\mathbf{m}_0}{\sqrt{\varrho_0}}}\times \chi_{\varrho} \times \chi_{\mathbf{u}}  \times  \chi_{ \varrho\mathbf{u} } \times \chi_V\times \chi_{\mathcal{E}} \times \chi_\nu \times \chi_W.
\end{align*}
Now since the following spaces are Polish, it follows from Prokhorov’s theorem theorem that:
\begin{itemize}
\item The set $\{\mu_{\varrho_{0,\varepsilon}}: \varepsilon \in (0,1)\}$ is tight on $\chi_{\varrho_0}$.
\item The set $\{\mu_{\mathbf{m}_{0,\varepsilon}}: \varepsilon \in (0,1)\}$ is tight on $\chi_{\mathbf{m}_0}$.
\item The set $\bigg\{\mu_{\frac{\mathbf{m}_{0,\varepsilon}}{\sqrt{\varrho_{0,\varepsilon}}}}: \varepsilon \in (0,1)\bigg\}$ is tight on $\chi_{\frac{\mathbf{m}_0}{\sqrt{\varrho_0}}}$.
\item The set $\{ \mu_W\}$ is tight on $\chi_W$.
\end{itemize}
Additionally, in analogy with the corresponding result in Lemma \ref{tighnessThirdLAyer}, we have that
\begin{itemize}
\item The set $\{\mu_{\varrho_\varepsilon}: \varepsilon \in (0,1) \}$ is tight on $\chi_\varrho$.
\item The set $\{\mu_{\mathbf{u}_\varepsilon}: \varepsilon \in (0,1)\}$ is tight on $\chi_{\mathbf{u}}$.
\item The set $\{\mu_{\varrho_\varepsilon\mathbf{u}_\varepsilon}: \varepsilon \in (0,1)\}$ is tight on $\chi_{\varrho\mathbf{u}}$.
\item The set $\{\mu_{V_\varepsilon}: \varepsilon \in (0,1)\}$ is tight on $\chi_{V}$.
\end{itemize}
Also, analogous to \cite[Proposition 4.4.6, Proposition 4.4.7]{breit2018stoch},
\begin{itemize} 
\item The set $\{\mu_{\mathcal{E}_\varepsilon}: \varepsilon \in (0,1)\}$ is tight on $\chi_\mathcal{E}$.
\item The set $\{\mu_{\nu_\varepsilon}: \varepsilon \in (0,1)\}$ is tight on $\chi_\nu$.
\end{itemize}
The following lemma thus hold.
\begin{lemma}
\label{tighnessFourthLAyer}
The set $\{\mu_\varepsilon :   \varepsilon \in (0,1) \}$  is tight on $\chi$.
\end{lemma}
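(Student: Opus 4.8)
The plan is to obtain tightness of the joint law $\mu_\varepsilon$ on the product space $\chi$ from the tightness of each of its ten one-dimensional marginals, all of which are recorded in the bullet points immediately preceding this statement. The mechanism is the elementary observation that on a finite product of topological spaces a family of probability measures is tight as soon as every marginal is: given $\eta>0$, one selects for each factor $\chi_\bullet$ a compact set $K_\bullet\subset\chi_\bullet$ whose complement carries marginal mass at most $\eta/10$, and then $K:=\prod_\bullet K_\bullet$ is compact in $\chi$ by Tychonoff's theorem, while a union bound gives
\[
\mu_\varepsilon(\chi\setminus K)\leq\sum_\bullet\mu_\varepsilon\big(\{\pi_\bullet\notin K_\bullet\}\big)\leq 10\cdot\tfrac{\eta}{10}=\eta
\]
uniformly in $\varepsilon\in(0,1)$, where $\pi_\bullet$ denotes the projection onto the $\bullet$-factor. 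Since $\eta$ is arbitrary, tightness of $\{\mu_\varepsilon\}$ follows.

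First I would assemble the marginal statements. On the Polish factors $\chi_{\varrho_0}$, $\chi_{\mathbf{m}_0}$, $\chi_{\frac{\mathbf{m}_0}{\sqrt{\varrho_0}}}$ and $\chi_W$ tightness is immediate from Prokhorov's theorem: the initial-data laws are tight because of the uniform-in-$\varepsilon$ moment bounds implicit in the convergences \eqref{converApproInitiDem}--\eqref{converApproInitiMom} of Section \ref{sec:initialLawFourthLayer}, and $\mu_W$ is a single Radon measure. On $\chi_\varrho$, $\chi_{\mathbf{u}}$ and $\chi_{\varrho\mathbf{u}}$ I would repeat the arguments of Lemma \ref{tighnessThirdLAyer} verbatim, now fed by the $\varepsilon$-uniform bounds \eqref{uniBoundFourthLayer1}--\eqref{uniBoundFourthLayer2} together with the improved pressure estimate \eqref{pressureEstFouthLayer1}; the latter is precisely what upgrades $\chi_\varrho$ from $C_w([0,T];L^\Gamma_x)$ to also carry the weak topology of $L^{\Gamma+1}_{t,x}$. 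For $\chi_V$ I would reuse the reasoning from Lemma \ref{tighnessThirdLAyer}: the $\varepsilon$-uniform bound on $\mathbb{E}\,\sup_t\Vert\Delta V_\varepsilon\Vert_{L^\Gamma_x}^\Gamma$ from \eqref{uniBoundFourthLayer1}, the time-regularity of $V_\varepsilon$ inherited through the combined continuity--Poisson identity \eqref{contPoissEqItoFourthLayer}, and the compact embedding into $C_w([0,T];W^{2,\Gamma}_x)$ of \cite[Corollary B.2]{ondrejat2010stochastic}. Finally, for $\chi_{\mathcal{E}}$ and $\chi_\nu$ I would invoke \cite[Proposition 4.4.6, Proposition 4.4.7]{breit2018stoch} essentially unchanged: the boundedness of $\mathcal{E}_\varepsilon$ in $L^p(\Omega;L^\infty_t\mathcal{M}_b(\mathbb{T}^3))$ furnished by \eqref{uniBoundFourthLayer1} makes the sub-level sets of the total-variation norm the required weak-$*$ compact sets, and tightness of the Young-measure sequence $\{\nu_\varepsilon\}$ on $\chi_\nu$ is automatic once the fields generating it are bounded in $L^1$ without concentration, which the same estimates guarantee.

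The only delicate point worth flagging is that the factor spaces $\chi_\varrho$, $\chi_{\mathbf{u}}$, $\chi_{\varrho\mathbf{u}}$, $\chi_V$, $\chi_{\mathcal{E}}$ and $\chi_\nu$ are not Polish, carrying weak or weak-$*$ topologies. This causes no trouble for the product argument, since tightness concerns compact sets and Tychonoff's theorem is topology-agnostic; and it causes no trouble later either, since on each of these spaces the compact sets one actually uses (bounded balls in reflexive spaces, bounded subsets of duals of separable spaces with the weak-$*$ topology, sub-level sets of coercive functionals) are metrizable, which is what legitimizes the subsequent Jakubowski--Skorokhod extraction. I do not expect a genuine obstacle here: every estimate needed has already been established in Sections \ref{sec:UniformEstFourthLayer}--\ref{sec:PressureEstFourthLay}, and the proof reduces to citing those marginal tightness assertions and running the finite union bound above.
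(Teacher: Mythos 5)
Your proposal matches the paper's argument: the paper establishes tightness of each of the ten marginals exactly as you describe (Prokhorov for the Polish factors, the analogues of Lemma \ref{tighnessThirdLAyer} fed by the $\varepsilon$-uniform estimates \eqref{uniBoundFourthLayer1}--\eqref{pressureEstFouthLayer1} for $\chi_\varrho$, $\chi_{\mathbf{u}}$, $\chi_{\varrho\mathbf{u}}$, $\chi_V$, and \cite[Propositions 4.4.6--4.4.7]{breit2018stoch} for $\chi_{\mathcal{E}}$, $\chi_\nu$), and then concludes tightness of the joint law by the standard product/union-bound step you spell out. No substantive difference.
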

We can now apply the Jakubowski--Skorokhod theorem \cite{jakubowski1998short} and we obtain the following result.
\begin{lemma}
\label{lem:JakowFourthLayer}
The exists a subsequence (not relabelled) $\{\mu_\varepsilon  :  \varepsilon \in (0,1)  \}$, a complete probability space $(\tilde{\Omega}, \tilde{\mathscr{F}}, \tilde{\mathbb{P}})$ with $\chi$-valued random variables
\begin{align*}
(\tilde{\varrho}_{0,\varepsilon}, \tilde{\mathbf{m}}_{0,\varepsilon},
\tilde{\mathbf{n}}_{0,\varepsilon},  \tilde{\varrho}_\varepsilon , \tilde{\mathbf{u}}_\varepsilon , \tilde{\mathbf{m}}_\varepsilon  ,\tilde{V}_\varepsilon ,
\tilde{\mathcal{E}}_\varepsilon,
\tilde{\nu}_\varepsilon,
 \tilde{W}_\varepsilon ) \quad \varepsilon \in (0,1) 
\end{align*}
and 
\begin{align*}
 (\tilde{\varrho}_0, \tilde{\mathbf{m}}_0, \tilde{\mathbf{n}}_0,  \tilde{\varrho}, \tilde{\mathbf{u}}, \tilde{\mathbf{m}}, \tilde{V}, \tilde{\mathcal{E}},
\tilde{\nu}, \tilde{W})
\end{align*}
such that 
\begin{itemize}
\item the law of $(\tilde{\varrho}_{0,\varepsilon}, \tilde{\mathbf{m}}_{0,\varepsilon},
\tilde{\mathbf{n}}_{0,\varepsilon},  \tilde{\varrho}_\varepsilon , \tilde{\mathbf{u}}_\varepsilon , \tilde{\mathbf{m}}_\varepsilon  ,\tilde{V}_\varepsilon ,
\tilde{\mathcal{E}}_\varepsilon,
\tilde{\nu}_\varepsilon,
 \tilde{W}_\varepsilon )$ on $\chi$ coincide with $\mu_\varepsilon $, $\varepsilon \in (0,1) $,
\item the law of $ (\tilde{\varrho}_0, \tilde{\mathbf{m}}_0, \tilde{\mathbf{n}}_0,  \tilde{\varrho}, \tilde{\mathbf{u}}, \tilde{\mathbf{m}}, \tilde{V}, \tilde{\mathcal{E}},
\tilde{\nu}, \tilde{W})$ on $\chi$ is a Radon measure,
\item the following convergence  (with each $\rightarrow$ interpreted with respect to the corresponding topology)
\begin{align*}
\tilde{\varrho}_{0,\varepsilon} \rightarrow \tilde{\varrho}_0 \quad \text{in}\quad \chi_{\varrho_0}, & \quad \qquad
\tilde{\mathbf{m}}_{0,\varepsilon} \rightarrow \tilde{\mathbf{m}}_0 \quad \text{in}\quad \chi_{\mathbf{m}_0}, 
\\
\tilde{\mathbf{n}}_{0,\varepsilon} \rightarrow \tilde{\mathbf{n}}_0 \quad \text{in}\quad \chi_{\frac{\mathbf{m}_0}{\sqrt{\varrho_0}}}, 
& \quad \qquad
\tilde{\varrho}_\varepsilon  \rightarrow \tilde{\varrho} \quad \text{in}\quad \chi_\varrho,
\\
\tilde{\mathbf{u}}_\varepsilon  \rightarrow \tilde{\mathbf{u}} \quad \text{in}\quad \chi_\mathbf{u}, 
 & \quad \qquad
\tilde{\mathbf{m}}_\varepsilon  \rightarrow \tilde{\mathbf{m}} \quad \text{in}\quad \chi_\mathbf{m}, 
\\
 \tilde{V}_\varepsilon  \rightarrow \tilde{V} \quad  \text{in}\quad \chi_V,
 & \quad \qquad
 \tilde{\mathcal{E}}_\varepsilon  \rightarrow  \tilde{\mathcal{E}} \quad \text{in}\quad \chi_\mathcal{E}, 
 \\
 \tilde{\nu}_\varepsilon  \rightarrow \tilde{\nu} \quad  \text{in}\quad \chi_\nu,
 & \quad \qquad
\tilde{W}_\varepsilon  \rightarrow \tilde{W} \quad \text{in}\quad \chi_W
\end{align*}
holds $\tilde{
\mathbb{P}}$-a.s.;
\item consider any Carath\'eodory function $\mathcal{C}=\mathcal{C}(t,x,\varrho, \mathbf{u}, \mathbb{U}, \mathbf{m}, f, \mathbf{V})$ with
\begin{align*}
(t,x,\varrho, \mathbf{u}, \mathbb{U}, \mathbf{m}, f, \mathbf{V}) \in [0,T]\times \mathbb{T}^3 \times \mathbb{R} \times \mathbb{R}^3 \times \mathbb{R}^{3\times3} \times \mathbb{R}^3 \times \mathbb{R} \times \mathbb{R}^3,
\end{align*} 
and where the following estimate
\begin{align*}
\vert \mathcal{C} \vert \lesssim 1+ \vert \varrho\vert^{r_1} + \vert  \mathbf{u}\vert^{r_2}  + \vert \mathbb{U}\vert^{r_3}   + \vert  \mathbf{m}\vert^{r_4}  + \vert  f\vert^{r_5}   + \vert  \mathbf{V}\vert^{r_6} 
\end{align*}
holds uniformly in $(t,x)$ for some $r_i>0$, $i=1, \ldots, 6$. Then as $\varepsilon \rightarrow0$, it follows that
\begin{align*}
\mathcal{C}(\tilde{\varrho}_\varepsilon, \tilde{\mathbf{u}}_\varepsilon, \nabla\tilde{\mathbf{u}}_\varepsilon, \tilde{\mathbf{m}}_\varepsilon, f, \nabla\tilde{V}_\varepsilon) 
\rightharpoonup
\overline{
\mathcal{C}(\tilde{\varrho}, \tilde{\mathbf{u}}, \nabla\tilde{\mathbf{u}}, \tilde{\mathbf{m}}, f, \nabla\tilde{V})}
\end{align*}
holds  in $L^r((0,T) \times
 \mathbb{T}^3)$ for all
 \begin{align*}
1<r \leq \frac{\Gamma+1}{r_1} \wedge \frac{2}{r_2} \wedge \frac{2\Gamma}{r_4(\Gamma+1)} \wedge \frac{2}{r_6}
 \end{align*}
$\tilde{\mathbb{P}}$-a.s.
\end{itemize}
\end{lemma}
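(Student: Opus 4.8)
The plan is to deduce the statement from the Jakubowski--Skorokhod representation theorem applied to the tight family $\{\mu_\varepsilon\}$ produced in Lemma~\ref{tighnessFourthLAyer}, followed by an identification of weak limits carried out through the accompanying Young measures. First I would note that $\chi$ is a countable product of factors, each of which is either a Polish space or a space of the form $(X,w)$, $C_w([0,T];X)$, $(L^\infty_t;\mathcal{M}_b(\mathbb{T}^3),w^*)$, or $(L^\infty_{w^*}((0,T)\times\mathbb{T}^3;\mathfrak{P}(\mathbb{R}^{20})),w^*)$; in every case there is a countable family of continuous functionals separating points, so $\chi$ is a quasi-Polish (sub-Polish) space in the sense required by the general version of Jakubowski's theorem, exactly as in \cite[Chapter~2]{breit2018stoch}. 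Applying \cite{jakubowski1998short} (see also \cite[Chapter~2]{breit2018stoch}) to the tight sequence $\{\mu_\varepsilon\}$ then yields, along a subsequence, a complete probability space $(\tilde{\Omega},\tilde{\mathscr{F}},\tilde{\mathbb{P}})$ carrying the copies $(\tilde{\varrho}_{0,\varepsilon},\ldots,\tilde{W}_\varepsilon)$ with law $\mu_\varepsilon$, a limit random variable $(\tilde{\varrho}_0,\ldots,\tilde{W})$ whose law is a Radon measure on $\chi$ (inner regular on the Polish factors and on the weak-topology factors by the structure of $\chi$), together with $\tilde{\mathbb{P}}$-a.s. convergence of the copies to the limit in each component topology. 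This produces all the listed convergences.

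Next I would use that equality of laws is preserved under Borel maps: the map $\omega\mapsto\big(\nu_\varepsilon,[\varrho_\varepsilon,\mathbf{u}_\varepsilon,\nabla\mathbf{u}_\varepsilon,\varrho_\varepsilon\mathbf{u}_\varepsilon,f,\nabla V_\varepsilon]\big)$ takes values $\mathbb{P}$-a.s. in the Borel set $\{(\nu,v)\,:\,\nu_{t,x}=\delta_{v(t,x)}\text{ for a.e.\ }(t,x)\}$, hence the same holds for the tilde variables, i.e.\ $\tilde{\nu}_{\varepsilon,t,x}=\delta_{[\tilde{\varrho}_\varepsilon,\tilde{\mathbf{u}}_\varepsilon,\nabla\tilde{\mathbf{u}}_\varepsilon,\tilde{\varrho}_\varepsilon\tilde{\mathbf{u}}_\varepsilon,f,\nabla\tilde{V}_\varepsilon](t,x)}$ for $\tilde{\mathbb{P}}\otimes\mathrm{d}t\otimes\mathrm{d}x$-a.e.\ $(\omega,t,x)$, cf.~\cite[Chapter~2]{breit2018stoch}. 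Consequently, for any $\phi\in C_b(\mathbb{R}^{20})$, testing the weak-$*$ convergence $\tilde{\nu}_\varepsilon\to\tilde{\nu}$ in $\chi_\nu$ against $\psi\,\phi$ with $\psi\in L^1((0,T)\times\mathbb{T}^3)$ gives that $\phi(\tilde{\varrho}_\varepsilon,\tilde{\mathbf{u}}_\varepsilon,\nabla\tilde{\mathbf{u}}_\varepsilon,\tilde{\mathbf{m}}_\varepsilon,f,\nabla\tilde{V}_\varepsilon)$ converges weakly-$*$ in $L^\infty((0,T)\times\mathbb{T}^3)$ to $\langle\tilde{\nu}_{t,x},\phi\rangle$, which I would denote $\overline{\phi(\tilde{\varrho},\tilde{\mathbf{u}},\nabla\tilde{\mathbf{u}},\tilde{\mathbf{m}},f,\nabla\tilde{V})}$; this settles the claim for bounded continuous integrands.

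Finally, for a Carath\'eodory function $\mathcal{C}$ with the stated polynomial growth, the a priori bounds \eqref{uniBoundFourthLayer1}, \eqref{uniBoundFourthLayer2} and the improved pressure estimate \eqref{pressureEstFouthLayer1}, which transfer to the tilde variables by equality of laws, give that $\mathcal{C}(\tilde{\varrho}_\varepsilon,\tilde{\mathbf{u}}_\varepsilon,\nabla\tilde{\mathbf{u}}_\varepsilon,\tilde{\mathbf{m}}_\varepsilon,f,\nabla\tilde{V}_\varepsilon)$ is bounded in $L^r(\tilde{\Omega}\times(0,T)\times\mathbb{T}^3)$ for every $r$ in the asserted range; here one uses $\tilde{\varrho}_\varepsilon\in L^{\Gamma+1}$, $\tilde{\mathbf{u}}_\varepsilon,\nabla\tilde{\mathbf{u}}_\varepsilon\in L^2$, $\tilde{\varrho}_\varepsilon\tilde{\mathbf{u}}_\varepsilon\in L^2_tL^{\frac{6\Gamma}{\Gamma+6}}_x\cap C_wL^{\frac{2\Gamma}{\Gamma+1}}_x$, $\nabla\tilde{V}_\varepsilon\in L^\infty_tL^2_x$, and $f\in L^\infty_x$, over $(0,T)\times\mathbb{T}^3$. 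Such a bound makes the family equi-integrable on $(0,T)\times\mathbb{T}^3$ $\tilde{\mathbb{P}}$-a.s. I would then approximate $\mathcal{C}$, uniformly on bounded sets, by truncations $\mathcal{C}_M\in C_b$, pass to the limit $\varepsilon\to0$ using the Young-measure identity of the previous paragraph, and finally let $M\to\infty$ invoking equi-integrability, obtaining
\[
\mathcal{C}(\tilde{\varrho}_\varepsilon,\tilde{\mathbf{u}}_\varepsilon,\nabla\tilde{\mathbf{u}}_\varepsilon,\tilde{\mathbf{m}}_\varepsilon,f,\nabla\tilde{V}_\varepsilon)\rightharpoonup\overline{\mathcal{C}(\tilde{\varrho},\tilde{\mathbf{u}},\nabla\tilde{\mathbf{u}},\tilde{\mathbf{m}},f,\nabla\tilde{V})}\quad\text{in }L^r((0,T)\times\mathbb{T}^3)
\]
$\tilde{\mathbb{P}}$-a.s. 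The main obstacle I anticipate is the non-Polish factor $\chi_\nu$: one must invoke the quasi-Polish version of Jakubowski--Skorokhod and, crucially, verify that the Dirac structure of the Young measures is a Borel property preserved under the change of probability space; once this is in place, the remaining truncation and equi-integrability steps for the Carath\'eodory functions are routine.
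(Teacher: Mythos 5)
Your proposal is correct and follows essentially the same route as the paper, which proves this lemma simply by invoking the Jakubowski--Skorokhod representation theorem on the tight family from Lemma \ref{tighnessFourthLAyer} (in its quasi-Polish form, as in \cite[Chapter 2]{breit2018stoch}) and then identifying the limits of the Carath\'eodory compositions through the Young measures together with the uniform bounds \eqref{uniBoundFourthLayer1} and \eqref{pressureEstFouthLayer1}. Your additional remarks on the Dirac structure of $\tilde{\nu}_\varepsilon$ being preserved under equality of laws and on the truncation/equi-integrability step are exactly the standard ingredients the paper implicitly delegates to \cite{breit2018stoch}.
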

As a consequence of Lemma \ref{lem:JakowFourthLayer}, we obtain the following result.
\begin{corollary}
The following holds $\tilde{\mathbb{P}}$-a.s.
\begin{equation}
\begin{aligned}
&\tilde{\varrho}_{0,\varepsilon}= \tilde{\varrho}_\varepsilon(0), \quad \tilde{\mathbf{m}}_{0,\varepsilon}= \tilde{\varrho}_\varepsilon\tilde{\mathbf{u}}_\varepsilon(0), \quad \tilde{\mathbf{n}}_{0,\varepsilon} = \frac{\tilde{\mathbf{m}}_{0,\varepsilon}}{\sqrt{\tilde{\varrho}_{0,\varepsilon}}}, \quad \tilde{\mathbf{m}}_{\varepsilon}= \tilde{\varrho}_\varepsilon\tilde{\mathbf{u}}_\varepsilon,
\\
&\tilde{\mathcal{E}}_\varepsilon = \mathcal{E}_\varepsilon(\tilde{\varrho}_\varepsilon, \tilde{\mathbf{u}}_\varepsilon, \tilde{V}_\varepsilon),\quad
\tilde{\nu}_{\varepsilon}= \delta_{[\tilde{\varrho}_\varepsilon, \tilde{\mathbf{u}}_\varepsilon, \nabla \tilde{\mathbf{u}}_\varepsilon, \tilde{\varrho}_\varepsilon \tilde{\mathbf{u}}_\varepsilon, f, \nabla \tilde{V}_\varepsilon]}
\end{aligned}
\end{equation}
and that
\begin{equation}
\begin{aligned}
\tilde{\mathbb{E}} \bigg\vert \sup_{t\in[0,T]} \int_{\mathbb{T}^3} \tilde{\mathcal{E}}_\varepsilon \, \mathrm{d}x \bigg\vert^p
&=
\tilde{\mathbb{E}} \bigg\vert\sup_{t\in[0,T]} \int_{\mathbb{T}^3}\bigg( \frac{1}{2} \tilde{\varrho}_\varepsilon  \vert \tilde{\mathbf{u}}_\varepsilon  \vert^2  
+ P^\Gamma_\delta(\tilde{\varrho}_\varepsilon ) 
\pm
\vartheta \vert \nabla \tilde{V}_\varepsilon \vert^2 \bigg) \, \mathrm{d}x
\bigg\vert^p
\\&\lesssim_{ \delta, \Gamma, p,\tilde{\mathcal{E}}_{0,\varepsilon}} 1
\end{aligned}
\end{equation}
holds uniformly in $\varepsilon$ just as in \eqref{uniBoundFourthLayer1}.
\end{corollary}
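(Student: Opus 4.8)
The plan is to obtain every claimed identity from the equality of laws furnished by Lemma \ref{lem:JakowFourthLayer}, transferring to the new probability space the a.s.\ relations that already hold for the original variables on $(\Omega,\mathscr{F},\mathbb{P})$. First I would record that, by the construction in Section \ref{sec:initialLawFourthLayer} and by Theorem \ref{thm:mainThirdLayer} (item (6) of Definition \ref{def:martSolThirdLayer}), the original tuple $(\varrho_{0,\varepsilon},\mathbf{m}_{0,\varepsilon},\mathbf{m}_{0,\varepsilon}/\sqrt{\varrho_{0,\varepsilon}},\varrho_\varepsilon,\mathbf{u}_\varepsilon,\varrho_\varepsilon\mathbf{u}_\varepsilon,V_\varepsilon,\mathcal{E}_\varepsilon,\nu_\varepsilon,W)$ satisfies, $\mathbb{P}$-a.s., the relations $\varrho_\varepsilon(0)=\varrho_{0,\varepsilon}$, $(\varrho_\varepsilon\mathbf{u}_\varepsilon)(0)=\mathbf{m}_{0,\varepsilon}$ (with $\mathbf{u}_{0,\varepsilon}:=\mathbf{m}_{0,\varepsilon}/\varrho_{0,\varepsilon}$ and $\varrho_{0,\varepsilon}\ge M>0$), $\mathbf{m}_\varepsilon=\varrho_\varepsilon\mathbf{u}_\varepsilon$, $\mathcal{E}_\varepsilon=\mathcal{E}_\varepsilon(\varrho_\varepsilon,\mathbf{u}_\varepsilon,V_\varepsilon)$ and $\nu_\varepsilon=\delta_{[\varrho_\varepsilon,\mathbf{u}_\varepsilon,\nabla\mathbf{u}_\varepsilon,\varrho_\varepsilon\mathbf{u}_\varepsilon,f,\nabla V_\varepsilon]}$, the last one being literally the definition adopted in Section \ref{sec:compactnessFourthLayer}.

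The key point is that each of these relations cuts out a Borel subset of the product space $\chi$. For those not involving $\chi_\nu$ or $\chi_{\mathcal{E}}$ this is standard: the evaluation maps at $t=0$ are Borel on $\chi_\varrho$ and $\chi_{\varrho\mathbf{u}}$ (each carries a $C_w$-component), and, testing against a countable dense family of smooth functions, the sets $\{\varrho(0)=\varrho_0\}$, $\{(\varrho\mathbf{u})(0)=\mathbf{m}_0\}$, $\{\mathbf{m}=\varrho\mathbf{u}\}$ and $\{\mathbf{n}_0=\mathbf{m}_0/\sqrt{\varrho_0}\}$ are countable intersections of closed sets, hence Borel. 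For the relations involving $\tilde{\mathcal{E}}_\varepsilon$ and $\tilde{\nu}_\varepsilon$ one argues exactly as in \cite[Section 2.8, Section 4.4.3]{breit2018stoch}: the maps $(\varrho,\mathbf{u},V)\mapsto\mathcal{E}_\varepsilon(\varrho,\mathbf{u},V)$ into $(L^\infty_t;\mathcal{M}_b(\mathbb{T}^3),w^*)$ and $(\varrho,\mathbf{u},\nabla\mathbf{u},\varrho\mathbf{u},f,\nabla V)\mapsto\delta_{[\cdot]}$ into the Young-measure space are Borel, so the corresponding equalities again define Borel sets. Since by Lemma \ref{lem:JakowFourthLayer} the laws of both tuples on $\chi$ equal $\mu_\varepsilon$, and each of these Borel sets has full $\mu_\varepsilon$-mass, all the relations hold with tildes $\tilde{\mathbb{P}}$-a.s.

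For the final moment estimate I would apply equality of laws once more, now to the (lower semicontinuous, hence Borel) functional $\chi_{\mathcal{E}}\ni\mu\mapsto\big|\sup_{t\in[0,T]}\langle\mu(t),1\rangle\big|^p\in[0,\infty]$. Equality of laws together with monotone convergence gives
\begin{align*}
\tilde{\mathbb{E}}\,\bigg|\sup_{t\in[0,T]}\int_{\mathbb{T}^3}\tilde{\mathcal{E}}_\varepsilon\,\mathrm{d}x\bigg|^p
=\mathbb{E}\,\bigg|\sup_{t\in[0,T]}\int_{\mathbb{T}^3}\mathcal{E}_\varepsilon\,\mathrm{d}x\bigg|^p ,
\end{align*}
and, rewriting $\mathcal{E}_\varepsilon$ via the already established identity $\tilde{\mathcal{E}}_\varepsilon=\mathcal{E}_\varepsilon(\tilde{\varrho}_\varepsilon,\tilde{\mathbf{u}}_\varepsilon,\tilde{V}_\varepsilon)$, the right-hand side is controlled by the first three summands of \eqref{uniBoundFourthLayer1} uniformly in $\varepsilon$ (recall $\mathcal{E}_{0,\varepsilon}$ is $\varepsilon$-uniformly bounded in $L^p(\Omega;L^1_x)$ by Section \ref{sec:initialLawFourthLayer}). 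The only genuinely delicate point is the measurability bookkeeping on the non-Polish spaces $\chi_\nu$ and $\chi_{\mathcal{E}}$ — making precise that "having the same law" is meaningful there and that the relevant sets and functionals are Borel; this is handled as in \cite[Section 2.8]{breit2018stoch} and is the part I would spell out with most care, the rest being a routine transfer along equality of laws.
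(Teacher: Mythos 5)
Your argument is correct and is exactly the (implicit) argument of the paper: the corollary is stated there as a direct consequence of Lemma \ref{lem:JakowFourthLayer}, with the identifications transferred along equality of laws as in \cite[Sections 2.8 and 4.4.3]{breit2018stoch} and the moment bound inherited from \eqref{uniBoundFourthLayer1}. Your extra care about Borel measurability of the identity sets on the quasi-Polish components $\chi_\nu$ and $\chi_{\mathcal{E}}$ is precisely the point the paper delegates to the reference, so nothing is missing.
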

Furthermore, a consequence of the last item of Lemma \ref{lem:JakowFourthLayer} is the following.
\begin{corollary}
\label{cor:nonlinearLimitsFourth}
There exists $\overline{p^\Gamma_\delta(\tilde{\varrho})}$, $\overline{\vert \nabla \tilde{V}\vert^2}$, $\overline{\tilde{\varrho} \nabla \tilde{V}}$ and $\overline{\tilde{\varrho}\, \mathbf{g}_k(\tilde{\varrho}, \tilde{f}, \tilde{\varrho}\tilde{\mathbf{u}})}$ such that
\begin{align}
p^\Gamma_\delta(\tilde{\varrho}_\varepsilon)
&\rightharpoonup
\overline{p^\Gamma_\delta(\tilde{\varrho})}
\label{pressureLimitFourthLayer}\\
\vert \nabla \tilde{V}_\varepsilon \vert^2
&\rightharpoonup
\overline{\vert \nabla \tilde{V}\vert^2}
\label{nablaVLimitFourthLayer}\\
\tilde{\varrho}_\varepsilon \nabla \tilde{V}_\varepsilon 
&\rightharpoonup
\overline{\tilde{\varrho} \nabla \tilde{V}}
\label{rhoNablaVLimitFourthLayer}\\
\tilde{\varrho}_\varepsilon \mathbf{g}_k(\tilde{\varrho}_\varepsilon, f, \tilde{\varrho}_\varepsilon\tilde{\mathbf{u}}_\varepsilon)
&\rightharpoonup
\overline{\tilde{\varrho}\, \mathbf{g}_k(\tilde{\varrho}, f, \tilde{\varrho}\tilde{\mathbf{u}})}
\label{noiseLimitFourthLayer}
\end{align}
$\tilde{\mathbb{P}}$-a.s. in $L^r((0,T) \times \mathbb{T}^3)$ for some $r>1$.
\end{corollary}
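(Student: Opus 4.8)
The plan is to recognise each of the four quantities as the composition of a Carath\'eodory integrand with $[\tilde{\varrho}_\varepsilon,\tilde{\mathbf{u}}_\varepsilon,\nabla\tilde{\mathbf{u}}_\varepsilon,\tilde{\mathbf{m}}_\varepsilon,f,\nabla\tilde{V}_\varepsilon]$ and to invoke the last item of Lemma~\ref{lem:JakowFourthLayer}, the only additional input being a uniform bound sharp enough to push the admissible integrability exponent strictly above $1$. Concretely I would take $\mathcal{C}(\varrho)=p(\varrho)+\delta(\varrho+\varrho^\Gamma)$ for \eqref{pressureLimitFourthLayer}, $\mathcal{C}(\mathbf{V})=|\mathbf{V}|^2$ for \eqref{nablaVLimitFourthLayer}, $\mathcal{C}(\varrho,\mathbf{V})=\varrho\,\mathbf{V}$ for \eqref{rhoNablaVLimitFourthLayer}, and $\mathcal{C}(\varrho,f,\mathbf{m})=\varrho\,\mathbf{g}_k(\cdot,\varrho,f,\mathbf{m})$ for \eqref{noiseLimitFourthLayer}; the last integrand is continuous up to $\varrho=0$ and satisfies $|\mathcal{C}|\le c_k\varrho(\varrho+f+|\mathbf{m}|)$ by \eqref{stochCoeffBoundExi0}.

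First I would dispose of the pressure. By the equality of laws from Lemma~\ref{lem:JakowFourthLayer} the bound \eqref{pressureEstFouthLayer1} transfers to $\tilde{\varrho}_\varepsilon$, so $p^\Gamma_\delta(\tilde{\varrho}_\varepsilon)$ is bounded in $L^p\big(\tilde{\Omega};L^{(\Gamma+1)/\Gamma}((0,T)\times\mathbb{T}^3)\big)$ uniformly in $\varepsilon$, with $(\Gamma+1)/\Gamma>1$, and the cited item of Lemma~\ref{lem:JakowFourthLayer} (with $r_1=\Gamma$) yields \eqref{pressureLimitFourthLayer}. For the three $\tilde{V}_\varepsilon$-dependent terms the decisive extra ingredient is the elliptic regularity hidden in \eqref{elecFieldThird1}: since $\pm\Delta\tilde{V}_\varepsilon=\tilde{\varrho}_\varepsilon-f$ with $\tilde{\varrho}_\varepsilon$ bounded in $L^p(\tilde{\Omega};L^\infty_tL^\Gamma_x)$ by \eqref{uniBoundFourthLayer1} and $f\in L^\infty_x$, Calder\'on--Zygmund estimates together with the Poincar\'e inequality on $\mathbb{T}^3$ give $\nabla\tilde{V}_\varepsilon$ bounded in $L^p(\tilde{\Omega};L^\infty_tW^{1,\Gamma}_x)$, hence in $L^p(\tilde{\Omega};L^\infty_{t,x})$ because $\Gamma\ge6>3$ forces $W^{1,\Gamma}_x\hookrightarrow L^\infty_x$. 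Consequently $|\nabla\tilde{V}_\varepsilon|^2$ is bounded in $L^p(\tilde{\Omega};L^\infty_{t,x})$ and $\tilde{\varrho}_\varepsilon\nabla\tilde{V}_\varepsilon$ in $L^p(\tilde{\Omega};L^\infty_tL^\Gamma_x)$; and, estimating $\tilde{\varrho}_\varepsilon|\mathbf{g}_k|\le c_k\tilde{\varrho}_\varepsilon(\tilde{\varrho}_\varepsilon+f+\tilde{\varrho}_\varepsilon|\tilde{\mathbf{u}}_\varepsilon|)$ and combining $\tilde{\varrho}_\varepsilon\in L^\infty_tL^\Gamma_x$, $f\in L^\infty_x$ and $\tilde{\varrho}_\varepsilon\tilde{\mathbf{u}}_\varepsilon\in L^\infty_tL^{2\Gamma/(\Gamma+1)}_x$ from \eqref{uniBoundFourthLayer1}, one obtains a uniform bound for $\tilde{\varrho}_\varepsilon\mathbf{g}_k(\cdot,\tilde{\varrho}_\varepsilon,f,\tilde{\varrho}_\varepsilon\tilde{\mathbf{u}}_\varepsilon)$ in $L^p(\tilde{\Omega};L^\infty_tL^{2\Gamma/(\Gamma+3)}_x)$ with $2\Gamma/(\Gamma+3)>1$. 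Each of these sits in a reflexive $L^r((0,T)\times\mathbb{T}^3)$ with $r>1$, so (up to a further subsequence) the weak limits in \eqref{nablaVLimitFourthLayer}--\eqref{noiseLimitFourthLayer} exist; their uniqueness along the already-fixed $\varepsilon$-sequence then follows from the $\tilde{\mathbb{P}}$-a.s.\ weak-$*$ convergence $\tilde{\nu}_\varepsilon\to\tilde{\nu}$ of the Young measures in Lemma~\ref{lem:JakowFourthLayer} via the fundamental theorem of Young measures, exactly as in the last item of that lemma.

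The one point requiring care, and the only genuine obstacle, is that the additive growth hypothesis of the last item of Lemma~\ref{lem:JakowFourthLayer} returns only the range $r\le1$ for the three $\tilde{V}_\varepsilon$-dependent integrands ($r_6=2$ for $|\mathbf{V}|^2$, and likewise after Young-splitting $\varrho\,\mathbf{V}$ and $\varrho\,\mathbf{g}_k$). The resolution is precisely to \emph{not} split these products but to exploit the improved density estimate \eqref{pressureEstFouthLayer1} and, above all, the gain $\nabla\tilde{V}_\varepsilon\in L^\infty_{t,x}$ afforded by $\Gamma\ge6$, and then to feed the resulting genuine $L^r$-bounds with $r>1$ into the Young-measure machinery; this is the stochastic analogue of the argument in \cite[Section~4.4.3]{breit2018stoch}.
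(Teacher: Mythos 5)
Your proof is correct and follows the same route the paper intends: the corollary is presented there as an immediate consequence of the Carath\'eodory/Young-measure item of Lemma \ref{lem:JakowFourthLayer}, fed with the uniform bounds \eqref{uniBoundFourthLayer1} and \eqref{pressureEstFouthLayer1}. You rightly note that the literal exponent formula $r\le 2/r_6$ would only give $r\le 1$ for the $\tilde V$-dependent integrands, and your fix --- the improved bound $\nabla\tilde V_\varepsilon\in L^\infty_t W^{1,\Gamma}_x\hookrightarrow L^\infty_{t,x}$ obtained from \eqref{elecFieldThird1} together with the product (rather than Young-split) estimates --- is precisely the regularity the paper itself exploits later, e.g.\ in \eqref{strongConvNablaV}.
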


\subsection{Identifying the limit system}
\label{sec:limSystemFourthLayer}
Just as was done in Section \ref{sec:limSystemThirdLayer},
we can now construct and endow the probability
space with the following pair of filtrations
\begin{align*}
\tilde{\mathscr{F}}_t^\varepsilon  :=\sigma \bigg(\sigma_t[\tilde{\varrho}_\varepsilon], \sigma_t[\tilde{\mathbf{u}}_\varepsilon], \sigma_t[\tilde{V}_\varepsilon], \bigcup_{k\in \mathbb{N}}\sigma_t[\tilde{\beta}_{\varepsilon,k}]  \bigg), \quad t\in [0,T]
\end{align*}
and
\begin{align*}
\tilde{\mathscr{F}}_t  :=
\sigma \bigg(\sigma_t[\tilde{\varrho}], \sigma_t[\tilde{\mathbf{u}}], \sigma_t[\tilde{V}], \bigcup_{k\in \mathbb{N}}\sigma_t[\tilde{\beta}_{k}]  \bigg), \quad t\in [0,T]
\end{align*}
on the family of sequences $(\tilde{\varrho}_{0,\varepsilon}, \tilde{\mathbf{m}}_{0,\varepsilon},
\tilde{\mathbf{n}}_{0,\varepsilon},  \tilde{\varrho}_\varepsilon , \tilde{\mathbf{u}}_\varepsilon , \tilde{\mathbf{m}}_\varepsilon  ,\tilde{V}_\varepsilon ,
\tilde{\mathcal{E}}_\varepsilon,
\tilde{\nu}_\varepsilon,
 \tilde{W}_\varepsilon )$  and the limit random variables $ (\tilde{\varrho}_0, \tilde{\mathbf{m}}_0, \tilde{\mathbf{n}}_0,  \tilde{\varrho}, \tilde{\mathbf{u}}, \tilde{\mathbf{m}}, \tilde{V}, \tilde{\mathcal{E}},
\tilde{\nu}, \tilde{W})$ respectively. The following can be found in \cite[Lemma 4.4.11]{breit2018stoch}.
\begin{lemma}
\label{lem:identifyContFourthLayer}
The random distributions  $[\tilde{\varrho}, \tilde{\mathbf{u}}]$ satisfies \eqref{weakContEqFourthLayer} for all $\psi \in C^\infty_c ([0,T))$ and $\phi \in C^\infty(\mathbb{T}^3)$ $\tilde{\mathbb{P}}$-a.s.
\end{lemma}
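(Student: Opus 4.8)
The plan is to obtain \eqref{weakContEqFourthLayer} by passing to the limit $\varepsilon\to0$ in the regularized continuity equation \eqref{contEqThird1}, which the limit variables of Lemma \ref{lem:JakowFourthLayer} inherit. First I would note that, by Definition \ref{def:martSolThirdLayer}, equation \eqref{contEqThird1} holds a.e.\ in $(0,T)\times\mathbb{T}^3$ $\mathbb{P}$-a.s.\ for the approximate solutions on the original stochastic basis; tested against a product $\psi(t)\phi(x)$ with $\psi\in C^\infty_c([0,T))$ and $\phi\in C^\infty(\mathbb{T}^3)$, this becomes a Borel condition on the path space $\chi$. Since the law of $(\tilde\varrho_\varepsilon,\tilde{\mathbf{u}}_\varepsilon,\tilde V_\varepsilon,\tilde W_\varepsilon)$ coincides with that of the original approximation and, by the corollary following Lemma \ref{lem:JakowFourthLayer}, $\tilde\varrho_{0,\varepsilon}=\tilde\varrho_\varepsilon(0)$ and $\tilde{\mathbf{m}}_\varepsilon=\tilde\varrho_\varepsilon\tilde{\mathbf{u}}_\varepsilon$, the tilde variables satisfy the weak form of \eqref{contEqThird1} tested against $\psi\phi$, that is, the right-hand side of \eqref{weakContEqFourthLayer} plus the extra term $-\varepsilon\int_0^T\psi\int_{\mathbb{T}^3}\nabla\tilde\varrho_\varepsilon\cdot\nabla\phi\,\mathrm{d}x\,\mathrm{d}t$ coming from the artificial viscosity $\varepsilon\Delta\varrho$. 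This transfer-of-the-equation step follows the measurability arguments of \cite[Section 2.9]{breit2018stoch}, which I would only sketch.

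Next I would pass to the limit $\tilde{\mathbb{P}}$-a.s.\ (along the Skorokhod subsequence of Lemma \ref{lem:JakowFourthLayer}) in each term. The convergence $\tilde\varrho_\varepsilon\to\tilde\varrho$ in $\chi_\varrho\hookrightarrow C_w([0,T];L^\Gamma_x)$ yields $\int_{\mathbb{T}^3}\tilde\varrho_\varepsilon(t)\phi\,\mathrm{d}x\to\int_{\mathbb{T}^3}\tilde\varrho(t)\phi\,\mathrm{d}x$ for every $t\in[0,T]$, and together with the uniform bound on $\sup_t\Vert\tilde\varrho_\varepsilon\Vert_{L^\Gamma_x}$ coming from \eqref{uniBoundFourthLayer1} (transferred by equality of laws) and dominated convergence in $t$, this handles the term carrying $\partial_t\psi$; the initial term is handled by $\tilde\varrho_{0,\varepsilon}\to\tilde\varrho_0$ in $\chi_{\varrho_0}=L^\Gamma_x$, cf.\ \eqref{converApproInitiDem}. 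For the convective term I would use $\tilde\varrho_\varepsilon\tilde{\mathbf{u}}_\varepsilon\rightharpoonup\tilde\varrho\tilde{\mathbf{u}}$, for instance in $L^2(0,T;L^{6\Gamma/(\Gamma+6)}_x)$, which follows from the compactness of $\tilde\varrho_\varepsilon\tilde{\mathbf{u}}_\varepsilon$ in $\chi_{\varrho\mathbf{u}}$, the weak convergence $\tilde{\mathbf{u}}_\varepsilon\rightharpoonup\tilde{\mathbf{u}}$ in $L^2(0,T;W^{1,2}_x)$, the identification $\tilde{\mathbf{m}}=\tilde\varrho\tilde{\mathbf{u}}$, and the uniform bound \eqref{uniBoundFourthLayer2}. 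Finally, by \eqref{uniBoundFourthLayer1} one has $\tilde{\mathbb{E}}\Vert\sqrt\varepsilon\,\nabla\tilde\varrho_\varepsilon\Vert_{L^2_{t,x}}^{2p}\lesssim1$ uniformly in $\varepsilon$, so the extra viscous term is $O(\sqrt\varepsilon)$ in $L^{2p}(\tilde\Omega)$ and therefore drops out of the limit (comparing this $L^{2p}$-, hence in-probability, limit with the a.s.\ limits of the remaining terms forces \eqref{weakContEqFourthLayer} to hold $\tilde{\mathbb{P}}$-a.s.).

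The main obstacle is not analytic but organizational: making precise that equality of laws transfers the weak continuity equation to the new stochastic basis, and keeping track of which topology each convergence of Lemma \ref{lem:JakowFourthLayer} lives in so that the products $\tilde\varrho_\varepsilon\phi$ and $\tilde\varrho_\varepsilon\tilde{\mathbf{u}}_\varepsilon\cdot\nabla\phi$ pass to the limit. By contrast, no effective-viscous-flux computation or DiPerna--Lions commutator estimate is needed at this stage, since the plain weak form of the continuity equation only involves the already-compact quantity $\tilde\varrho_\varepsilon\tilde{\mathbf{u}}_\varepsilon$; the renormalized form \eqref{renormalizedContFourthLayer} is treated separately.
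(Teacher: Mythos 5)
Your proposal is correct and follows exactly the standard route that the paper itself invokes: the paper's "proof" of this lemma is simply a citation to \cite[Lemma 4.4.11]{breit2018stoch}, whose argument is precisely what you reconstruct — transfer of the weak form of \eqref{contEqThird1} to the new basis by equality of laws, term-by-term passage to the limit using the topologies of Lemma \ref{lem:JakowFourthLayer} together with the identification $\tilde{\mathbf{m}}=\tilde{\varrho}\tilde{\mathbf{u}}$, and elimination of the artificial viscosity term via the uniform bound on $\sqrt{\varepsilon}\,\nabla\tilde{\varrho}_\varepsilon$ in $L^2_{t,x}$ from \eqref{uniBoundFourthLayer1}. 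No gaps.
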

Similar to Lemma \ref{lem:identifyPoisThirdLayer}, we obtain the following result.
\begin{lemma}
\label{lem:identifyPoisFourthLayer}
The random variables $[\tilde{\varrho},  \tilde{V}]$ satisfies  \eqref{elecFieldFourth1} a.e. in $(0,T) \times \mathbb{T}^3$ $\tilde{\mathbb{P}}$-a.s.
\end{lemma}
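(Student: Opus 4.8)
The plan is to follow the argument of Lemma \ref{lem:identifyPoisThirdLayer} almost verbatim, the only structural input being the linearity of \eqref{elecFieldFourth1} together with the convergences supplied by Lemma \ref{lem:JakowFourthLayer}. First I would note that, since $(\varrho_\varepsilon, V_\varepsilon)$ satisfies \eqref{elecFieldThird1} $\mathbb{P}$-a.s. for a.e. $(t,x)$ and ``solving the Poisson equation a.e.'' is a Borel property of trajectories in $\chi_\varrho \times \chi_V$, the equality of laws in Lemma \ref{lem:JakowFourthLayer} transfers it to the tilde variables: $\pm \Delta \tilde{V}_\varepsilon = \tilde{\varrho}_\varepsilon - f$ holds $\tilde{\mathbb{P}}$-a.s. for a.e. $(t,x) \in (0,T)\times \mathbb{T}^3$, together with the $\varepsilon$-uniform moment bounds on $\|\tilde{V}_\varepsilon\|_{L^\infty_t W^{2,\Gamma}_x}$ and $\|\tilde{\varrho}_\varepsilon\|_{L^\infty_t L^\Gamma_x}$ inherited from \eqref{uniBoundFourthLayer1}.

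Next, fixing $\psi \in C^\infty_c([0,T))$ and $\phi \in C^\infty(\mathbb{T}^3)$, I would test this identity against $\psi(t)\phi(x)$ and integrate by parts twice on $\mathbb{T}^3$ to land both derivatives on $\phi$, obtaining
\[
\pm \int_0^T \psi \int_{\mathbb{T}^3} \tilde{V}_\varepsilon\,\Delta\phi \,\mathrm{d}x\,\mathrm{d}t = \int_0^T \psi \int_{\mathbb{T}^3} (\tilde{\varrho}_\varepsilon - f)\,\phi\,\mathrm{d}x\,\mathrm{d}t .
\]
The $\tilde{\mathbb{P}}$-a.s. convergences $\tilde{V}_\varepsilon \to \tilde{V}$ in $C_w([0,T];W^{2,\Gamma}_x)$ and $\tilde{\varrho}_\varepsilon \to \tilde{\varrho}$ in $C_w([0,T];L^\Gamma_x)$ make the inner spatial integrals converge pointwise in $t$, and since they are bounded uniformly in $t$ (and $\psi$ is compactly supported) dominated convergence lets me send $\varepsilon \to 0$, yielding
\[
\pm \int_0^T \psi \int_{\mathbb{T}^3} \tilde{V}\,\Delta\phi \,\mathrm{d}x\,\mathrm{d}t = \int_0^T \psi \int_{\mathbb{T}^3} (\tilde{\varrho} - f)\,\phi\,\mathrm{d}x\,\mathrm{d}t
\]
$\tilde{\mathbb{P}}$-a.s., i.e. $\pm \Delta \tilde{V} = \tilde{\varrho} - f$ in the sense of distributions on $(0,T)\times\mathbb{T}^3$. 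Since $\tilde{\varrho}(t) - f \in L^\Gamma_x$ for a.e. $t$ with $\Gamma \geq 6$, standard elliptic regularity for $-\Delta$ on $\mathbb{T}^3$ upgrades this to $\tilde{V}(t) \in W^{2,\Gamma}_x$ with \eqref{elecFieldFourth1} holding for a.e. $x$, a.e. $t$, $\tilde{\mathbb{P}}$-a.s.; consistency of this representative with the limit $\tilde{V}$ follows from uniqueness of the mean-free weak solution, the normalization being propagated through $V_{0,\varepsilon} = \pm\Delta^{-1}_{\mathbb{T}^3}(\varrho_0 - f)$.

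I do not anticipate a genuine obstacle here: because \eqref{elecFieldFourth1} is linear, no compensated-compactness or effective-viscous-flux machinery is needed, in contrast with the pressure and electric-field product terms treated later in the paper. The only points demanding a touch of care are \emph{(i)} that ``the equation holds a.e.'' is preserved under the Skorokhod representation — handled by identifying it with a Borel subset of the path space — and \emph{(ii)} that the weak-in-space, weak-$*$-in-time convergences of $\tilde{\varrho}_\varepsilon$ and $\tilde{V}_\varepsilon$ are strong enough to pass to the limit, which they are precisely because every term in the weak formulation is linear in the unknowns and uniformly bounded in the relevant norms.
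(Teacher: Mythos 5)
Your proposal is correct and follows essentially the same route as the paper, which likewise identifies the limit Poisson equation by transferring the a.e.\ identity to the new probability space via equality of laws and then exploiting the linearity of \eqref{elecFieldFourth1} together with the $C_w([0,T];W^{2,\Gamma}_x)$ and $C_w([0,T];L^\Gamma_x)$ convergences of $\tilde{V}_\varepsilon$ and $\tilde{\varrho}_\varepsilon$ from Lemma \ref{lem:JakowFourthLayer}. The extra details you supply (Borel measurability of the solution set, the weak formulation, elliptic regularity to recover the pointwise identity) are exactly the steps the paper leaves implicit.
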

By using Corollary \ref{cor:nonlinearLimitsFourth}, we obtain the  following result which is analogous to \cite[Proposition 4.4.12.]{breit2018stoch}.
\begin{lemma}
\label{lem:identifyMomFourthLayer}
The following convergence
\begin{align}
\label{identifyConvectionFourthLayer}
\tilde{\varrho}_\varepsilon \tilde{\mathbf{u}}_\varepsilon \otimes
\tilde{\mathbf{u}}_\varepsilon
\rightharpoonup
\tilde{\varrho} \tilde{\mathbf{u}} \otimes
\tilde{\mathbf{u}} \quad \text{in} \quad L^1(0,T; L^1(\mathbb{T}^3))
\end{align}
holds $\tilde{\mathbb{P}}$-a.s. and
the random distributions  $[\tilde{\varrho}, \tilde{\mathbf{u}},  \tilde{V}, \tilde{W}]$ satisfies 
\begin{equation}
\begin{aligned}
\label{weakMomEqLimitFourthLayer}
&-\int_0^T \partial_t \psi \int_{{\mathbb{T}^3}} \tilde{\varrho}  \tilde{\mathbf{u}} (t) \cdot \bm{\phi} \, \mathrm{d}x    \mathrm{d}t
=
\psi(0)  \int_{{\mathbb{T}^3}} \tilde{\varrho} _0 \tilde{\mathbf{u}} _0 \cdot \bm{\phi} \, \mathrm{d}x  
+  \int_0^T \psi \int_{{\mathbb{T}^3}}  \tilde{\varrho}   \tilde{ \mathbf{u}} \otimes \tilde{\mathbf{u}} : \nabla \bm{\phi} \, \mathrm{d}x  \mathrm{d}t 
\\&-
 \nu^S\int_0^T \psi \int_{{\mathbb{T}^3}} \nabla \tilde{\mathbf{u}} : \nabla \bm{\phi} \, \mathrm{d}x  \mathrm{d}t 
-(\nu^B+ \nu^S )\int_0^T \psi \int_{{\mathbb{T}^3}} \mathrm{div}\,\tilde{\mathbf{u}} \, \mathrm{div}\, \bm{\phi} \, \mathrm{d}x  \mathrm{d}t    
\\&+   
\int_0^T \psi \int_{{\mathbb{T}^3}}   \overline{ p_{\delta}^{ \Gamma}(\tilde{\varrho})}\, \mathrm{div} \, \bm{\phi} \, \mathrm{d}x  \mathrm{d}t
+\int_0^T \psi \int_{{\mathbb{T}^3}} \vartheta \overline{\tilde{\varrho} \nabla \tilde{V}}  \cdot \bm{\phi} \, \mathrm{d}x    \mathrm{d}t
\\&+
\int_0^T \psi \sum_{k\in \mathbb{N}} \int_{{\mathbb{T}^3}}  \overline{ \tilde{\varrho}\, \mathbf{g}_{k}(\tilde{\varrho} , f , \tilde{\varrho} \tilde{\mathbf{u}} )} \cdot \bm{\phi}  \, \mathrm{d}x \mathrm{d}\tilde{\beta}_k  
\end{aligned}
\end{equation}
for all $\psi \in C^\infty_c ([0,T))$ and $\bm{\phi} \in C^\infty(\mathbb{T}^3)$ $\mathbb{P}$-a.s.
\end{lemma}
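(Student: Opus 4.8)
The plan is to transfer the uniform bounds \eqref{uniBoundFourthLayer1}--\eqref{uniBoundFourthLayer2} together with the improved pressure estimate \eqref{pressureEstFouthLayer1} to the tilde-variables produced by Lemma \ref{lem:JakowFourthLayer} (legitimate by equality of laws), and then to pass to the limit $\varepsilon\to0$ term by term in the weak momentum balance \eqref{momEqItoFourthLayer} written on the new probability space for $(\tilde{\varrho}_\varepsilon,\tilde{\mathbf u}_\varepsilon,\tilde{V}_\varepsilon,\tilde{W}_\varepsilon)$. The whole scheme mimics \cite[Proposition 4.4.12]{breit2018stoch}; the only genuinely new term is the electric-field contribution, which causes no trouble because the Poisson equation is linear and the relevant weak limit is already supplied by Corollary \ref{cor:nonlinearLimitsFourth}.

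For the convective term \eqref{identifyConvectionFourthLayer} I would combine the strong convergence $\tilde{\mathbf m}_\varepsilon=\tilde{\varrho}_\varepsilon\tilde{\mathbf u}_\varepsilon\to\tilde{\mathbf m}=\tilde{\varrho}\tilde{\mathbf u}$ in $C([0,T];W^{-k,2}_x)$ (part of $\chi_{\varrho\mathbf u}$, with $k>\tfrac52$) with the weak convergence $\tilde{\mathbf u}_\varepsilon\rightharpoonup\tilde{\mathbf u}$ in $L^2(0,T;W^{1,2}_x)$: pairing the strongly convergent momentum against the weakly convergent velocity (times a smooth test matrix) identifies $\tilde{\varrho}_\varepsilon\tilde{\mathbf u}_\varepsilon\otimes\tilde{\mathbf u}_\varepsilon\to\tilde{\varrho}\tilde{\mathbf u}\otimes\tilde{\mathbf u}$ in the sense of distributions on $(0,T)\times\mathbb{T}^3$. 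To upgrade this to weak $L^1((0,T)\times\mathbb{T}^3)$ convergence I would use that, by H\"older's inequality, $\tilde{\varrho}_\varepsilon\tilde{\mathbf u}_\varepsilon\otimes\tilde{\mathbf u}_\varepsilon$ is bounded in $L^q((0,T)\times\mathbb{T}^3)$ for some $q>1$ — coming from $\tilde{\varrho}_\varepsilon\tilde{\mathbf u}_\varepsilon\in L^2_tL^{6\Gamma/(\Gamma+6)}_x$ (see \eqref{uniBoundFourthLayer2}) and $\tilde{\mathbf u}_\varepsilon\in L^2_tL^6_x$ (from \eqref{uniBoundFourthLayer1}) — so the family is equi-integrable and relatively weakly compact in $L^1$, and the distributional limit pins down the weak limit.

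For the remaining terms of \eqref{weakMomEqLimitFourthLayer} the passages are: the time-derivative and initial-data terms by $\tilde{\mathbf m}_\varepsilon\to\tilde{\mathbf m}$ in $C_w([0,T];L^{2\Gamma/(\Gamma+1)}_x)$ and $\tilde{\mathbf m}_{0,\varepsilon}\to\tilde{\mathbf m}_0$ in $L^1_x$; the linear viscous terms by $\nabla\tilde{\mathbf u}_\varepsilon\rightharpoonup\nabla\tilde{\mathbf u}$ in $L^2_{t,x}$; the artificial viscosity term $\varepsilon\int\tilde{\varrho}_\varepsilon\tilde{\mathbf u}_\varepsilon\,\Delta\bm{\phi}$, which vanishes since $\tilde{\varrho}_\varepsilon\tilde{\mathbf u}_\varepsilon$ is bounded uniformly in $\varepsilon$; and the pressure and electric-field terms, which converge to $\overline{p^\Gamma_\delta(\tilde{\varrho})}$ and $\vartheta\,\overline{\tilde{\varrho}\nabla\tilde{V}}$ by \eqref{pressureLimitFourthLayer} and \eqref{rhoNablaVLimitFourthLayer} of Corollary \ref{cor:nonlinearLimitsFourth}. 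For the stochastic forcing I would first note that $\tilde{\varrho}>0$ $\tilde{\mathbb P}$-a.s. (inherited through Lemma \ref{lem:JakowFourthLayer} from the strictly positive densities of the third layer), hence the cut-offs in the definition of $\mathbf g_{k,\varepsilon}$ satisfy $\chi(\varepsilon/\tilde{\varrho}_\varepsilon-1)\to\chi(-1)=1$ and $\chi(|\tilde{\mathbf u}_\varepsilon|-1/\varepsilon)\to1$ a.e., so $\mathbf g_{k,\varepsilon}(\tilde{\varrho}_\varepsilon,f,\tilde{\varrho}_\varepsilon\tilde{\mathbf u}_\varepsilon)\to\mathbf g_k$ along the sequence (see \eqref{cuttOff}); combined with the uniform $L^2\big(\tilde{\Omega};L^2(0,T;L_2(\mathfrak U;L^2_x))\big)$ bound on $\tilde{\varrho}_\varepsilon\mathbf G_\varepsilon$ from \eqref{eq:gk1} and \eqref{uniBoundFourthLayer1}, the convergence $\tilde{W}_\varepsilon\to\tilde{W}$ in $\chi_W$, and the non-anticipativity of $\tilde{\mathscr F}_t$, I would invoke \cite[Theorem 2.9.1]{breit2018stoch} to pass to the limit in the It\^o integral, obtaining the term $\int_0^T\psi\sum_{k}\int_{\mathbb{T}^3}\overline{\tilde{\varrho}\,\mathbf g_k(\tilde{\varrho},f,\tilde{\varrho}\tilde{\mathbf u})}\cdot\bm{\phi}\,\mathrm{d}x\,\mathrm{d}\tilde{\beta}_k$, with the barred object the weak limit of \eqref{noiseLimitFourthLayer}.

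The main obstacle is the passage to the limit in the stochastic integral: one must check that the limiting integrand is progressively measurable with respect to $\tilde{\mathscr F}_t$ so that the It\^o integral is well defined, and control the cross-variation between the approximating martingales and $\tilde{W}_\varepsilon$; this is precisely where the abstract apparatus of \cite[Chapter 2]{breit2018stoch} enters, and since $f$ is a fixed datum sitting inside $\mathbf g_k$ the argument of \cite[Proposition 4.4.12]{breit2018stoch} carries over essentially verbatim. It should be emphasized that $\overline{p^\Gamma_\delta(\tilde{\varrho})}$, $\overline{\tilde{\varrho}\nabla\tilde{V}}$ and $\overline{\tilde{\varrho}\,\mathbf g_k(\tilde{\varrho},f,\tilde{\varrho}\tilde{\mathbf u})}$ are at this point only weak $L^1$-limits; their identification with $p^\Gamma_\delta(\tilde{\varrho})$, $\tilde{\varrho}\nabla\tilde{V}$ and $\tilde{\varrho}\,\mathbf g_k(\tilde{\varrho},f,\tilde{\varrho}\tilde{\mathbf u})$ hinges on the strong convergence of $\tilde{\varrho}_\varepsilon$, which is established subsequently through the stochastic effective-viscous-flux analysis and lies outside the scope of the present lemma.
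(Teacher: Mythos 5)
Your proposal is correct and follows essentially the same route as the paper, which at this point simply invokes Corollary \ref{cor:nonlinearLimitsFourth} together with the machinery of \cite[Theorem 2.9.1, Lemma 2.6.6, Proposition 4.4.12]{breit2018stoch} to pass to the limit term by term. One small correction on the convective term: pairing $\tilde{\varrho}_\varepsilon\tilde{\mathbf{u}}_\varepsilon\in L^2_tL^{6\Gamma/(\Gamma+6)}_x$ with $\tilde{\mathbf{u}}_\varepsilon\in L^2_tL^6_x$ only yields a bound in $L^1_t L^{3\Gamma/(\Gamma+3)}_x$, which is not $L^q((0,T)\times\mathbb{T}^3)$ for any $q>1$; instead pair $\tilde{\varrho}_\varepsilon\tilde{\mathbf{u}}_\varepsilon\in L^\infty_tL^{2\Gamma/(\Gamma+1)}_x$ (available in \eqref{uniBoundFourthLayer1}) with $\tilde{\mathbf{u}}_\varepsilon\in L^2_tL^6_x$ to get a uniform bound in $L^2_tL^{6\Gamma/(4\Gamma+3)}_x\hookrightarrow L^{4/3}((0,T)\times\mathbb{T}^3)$ for $\Gamma\geq6$, which gives the equi-integrability you need.
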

Lemma \ref{lem:identifyMomFourthLayer} does not completely identify the momemtum equation since the nonlinear pressure, the term containing the electric field and the noise term are only expressed in terms of arbitrary limits  which we do not know to be exactly of the form \eqref{momEqFourth1}. We shall require strong convergence of the density in order to establish that these arbitrary `product limit' terms actually coincide with their corresponding form in \eqref{momEqFourth1}. The following lemma will help us in this direction.
\begin{lemma}
\label{lem:effectiveFluxFourthLayer}
The following convergence
\begin{align*}
\lim_{\varepsilon \rightarrow0}&\int_0^s\int_{\mathbb{T}^3}\big[p^\Gamma_\delta(\tilde{\varrho}_\varepsilon) -(\nu^B+2\nu^S)\mathrm{div}\, \tilde{\mathbf{u}}_\varepsilon \big] \tilde{\varrho}_\varepsilon\, \mathrm{d}x \, \mathrm{d}x
=
\int_0^s\int_{\mathbb{T}^3}\big[\overline{p^\Gamma_\delta( \tilde{\varrho} )} -(\nu^B+2\nu^S)\mathrm{div}\, \tilde{\mathbf{u}} \big]\tilde{\varrho}\, \mathrm{d}x \, \mathrm{d}x
\end{align*}
holds $\tilde{\mathbb{P}}$-a.s. for a.e. $s\in(0,T)$.
\end{lemma}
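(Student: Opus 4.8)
Here is a plan for proving Lemma~\ref{lem:effectiveFluxFourthLayer}.

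The plan is to test the momentum equation for $(\tilde{\varrho}_\varepsilon,\tilde{\mathbf{u}}_\varepsilon,\tilde{V}_\varepsilon,\tilde{W}_\varepsilon)$, i.e. \eqref{momEqItoFourthLayer}, with the test function $\psi(t)\,\mathcal{A}[\tilde{\varrho}_\varepsilon]$, and likewise to test the limit momentum equation \eqref{weakMomEqLimitFourthLayer} with $\psi(t)\,\mathcal{A}[\tilde{\varrho}]$, where $\mathcal{A}:=\nabla\Delta^{-1}_{\mathbb{T}^3}$ acts on the mean-free part, $\mathcal{R}:=\nabla\Delta^{-1}_{\mathbb{T}^3}\mathrm{div}$, and $\psi\in C^\infty_c([0,T))$ approximates $\chi_{[0,s]}$; then one passes to the limit $\varepsilon\to0$ in the resulting identities and matches them term by term. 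Since the continuity equation \eqref{contEqThird1} (and its limit \eqref{contEqFourth1}) is a \emph{pathwise deterministic} evolution, $\mathcal{A}[\tilde{\varrho}_\varepsilon]$ is a finite-variation (absolutely continuous) process with $\partial_t\mathcal{A}[\tilde{\varrho}_\varepsilon]=\varepsilon\nabla\tilde{\varrho}_\varepsilon-\mathcal{A}[\mathrm{div}(\tilde{\varrho}_\varepsilon\tilde{\mathbf{u}}_\varepsilon)]$, so the It\^o product rule of Theorem~\ref{thm:itoLemma} applied to $t\mapsto\int_{\mathbb{T}^3}\tilde{\varrho}_\varepsilon\tilde{\mathbf{u}}_\varepsilon\cdot\mathcal{A}[\tilde{\varrho}_\varepsilon]\,\mathrm{d}x$ produces no cross-variation correction. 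This is the same It\^o bookkeeping as in Section~\ref{sec:PressureEstFourthLay} (cf. \eqref{momEqItoFourthLayer1}), except that here we integrate the viscous and pressure terms by parts, using $\mathrm{div}\,\mathcal{A}[\tilde{\varrho}_\varepsilon]=\tilde{\varrho}_\varepsilon-(\tilde{\varrho}_\varepsilon)_{\mathbb{T}^3}$ and the standard identity $\int\nabla\tilde{\mathbf{u}}_\varepsilon:\nabla\mathcal{A}[\tilde{\varrho}_\varepsilon]\,\mathrm{d}x=\int\mathrm{div}\,\tilde{\mathbf{u}}_\varepsilon\,\tilde{\varrho}_\varepsilon\,\mathrm{d}x$, so as to surface the effective-flux combination; moreover $\mathcal{A}[\tilde{\varrho}_\varepsilon]=\pm\nabla\tilde{V}_\varepsilon+\mathcal{A}[f]$ differs from $\pm\nabla\tilde{V}_\varepsilon$ only by the time-independent smooth field $\mathcal{A}[f]$. (A routine density extension of the admissible test-function class from $C^\infty(\mathbb{T}^3)$ to $W^{1,\Gamma}(\mathbb{T}^3)$ is needed here and in \eqref{weakMomEqLimitFourthLayer}, since $\tilde{\varrho}_\varepsilon\in C^{2+\nu}_x$ and $\tilde{\varrho}\in L^\Gamma_x$.)

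Carrying this out yields, for a.e. $s\in(0,T)$ and $\tilde{\mathbb{P}}$-a.s., an identity expressing $\int_0^s\int_{\mathbb{T}^3}[p^\Gamma_\delta(\tilde{\varrho}_\varepsilon)-(\nu^B+2\nu^S)\mathrm{div}\,\tilde{\mathbf{u}}_\varepsilon]\tilde{\varrho}_\varepsilon\,\mathrm{d}x\,\mathrm{d}t$ as a sum of: (i) the endpoint and initial terms $\int\tilde{\varrho}_\varepsilon\tilde{\mathbf{u}}_\varepsilon(s)\cdot\mathcal{A}[\tilde{\varrho}_\varepsilon(s)]\,\mathrm{d}x-\int\tilde{\varrho}_{0,\varepsilon}\tilde{\mathbf{u}}_{0,\varepsilon}\cdot\mathcal{A}[\tilde{\varrho}_{0,\varepsilon}]\,\mathrm{d}x$; (ii) a commutator term built from $\int\tilde{\varrho}_\varepsilon\tilde{\mathbf{u}}_\varepsilon\otimes\tilde{\mathbf{u}}_\varepsilon:\nabla\mathcal{A}[\tilde{\varrho}_\varepsilon]$ and $\int\tilde{\varrho}_\varepsilon\tilde{\mathbf{u}}_\varepsilon\cdot\mathcal{A}[\mathrm{div}(\tilde{\varrho}_\varepsilon\tilde{\mathbf{u}}_\varepsilon)]$; (iii) the electric term $\pm\vartheta\int\tilde{\varrho}_\varepsilon\,\nabla\Delta^{-1}_{\mathbb{T}^3}[\tilde{\varrho}_\varepsilon-f]\cdot\mathcal{A}[\tilde{\varrho}_\varepsilon]$; (iv) the artificial-viscosity remainders $-\int\tilde{\varrho}_\varepsilon\tilde{\mathbf{u}}_\varepsilon\cdot\varepsilon\nabla\tilde{\varrho}_\varepsilon$ and the $\varepsilon\Delta(\tilde{\varrho}_\varepsilon\tilde{\mathbf{u}}_\varepsilon)$-type contribution; (v) the stochastic integral $\sum_k\int_0^s\big(\int\tilde{\varrho}_\varepsilon\,\mathbf{g}_{k,\varepsilon}(\tilde{\varrho}_\varepsilon,f,\tilde{\varrho}_\varepsilon\tilde{\mathbf{u}}_\varepsilon)\cdot\mathcal{A}[\tilde{\varrho}_\varepsilon]\,\mathrm{d}x\big)\mathrm{d}\tilde{\beta}_k$; plus harmless mean-value corrections $-(\tilde{\varrho}_\varepsilon)_{\mathbb{T}^3}\int p^\Gamma_\delta(\tilde{\varrho}_\varepsilon)$ (the analogous $\mathrm{div}\,\tilde{\mathbf{u}}_\varepsilon$ correction vanishing). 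The same manipulation on \eqref{weakMomEqLimitFourthLayer} produces the identity with $\overline{p^\Gamma_\delta(\tilde{\varrho})}$, $\overline{\tilde{\varrho}\,\mathbf{g}_k(\tilde{\varrho},f,\tilde{\varrho}\tilde{\mathbf{u}})}$ and $\tilde{\varrho},\tilde{\mathbf{u}},\tilde{V}$ in the respective slots. It therefore suffices to identify each of (i)--(v) (and the mean corrections) as the limit of its $\varepsilon$-counterpart. For this we note first that, by \eqref{uniBoundFourthLayer1} and $\partial_t\tilde{\varrho}_\varepsilon=\varepsilon\Delta\tilde{\varrho}_\varepsilon-\mathrm{div}(\tilde{\varrho}_\varepsilon\tilde{\mathbf{u}}_\varepsilon)$, the density sequence is equicontinuous in time with values in $W^{-1,r}_x$ for a suitable $r$ and bounded in $L^\infty_tL^\Gamma_x$, so Arzel\`a--Ascoli with Lemma~\ref{lem:JakowFourthLayer} gives $\tilde{\varrho}_\varepsilon\to\tilde{\varrho}$ strongly in $C([0,T];W^{-1,r}_x)$; consequently $\mathcal{A}[\tilde{\varrho}_\varepsilon]\to\mathcal{A}[\tilde{\varrho}]$ and $\nabla\tilde{V}_\varepsilon=\pm\mathcal{A}[\tilde{\varrho}_\varepsilon-f]\to\nabla\tilde{V}$ strongly in $C([0,T];L^q_x)$ for every $q<\infty$ (interpolating with the $L^\infty_tW^{1,\Gamma}_x$ bound, $\Gamma\geq6$).

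With these at hand: (i) converges for a.e. $s$ by weak--strong convergence, using $\tilde{\varrho}_\varepsilon\tilde{\mathbf{u}}_\varepsilon\rightharpoonup\tilde{\varrho}\tilde{\mathbf{u}}$ in $C_w([0,T];L^{2\Gamma/(\Gamma+1)}_x)$ together with \eqref{converApproInitiDem} and \eqref{converApproInitiMom} for the initial slice; (iii) converges because it is weak ($\tilde{\varrho}_\varepsilon$ in $L^{\Gamma+1}_{t,x}$, cf. \eqref{pressureEstFouthLayer1}) times strong ($\nabla\Delta^{-1}_{\mathbb{T}^3}[\tilde{\varrho}_\varepsilon-f]\cdot\mathcal{A}[\tilde{\varrho}_\varepsilon]$ in $L^{(\Gamma+1)'}_{t,x}$), and moreover $\overline{\tilde{\varrho}\nabla\tilde{V}}=\tilde{\varrho}\nabla\tilde{V}$ precisely because $\nabla\tilde{V}_\varepsilon$ converges strongly --- this is where linearity of the Poisson equation makes the electric term harmless; (iv) tends to $0$ since $\varepsilon\|\nabla\tilde{\varrho}_\varepsilon\|_{L^2_{t,x}}=\sqrt{\varepsilon}\,\|\sqrt{\varepsilon}\nabla\tilde{\varrho}_\varepsilon\|_{L^2_{t,x}}\to0$ by \eqref{uniBoundFourthLayer1}; (v) converges by the martingale-identification method (\cite[Theorem 2.9.1]{breit2018stoch}), using the uniform Burkholder--Davis--Gundy bounds exploited in \eqref{noiseBoundFourthLayer1}--\eqref{noiseBoundFourthLayer3}, the convergence $\mathbf{g}_{k,\varepsilon}\to\mathbf{g}_k$ of the cut-off coefficients (the cut-offs converge to $1$ since $\tilde{\varrho}_\varepsilon>0$ and $\tilde{\mathbf{u}}_\varepsilon$ is finite) together with \eqref{noiseLimitFourthLayer}, and the strong convergence of $\mathcal{A}[\tilde{\varrho}_\varepsilon]$; and the mean corrections converge because $(\tilde{\varrho}_\varepsilon)_{\mathbb{T}^3}$ is conserved and convergent while $p^\Gamma_\delta(\tilde{\varrho}_\varepsilon)\rightharpoonup\overline{p^\Gamma_\delta(\tilde{\varrho})}$ by \eqref{pressureLimitFourthLayer}. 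Assembling these identifications reconstitutes exactly the right-hand side of the limit identity, which gives the claim.

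The genuine obstacle is term (ii): the convective and self-transport contributions converge only weakly individually, and their combination converges solely thanks to the div--curl (compensated-compactness) structure. Schematically one rewrites the pair as $\int\tilde{\mathbf{u}}_\varepsilon\cdot\big(\tilde{\varrho}_\varepsilon\mathcal{R}[\tilde{\varrho}_\varepsilon\tilde{\mathbf{u}}_\varepsilon]-\tilde{\varrho}_\varepsilon\tilde{\mathbf{u}}_\varepsilon\,\mathcal{R}[\tilde{\varrho}_\varepsilon]\big)\,\mathrm{d}x$ and invokes compactness of the commutators $[\mathcal{R},\tilde{\varrho}_\varepsilon]$ together with $\tilde{\mathbf{u}}_\varepsilon\rightharpoonup\tilde{\mathbf{u}}$ in $L^2_tW^{1,2}_x$ and $\tilde{\varrho}_\varepsilon\tilde{\mathbf{u}}_\varepsilon\rightharpoonup\tilde{\varrho}\tilde{\mathbf{u}}$; in the stochastic setting this is the step that requires care with the time-regularity of $\tilde{\varrho}_\varepsilon\tilde{\mathbf{u}}_\varepsilon$. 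Since the Poisson equation contributes nothing to this term, this step goes through verbatim as in \cite[Section 4.4.3]{breit2018stoch} (the stochastic effective-viscous-flux lemma for the compressible Navier--Stokes system), the only new ingredient --- the electric field --- being controlled by the elementary weak--strong argument of (iii).
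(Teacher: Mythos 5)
Your proposal is correct, and the overall architecture (It\^o product of the momentum equation with an inverse-Laplacian multiplier of the density, weak--strong pairings for the electric and noise terms, vanishing of the $\varepsilon$-remainders, and the Div--Curl lemma for the convective commutator) matches the paper's. The one genuine difference is the choice of multiplier: you test with $\mathcal{A}[\tilde{\varrho}_\varepsilon]=\nabla\Delta^{-1}_{\mathbb{T}^3}\tilde{\varrho}_\varepsilon$ (the classical Lions--Feireisl choice, carried over from \cite{breit2018stoch}), whereas the paper tests with $\nabla\tilde{V}_\varepsilon=\pm\nabla\Delta^{-1}_{\mathbb{T}^3}(\tilde{\varrho}_\varepsilon-f)$, i.e.\ it uses the combined continuity--Poisson equation \eqref{contPoissEqItoFourthLayer} as the evolution of the multiplier and obtains the identities \eqref{momEqItoFourthLayer1a}--\eqref{momEqItoFourthLayer1b}, converting back to the $\tilde{\varrho}_\varepsilon$-weighted flux only at the end via \eqref{fromVtoRho}. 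The two choices differ by the time-independent field $\nabla\Delta^{-1}_{\mathbb{T}^3}f$, and since $f\in L^\infty_x$ is fixed the extra contributions are harmless either way, exactly as you observe. What the paper's choice buys is that the electric-field term in the momentum equation pairs into $\int\vartheta\tilde{\varrho}_\varepsilon|\nabla\tilde{V}_\varepsilon|^2$, controlled directly by the energy and by the strong convergence \eqref{strongConvNablaV}, and that the multiplier is already one of the compactly converging Skorokhod variables ($\tilde{V}_\varepsilon\to\tilde{V}$ in $C_w([0,T];W^{2,\Gamma}_x)$), so no separate Arzel\`a--Ascoli argument for $\mathcal{A}[\tilde{\varrho}_\varepsilon]$ is needed; what your choice buys is that the pressure term comes out weighted by $\tilde{\varrho}_\varepsilon$ immediately (up to the mean correction), so the final conversion step \eqref{fromVtoRho} is unnecessary. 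Both routes rely on the same estimates \eqref{uniBoundFourthLayer1} and the same div--curl argument, and both are sound.
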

\begin{proof}
First of all, as a result of the equality of laws established by Lemma \ref{lem:JakowFourthLayer},  it follows from \eqref{momEqItoFourthLayer1} that
\begin{equation}
\begin{aligned}
\label{momEqItoFourthLayer1a}
&
\int_0^s \int_{{\mathbb{T}^3}}    p_{\delta}^{ \Gamma}(\tilde{\varrho}_\varepsilon)\, \Delta \tilde{V}_\varepsilon \, \mathrm{d}x  \mathrm{d}t
=
\int_{{\mathbb{T}^3}} \tilde{\varrho}_\varepsilon  \tilde{\mathbf{u}}_\varepsilon (t) \cdot \nabla \tilde{V}_\varepsilon \, \mathrm{d}x
-
 \int_{{\mathbb{T}^3}} \tilde{\varrho}_{0, \varepsilon} \tilde{\mathbf{u}}_{0,\varepsilon} \cdot \nabla \tilde{V}_{0,\varepsilon} \, \mathrm{d}x     
\\&
-  \int_0^s \int_{{\mathbb{T}^3}}  \tilde{\varrho}_\varepsilon   \tilde{ \mathbf{u}}_\varepsilon \otimes \tilde{\mathbf{u}}_\varepsilon : \nabla^2 \tilde{V}_\varepsilon \, \mathrm{d}x  \mathrm{d}t 
+
 \nu^S\int_0^s\int_{{\mathbb{T}^3}} \nabla \tilde{\mathbf{u}}_\varepsilon : \nabla^2 \tilde{V}_\varepsilon \, \mathrm{d}x  \mathrm{d}t 
\\&
+(\nu^B+ \nu^S )\int_0^s\int_{{\mathbb{T}^3}} \mathrm{div}\,\tilde{\mathbf{u}}_\varepsilon \, \Delta \tilde{V}_\varepsilon \, \mathrm{d}x  \mathrm{d}t    
+
\int_0^s\int_{{\mathbb{T}^3}} \tilde{\varrho}_\varepsilon \tilde{\mathbf{u}}_\varepsilon  \nabla\Delta^{-1}_{\mathbb{T}^3}\mathrm{div}(\tilde{\varrho}_\varepsilon \tilde{\mathbf{u}}_\varepsilon)  \, \mathrm{d}x    \mathrm{d}t
\\&
-\int_0^s\int_{{\mathbb{T}^3}} \vartheta\tilde{\varrho}_\varepsilon \vert  \nabla \tilde{V}_\varepsilon  \vert^2 \, \mathrm{d}x    \mathrm{d}t
-
\int_0^s\sum_{k\in \mathbb{N}} \int_{{\mathbb{T}^3}}  \tilde{ \varrho}_\varepsilon\, \mathbf{g}_{k,\varepsilon}(\tilde{\varrho}_\varepsilon , f, \tilde{\varrho}_\varepsilon \tilde{\mathbf{u}}_\varepsilon ) \cdot \nabla \tilde{V}_\varepsilon  \, \mathrm{d}x \mathrm{d}\tilde{\beta}_{k,\varepsilon}
\\&
-
\varepsilon
\int_0^s\int_{{\mathbb{T}^3}} \nabla(\tilde{\varrho}_\varepsilon \tilde{\mathbf{u}}_\varepsilon):\nabla^2 \tilde{V}_\varepsilon \, \mathrm{d}x   \mathrm{d}t
-  
\varepsilon
\int_0^s\int_{{\mathbb{T}^3}} \tilde{\varrho}_\varepsilon \tilde{\mathbf{u}}_\varepsilon  \nabla \tilde{\varrho}_\varepsilon\, \mathrm{d}x    \mathrm{d}t
=:J_1+ \ldots + J_{10}.
\end{aligned}
\end{equation}
holds $\tilde{\mathbb{P}}$-a.s. Notice that the noise term in now driven by several Brownian motions $\tilde{\beta}_{k,\varepsilon}$ for each $k\in \mathbb{N}$ which is a result of the existence of $\tilde{W}_\varepsilon$ as given by Lemma \ref{lem:JakowFourthLayer}. Also notice that the commutativity of the differential operators $\partial_{x_i}$ and $\partial_{x_j}$ mean that after integrating by parts twice,
\begin{align}
\label{intergrationByParts}
J_4= \nu^S \int_0^s\int_{{\mathbb{T}^3}} \mathrm{div}\,\tilde{\mathbf{u}}_\varepsilon \, \Delta \tilde{V}_\varepsilon \, \mathrm{d}x  \mathrm{d}t    
\end{align}
provided that $\tilde{\mathbf{u}}_\varepsilon$ and $\tilde{V}_\varepsilon$ are regular enough to allow this many integration by parts. Since \eqref{momEqItoFourthLayer1} and thus \eqref{momEqItoFourthLayer1a} was derived through the application of It\^o's lemma after a preliminary regularization step,\eqref{intergrationByParts} holds true indeed. Now similar to \eqref{momEqItoFourthLayer1a}, we use use the generalized It\^o's formula to obtain from Lemma \ref{lem:identifyMomFourthLayer}
\begin{equation}
\begin{aligned}
\label{momEqItoFourthLayer1b}
&
\int_0^s \int_{{\mathbb{T}^3}}   \overline{ p_{\delta}^{ \Gamma}(\tilde{\varrho})}\, \Delta \tilde{V} \, \mathrm{d}x  \mathrm{d}t
=
\int_{{\mathbb{T}^3}} \tilde{\varrho}  \tilde{\mathbf{u}} (t) \cdot \nabla \tilde{V} \, \mathrm{d}x
-
 \int_{{\mathbb{T}^3}} \tilde{\varrho} _0 \tilde{\mathbf{u}} _0 \cdot \nabla V_0 \, \mathrm{d}x     
\\&
-  \int_0^s \int_{{\mathbb{T}^3}}  \tilde{\varrho}    \tilde{\mathbf{u}} \otimes \tilde{\mathbf{u}} : \nabla^2 \tilde{V} \, \mathrm{d}x  \mathrm{d}t 
+
 \nu^S\int_0^s\int_{{\mathbb{T}^3}} \nabla \tilde{\mathbf{u}} : \nabla^2 \tilde{V} \, \mathrm{d}x  \mathrm{d}t 
\\&
+(\nu^B+ \nu^S )\int_0^s\int_{{\mathbb{T}^3}} \mathrm{div}\,\tilde{\mathbf{u}} \, \Delta \tilde{V} \, \mathrm{d}x  \mathrm{d}t    
+
\int_0^s\int_{{\mathbb{T}^3}} \tilde{\varrho} \tilde{\mathbf{u}} \nabla\Delta^{-1}_{\mathbb{T}^3}\mathrm{div}(\tilde{\varrho} \tilde{\mathbf{u}})  \, \mathrm{d}x    \mathrm{d}t
\\&
-
\int_0^s\int_{{\mathbb{T}^3}} \vartheta \overline{\tilde{\varrho}   \nabla \tilde{V} } \cdot \nabla \tilde{V}  \, \mathrm{d}x    \mathrm{d}t
-
\int_0^s\sum_{k\in \mathbb{N}} \int_{{\mathbb{T}^3}}   \overline{\tilde{\varrho}\, \mathbf{g}_{k}(\tilde{\varrho} , f, \tilde{\varrho} \tilde{\mathbf{u}} )} \cdot \nabla \tilde{V}  \, \mathrm{d}x \mathrm{d}\tilde{\beta}_k
=:K_1+ \ldots + K_8.
\end{aligned}
\end{equation}
We now show that $I_9, I_{10} \rightarrow0$ $\tilde{\mathbb{P}}$-a.s. as $\varepsilon\rightarrow0$. This is because since $2< \frac{2\Gamma}{\Gamma-2} \leq3$ (recall that $\Gamma\geq 6$), it follows from H\"older inequality  that $\tilde{\mathbb{P}}$-a.s., the estimate
\begin{equation}
\begin{aligned}
J_9
&= 
\int_0^s\int_{{\mathbb{T}^3}}\big[ \varepsilon \tilde{\varrho}_\varepsilon\, \nabla\tilde{\mathbf{u}}_\varepsilon
+
\sqrt{\varepsilon}\sqrt{\varepsilon}\nabla\tilde{\varrho}_\varepsilon \cdot
\tilde{\mathbf{u}}_\varepsilon
 \big]:\nabla^2 \tilde{V}_\varepsilon \, \mathrm{d}x   \mathrm{d}t
 \leq
\varepsilon
\int_0^s
\Vert \tilde{\varrho}_\varepsilon\Vert_{L^{\Gamma}_x} \Vert \nabla\tilde{\mathbf{u}}_\varepsilon \Vert_{L^2_x}
\Vert \nabla^2 \tilde{V}_\varepsilon \Vert_{L^\frac{2\Gamma}{\Gamma-2}_x}
\mathrm{d}t
\\&
+
 \sqrt{\varepsilon}
\int_0^s
\Vert
\sqrt{\varepsilon}\nabla\tilde{\varrho}_\varepsilon \Vert_{L^2_x}
\Vert
\tilde{\mathbf{u}}_\varepsilon\Vert_{L^\frac{2\Gamma}{\Gamma-2}_x}
\Vert \nabla^2 \tilde{V}_\varepsilon \Vert_{L^\Gamma_x} \mathrm{d}t
 \lesssim
\varepsilon
\Vert \tilde{\varrho}_\varepsilon\Vert_{L^\infty_t L^{\Gamma}_x} \Vert \nabla\tilde{\mathbf{u}}_\varepsilon \Vert_{L^2_tL^2_x}
\Vert \nabla^2 \tilde{V}_\varepsilon \Vert_{L^2_tL^\frac{2\Gamma}{\Gamma-2}_x}
\\&
+
 \sqrt{\varepsilon}
\Vert
\sqrt{\varepsilon}\nabla\tilde{\varrho}_\varepsilon \Vert_{L^2_t L^2_x}
\Vert \nabla
\tilde{\mathbf{u}}_\varepsilon\Vert_{L^2_t L^2_x}
\Vert \nabla^2 \tilde{V}_\varepsilon \Vert_{L^\infty_t L^\Gamma_x} 
\end{aligned}
\end{equation}
holds uniformly in $\varepsilon$. 
Now since the embedding $L^\infty_t L^\Gamma_x \hookrightarrow  L^2_tL^\frac{2\Gamma}{\Gamma-2}_x $ is continuous, it follows from the estimates \eqref{uniBoundFourthLayer1}, \eqref{gradSquaredV} and Lemma \ref{lem:JakowFourthLayer} that
\begin{align}
\label{j9vanishesFourthLayer}
J_9 \lesssim \sqrt{\varepsilon} \rightarrow 0
\end{align}
$\tilde{\mathbb{P}}$-a.s. as $\varepsilon\rightarrow0$.
Again, since $2< \frac{2\Gamma}{\Gamma-2} \leq3$, it follows from Lemma \ref{lem:JakowFourthLayer} that
\begin{equation}
\begin{aligned}
\label{j10vanishesFourthLayer}
J_{10}
&\leq\sqrt{\varepsilon}\int_0^s \Vert \tilde{\varrho}_\varepsilon \Vert_{L^\Gamma_x} \Vert \tilde{\mathbf{u}}_\varepsilon \Vert_{L^\frac{2\Gamma}{\Gamma-2}_x}
\Vert\sqrt{\varepsilon} \nabla \tilde{\varrho}_\varepsilon \Vert_{L^2_x}\, \mathrm{d}t
\lesssim
\sqrt{\varepsilon}\Vert \tilde{\varrho}_\varepsilon \Vert_{L^\infty_t L^\Gamma_x} \Vert \nabla \tilde{\mathbf{u}}_\varepsilon \Vert_{L^2_tL^2_x}
\Vert\sqrt{\varepsilon} \nabla \tilde{\varrho}_\varepsilon \Vert_{L^2_tL^2_x}
\\&
\lesssim
\sqrt{\varepsilon} \rightarrow 0
\end{aligned}
\end{equation}
$\tilde{\mathbb{P}}$-a.s. as $\varepsilon\rightarrow0$.
\\
Now notice that since the given function $f\in L^r_x$ for all $p\in [1,\infty]$, it follows from Lemma \ref{lem:JakowFourthLayer} and \eqref{pressureLimitFourthLayer} that
\begin{equation}
\begin{aligned}
\label{fromVtoRho}
&\lim_{\varepsilon \rightarrow0}\int_0^s\int_{\mathbb{T}^3}\big[p^\Gamma_\delta(\tilde{\varrho}_\varepsilon) -(\nu^B+2\nu^S)\mathrm{div}\, \tilde{\mathbf{u}}_\varepsilon \big] \Delta\tilde{V}_\varepsilon\, \mathrm{d}x \, \mathrm{d}x
\\&
-
\int_0^s\int_{\mathbb{T}^3}\big[\overline{p^\Gamma_\delta(\tilde{\varrho})} -(\nu^B+2\nu^S)\mathrm{div}\, \tilde{\mathbf{u}} \big]\Delta\tilde{V}\, \mathrm{d}x \, \mathrm{d}x
\\&=
\lim_{\varepsilon \rightarrow0}\int_0^s\int_{\mathbb{T}^3}\big[p^\Gamma_\delta(\tilde{\varrho}_\varepsilon) -(\nu^B+2\nu^S)\mathrm{div}\, \tilde{\mathbf{u}}_\varepsilon \big] \tilde{\varrho}_\varepsilon\, \mathrm{d}x \, \mathrm{d}x
\\&
-
\int_0^s\int_{\mathbb{T}^3}\big[\overline{p^\Gamma_\delta(\tilde{\varrho})} -(\nu^B+2\nu^S)\mathrm{div}\, \tilde{\mathbf{u}} \big]\tilde{\varrho}\, \mathrm{d}x \, \mathrm{d}x
\end{aligned}
\end{equation}
Also notice that since $\Gamma>3$, it follows from Lemma \ref{lem:JakowFourthLayer} and the compact embedding $W^{1,\Gamma}_x \xhookrightarrow{cpt} C_x$ that
\begin{align}
\label{strongConvNablaV}
\nabla \tilde{V}_\varepsilon \rightarrow \nabla \tilde{V} \quad \text{in } C([0,T] \times \mathbb{T}^3)
\end{align}
$\tilde{\mathbb{P}}$-a.s. It follows from \cite[Lemma 2.6.6]{breit2018stoch}, \eqref{noiseLimitFourthLayer} and \eqref{strongConvNablaV} that
\begin{equation}
\begin{aligned}
\label{strongConvNoiseFourthLayer}
&\lim_{\varepsilon\rightarrow0}
\int_0^s\sum_{k\in \mathbb{N}} \int_{{\mathbb{T}^3}}  \tilde{ \varrho}_\varepsilon\, \mathbf{g}_{k,\varepsilon}(\tilde{\varrho}_\varepsilon , f , \tilde{\varrho}_\varepsilon \tilde{\mathbf{u}}_\varepsilon ) \cdot \nabla \tilde{V}_\varepsilon  \, \mathrm{d}x \mathrm{d}\tilde{\beta}_{k,\varepsilon} \bigg]
=
\int_0^s\sum_{k\in \mathbb{N}} \int_{{\mathbb{T}^3}}   \overline{\tilde{\varrho}\, \mathbf{g}_{k}(\tilde{\varrho} , f, \tilde{\varrho} \tilde{\mathbf{u}} )} \cdot \nabla \tilde{V}  \, \mathrm{d}x \mathrm{d}\tilde{\beta}_k
\end{aligned}
\end{equation}
$\tilde{\mathbb{P}}$-a.s. and from Lemma \ref{lem:JakowFourthLayer} and \eqref{strongConvNablaV} that
\begin{equation}
\begin{aligned}
\label{strongConvMomIniMomFourthLayer}
&\lim_{\varepsilon \rightarrow0}\bigg[
\int_{{\mathbb{T}^3}} \tilde{\varrho}_\varepsilon  \tilde{\mathbf{u}}_\varepsilon (t) \cdot \nabla \tilde{V}_\varepsilon \, \mathrm{d}x
-
 \int_{{\mathbb{T}^3}} \tilde{\varrho}_{0, \varepsilon} \tilde{\mathbf{u}}_{0,\varepsilon} \cdot \nabla \tilde{V}_{0,\varepsilon} \, \mathrm{d}x    \bigg]
 \\&
 =
 \bigg[
\int_{{\mathbb{T}^3}} \tilde{\varrho}  \tilde{\mathbf{u}} (t) \cdot \nabla \tilde{V} \, \mathrm{d}x
-
 \int_{{\mathbb{T}^3}} \tilde{\varrho} _0 \tilde{\mathbf{u}} _0 \cdot \nabla V_0 \, \mathrm{d}x     \bigg].
\end{aligned}
\end{equation}
Also, \eqref{rhoNablaVLimitFourthLayer} and \eqref{strongConvNablaV} yields
\begin{equation}
\begin{aligned}
\label{strongConvVelElectFourthLayer}
&\lim_{\varepsilon \rightarrow0}\int_0^s\int_{{\mathbb{T}^3}} \vartheta\tilde{\varrho}_\varepsilon \vert  \nabla \tilde{V}_\varepsilon  \vert^2 \, \mathrm{d}x    \mathrm{d}t
 =
\int_0^s\int_{{\mathbb{T}^3}} \vartheta \overline{\tilde{\varrho}   \nabla \tilde{V} } \cdot \nabla \tilde{V}  \, \mathrm{d}x    \mathrm{d}t
\end{aligned}
\end{equation}
$\tilde{\mathbb{P}}$-a.s.
By combining \eqref{momEqItoFourthLayer1a}-- \eqref{momEqItoFourthLayer1b} with \eqref{j9vanishesFourthLayer}--\eqref{fromVtoRho} and \eqref{strongConvNoiseFourthLayer}--\eqref{strongConvVelElectFourthLayer} it follows that
\begin{equation}
\begin{aligned}
\label{showRHSiszeroFourthLayer}
&\lim_{\varepsilon \rightarrow0}\int_0^s\int_{\mathbb{T}^3}\big[p^\Gamma_\delta(\tilde{\varrho}_\varepsilon) -(\nu^B+2\nu^S)\mathrm{div}\, \tilde{\mathbf{u}}_\varepsilon \big] \tilde{\varrho}_\varepsilon\, \mathrm{d}x \, \mathrm{d}x
\\&
-
\int_0^s\int_{\mathbb{T}^3}\big[\overline{p^\Gamma_\delta(\tilde{\varrho})} -(\nu^B+2\nu^S)\mathrm{div}\, \tilde{\mathbf{u}} \big]\tilde{\varrho}\, \mathrm{d}x \, \mathrm{d}x
\\
=&\lim_{\varepsilon \rightarrow0}
\int_0^s\int_{{\mathbb{T}^3}}\big[ \tilde{\varrho}_\varepsilon \tilde{\mathbf{u}}_\varepsilon  \nabla\Delta^{-1}_{\mathbb{T}^3}\mathrm{div}(\tilde{\varrho}_\varepsilon \tilde{\mathbf{u}}_\varepsilon) 
-
\tilde{\varrho}_\varepsilon   \tilde{ \mathbf{u}}_\varepsilon \otimes \tilde{\mathbf{u}}_\varepsilon : \nabla^2 \tilde{V}_\varepsilon \big]
 \, \mathrm{d}x    \mathrm{d}t
\\&+
\int_0^s\int_{{\mathbb{T}^3}} \big[ \tilde{\varrho} \tilde{\mathbf{u}} \nabla\Delta^{-1}_{\mathbb{T}^3}\mathrm{div}(\tilde{\varrho} \tilde{\mathbf{u}})  
-
\tilde{\varrho}    \tilde{\mathbf{u}} \otimes \tilde{\mathbf{u}} : \nabla^2 \tilde{V} \big]
\, \mathrm{d}x    \mathrm{d}t..
\end{aligned}
\end{equation}
$\tilde{\mathbb{P}}$-a.s. We now want to show that the right-hand side of \eqref{showRHSiszeroFourthLayer} is zero. Since the periodic inverse Laplacian commutes with spatial derivatives and
\begin{align*}
\pm \nabla^2 V = \nabla^2 \Delta^{-1}_{\mathbb{T}^3}(\varrho -f),
\end{align*}
in coordinates form, the first integral term of the right-hand side of \eqref{showRHSiszeroFourthLayer} is
\begin{equation}
\begin{aligned}
&\int_0^s\int_{{\mathbb{T}^3}}\tilde{u}_\varepsilon^i \big[ \tilde{\varrho}_\varepsilon  \partial_{x_i}\Delta^{-1}_{\mathbb{T}^3}\partial_{x_j}(\tilde{\varrho}_\varepsilon \tilde{u}_\varepsilon^j) 
-
\tilde{\varrho}_\varepsilon   \tilde{u}_\varepsilon^j \partial_{x_i}\Delta^{-1}_{\mathbb{T}^3}\partial_{x_j} (\tilde{\varrho}_\varepsilon -f) \big]
 \, \mathrm{d}x    \mathrm{d}t
 \\
 &=
 \int_0^s\int_{{\mathbb{T}^3}}\tilde{u}_\varepsilon^i \big[ \tilde{\varrho}_\varepsilon  \partial_{x_i}\Delta^{-1}_{\mathbb{T}^3}\partial_{x_j}(\tilde{\varrho}_\varepsilon \tilde{u}_\varepsilon^j) 
-
\tilde{\varrho}_\varepsilon   \tilde{u}_\varepsilon^j \partial_{x_i}\Delta^{-1}_{\mathbb{T}^3}\partial_{x_j} \tilde{\varrho}_\varepsilon  \big]
 \, \mathrm{d}x    \mathrm{d}t
\\&+
  \int_0^s\int_{{\mathbb{T}^3}}
\tilde{\varrho}_\varepsilon  \tilde{u}_\varepsilon^i \tilde{u}_\varepsilon^j \partial_{x_i} \partial_{x_j} \Delta^{-1}_{\mathbb{T}^3} f
 \, \mathrm{d}x    \mathrm{d}t =:I^\varepsilon_1 + I^\varepsilon_2
\end{aligned}
\end{equation}
$\tilde{\mathbb{P}}$-a.s. The same result holds for the second integral term of the right-hand side of \eqref{showRHSiszeroFourthLayer}, i.e., an obvious comparison of the form
\begin{equation}
\begin{aligned}
&\int_0^s\int_{{\mathbb{T}^3}}\tilde{u}^i \big[ \tilde{\varrho} \, \partial_{x_i}\Delta^{-1}_{\mathbb{T}^3} \partial_{x_j}(\tilde{\varrho} \tilde{u}^j) 
-
\tilde{\varrho}   \tilde{u}^j \partial_{x_i}\Delta^{-1}_{\mathbb{T}^3} \partial_{x_j} (\tilde{\varrho} -f) \big]
 \, \mathrm{d}x    \mathrm{d}t
=I_1 + I_2
\end{aligned}
\end{equation}
hold $\tilde{\mathbb{P}}$-a.s. Since $f\in L^p(\mathbb{T}^3)$  for all $p\in [1,\infty]$, it follows from the compactness result for velocity established in Lemma \ref{lem:JakowFourthLayer}, \eqref{identifyConvectionFourthLayer} and the Div--Curl lemma \cite[Theorem 10.27]{feireisl2009singular} (see also \cite[Lemma A.1.11]{breit2018stoch} for the periodic case)  that
\begin{align}
\lim_{\varepsilon \rightarrow 0}I^\varepsilon_1 + I^\varepsilon_2 =I_1 + I_2
\end{align}
$\tilde{\mathbb{P}}$-a.s. provided that we choose $\Gamma \geq 5$.
By using this convergence, the right-hand of  equation \eqref{showRHSiszeroFourthLayer} is zero and the proof is done.
\end{proof}
Now since $\tilde{\varrho}\in L^\infty_tL^\Gamma_x$ $\tilde{\mathbb{P}}$-a.s. with $\Gamma \geq2$ and $\tilde{\mathbf{u}} \in L^2_tW^{1,2}_x$  $\tilde{\mathbb{P}}$-a.s., we may use the renormalized theory of DiPerna-Lions \cite{diperna1989ordinary} to get that $(\tilde{\varrho}, \tilde{\mathbf{u}})$ satisfies \eqref{contEqFourth1} in the renormalized sense.
Since the map $\varrho \mapsto \varrho \log \varrho$ is strictly convex, it follows that
\begin{align}
\label{strongDensityConvFourthLayer}
\tilde{\varrho}_\varepsilon \rightarrow \tilde{\varrho}, \quad\text{in} \quad L^r_t L^1_x
\end{align}
and hence,
\begin{align}
\label{strongDeltaVConvFourthLayer}
\Delta\tilde{V}_\varepsilon \rightarrow \Delta\tilde{V}, \quad\text{in} \quad L^r_t L^1_x
\end{align}
$\tilde{\mathbb{P}}$-a.s. for any $r \in [1, \infty)$. Furthermore, as in \cite[4.177]{breit2018stoch}, we can use the Lipschitz continuity of $\mathbf{g}_k$ and \eqref{strongDensityConvFourthLayer} to that
\begin{align}
\int_{{\mathbb{T}^3}}  \tilde{\varrho}_\varepsilon\, \mathbf{g}_{k}(\tilde{\varrho}_\varepsilon , f, \tilde{\varrho}_\varepsilon \tilde{\mathbf{u}}_\varepsilon ) \cdot \bm{\phi}  \, \mathrm{d}x
\rightarrow
\int_{{\mathbb{T}^3}}  \tilde{\varrho}\, \mathbf{g}_{k}(\tilde{\varrho} , f, \tilde{\varrho} \tilde{\mathbf{u}} ) \cdot \bm{\phi}  \, \mathrm{d}x \quad \text{a.e. in } (0,T)
\end{align}
holds $\mathbb{P}$-a.s. for any $\bm{\phi} \in C^\infty(\mathbb{T}^3)$ from which we infer that
\begin{align}
\overline{  \tilde{\varrho}\, \mathbf{g}_{k}(\tilde{\varrho} , f, \tilde{\varrho} \tilde{\mathbf{u}} ) }
=
  \tilde{\varrho}\, \mathbf{g}_{k}(\tilde{\varrho} , f, \tilde{\varrho} \tilde{\mathbf{u}} ) \quad \text{a.e. in } \tilde{\Omega} \times (0,T) \times \mathbb{T}^3.
\end{align}
We can therefore conclude with the following lemma
\begin{lemma}
\label{lem:identifyMomFourthLayer1}
The random distributions  $[\tilde{\varrho}, \tilde{\mathbf{u}},  \tilde{V}, \tilde{W}]$  which has been shown to satisfy \eqref{weakMomEqLimitFourthLayer} in Lemma \ref{lem:identifyMomFourthLayer} further satisfy \eqref{momEqFourth1} 
for all $\psi \in C^\infty_c ([0,T))$ and $\bm{\phi} \in C^\infty(\mathbb{T}^3)$ $\mathbb{P}$-a.s.
\end{lemma}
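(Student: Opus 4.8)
The goal is to show that the three ``barred'' nonlinear objects appearing in the limit momentum balance \eqref{weakMomEqLimitFourthLayer} of Lemma \ref{lem:identifyMomFourthLayer} --- the pressure limit $\overline{p^\Gamma_\delta(\tilde\varrho)}$, the electric-field limit $\overline{\tilde\varrho \nabla \tilde V}$ and the noise limit $\overline{\tilde\varrho\,\mathbf{g}_k(\tilde\varrho,f,\tilde\varrho\tilde{\mathbf u})}$ --- actually coincide with the genuine nonlinearities $p^\Gamma_\delta(\tilde\varrho)$, $\tilde\varrho \nabla \tilde V$ and $\tilde\varrho\,\mathbf{g}_k(\tilde\varrho,f,\tilde\varrho\tilde{\mathbf u})$; once this is done, substituting into \eqref{weakMomEqLimitFourthLayer} gives precisely the weak form \eqref{weakMomEqFourthLayer} of \eqref{momEqFourth1}. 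The decisive ingredient is the strong convergence of the density, $\tilde\varrho_\varepsilon \to \tilde\varrho$ in $L^r_t L^1_x$ $\tilde{\mathbb P}$-a.s.\ for every $r\in[1,\infty)$, recorded in \eqref{strongDensityConvFourthLayer}; it is produced from the effective-viscous-flux identity of Lemma \ref{lem:effectiveFluxFourthLayer}, the renormalized continuity equation for $(\tilde\varrho,\tilde{\mathbf u})$, and strict convexity of $\varrho\mapsto\varrho\log\varrho$.

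First I would settle the pressure. Combining \eqref{strongDensityConvFourthLayer} (convergence $\tilde{\mathbb P}$-a.s.\ in measure on $(0,T)\times\mathbb T^3$) with the uniform bound $\mathbb E\|\tilde\varrho_\varepsilon\|_{L^{\Gamma+1}_{t,x}}^{(\Gamma+1)p}\lesssim 1$ from \eqref{pressureEstFouthLayer1}, the family $\{p^\Gamma_\delta(\tilde\varrho_\varepsilon)\}_\varepsilon$ is equi-integrable on $(0,T)\times\mathbb T^3$ $\tilde{\mathbb P}$-a.s.\ (its growth is $\lesssim 1+\varrho^\Gamma$ and $\Gamma<\Gamma+1$), so Vitali's theorem yields $p^\Gamma_\delta(\tilde\varrho_\varepsilon)\to p^\Gamma_\delta(\tilde\varrho)$ in $L^1((0,T)\times\mathbb T^3)$ $\tilde{\mathbb P}$-a.s.; by uniqueness of weak limits and \eqref{pressureLimitFourthLayer}, $\overline{p^\Gamma_\delta(\tilde\varrho)}=p^\Gamma_\delta(\tilde\varrho)$. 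Next the electric-field term: by \eqref{strongConvNablaV} we have $\nabla\tilde V_\varepsilon\to\nabla\tilde V$ uniformly on $[0,T]\times\mathbb T^3$ $\tilde{\mathbb P}$-a.s.\ (compact embedding $W^{1,\Gamma}_x\hookrightarrow C_x$, $\Gamma>3$), and since $\tilde\varrho_\varepsilon\to\tilde\varrho$ strongly in $L^1_{t,x}$ the product converges in $L^1_{t,x}$; comparing with \eqref{rhoNablaVLimitFourthLayer} gives $\overline{\tilde\varrho \nabla\tilde V}=\tilde\varrho\nabla\tilde V$. For the stochastic forcing, the identity $\overline{\tilde\varrho\,\mathbf{g}_k(\tilde\varrho,f,\tilde\varrho\tilde{\mathbf u})}=\tilde\varrho\,\mathbf{g}_k(\tilde\varrho,f,\tilde\varrho\tilde{\mathbf u})$ a.e.\ in $\tilde\Omega\times(0,T)\times\mathbb T^3$ has already been derived above from the Lipschitz continuity of $\mathbf{g}_k$, \eqref{noiseLimitFourthLayer} and \eqref{strongDensityConvFourthLayer}, the passage to the limit inside the It\^o integral being justified by \cite[Lemma 2.6.6]{breit2018stoch} on the new stochastic basis.

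It then remains to assemble: in \eqref{weakMomEqLimitFourthLayer} replace the three barred terms by the identified ones, use $\tilde\varrho_\varepsilon\tilde{\mathbf u}_\varepsilon\otimes\tilde{\mathbf u}_\varepsilon\rightharpoonup\tilde\varrho\tilde{\mathbf u}\otimes\tilde{\mathbf u}$ in $L^1_{t,x}$ (Lemma \ref{lem:identifyMomFourthLayer}, \eqref{identifyConvectionFourthLayer}), and recall that the viscous terms, the initial-datum term $\psi(0)\int_{\mathbb T^3}\tilde\varrho_0\tilde{\mathbf u}_0\cdot\bm{\phi}$ (via \eqref{converApproInitiMom}), and the vanishing artificial-viscosity term $\int_0^T\psi\int_{\mathbb T^3}\varepsilon\varrho_\varepsilon\mathbf{u}_\varepsilon\,\Delta\bm{\phi}\to0$ were already passed to the limit in Lemma \ref{lem:identifyMomFourthLayer}; the outcome is exactly \eqref{weakMomEqFourthLayer} tested against $\psi\in C^\infty_c([0,T))$, $\bm{\phi}\in C^\infty(\mathbb T^3)$, $\mathbb P$-a.s. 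I expect the only genuinely hard step to be the one already discharged upstream, namely securing \eqref{pressureEstFouthLayer1} and Lemma \ref{lem:effectiveFluxFourthLayer} (hence \eqref{strongDensityConvFourthLayer}); granted those, the present lemma is a routine bookkeeping of weak/strong convergences, the only mild subtlety being to keep the $\tilde{\mathbb P}$-a.s.\ qualifier consistent across the Vitali and stochastic-integral limit passages.
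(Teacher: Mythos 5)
Your proposal is correct and follows essentially the same route as the paper: strong convergence of the density (obtained from the effective viscous flux identity, the DiPerna--Lions renormalization of the limit continuity equation, and strict convexity of $\varrho\mapsto\varrho\log\varrho$) is used to identify $\overline{p^\Gamma_\delta(\tilde\varrho)}$, $\overline{\tilde\varrho\nabla\tilde V}$ and $\overline{\tilde\varrho\,\mathbf{g}_k(\tilde\varrho,f,\tilde\varrho\tilde{\mathbf u})}$ with the genuine nonlinearities, after which \eqref{weakMomEqLimitFourthLayer} becomes \eqref{weakMomEqFourthLayer}. The only difference is that you spell out the Vitali/equi-integrability step for the pressure and the product-convergence step for the electric-field term, which the paper leaves implicit.
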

To complete the proof of Theorem \ref{thm:mainFourthLayer}, we identify the limit energy inequality.
\begin{lemma}
\label{lem:identifyEnerFourthLayer}
The random distributions  $[\tilde{\varrho}, \tilde{\mathbf{u}},  \tilde{V}]$ satisfies \eqref{energyInequalityFourthLayer} for all $\psi \in C^\infty_c ([0,T))$, $\psi \geq0$ $\mathbb{P}$-a.s.
\end{lemma}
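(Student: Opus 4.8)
The plan is to obtain \eqref{energyInequalityFourthLayer} by passing to the limit $\varepsilon\to0$ in the energy inequality satisfied, on the new probability space, by the approximate solutions $[\tilde\varrho_\varepsilon,\tilde\mathbf{u}_\varepsilon,\tilde V_\varepsilon,\tilde W_\varepsilon]$. By the equality of laws in Lemma \ref{lem:JakowFourthLayer}, each such quintuple satisfies \eqref{energyInequalityThirdLayer}, equivalently the integrated-in-time form \eqref{energyInequalityFourthLayerNonDissi} for a.e.\ $s\in(0,T]$, $\tilde\mathbb{P}$-a.s. Since the artificial-viscosity terms $\varepsilon\int\tilde\varrho_\varepsilon|\nabla\tilde\mathbf{u}_\varepsilon|^2$ and $\varepsilon\int(P^\Gamma_\delta)''(\tilde\varrho_\varepsilon)|\nabla\tilde\varrho_\varepsilon|^2$ are nonnegative, I would discard them from the left-hand side, as \eqref{energyInequalityFourthLayer} contains no such contributions. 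The observation that makes the passage clean is that \emph{every} energy-density term converges strongly: the pressure potential because $\tilde\varrho_\varepsilon\to\tilde\varrho$ in $L^r_tL^1_x$ by \eqref{strongDensityConvFourthLayer} together with the equi-integrability of $\{\tilde\varrho_\varepsilon^{\Gamma}\}$ furnished by \eqref{pressureEstFouthLayer1} (de la Vall\'ee-Poussin, applied pathwise along the chosen subsequence), hence $P^\Gamma_\delta(\tilde\varrho_\varepsilon)\to P^\Gamma_\delta(\tilde\varrho)$ in $L^1_{t,x}$; the field energy because $\nabla\tilde V_\varepsilon\to\nabla\tilde V$ uniformly by \eqref{strongConvNablaV}; and the kinetic energy because $\sqrt{\tilde\varrho_\varepsilon}\tilde\mathbf{u}_\varepsilon\rightharpoonup\sqrt{\tilde\varrho}\tilde\mathbf{u}$ in $L^2((0,T)\times\mathbb{T}^3)$ (Carath\'eodory convergence of Lemma \ref{lem:JakowFourthLayer} plus strong density convergence) while $\int\tilde\varrho_\varepsilon|\tilde\mathbf{u}_\varepsilon|^2\to\int\tilde\varrho|\tilde\mathbf{u}|^2$, the latter being the trace of the convective convergence \eqref{identifyConvectionFourthLayer} tested against the identity matrix; weak convergence together with convergence of norms upgrades $\sqrt{\tilde\varrho_\varepsilon}\tilde\mathbf{u}_\varepsilon$ to strong $L^2$ convergence, so $\tilde\varrho_\varepsilon|\tilde\mathbf{u}_\varepsilon|^2\to\tilde\varrho|\tilde\mathbf{u}|^2$ in $L^1_{t,x}$. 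Thus the only terms requiring \emph{weak lower semicontinuity} are the viscous dissipations $\nu^S\int|\nabla\tilde\mathbf{u}_\varepsilon|^2$ and $(\nu^B+\nu^S)\int|\mathrm{div}\,\tilde\mathbf{u}_\varepsilon|^2$, and there the sign is right since the test function $\psi\ge0$ (using $\tilde\mathbf{u}_\varepsilon\rightharpoonup\tilde\mathbf{u}$ in $L^2(0,T;W^{1,2}_x)$).

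Next I would pass to the limit in the right-hand side of \eqref{energyInequalityThirdLayer}. The initial energy $\int[\tfrac12\varrho_{0,\varepsilon}|\mathbf{u}_{0,\varepsilon}|^2+P^\Gamma_\delta(\varrho_{0,\varepsilon})\pm\vartheta|\nabla V_{0,\varepsilon}|^2]$ converges by the construction of the laws in Section \ref{sec:initialLawFourthLayer}, where $\varrho_{0,\varepsilon}$, $\mathbf{m}_{0,\varepsilon}/\sqrt{\varrho_{0,\varepsilon}}$ and $\nabla V_{0,\varepsilon}$ were arranged to converge in the relevant $L^{p_0}(\Omega;\cdot)$ spaces, hence $\tilde\mathbb{P}$-a.s.\ along the subsequence fixed by Skorokhod. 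The It\^o-correction $\tfrac12\int\tilde\varrho_\varepsilon\sum_k|\mathbf{g}_{k,\varepsilon}(\cdot,\tilde\varrho_\varepsilon,f,\tilde\varrho_\varepsilon\tilde\mathbf{u}_\varepsilon)|^2$ converges to $\tfrac12\int\tilde\varrho\sum_k|\mathbf{g}_k(\cdot,\tilde\varrho,f,\tilde\varrho\tilde\mathbf{u})|^2$ because the cut-offs $\chi(\varepsilon/\tilde\varrho_\varepsilon-1)\chi(|\tilde\mathbf{u}_\varepsilon|-1/\varepsilon)$ tend to $1$ a.e.\ as $\varepsilon\to0$, the integrand converges a.e.\ by the strong convergence of $\tilde\varrho_\varepsilon$ and of $\tilde\varrho_\varepsilon\tilde\mathbf{u}_\varepsilon$ (the latter again from strong convergence of $\sqrt{\tilde\varrho_\varepsilon}$ and of $\sqrt{\tilde\varrho_\varepsilon}\tilde\mathbf{u}_\varepsilon$) and the Lipschitz/growth bounds \eqref{stochCoeffBoundExi0}--\eqref{stochsummableConst}, while equi-integrability follows from the Young-inequality computations of \eqref{rhof}--\eqref{eq:gk1} fed by \eqref{uniBoundFourthLayer1}; Vitali's theorem then applies. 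The stochastic integral $\int_0^T\psi\int\tilde\varrho_\varepsilon\tilde\mathbf{u}_\varepsilon\cdot\mathbf{G}_\varepsilon(\tilde\varrho_\varepsilon,f,\tilde\varrho_\varepsilon\tilde\mathbf{u}_\varepsilon)\,\mathrm{d}x\,\mathrm{d}\tilde W_\varepsilon$ converges to $\int_0^T\psi\int\tilde\varrho\tilde\mathbf{u}\cdot\mathbf{G}(\tilde\varrho,f,\tilde\varrho\tilde\mathbf{u})\,\mathrm{d}x\,\mathrm{d}\tilde W$ by \cite[Lemma 2.6.6]{breit2018stoch}, exactly as the forcing term in the momentum equation was treated in Lemma \ref{lem:identifyMomFourthLayer1}: one invokes $\tilde W_\varepsilon\to\tilde W$ in $\chi_W$, the convergence of the integrands just described, and the uniform $L^2(\tilde\Omega;L^2(0,T;L_2(\mathfrak{U};L^2_x)))$-type bounds coming from \eqref{uniBoundFourthLayer1}. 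Collecting these limits, and noting that removing the cut-offs turns $\mathbf{g}_{k,\varepsilon}$ and $\mathbf{G}_\varepsilon$ into $\mathbf{g}_k$ and $\mathbf{G}$, yields \eqref{energyInequalityFourthLayer} for every $\psi\in C^\infty_c([0,T))$, $\psi\ge0$.

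The principal difficulty is not any individual limit but that the whole scheme hinges on the \emph{strong} density convergence \eqref{strongDensityConvFourthLayer}: it is precisely what makes the pressure potential, the stochastic-coefficient terms, and---via \eqref{identifyConvectionFourthLayer}---the kinetic energy converge strongly rather than only weakly, thereby eliminating the sign obstruction that would otherwise arise from $\partial_t\psi$ in the energy term and reducing the whole statement to the viscous-dissipation lower-semicontinuity above. Since \eqref{strongDensityConvFourthLayer} has already been secured through the effective-viscous-flux identity of Lemma \ref{lem:effectiveFluxFourthLayer} and the DiPerna--Lions renormalization, and the equi-integrability inputs are available from \eqref{pressureEstFouthLayer1} and \eqref{uniBoundFourthLayer1}, what remains is the bookkeeping above, which mirrors the corresponding step for the stochastic Navier--Stokes system in \cite{breit2018stoch}.
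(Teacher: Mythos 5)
Your argument is correct and follows essentially the same route as the paper, which simply notes that the strong density convergence \eqref{strongDensityConvFourthLayer} reduces the matter to the identification of the noise terms, handled exactly as in \cite[Proposition 4.4.13]{breit2018stoch}. Your write-up supplies the term-by-term bookkeeping (lower semicontinuity for the viscous dissipation under $\psi\geq 0$, discarding the nonnegative $\varepsilon$-terms, Vitali/equi-integrability for the pressure potential and It\^o correction, and the trace of \eqref{identifyConvectionFourthLayer} for the kinetic energy) that the paper leaves implicit, with no gaps of substance.
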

Since we have strong convergence of the density sequence \eqref{strongDensityConvFourthLayer}, the only issue is the identification of the noise term (last term) in \eqref{energyInequalityFourthLayer}. Since $f \in L^\infty_x$ is given, the argument follow exactly as in \cite[Proposition 4.4.13.]{breit2018stoch}.

\section{The fifth approximation layer}
\label{sec:fifthLayer}
Under the assumptions of Theorem \ref{thm:one},  we establish  the existence of a solution in the sense of Definition \ref{def:martSolutionExis} to the following system 
\begin{align}
&\mathrm{d} \varrho  + \mathrm{div}(\varrho  \mathbf{u})  \, \mathrm{d}t =0,
\label{contEqFifth1}
\\
&\mathrm{d}(\varrho  \mathbf{u} ) +
\big[
\mathrm{div} (\varrho  \mathbf{u}  \otimes \mathbf{u} ) +  \nabla   p(\varrho) \big]\, \mathrm{d}t 
=
\big[ \nu^S \Delta \mathbf{u} 
\nonumber\\&
\quad +(\nu^B + \nu^S)\nabla \mathrm{div} \mathbf{u} + \vartheta \varrho  \nabla V  \big] \, \mathrm{d}t   
+ \sum_{k\in \mathbb{N}} \varrho\,   \mathbf{g}_{k}(\varrho , f, \varrho \mathbf{u} ) \, \mathrm{d}\beta_k ,
\label{momEqFifth1}
\\
&\pm \Delta V  = \varrho  - f, \label{elecFieldFifth1}
\end{align}
by passing to the limit $\delta \rightarrow 0$ in \eqref{contEqFourth1}--\eqref{momEqFourth1}.

\subsection{Construction of law}
\label{sec:lawFifthLayer}
Before we start the main arguments, we first construct a law $\Lambda$ satisfying the assumptions of Theorem \ref{thm:one}. This will be done from the approximation laws $\Lambda_\delta$ satisfying Theorem \ref{thm:mainFourthLayer} such that $\Lambda_\delta \xrightharpoonup{*} \Lambda$ in the sense of measures on $[L^1_x]^3$.
To see this, let $\Lambda$ is Borel probability measure on  $[L^1_x]^3$. Since $[L^1_x]^3$ is a  Polish space, by \cite[Corollary 2.6.4]{breit2018stoch}, there exists a stochastic basis $(\Omega ,\mathscr{F} ,(\mathscr{F} _t)_{t\geq0},\mathbb{P} )$ containing some $[L^1_x]^3$-valued $\mathscr{F}_0$-measurable random variables $(\varrho_0, \mathbf{m}_0, f)$
having the law $\Lambda$. First of all, since these random variables have law $\Lambda$,
\begin{align*}
f \leq \overline{f}
\end{align*}
holds $\mathbb{P}$-a.s. for a deterministic constant $\overline{f}>0$. It follows from this boundedness and the fact that $\mathbb{T}^3$ is of finite measure that $f\in L^r_x$ for all $r\in [1,\infty]$.
Now set $\varrho_{0,\delta} =\varrho_0$ for all $\delta>0$ so that
\begin{align*}
0< M \leq \varrho_{0,\delta} \leq M^{-1}
\end{align*}
holds $\mathbb{P}$-a.s. for a $\delta$-uniform deterministic constant $M>0$. Since $\varrho_0$ is bounded, it follows that $\mathbb{P}$-a.s. $\varrho_{0,\delta} \in L^\Gamma_x$  and in particular for $\gamma\leq \Gamma$, we have that
\begin{align*}
\varrho_{0,\delta} \rightarrow \varrho_0 \quad \text{in} \quad L^\gamma_x \quad \mathbb{P}\text{-a.s.}
\end{align*}
If we also set $V_{0,\delta} =V_0 =\pm \Delta^{-1}_{\mathbb{T}^3}(\varrho_0-f)$, then we have that
\begin{align}
\label{momentKineti}
\mathbb{E}\bigg[ \int_{\mathbb{T}^3} \vartheta \vert \nabla V_{0,\delta} \vert^2 \, \mathrm{d}x \bigg]^{p_0} \lesssim 1
\end{align}
uniformly in $\delta$ for all $p_0\in [1,p]$ and that
\begin{align}
\nabla V_{0,\delta} \rightarrow  \nabla V_{0} \quad &\text{in}\quad L^{p_0}(\Omega; L^2_x) \quad \text{for all}\quad  p_0\in [1,2p].
\end{align}
Now set
\begin{align}
\tilde{\mathbf{m}}_{0,\delta} = 
  \begin{cases} 
   \mathbf{m}_0\sqrt{\frac{\varrho_{0,\delta}}{\varrho_0}} & \text{if } \varrho_0> 0 \\
   0       & \text{if } \varrho= 0
  \end{cases}
\end{align}
so that
\begin{align}
\label{momentKineti}
\mathbb{E}\bigg[ \int_{\mathbb{T}^3} \frac{1}{2} \frac{\vert \tilde{\mathbf{m}}_{0,\delta} \vert^2}{\varrho_{0,\delta}} \, \mathrm{d}x \bigg]^{p_0} \lesssim 1
\end{align}
uniformly in $\delta$ for all $p_0\in [1,p]$.  Now if we represent the smooth version (after mollification say,)  $\frac{ \tilde{\mathbf{m}}_{0,\delta} }{\sqrt{\varrho_{0,\delta}}}$ by $s_\delta$ so that
\begin{align*}
\bigg[\frac{ \tilde{\mathbf{m}}_{0,\delta} }{\sqrt{\varrho_{0,\delta}}} - s_\delta \bigg] \rightarrow 0 \quad \text{in}\quad L^{p_0}(\Omega; L^2_x)
\end{align*}
for all $p_0\in [1,2p]$, then for ${\mathbf{m}}_{0,\delta} = s_\delta \sqrt{\varrho_{0,\delta}}$, we have that
\begin{align}
\label{momentKineti1}
\mathbb{E}\bigg[ \int_{\mathbb{T}^3} \frac{1}{2} \frac{\vert \mathbf{m}_{0,\delta} \vert^2}{\varrho_{0,\delta}} \, \mathrm{d}x \bigg]^{p_0} \lesssim 1
\end{align}
uniformly in $\delta$ for all $p_0\in [1,p]$. In addition (recall again that $\varrho_{0,\delta} =\varrho_0$),
\begin{align}
\mathbf{m}_{0,\delta} \rightarrow  \mathbf{m}_0\quad &\text{in}\quad L^{p_0}(\Omega; L^1_x) \quad \text{for all}\quad  p_0\in [1,p],
\\
\frac{\mathbf{m}_{0,\delta}}{\sqrt{\varrho_{0,\delta}}} \rightarrow  \frac{\mathbf{m}_0}{\sqrt{\varrho_{0}}} \quad &\text{in}\quad L^{p_0}(\Omega; L^2_x) \quad \text{for all}\quad  p_0\in [1,2p].
\end{align}
By using the inequality $ (a+b)^r \lesssim_r a^r +b^r$, we obtain the estimate
\begin{align}
\mathbb{E}\bigg[ \int_{\mathbb{T}^3}\bigg( \frac{1}{2} \frac{\vert \mathbf{m}_{0,\delta} \vert^2}{\varrho_{0,\delta}} 
+
P^\Gamma_\delta(\varrho_{0,\delta})  \pm 
\vartheta \vert \nabla V_{0,\delta} \vert^2
 \bigg)\, \mathrm{d}x \bigg]^{p_0} \lesssim_{p_0} 1
\end{align}
uniformly in $\delta$ by collecting the suitable information above. Now if we set $\Lambda_\delta = \mathbb{P}\circ (\varrho_{0,\delta}, \mathbf{m}_{0,\delta},f)^{-1}$, then  we observe  that $\Lambda_\delta$ satisfies the corresponding assumptions of Theorem \ref{thm:mainFourthLayer} uniformly in $\delta$ and in addition, $\Lambda_\delta \xrightharpoonup{*} \Lambda$ in the sense of measures on $[L^1_x]^3$.
\subsection{Uniform estimates}
\label{sec:UniformEstFifthLayer}
Given that the data $(\varrho_{0,\delta}, \mathbf{m}_{0,\delta}, f)$ just constructed above satisfies the assumptions of Theorem \ref{thm:mainFourthLayer}, there exists a dissipative martingale solution $[(\Omega ,\mathscr{F} ,(\mathscr{F} _t)_{t\geq0},\mathbb{P} );\varrho_\delta , \mathbf{u}_\delta , V_\delta , W   ]$ of \eqref{contEqFourth1}--\eqref{elecFieldFourth1} in the sense of Definition \ref{def:martSolFourthLayer} with $\Lambda_\delta$ as its law. As usual, without loss of generality, we consider a single stochastic basis and a single Wiener process for each $\delta>0$.
 Since the solution satisfies the energy estimate \eqref{energyInequalityFourthLayer}, by approximating the characteristic function $\chi_{[0,s]}$ for any $s\in(0,T]$, by a sequence of test functions $\psi\in C^\infty_c([0,T))$, it follows that
\begin{equation}
\begin{aligned}
\label{energyInequalityFifthLayerNonDissi}
\int_{{\mathbb{T}^3}}&\bigg[ \frac{1}{2} \varrho_\delta  \vert \mathbf{u}_\delta  \vert^2  
+ P^\Gamma_\delta(\varrho_\delta ) 
\pm
\vartheta \vert \nabla V_\delta \vert^2 
\bigg](S)\,\mathrm{d}x 
+\nu^S
\int_0^s \int_{{\mathbb{T}^3}}\vert  \nabla \mathbf{u}_\delta  \vert^2 \,\mathrm{d}x  \,\mathrm{d}x
\\&+(\nu^B+\nu^S)
\int_0^s \int_{{\mathbb{T}^3}}\vert  \mathrm{div}\, \mathbf{u}_\delta  \vert^2 \,\mathrm{d}x  \,\mathrm{d}t
\leq
 \int_{{\mathbb{T}^3}}\bigg[\frac{1}{2}\frac{\vert \mathbf{m} _{0, \delta} \vert^2  }{\varrho _{0,\delta}} 
+ P^\Gamma_\delta(\varrho _{0, \delta}) 
\pm
\vartheta \vert \nabla V_{0, \delta} \vert^2
 \bigg]\,\mathrm{d}x
\\&
+\frac{1}{2}\int_0^s \int_{{\mathbb{T}^3}}
 \varrho_\delta
 \sum_{k\in\mathbb{N}} \big\vert 
 \mathbf{g}_{k, \varepsilon}(x,\varrho_\delta ,f,\mathbf{m}_\delta  ) 
\big\vert^2\,\mathrm{d}x\,\mathrm{d}t
+\int_0^s  \int_{{\mathbb{T}^3}}   \varrho_\delta \mathbf{u}_\delta \cdot  \mathbf{G}_\varepsilon(\varrho_\delta , f, \mathbf{m}_\delta )\,\mathrm{d}x\,\mathrm{d}W ;
\end{aligned}
\end{equation}
holds $\mathbb{P}$-a.s. for a.e. $s\in(0,T]$. Given that
\begin{align}
\label{calEzeroDelta}
\mathcal{E}_{0,\delta}:=\frac{1}{2}\frac{\vert \mathbf{m} _{0, \delta} \vert^2  }{\varrho _{0,\delta}} 
+ P^\Gamma_\delta(\varrho _{0, \delta}) 
\pm
\vartheta \vert \nabla V_{0, \delta} \vert^2
\end{align}
is $\delta$-uniformly bounded in $L^p(\Omega ; L^1_x)$, just as in Section \ref{sec:UniformEstFourthLayer} we obtain the following estimates
\begin{equation}
\begin{aligned}
\label{uniBoundFifthLayer1}
&\mathbb{E} \bigg\vert \sup_{t\in [0,T]}\big\Vert \varrho_\delta \vert \mathbf{u}_\delta\vert^2 \big\Vert_{L^1_x} \bigg\vert^p 
+
\mathbb{E} \bigg\vert \sup_{t\in [0,T]} \Vert \varrho_\delta  \Vert_{L^\gamma_x}^\gamma \bigg\vert^p 
+
\mathbb{E} \bigg\vert \sup_{t\in [0,T]}\delta\, \Vert \varrho_\delta  \Vert_{L^\Gamma_x}^\Gamma \bigg\vert^p 
+
\mathbb{E} \bigg\vert \sup_{t\in [0,T]} \Vert \nabla V_\delta \Vert_{L^2_x}^2 \bigg\vert^p
\\&+
\mathbb{E} \bigg\vert \sup_{t\in [0,T]} \Vert \Delta V_\delta \Vert_{L^\gamma_x}^\gamma \bigg\vert^p 
+
\mathbb{E} \Vert  \mathbf{u}_\delta \Vert_{L^2_{t}W^{1,2}_x}^{2p}  
+
\mathbb{E} \bigg\vert \sup_{t\in [0,T]}\big\Vert \varrho_\delta \mathbf{u}_\delta \big\Vert_{L^\frac{2\gamma}{\gamma+1}_x}^\frac{2\gamma}{\gamma+1} \bigg\vert^p 
\lesssim_{ \Gamma, p,\mathcal{E}_{0,\delta}} 1
\end{aligned}
\end{equation}
which holds uniformly in $\delta$. Furthermore, it follows from the continuity equation \eqref{contEqFourth1} and \eqref{lawMainThmFourthLayer} that
\begin{align}
\label{massConserveFifth}
\Vert \varrho_\delta(t) \Vert_{L^1_x} = \Vert \varrho_{0,\delta} \Vert_{L^1_x} \leq M^{-1}
\end{align}
for any $t\in [0,T]$.
\subsection{Improved pressure estimate}
\label{sec:PressureEstFifthLay}
Now since $(\varrho_\delta, \mathbf{u}_\delta)$ satisfies the renormalized continuity equation 
\begin{equation}
\begin{aligned}
&0=\int_0^T\int_{{\mathbb{T}^3}}  b(\varrho_\delta )\,\partial_t\phi \,\mathrm{d}x \, \mathrm{d}t +
\int_{{\mathbb{T}^3}}  b\big(\varrho_\delta (0)\big)\,\phi(0) \,\mathrm{d}x 
\\&
+ \int_0^T\int_{{\mathbb{T}^3}} \big[b(\varrho_\delta ) \mathbf{u}_\delta  \big] \cdot \nabla\phi \, \mathrm{d}x \, \mathrm{d}t  
- \int_0^T\int_{{\mathbb{T}^3}} \big[ \big(  b'(\varrho_\delta ) \varrho_\delta   
-
 b(\varrho_\delta )\big)\mathrm{div}\mathbf{u}_\delta \big]\,  \phi\, \mathrm{d}x \, \mathrm{d}t
\end{aligned}
\end{equation} 
$\mathbb{P}$-a.s. or any $\phi\in C^\infty_c([0,T)\times{\mathbb{T}^3})$ and $b \in C^1_b(\mathbb{R})$ such that $ b'(z)=0$ for all $z\geq M_b$, it follows that for $b(\varrho_\delta)= \varrho^\Theta_\delta$ where $0< \Theta \leq \frac{1}{3}$, we have that
\begin{equation}
\begin{aligned}
&
-
\int_{{\mathbb{T}^3}}  \varrho^\Theta_\delta(0)\,\phi(0) \,\mathrm{d}x
-
\int_0^s\int_{{\mathbb{T}^3}}  \varrho^\Theta_\delta\,\partial_t\phi \,\mathrm{d}x \, \mathrm{d}t 
\\&
= \int_0^s\int_{{\mathbb{T}^3}} \big[\varrho^\Theta_\delta \mathbf{u}_\delta  \big] \cdot \nabla\phi \, \mathrm{d}x \, \mathrm{d}t  
- (\Theta-1)\int_0^s\int_{{\mathbb{T}^3}} \big[\varrho^\Theta_\delta \mathrm{div}\mathbf{u}_\delta \big]\,  \phi\, \mathrm{d}x \, \mathrm{d}t
\end{aligned}
\end{equation} 
holds $\mathbb{P}$-a.s. for any $\phi\in C^\infty_c([0,T) \times{\mathbb{T}^3})$. Now for $\phi = \Delta^{-1}_{\mathbb{T}^3}\mathrm{div}\psi$ where $\psi\in C^\infty_c([0,T)\times{\mathbb{T}^3})$, we obtain
\begin{equation}
\begin{aligned}
\label{contEqItoFifthLayer}
\int_{{\mathbb{T}^3}} \nabla \Delta^{-1}_{\mathbb{T}^3}  \varrho^\Theta_\delta(0)\,\psi(0) \,\mathrm{d}x
&+
\int_0^s\int_{{\mathbb{T}^3}}  \nabla \Delta^{-1}_{\mathbb{T}^3} \varrho^\Theta_\delta\,\partial_t\psi \,\mathrm{d}x \, \mathrm{d}t 
=
 \int_0^s\int_{{\mathbb{T}^3}}  \nabla \Delta^{-1}_{\mathbb{T}^3}\mathrm{div}\big[\varrho^\Theta_\delta \mathbf{u}_\delta  \big] \psi \, \mathrm{d}x \, \mathrm{d}t  
\\&+
(\Theta-1)\int_0^s\int_{{\mathbb{T}^3}} \nabla \Delta^{-1}_{\mathbb{T}^3} \big[\varrho^\Theta_\delta \mathrm{div}\,\mathbf{u}_\delta \big]\,  \psi\, \mathrm{d}x \, \mathrm{d}t
\end{aligned}
\end{equation} 
$\mathbb{P}$-a.s. 
 Also, as the momemtum equation \eqref{momEqFourth1} is only satisfied weakly in the sense of \eqref{weakMomEqFourthLayer}, a suitable approximation of the characteristic function $\chi_{[0,s]}$ for any $s\in(0,T]$, by a sequence of test functions $\psi\in C^\infty_c([0,T))$ yields
\begin{equation}
\begin{aligned}
\label{momEqItoFifthLayer}
&\int_{{\mathbb{T}^3}} \varrho_\delta    \mathbf{u}_\delta   (t) \cdot \bm{\phi} \, \mathrm{d}x   
=
 \int_{{\mathbb{T}^3}} \varrho_{\delta,0} \mathbf{u} _{0,\delta} \cdot \bm{\phi} \, \mathrm{d}x  
+  \int_0^s \int_{{\mathbb{T}^3}}  \varrho_\delta      \mathbf{u}_\delta   \otimes \mathbf{u}_\delta   : \nabla \bm{\phi} \, \mathrm{d}x  \mathrm{d}t 
\\&-
 \nu^S\int_0^s\int_{{\mathbb{T}^3}} \nabla \mathbf{u}_\delta   : \nabla \bm{\phi} \, \mathrm{d}x  \mathrm{d}t 
-(\nu^B+ \nu^S )\int_0^s\int_{{\mathbb{T}^3}} \mathrm{div}\,\mathbf{u}_\delta   \, \mathrm{div}\, \bm{\phi} \, \mathrm{d}x  \mathrm{d}t    
\\&+   
\int_0^s \int_{{\mathbb{T}^3}}    p_{\delta}^{ \Gamma}(\varrho_\delta  )\, \mathrm{div} \, \bm{\phi} \, \mathrm{d}x  \mathrm{d}t
+\int_0^s\int_{{\mathbb{T}^3}} \vartheta\varrho_\delta    \nabla V_\delta    \cdot \bm{\phi} \, \mathrm{d}x    \mathrm{d}t
\\&+
\int_0^s\sum_{k\in \mathbb{N}} \int_{{\mathbb{T}^3}}   \varrho_\delta  \, \mathbf{g}_{k,\varepsilon}(\varrho_\delta   , f, \varrho_\delta   \mathbf{u}_\delta   ) \cdot \bm{\phi}  \, \mathrm{d}x \mathrm{d}\beta_k  
\end{aligned}
\end{equation}
$\mathbb{P}$-a.s. Since both \eqref{contEqItoFifthLayer} and \eqref{momEqItoFifthLayer} are in their distributional form, after a long and tedious regularization procedure (which we do not show), we obtain from I\^o's lemma, 
\begin{equation}
\begin{aligned}
\label{momEqItoFifthLayer1}
&
\int_0^s \int_{{\mathbb{T}^3}}    p_{\delta}^{ \Gamma}(\varrho_\delta  )\, \varrho^\Theta_\delta   \, \mathrm{d}x  \mathrm{d}t
=
\int_{{\mathbb{T}^3}} \varrho_\delta   \mathbf{u}_\delta  (t) \cdot \nabla \Delta^{-1}_{\mathbb{T}^3}\varrho^\Theta_\delta \, \mathrm{d}x
-
 \int_{{\mathbb{T}^3}} \varrho_{0,\delta} \mathbf{u}_{0,\delta} \cdot \nabla \Delta^{-1}_{\mathbb{T}^3}\varrho^\Theta_\delta(0) \, \mathrm{d}x     
\\&-
\int_0^s \int_{{\mathbb{T}^3}}  \varrho_\delta     \mathbf{u}_\delta  \otimes \mathbf{u}_\delta  : \nabla^2 \Delta^{-1}_{\mathbb{T}^3}\varrho^\Theta_\delta \, \mathrm{d}x  \mathrm{d}t 
+
(\nu^B+ \nu^S )\int_0^s\int_{{\mathbb{T}^3}} \mathrm{div}\,\mathbf{u}_\delta  \,\varrho^\Theta_\delta \, \mathrm{d}x  \mathrm{d}t   
\\&+
 \nu^S\int_0^s\int_{{\mathbb{T}^3}} \nabla \mathbf{u}_\delta  : \nabla^2 \Delta^{-1}_{\mathbb{T}^3}\varrho^\Theta_\delta \, \mathrm{d}x  \mathrm{d}t  
-
\int_0^s\int_{{\mathbb{T}^3}} \vartheta\varrho_\delta    \nabla V \, \nabla \Delta^{-1}_{\mathbb{T}^3}\varrho^\Theta_\delta \, \mathrm{d}x    \mathrm{d}t
\\&+
\int_0^s\int_{{\mathbb{T}^3}} \varrho_\delta \mathbf{u}_\delta  \nabla \Delta^{-1}_{\mathbb{T}^3}\mathrm{div}\big[\varrho^\Theta_\delta \mathbf{u}_\delta  \big]  \, \mathrm{d}x    \mathrm{d}t
+
(\Theta-1) \int_0^s\int_{{\mathbb{T}^3}}\varrho_\delta  \mathbf{u}_\delta \nabla \Delta^{-1}_{\mathbb{T}^3} \big[\varrho^\Theta_\delta \mathrm{div}\mathbf{u}_\delta \big]
 \, \mathrm{d}x    \mathrm{d}t
\\&-
\int_0^s\sum_{k\in \mathbb{N}} \int_{{\mathbb{T}^3}}   \varrho_\delta \, \mathbf{g}_{k,\varepsilon}(\varrho_\delta  , f, \varrho_\delta  \mathbf{u}_\delta  ) \cdot \nabla \Delta^{-1}_{\mathbb{T}^3}\varrho^\Theta_\delta  \, \mathrm{d}x \mathrm{d}\beta_k
=:I_1+ \ldots + I_9.
\end{aligned}
\end{equation}
%
%
Now since the embedding $W^{1,r}_x \hookrightarrow L^\infty_x$ is continuous for $r \geq 3$, it follows from \eqref{massConserveFifth} and the fact that $0< \Theta \leq \frac{1}{3}$,
\begin{align}
\label{massConserContOperatorFifth}
\sup_{t\in [0,T]}\big\Vert  \nabla \Delta^{-1}_{\mathbb{T}^3}\varrho^\Theta_\delta(t)  \big\Vert_{L^\infty_x} 
\lesssim
\Vert\varrho^\Theta_{0,\delta}  \Vert_{L^q_x} \leq M^{-\Theta}, \quad \mathbb{P}\text{-a.s. where } q=\frac{1}{\Theta}
\end{align}
uniform in $\delta$. By using \eqref{massConserContOperatorFifth}, we can replicate the estimates in \eqref{sec:PressureEstFourthLay} so that we obtain from \eqref{momEqItoFifthLayer1},
\begin{equation}
\begin{aligned}
\label{pressureEstFifthLayer1}
\mathbb{E}\bigg\vert \int_0^t \int_{{\mathbb{T}^3}}    p_{\delta}^{ \Gamma}(\varrho_\delta  )\, \varrho^\Theta_\delta \, \mathrm{d}x\,  \mathrm{d}t \bigg\vert^p \lesssim_{p,k, \Gamma, \gamma, \mathcal{E}_{0,\delta}, \overline{f}, \Theta, M}1
\end{aligned}
\end{equation}
uniformly in $\delta$ for all $p\geq2$. We remind the reader of \eqref{higherPressure}, i.e.,
\begin{align}
p_{\delta}^{ \Gamma}(\varrho_\delta  )\, \varrho^\Theta_\delta
=
\big[p(\varrho_\delta  )+\delta(\varrho_\delta +\varrho^\Gamma_\delta) \big]\, \varrho^\Theta_\delta
\end{align}
and also of \eqref{calEzeroDelta} which states that $\mathcal{E}_{0,\delta}$ is uniformly bounded in $\delta$.

\subsection{Compactness}
\label{sec:compactnessFifthLayer}
We now define the following spaces 
\begin{align*}
\chi_{\varrho_0} = L^\gamma_x, \, \quad \chi_{\mathbf{m}_0}= L^1_x, \, \quad \chi_{\frac{\mathbf{m}_0}{\sqrt{\varrho_0}}}= L^2_x,
\, \quad
\chi_{ \mathbf{u} } =  \big(L^2\big(0,T;W^{1,2}_x\big), w\big),
\, \quad
\chi_V = C_w\left([0,T];W^{2,\gamma}_x\right),
\\
\chi_W= C\left([0,T];\mathfrak{U}_0\right) ,
\, \quad
\chi_{\varrho\mathbf{u}} =C_w\big([0,T]; L^\frac{2\gamma}{\gamma+1} _x\big) \cap C\big([0,T]; W^{-k,2}_x\big), 
\\
\chi_\varrho = C_w \left([0,T];L^{\gamma}_x\right)
\cap
\big(L^{\gamma+\Theta}_{t,x}, w\big)
 \quad
\chi_{\mathcal{E}}= \big(L^\infty_t; \mathcal{M}_b(\mathbb{T}^3), w^*\big),
 \quad
\chi_{\nu}= \big(L^\infty\big((0,T)\times \mathbb{T}^3; \mathfrak{P}(\mathbb{R}^{20})\big) , w^*\big)
\end{align*}
for $\gamma>\frac{3}{2}$ and $k> \frac{3}{2}$. We now let  $\mu_{\varrho_{0,\delta}}$, $\mu_{\mathbf{m}_{0,\delta}}$, $\mu_{\frac{\mathbf{m}_{0,\delta}}{\sqrt{\varrho_{0,\delta}}}}$,  $\mu_{\varrho_\delta }$, $\mu_{\mathbf{u}_\delta }$, $\mu_{\varrho_\delta\mathbf{u}_\delta }$,  $\mu_{V_\delta }$,$\mu_{\mathcal{E}_\delta }$, $\mu_{\nu_\delta }$ and $\mu_{W}$ be the respective laws of  $\varrho_{0,\delta}$, $\mathbf{m}_{0,\delta}$,   $\frac{\mathbf{m}_{0,\delta}}{\sqrt{\varrho_{0,\delta}}}$,  $\varrho_\delta$, $\mathbf{u}_\delta$, $\varrho_\delta \mathbf{u}_\delta $, $V_\delta $, $\mathcal{E}_\delta$, $\nu_\delta$ and $W$ on the respective spaces 
$\chi_{\varrho_0}$, $\chi_{\mathbf{m}_0}$, $\chi_{\frac{\mathbf{m}_0}{\sqrt{\varrho_0}}}$,  $\chi_{\varrho} $,
$\chi_{\mathbf{u}} $,  $\chi_{ \varrho\mathbf{u} }$, $\chi_V$, $\chi_{\mathcal{E}}$, $\chi_\nu$ and $\chi_W$. Furthermore, we set $\mu_\delta $ as their joint law on the space 
\begin{align*}
\chi = \chi_{\varrho_0} \times \chi_{\mathbf{m}_0}\times \chi_{\frac{\mathbf{m}_0}{\sqrt{\varrho_0}}}\times  \chi_{\varrho} \times \chi_{\mathbf{u}}  \times  \chi_{ \varrho\mathbf{u} } \times \chi_V\times \chi_{\mathcal{E}} \times \chi_\nu \times \chi_W.
\end{align*}
In analogy with \eqref{tighnessFourthLAyer}, we have the following result.
\begin{lemma}
\label{tighnessFifthLAyer}
The set $\{\mu_\delta :   \delta \in (0,1) \}$  is tight on $\chi$.
\end{lemma}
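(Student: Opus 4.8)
The plan is to deduce tightness of the joint law $\mu_\delta$ from tightness of each of its marginals, using the elementary fact that on the product space $\chi$ a family of probability measures is tight as soon as every marginal family is tight (pick compact sets in each factor carrying all but $\eta/n$ of the mass and take their product). The marginals fall into three groups. The first, ``trivial'', group consists of the data laws $\mu_{\varrho_{0,\delta}}$, $\mu_{\mathbf m_{0,\delta}}$, $\mu_{\mathbf m_{0,\delta}/\sqrt{\varrho_{0,\delta}}}$ and of $\mu_W$: by the construction in Section~\ref{sec:lawFifthLayer} we have $\varrho_{0,\delta}=\varrho_0$ fixed and $\mathbf m_{0,\delta}\to\mathbf m_0$, $\mathbf m_{0,\delta}/\sqrt{\varrho_{0,\delta}}\to\mathbf m_0/\sqrt{\varrho_0}$ strongly in $L^{p_0}(\Omega;\cdot)$, so these are convergent sequences of Radon measures on Polish spaces, hence tight, while $\mu_W$ is a single Radon measure on the Polish space $C([0,T];\mathfrak U_0)$.

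The second group comprises the ``weak-topology'' marginals $\mu_{\mathbf u_\delta}$ on $\chi_{\mathbf u}$, the $\big(L^{\gamma+\Theta}_{t,x},w\big)$-component of $\mu_{\varrho_\delta}$, $\mu_{\mathcal E_\delta}$ on $\chi_{\mathcal E}$ and $\mu_{\nu_\delta}$ on $\chi_\nu$. In a reflexive separable Banach space bounded sets are relatively weakly compact, so a $\delta$-uniform moment bound together with Chebyshev's inequality gives tightness in the weak topology: here one uses \eqref{uniBoundFifthLayer1} for $\mathbf u_\delta$; the improved estimate \eqref{pressureEstFifthLayer1}, which since $p(\varrho)=a\varrho^\gamma$ controls $\mathbb E\|\varrho_\delta\|_{L^{\gamma+\Theta}_{t,x}}^{(\gamma+\Theta)p}$ uniformly in $\delta$, for $\varrho_\delta$; and the $\delta$-uniform bound of $\mathcal E_\delta$ in $L^p(\Omega;L^\infty(0,T;L^1_x))$ for $\mathcal E_\delta$, interpreted in the weak-$*$ topology of $\mathcal M_b(\mathbb T^3)$. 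For $\mu_{\nu_\delta}$ one invokes that the set of Young measures is itself weak-$*$ compact, exactly as in \cite[Proposition 4.4.7]{breit2018stoch}.

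The third group carries the real work: the $C_w([0,T];L^\gamma_x)$-component of $\mu_{\varrho_\delta}$, the law $\mu_{V_\delta}$ on $\chi_V$, and $\mu_{\varrho_\delta\mathbf u_\delta}$ on $\chi_{\varrho\mathbf u}$. For the density, \eqref{contEqFourth1} gives $\partial_t\varrho_\delta=-\mathrm{div}(\varrho_\delta\mathbf u_\delta)$, and by \eqref{uniBoundFifthLayer1} the momentum $\varrho_\delta\mathbf u_\delta$ is bounded in $L^p(\Omega;L^\infty(0,T;L^\frac{2\gamma}{\gamma+1}_x))$, whence $\varrho_\delta$ is bounded in $L^p(\Omega;C^{0,1}([0,T];W^{-1,\frac{2\gamma}{\gamma+1}}_x))$; combining this with the uniform $L^\infty_tL^\gamma_x$ bound, the compact embedding of $L^\infty(0,T;L^\gamma_x)\cap C^{0,1}([0,T];W^{-1,\frac{2\gamma}{\gamma+1}}_x)$ into $C_w([0,T];L^\gamma_x)$ (as in \cite[Corollary B.2.]{ondrejat2010stochastic}), and Chebyshev, gives tightness. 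For $V_\delta$, the Poisson equation \eqref{elecFieldFifth1} and elliptic regularity transfer the $L^\infty_tL^\gamma_x$ bound on $\varrho_\delta$ (together with $f\in L^\gamma_x$) to a $\delta$-uniform bound of $V_\delta$ in $L^\infty_tW^{2,\gamma}_x$, and the Lipschitz-in-time regularity of $\varrho_\delta$ is inherited by $\Delta V_\delta$; the same compact-embedding-plus-Chebyshev argument used in the third and fourth layers then yields tightness of $\mu_{V_\delta}$ on $\chi_V$. Finally, tightness of $\mu_{\varrho_\delta\mathbf u_\delta}$ on $\chi_{\varrho\mathbf u}$ follows the scheme behind the analogous statements in \cite[Section 4.4]{breit2018stoch}: one writes $\varrho_\delta\mathbf u_\delta$ via \eqref{momEqFifth1} as its initial value plus a time integral of the flux, viscous, pressure and Lorentz-force terms plus the stochastic integral; each deterministic term lies in $L^p(\Omega;C^{0,\alpha}([0,T];W^{-l,2}_x))$ for suitable $l$ by \eqref{uniBoundFifthLayer1}, \eqref{pressureEstFifthLayer1} and, for $\vartheta\varrho_\delta\nabla V_\delta$, the bound $\varrho_\delta\nabla V_\delta\in L^\infty_tL^\frac{2\gamma}{\gamma+1}_x\hookrightarrow L^r_tW^{-l,2}_x$; the stochastic integral is controlled in $L^p(\Omega;C^{0,\alpha}([0,T];W^{-l,2}_x))$ by Burkholder--Davis--Gundy and the Kolmogorov continuity criterion together with \eqref{stochCoeffBoundExi0}--\eqref{stochsummableConst} and the cutoff being bounded by one. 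The resulting $\delta$-uniform bound in $C^{0,\alpha}([0,T];W^{-l,2}_x)\cap L^\infty(0,T;L^\frac{2\gamma}{\gamma+1}_x)$ then interpolates against the compact embedding into $C_w([0,T];L^\frac{2\gamma}{\gamma+1}_x)\cap C([0,T];W^{-k,2}_x)$.

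I expect the main obstacle to be the treatment of the electric-field contributions in the third group, namely securing a $\delta$-uniform bound for $\vartheta\varrho_\delta\nabla V_\delta$ in a space simultaneously good enough for the time-regularity of $\varrho_\delta\mathbf u_\delta$ and, through the Poisson equation, for the time-regularity of $V_\delta$ itself. As announced in the introduction, this is where conservation of mass \eqref{massConserveFifth} and the loss-free estimate $\nabla V_\delta\in L^\infty_tL^2_x$ extracted from the energy inequality \eqref{energyInequalityFifthLayerNonDissi} are essential; once these are in hand, tightness of the joint law is immediate from tightness of all the marginals.
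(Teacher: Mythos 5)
Your proposal is correct and follows essentially the same route as the paper, which simply asserts the result ``in analogy with'' Lemma \ref{tighnessFourthLAyer} and hence relies on exactly the marginal-by-marginal scheme you spell out: Prokhorov for the Polish-space data marginals, uniform moment bounds plus Chebyshev for the weak-topology marginals (with \eqref{pressureEstFifthLayer1} supplying the $L^{\gamma+\Theta}_{t,x}$ bound on the density), and the compact-embedding arguments of the earlier layers for $\varrho_\delta$, $V_\delta$ and $\varrho_\delta\mathbf{u}_\delta$, with the electric-field term handled through the Poisson equation and the $L^\infty_tL^{2\gamma/(\gamma+1)}_x$ bound on $\varrho_\delta\nabla V_\delta$. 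Your write-up is in fact more explicit than the paper's own one-line proof.
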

By applying the Jakubowski--Skorokhod theorem \cite{jakubowski1998short}, we obtain the following result.
\begin{lemma}
\label{lem:JakowFifthLayer}
The exists a subsequence (not relabelled) $\{\mu_\delta  :  \delta \in (0,1)  \}$, a complete probability space $(\tilde{\Omega}, \tilde{\mathscr{F}}, \tilde{\mathbb{P}})$ with $\chi$-valued random variables
\begin{align*}
(\tilde{\varrho}_{0,\delta}, \tilde{\mathbf{m}}_{0,\delta},
\tilde{\mathbf{n}}_{0,\delta},  \tilde{\varrho}_\delta , \tilde{\mathbf{u}}_\delta , \tilde{\mathbf{m}}_\delta  ,\tilde{V}_\delta ,
\tilde{\mathcal{E}}_\delta,
\tilde{\nu}_\delta,
 \tilde{W}_\delta ) \quad \delta \in (0,1) 
\end{align*}
and 
\begin{align*}
 (\tilde{\varrho}_0, \tilde{\mathbf{m}}_0, \tilde{\mathbf{n}}_0,  \tilde{\varrho}, \tilde{\mathbf{u}}, \tilde{\mathbf{m}}, \tilde{V}, \tilde{\mathcal{E}},
\tilde{\nu}, \tilde{W})
\end{align*}
such that 
\begin{itemize}
\item the law of $(\tilde{\varrho}_{0,\delta}, \tilde{\mathbf{m}}_{0,\delta},
\tilde{\mathbf{n}}_{0,\delta},  \tilde{\varrho}_\delta , \tilde{\mathbf{u}}_\delta , \tilde{\mathbf{m}}_\delta  ,\tilde{V}_\delta ,
\tilde{\mathcal{E}}_\delta,
\tilde{\nu}_\delta,
 \tilde{W}_\delta )$ on $\chi$ coincide with $\mu_\delta $, $\delta \in (0,1) $,
\item the law of $ (\tilde{\varrho}_0, \tilde{\mathbf{m}}_0, \tilde{\mathbf{n}}_0,  \tilde{\varrho}, \tilde{\mathbf{u}}, \tilde{\mathbf{m}}, \tilde{V}, \tilde{\mathcal{E}},
\tilde{\nu}, \tilde{W})$ on $\chi$ is a Radon measure,
\item the following convergence  (with each $\rightarrow$ interpreted with respect to the corresponding topology)
\begin{align*}
\tilde{\varrho}_{0,\delta} \rightarrow \tilde{\varrho}_0 \quad \text{in}\quad \chi_{\varrho_0}, & \quad \qquad
\tilde{\mathbf{m}}_{0,\delta} \rightarrow \tilde{\mathbf{m}}_0 \quad \text{in}\quad \chi_{\mathbf{m}_0}, 
\\
\tilde{\mathbf{n}}_{0,\delta} \rightarrow \tilde{\mathbf{n}}_0 \quad \text{in}\quad \chi_{\frac{\mathbf{m}_0}{\sqrt{\varrho_0}}}, 
& \quad \qquad
\tilde{\varrho}_\delta  \rightarrow \tilde{\varrho} \quad \text{in}\quad \chi_\varrho,
\\
\tilde{\mathbf{u}}_\delta  \rightarrow \tilde{\mathbf{u}} \quad \text{in}\quad \chi_\mathbf{u}, 
 & \quad \qquad
\tilde{\mathbf{m}}_\delta  \rightarrow \tilde{\mathbf{m}} \quad \text{in}\quad \chi_\mathbf{m}, 
\\
 \tilde{V}_\delta  \rightarrow \tilde{V} \quad  \text{in}\quad \chi_V,
  & \quad \qquad
 \tilde{\mathcal{E}}_\delta  \rightarrow  \tilde{\mathcal{E}} \quad \text{in}\quad \chi_\mathcal{E}, 
 \\
 \tilde{\nu}_\delta  \rightarrow \tilde{\nu} \quad  \text{in}\quad \chi_\nu,
  & \quad \qquad
\tilde{W}_\delta  \rightarrow \tilde{W} \quad \text{in}\quad \chi_W
\end{align*}
holds $\tilde{
\mathbb{P}}$-a.s.;
\item consider any Carath\'eodory function $\mathcal{C}=\mathcal{C}(t,x,\varrho, \mathbf{u}, \mathbb{U}, \mathbf{m}, f, \mathbf{V})$ with
\begin{align*}
(t,x,\varrho, \mathbf{u}, \mathbb{U}, \mathbf{m}, f, \mathbf{V}) \in [0,T]\times \mathbb{T}^3 \times \mathbb{R} \times \mathbb{R}^3 \times \mathbb{R}^{3\times3} \times \mathbb{R}^3 \times \mathbb{R} \times \mathbb{R}^3,
\end{align*} 
and where the following estimate
\begin{align*}
\vert \mathcal{C} \vert \lesssim 1+ \vert \varrho\vert^{r_1} + \vert  \mathbf{u}\vert^{r_2}  + \vert \mathbb{U}\vert^{r_3}   + \vert  \mathbf{m}\vert^{r_4}  + \vert  f\vert^{r_5}   + \vert  \mathbf{V}\vert^{r_6} 
\end{align*}
holds uniformly in $(t,x)$ for some $r_i>0$, $i=1, \ldots, 6$. Then as $\delta \rightarrow0$, it follows that
\begin{align*}
\mathcal{C}(\tilde{\varrho}_\delta, \tilde{\mathbf{u}}_\delta, \nabla\tilde{\mathbf{u}}_\delta, \tilde{\mathbf{m}}_\delta, f, \nabla\tilde{V}_\delta) 
\rightharpoonup
\overline{
\mathcal{C}(\tilde{\varrho}, \tilde{\mathbf{u}}, \nabla\tilde{\mathbf{u}}, \tilde{\mathbf{m}}, f,  \nabla\tilde{V})}
\end{align*}
holds  in $L^r((0,T) \times
 \mathbb{T}^3)$ for all
 \begin{align*}
1<r \leq \frac{\gamma+\Theta}{r_1} \wedge \frac{2}{r_2} \wedge \frac{2\gamma}{r_4(\gamma+1)} \wedge \frac{2}{r_6}
 \end{align*}
$\tilde{\mathbb{P}}$-a.s.
\end{itemize}
\end{lemma}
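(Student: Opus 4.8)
The plan is to obtain Lemma~\ref{lem:JakowFifthLayer} as a direct application of the Jakubowski--Skorokhod representation theorem \cite{jakubowski1998short} to the tight family $\{\mu_\delta:\delta\in(0,1)\}$ furnished by Lemma~\ref{tighnessFifthLAyer}, following verbatim the argument behind Lemma~\ref{lem:JakowFourthLayer}. First I would verify that the path space $\chi$ is admissible for that theorem: the factors $\chi_{\varrho_0}$, $\chi_{\mathbf m_0}$, $\chi_{\frac{\mathbf m_0}{\sqrt{\varrho_0}}}$ and $\chi_W$ are Polish; on the bounded balls that carry all the mass by virtue of the uniform bounds \eqref{uniBoundFifthLayer1} and \eqref{pressureEstFifthLayer1} the weak-topology factors $\chi_{\mathbf u}$ and $\chi_\varrho$ are separable metrizable; $\chi_V$ and $\chi_{\varrho\mathbf u}$ are of the $C_w$-type already handled in \cite{ondrejat2010stochastic,breit2018stoch}; and the two genuinely non-metrizable weak-$*$ factors $\chi_{\mathcal E}$ and $\chi_\nu$ each admit a countable family of continuous real functionals separating points, as discussed in Section~\ref{sec:compactnessFourthLayer} and in \cite[Section~2.8]{breit2018stoch}. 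Once this is in place, the theorem delivers the subsequence, the complete space $(\tilde\Omega,\tilde{\mathscr F},\tilde{\mathbb P})$, the two families of tilde random variables with the stated equality of joint laws, and the $\tilde{\mathbb P}$-a.s.\ convergences; the limit law is Radon because it is the law of a tight random variable.

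Next I would transfer, through the equality of laws, every uniform-in-$\delta$ estimate from the original basis to the new one --- in particular the energy bounds \eqref{uniBoundFifthLayer1}, the mass identity \eqref{massConserveFifth}, and the improved pressure estimate \eqref{pressureEstFifthLayer1} --- so that they hold for $(\tilde\varrho_\delta,\tilde{\mathbf u}_\delta,\tilde V_\delta)$ uniformly in $\delta$. Recalling \eqref{higherPressure} and $0<\Theta\le\frac13$, estimate \eqref{pressureEstFifthLayer1} in particular yields that $\tilde\varrho_\delta$ is bounded in $L^p(\tilde\Omega;L^{\gamma+\Theta}_{t,x})$, which is exactly why $\chi_\varrho$ carries the weak $L^{\gamma+\Theta}_{t,x}$ topology in this layer.

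For the last item I would invoke the fundamental theorem of Young measures, as in \cite[Section~2.8]{breit2018stoch}. The canonical Young measure $\tilde\nu_\delta=\delta_{[\tilde\varrho_\delta,\tilde{\mathbf u}_\delta,\nabla\tilde{\mathbf u}_\delta,\tilde\varrho_\delta\tilde{\mathbf u}_\delta,f,\nabla\tilde V_\delta]}$ converges $\tilde{\mathbb P}$-a.s.\ in $\chi_\nu$ to $\tilde\nu$; combining this with the transferred bounds --- $\tilde\varrho_\delta$ in $L^{\gamma+\Theta}_{t,x}$, $\nabla\tilde{\mathbf u}_\delta$ in $L^2_{t,x}$, $\tilde\varrho_\delta\tilde{\mathbf u}_\delta$ in $L^2_tL^{\frac{2\gamma}{\gamma+1}}_x$, $\nabla\tilde V_\delta$ in $L^\infty_tL^2_x$, and $f\in L^\infty_x$ --- shows that for any Carath\'eodory function $\mathcal C$ obeying the stated growth the composed sequence $\mathcal C(\tilde\varrho_\delta,\tilde{\mathbf u}_\delta,\nabla\tilde{\mathbf u}_\delta,\tilde{\mathbf m}_\delta,f,\nabla\tilde V_\delta)$ is $\tilde{\mathbb P}$-a.s.\ uniformly integrable in $L^r((0,T)\times\mathbb T^3)$ for every $r$ in the asserted range $1<r\le\frac{\gamma+\Theta}{r_1}\wedge\frac{2}{r_2}\wedge\frac{2\gamma}{r_4(\gamma+1)}\wedge\frac{2}{r_6}$. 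Hence along a further subsequence it converges weakly in that $L^r$, and the representation of such weak limits identifies the limit as the barycenter $\int_{\mathbb R^{20}}\mathcal C(\cdot,\xi)\,\mathrm d\tilde\nu_{\cdot}(\xi)=\overline{\mathcal C(\tilde\varrho,\tilde{\mathbf u},\nabla\tilde{\mathbf u},\tilde{\mathbf m},f,\nabla\tilde V)}$; since this is independent of the subsequence, the whole sequence converges.

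The part that takes care is not the extraction itself but the two bookkeeping points: justifying that Jakubowski's theorem genuinely applies across the non-metrizable weak-$*$ factors $\chi_{\mathcal E}$ and $\chi_\nu$ (using the separating sequences of functionals and the sub-Polish structure), and checking that the uniform-integrability thresholds coming from \eqref{uniBoundFifthLayer1}--\eqref{pressureEstFifthLayer1} are compatible with the growth exponents $r_i$ so that the Young-measure representation holds in the claimed $L^r$. Both are routine adaptations of the fourth-layer argument behind Lemma~\ref{lem:JakowFourthLayer}; the only substantive change is the use of the weaker integrability exponents $\gamma$ and $\gamma+\Theta$ in place of $\Gamma$ and $\Gamma+1$, which is precisely why the list of admissible $r$ has been adjusted accordingly.
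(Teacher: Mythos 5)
Your proposal is correct and follows essentially the same route as the paper, which obtains the lemma by applying the Jakubowski--Skorokhod theorem to the tight family from Lemma \ref{tighnessFifthLAyer} on the quasi-Polish space $\chi$ and handles the Carath\'eodory/Young-measure item exactly as in the fourth layer, with $\Gamma$, $\Gamma+1$ replaced by $\gamma$, $\gamma+\Theta$. Your additional bookkeeping (separating functionals for the weak-$*$ factors, transfer of the uniform bounds via equality of laws) merely makes explicit what the paper delegates to \cite[Section 2.8]{breit2018stoch} and the analogue Lemma \ref{lem:JakowFourthLayer}.
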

The following corollary follow from Lemma \ref{lem:JakowFifthLayer}.
\begin{corollary}
The following holds $\tilde{\mathbb{P}}$-a.s.
\begin{equation}
\begin{aligned}
&\tilde{\varrho}_{0,\delta}= \tilde{\varrho}_\delta(0), \quad \tilde{\mathbf{m}}_{0,\delta}= \tilde{\varrho}_\delta\tilde{\mathbf{u}}_\delta(0), \quad \tilde{\mathbf{n}}_{0,\delta} = \frac{\tilde{\mathbf{m}}_{0,\delta}}{\sqrt{\tilde{\varrho}_{0,\delta}}}, \quad \tilde{\mathbf{m}}_{\delta}= \tilde{\varrho}_\delta\tilde{\mathbf{u}}_\delta,
\\
&\tilde{\mathcal{E}}_\delta = \mathcal{E}_\delta(\tilde{\varrho}_\delta, \tilde{\mathbf{u}}_\delta, \tilde{V}_\delta),\quad
\tilde{\nu}_{\delta}= \delta_{[\tilde{\varrho}_\delta, \tilde{\mathbf{u}}_\delta, \nabla \tilde{\mathbf{u}}_\delta, \tilde{\varrho}_\delta \tilde{\mathbf{u}}_\delta, f, \nabla \tilde{V}_\delta]}
\end{aligned}
\end{equation}
and that
\begin{equation}
\begin{aligned}
\tilde{\mathbb{E}} \bigg\vert \sup_{t\in[0,T]} \int_{\mathbb{T}^3} \tilde{\mathcal{E}}_\delta \, \mathrm{d}x \bigg\vert^p
&=
\tilde{\mathbb{E}} \bigg\vert\sup_{t\in[0,T]} \int_{\mathbb{T}^3}\bigg( \frac{1}{2} \tilde{\varrho}_\delta  \vert \tilde{\mathbf{u}}_\delta  \vert^2  
+ P^\Gamma_\delta(\tilde{\varrho}_\delta ) 
\pm
\vartheta \vert \nabla \tilde{V}_\delta \vert^2 \bigg) \, \mathrm{d}x
\bigg\vert^p
\lesssim_{  \Gamma, p,\tilde{\mathcal{E}}_{0,\delta}} 1
\end{aligned}
\end{equation}
and
\begin{equation}
\begin{aligned}
\tilde{\mathbb{E}}\bigg\vert \int_0^T \int_{\mathbb{T}^3} P^\Gamma_\delta(\tilde{\varrho}_\delta ) \tilde{\varrho}_\delta^\Theta
\, \mathrm{d}x\, \mathrm{d}t
\bigg\vert^p
\lesssim_{  \Gamma, p,\tilde{\mathcal{E}}_{0,\delta}} 1
\end{aligned}
\end{equation}
holds uniformly in $\delta$.
\end{corollary}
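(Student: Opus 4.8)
This corollary is purely a transfer statement: every identity and every bound it asserts is already known for the approximating sequence $(\varrho_\delta,\mathbf{u}_\delta,V_\delta,W)$ on the original stochastic basis from Section \ref{sec:UniformEstFifthLayer} and Section \ref{sec:PressureEstFifthLay}, and the only thing to do is to move them to the new basis supplied by Lemma \ref{lem:JakowFifthLayer} via the equality of joint laws. So the plan is to identify the relevant functionals as Borel-measurable maps on the path space $\chi$, observe that the relations/estimates hold on a set of full $\mu_\delta$-measure on the original side, and conclude by pushing forward.

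\textbf{Key steps.} First I would treat the algebraic identities. On the original probability space the components of the joint law $\mu_\delta$ constructed in Section \ref{sec:compactnessFifthLayer} satisfy, by construction, $\varrho_{0,\delta}=\varrho_\delta(0)$ (the trace at $t=0$ being well defined since $\varrho_\delta\in C_w([0,T];L^\gamma_x)$), $\mathbf{m}_{0,\delta}=(\varrho_\delta\mathbf{u}_\delta)(0)$, $\mathbf{n}_{0,\delta}=\mathbf{m}_{0,\delta}/\sqrt{\varrho_{0,\delta}}$, $\mathbf{m}_\delta=\varrho_\delta\mathbf{u}_\delta$, $\mathcal{E}_\delta=\mathcal{E}_\delta(\varrho_\delta,\mathbf{u}_\delta,V_\delta)$ and $\nu_\delta=\delta_{[\varrho_\delta,\mathbf{u}_\delta,\nabla\mathbf{u}_\delta,\varrho_\delta\mathbf{u}_\delta,f,\nabla V_\delta]}$. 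Each of these defines a Borel subset of $\chi$ of full $\mu_\delta$-measure; the maps involved ($(\varrho,\mathbf{u})\mapsto\varrho\mathbf{u}$, evaluation at $t=0$, $\varrho\mapsto P^\Gamma_\delta(\varrho)$, and the formation of the Young measure) are Borel, so since by Lemma \ref{lem:JakowFifthLayer} the tuple $(\tilde\varrho_{0,\delta},\dots,\tilde W_\delta)$ again has law $\mu_\delta$, the same sets have full $\tilde{\mathbb{P}}$-measure, which is precisely the first displayed block. This is carried out exactly as in \cite[Section~2.9, Lemmas~4.4.9 and~4.4.11]{breit2018stoch}. Second, for the two moment bounds I would note that $\varrho\mapsto\sup_{t\in[0,T]}\int_{\mathbb{T}^3}[\tfrac12\varrho|\mathbf{u}|^2+P^\Gamma_\delta(\varrho)\pm\vartheta|\nabla V|^2]\, \mathrm{d}x$ and $\varrho\mapsto\int_0^T\!\!\int_{\mathbb{T}^3}P^\Gamma_\delta(\varrho)\varrho^\Theta\, \mathrm{d}x\, \mathrm{d}t$ are Borel (in fact lower semicontinuous) functionals on $\chi_{\mathcal{E}}$ and on $\chi_\varrho$ respectively; by equality of laws their $p$-th $\tilde{\mathbb{P}}$-moments coincide with the corresponding $\mathbb{P}$-moments, which are bounded uniformly in $\delta$ in \eqref{uniBoundFifthLayer1} and \eqref{pressureEstFifthLayer1}, once one also records (as in Section \ref{sec:lawFifthLayer}) that $\tilde{\mathcal{E}}_{0,\delta}$ has the same law as $\mathcal{E}_{0,\delta}$ and is hence $\delta$-uniformly bounded in $L^p(\tilde\Omega;L^1_x)$.

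\textbf{Main obstacle.} The only genuinely delicate point is the measurability book-keeping in the non-Polish weak-$\ast$ spaces $\chi_{\mathcal{E}}=(L^\infty_t;\mathcal{M}_b(\mathbb{T}^3),w^*)$ and $\chi_\nu$, i.e. checking that the Jakubowski--Skorokhod representation truly transfers the parametrised-measure identity $\tilde\nu_\delta=\delta_{[\tilde\varrho_\delta,\dots]}$ and the measure-valued identity $\tilde{\mathcal{E}}_\delta=\mathcal{E}_\delta(\tilde\varrho_\delta,\tilde{\mathbf{u}}_\delta,\tilde V_\delta)$. I would resolve this exactly as in \cite[Section~2.8, Proposition~4.4.8]{breit2018stoch}: realise both identities as countable intersections of Borel sets by testing against a countable determining family $\psi\otimes\phi$ with $\psi\in L^1((0,T)\times\mathbb{T}^3)$ and $\phi\in C_b(\mathbb{R}^{20})$ (respectively against $C(\mathbb{T}^3)$-functions for $\chi_{\mathcal{E}}$), after which the pushforward argument applies verbatim; the remaining product- and trace-continuity is no harder than in the previous approximation layers.
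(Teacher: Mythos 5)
Your proposal is correct and is exactly the argument the paper intends: the corollary is stated as an immediate consequence of Lemma \ref{lem:JakowFifthLayer}, i.e.\ the identities and the uniform bounds \eqref{uniBoundFifthLayer1} and \eqref{pressureEstFifthLayer1} already hold on the original basis and are transferred to the new one by equality of laws, with the measurability issues in the weak-$*$ spaces handled as in \cite{breit2018stoch}. No gap.
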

Furthermore, we can endow this new probability space $(\tilde{\Omega}, \tilde{\mathscr{F}}, \tilde{\mathbb{P}})$ with the filtrations
\begin{align*}
\tilde{\mathscr{F}}_t^\delta  :=\sigma \bigg(\sigma_t[\tilde{\varrho}_\delta], \sigma_t[\tilde{\mathbf{u}}_\delta], \sigma_t[\tilde{V}_\delta], \bigcup_{k\in \mathbb{N}}\sigma_t[\tilde{\beta}_{\delta,k}]  \bigg), \quad t\in [0,T]
\end{align*}
and
\begin{align*}
\tilde{\mathscr{F}}_t  :=
\sigma \bigg(\sigma_t[\tilde{\varrho}], \sigma_t[\tilde{\mathbf{u}}], \sigma_t[\tilde{V}], \bigcup_{k\in \mathbb{N}}\sigma_t[\tilde{\beta}_{k}]  \bigg), \quad t\in [0,T]
\end{align*}
for the family of sequences $(\tilde{\varrho}_{0,\delta}, \tilde{\mathbf{m}}_{0,\delta},
\tilde{\mathbf{n}}_{0,\delta},  \tilde{\varrho}_\delta , \tilde{\mathbf{u}}_\delta , \tilde{\mathbf{m}}_\delta  ,\tilde{V}_\delta ,
\tilde{\mathcal{E}}_\delta,
\tilde{\nu}_\delta,
 \tilde{W}_\delta )$  and the limit random variables $ (\tilde{\varrho}_0, \tilde{\mathbf{m}}_0, \tilde{\mathbf{n}}_0,  \tilde{\varrho}, \tilde{\mathbf{u}}, \tilde{\mathbf{m}}, \tilde{V}, \tilde{\mathcal{E}},
\tilde{\nu}, \tilde{W})$ respectively.
\\
Now recall from Section \ref{sec:UniformEstFifthLayer} that $[(\Omega ,\mathscr{F} ,(\mathscr{F} _t)_{t\geq0},\mathbb{P} );\varrho_\delta , \mathbf{u}_\delta , V_\delta , W   ]$
was a dissipative martingale solution  of \eqref{contEqFourth1}--\eqref{elecFieldFourth1} in the sense of Definition \ref{def:martSolFourthLayer} having the law $\Lambda_\delta$. Since Lemma \ref{lem:JakowFifthLayer} attests that the marginal law of $(\varrho_\delta , \mathbf{u}_\delta )$ coincides with $(\tilde{\varrho}_\delta , \tilde{\mathbf{u}}_\delta )$, it follows that that latter satisfies the Renormalized continuity equation \eqref{renormalizedContFourthLayer}. The follow result thus holds true.
\begin{lemma}
For any $\phi\in C^\infty_c([0,T)\times{\mathbb{T}^3})$ and $b \in C^1_b(\mathbb{R})$ such that $ b'(z)=0$ for all $z\geq M_b$,  we have that
\begin{equation}
\begin{aligned}
\label{renormalizedContFifthLayerSeq} 
&0=\int_0^T\int_{{\mathbb{T}^3}}  b(\tilde{\varrho}_\delta)\,\partial_t\phi \,\mathrm{d}x \, \mathrm{d}t +
\int_{{\mathbb{T}^3}}  b\big(\tilde{\varrho}_{0,\delta}\big)\,\phi(0) \,\mathrm{d}x 
\\&+
 \int_0^T\int_{{\mathbb{T}^3}} \big[b(\tilde{\varrho}_\delta) \tilde{\mathbf{u}}_\delta  \big] \cdot \nabla\phi \, \mathrm{d}x \, \mathrm{d}t  
- \int_0^T\int_{{\mathbb{T}^3}} \big[ \big(  b'(\tilde{\varrho}_\delta) \tilde{\varrho}_\delta  
-
 b(\tilde{\varrho}_\delta )\big)\mathrm{div}\tilde{\mathbf{u}}_\delta \big]\,  \phi\, \mathrm{d}x \, \mathrm{d}t
\end{aligned}
\end{equation} 
holds $\tilde{\mathbb{P}}$-a.s.
\end{lemma}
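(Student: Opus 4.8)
The plan is to obtain \eqref{renormalizedContFifthLayerSeq} not by re-deriving the renormalized continuity equation from scratch on the new probability space, but by transferring it from the original solutions via the equality of laws furnished by Lemma \ref{lem:JakowFifthLayer}. Recall that for every $\delta$ the pair $(\varrho_\delta, \mathbf{u}_\delta)$ is a dissipative martingale solution in the sense of Definition \ref{def:martSolFourthLayer}, so by item (ix) of that definition it already satisfies \eqref{renormalizedContFourthLayer} $\mathbb{P}$-a.s. First I would fix $\phi \in C^\infty_c([0,T)\times\mathbb{T}^3)$ and an admissible renormalization function $b \in C^1_b(\mathbb{R})$ with $b'(z)=0$ for $z\geq M_b$, and introduce the ``defect'' functional $L_{\phi,b}$ that sends a pair $(\varrho, \mathbf{u}) \in \chi_\varrho \times \chi_{\mathbf{u}}$ to the difference of the two sides of \eqref{renormalizedContFourthLayer}, with the initial term $b(\varrho(0))$ read off the $C_w([0,T];L^\gamma_x)$ factor of $\chi_\varrho$. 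The claim to be proved then amounts to $L_{\phi,b}(\tilde{\varrho}_\delta, \tilde{\mathbf{u}}_\delta) = 0$ $\tilde{\mathbb{P}}$-a.s., combined with the identification $\tilde{\varrho}_\delta(0) = \tilde{\varrho}_{0,\delta}$ supplied by the corollary above.

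The key step is to show that $L_{\phi,b}$ is Borel measurable on $\chi_\varrho \times \chi_{\mathbf{u}}$, so that equality of joint laws forces $\tilde{\mathbb{P}}\big(L_{\phi,b}(\tilde{\varrho}_\delta, \tilde{\mathbf{u}}_\delta)=0\big) = \mathbb{P}\big(L_{\phi,b}(\varrho_\delta, \mathbf{u}_\delta)=0\big) = 1$. For this I would use that, since $b$ is bounded and Lipschitz and $z\mapsto b'(z)z-b(z)$ is bounded (it vanishes for $z\geq M_b$), the associated Nemytskii operators are continuous from $L^{\gamma+\Theta}((0,T)\times\mathbb{T}^3)$ with its norm topology into $L^q((0,T)\times\mathbb{T}^3)$ for any finite $q$; that evaluation at $t=0$ is continuous on $C_w([0,T];L^\gamma_x)$; and that the remaining operations (multiplication by $\nabla\phi$ or $\phi$, space–time integration) are bilinear and bounded, hence continuous, in the norm topologies, since $\phi$ is smooth. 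Because on a separable Banach space the Borel $\sigma$-algebras of the weak and the norm topology coincide, the canonical inclusions $\chi_\varrho \hookrightarrow (L^{\gamma+\Theta}_{t,x}, \text{norm})$ and $\chi_{\mathbf{u}} \hookrightarrow (L^2(0,T;W^{1,2}_x), \text{norm})$ are Borel, and $L_{\phi,b}$ is Borel as a composition of such Borel and continuous maps.

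Finally I would upgrade the $\tilde{\mathbb{P}}$-a.s.\ validity from a single pair $(\phi,b)$ to all admissible pairs simultaneously: pick a countable set $\mathcal{D}$ dense in $C^\infty_c([0,T)\times\mathbb{T}^3)$ in the relevant norms, together with a countable family $\mathcal{B}$ of renormalization functions approximating every admissible $b$ uniformly and with uniform control of $b'$ and of $M_b$; intersect the countably many $\tilde{\mathbb{P}}$-null sets; and on the resulting full-measure event pass to arbitrary $(\phi,b)$ using the uniform moment bounds \eqref{uniBoundFifthLayer1} and dominated convergence. I expect the measurability step to be the only genuinely delicate point: one has to check that the quadratic-type products $b(\varrho)\mathbf{u}$ and $(b'(\varrho)\varrho - b(\varrho))\,\mathrm{div}\,\mathbf{u}$, which couple a bounded nonlinear image of the density against a quantity that is merely weakly controlled on $\chi_{\mathbf{u}}$, still define a Borel functional on the weak path spaces — and this is precisely where the coincidence of the weak and norm Borel structures on separable Banach spaces is invoked, everything else (density of test functions, the limiting procedure, the identification of the initial datum) being routine.
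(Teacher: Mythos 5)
Your proposal is correct and follows essentially the same route as the paper: the paper likewise obtains \eqref{renormalizedContFifthLayerSeq} by noting that $(\varrho_\delta,\mathbf{u}_\delta)$ satisfies \eqref{renormalizedContFourthLayer} as part of Definition \ref{def:martSolFourthLayer} and then transferring this $\mathbb{P}$-a.s.\ identity to $(\tilde{\varrho}_\delta,\tilde{\mathbf{u}}_\delta)$ via the equality of (marginal) laws from Lemma \ref{lem:JakowFifthLayer}. The paper simply asserts this transfer, whereas you supply the supporting details (Borel measurability of the defect functional via the coincidence of weak and norm Borel structures, and the countable-density argument over $(\phi,b)$), which are the standard justification in this framework.
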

Now analogous to Corollary \ref{cor:nonlinearLimitsFourth}, we also have the following result.
\begin{corollary}
\label{cor:nonlinearLimitsFifth}
There exists $\overline{p^\Gamma_\delta(\tilde{\varrho})}$, $\overline{\vert \nabla \tilde{V}\vert^2}$, $\overline{\tilde{\varrho} \nabla \tilde{V}}$ and $\overline{\tilde{\varrho}\, \mathbf{g}_k(\tilde{\varrho}, \tilde{f}, \tilde{\varrho}\tilde{\mathbf{u}})}$ such that
\begin{align}
p^\Gamma_\delta(\tilde{\varrho}_\delta)
&\rightharpoonup
\overline{p^\Gamma_\delta(\tilde{\varrho})}
\label{pressureLimitFifthLayer}\\
\vert \nabla \tilde{V}_\delta \vert^2
&\rightharpoonup
\overline{\vert \nabla \tilde{V}\vert^2}
\label{nablaVLimitFifthLayer}\\
\tilde{\varrho}_\delta \nabla \tilde{V}_\delta 
&\rightharpoonup
\overline{\tilde{\varrho} \nabla \tilde{V}}
\label{rhoNablaVLimitFifthLayer}\\
\tilde{\varrho}_\delta \mathbf{g}_k(\tilde{\varrho}_\delta, f, \tilde{\varrho}_\delta\tilde{\mathbf{u}}_\delta)
&\rightharpoonup
\overline{\tilde{\varrho}\, \mathbf{g}_k(\tilde{\varrho}, f, \tilde{\varrho}\tilde{\mathbf{u}})}
\label{noiseLimitFifthLayer}
\end{align}
$\tilde{\mathbb{P}}$-a.s. in $L^r((0,T) \times \mathbb{T}^3)$ for some $r>1$.
\end{corollary}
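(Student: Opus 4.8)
The plan is to argue exactly as in the proof of Corollary~\ref{cor:nonlinearLimitsFourth}, the only new point being that less integrability is available here than in the fourth layer. First I would record the $\delta$-uniform bounds. From the energy estimate \eqref{uniBoundFifthLayer1} one controls, uniformly in $\delta$ and for every $p\in(1,\infty)$, the quantities $\tilde\varrho_\delta$ in $L^p(\tilde\Omega;L^\infty(0,T;L^\gamma_x))$, $\Delta\tilde V_\delta$ in $L^p(\tilde\Omega;L^\infty(0,T;L^\gamma_x))$, $\tilde{\mathbf u}_\delta$ in $L^p(\tilde\Omega;L^2(0,T;W^{1,2}_x))$ and $\tilde\varrho_\delta\tilde{\mathbf u}_\delta$ in $L^p(\tilde\Omega;L^\infty(0,T;L^{2\gamma/(\gamma+1)}_x))$; from the improved pressure estimate \eqref{pressureEstFifthLayer1} together with \eqref{higherPressure} one moreover gets $\tilde\varrho_\delta$ bounded in $L^p(\tilde\Omega;L^{\gamma+\Theta}((0,T)\times\mathbb{T}^3))$ and, since $\Gamma>\gamma$, $p^\Gamma_\delta(\tilde\varrho_\delta)$ bounded in $L^p(\tilde\Omega;L^{r_0}((0,T)\times\mathbb{T}^3))$ for $r_0=(\Gamma+\Theta)/\Gamma>1$. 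All these bounds are read on the tilde variables via the equality of laws in Lemma~\ref{lem:JakowFifthLayer}.

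Next I would upgrade the control on $\nabla\tilde V_\delta$ by elliptic regularity for the Poisson equation \eqref{elecFieldFifth1}: since $\tilde\varrho_\delta\in L^\gamma_x$ and $f\in L^\gamma_x$, one has $\tilde V_\delta$ bounded in $L^\infty(0,T;W^{2,\gamma}_x)$, hence $\nabla\tilde V_\delta$ bounded in $L^\infty(0,T;W^{1,\gamma}_x)\hookrightarrow L^\infty(0,T;L^{3\gamma/(3-\gamma)}_x)$ with $3\gamma/(3-\gamma)>3$ because $\gamma>\tfrac32$ (and in $L^\infty_{t,x}$ if $\gamma\ge 3$). Therefore $|\nabla\tilde V_\delta|^2$ is $\delta$-uniformly bounded in $L^p(\tilde\Omega;L^{r_1}_{t,x})$ for some $r_1>1$, and by H\"older's inequality $\tilde\varrho_\delta\nabla\tilde V_\delta$ is bounded in $L^p(\tilde\Omega;L^{r_2}_{t,x})$ for some $r_2>1$ (with $1/r_2=\tfrac1\gamma+\tfrac{3-\gamma}{3\gamma}=\tfrac{6-\gamma}{3\gamma}<1$ when $\gamma<3$) --- here the strict inequality $\gamma>\tfrac32$ is exactly what is used. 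Finally, the growth conditions \eqref{stochCoeffBoundExi0}--\eqref{stochsummableConst} on $\mathbf g_k$ and the uniform boundedness of the cut--offs give, just as in \eqref{noiseBoundFourthLayer1}--\eqref{noiseBoundFourthLayer3}, that $\tilde\varrho_\delta\mathbf g_k(\tilde\varrho_\delta,f,\tilde\varrho_\delta\tilde{\mathbf u}_\delta)$ is $\delta$-uniformly bounded in $L^p(\tilde\Omega;L^{r_3}_{t,x})$ for some $r_3>1$.

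Then each of $p^\Gamma_\delta(\tilde\varrho_\delta)$, $|\nabla\tilde V_\delta|^2$, $\tilde\varrho_\delta\nabla\tilde V_\delta$ and $\tilde\varrho_\delta\mathbf g_k(\tilde\varrho_\delta,f,\tilde\varrho_\delta\tilde{\mathbf u}_\delta)$ is of the form $\mathcal{C}(\tilde\varrho_\delta,\tilde{\mathbf u}_\delta,\nabla\tilde{\mathbf u}_\delta,\tilde\varrho_\delta\tilde{\mathbf u}_\delta,f,\nabla\tilde V_\delta)$ for a Carath\'eodory function $\mathcal{C}$ whose growth exponents $r_i$ are, by the previous steps, compatible with some $r>1$ in the sense of the last item of Lemma~\ref{lem:JakowFifthLayer}; applying that item along the subsequence already fixed in Lemma~\ref{lem:JakowFifthLayer} produces the weak convergences \eqref{pressureLimitFifthLayer}--\eqref{noiseLimitFifthLayer} in $L^r((0,T)\times\mathbb{T}^3)$ $\tilde{\mathbb P}$-a.s., the limits being by definition the barred quantities.

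The hard part --- and the reason the improved pressure estimate of Section~\ref{sec:PressureEstFifthLay} and the elliptic bound for $V_\delta$ are needed --- is precisely to secure that these four sequences are bounded in some $L^r_{t,x}$ with $r$ strictly larger than $1$, and not merely tight in the space of bounded measures. Without \eqref{pressureEstFifthLayer1}, the pressure sequence $p^\Gamma_\delta(\tilde\varrho_\delta)$ could concentrate and its limit would only be a measure, so \eqref{pressureLimitFifthLayer} could not even be stated, and the later analysis of the effective viscous flux and of the oscillation defect measure would be impossible.
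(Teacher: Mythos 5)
Your proposal is correct and follows essentially the same route as the paper: the paper's own proof of this corollary is a one-line appeal to the analogous fourth-layer result, i.e.\ to the Carath\'eodory-function convergence in the last item of Lemma \ref{lem:JakowFifthLayer} combined with the $\delta$-uniform bounds \eqref{uniBoundFifthLayer1}, \eqref{pressureEstFifthLayer1} and the elliptic estimate for \eqref{elecFieldFifth1}, which is precisely what you spell out. The one thin spot --- equally unaddressed in the paper --- is the $L^{r_3}_{t,x}$ bound on $\tilde\varrho_\delta\,\mathbf g_k(\tilde\varrho_\delta,f,\tilde\varrho_\delta\tilde{\mathbf u}_\delta)$: the displays \eqref{noiseBoundFourthLayer1}--\eqref{noiseBoundFourthLayer3} you cite control time-integrated quadratic-variation quantities rather than a space--time Lebesgue norm of $\tilde\varrho_\delta\,\mathbf g_k$ itself, and with only $\tilde\varrho_\delta\in L^{\gamma+\Theta}_{t,x}$, $\Theta\le\tfrac13$, the term $\tilde\varrho_\delta^2$ in the growth bound need not lie in $L^{r}_{t,x}$ for any $r>1$ when $\gamma$ is close to $\tfrac32$, so this convergence deserves a more careful justification than the reference you give.
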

The corresponding versions of Lemma \ref{lem:identifyContFourthLayer}--Lemma \ref{weakMomEqLimitFourthLayer} on this approximation layer holds true.
\begin{lemma}
\label{lem:identifyContFifthLayer}
The random distributions  $[\tilde{\varrho}, \tilde{\mathbf{u}}]$ satisfies \eqref{eq:distributionalSolMomen} for all $\psi \in C^\infty_c ([0,T))$ and $\phi \in C^\infty(\mathbb{T}^3)$ $\tilde{\mathbb{P}}$-a.s.
\end{lemma}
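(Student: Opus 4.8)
The plan is to obtain \eqref{eq:distributionalSolMomen} by passing to the limit $\delta\to0$ in the weak momentum balance \eqref{weakMomEqFourthLayer} satisfied by the approximate solutions of the fourth layer. By the equality of laws in Lemma \ref{lem:JakowFifthLayer}, the variables $(\tilde{\varrho}_\delta,\tilde{\mathbf{u}}_\delta,\tilde{V}_\delta,\tilde{W}_\delta)$ form, on the new stochastic basis $(\tilde{\Omega},\tilde{\mathscr{F}},(\tilde{\mathscr{F}}_t^\delta),\tilde{\mathbb{P}})$, a dissipative martingale solution of \eqref{contEqFourth1}--\eqref{elecFieldFourth1}; in particular \eqref{weakMomEqFourthLayer} holds for all $\psi\in C^\infty_c([0,T))$ and $\bm{\phi}\in C^\infty(\mathbb{T}^3)$, together with the uniform moment bounds \eqref{uniBoundFifthLayer1}, \eqref{pressureEstFifthLayer1}, the mass bound \eqref{massConserveFifth}, and the convergences of Lemma \ref{lem:JakowFifthLayer} and Corollary \ref{cor:nonlinearLimitsFifth}. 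I would then treat each term of \eqref{weakMomEqFourthLayer} separately, on a set of full $\tilde{\mathbb{P}}$-measure.

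The time-derivative term and the two viscous terms are linear and pass to the limit using $\tilde{\mathbf{m}}_\delta\to\tilde{\mathbf{m}}=\tilde{\varrho}\tilde{\mathbf{u}}$ in $C_w([0,T];L^{2\gamma/(\gamma+1)}_x)$ and $\nabla\tilde{\mathbf{u}}_\delta\rightharpoonup\nabla\tilde{\mathbf{u}}$ in $L^2_{t,x}$, while the initial datum converges by $\tilde{\mathbf{m}}_{0,\delta}\to\tilde{\mathbf{m}}_0$ in $L^1_x$. For the convective term I would argue exactly as for \eqref{identifyConvectionFourthLayer}: combining $\tilde{\mathbf{m}}_\delta\to\tilde{\mathbf{m}}$ in $C([0,T];W^{-k,2}_x)$ with $\tilde{\mathbf{u}}_\delta\rightharpoonup\tilde{\mathbf{u}}$ in $L^2_tW^{1,2}_x$ and the uniform bound on $\tilde{\varrho}_\delta\tilde{\mathbf{u}}_\delta$ in $L^2_tL^{6\gamma/(\gamma+6)}_x$ (derived as in \eqref{uniBoundFourthLayer2}) yields $\tilde{\varrho}_\delta\tilde{\mathbf{u}}_\delta\otimes\tilde{\mathbf{u}}_\delta\rightharpoonup\tilde{\varrho}\tilde{\mathbf{u}}\otimes\tilde{\mathbf{u}}$ in $L^1_{t,x}$, $\tilde{\mathbb{P}}$-a.s. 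The $\delta$-dependent part of the artificial pressure vanishes: interpolating $\delta\tilde{\varrho}_\delta^\Gamma$ between $\delta\tilde{\varrho}_\delta^{\Gamma+\Theta}\in L^1_{t,x}$ (from \eqref{pressureEstFifthLayer1}) and $\delta\tilde{\varrho}_\delta\in L^\infty_tL^1_x$ (from \eqref{uniBoundFifthLayer1}) gives $\|\delta(\tilde{\varrho}_\delta+\tilde{\varrho}_\delta^\Gamma)\|_{L^{(\Gamma+\Theta)/\Gamma}_{t,x}}\lesssim\delta^{\Theta/\Gamma}\to0$, so only $p(\tilde{\varrho}_\delta)=a\tilde{\varrho}_\delta^\gamma$ survives, converging weakly to $\overline{p(\tilde{\varrho})}$ as in \eqref{pressureLimitFifthLayer}. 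For the electric-field term note that $\Delta\tilde{V}_\delta=\pm(\tilde{\varrho}_\delta-f)$ is bounded in $L^\infty_tL^\gamma_x$ and $\partial_t\nabla\tilde{V}_\delta=\mp\nabla\Delta^{-1}_{\mathbb{T}^3}\mathrm{div}(\tilde{\varrho}_\delta\tilde{\mathbf{u}}_\delta)$ is bounded in some $L^r_tL^q_x$, so by Aubin--Lions $\nabla\tilde{V}_\delta\to\nabla\tilde{V}$ strongly in $L^r_tL^{q'}_x$; hence $\tilde{\varrho}_\delta\nabla\tilde{V}_\delta\rightharpoonup\tilde{\varrho}\nabla\tilde{V}$ (weak times strong), i.e. $\overline{\tilde{\varrho}\nabla\tilde{V}}=\tilde{\varrho}\nabla\tilde{V}$, which is already the form appearing in \eqref{eq:distributionalSolMomen}. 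Finally, the stochastic integral passes to the limit via the convergence theorem \cite[Theorem 2.9.1]{breit2018stoch}, used as in \cite[Proposition 4.4.12]{breit2018stoch}, with the convergence of $\tilde{W}_\delta$ and \eqref{noiseLimitFifthLayer}, producing the noise term $\sum_k\overline{\tilde{\varrho}\,\mathbf{g}_k(\tilde{\varrho},f,\tilde{\varrho}\tilde{\mathbf{u}})}\,\mathrm{d}\tilde{\beta}_k$.

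Collecting these limits shows that $[\tilde{\varrho},\tilde{\mathbf{u}},\tilde{V},\tilde{W}]$ satisfies \eqref{eq:distributionalSolMomen}, the convective and electric-field terms already being in the displayed form and the nonlinear pressure and noise terms appearing as the weak limits $\overline{p(\tilde{\varrho})}$ and $\sum_k\overline{\tilde{\varrho}\,\mathbf{g}_k(\tilde{\varrho},f,\tilde{\varrho}\tilde{\mathbf{u}})}$; this is the $\delta\to0$ analogue of Lemma \ref{lem:identifyMomFourthLayer}/\eqref{weakMomEqLimitFourthLayer}. The main obstacle — and the reason the identifications $\overline{p(\tilde{\varrho})}=p(\tilde{\varrho})$ and $\overline{\tilde{\varrho}\,\mathbf{g}_k}=\tilde{\varrho}\,\mathbf{g}_k$ are completed only in the subsequent steps of this layer — is the loss of the regularising $\varepsilon\Delta\varrho$ term: the density now solves the continuity equation only weakly, with merely $L^{\gamma+\Theta}_{t,x}$ integrability, so strong $L^1$-convergence of $\tilde{\varrho}_\delta$ has to be extracted from the stochastic effective-viscous-flux identity \eqref{momEqItoFifthLayer1} (obtained by testing the momentum balance against $\nabla\Delta^{-1}_{\mathbb{T}^3}\tilde{\varrho}_\delta^\Theta$ and controlling the new electric-field contributions through the conserved-mass bound \eqref{massConserveFifth} in the spirit of \eqref{massConserContOperatorFifth}) together with an analysis of the oscillation defect measure and the DiPerna--Lions renormalisation of \eqref{renormalizedContFifthLayerSeq}. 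Once $\tilde{\varrho}_\delta\to\tilde{\varrho}$ strongly, continuity of $p$ with the uniform higher integrability, and the Lipschitz bound on $\mathbf{g}_k$, yield $\overline{p(\tilde{\varrho})}=p(\tilde{\varrho})$ and $\overline{\tilde{\varrho}\,\mathbf{g}_k(\tilde{\varrho},f,\tilde{\varrho}\tilde{\mathbf{u}})}=\tilde{\varrho}\,\mathbf{g}_k(\tilde{\varrho},f,\tilde{\varrho}\tilde{\mathbf{u}})$, so that the identity becomes exactly \eqref{eq:distributionalSolMomen}.
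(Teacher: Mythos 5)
The lemma you were asked about is, despite the reference printed in it, the identification of the \emph{continuity} equation and not of the momentum balance: the label, the scalar test function $\phi\in C^\infty(\mathbb{T}^3)$, the fact that only $[\tilde{\varrho},\tilde{\mathbf{u}}]$ (and not $\tilde V,\tilde W$) appears, and the exact parallel with Lemma \ref{lem:identifyContFourthLayer} show that the citation of \eqref{eq:distributionalSolMomen} is a typo for \eqref{eq:distributionalSolMass}; the momentum equation is treated separately in Lemma \ref{lem:identifyMomFifthLayer}, necessarily in the ``bar'' form \eqref{weakMomEqLimitFifthLayer}, because at this point of the section the strong convergence of the density is not yet available. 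Your proposal never addresses the continuity equation. Its intended proof is short: by the equality of laws in Lemma \ref{lem:JakowFifthLayer}, $(\tilde{\varrho}_\delta,\tilde{\mathbf{u}}_\delta)$ satisfies \eqref{weakContEqFourthLayer} on the new probability space, and the convergences $\tilde{\varrho}_\delta\to\tilde{\varrho}$ in $C_w([0,T];L^{\gamma}_x)$, $\tilde{\mathbf{m}}_\delta=\tilde{\varrho}_\delta\tilde{\mathbf{u}}_\delta\to\tilde{\mathbf{m}}$ in $C_w([0,T];L^{2\gamma/(\gamma+1)}_x)$, $\tilde{\varrho}_{0,\delta}\to\tilde{\varrho}_0$ in $L^\gamma_x$, together with the identification $\tilde{\mathbf{m}}=\tilde{\varrho}\tilde{\mathbf{u}}$ (as in \cite[Lemma 4.4.11]{breit2018stoch}), allow a term-by-term passage to the limit; no pressure, electric-field or stochastic term occurs, so none of the machinery you set up is needed here.

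If one insists on reading the statement literally as \eqref{eq:distributionalSolMomen}, then what you outline is essentially the paper's own route through the rest of Section \ref{sec:fifthLayer}: first the bar-form momentum identity, then the effective viscous flux \eqref{momEqItoFifthLayer1} and \eqref{effectiveVisFLuxFifthLayer}, the oscillation defect measure and the renormalization argument giving $\tilde{\varrho}_\delta\to\tilde{\varrho}$ in $L^1$, and finally the identification of $\overline{p(\tilde{\varrho})}$ and of the noise via the Lipschitz continuity of $\mathbf{g}_k$. That is not wrong, but as a proof of a lemma sitting at this position it is incomplete by your own admission: the decisive identifications are ``deferred to subsequent steps'', i.e.\ you are restating the plan of the remainder of the section rather than proving the statement where it stands. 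Two smaller remarks: the vanishing of the artificial pressure via the $\Theta$-improved bound is correct (the exponent comes out as $\delta^{\Theta/(\Gamma+\Theta)}$ rather than $\delta^{\Theta/\Gamma}$, which is harmless), and your direct identification $\overline{\tilde{\varrho}\nabla\tilde V}=\tilde{\varrho}\nabla\tilde V$ through the time regularity of $\nabla\tilde V_\delta$ inherited from the continuity equation and the compact embedding $W^{2,\gamma}_x\hookrightarrow W^{1,\gamma'}_x$ (which uses precisely $\gamma>\tfrac{3}{2}$) is sound and in fact cleaner than the paper, which keeps $\overline{\tilde{\varrho}\nabla\tilde V}$ until after the strong convergence of the density has been established.
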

\begin{lemma}
\label{lem:identifyPoisFifthLayer}
The random variables $[\tilde{\varrho},  \tilde{V}]$ satisfies  \eqref{elecFieldFifth1} a.e. in $(0,T) \times \mathbb{T}^3$ $\tilde{\mathbb{P}}$-a.s.
\end{lemma}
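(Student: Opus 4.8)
The plan is to proceed exactly as in Lemma~\ref{lem:identifyPoisFourthLayer} (which itself reproduced the argument of Lemma~\ref{lem:identifyPoisThirdLayer}), exploiting the linearity of the Poisson equation \eqref{elecFieldFifth1} and the fact that $f$ does not depend on $\delta$ and is frozen in time. First I would record that the approximate pair $(\tilde{\varrho}_\delta, \tilde{V}_\delta)$ inherits the Poisson equation: by Lemma~\ref{lem:JakowFifthLayer} the law of the tilded sequence on $\chi$ coincides with the joint law $\mu_\delta$ of the original approximate solution constructed in Section~\ref{sec:UniformEstFifthLayer}, and the latter solves \eqref{elecFieldFourth1} (equivalently \eqref{elecFieldThird1}) pointwise a.e. by Definition~\ref{def:martSolFourthLayer}(7); since this is a Borel property of the pair, it transfers, giving
\begin{equation*}
\pm \Delta \tilde{V}_\delta = \tilde{\varrho}_\delta - f \qquad \text{a.e. in } (0,T)\times\mathbb{T}^3, \quad \tilde{\mathbb{P}}\text{-a.s.}
\end{equation*}
Testing against an arbitrary $\varphi \in C^\infty_c((0,T)\times\mathbb{T}^3)$ and integrating by parts twice recasts this as
\begin{equation*}
\pm \int_0^T \int_{\mathbb{T}^3} \tilde{V}_\delta\, \Delta\varphi \,\mathrm{d}x\,\mathrm{d}t = \int_0^T \int_{\mathbb{T}^3} (\tilde{\varrho}_\delta - f)\,\varphi \,\mathrm{d}x\,\mathrm{d}t .
\end{equation*}

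Next I would pass to the limit $\delta \to 0$ in this linear identity. On the left-hand side the convergence $\tilde{V}_\delta \to \tilde{V}$ in $C_w([0,T];W^{2,\gamma}_x)$ furnished by Lemma~\ref{lem:JakowFifthLayer} is more than enough (it already yields $\Delta\tilde{V}_\delta \rightharpoonup \Delta\tilde{V}$ in $L^\gamma$); on the right-hand side, the convergence $\tilde{\varrho}_\delta \to \tilde{\varrho}$ in $\chi_\varrho \supset \big(C_w([0,T];L^\gamma_x),\text{hence weak }L^1((0,T)\times\mathbb{T}^3)\big)$ together with the $\delta$-independence of $f$ gives the passage. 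Consequently $(\tilde{\varrho}, \tilde{V})$ satisfies the same distributional identity with $\varphi$ arbitrary, that is, $\pm \Delta \tilde{V} = \tilde{\varrho} - f$ in the sense of distributions on $(0,T)\times\mathbb{T}^3$, $\tilde{\mathbb{P}}$-a.s.

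Finally I would upgrade this to the pointwise statement: from $\tilde{V} \in C_w([0,T];W^{2,\gamma}_x)$ we get $\Delta\tilde{V}(t) \in L^\gamma_x$ for every $t$, while $\tilde{\varrho}(t) \in L^\gamma_x$ and $f \in L^\infty_x \hookrightarrow L^\gamma_x$, so the identity actually holds in $L^\gamma_x$ for a.e.\ $t$, hence a.e.\ in $(0,T)\times\mathbb{T}^3$ and $\tilde{\mathbb{P}}$-a.s.; measurability and adaptedness of $\tilde{V}$ follow from the filtration $(\tilde{\mathscr{F}}_t)$ built after Lemma~\ref{lem:JakowFifthLayer} together with the continuity of $\Delta^{-1}_{\mathbb{T}^3}$. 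I do not expect a genuine obstacle here: linearity of the Poisson equation makes the limit passage entirely routine, and the only point deserving a remark is that $f$ must be carried unchanged through the Jakubowski--Skorokhod representation, which is harmless because it is time-independent and enters the equation only affinely, exactly as in the preceding approximation layers.
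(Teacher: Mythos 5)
Your proposal is correct and follows essentially the same route the paper indicates: transfer of the Poisson equation to the new probability space by equality of laws, followed by a limit passage that is immediate from the linearity of the equation, the convergences $\tilde{V}_\delta \to \tilde{V}$ in $\chi_V$ and $\tilde{\varrho}_\delta \to \tilde{\varrho}$ in $\chi_\varrho$ from Lemma \ref{lem:JakowFifthLayer}, and the $\delta$-independence of $f$. The paper leaves this proof implicit (deferring to the analogous Lemmas \ref{lem:identifyPoisThirdLayer} and \ref{lem:identifyPoisFourthLayer}), and your write-up simply fills in the same routine details.
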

\begin{lemma}
\label{lem:identifyMomFifthLayer}
The random distributions  $[\tilde{\varrho}, \tilde{\mathbf{u}},  \tilde{V}, \tilde{W}]$ satisfies 
\begin{equation}
\begin{aligned}
\label{weakMomEqLimitFifthLayer}
&-\int_0^T \partial_t \psi \int_{{\mathbb{T}^3}} \tilde{\varrho}  \tilde{\mathbf{u}} (t) \cdot \bm{\phi} \, \mathrm{d}x    \mathrm{d}t
=
\psi(0)  \int_{{\mathbb{T}^3}} \tilde{\varrho} _0 \tilde{\mathbf{u}} _0 \cdot \bm{\phi} \, \mathrm{d}x  
+  \int_0^T \psi \int_{{\mathbb{T}^3}}  \tilde{\varrho}   \tilde{ \mathbf{u}} \otimes \tilde{\mathbf{u}} : \nabla \bm{\phi} \, \mathrm{d}x  \mathrm{d}t 
\\&-
 \nu^S\int_0^T \psi \int_{{\mathbb{T}^3}} \nabla \tilde{\mathbf{u}} : \nabla \bm{\phi} \, \mathrm{d}x  \mathrm{d}t 
-(\nu^B+ \nu^S )\int_0^T \psi \int_{{\mathbb{T}^3}} \mathrm{div}\,\tilde{\mathbf{u}} \, \mathrm{div}\, \bm{\phi} \, \mathrm{d}x  \mathrm{d}t    
\\&+   
\int_0^T \psi \int_{{\mathbb{T}^3}}   \overline{ p(\tilde{\varrho})}\, \mathrm{div} \, \bm{\phi} \, \mathrm{d}x  \mathrm{d}t
+\int_0^T \psi \int_{{\mathbb{T}^3}} \vartheta \overline{\tilde{\varrho}  \nabla \tilde{V}}  \cdot \bm{\phi} \, \mathrm{d}x    \mathrm{d}t
\\&+
\int_0^T \psi \sum_{k\in \mathbb{N}} \int_{{\mathbb{T}^3}}  \overline{ \tilde{\varrho}\, \mathbf{g}_{k}(\tilde{\varrho} , f , \tilde{\varrho} \tilde{\mathbf{u}} )} \cdot \bm{\phi}  \, \mathrm{d}x \mathrm{d}\tilde{\beta}_k  
\end{aligned}
\end{equation}
for all $\psi \in C^\infty_c ([0,T))$ and $\bm{\phi} \in C^\infty(\mathbb{T}^3)$ $\mathbb{P}$-a.s.
\end{lemma}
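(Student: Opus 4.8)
The plan is to mimic the proof of the fourth-layer analogue Lemma~\ref{lem:identifyMomFourthLayer} (itself modelled on \cite[Proposition~4.4.12]{breit2018stoch}) and pass to the limit $\delta\to0$ in the weak momentum balance satisfied by the approximating sequence. By the equality of laws in Lemma~\ref{lem:JakowFifthLayer}, the tilded random variables $(\tilde\varrho_\delta,\tilde{\mathbf u}_\delta,\tilde V_\delta,\tilde W_\delta)$ satisfy \eqref{weakMomEqFourthLayer} on $(\tilde\Omega,\tilde{\mathscr F},(\tilde{\mathscr F}^\delta_t),\tilde{\mathbb P})$ with data $(\tilde\varrho_{0,\delta},\tilde{\mathbf m}_{0,\delta},f)$, and they inherit the uniform bounds \eqref{uniBoundFifthLayer1} together with the improved pressure estimate \eqref{pressureEstFifthLayer1}. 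First I would rewrite this identity in the time-integrated form obtained by testing with $\psi\in C^\infty_c([0,T))$ and examine each term in turn, the goal being to show convergence of every term to the corresponding term of \eqref{weakMomEqLimitFifthLayer}.

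For the deterministic terms: the two viscous terms converge by the weak convergence $\tilde{\mathbf u}_\delta\rightharpoonup\tilde{\mathbf u}$ in $L^2(0,T;W^{1,2}_x)$; the $\partial_t\psi$ term and the initial-data term converge by the convergence of $\tilde{\mathbf m}_\delta=\tilde\varrho_\delta\tilde{\mathbf u}_\delta$ in $\chi_{\varrho\mathbf u}=C_w([0,T];L^{2\gamma/(\gamma+1)}_x)\cap C([0,T];W^{-k,2}_x)$ and of $\tilde{\mathbf m}_{0,\delta}$ in $L^1_x$. For the convection term I would argue exactly as for \eqref{identifyConvectionFourthLayer}: interpolating the convergence in $\chi_{\varrho\mathbf u}$ with the uniform $L^\infty(0,T;L^{2\gamma/(\gamma+1)}_x)$ bound (here the strict inequality $\gamma>\tfrac32$ is what makes $L^{2\gamma/(\gamma+1)}_x$ compactly embed into $W^{-1,2}_x$) gives $\tilde\varrho_\delta\tilde{\mathbf u}_\delta\to\tilde\varrho\tilde{\mathbf u}$ strongly in $L^2(0,T;W^{-1,2}_x)$ $\tilde{\mathbb P}$-a.s., which paired with $\tilde{\mathbf u}_\delta\rightharpoonup\tilde{\mathbf u}$ in $L^2(0,T;W^{1,2}_x)\hookrightarrow L^2(0,T;L^6_x)$ yields $\tilde\varrho_\delta\tilde{\mathbf u}_\delta\otimes\tilde{\mathbf u}_\delta\rightharpoonup\tilde\varrho\tilde{\mathbf u}\otimes\tilde{\mathbf u}$ in $L^1((0,T)\times\mathbb T^3)$. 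For the pressure I would split $p^\Gamma_\delta(\tilde\varrho_\delta)=p(\tilde\varrho_\delta)+\delta(\tilde\varrho_\delta+\tilde\varrho_\delta^\Gamma)$, recall \eqref{higherPressure}: the artificial part tends to $0$ in $L^1((0,T)\times\mathbb T^3)$ $\tilde{\mathbb P}$-a.s.\ because \eqref{pressureEstFifthLayer1} bounds $\delta\int_0^T\!\!\int_{\mathbb T^3}\tilde\varrho_\delta^{\Gamma+\Theta}$ uniformly (hence $\delta\tilde\varrho_\delta^\Gamma\to0$ by H\"older on the set where it is large), while $p(\tilde\varrho_\delta)\rightharpoonup\overline{p(\tilde\varrho)}$ in $L^r$ for some $r>1$ by the last item of Lemma~\ref{lem:JakowFifthLayer} applied to $\mathcal C=p$, with $|p(\varrho)|\lesssim1+\varrho^\gamma$; this simultaneously shows $\overline{p(\tilde\varrho)}=\overline{p^\Gamma_\delta(\tilde\varrho)}$ of Corollary~\ref{cor:nonlinearLimitsFifth}. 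The electric-field term is handled directly by \eqref{rhoNablaVLimitFifthLayer}, i.e.\ $\tilde\varrho_\delta\nabla\tilde V_\delta\rightharpoonup\overline{\tilde\varrho\nabla\tilde V}$ in $L^r$ for some $r>1$.

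For the stochastic integral I would invoke the convergence-of-stochastic-integrals machinery of \cite[Theorem~2.9.1]{breit2018stoch} (see also \cite[Lemma~2.6.6]{breit2018stoch}): by \eqref{stochCoeffBoundExi0}--\eqref{stochsummableConst} and \eqref{uniBoundFifthLayer1} the integrands $\sum_k\tilde\varrho_\delta\mathbf g_k(\tilde\varrho_\delta,f,\tilde\varrho_\delta\tilde{\mathbf u}_\delta)$ are uniformly bounded in $L^2\bigl(\tilde\Omega;L^2(0,T;L_2(\mathfrak U;W^{-l,2}_x))\bigr)$, and combined with the $\tilde{\mathbb P}$-a.s.\ convergences from Lemma~\ref{lem:JakowFifthLayer}, the identification \eqref{noiseLimitFifthLayer} of the limit integrand, and the strong convergence $\tilde W_\delta\to\tilde W$ in $C([0,T];\mathfrak U_0)$, this gives $\int_0^{\cdot}\sum_k\tilde\varrho_\delta\mathbf g_k(\tilde\varrho_\delta,f,\tilde\varrho_\delta\tilde{\mathbf u}_\delta)\,\mathrm d\tilde\beta_{k,\delta}\to\int_0^{\cdot}\sum_k\overline{\tilde\varrho\,\mathbf g_k(\tilde\varrho,f,\tilde\varrho\tilde{\mathbf u})}\,\mathrm d\tilde\beta_k$ in probability; since $f$ is a given function this passage is verbatim the argument of \cite[Proposition~4.4.12]{breit2018stoch}. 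Collecting the limits with $\psi$ and $\bm\phi$ fixed and then noting the identity holds for all admissible $\psi,\bm\phi$ yields \eqref{weakMomEqLimitFifthLayer}.

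I expect the main obstacle to be the convection term: the other passages are either weak-convergence statements or direct appeals to Corollary~\ref{cor:nonlinearLimitsFifth} and the stochastic-compactness lemmas, whereas the product $\tilde\varrho_\delta\tilde{\mathbf u}_\delta\otimes\tilde{\mathbf u}_\delta$ requires turning one weak factor into a strong one via quantitative time-regularity of $\tilde\varrho_\delta\tilde{\mathbf u}_\delta$, and one must check that the exponent $\gamma>\tfrac32$ (rather than $\Gamma\ge6$ as one layer earlier) still suffices for the interpolation and the Sobolev embedding $W^{1,2}_x\hookrightarrow L^6_x$ used in the convective and electric-field terms, which it does, with $\gamma>\tfrac32$ strict being exactly the borderline case. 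A secondary but essential point is the vanishing of $\delta(\tilde\varrho_\delta+\tilde\varrho_\delta^\Gamma)$, which is precisely what makes the improved pressure estimate \eqref{pressureEstFifthLayer1} indispensable here.
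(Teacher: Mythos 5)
Your proposal is correct and follows exactly the route the paper intends: the paper itself gives no detailed proof of this lemma, merely asserting it as the analogue of Lemma \ref{lem:identifyMomFourthLayer} and of \cite[Proposition 4.4.12]{breit2018stoch}, and your term-by-term limit passage (weak convergence for the viscous terms, the $\chi_{\varrho\mathbf{u}}$-interpolation argument for the convection term with $\gamma>\tfrac32$ as the borderline, Corollary \ref{cor:nonlinearLimitsFifth} for the pressure and electric-field terms, and the stochastic-integral convergence lemma for the noise) is precisely that argument. Your observation that the artificial pressure $\delta(\tilde\varrho_\delta+\tilde\varrho_\delta^{\Gamma})$ vanishes in $L^1$ only thanks to the improved estimate \eqref{pressureEstFifthLayer1} (via the splitting over $\{\tilde\varrho_\delta\le R\}$ and $\{\tilde\varrho_\delta>R\}$) is exactly the point the paper leaves implicit and is correctly handled.
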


\subsection{Strong convergence of Density}
Notice the strong convergence \eqref{strongDensityConvFourthLayer} of the density sequence in the previous approximation layer relied crucially on the DiPerna-Lions renomarlization theorem \cite{diperna1989ordinary}. In order to apply this theorem, one requires the density to be at least squared-integrable in time and space which happened to be the case in Section \ref{sec:fourthLayer} (recall that $\Gamma\geq6$).
\\
Since the fluid limit density in this current approximation layer is only $(\gamma+\Theta)$-integrable in time and space, we can no longer apply the theorem of DiPerna-Lions since $\gamma>\frac{3}{2}$ and $0<\Theta\leq \frac{1}{3}$. To remedy this problem, we rely on the \textit{oscillation defect measure} introduced by Feireisl \cite{feireisl2001compactness}
in order to establish the smallness of the
amplitude of oscillations in the density sequence. In the present stochastic setting and approximation layer, the measure is given by
\begin{equation}
\begin{aligned}
\mathbf{osc}_{\gamma+1}[\tilde{\varrho}_\delta \rightarrow \tilde{\varrho}](\tilde{\Omega} \times Q)
=
\sup_{k\geq1} \bigg( \limsup_{\delta\rightarrow 0} \tilde{\mathbb{E}} \int_Q \vert T_k(\tilde{\varrho}_\delta) -T_k(\tilde{\varrho}) \vert^{\gamma+1}\, \mathrm{d}x\, \mathrm{d}t \bigg)
\end{aligned}
\end{equation}
where $Q=(0,T) \times \mathbb{T}^3$ and for any $k\in \mathbb{N}$, $T_k$ is a cut-off function defined as 
\begin{equation}
\begin{aligned}
T_k(s) =k T\bigg( \frac{s}{k} \bigg), \quad T \in C^\infty([0,\infty)), \quad
T(s)
=
  \begin{cases}
   s      &  \text{ if }0 \leq s \leq 1\\
   T''(s)\leq 0     &  \text{ if }1< s <3\\
   2  & \text{ if } s \geq 3.
  \end{cases}
\end{aligned}
\end{equation}
\subsubsection{The effective viscous flux}
By using the weak compactness result of Carath\'eodory functions established in Lemma \ref{lem:JakowFifthLayer}, we obtain the following
\begin{align}
T_k(\tilde{\varrho}_\delta)  \rightharpoonup \overline{T_k(\tilde{\varrho})} \quad &\text{in}\quad L^p((0,T)\times\mathbb{T}^3),
\label{tkCarathe1}\\
T_k(\tilde{\varrho}_\delta)  \rightarrow \overline{T_k(\tilde{\varrho})} \quad &\text{in}\quad C_w([0,T]; L^p(\mathbb{T}^3)),
\label{tkCarathe2}\\
[T_k'(\tilde{\varrho}_\delta)\tilde{\varrho}_\delta -T_k(\tilde{\varrho}_\delta)]\mathrm{div} \tilde{\mathbf{u}}_\delta  \rightharpoonup \overline{[T_k'(\tilde{\varrho})\tilde{\varrho} -T_k(\tilde{\varrho})]\mathrm{div} \tilde{\mathbf{u}}} \quad &\text{in}\quad L^q((0,T)\times\mathbb{T}^3) \label{tkCarathe3}
\end{align}
$\tilde{\mathbb{P}}$-a.s. for any $p\in (1,\infty)$ and for some $q>1$.
We can then use \eqref{tkCarathe1}--\eqref{tkCarathe3} to pass to limit $\delta \rightarrow 0$ in \eqref{renormalizedContFifthLayerSeq} with $b=T_k$ to obtain
\begin{equation}
\begin{aligned}
\label{renormalizedContFifthLayerLim} 
&0=\int_0^T\int_{{\mathbb{T}^3}}  \overline{T_k(\tilde{\varrho})}\,\partial_t\phi \,\mathrm{d}x \, \mathrm{d}t +
\int_{{\mathbb{T}^3}}  \overline{T_k\big(\tilde{\varrho}_{0}\big)}\,\phi(0) \,\mathrm{d}x 
\\&+
 \int_0^T\int_{{\mathbb{T}^3}} \big[\overline{T_k(\tilde{\varrho})} \tilde{\mathbf{u}}  \big] \cdot \nabla\phi \, \mathrm{d}x \, \mathrm{d}t  
- \int_0^T\int_{{\mathbb{T}^3}} \big[ \overline{\big(  T_k'(\tilde{\varrho}) \tilde{\varrho}  
-
 T_k(\tilde{\varrho} )\big)\mathrm{div}\tilde{\mathbf{u}}} \big]\,  \phi\, \mathrm{d}x \, \mathrm{d}t
\end{aligned}
\end{equation} 
$\mathbb{P}$-a.s. for any $\phi\in C^\infty_c([0,T)\times{\mathbb{T}^3})$. 
Similar to \eqref{momEqItoFifthLayer1}, we obtain from \eqref{weakMomEqLimitFifthLayer} and \eqref{renormalizedContFifthLayerLim}, the following. 
\begin{equation}
\begin{aligned}
\label{momTkcontEqlim}
&
\int_0^s \int_{{\mathbb{T}^3}}    \overline{p(\tilde{\varrho} )}\, \overline{T_k(\tilde{\varrho})}   \, \mathrm{d}x  \mathrm{d}t
=
\int_{{\mathbb{T}^3}} \tilde{\varrho} \tilde{\mathbf{u}}  (t) \cdot \nabla \Delta^{-1}_{\mathbb{T}^3} \overline{T_k(\tilde{\varrho})} \, \mathrm{d}x
-
 \int_{{\mathbb{T}^3}}  \tilde{\varrho}_{0,\delta} \tilde{\mathbf{u}}_{0,\delta} \cdot \nabla \Delta^{-1}_{\mathbb{T}^3} \overline{T_k(\tilde{\varrho}(0))}  \, \mathrm{d}x     
\\&-
\int_0^s \int_{{\mathbb{T}^3}}  \tilde{\varrho}     \tilde{\mathbf{u}}  \otimes \tilde{\mathbf{u}}  : \nabla^2 \Delta^{-1}_{\mathbb{T}^3} \overline{T_k(\tilde{\varrho})} \, \mathrm{d}x  \mathrm{d}t 
+
(\nu^B+ \nu^S )\int_0^s\int_{{\mathbb{T}^3}} \mathrm{div}\,\tilde{\mathbf{u}}  \, \overline{T_k(\tilde{\varrho})} \, \mathrm{d}x  \mathrm{d}t   
\\&+
 \nu^S\int_0^s\int_{{\mathbb{T}^3}} \nabla \tilde{\mathbf{u}}  : \nabla^2 \Delta^{-1}_{\mathbb{T}^3} \overline{T_k(\tilde{\varrho})} \, \mathrm{d}x  \mathrm{d}t  
-
\int_0^s\int_{{\mathbb{T}^3}} \vartheta\overline{\tilde{\varrho}  \nabla \tilde{V}}\, \nabla \Delta^{-1}_{\mathbb{T}^3} \overline{T_k(\tilde{\varrho})} \, \mathrm{d}x    \mathrm{d}t
\\&+
\int_0^s\int_{{\mathbb{T}^3}} \tilde{\varrho} \tilde{\mathbf{u}}  \nabla \Delta^{-1}_{\mathbb{T}^3}\mathrm{div}\big[  \overline{T_k(\tilde{\varrho})} \tilde{\mathbf{u}}  \big]  \, \mathrm{d}x    \mathrm{d}t
+
 \int_0^s\int_{{\mathbb{T}^3}}\tilde{\varrho}  \tilde{\mathbf{u}} \nabla \Delta^{-1}_{\mathbb{T}^3} \big[\overline{\big(  T_k'(\tilde{\varrho}) \tilde{\varrho}  
-
 T_k(\tilde{\varrho} )\big)\mathrm{div}\tilde{\mathbf{u}}} \big]
 \, \mathrm{d}x    \mathrm{d}t
\\&-
\int_0^s\sum_{k\in \mathbb{N}} \int_{{\mathbb{T}^3}}   \overline{\tilde{\varrho} \, \mathbf{g}_{k}(\tilde{\varrho}  , f, \tilde{\varrho}  \tilde{\mathbf{u}}  )} \cdot \nabla \Delta^{-1}_{\mathbb{T}^3} \overline{T_k(\tilde{\varrho})}  \, \mathrm{d}x \mathrm{d}\tilde{\beta}_k.
\end{aligned}
\end{equation}
On the other hand, we also note that a consequence of Lemma \ref{lem:JakowFifthLayer} is that $(\tilde{\varrho}_\delta, \tilde{\mathbf{u}}_\delta , \tilde{V}_\delta, \tilde{W}_\delta)$ satisfies \eqref{weakMomEqFourthLayer}. By combining this information with \eqref{renormalizedContFifthLayerSeq}, we obtain as in \eqref{momEqItoFifthLayer1},
\begin{equation}
\begin{aligned}
\label{momTkcontEqSeq}
&
\int_0^s \int_{{\mathbb{T}^3}}   p(\tilde{\varrho}_\delta )\, T_k(\tilde{\varrho}_\delta)   \, \mathrm{d}x  \mathrm{d}t
=
\int_{{\mathbb{T}^3}} \tilde{\varrho}_\delta \tilde{\mathbf{u}}_\delta  (t) \cdot \nabla \Delta^{-1}_{\mathbb{T}^3} T_k(\tilde{\varrho}_\delta) \, \mathrm{d}x
-
 \int_{{\mathbb{T}^3}} \tilde{\varrho}_{0,\delta} \mathbf{u}_{0,\delta} \cdot \nabla \Delta^{-1}_{\mathbb{T}^3} T_k(\tilde{\varrho}_\delta(0))  \, \mathrm{d}x     
\\&-
\int_0^s \int_{{\mathbb{T}^3}}  \tilde{\varrho}_\delta     \tilde{\mathbf{u}}_\delta  \otimes \tilde{\mathbf{u}}_\delta  : \nabla^2 \Delta^{-1}_{\mathbb{T}^3} T_k(\tilde{\varrho}_\delta)  \, \mathrm{d}x  \mathrm{d}t 
+
(\nu^B+ \nu^S )\int_0^s\int_{{\mathbb{T}^3}} \mathrm{div}\,\tilde{\mathbf{u}}_\delta  \, T_k(\tilde{\varrho}_\delta)  \, \mathrm{d}x  \mathrm{d}t   
\\&+
 \nu^S\int_0^s\int_{{\mathbb{T}^3}} \nabla \tilde{\mathbf{u}}_\delta  : \nabla^2 \Delta^{-1}_{\mathbb{T}^3} T_k(\tilde{\varrho}_\delta)  \, \mathrm{d}x  \mathrm{d}t  
-
\int_0^s\int_{{\mathbb{T}^3}} \vartheta\tilde{\varrho}_\delta    \nabla \tilde{V}_\delta \, \nabla \Delta^{-1}_{\mathbb{T}^3} T_k(\tilde{\varrho}_\delta)  \, \mathrm{d}x    \mathrm{d}t
\\&
\int_0^s\int_{{\mathbb{T}^3}} \tilde{\varrho}_\delta \tilde{\mathbf{u}}_\delta  \nabla \Delta^{-1}_{\mathbb{T}^3}\mathrm{div}\big[  T_k(\tilde{\varrho}_\delta) \tilde{\mathbf{u}}_\delta  \big]   \mathrm{d}x    \mathrm{d}t
+
 \int_0^s\int_{{\mathbb{T}^3}}\tilde{\varrho}_\delta  \tilde{\mathbf{u}}_\delta \nabla \Delta^{-1}_{\mathbb{T}^3} \big[\big(  T_k'(\tilde{\varrho}_\delta) \tilde{\varrho}_\delta  
-
 T_k(\tilde{\varrho}_\delta)  \big)\mathrm{div}\tilde{\mathbf{u}}_\delta \big]
  \mathrm{d}x    \mathrm{d}t
\\&-
\int_0^s\sum_{k\in \mathbb{N}} \int_{{\mathbb{T}^3}}   \tilde{\varrho}_\delta \, \mathbf{g}_{k}(\tilde{\varrho}_\delta  , f, \tilde{\varrho}_\delta  \tilde{\mathbf{u}}_\delta  ) \cdot \nabla \Delta^{-1}_{\mathbb{T}^3} T_k(\tilde{\varrho}_\delta)   \, \mathrm{d}x \mathrm{d}\tilde{\beta}_{\delta,k}.
\end{aligned}
\end{equation}
As in \eqref{intergrationByParts}, we can rewrite the viscous terms in \eqref{momTkcontEqlim} and \eqref{momTkcontEqSeq} appropriately and in analogy with the derivation of Lemma \ref{lem:effectiveFluxFourthLayer}, we obtain
\begin{equation}
\begin{aligned}
\label{effectiveVisFLuxFifthLayer}
\lim_{\delta  \rightarrow0}&\int_0^s\int_{\mathbb{T}^3}\big[p(\tilde{\varrho}_\delta) -(\nu^B+2\nu^S)\mathrm{div}\, \tilde{\mathbf{u}}_\delta \big] T_k(\tilde{\varrho}_\delta) \, \mathrm{d}x \, \mathrm{d}x
\\&
=
\int_0^s\int_{\mathbb{T}^3}\big[\overline{p(\tilde{\varrho} )} -(\nu^B+2\nu^S)\mathrm{div}\, \tilde{\mathbf{u}} \big]\overline{T_k(\tilde{\varrho})}\, \mathrm{d}x \, \mathrm{d}x
\end{aligned}
\end{equation}
holds $\tilde{\mathbb{P}}$-a.s. for a.e. $s\in(0,T)$
with the help of \eqref{pressureLimitFifthLayer}--\eqref{noiseLimitFifthLayer},  \eqref{tkCarathe1}--\eqref{tkCarathe3} and \cite[Lemma 2.6.6]{breit2018stoch}.

\subsubsection{Oscillation defect measure}
We now show that the estimate
\begin{align}
\label{smallnesOfOscilDefectMeasure}
 \mathbf{osc}_{\gamma+1}[\tilde{\varrho}_\delta \rightarrow \tilde{\varrho}](\tilde{\Omega}\times Q)   \lesssim 1.
\end{align}
holds uniformly in $k$. To see this,
we first note  the identity
\begin{equation}
\begin{aligned}
\label{eq:2127}
\lim_{\delta \rightarrow 0} \tilde{\mathbb{E}}\int_Q &    \left(p( \tilde{\varrho}_\delta) \, T_k(\tilde{\varrho}_\delta) -  \overline{p(\tilde{\varrho})}\, \overline{T_k(\tilde{\varrho})}\right)\,\mathrm{d}x\, \mathrm{d}t
\\&=
(\nu^B+2\nu^S)
\lim_{n\rightarrow \infty}\tilde{\mathbb{E}}\int_Q      \left[\mathrm{div}\,\tilde{\mathbf{u}}_\delta  \, T_k(\tilde{\varrho}_\delta)   -  \mathrm{div}\,\tilde{\mathbf{u}}  \, \overline{T_k(\tilde{\varrho})}  \right]\,\mathrm{d}x\, \mathrm{d}t
\end{aligned}
\end{equation}
from \eqref{effectiveVisFLuxFifthLayer} where  by use of  the inequality
\begin{align*}
\vert T_k(t)  -T_k(s)\vert^{\gamma+1}  \leq  (t^\gamma-s^\gamma)\left( T_k(t)-T_k(s)  \right),
\end{align*}
it follows that
\begin{equation}
\begin{aligned}
\label{eq:57}
\lim_{\delta \rightarrow 0} \tilde{\mathbb{E}}\int_Q &    \left(p(\tilde{\varrho}_\delta) \, T_k(\tilde{\varrho}_\delta) -  \overline{p(\tilde{\varrho})}\, \overline{T_k(\tilde{\varrho})}\right)\,\mathrm{d}x\, \mathrm{d}t
\geq
\limsup_{\delta \rightarrow 0} \tilde{\mathbb{E}}\int_Q     
\big\vert T_k(\tilde{\varrho}_\delta) - T_k(\tilde{\varrho}) 
\big\vert^{\gamma+1} \,\mathrm{d}x\, \mathrm{d}t
\\
&+
 \tilde{\mathbb{E}}\int_Q     \left( \overline{p(\tilde{\varrho})}  -   p(\tilde{\varrho}_\delta)\right)\left( T_k(\tilde{\varrho}) - \overline{T_k(\tilde{\varrho})} \right) \,\mathrm{d}x\, \mathrm{d}t.
\end{aligned}
\end{equation}
By combining \eqref{eq:2127}--\eqref{eq:57} with the negativity of the lest term in \eqref{eq:57} above (this negativity follows from the convexity of $t\mapsto t^\gamma$ and the concavity of $t\mapsto T_k(t)$), we gain 
\begin{equation}
\begin{aligned}
\label{eq:57az}
\limsup_{\delta \rightarrow 0}&\, \tilde{\mathbb{E}}\int_Q   \, 
\big\vert T_k(\tilde{\varrho}_\delta) - T_k(\tilde{\varrho}) 
\big\vert^{\gamma+1} \,\mathrm{d}x\, \mathrm{d}t
\\
&\leq (\nu^B+2\nu^S)
\limsup_{\delta \rightarrow 0} \tilde{\mathbb{E}}\int_Q   \,  \left[\mathrm{div}\,\tilde{\mathbf{u}}_\delta  \, T_k(\tilde{\varrho}_\delta)   -  \mathrm{div}\,\tilde{\mathbf{u}}  \, \overline{T_k(\tilde{\varrho})}  \right]\,\mathrm{d}x\, \mathrm{d}t.
\end{aligned}
\end{equation}
However, the use of  H\"{o}lder and triangle inequalities further yield
\begin{equation}
\begin{aligned}
\label{eq:58}
&\limsup_{n\rightarrow \infty}\tilde{\mathbb{E}}\int_Q     \left[ \mathrm{div}\,\tilde{\mathbf{u}}_\delta  \, T_k(\tilde{\varrho}_\delta)    -    \mathrm{div}\,\tilde{\mathbf{u}}  \, \overline{T_k(\tilde{\varrho})}\right] \,\mathrm{d}x\, \mathrm{d}t
\\
&\leq
\limsup_{n\rightarrow \infty}\tilde{\mathbb{E}}\, \left\Vert  \mathrm{div}\,\tilde{\mathbf{u}}_\delta  \right\Vert_{L^2(Q)}
\tilde{\mathbb{E}}\,\Big( \big\Vert  \big( T_k(\tilde{\varrho}_\delta) -T_k(\tilde{\varrho})  \big)\big\Vert_{L^2(Q)}   
+    \big\Vert   \big( T_k(\tilde{\varrho})   -     \overline{T_k(\tilde{\varrho})} \big) \big\Vert_{L^2(Q)}\Big) .
\end{aligned}
\end{equation}
And by lower semi-continuity of norms, 
\begin{equation}
\begin{aligned}
\label{eq:59}
\tilde{\mathbb{E}}\,\left\Vert   \big(T_k(\tilde{\varrho})   -     \overline{T_k(\tilde{\varrho})}  \big) \right\Vert_{L^2(Q)} 
&\leq 
\liminf_{\delta \rightarrow 0}\tilde{\mathbb{E}}\,\left\Vert   \big( T_k(\tilde{\varrho}_\delta) -T_k(\tilde{\varrho}) \big) \right\Vert_{L^2(Q)} 
\\
&\leq 
\limsup_{\delta \rightarrow 0}\tilde{\mathbb{E}}\,\left\Vert   \big( T_k(\tilde{\varrho}_\delta) -T_k(\tilde{\varrho}) \big)  \right\Vert_{L^2(Q)}.
\end{aligned}
\end{equation}
By using the embedding $L^{\gamma+1} \hookrightarrow L^2$, we can  substitute \eqref{eq:59} into \eqref{eq:58} to get
\begin{equation}
\begin{aligned}
\label{eq:590}
&\limsup_{n\rightarrow \infty}\,\tilde{\mathbb{E}}\int_Q      \left[ \mathrm{div}\,\tilde{\mathbf{u}}_\delta  \, T_k(\tilde{\varrho}_\delta)    -    \mathrm{div}\,\tilde{\mathbf{u}}  \, \overline{T_k(\tilde{\varrho})}\right] \,\mathrm{d}x\, \mathrm{d}t
\\
&\leq 2
\limsup_{n\rightarrow \infty}
\bigg(  \tilde{\mathbb{E}}\, \left\Vert   \mathrm{div}\,\tilde{\mathbf{u}}_\delta  \right\Vert_{L^2(Q)}
\tilde{\mathbb{E}}\, \big\Vert   (T_k(\tilde{\varrho}_\delta) -T_k(\tilde{\varrho}) ) \big\Vert_{L^{\gamma+1}(Q)}\bigg) .
\end{aligned}
\end{equation}
Finally, we substitute \eqref{eq:590}  into \eqref{eq:57az}  and apply Young's inequality to obtain
\begin{equation}
\begin{aligned}
&\limsup_{\delta \rightarrow 0} \tilde{\mathbb{E}}\int_Q     
\big\vert T_k(\tilde{\varrho}_\delta) - T_k(\tilde{\varrho}) 
\big\vert^{\gamma+1} \,\mathrm{d}x\, \mathrm{d}t
\\
&\lesssim
\limsup_{n\rightarrow \infty}
\bigg(  \tilde{\mathbb{E}}\, \left\Vert   \mathrm{div}\,\tilde{\mathbf{u}}_\delta  \right\Vert_{L^2(Q)}
\tilde{\mathbb{E}}\, \big\Vert   (T_k(\tilde{\varrho}_\delta) -T_k(\tilde{\varrho}) ) \big\Vert_{L^{\gamma+1}(Q)}\bigg) 
\\
&\lesssim \frac{\gamma}{\gamma+1
} \sup_{n} \bigg(\tilde{\mathbb{E}}\, \left\Vert   \mathrm{div}\,\tilde{\mathbf{u}}_\delta  \right\Vert_{L^2(Q)} \bigg)^\frac{\gamma+1}{\gamma}
+
\frac{1}{\gamma+1}
\limsup_{\delta \rightarrow 0} \tilde{\mathbb{E}}\int_Q   
\big\vert T_k(\tilde{\varrho}_\delta) - T_k(\tilde{\varrho}) 
\big\vert^{\gamma+1} \,\mathrm{d}x\, \mathrm{d}t
\end{aligned}
\end{equation}
uniformly in $k$. The estimate \eqref{smallnesOfOscilDefectMeasure} follow by absolving the last term above into the left-hand side (note that $\frac{1}{\gamma+1}<\frac{2}{5}$ is small enough) and keep in mind that $\frac{\gamma+1}{\gamma}<2$.

\subsubsection{The renormalized solution for the limit process}
If we now regularize \eqref{renormalizedContFifthLayerLim} with some  regularizing operator $S_m$, multiply the resulting deterministically strong equation by $b'(S_m[\overline{T_k(\varrho)}])$ and pass to the limit $m\rightarrow \infty$, we obtain
\begin{equation}
\begin{aligned}
\label{renormalizedContFifthLayerLim1} 
&0=\int_0^T\int_{{\mathbb{T}^3}}  b(\overline{T_k(\tilde{\varrho})})\,\partial_t\phi \,\mathrm{d}x \, \mathrm{d}t +
\int_{{\mathbb{T}^3}}  b(\overline{T_k\big(\tilde{\varrho}_{0}\big)})\,\phi(0) \,\mathrm{d}x 
+
 \int_0^T\int_{{\mathbb{T}^3}} \big[b(\overline{T_k(\tilde{\varrho})}) \tilde{\mathbf{u}}  \big] \cdot \nabla\phi \, \mathrm{d}x \, \mathrm{d}t  
\\&+ \int_0^T\int_{{\mathbb{T}^3}}b'(\overline{T_k(\tilde{\varrho})})  \big[ \overline{\big(  T_k'(\tilde{\varrho}) \tilde{\varrho}  
-
 T_k(\tilde{\varrho} )\big)\mathrm{div}\tilde{\mathbf{u}}} \big]\,  \phi\, \mathrm{d}x \, \mathrm{d}t
 \\&- \int_0^T\int_{{\mathbb{T}^3}}\big[ b'(\overline{T_k(\tilde{\varrho})})  \overline{  T_k(\tilde{\varrho})}   
-
 b(\overline{T_k(\tilde{\varrho} )}) \big]\mathrm{div}\tilde{\mathbf{u}}\,  \phi\, \mathrm{d}x \, \mathrm{d}t
\end{aligned}
\end{equation} 
$\mathbb{P}$-a.s. for any $\phi\in C^\infty_c([0,T)\times{\mathbb{T}^3})$. Now if we let $Q=(0,T) \times \mathbb{T}^3$ and set $Q_{k,M} =\left\{(\omega, t,x)\in \tilde{\Omega}\times Q \, :\,  \vert \overline{T_k(\tilde{\varrho})}  \vert \leq M  \right\}$, then it follows from Lemma \ref{lem:JakowFifthLayer} that
\begin{equation}
\begin{aligned}
\label{zeroConv}
&\left\Vert   b'(\overline{T_k(\tilde{\varrho})})  \overline{\left(T'_k(\tilde{\varrho})\tilde{\varrho}  -T_k(\tilde{\varrho}) \right)\mathrm{div}\,\tilde{\mathbf{u}}}   \right\Vert_{L^1(\tilde{\Omega}\times Q)}
\\
&\leq
\left(\sup_{0\leq z\leq M}\vert b'(z)\vert \right)  \liminf_{\delta \rightarrow 0}\big\Vert   \big( T_k(\tilde{\varrho}_\delta)  -  T'_k(\tilde{\varrho}_\delta)\tilde{\varrho}_\delta \big) \mathrm{div}\,\tilde{\mathbf{u}}_\delta \big\Vert_{L^1(\tilde{\Omega}\times Q)}
\\
&\leq
\left(\sup_{0\leq z\leq M}\vert b'(z)\vert \right) \left(\sup_{\delta>0} \Vert  \,\mathrm{div}\,\tilde{\mathbf{u}}_\delta \Vert_{L^2(\tilde{\Omega}\times Q)}  \right)\liminf_{\delta \rightarrow 0} \big\Vert    \, \big(T_k(\tilde{\varrho}_\delta)  -  T'_k(\tilde{\varrho}_\delta)\tilde{\varrho}_\delta \big)  \big\Vert_{L^{2}(Q_{k,M})}
\\
&\lesssim_{M}\, \liminf_{\delta \rightarrow 0} \big\Vert   \, \big(  T_k(\tilde{\varrho}_\delta)  -  T'_k(\tilde{\varrho}_\delta)\tilde{\varrho}_\delta \big) \big\Vert_{L^{2}(Q_{k,M})}
\end{aligned}
\end{equation}
where by  interpolation, we also have that
\begin{equation}
\begin{aligned}
\label{zeroConv1A}
\big\Vert &  \big(  T_k(\tilde{\varrho}_\delta)  -  T'_k(\tilde{\varrho}_\delta)\tilde{\varrho}_\delta \big) \big\Vert_{L^{2}(Q_{k,M})} 
\\
&\leq
\big\Vert   \big(  T_k(\tilde{\varrho}_\delta)  -  T'_k(\tilde{\varrho}_\delta)\tilde{\varrho}_\delta \big) \big\Vert_{L^{\gamma+1}(Q_{k,M})}^\frac{\gamma +1 }{2\gamma}
\,
\big\Vert   \big(  T_k(\tilde{\varrho}_\delta)  -  T'_k(\tilde{\varrho}_\delta)\tilde{\varrho}_\delta \big) \big\Vert_{L^1(\tilde{\Omega}\times Q))}^\frac{\gamma-1}{2\gamma}.
\end{aligned}
\end{equation}
It follows that
\begin{equation}
\begin{aligned}
\label{thus}
\liminf_{\delta \rightarrow 0} &\big\Vert   \, \big(  T_k(\tilde{\varrho}_\delta)  -  T'_k(\tilde{\varrho}_\delta)\tilde{\varrho}_\delta \big) \big\Vert_{L^{2}(Q_{k,M})}
\\& \lesssim
\limsup_{\delta \rightarrow 0} \big\Vert   \, \big( T_k(\tilde{\varrho}_\delta)  -  T'_k(\tilde{\varrho}_\delta)\tilde{\varrho}_\delta \big)  \big\Vert_{L^{1}(Q_{k,M})}^\frac{\gamma -1 }{2\gamma} 
\big\Vert   \big(  T_k(\tilde{\varrho}_\delta)  -  T'_k(\tilde{\varrho}_\delta)\tilde{\varrho}_\delta \big) \big\Vert_{L^{\gamma+1}(\tilde{\Omega}\times Q))}^\frac{\gamma+1}{2\gamma}
\end{aligned}
\end{equation}
where
\begin{equation}
\begin{aligned}
\label{equiZero}
\limsup_{\delta \rightarrow 0}
\big\Vert   \big(  T_k(\tilde{\varrho}_\delta)  &-  T'_k(\tilde{\varrho}_\delta)\tilde{\varrho}_\delta \big) \big\Vert_{L^1(\tilde{\Omega}\times Q))}^\frac{\gamma-1}{2\gamma}
\leq
\limsup_{\delta \rightarrow 0}
 \big\Vert    \, \tilde{\varrho}_\delta \,\mathbbm{1}_{ \{k\leq \tilde{\varrho}_\delta \}} \big\Vert_{L^1(\tilde{\Omega}\times Q))}^\frac{\gamma-1}{2\gamma}
 \\
&
\leq
\bigg(\frac{1}{k} \bigg)^{\frac{1}{2}\big(1-\frac{1}{\gamma} \big)^2}\limsup_{\delta \rightarrow 0}
 \big\Vert    \, \tilde{\varrho}_\delta  \big\Vert_{L^\gamma(\tilde{\Omega}\times Q))}^\frac{\gamma-1}{2\gamma} \rightarrow 0
\end{aligned}
\end{equation}
as $k\rightarrow \infty$ and by triangle inequality,
\begin{equation}
\begin{aligned}
\label{zeroConv1A}
\big\Vert \big(  T_k(\tilde{\varrho}_\delta)  -  T'_k(\tilde{\varrho}_\delta)\tilde{\varrho}_\delta \big) &\big\Vert_{L^{\gamma+1}(Q_{k,M})}^\frac{\gamma +1 }{2\gamma}
\lesssim 
 \Vert   (     \overline{T_k(\tilde{\varrho})} - T_k(\tilde{\varrho}) ) \Vert_{L^{\gamma+1}(Q_{k,M})}^\frac{\gamma +1 }{2\gamma}
    \\&+ \Vert  ( T_k(\tilde{\varrho}_\delta)  -  T_k(\tilde{\varrho}) ) \Vert_{L^{\gamma+1}(\tilde{\Omega}\times Q)}^\frac{\gamma +1 }{2\gamma}    +
  M^\frac{\gamma +1 }{2\gamma}.
\end{aligned}
\end{equation}
Now since 
\begin{align*}
 \Vert   (     \overline{T_k(\tilde{\varrho})} - T_k(\tilde{\varrho}) ) \Vert_{L^{\gamma+1}(Q_{k,M})}
   &\leq 
 \limsup_{\delta \rightarrow 0}
  \Vert   (    T_k(\tilde{\varrho}_\delta) - T_k(\tilde{\varrho}) ) \Vert_{L^{\gamma+1}(\tilde{\Omega}\times Q)},
\end{align*}
and $\frac{1}{2\gamma}<1$, we obtain from \eqref{zeroConv1A},
\begin{equation}
\begin{aligned}
\label{zeroConv5}
\big\Vert    \big( T_k(\tilde{\varrho}_\delta)  &-  T'_k(\tilde{\varrho}_\delta)\tilde{\varrho}_\delta \big)  \big\Vert_{L^{\gamma+1}(Q_{k,M})}^\frac{\gamma +1 }{2\gamma} 
\\&
\lesssim \limsup_{\delta \rightarrow 0}
\tilde{\mathbb{E}}\int_Q   \left\vert T_k(\tilde{\varrho}_\delta) -  T_k(\tilde{\varrho})\right\vert^{\gamma+1}\,\mathrm{d}x\, \mathrm{d}t  
  +  M^\frac{\gamma +1 }{2\gamma} 
\\&
\lesssim_M \mathbf{osc}_{\gamma+1}[\tilde{\varrho}_\delta \rightarrow \tilde{\varrho}](\tilde{\Omega}\times Q)   +1
\end{aligned}
\end{equation}
for a constant that is independent of $k$.

Now substituting \eqref{thus}, \eqref{equiZero} and \eqref{zeroConv5} into \eqref{zeroConv}, we obtain
\begin{equation}
\begin{aligned}
\big\Vert    b'(\overline{T_k(\tilde{\varrho})}) & \overline{\left(T'_k(\tilde{\varrho})\tilde{\varrho}  -T_k(\tilde{\varrho}) \right)\mathrm{div}\,\tilde{\mathbf{u}}}   \big\Vert_{L^1(\tilde{\Omega}\times Q)}
\\
&\lesssim_{M}\,
\bigg(\frac{1}{k} \bigg)^{\frac{1}{2}\big(1-\frac{1}{\gamma} \big)^2}\limsup_{\delta \rightarrow 0}
 \big\Vert    \, \tilde{\varrho}_\delta  \big\Vert_{L^\gamma(\tilde{\Omega}\times Q))}^\frac{\gamma-1}{2\gamma}
\bigg(
 \mathbf{osc}_{\gamma+1}[\tilde{\varrho}_\delta \rightarrow \tilde{\varrho}](\tilde{\Omega}\times Q)   +1\bigg)
\end{aligned}
\end{equation}
where the right-hand side converges to zero as $k\rightarrow\infty$ since
\begin{align}
 \mathbf{osc}_{\gamma+1}[\tilde{\varrho}_\delta \rightarrow \tilde{\varrho}](\tilde{\Omega}\times Q)   \lesssim 1
\end{align}
uniformly in $k$, recall \eqref{smallnesOfOscilDefectMeasure}. We have therefore shown that
\begin{align}
\label{tkOscZero}
 b'(\overline{T_k(\tilde{\varrho})})  \overline{\left(T'_k(\tilde{\varrho})\tilde{\varrho}  -T_k(\tilde{\varrho}) \right)\mathrm{div}\,\tilde{\mathbf{u}}}  \rightarrow 0 \text{ in } L^1(\tilde{\Omega} \times (0,T) \times \mathbb{T}^3)
\end{align}
as $k\rightarrow \infty$. The convergence \eqref{tkOscZero} together with
\begin{align}
\overline{T_k(\tilde{\varrho})} \rightarrow \tilde{\varrho} \text{ in } L^r(\tilde{\Omega} \times (0,T) \times \mathbb{T}^3)
\end{align}
for all $r\in (1,\gamma)$ as $k\rightarrow \infty$ (see \cite[Eq. (4.232)]{breit2018stoch} allows us to pass to the limit $k\rightarrow \infty$ in \eqref{renormalizedContFifthLayerLim1} and we obtain
\begin{equation}
\begin{aligned}
\label{renormalizedContFifthLayerLim2} 
&0=\int_0^T\int_{{\mathbb{T}^3}}  b(\tilde{\varrho})\,\partial_t\phi \,\mathrm{d}x \, \mathrm{d}t +
\int_{{\mathbb{T}^3}}  b(\tilde{\varrho}_{0})\,\phi(0) \,\mathrm{d}x 
\\&+
 \int_0^T\int_{{\mathbb{T}^3}} \big[b(\tilde{\varrho}) \tilde{\mathbf{u}}  \big] \cdot \nabla\phi \, \mathrm{d}x \, \mathrm{d}t  
- \int_0^T\int_{{\mathbb{T}^3}} \big[ \big(  b'(\tilde{\varrho}) \tilde{\varrho}  
-
 b(\tilde{\varrho} )\big)\mathrm{div}\tilde{\mathbf{u}} \big]\,  \phi\, \mathrm{d}x \, \mathrm{d}t
\end{aligned}
\end{equation} 
$\mathbb{P}$-a.s. for any $\phi\in C^\infty_c([0,T)\times{\mathbb{T}^3})$. If we now introduce
\begin{equation}
\begin{aligned}
\label{Lk}
L_k(z) =
\begin{dcases}
 z\log(z)  & \text{ if } z\in [0,k), \\
z\log(k)  + z \int_k^z \frac{T_k(s)}{s^2}\,\mathrm{d}s & \text{ if } z\in [k,\infty)
\end{dcases}
\end{aligned}
\end{equation}
which satisfies
\begin{equation}
\begin{aligned}
\label{LkTk}
zL'_k(z)- L_k(z) =T_k(z)
\end{aligned}
\end{equation}
in place of $b$ in \eqref{renormalizedContFifthLayerSeq} and \eqref{renormalizedContFifthLayerLim2}, then $\tilde{\mathbb{P}}$-a.s., we obtain
\begin{equation}
\begin{aligned}
\label{renormalizedContFifthLayerSeqLk} 
&0=\int_0^T\int_{{\mathbb{T}^3}}  L_k(\tilde{\varrho}_\delta)\,\partial_t\phi \,\mathrm{d}x \, \mathrm{d}t +
\int_{{\mathbb{T}^3}}  L_k\big(\tilde{\varrho}_{0,\delta}\big)\,\phi(0) \,\mathrm{d}x 
\\&+
 \int_0^T\int_{{\mathbb{T}^3}} \big[L_k(\tilde{\varrho}_\delta) \tilde{\mathbf{u}}_\delta  \big] \cdot \nabla\phi \, \mathrm{d}x \, \mathrm{d}t  
- \int_0^T\int_{{\mathbb{T}^3}}T_k(\tilde{\varrho}_\delta )\mathrm{div}\tilde{\mathbf{u}}_\delta \,  \phi\, \mathrm{d}x \, \mathrm{d}t
\end{aligned}
\end{equation} 
and
\begin{equation}
\begin{aligned}
\label{renormalizedContFifthLayerLimL_k} 
&0=\int_0^T\int_{{\mathbb{T}^3}}  L_k(\tilde{\varrho})\,\partial_t\phi \,\mathrm{d}x \, \mathrm{d}t +
\int_{{\mathbb{T}^3}}  L_k(\tilde{\varrho}_{0})\,\phi(0) \,\mathrm{d}x 
\\&+
 \int_0^T\int_{{\mathbb{T}^3}} \big[L_k(\tilde{\varrho}) \tilde{\mathbf{u}}  \big] \cdot \nabla\phi \, \mathrm{d}x \, \mathrm{d}t  
- \int_0^T\int_{{\mathbb{T}^3}} 
 T_k(\tilde{\varrho} )\mathrm{div}\tilde{\mathbf{u}} \,  \phi\, \mathrm{d}x \, \mathrm{d}t
\end{aligned}
\end{equation} 
respectively and where
\begin{align}
L_k(\tilde{\varrho}_\delta) \rightarrow \overline{L_k(\tilde{\varrho})}\quad &\text{in}\quad C_w\big( [0,T];L^p(\mathbb{T}^3)\big)
\label{l_kConver1}\\
\tilde{\varrho}_n\log(\tilde{\varrho}_\delta) \rightarrow \overline{\tilde{\varrho}\log(\tilde{\varrho})}\quad &\text{in}\quad C_w\big( [0,T];L^p(\mathbb{T}^3)\big)\label{logConver}
\\
L_k(\tilde{\varrho}_\delta) \rightarrow \overline{L_k(\tilde{\varrho})}\quad &\text{in}\quad C\big( [0,T];W^{-1,2}(\mathbb{T}^3)\big)
\\
T_k(\tilde{\varrho}_\delta)\mathrm{div}\,\tilde{\mathbf{u}}_\delta
\rightharpoonup \overline{T_k(\tilde{\varrho}) \mathrm{div}\,\tilde{\mathbf{u}}} 
\quad &\text{in}\quad
L^q \big( (0,T) \times \mathbb{T}^3 \big) \label{tk100}
\end{align}
holds $\tilde{\mathbb{P}}$-a.s. for any $p\in (1,\gamma)$ and some $q>1$. We can now use \eqref{l_kConver1}--\eqref{tk100} to pass to the limit in \eqref{renormalizedContFifthLayerSeqLk}  and we obtain
\begin{equation}
\begin{aligned}
\label{renormalizedContFifthLayerLimL_k1} 
&0=\int_0^T\int_{{\mathbb{T}^3}}  \overline{L_k(\tilde{\varrho})}\,\partial_t\phi \,\mathrm{d}x \, \mathrm{d}t +
\int_{{\mathbb{T}^3}}  L_k(\tilde{\varrho}_{0})\,\phi(0) \,\mathrm{d}x 
\\&+
 \int_0^T\int_{{\mathbb{T}^3}} \big[\overline{L_k(\tilde{\varrho})} \tilde{\mathbf{u}}  \big] \cdot \nabla\phi \, \mathrm{d}x \, \mathrm{d}t  
- \int_0^T\int_{{\mathbb{T}^3}} 
 \overline{T_k(\tilde{\varrho} )\mathrm{div}\tilde{\mathbf{u}} }\,  \phi\, \mathrm{d}x \, \mathrm{d}t.
\end{aligned}
\end{equation} 
If we now take the difference of \eqref{renormalizedContFifthLayerLimL_k} and \eqref{renormalizedContFifthLayerLimL_k1} and consider $\phi(t,x)= \phi_m(t)\phi_n(x)$ where $\phi_m$ and $\phi_n$ are approximation sequences of the characteristic functions $\chi_{[0,s]}$, $s\in[0,T]$ and $\chi_{\mathbb{T}^3}$ respectively, then we obtain
\begin{equation}
\begin{aligned}
\label{renormalizedContFifthLayerLimL_k2} 
&\int_0^s\int_{{\mathbb{T}^3}}  \big[L_k(\tilde{\varrho}) - \overline{L_k(\tilde{\varrho})} \big]\,\mathrm{d}x  
= \int_0^s\int_{{\mathbb{T}^3}} 
\big[ \overline{T_k(\tilde{\varrho} )\mathrm{div}\tilde{\mathbf{u}} } - T_k(\tilde{\varrho} )\mathrm{div}\tilde{\mathbf{u}}  \big]\,  \mathrm{d}x \, \mathrm{d}t
\\&= \int_0^s\int_{{\mathbb{T}^3}} 
\big[ \overline{T_k(\tilde{\varrho} )\mathrm{div}\tilde{\mathbf{u}} } - \overline{T_k(\tilde{\varrho} )}\mathrm{div}\tilde{\mathbf{u}}  \big]\,  \mathrm{d}x \, \mathrm{d}t
+ \int_0^s\int_{{\mathbb{T}^3}} 
\big[ \overline{T_k(\tilde{\varrho} )}  - T_k(\tilde{\varrho} ) \big]\, \mathrm{div}\tilde{\mathbf{u}} \,  \mathrm{d}x \, \mathrm{d}t
\\&=: I_1 + I_2
\end{aligned}
\end{equation} 
where by a similar argument as in \eqref{eq:57az}, we have that
\begin{equation}
I_1 \geq \frac{1}{(\nu^B +2\nu^S)}
\limsup_{\delta \rightarrow 0}
 \int_0^s\int_{{\mathbb{T}^3}} 
\big\vert T_k(\tilde{\varrho}_\delta) - T_k(\tilde{\varrho}) 
\big\vert^{\gamma+1} \,\mathrm{d}x\, \mathrm{d}t.
\end{equation}
By the bounded of $\tilde{\mathbf{u}}$ in $L^2_tW^{1,2}_x$, the smallness of the oscillation defect measure \eqref{smallnesOfOscilDefectMeasure} and properties of $T_k$, we also have that
\begin{equation}
\begin{aligned}
I_2 \rightarrow 0
\end{aligned}
\end{equation}
as $k\rightarrow\infty$. Also, one can verify that the left-hand side of \eqref{renormalizedContFifthLayerLimL_k2}  is bounded by using \eqref{LkTk}. The collection of the above information implies that
\begin{align}
\int_{\mathbb{T}^3}\big[\overline{\tilde{\varrho} \log \tilde{\varrho}} - \tilde{\varrho} \log \tilde{\varrho} \big](t)\, \mathrm{d}x =0 \quad \text{for all} \quad t\in [0,T]
\end{align}
as $k\rightarrow \infty$ in \eqref{renormalizedContFifthLayerLimL_k2}. This implies that
\begin{align}
\label{strongDensCovFifthLayer}
\tilde{\varrho}_\delta(t) \rightarrow \tilde{\varrho}(t) \quad \text{in} \quad L^1(\mathbb{T}^3)\quad \text{for any} \quad t\in[0,T]
\end{align}
since $\tilde{\varrho}\mapsto \tilde{\varrho}\log \tilde{\varrho}$ is strictly convex. It also follows from \eqref{strongDensCovFifthLayer} that
\begin{align}
\label{strongDelVCovFifthLayer}
\Delta\tilde{V}_\delta(t) \rightarrow \Delta\tilde{V}(t) \quad \text{in} \quad L^1(\mathbb{T}^3)\quad \text{for any} \quad t\in[0,T].
\end{align}
We can now use the Lipschitz continuity of $\mathbf{g}_k$ and \eqref{strongDensCovFifthLayer} to obtain
\begin{align}
\int_{{\mathbb{T}^3}}  \tilde{\varrho}_\delta\, \mathbf{g}_{k}(\tilde{\varrho}_\delta , f, \tilde{\varrho}_\delta \tilde{\mathbf{u}}_\delta ) \cdot \bm{\phi}  \, \mathrm{d}x
\rightarrow
\int_{{\mathbb{T}^3}}  \tilde{\varrho}\, \mathbf{g}_{k}(\tilde{\varrho} , f, \tilde{\varrho} \tilde{\mathbf{u}} ) \cdot \bm{\phi}  \, \mathrm{d}x \quad \text{a.e. in } (0,T)
\end{align}
$\mathbb{P}$-a.s. for any $\bm{\phi} \in C^\infty(\mathbb{T}^3)$ from which we infer that
\begin{align}
\label{almostEverywhereNoiseFifthLayer}
\overline{  \tilde{\varrho}\, \mathbf{g}_{k}(\tilde{\varrho} , f, \tilde{\varrho} \tilde{\mathbf{u}} ) }
=
  \tilde{\varrho}\, \mathbf{g}_{k}(\tilde{\varrho} , f, \tilde{\varrho} \tilde{\mathbf{u}} ) \quad \text{a.e. in } \tilde{\Omega} \times (0,T) \times \mathbb{T}^3.
\end{align}
We are therefore able to conclude that the nonlinear pressure term in \eqref{weakMomEqLimitFifthLayer} is indeed isentropic  by the use of \eqref{strongDelVCovFifthLayer}. Furthermore, the nonlinear noise term in \eqref{weakMomEqLimitFifthLayer} is also of the require form since we have \eqref{almostEverywhereNoiseFifthLayer}. Finally as in Lemma \ref{lem:identifyEnerFourthLayer}, we also have the following result.
\begin{lemma}
\label{lem:identifyEnerFifthLayer}
The random distributions  $[\tilde{\varrho}, \tilde{\mathbf{u}},  \tilde{V}]$ satisfies \eqref{augmentedEnergy} for all $\psi \in C^\infty_c ([0,T))$, $\psi \geq 0$ $\mathbb{P}$-a.s.
\end{lemma}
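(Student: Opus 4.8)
The plan is to pass to the limit $\delta\rightarrow 0$ in the energy inequality of the fourth approximation layer. By the equality of laws recorded in Lemma~\ref{lem:JakowFifthLayer}, the tuple $(\tilde{\varrho}_\delta,\tilde{\mathbf{u}}_\delta,\tilde{V}_\delta,\tilde{W}_\delta)$ satisfies, on the new stochastic basis, the integrated (in time) form \eqref{energyInequalityFifthLayerNonDissi} of the fourth-layer energy inequality, $\tilde{\mathbb{P}}$-a.s.\ for a.e.\ $s\in(0,T]$. I would first establish the pointwise-in-$s$ inequality for the limit $[\tilde{\varrho},\tilde{\mathbf{u}},\tilde{V}]$ with $P(\tilde{\varrho})$ (recall \eqref{PVarrho}) in place of $P^\Gamma_\delta(\tilde{\varrho}_\delta)$, and then recover the weak form \eqref{augmentedEnergy} by the standard equivalence between the pointwise-in-$s$ and the $\psi$-formulations: testing with $\psi\in C^\infty_c([0,T))$, $\psi\geq0$, integrating in $s$, and integrating by parts in the drift while keeping the martingale part as a stochastic integral gives precisely \eqref{augmentedEnergy}.

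On the left-hand side I would use weak lower semicontinuity. The viscous dissipation terms $\nu^S\int_0^s\Vert\nabla\tilde{\mathbf{u}}_\delta\Vert_{L^2_x}^2$ and $(\nu^B+\nu^S)\int_0^s\Vert\mathrm{div}\,\tilde{\mathbf{u}}_\delta\Vert_{L^2_x}^2$ are convex in $\tilde{\mathbf{u}}_\delta$, hence weakly lsc under $\tilde{\mathbf{u}}_\delta\rightharpoonup\tilde{\mathbf{u}}$ in $L^2_tW^{1,2}_x$. For the electric energy I note that the strong convergence \eqref{strongDensCovFifthLayer} together with the uniform bound \eqref{uniBoundFifthLayer1} gives $\tilde{\varrho}_\delta\to\tilde{\varrho}$ in $L^r_{t,x}$ for every $r<\gamma$, so (via $\nabla\Delta^{-1}_{\mathbb{T}^3}\colon L^r_x\to W^{1,r}_x\hookrightarrow L^2_x$ for $r\in(6/5,\gamma)$) $\nabla\tilde{V}_\delta\to\nabla\tilde{V}$ strongly in $L^2_{t,x}$, and with \eqref{strongDelVCovFifthLayer} also $\vert\nabla\tilde{V}_\delta(s)\vert^2\to\vert\nabla\tilde{V}(s)\vert^2$ in $L^1_x$, so this term simply converges (for either choice of sign). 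For the pressure potential I would write $P^\Gamma_\delta(\varrho)=P(\varrho)+\delta\,\mathcal{P}_\Gamma(\varrho)$, where $\mathcal{P}_\Gamma$ is the internal energy associated with $\varrho\mapsto\varrho+\varrho^\Gamma$; the only negative contribution to $\delta\,\mathcal{P}_\Gamma$, namely $\delta\varrho\log\varrho$ on $\{\varrho<1\}$, is bounded below by $-\delta/e$, so Fatou's lemma (using $\tilde{\varrho}_\delta(s)\to\tilde{\varrho}(s)$ a.e.\ and positivity of $P$) yields $\int_{\mathbb{T}^3}P(\tilde{\varrho})(s)\leq\liminf_\delta\int_{\mathbb{T}^3}P^\Gamma_\delta(\tilde{\varrho}_\delta)(s)$. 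Finally, $\tfrac12\varrho\vert\mathbf{u}\vert^2=\tfrac12\vert\varrho\mathbf{u}\vert^2/\varrho$ is jointly convex in $(\varrho\mathbf{u},\varrho)$; since $\tilde{\varrho}_\delta(s)\to\tilde{\varrho}(s)$ strongly in $L^1_x$ and $\tilde{\varrho}_\delta\tilde{\mathbf{u}}_\delta(s)\rightharpoonup\tilde{\varrho}\tilde{\mathbf{u}}(s)$ weakly in $L^{2\gamma/(\gamma+1)}_x$ (from the $C_w$ convergence carried by $\chi_{\varrho\mathbf{u}}$), the standard lsc lemma for this functional gives $\int_{\mathbb{T}^3}\tfrac12\tilde{\varrho}\vert\tilde{\mathbf{u}}\vert^2(s)\leq\liminf_\delta\int_{\mathbb{T}^3}\tfrac12\tilde{\varrho}_\delta\vert\tilde{\mathbf{u}}_\delta\vert^2(s)$.

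On the right-hand side every term converges. The initial energy $\int_{\mathbb{T}^3}[\tfrac12\vert\mathbf{m}_{0,\delta}\vert^2/\varrho_{0,\delta}+P^\Gamma_\delta(\varrho_{0,\delta})\pm\vartheta\vert\nabla V_{0,\delta}\vert^2]$ converges to $\int_{\mathbb{T}^3}[\tfrac12\varrho_0\vert\mathbf{u}_0\vert^2+P(\varrho_0)\pm\vartheta\vert\nabla V_0\vert^2]$, with the $p$-th moment preserved, since (Section~\ref{sec:lawFifthLayer}) $\varrho_{0,\delta}=\varrho_0$ is uniformly bounded so $P^\Gamma_\delta(\varrho_{0,\delta})\to P(\varrho_0)$ uniformly, $\nabla V_{0,\delta}=\nabla V_0$, and $\mathbf{m}_{0,\delta}/\sqrt{\varrho_{0,\delta}}\to\mathbf{m}_0/\sqrt{\varrho_0}$ in $L^{p_0}(\Omega;L^2_x)$. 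The noise-coefficient term $\tfrac12\int_0^s\int_{\mathbb{T}^3}\tilde{\varrho}_\delta\sum_k\vert\mathbf{g}_k(x,\tilde{\varrho}_\delta,f,\tilde{\mathbf{m}}_\delta)\vert^2$ and the stochastic integral $\int_0^s\int_{\mathbb{T}^3}\tilde{\varrho}_\delta\tilde{\mathbf{u}}_\delta\cdot\mathbf{G}(\tilde{\varrho}_\delta,f,\tilde{\mathbf{m}}_\delta)\,\mathrm{d}x\,\mathrm{d}\tilde{W}_\delta$ are passed to the limit exactly as in the proof of \cite[Proposition~4.4.13]{breit2018stoch}: one invokes the growth conditions \eqref{stochCoeffBoundExi0}--\eqref{stochsummableConst}, the strong convergence of the density \eqref{strongDensCovFifthLayer}, the $\tilde{\mathbb{P}}$-a.s.\ identity \eqref{almostEverywhereNoiseFifthLayer}, the convergence $\tilde{W}_\delta\to\tilde{W}$ from Lemma~\ref{lem:JakowFifthLayer}, and the stochastic-integral convergence lemma \cite[Lemma~2.6.6]{breit2018stoch} (passing to a further subsequence along which the stochastic integral converges $\tilde{\mathbb{P}}$-a.s.); the fact that $f\in L^\infty_x\cap L^1_x$ is a fixed function makes its presence in $\mathbf{g}_k$ harmless. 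Taking $\liminf_\delta$ in \eqref{energyInequalityFifthLayerNonDissi} and combining with the previous paragraph yields the pointwise-in-$s$ inequality, hence \eqref{augmentedEnergy}.

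The main obstacle is the consistent handling of the stochastic forcing in the passage to the limit: one must marry the merely weak convergence of $\tilde{\mathbf{u}}_\delta$ with the strong convergence of $\tilde{\varrho}_\delta$ in order to identify both the It\^o correction term and the martingale part while respecting the direction of the inequality; together with the fixed-time lower-semicontinuity step for the kinetic energy, this is where the argument requires care. Everything else is routine convexity and Fatou bookkeeping, or a verbatim transcription of the pure-fluid argument of \cite{breit2018stoch}.
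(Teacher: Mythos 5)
Your term-by-term analysis is substantively the same as what the paper intends (the paper itself offers no proof beyond ``as in Lemma \ref{lem:identifyEnerFourthLayer}'' and a pointer to \cite{breit2018stoch}): transfer the fourth-layer inequality to the new basis via Lemma \ref{lem:JakowFifthLayer}, treat the dissipation by convexity, the artificial pressure potential by the $\delta\varrho\log\varrho\geq-\delta/e$ lower bound and Fatou, the electric energy by the strong convergence inherited from \eqref{strongDensCovFifthLayer}, the initial data from Section \ref{sec:lawFifthLayer}, and the noise terms via \eqref{almostEverywhereNoiseFifthLayer} and \cite[Lemma 2.6.6, Prop.\ 4.4.13]{breit2018stoch}. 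All of that is correct and is exactly the content the citation is standing in for.

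There is, however, one step that does not work as you state it: the passage from the pointwise-in-$s$ inequality back to the $\psi$-tested form \eqref{augmentedEnergy}. The ``standard equivalence'' only goes one way. Writing $F(s)=E(s)+\int_0^sD-E(0)-\int_0^sN-M(s)$, the $\psi$-form is precisely the statement $\int_0^T\partial_t\psi\,F\,\mathrm{d}s\geq0$ for all $\psi\geq0$ in $C^\infty_c([0,T))$, i.e.\ that $F$ is (distributionally) nonincreasing; knowing only $F\leq0$ a.e.\ does not give this, since $\partial_t\psi$ changes sign (a bounded, sign-correct but oscillating energy profile is a counterexample). The repair is to pass to the limit directly in the $\psi$-tested inequality \eqref{energyInequalityFourthLayer} rather than in its time-integrated consequence. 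The dissipation terms then sit under the nonnegative weight $\psi$, so weak lower semicontinuity suffices there; but the term $-\int_0^T\partial_t\psi\int\mathcal{E}_\delta\,\mathrm{d}x\,\mathrm{d}t$ carries the sign-indefinite weight $\partial_t\psi$ and therefore needs genuine convergence of the energy, not just fixed-time lower semicontinuity. This is available: the pressure potential and electric energy converge by \eqref{strongDensCovFifthLayer}--\eqref{strongDelVCovFifthLayer} as you note, and the kinetic energy converges weakly in $L^1((0,T)\times\mathbb{T}^3)$ because it is the trace of $\tilde{\varrho}_\delta\tilde{\mathbf{u}}_\delta\otimes\tilde{\mathbf{u}}_\delta\rightharpoonup\tilde{\varrho}\tilde{\mathbf{u}}\otimes\tilde{\mathbf{u}}$ (the fifth-layer analogue of \eqref{identifyConvectionFourthLayer}, contained in Lemma \ref{lem:identifyMomFifthLayer}); alternatively one uses the weak-$*$ convergence of $\tilde{\mathcal{E}}_\delta$ in $\chi_{\mathcal{E}}$ carried through Section \ref{sec:compactnessFifthLayer} and identifies the limit measure with the energy of the limit functions by the same convergences. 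With that substitution your argument is complete; your convexity lemma for $(\varrho,\mathbf{m})\mapsto|\mathbf{m}|^2/\varrho$ is then not needed for the kinetic term at all.
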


\section{Existence on the whole space}
\label{sec:existenceWholeSpace}
In this section we deal with the existence in the whole domain case $\mathcal{O}=\mathbb{R}^{3}$. 
For $(x,\varrho,f, \mathbf{u}) \in {\mathbb{R}^3}\times \mathbb{R}_{\geq0}\times \mathbb{R}_{\geq0}\times{\mathbb{R}^3} $,  we  assume that there exists some functions $(\mathbf{g}_k)_{k\in \mathbb{N}}$  such that for any $k\in\mathbb{N}$,
\begin{align}
\label{noiseSupportExiWS}
\mathbf{g}_k: {\mathbb{R}^3}\times \mathbb{R}_{\geq0}\times \mathbb{R}_{\geq0}\times{\mathbb{R}^3}  \rightarrow {\mathbb{R}^3}, \quad \quad \mathbf{g}_k\in C^1 \left( \mathbb{R}^3 \times \mathbb{R}_{>0}\times \mathbb{R}_{\geq 0}\times{\mathbb{R}^3}  \right)
\end{align}
and the support of each $\mathbf{g}_k$ in $\mathbb{R}^3$ is $K$ where $K\Subset \mathbb{R}^3$, i.e.,
\begin{equation}
\label{cptsptnoiseWS}
\exists K \Subset \mathbb{R}^3 \quad \text{such that} \quad
\mathbf{g}_k=0 \quad \text{in}\quad  \mathbb{R}^3\setminus K.
\end{equation}
In addition, $ \mathbf{g}_k$ satisfies the growth conditions \eqref{stochCoeffBoundExi0}--\eqref{stochsummableConst}. 
\\
We study the problem under the far-field condition
\begin{align}
\label{farfield}
\mathbf{u}\rightarrow 0, \quad \varrho \rightarrow\overline{\varrho}>0
\end{align}
as $\vert x \vert \rightarrow\infty$. By the above conditions, with the same lines of arguments of Section \ref{sec:noiseAssumption} we have that the stochastic integral $\int_0^\cdot\varrho \,\mathbf{G}(\varrho, f, \varrho \mathbf{u})\mathrm{d}W$ is  well-defined.
\subsection{Concept of a solution}
\begin{definition}
\label{def:martSolutionExisWS}
Let $\Lambda= \Lambda(\varrho, \mathbf{m},f)$ be a Borel probability measure on $\big[ L^1(\mathbb{R}^3)\big]^3$. We say that
\begin{align}
\label{weakMartSolWS}
\big[(\Omega ,\mathscr{F} ,(\mathscr{F} _t)_{t\geq0},\mathbb{P} );\varrho , \mathbf{u} , V , W   \big]
\end{align}
is a \textit{dissipate martingale solution} of \eqref{contEq}--\eqref{elecField} with initial law $\Lambda$ provided
\begin{enumerate}
\item $(\Omega,\mathscr{F},(\mathscr{F}_t),\mathbb{P})$ is a stochastic basis with a complete right-continuous filtration;
\item $W$ is a $(\mathscr{F}_t)$-cylindrical Wiener process;
\item the density $\varrho \in C_w \big( [0,T]; L^\gamma (\mathbb{R}^3)\big) $ $\mathbb{P}$-a.s., it is $(\mathscr{F}_t)$-progressively measurable  and 
\begin{align*}
\mathbb{E}\, \bigg[ \sup_{t\in(0,T)}\Vert  \varrho  (t)   \Vert^{\overline{\gamma}p}_{L^{\overline{\gamma}} (\mathbb{R}^3)}\bigg]<\infty 
\end{align*}
for all $1\leq p<\infty$ where $\overline{\gamma}=\min\{2,\gamma\}$,
\item the velocity field $\mathbf{u} $ is an $(\mathscr{F}_t)$-adapted random distribution and 
\begin{align*}
\mathbb{E}\, \bigg[ \int_0^T\Vert  \mathbf{u}  \Vert^2_{W^{1,2}(\mathbb{R}^3) }\,\mathrm{d}t\bigg]^p<\infty 
\end{align*}
for all $1\leq p<\infty$,
\item the momentum $\varrho  \mathbf{u}  \in C_w \big( [0,T]; L^{\frac{2\gamma}{\gamma+1}} (\mathbb{R}^3)\big) $ $\mathbb{P}$-a.s., it is $(\mathscr{F}_t)$-progressively measurable and 
\begin{align*}
\mathbb{E}\, \bigg[ \sup_{t\in(0,T)} \big\Vert  \varrho \vert \mathbf{u} \vert^2(t)  \big\Vert_{L^1(\mathbb{R}^3)}^p\bigg]
+
\mathbb{E}\, \bigg[ \sup_{t\in(0,T)} \big\Vert  \varrho  \mathbf{u}  (t)\big\Vert_{L^\frac{2\gamma}{\gamma+1}(\mathbb{R}^3)}^{\frac{2\gamma}{\gamma+1}p}\bigg]
<\infty 
\end{align*}
for all $1\leq p<\infty$;
\item the field $\Delta V \in C_w \big( [0,T]; L^\gamma(\mathbb{R}^3)\big) $ $\mathbb{P}$-a.s., it is $(\mathscr{F}_t)$-progressively measurable  and 
\begin{align*}
\mathbb{E}\, \bigg[ \sup_{t\in(0,T)}\Vert \Delta V  (t)   \Vert^{\overline{\gamma}p}_{L^{\overline{\gamma}}(\mathbb{R}^3)}\bigg]
+
\mathbb{E}\, \bigg[ \sup_{t\in(0,T)}\Vert \nabla V  (t)   \Vert^{2p}_{L^{2}(\mathbb{R}^3)}\bigg]
<\infty 
\end{align*}
for all $1\leq p<\infty$ where $\overline{\gamma}=\min\{2,\gamma\}$,
\item given $f\in L^1(\mathbb{R}^3) \cap L^\infty(\mathbb{R}^3)$, there exists $\mathscr{F}_0$-measurable random variables $(\varrho_0, \varrho_0\mathbf{u}_0)= (\varrho(0), \varrho\mathbf{u}(0))$ such that $\Lambda = \mathbb{P}\circ (\varrho_0, \varrho_0 \mathbf{u}_0, f)^{-1}$;
\item for all $\psi \in C^\infty_c ([0,T))$ and $\phi \in C^\infty_c ({\mathbb{R}^3})$, the following
\begin{equation}
\begin{aligned}
\label{eq:distributionalSolMassWS}
&-\int_0^T \partial_t \psi \int_{{\mathbb{R}^3}} \varrho (t) \phi \, \mathrm{d}x  \mathrm{d}t  =  \psi(0) \int_{{\mathbb{R}^3}} \varrho _0 \phi \,\mathrm{d}x  +  \int_0^T \psi \int_{{\mathbb{R}^3}} \varrho  \mathbf{u} \cdot \nabla \phi \, \mathrm{d}x  \mathrm{d}t,
\end{aligned}
\end{equation}
hold $\mathbb{P}$-a.s.;
\item for all $\psi \in C^\infty_c ([0,T))$ and $\bm{\phi} \in C^\infty_c ({\mathbb{R}^3})$, the following
\begin{equation}
\begin{aligned}
\label{eq:distributionalSolMomenWS}
&-\int_0^T \partial_t \psi \int_{{\mathbb{R}^3}} \varrho  \mathbf{u} (t) \cdot \bm{\phi} \, \mathrm{d}x    \mathrm{d}t
=
\psi(0)  \int_{{\mathbb{R}^3}} \varrho _0 \mathbf{u} _0\cdot \bm{\phi} \, \mathrm{d}x  
+  \int_0^T \psi \int_{{\mathbb{R}^3}} \varrho  \mathbf{u} \otimes \mathbf{u} : \nabla \bm{\phi}\, \mathrm{d}x  \mathrm{d}t 
\\&- \nu^S\int_0^T \psi \int_{{\mathbb{R}^3}} \nabla \mathbf{u} : \nabla \bm{\phi} \, \mathrm{d}x  \mathrm{d}t 
-(\nu^B+ \nu^S )\int_0^T \psi \int_{{\mathbb{R}^3}} \mathrm{div}\,\mathbf{u} \, \mathrm{div}\, \bm{\phi}\, \mathrm{d}x  \mathrm{d}t    
\\&+   
\int_0^T \psi \int_{{\mathbb{R}^3}} p(\varrho)\, \mathrm{div} \, \bm{\phi} \, \mathrm{d}x  \mathrm{d}t
+\int_0^T \psi \int_{{\mathbb{R}^3}} \vartheta\varrho  \nabla V  \cdot \bm{\phi} \, \mathrm{d}x    \mathrm{d}t
\\&+
\int_0^T \psi \int_{{\mathbb{R}^3}}  \varrho\,\mathbf{G}(\varrho , f, \mathbf{m} ) \cdot \bm{\phi}\, \mathrm{d}x \mathrm{d}W   
\end{aligned}
\end{equation}
hold $\mathbb{P}$-a.s.;
\item equation \eqref{elecField} holds $\mathbb{P}$-a.s. for a.e.  $(t,x) \in(0,T)\times \mathbb{R}^3$; 
\item the  energy inequality 
\begin{equation}
\begin{aligned}
\label{augmentedEnergyWS}
-
&\int_0^T \partial_t \psi \int_{{\mathbb{R}^3}}\bigg[ \frac{1}{2} \varrho  \vert \mathbf{u}  \vert^2  
+ H(\varrho, \overline{\varrho} ) \pm
\vartheta \vert \nabla V \vert^2
\bigg]\,\mathrm{d}x \, \mathrm{d}t
+\nu^S
\int_0^T\psi \int_{{\mathbb{R}^3}}\vert  \nabla \mathbf{u}  \vert^2 \,\mathrm{d}x  \,\mathrm{d}x
\\&+(\nu^B+\nu^S)
\int_0^T \psi \int_{{\mathbb{R}^3}}\vert  \mathrm{div}\, \mathbf{u}  \vert^2 \,\mathrm{d}x  \,\mathrm{d}t
\leq
\psi(0)
 \int_{{\mathbb{R}^3}}\bigg[\frac{1}{2} \varrho _0\vert \mathbf{u} _0 \vert^2  
+ H(\varrho_0, \overline{\varrho} ) \pm
\vartheta \vert \nabla V_0 \vert^2 \bigg]\,\mathrm{d}x
\\&
+\frac{1}{2}\int_0^T \psi
 \int_{{\mathbb{R}^3}}
 \varrho
 \sum_{k\in\mathbb{N}} \big\vert
 \mathbf{g}_k(x,\varrho ,f,\mathbf{m}  ) 
\big\vert^2\,\mathrm{d}x\,\mathrm{d}t
+\int_0^T \psi  \int_{{\mathbb{R}^3}}   \varrho\mathbf{u} \cdot\mathbf{G}(\varrho , f, \mathbf{m} )\,\mathrm{d}x\,\mathrm{d}W ;
\end{aligned}
\end{equation}
holds $\mathbb{P}$-a.s. for all $\psi \in C^\infty_c ([0,T))$, $\psi\geq0$ where 
\begin{equation} 
\begin{aligned}
\label{HVarrhoWS}
H(\varrho, \overline{\varrho} ) = P(\varrho) -P'(\overline{\varrho})(\varrho -\overline{\varrho}) -P(\overline{\varrho})
\end{aligned}
\end{equation}
and $P $ is given by \eqref{PVarrho};
\item and  \eqref{contEq} holds in the renormalized sense, i.e., for any $\phi\in C^\infty_c([0,T)\times{\mathbb{R}^3})$ and $b \in C^1_b(\mathbb{R})$ such that $ b'(z)=0$ for all $z\geq M_b$,  we have that
\begin{equation}
\begin{aligned}
\label{renormalizedContExisWS}
-&\int_0^T\int_{{\mathbb{R}^3}}  b(\varrho )\,\partial_t\phi \,\mathrm{d}x \, \mathrm{d}t =
\int_{{\mathbb{R}^3}}  b\big(\varrho (0)\big)\,\phi(0) \,\mathrm{d}x 
\\&
+ \int_0^T\int_{{\mathbb{R}^3}} \big[b(\varrho ) \mathbf{u}  \big] \cdot \nabla\phi \, \mathrm{d}x \, \mathrm{d}t  
- \int_0^T\int_{{\mathbb{R}^3}} \big[ \big(  b'(\varrho ) \varrho   
-
 b(\varrho )\big)\mathrm{div}\mathbf{u} \big]\,  \phi\, \mathrm{d}x \, \mathrm{d}t
\end{aligned}
\end{equation} 
\end{enumerate}
holds $\mathbb{P}$-a.s.
\end{definition} 
\subsection{Main result}
The main result is the following.
\begin{theorem}
\label{thm:WS}
Let $\overline{\varrho}\geq1$ and let $\Lambda = \Lambda(\varrho, \mathbf{m},f)$ be a Borel probability measure on $\big[ L^1 (\mathbb{R}^3)\big]^3$ such that
\begin{align*}
\Lambda\bigg\{\varrho\geq0, \quad M \leq  \varrho  \leq M^{-1}, \quad f \leq \overline{f}, \quad \mathbf{m} =0 \quad \text{ when } \varrho=0  \bigg\} =1, \quad
\end{align*}
holds for some deterministic constants $M, \overline{f}>0$. Also 
 assume that
\begin{align}
\label{finiteEnergyLaw}
\int_{[L^1 (\mathbb{R}^3) ]^3} \bigg\vert  
\int_{\mathbb{R}^3} \bigg[ \frac{\vert \mathbf{m} \vert^2}{2\varrho} + H(\varrho, \overline{\varrho} )  \pm
\vartheta \vert \nabla V \vert^2
\bigg] \, \mathrm{d}x
\bigg\vert^p \, \mathrm{d}\Lambda(\varrho, \mathbf{m},f) \lesssim 1
\end{align}
holds for some $p\geq 1$ and that
\begin{align*}
f\in L^1 (\mathbb{R}^3) \cap L^\infty (\mathbb{R}^3)\quad \mathbb{P}\text{-a.s.}
\end{align*} Finally assume that \eqref{noiseSupportExi}--\eqref{stochsummableConst} as well as \eqref{cptsptnoiseWS} are satisfied. Then the exists a dissipative martingale solution of \eqref{contEq}--\eqref{elecField} in the sense of Definition \ref{def:martSolutionExisWS}.
\end{theorem}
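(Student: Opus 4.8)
The plan is to realize the solution on $\mathbb{R}^3$ as the limit of a sequence of periodic problems, mirroring the deterministic construction of \cite{ducomet2004global} and the stochastic whole-space analysis of \cite{mensah2016existence,mensah2019theses}. First I would fix an exhausting family of tori $(\mathbb{T}^3_n)_{n\in\mathbb{N}}$ of side length $2n$ and transfer the prescribed law $\Lambda$ on $[L^1(\mathbb{R}^3)]^3$ to a family of laws $\Lambda_n$ on $[L^1(\mathbb{T}^3_n)]^3$: the density datum is periodized by gluing $\varrho_0$ on a large ball to the far-field constant $\overline{\varrho}$ outside, the momentum datum is cut off correspondingly so that $\mathbf{m}_0=0$ where $\varrho_0=0$ is preserved, and $f$ is extended periodically, which is consistent because by \eqref{cptsptnoiseWS} the noise coefficients $\mathbf{g}_k$ are supported in a fixed compact $K$ that is eventually contained in every $\mathbb{T}^3_n$. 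Using the \emph{relative} energy functional $H(\varrho,\overline{\varrho})$ in place of $P(\varrho)$, exactly as in the energy balances \eqref{ito3} and in \eqref{augmentedEnergyWS}, one checks that $\Lambda_n$ satisfies the hypotheses of Theorem \ref{thm:one} uniformly in $n$ (here \eqref{finiteEnergyLaw} and the far-field condition \eqref{farfield} enter), and that $\Lambda_n\to\Lambda$ in the sense of measures on $[L^1_{\mathrm{loc}}(\mathbb{R}^3)]^3$. Theorem \ref{thm:one} then furnishes, for each $n$, a dissipative martingale solution $[(\Omega,\mathscr{F},(\mathscr{F}_t),\mathbb{P});\varrho_n,\mathbf{u}_n,V_n,W]$ on $\mathbb{T}^3_n$, on a single stochastic basis and a single Wiener process.

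Next I would extract a priori bounds that are uniform in $n$ but only \emph{local in space}. From \eqref{augmentedEnergyWS} with $H(\varrho_n,\overline{\varrho})$, the noise estimate \eqref{eq:gk1} (whose integrability requirements are met because $f$ is compactly controlled and $\varrho_n\vert\mathbf{u}_n\vert^2$ is energy-bounded), the Burkholder--Davis--Gundy inequality and Gr\"onwall's lemma, one obtains, in all stochastic moments, uniform bounds for $\varrho_n-\overline{\varrho}$ in $L^\infty_tL^\gamma$ over bounded sets, for $\mathbf{u}_n$ in $L^2_tW^{1,2}_{\mathrm{loc}}$, for $\nabla V_n$ in $L^\infty_tL^2(\mathbb{R}^3)$, for $\varrho_n\mathbf{u}_n$ in $L^\infty_tL^{2\gamma/(\gamma+1)}_{\mathrm{loc}}$, etc.; the compact support of the noise and \eqref{farfield} keep $\varrho_n-\overline{\varrho}$ under control at infinity. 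As in Section \ref{sec:PressureEstFifthLay} this is not enough to identify the pressure, so I would prove an improved estimate: test the (weak) momentum equation against $\varrho_n^\Theta$ composed with $\nabla\Delta^{-1}$ --- on $\mathbb{T}^3_n$, then localized by a spatial cut-off, with the commutator errors absorbed --- and apply It\^o's lemma to obtain $\mathbb{E}\Vert\varrho_n\Vert_{L^{\gamma+\Theta}(Q')}^{(\cdot)}\lesssim 1$ on any bounded $Q'\subset(0,T)\times\mathbb{R}^3$, uniformly in $n$, for $0<\Theta\leq\frac{1}{3}$ (recall $\gamma>\frac{3}{2}$). Here the new terms relative to \cite{breit2018stoch} are the electric-field term $\varrho_n\nabla V_n$, estimated via the uniform bound on $\nabla V_n$ together with the conservation of mass $\Vert\varrho_n(t)\Vert_{L^1}=\Vert\varrho_{0,n}\Vert_{L^1}$ in the weighted sense, and the noise term, handled as in Section \ref{sec:PressureEstFifthLay}.

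With these bounds I would set up tightness on path spaces built from $L^p_{\mathrm{loc}}(\mathbb{R}^3)$ (or suitably weighted $L^p(\mathbb{R}^3)$) topologies, so that the relevant embeddings --- including the Young-measure and energy-measure components --- are compact exactly as in Lemma \ref{tighnessFifthLAyer} and in \cite{mensah2016existence,mensah2019theses}; tightness of the electric field again follows from the linearity of the Poisson equation and the inherited density regularity. The Jakubowski--Skorokhod theorem \cite{jakubowski1998short} then yields a new probability space, random variables $(\tilde{\varrho}_n,\tilde{\mathbf{u}}_n,\tilde{V}_n,\tilde{W}_n,\tilde{\mathcal{E}}_n,\tilde{\nu}_n,\dots)$ with the same laws, and almost sure convergence to a limit $(\tilde{\varrho},\tilde{\mathbf{u}},\tilde{V},\tilde{W},\dots)$, together with the weak limits of all nonlinear composites. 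After constructing the limit filtration and verifying $\tilde{W}$ is a cylindrical Wiener process, the continuity equation, its renormalized form once strong $L^1_{\mathrm{loc}}$ density convergence is available, the Poisson equation (by linearity), and all linear and quadratic-in-velocity terms of the momentum equation pass to the limit; the energy inequality \eqref{augmentedEnergyWS} passes by weak lower semicontinuity. What remains --- and this is the main obstacle --- is to upgrade the weak limits $\overline{p(\tilde{\varrho})}$, $\overline{\tilde{\varrho}\nabla\tilde{V}}$ and $\overline{\tilde{\varrho}\,\mathbf{g}_k(\tilde{\varrho},f,\tilde{\varrho}\tilde{\mathbf{u}})}$ to the genuine nonlinearities, i.e. to prove strong convergence of $\varrho_n$.

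For this I would follow the scheme of Section \ref{sec:fifthLayer}, localized in space. The \emph{effective viscous flux} identity is obtained by combining the momentum equation with the renormalized continuity equation through $\nabla\Delta^{-1}$ (localized, with vanishing truncation errors guaranteed by the decay of $\varrho_n-\overline{\varrho}$ and the compact support of the noise), the electric-field contribution being controlled by the uniform $\nabla V_n$ bound and mass conservation and the stochastic contribution being treated via \cite[Lemma 2.6.6]{breit2018stoch}; then the \emph{oscillation defect measure} $\mathbf{osc}_{\gamma+1}[\tilde{\varrho}_n\to\tilde{\varrho}]$ is shown bounded uniformly in the cut-off level $k$ \`a la Feireisl \cite{feireisl2001compactness}; propagating the renormalized equation with the truncations $T_k$ and $L_k$ to the limit gives $\int(\overline{\tilde{\varrho}\log\tilde{\varrho}}-\tilde{\varrho}\log\tilde{\varrho})(t)\,\mathrm{d}x=0$, hence by strict convexity $\varrho_n\to\tilde{\varrho}$ strongly in $L^1_{\mathrm{loc}}$, whence $\Delta V_n\to\Delta V$ strongly, $\overline{p(\tilde{\varrho})}=p(\tilde{\varrho})$, and the noise term is identified by the Lipschitz continuity of the $\mathbf{g}_k$. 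Finally, as in Lemma \ref{lem:identifyEnerFourthLayer}, the energy and relative-energy inequalities pass to the limit, and the renormalized continuity equation follows from \cite{diperna1989ordinary}, completing the proof of Theorem \ref{thm:WS}. I expect the delicate points to be purely the localization bookkeeping --- making every commutator and every spatial cut-off error vanish as the truncation radius tends to infinity while keeping the $\nabla\Delta^{-1}$-based arguments valid --- rather than any genuinely new phenomenon, since the algebraic structure is identical to the periodic case.
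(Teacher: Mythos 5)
Your proposal is correct and follows essentially the same route as the paper: periodization of the data by gluing to the far-field constant $\overline{\varrho}$, uniform-in-$L$ energy bounds with the relative pressure potential $H(\varrho,\overline{\varrho})$ exploiting the compact support of the noise, a localized $\eta\,\nabla\Delta^{-1}$-based improved pressure estimate, tightness in $L^p_{\mathrm{loc}}$-type path spaces plus Jakubowski--Skorokhod, and finally the effective viscous flux together with the oscillation defect measure to get strong $L^1_{\mathrm{loc}}$ convergence of the density. The only cosmetic discrepancy is the admissible range of the exponent $\Theta$ in the pressure estimate (the paper works with $\Theta<\tfrac{2}{3}\gamma-1$ rather than $\Theta\le\tfrac13$), which does not affect the argument.
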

\subsection{Construction of law}
\label{sec:intialDataTorus}
To obtain a suitable initial law satisfying the preamble of Theorem \ref{thm:WS}, we first consider the following family of cut-off functions
\begin{equation} \label{periodicCutOff}
\begin{cases}
\eta_L\in C^\infty_0 \big([-L,L]^3 \big), \quad &0\leq \eta_L \leq 1, \\
\eta_L \equiv 1 \text{ in } \big[-\frac{L}{2} , \frac{L}{2} \big]^3
\end{cases}
\end{equation}
defined for $L \geq1$ and let $\overline{\varrho}>0$ be the anticipated far-field condition in \eqref{farfield}. Since the law $\Lambda$ in Theorem \ref{thm:WS}  is a measure on a Polish space, it follows from Skorokhod's theorem  that there exists some $\mathscr{F}_0$-measurable random variables $(\varrho_0, \,\mathbf{m}_0, f)$ defined on some probability space $(\Omega, \mathscr{F}, \mathbb{P})$ and having $\Lambda$ as its law. With  \eqref{periodicCutOff} in hand, we construct the following family
\begin{equation}
\begin{aligned}
\label{familyOfData}
&\varrho_{0,L} =\eta_L \varrho_0 +(1-\eta_L )\overline{\varrho}, \quad \mathbf{m}_{0,L} :=\varrho_{0,L}\mathbf{u}_{0,L} =\eta_L \frac{\sqrt{\varrho_{0,L}}}{\sqrt{\varrho_0}}\mathbf{m}_0, 
\\& 
V_{0,L}  =\pm \Delta^{-1}_{[-L,L]^3}(\varrho_{0,L}-f)
\end{aligned}
\end{equation}
of periodic functions having the property that
\begin{align}
\varrho_{0,L}\big\vert_{\partial [-L,L]^3}= \overline{\varrho}, \quad \mathbf{m}_{0,L}\big\vert_{\partial [-L,L]^3} =0
\end{align}
and hence satisfies the far-field condition \eqref{farfield}. We also have that
\begin{align}
(\varrho_{0,L},
\mathbf{m}_{0,L} ) \rightarrow ( \varrho_0, \mathbf{m}_0) \quad\text{a.e. in} \quad {  \mathbb{R}^{3} } 
\end{align}
as $L\rightarrow \infty $. Now for an arbitrary $K\Subset \mathbb{R}^3$  and  a choice of $L\gg 1$ such that $K\subset  [-L,L]^3$, we have that $\vert \varrho_{0,L} \vert \lesssim \varrho_0 +\overline{\varrho}$ and $\vert \mathbf{m}_{0,L} \vert \lesssim 1+ \mathbf{m}_0$
holds  uniformly in $L$. Furthermore, per the assumptions on $\Lambda$,  $\varrho_0 +\overline{\varrho}  \in L^1(K)$ and  $1+\mathbf{m}_0 \in L^1(K)$ so that
\begin{align}
(\varrho_{0,L},
\mathbf{m}_{0,L}, f ) \rightarrow ( \varrho_0, \mathbf{m}_0,f)\quad \text{in} \quad \big[L^1(K)\big]^3
\end{align}
a.s. Subsequently,  we gain
\begin{align}
\label{convOfMeasures}
\Lambda_L=\mathbb{P}\circ (\varrho_{0,L},
\mathbf{m}_{0,L},f )^{-1}
\,\xrightharpoonup{*} \,
\mathbb{P}\circ (\varrho_{0},
\mathbf{m}_{0},f )^{-1} =\Lambda
\end{align}
in the sense of measures on $\big[ L^1_{\mathrm{loc}}(\mathbb{R}^3) \big]^3$ by the arbitrariness of  $K\Subset \mathbb{R}^3$.  

\subsection{A priori bounds} 
\label{sec:aPriori}
By periodicity and invariance, it follows from \eqref{thm:one} that there exists  a family of dissipative martingale solutions 
\begin{align}
\label{weakMartSol}
\big[ \big(\Omega,\mathscr{F},(\mathscr{F}_t),\mathbb{P} \big);\varrho_L, \mathbf{u}_L, V_L, W  \big]
\end{align} 
in the sense of Definition  \ref{def:martSolutionExis}  which are defined for
{ $\overline{\mathrm{d}\mathbb{P}\otimes \mathrm{d}t}$} a.e. $(\omega,t)\in \Omega\times[0,T]$ on the periodic domains $\mathbb{T}^3_L$ for $L\geq1$. Without loss of generality, we have chosen the family of solutions \eqref{weakMartSol} to be defined on the same stochastic basis and also driven by the same Wiener process. We may also choose $L \gg1$ large enough so that the compact set $K$ in \eqref{cptsptnoiseWS} is contained in $\mathbb{T}^3_L$.
\\
The prescribed laws for  \eqref{weakMartSol} are the Borel  probability measures
 $\Lambda_L = \mathbb{P}\circ (\varrho_{0,L}\,,\, \varrho_{0,L} \mathbf{u}_{0,L}, f)^{-1}$ defined on
$\big[ L^1\big(\mathbb{T}^3_L \big)\big]^3$ and which are assumed to satisfy
\begin{equation}
\begin{aligned}
\label{iniLawTorus}
&\Lambda_L\{ \varrho\geq 0 \}=1,
\, \quad
&\int_{[L^1_x]^3}\bigg\vert 
\int_{\mathbb{T}^3_L} \bigg( \frac{1}{2}\frac{\vert \mathbf{m} \vert^2}{\varrho}  + P(\varrho) \pm
\vartheta \vert \nabla V \vert^2
\bigg) \, \mathrm{d}x  \bigg\vert^q \, \mathrm{d}\Lambda_L(\varrho, \mathbf{m},f) < \infty, 
\end{aligned}
\end{equation}
for some $q\geq1$.
\\
Furthermore, in analogy with \eqref{noiseSupportExi}-- \eqref{stochsummableConst}, for $\mathbf{m}_L :=\varrho_L\mathbf{u}_L$, the noise term $\mathbf{G}(\varrho_L, \mathbf{m}_L,f) : \mathfrak{U} \rightarrow L^1(\mathbb{T}^3_L)$ is defined as
\begin{align}
\mathbf{G}(\varrho_L, \mathbf{m}_L,f)e_k 
=
\mathbf{g}_k(\cdot, \varrho_L(\cdot), \mathbf{m}_L(\cdot),f(\cdot))
\end{align}
for all $k\in \mathbb{N}$ such that
\begin{align}
\label{noiseSupportExiTorusL}
\mathbf{g}_k: {\mathbb{T}^3_L}\times \mathbb{R}_{\geq0}\times \mathbb{R}_{\geq0}\times{\mathbb{T}^3_L}  \rightarrow {\mathbb{R}^3}, \quad \quad \mathbf{g}_k\in C^1 \left( {\mathbb{T}^3_L}\times \mathbb{R}_{>0}\times \mathbb{R}_{\geq 0}\times{\mathbb{T}^3_L}  \right)
\end{align}
for any $k\in\mathbb{N}$  and in addition, $ \mathbf{g}_k$ satisfies the following growth conditions:
\begin{align}
&\vert  \mathbf{g}_k(x, \varrho_L, f, \mathbf{m}_L)   \vert  \leq c_k \,\left( \varrho_L + f+ \vert \mathbf{m}_L\vert \right),
\label{stochCoeffBoundExi0TorusL}
\\
&\big\vert\nabla_{\varrho,f, \mathbf{m}} \, \mathbf{g}_k(x, \varrho_L, f, \mathbf{m}_L) \big) \big\vert   \leq c_k,
\label{stochCoeffBound1Exis0TorusL}
\\
&\sum_{k\in \mathbb{N}} c_k^2 \lesssim 1 \label{stochsummableConstTorusL}
\end{align}
for some constant $(c_k)_{k\in \mathbb{N}} \in [0,\infty)$.
\\
If we now approximate the characteristic map $t \mapsto \chi_{[0,t]}$ by  a sequence of nonnegative
compactly supported smooth functions $(\psi_m)_{m\in \mathbb{N}}$ and observe that the resulting inequality is preserved under an affine perturbation of $P(z)$, then for
\begin{align}
H(z,\overline{\varrho}) =P(z) - P'(\overline{\varrho})(z-\overline{\varrho})-P(\overline{\varrho}),
\end{align}
where $\overline{\varrho}>0$, it follows from \eqref{augmentedEnergy} that
\begin{equation}
\begin{aligned}
\label{augmentedEnergyTorusL}
&  \int_{{\mathbb{T}^3_L}}\bigg[ \frac{1}{2} \varrho_L  \vert \mathbf{u}_L  \vert^2  
+ H(\varrho_L,\overline{\varrho} ) \pm
\vartheta \vert \nabla V_L \vert^2
\bigg](t)\,\mathrm{d}x 
+\nu^S
\int_0^t \int_{{\mathbb{T}^3_L}}\vert  \nabla \mathbf{u}_L  \vert^2 \,\mathrm{d}x  \,\mathrm{d}s
\\&+(\nu^B+\nu^S)
\int_0^t \int_{{\mathbb{T}^3_L}}\vert  \mathrm{div}\, \mathbf{u}_L  \vert^2 \,\mathrm{d}x  \,\mathrm{d}s
\leq
 \int_{{\mathbb{T}^3_L}}\bigg[\frac{1}{2} \varrho _{0,L}\vert \mathbf{u} _{0,L} \vert^2  
+ H(\varrho _{0,L},\overline{\varrho}) \pm
\vartheta \vert \nabla V_{0,L} \vert^2 \bigg]\,\mathrm{d}x
\\&
+\frac{1}{2}\int_0^t
 \int_{{\mathbb{T}^3_L}}
 \varrho_L
 \sum_{k\in\mathbb{N}} \big\vert
 \mathbf{g}_k(x,\varrho_L ,f,\mathbf{m}_L  ) 
\big\vert^2\,\mathrm{d}x\,\mathrm{d}s
+\int_0^t \int_{{\mathbb{T}^3_L}}   \varrho_L\mathbf{u}_L \cdot\mathbf{G}(\varrho_L , f, \mathbf{m}_L )\,\mathrm{d}x\,\mathrm{d}W ;
\end{aligned}
\end{equation}
holds $\mathbb{P}$-a.s. for all $\psi \in C^\infty_c ([0,T))$, $\psi\geq0$.
\\
By using\eqref{periodicCutOff} and the convexity of $\eta_L \mapsto H(\varrho_{0,L}, \overline{\varrho})$,
we obtain
\begin{equation}
\begin{aligned}
\label{initialTorusLR}
 \int_{{\mathbb{T}^3_L}}&\bigg[\frac{1}{2} \varrho _{0,L}\vert \mathbf{u} _{0,L} \vert^2  
+ H(\varrho _{0,L},\overline{\varrho}) \pm
\vartheta \vert \nabla V_{0,L} \vert^2 \bigg]\,\mathrm{d}x
\\&
\leq
 \int_{{\mathbb{R}^3}}\bigg[\frac{1}{2} \varrho _{0}\vert \mathbf{u} _{0} \vert^2  
+ H(\varrho _{0},\overline{\varrho}) \pm
\vartheta \vert \nabla V_{0} \vert^2 \bigg]\,\mathrm{d}x
\end{aligned}
\end{equation}
$\mathbb{P}$-a.s., c.f. \cite[Page 57]{mensah2019theses}. In comparison with \eqref{eq:gk1}, it follows from \eqref{stochCoeffBoundExi0TorusL} that for any $p\in[1,\infty)$,
\begin{equation}
\begin{aligned}
\label{supNoiseL}
\mathbb{E}&\bigg[\sup_{t\in[0,T]}
\bigg\vert
\int_0^t
 \int_{{\mathbb{T}^3_L}}
 \varrho_L
 \sum_{k\in\mathbb{N}} \big\vert
 \mathbf{g}_k(x,\varrho_L ,f,\mathbf{m}_L  ) 
\big\vert^2\,\mathrm{d}x\,\mathrm{d}s
\bigg\vert \bigg]^p
\\&
\lesssim_p
\mathbb{E} \int_0^T
\bigg(
\int_K
\big( 1+ \varrho_L^\gamma + \varrho_L\vert \mathbf{u}_L\vert^2 + f^\frac{2\gamma}{\gamma-1} \big)\, \mathrm{d}x \bigg)^p \mathrm{d}s
\end{aligned}
\end{equation}
holds uniformly in $L$ where $K$ is given is \eqref{cptsptnoiseWS}. The use of the Burkholder--Davis--Gundy inequality also yields 
\begin{equation}
\begin{aligned}
\label{supNoiseL1}
\mathbb{E}&\bigg[\sup_{t\in[0,T]}
\bigg\vert
\int_0^t \int_{{\mathbb{T}^3_L}}   \varrho_L\mathbf{u}_L \cdot\mathbf{G}(\varrho_L , f, \mathbf{m}_L )\,\mathrm{d}x\,\mathrm{d}W
\bigg\vert \bigg]^p
\lesssim_p
\mathbb{E}\bigg( \sup_{t\in[0,T]} 
\int_{\mathbb{T}^3_L} \varrho_L \vert \mathbf{u}_L \vert^2 \, \mathrm{d}x \bigg)^p
\\&+
\mathbb{E} \int_0^T
\bigg(
\int_K
\big( 1+ \varrho_L^\gamma + \varrho_L\vert \mathbf{u}_L\vert^2 + f^\frac{2\gamma}{\gamma-1} \big)\, \mathrm{d}x \bigg)^p \mathrm{d}s
\end{aligned}
\end{equation}
holds uniformly in $L$.
If we now take the $p$th-moment of the supremum in \eqref{augmentedEnergyTorusL} and use Gronwall's lemma, then it follows from \eqref{initialTorusLR}--\eqref{supNoiseL1} that
\begin{equation}
\begin{aligned}
\label{est}
&\mathbb{E} \bigg[\sup_{t\in[0,T]}
\int_{{\mathbb{T}^3_L}}\bigg[\frac{1}{2} \varrho _{L}\vert \mathbf{u} _{L} \vert^2  
+ H(\varrho _{L},\overline{\varrho}) \pm
\vartheta \vert \nabla V_{L} \vert^2 \bigg]\,\mathrm{d}x \bigg]^p
+\nu^S\mathbb{E}\bigg[ 
\int_0^T \int_{{\mathbb{T}^3_L}}\vert  \nabla \mathbf{u}_L  \vert^2 \,\mathrm{d}x  \,\mathrm{d}s \bigg]^p
\\&+(\nu^B+\nu^S)\mathbb{E}\bigg[
\int_0^T \int_{{\mathbb{T}^3_L}}\vert  \mathrm{div}\, \mathbf{u}_L  \vert^2 \,\mathrm{d}x  \,\mathrm{d}s\bigg]^p
\lesssim_p 1+
\mathbb{E} \bigg[
\int_{{\mathbb{R}^3}}\bigg[\frac{1}{2} \varrho _{0}\vert \mathbf{u} _{0} \vert^2  
+ H(\varrho _{0},\overline{\varrho}) \pm
\vartheta \vert \nabla V_{0} \vert^2 \bigg]\,\mathrm{d}x \bigg]^p
\end{aligned}
\end{equation}
holds uniformly in $L$  where the right-hand side is finite as a result of \eqref{finiteEnergyLaw}. In particular, it therefore follow from \eqref{est} that
\begin{equation}
\begin{aligned}
\label{est0}
&\mathbb{E} \,\bigg[ \sup_{t\in[0,T]}  \big\Vert \varrho_L\vert \mathbf{u}_L\vert^2 \big\Vert_{L^1(\mathbb{T}^3_L)}\bigg]^p  \lesssim 1, 
&\quad
\mathbb{E} \,\bigg[ \sup_{t\in[0,T]}  \big\Vert H(\varrho _{L},\overline{\varrho}) \big\Vert_{L^1(\mathbb{T}^3_L)} \bigg]^p \lesssim 1,
\\
& \mathbb{E} \,\bigg[ \sup_{t\in[0,T]}  \big\Vert \nabla V_L \big\Vert_{L^2(\mathbb{T}^3_L)}^2\bigg]^p  \lesssim 1, 
&\quad
\mathbb{E} \,\bigg[ \bigg( \int_0^T \big\Vert \nabla\mathbf{u}_L \big\Vert_{L^2(\mathbb{T}^3_L)}^2 \, \mathrm{d}t \bigg)^\frac{1}{2} \bigg]^p  \lesssim 1,
\\
&
\mathbb{E} \,\bigg[ \bigg( \int_0^T \big\Vert \mathbf{u}_L \big\Vert_{L^6(\mathbb{T}^3_L)}^2 \, \mathrm{d}t \bigg)^\frac{1}{2} \bigg]^p  \lesssim 1,
&\quad
\mathbb{E} \,\bigg[ \sup_{t\in[0,T]}  \big\Vert \varrho _{L}- \overline{\varrho} \big\Vert_{L^{\overline{\gamma}}_2(\mathbb{T}^3_L)}^{\overline{\gamma}} \bigg]^p \lesssim 1
\end{aligned}
\end{equation}
holds uniformly in $L$ where $\overline{\gamma} =\min \{ \gamma,2\}$.
\begin{lemma}
\label{lem:l2velo}
For all $p\in [1,\infty)$, the following  estimate
\begin{align*}
\mathbb{E} \,\bigg[ \bigg( \int_0^T \big\Vert \mathbf{u}_L \big\Vert_{L^2(\mathbb{T}^3_L)}^2 \, \mathrm{d}t \bigg)^\frac{1}{2} \bigg]^p
\lesssim_p 1
\end{align*}
holds uniformly in $L$.
\end{lemma}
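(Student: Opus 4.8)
The difficulty here is that, unlike on a bounded domain or on a fixed torus, there is no Poincar\'e inequality on $\mathbb{T}^3_L$ whose constant is uniform in $L$, so the bound on $\Vert\nabla\mathbf{u}_L\Vert_{L^2(\mathbb{T}^3_L)}$ from \eqref{est0} does not by itself control $\Vert\mathbf{u}_L\Vert_{L^2(\mathbb{T}^3_L)}$ uniformly in $L$. The plan is to replace the Poincar\'e inequality by the far--field condition $\varrho_L\to\overline{\varrho}>0$: on the part of $\mathbb{T}^3_L$ where the density stays close to $\overline{\varrho}$ the kinetic energy $\Vert\sqrt{\varrho_L}\mathbf{u}_L\Vert_{L^2}^2$ controls $\Vert\mathbf{u}_L\Vert_{L^2}^2$ pointwise, while the complementary region has finite volume uniformly in $L$ because of the coercivity of $H(\cdot,\overline{\varrho})$, so there H\"older's inequality together with the $L^6$--bound on $\mathbf{u}_L$ takes over.

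Concretely, I would fix $(\omega,t)$ and set $G:=\{x\in\mathbb{T}^3_L:\varrho_L(t,x)\geq\overline{\varrho}/2\}$ and $B:=\mathbb{T}^3_L\setminus G$. On $G$ one has $|\mathbf{u}_L|^2\leq\tfrac{2}{\overline{\varrho}}\varrho_L|\mathbf{u}_L|^2$, hence $\int_G|\mathbf{u}_L|^2\,\mathrm{d}x\leq\tfrac{2}{\overline{\varrho}}\Vert\varrho_L|\mathbf{u}_L|^2(t)\Vert_{L^1(\mathbb{T}^3_L)}$. On $B$ one has $\varrho_L<\overline{\varrho}/2<\overline{\varrho}$, and since $z\mapsto H(z,\overline{\varrho})$ is convex with minimum $0$ at $\overline{\varrho}$ it is decreasing and strictly positive on $[0,\overline{\varrho})$, so $H(\varrho_L,\overline{\varrho})\geq H(\overline{\varrho}/2,\overline{\varrho})=:c_0>0$ on $B$; therefore $|B|\leq c_0^{-1}\int_{\mathbb{T}^3_L}H(\varrho_L,\overline{\varrho})(t)\,\mathrm{d}x$. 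By H\"older's inequality with exponents $\tfrac32$ and $3$, together with the $L$--uniform bound on $\Vert\mathbf{u}_L\Vert_{L^6(\mathbb{T}^3_L)}$ from \eqref{est0}, $\int_B|\mathbf{u}_L|^2\,\mathrm{d}x\leq|B|^{2/3}\Vert\mathbf{u}_L(t)\Vert_{L^6(\mathbb{T}^3_L)}^2$. Combining the two pieces and bounding $\int_{\mathbb{T}^3_L}H(\varrho_L,\overline{\varrho})(t)\,\mathrm{d}x$ by its supremum over $t\in[0,T]$ gives, for a.e.\ $(\omega,t)$,
\[
\Vert\mathbf{u}_L(t)\Vert_{L^2(\mathbb{T}^3_L)}^2
\;\lesssim\;
\Vert\varrho_L|\mathbf{u}_L|^2(t)\Vert_{L^1(\mathbb{T}^3_L)}
+\Big(\sup_{s\in[0,T]}\Vert H(\varrho_L,\overline{\varrho})(s)\Vert_{L^1(\mathbb{T}^3_L)}\Big)^{2/3}\Vert\mathbf{u}_L(t)\Vert_{L^6(\mathbb{T}^3_L)}^2 .
\]

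Finally I would integrate this over $(0,T)$, bounding the first term by $T\sup_t\Vert\varrho_L|\mathbf{u}_L|^2\Vert_{L^1}$ and leaving the second as the product of the time--independent factor $\big(\sup_t\Vert H(\varrho_L,\overline{\varrho})\Vert_{L^1}\big)^{2/3}$ with $\int_0^T\Vert\mathbf{u}_L\Vert_{L^6}^2\,\mathrm{d}t$, then take the square root, raise to the power $p$, use $(a+b)^p\lesssim_p a^p+b^p$, and take expectations; a single Cauchy--Schwarz in $\omega$ splits the product in the second term, and each resulting factor has finite moments of every order bounded uniformly in $L$ by \eqref{est0} (which rests on \eqref{est} and \eqref{finiteEnergyLaw}). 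This yields $\mathbb{E}\big[(\int_0^T\Vert\mathbf{u}_L\Vert_{L^2(\mathbb{T}^3_L)}^2\,\mathrm{d}t)^{1/2}\big]^p\lesssim_p 1$ uniformly in $L$. The only points requiring a little care are the $L$--independence of the Sobolev constant on $\mathbb{T}^3_L$ (which holds because the Gagliardo--Nirenberg--Sobolev inequality on a cube has a purely dimensional constant once the spatial mean is separated off, and is in any case already built into the $L^6$--bound of \eqref{est0}) and the $L$--uniform lower bound $c_0$ for $H(\cdot,\overline{\varrho})$ on the low-density set; neither is genuinely hard, and no new idea beyond the density splitting is needed, so this is a minor but not trivial point rather than a real obstacle.
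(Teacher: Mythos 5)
Your argument is correct and rests on the same basic idea as the paper's proof -- splitting $\mathbb{T}^3_L$ according to whether the density is close to or far from the far-field value $\overline{\varrho}$, and using the kinetic energy on the first region -- but the treatment of the low-density region is genuinely different. The paper decomposes according to whether $2|\varrho_L-\overline{\varrho}|\le 1$ or $\ge 1$; on the bad set it inserts the weight $1\le 2|\varrho_L-\overline{\varrho}|$, applies H\"older with the $L^{\overline{\gamma}}$ bound on $\varrho_L-\overline{\varrho}$ from \eqref{est0}, and is then forced to interpolate $\mathbf{u}_L$ between $L^2(\mathbb{T}^3_L)$ and $L^6(\mathbb{T}^3_L)$ (since $2<\tfrac{2\overline{\gamma}}{\overline{\gamma}-1}<6$) and absorb the resulting $L^2$ contribution into the left-hand side via Young's inequality with a small parameter $\delta$, cf.\ \eqref{l2VelA00}. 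You instead bound the \emph{measure} of the bad set $B=\{\varrho_L<\overline{\varrho}/2\}$ directly by $c_0^{-1}\int H(\varrho_L,\overline{\varrho})\,\mathrm{d}x$ using the strict convexity and monotonicity of $H(\cdot,\overline{\varrho})$ on $[0,\overline{\varrho})$, and then a single H\"older with exponents $\tfrac32$ and $3$ closes the estimate against the $L^6$ bound with no interpolation and no absorption. Your route is slightly cleaner in that it avoids the absorption step (which tacitly requires the $L^2_{t,x}$ quantity to be finite for each fixed $L$ before it can be absorbed), at the cost of invoking the coercivity of $H$ away from $\overline{\varrho}$ rather than only the $L^{\overline{\gamma}}$ bound on $\varrho_L-\overline{\varrho}$; since that $L^{\overline{\gamma}}$ bound in \eqref{est0} is itself extracted from $\|H(\varrho_L,\overline{\varrho})\|_{L^1}$, the two arguments draw on exactly the same energy information. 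Both use the hypothesis $\overline{\varrho}\ge 1$ (the paper to get $\varrho_L\ge\tfrac12$ on the good set, you only need $\overline{\varrho}>0$ there, which is a minor gain in generality), and your closing moment computation with $(a+b)^p\lesssim_p a^p+b^p$ and Cauchy--Schwarz in $\omega$ is fine since \eqref{est0} holds for every finite $p$.
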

\begin{proof}
First of all, if we decompose $\mathbb{T}^3_L$ into 
\begin{align*}
\big\{x\in \mathbb{T}^3_L \,:\,2\vert \varrho_L-\overline{\varrho} \vert\leq 1\big\}
\quad \text{and} \quad
\big\{x\in \mathbb{T}^3_L \, : \,2\vert \varrho_L-\overline{\varrho} \vert\geq 1\big \},
\end{align*}
then since $\overline{\varrho}\geq1$, we obtain                            
\begin{equation}
\begin{aligned}
\label{l2VelA}
\mathbb{E}\bigg[ \int_0^T\Vert \mathbf{u}_L \Vert_{L^2(\mathbb{T}^3_L)}^2   \,\mathrm{d}t \bigg]^p
 &\leq
\mathbb{E}\bigg[ 2 \sup_{t\in[0,T] } \big\Vert \varrho_L-\overline{\varrho}  \big\Vert_{L^{\overline{\gamma}}(\mathbb{T}^3_L)}
 \int_0^T\big\Vert \bu_L  \big\Vert^2_{L^{\frac{2\overline{\gamma}}{\overline{\gamma}-1}}(\mathbb{T}^3_L)}\, \mathrm{d}t \bigg]^p
 \\&+
 \mathbb{E}\bigg[ 2 \sup_{t\in[0,T] } \big\Vert \varrho_L \vert \bu_L \vert^2 \big\Vert_{L^1(\mathbb{T}^3_L)}\bigg]^p
\end{aligned}
\end{equation}
for any $p\in[1,\infty)$. Next, since we have $0<\frac{3}{2\overline{\gamma}}<1$, $0<1- \frac{3}{2\overline{\gamma}}<1$ and 
\begin{align*}
\frac{\overline{\gamma}-1}{2\overline{\gamma}}
=
\bigg(1- \frac{3}{2\overline{\gamma}} \bigg)\times \frac{1}{2} + \frac{3}{2\overline{\gamma}}\times \frac{1}{6},
\end{align*}
we can interpolate $\bu_L$ between $L^2(\mathbb{T}^3_L)$ and  $L^6(\mathbb{T}^3_L)$ which yields for any $\delta, c_\delta>0$,
\begin{equation}
\begin{aligned}
\label{l2VelA00}
&\mathbb{E}\bigg[ 2
 \sup_{t\in[0,T] } \big\Vert \varrho_L-\overline{\varrho}  \big\Vert_{L^{\overline{\gamma}}(\mathbb{T}^3_L)}
 \int_0^T\big\Vert \bu_L  \big\Vert^2_{L^{\frac{2\overline{\gamma}}{\overline{\gamma}-1}}(\mathbb{T}^3_L)}\, \mathrm{d}t
 \bigg]^p
 \leq \delta
\Bigg( \mathbb{E}\bigg[ \int_0^T
\big\Vert  \bu_L  \big\Vert^2_{L^2(\mathbb{T}^3_L)}\, \mathrm{d}t \bigg]^{p_1\left(1-\frac{3}{2\overline{\gamma}} \right) }
\Bigg)^\frac{p}{p_1}
\\&
+c_\delta
\Bigg( \mathbb{E}\bigg[
 \sup_{t\in[0,T] } \big\Vert\varrho_L-\overline{\varrho} \big\Vert_{L^{\overline{\gamma}}(\mathbb{T}^3_L)}
 \bigg]^{p_2}
 \Bigg)^\frac{p}{p_2}
\Bigg( 
\mathbb{E}
 \bigg[ \int_0^T\big\Vert  \vert \nabla \bu_L \vert^2 \big\Vert_{L^1(\mathbb{T}^3_L)}
\, \mathrm{d}t \bigg]^{p_3\frac{3}{2\overline{\gamma}}}
\Bigg)^\frac{p}{p_3}  
\end{aligned}
\end{equation} 
uniformly in $L$  for all $p_1,p_2,p_3 \in [1,\infty)$ such that $\frac{1}{p_1}+\frac{1}{p_2}+\frac{1}{p_3}= \frac{1}{p}$. Thus our result follow from \eqref{est0}.
\end{proof}
We can now conclude from velocity and gradient velocity estimates in \eqref{est0} as well as Lemma \ref{lem:l2velo} that
\begin{align}
\label{w12velocity}
\mathbb{E} \,\bigg[ \bigg( \int_0^T \big\Vert \mathbf{u}_L \big\Vert_{W^{1,2}(\mathbb{T}^3_L)}^2 \, \mathrm{d}t \bigg)^\frac{1}{2} \bigg]^p
\lesssim_p 1
\end{align}
holds uniformly in $L$.  Also,
\begin{equation}
\begin{aligned}
\mathbb{E} \,\bigg[ \sup_{t\in[0,T]}  \Vert \varrho_L \mathbf{u}_L &\Vert_{L^\frac{2\gamma}{\gamma+1}(\mathbb{T}^3_L)}^\frac{2\gamma}{\gamma+1}\bigg]^p
\lesssim_{\gamma,p}
\mathbb{E} \,\bigg[ \sup_{t\in[0,T]}  \Vert \varrho_L  \Vert_{L^\gamma(\mathbb{T}^3_L)}^\gamma\bigg]^p
\\&+
\mathbb{E} \,\bigg[ \sup_{t\in[0,T]}  \big\Vert \varrho_L\vert \mathbf{u}_L\vert^2 \big\Vert_{L^1(\mathbb{T}^3_L)}\bigg]^p  \lesssim 1
\end{aligned}
\end{equation}
and
\begin{equation}
\begin{aligned}
\mathbb{E}\bigg[\bigg( \int_0^T &\Vert \varrho_L \mathbf{u}_L \otimes \mathbf{u}_L \Vert_{L^\frac{6\gamma}{4\gamma+3}(\mathbb{T}^3_L)}^2   \,\mathrm{d}t \bigg)^\frac{1}{2}\bigg]^p
\lesssim
\mathbb{E} \,\bigg[ \sup_{t\in[0,T]}  \Vert \varrho_L \mathbf{u}_L \Vert_{L^\frac{2\gamma}{\gamma+1}(\mathbb{T}^3_L)}^\frac{2\gamma}{\gamma+1}\bigg]^p
\\&\times
\mathbb{E} \,\bigg[ \bigg( \int_0^T \big\Vert \mathbf{u}_L \big\Vert_{L^6(\mathbb{T}^3_L)}^2 \, \mathrm{d}t \bigg)^\frac{1}{2} \bigg]^p  \lesssim 1
\end{aligned}
\end{equation}
holds uniformly in $L$.
\begin{lemma}
\label{lem:higherDensity}
Let $L\gg1$ be large enough so that $B \cap \mathbb{T}^3_L =B$ for a ball $B$ of arbitrary radius. Then for all $\Theta< \frac{2}{3}\gamma-1$, we have 
\begin{align}
\mathbb{E}\bigg[ \int_0^T\int_{B} \varrho_L^{\gamma+\Theta}\,\mathrm{d}x\,\mathrm{d}t \bigg]^p \lesssim_{p,\gamma, \Theta} 1
\end{align}
uniformly in $L$ for any $p\in[2,\infty)$
\end{lemma}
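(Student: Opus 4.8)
The plan is to reproduce the improved pressure estimate of Section~\ref{sec:PressureEstFifthLay} at the level of the periodic approximations $[\varrho_L,\mathbf{u}_L,V_L,W]$, but localized to the ball $B$ by inserting a spatial cut-off into the multiplier. More precisely, fix a ball $B$ and a cut-off $\eta \in C^\infty_c(B')$ with $\eta \equiv 1$ on $B$, where $B' \supset B$ is a slightly larger ball contained in $\mathbb{T}^3_L$ for $L \gg 1$. Since $(\varrho_L,\mathbf{u}_L)$ satisfies the renormalized continuity equation \eqref{renormalizedContExis} on $\mathbb{T}^3_L$, one has for $b(\varrho_L)=\varrho_L^\Theta$ with $0<\Theta\leq \tfrac13$ the distributional identity used in \eqref{contEqItoFifthLayer}; testing the momentum equation \eqref{eq:distributionalSolMomen} with $\bm{\phi} = \eta\,\nabla \Delta^{-1}_{\mathbb{T}^3_L}(\eta\,\varrho_L^\Theta)$ and applying the generalized It\^o formula \cite[Theorem A.4.1]{breit2018stoch}, exactly as in the derivation of \eqref{momEqItoFifthLayer1}, yields an expression for $\int_0^t\int_{\mathbb{T}^3_L} \eta^2\, p(\varrho_L)\,\varrho_L^\Theta\,\mathrm{d}x\,\mathrm{d}t$ as a sum of boundary-type terms (the convective term, the two viscous terms, the electric-field term, the two commutator-type terms coming from the continuity equation, the noise term, and the $\nabla\eta$-error terms generated by the cut-off). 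The key structural point — as already exploited in Section~\ref{sec:PressureEstFourthLay} — is that the operator $\nabla\Delta^{-1}_{\mathbb{T}^3_L}$ applied to the bounded quantity $\eta\varrho_L^\Theta$ produces an $L^\infty_x$ bound via $W^{1,q}_x\hookrightarrow L^\infty_x$ with $q=1/\Theta \geq 3$, and this bound is \emph{uniform in $L$} because it depends only on the conserved mass: by \eqref{massConserveFifth} one has $\Vert \varrho_L(t)\Vert_{L^1(\mathbb{T}^3_L)}=\Vert\varrho_{0,L}\Vert_{L^1}$, which, using the structure \eqref{familyOfData} of the data (density equals $\overline{\varrho}$ outside a fixed compact set plus an $L^1$ perturbation), is bounded on $B'$ uniformly in $L$; alternatively one uses $\Vert H(\varrho_L,\overline\varrho)\Vert_{L^1}$ bounds from \eqref{est0} together with $\overline\varrho\geq 1$.

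The second ingredient is that \emph{every} remaining term on the right-hand side is controlled by the uniform-in-$L$ energy bounds of Section~\ref{sec:aPriori}, namely \eqref{est0}, \eqref{w12velocity}, the bounds on $\varrho_L\mathbf{u}_L$ in $L^\infty_t L^{2\gamma/(\gamma+1)}_x$ and on $\varrho_L\mathbf{u}_L\otimes\mathbf{u}_L$ in $L^2_t L^{6\gamma/(4\gamma+3)}_x$, the bound on $\nabla V_L$ in $L^\infty_t L^2_x$, and — for the electric-field term — the bound on $\Delta V_L$ in $L^\infty_t L^{\overline\gamma}_x$ obtained from the Poisson equation \eqref{elecField} and the Calder\'on–Zygmund estimate. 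The noise term is handled, as in \eqref{noiseBoundFourthLayer3} and the estimate of $I_9$ in Section~\ref{sec:PressureEstFourthLay}, by the Burkholder–Davis–Gundy inequality together with \eqref{stochCoeffBoundExi0}–\eqref{stochsummableConst} and the compact support \eqref{cptsptnoiseWS} of the $\mathbf{g}_k$; here one must take $p\geq 2$ so that $|\cdot|^{p/2}$ is convex, which is why the lemma is stated for $p\in[2,\infty)$. The $\nabla\eta$-error terms are of strictly lower order in the integrand and are absorbed by the same bounds. Carrying through the H\"older bookkeeping — which is identical in form to the estimates for $I_1,\dots,I_9$ in Section~\ref{sec:PressureEstFourthLay}, now with the exponent $\Theta$ in place of $1$ and with the constraint that the Riesz-potential exponent stays $\geq 3$ — one arrives at
\begin{align*}
\mathbb{E}\bigg[\int_0^T\int_{B} \varrho_L^\Theta\, p(\varrho_L)\,\mathrm{d}x\,\mathrm{d}t\bigg]^p \lesssim_{p,\gamma,\Theta} 1
\end{align*}
uniformly in $L$. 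Since $p(\varrho_L)=a\varrho_L^\gamma$ and $\varrho_L\geq 0$, this gives $\mathbb{E}[\int_0^T\int_B \varrho_L^{\gamma+\Theta}]^p \lesssim 1$ uniformly in $L$, which is the claim; the admissible range of $\Theta$ is dictated by requiring that the worst convective/electric terms remain integrable, which after tracking the exponents gives precisely $\Theta < \tfrac{2}{3}\gamma - 1$ (the same threshold as in the classical deterministic Navier–Stokes–Poisson analysis).

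The main obstacle I expect is bookkeeping the cut-off errors and the $L$-uniformity simultaneously: one must verify that inserting $\eta$ into the multiplier $\nabla\Delta^{-1}_{\mathbb{T}^3_L}$ does not destroy the $L^\infty$ control, and that the Riesz-type operator norms and Sobolev embedding constants on $\mathbb{T}^3_L$ do not blow up as $L\to\infty$. The clean way around this is to note that $\eta\varrho_L^\Theta$ is supported in the fixed ball $B'$ and that $\nabla\Delta^{-1}_{\mathbb{T}^3_L}$ acting on a function supported in $B'$ coincides, up to a harmless smooth correction, with the whole-space Riesz potential restricted to $B'$ once $L$ is large enough that $B'$ is well inside a fundamental domain; the relevant constants are then those of $\mathbb{R}^3$ and are $L$-independent. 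This is exactly the mechanism already used implicitly in Section~\ref{sec:aPriori} to get $L$-uniform energy bounds, so no genuinely new device is needed beyond localization. The electric-field term $\int_0^s\int \vartheta\varrho_L |\nabla V_L|^2\,\mathrm{d}x\,\mathrm{d}t$-type contribution, which in the fourth layer was absorbed using $\Gamma\geq 6$, must here be re-estimated using only $\gamma>\tfrac32$ together with $\Vert\nabla V_L\Vert_{L^\infty_t L^2_x}$, $\Vert\Delta V_L\Vert_{L^\infty_t L^{\overline\gamma}_x}$ and the conservation of mass — this is the one place where the argument genuinely differs from Section~\ref{sec:PressureEstFourthLay}, but it is precisely the computation already carried out for the torus in Section~\ref{sec:PressureEstFifthLay}, applied verbatim with the spatial cut-off in place.
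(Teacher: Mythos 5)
Your proposal follows essentially the same route as the paper: localize the improved-pressure argument to a ball by multiplying the momentum equation with a cut-off times $\nabla\Delta^{-1}$ of $\varrho_L^{\Theta}$, apply the generalized It\^o product formula, estimate each resulting term with the $L$-uniform energy bounds of Section \ref{sec:aPriori}, treat the electric-field contribution by substituting the Poisson equation, and handle the noise via Burkholder--Davis--Gundy (whence $p\geq 2$). The only difference is cosmetic: the paper takes the inverse Laplacian $\Delta^{-1}_{\tilde B}$ on a fixed enlarged ball $\tilde B\supset B$, so the operator norms are $L$-independent by construction, whereas you use the periodic operator $\Delta^{-1}_{\mathbb{T}^3_L}$ with an inner cut-off and then argue separately that its constants do not degrade as $L\to\infty$ — both are valid, and your identification of the threshold $\Theta<\tfrac{2}{3}\gamma-1$ as coming from the convective commutator term matches the paper's condition $\Theta<\tfrac{2\gamma-3}{3}$.
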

\begin{proof}
For $B \Subset \tilde{B}$, let $\eta \in C^\infty_0(\tilde{B})$ be such that $\eta=1$ in $B$.
Since $(\varrho_L, \mathbf{u}_L)$ is a solution in the sense of Definition \ref{def:martSolutionExis}, it satisfies the renormalized continuity equation \eqref{renormalizedContExis} for a sequence of compactly supported smooth functions $b_m$ that approximate $\varrho\mapsto \varrho^\Theta$. Also, $(\varrho_L, \mathbf{u}_L, V_L)$ satisfies the momentum balance equation \eqref{eq:distributionalSolMomen}. Since the aforementioned equations \eqref{renormalizedContExis} and \eqref{eq:distributionalSolMomen} are weak in the PDE sense, we may regularize them by mollification with the usual mollifier $\wp_\kappa$ to get them to be satisfied strongly in the sense in PDEs. We can then apply  the operator $\eta \nabla \Delta^{-1}_{\tilde{B}}$ to the (strong regularized) renormalized continuity equation, multiply the resulting equation with the (strong regularized) momentum balance equation by the use of It\^o's product rule. If we subsequently pass to the limit $\kappa \rightarrow 0$ in the product, then just as in \cite[Lemma 3.3.4]{mensah2019theses}, we obtain
\begin{equation}
\begin{aligned}
\label{estimates}
\mathbb{E}\,\bigg[ \int_0^t \int_{\mathbb{R}^3} \eta  \,p(\varrho_L)\varrho_L^{\Theta} \,\mathrm{d}x\,\mathrm{d}s\bigg]^p  
\lesssim_p 
 \mathbb{E}\, \sum_{i=1}^{14}[J_i]^p
\end{aligned}
\end{equation}
where
\begin{equation}
\begin{aligned}
\mathbb{E}\, \sum_{i=1}^{14} [J_i]^p
&:=
\mathbb{E} \bigg[  \int_{\mathbb{R}^3}\eta\big(\varrho_L\mathbf{u}_L\big)(t)\cdot \nabla \Delta^{-1}_{\tilde{B}}\left(\varrho_L^\Theta\right)(t)\, \mathrm{d}x  \bigg]^p
\\&+
\mathbb{E}\bigg[ \int_{\mathbb{R}^3}\eta\big(\varrho_{0,L}\mathbf{u}_{0,L}\big)\cdot \nabla \Delta^{-1}_{\tilde{B}}\left[\varrho_{0,L}^\Theta\right]\, \mathrm{d}x \bigg]^p
\\
&+ \mathbb{E} \bigg[ \int_0^t \int_{\mathbb{R}^3}\eta  \big(\varrho_L\mathbf{u}_L\big)\cdot\, \nabla \Delta^{-1}_{\tilde{B}} \left[ \mathrm{div}( \varrho_L^\Theta\, \mathbf{u}_L)\right] \mathrm{d}x\,\mathrm{d}s \bigg]^p
\\
&+   \mathbb{E} \bigg[ \int_0^t \int_{\mathbb{R}^3} \eta \big(\varrho_L\mathbf{u}_L\big)\cdot \nabla \Delta^{-1}_{\tilde{B}}\left[\Theta \varrho_L^\Theta \, \mathrm{div}\,\mathbf{u}_L\right]  \mathrm{d}x\mathrm{d}s   \bigg]^p
\\&
+  \mathbb{E}\bigg[ \int_0^t \int_{\mathbb{R}^3} \eta \big(\varrho_L\mathbf{u}_L\big)\cdot\,\nabla \Delta^{-1}_{\tilde{B}}\left[\varrho_L^\Theta\,  \mathrm{div}\,\mathbf{u}_L\right]\mathrm{d}x\mathrm{d}s \bigg]^p
\\
&+ \mathbb{E} \bigg[ \int_0^t \int_{\mathbb{R}^3} \eta (\varrho_L\mathbf{u}_L\otimes \mathbf{u}_L) : \nabla^2 \Delta^{-1}_{\tilde{B}}(\varrho_L^\Theta)\,\mathrm{d}x\mathrm{d}s \bigg]^p  
\\
&+  \mathbb{E}\bigg[ \int_0^t \int_{\mathbb{R}^3}\eta \nu^S\nabla\mathbf{u}_L:\nabla^2 \Delta^{-1}_{\tilde{B}}(\varrho_L^\Theta)\,\mathrm{d}x\mathrm{d}s \bigg]^p 
 \\
& + \mathbb{E}\bigg[ \int_0^t \int_{\mathbb{R}^3} \eta (\nu^B+ \nu^S)\varrho_L^\Theta\, \mathrm{div}\mathbf{u}_L\,\mathrm{d}x\,\mathrm{d}s    \bigg]^p
\\
&+ \mathbb{E} \bigg[ \int_0^t \int_{\mathbb{R}^3} (\varrho_L\mathbf{u}_L\otimes \mathbf{u}_L) :\nabla\eta \otimes  \nabla \Delta^{-1}_{\tilde{B}}\big(\varrho_L^\Theta\big)\,\mathrm{d}x\mathrm{d}s \bigg]^p  
\\&
+  \mathbb{E}\bigg[ \int_0^t \int_{\mathbb{R}^3} \nu^S \nabla\mathbf{u}_L: \nabla \eta \otimes \nabla \Delta^{-1}_{\tilde{B}}\big(\varrho_L^\Theta\big)\,\mathrm{d}x\mathrm{d}s \bigg]^p 
 \\
& + \mathbb{E} \bigg[ \int_0^t \int_{\mathbb{R}^3}(\nu^B+ \nu^S)\nabla \Delta^{-1}_{\tilde{B}}(\varrho_L^\Theta)\cdot  \mathrm{div}\mathbf{u}_L \, \nabla\eta \,\mathrm{d}x\,\mathrm{d}s \bigg]^p   
\\
&+ \mathbb{E} \bigg[ \int_0^t \int_{\mathbb{R}^3}   p(\varrho_L)\,\nabla \eta\cdot\nabla \Delta^{-1}_{\tilde{B}}(\varrho_L^\Theta) \,\mathrm{d}x\,\mathrm{d}s  \bigg]^p
 \\
&+ \mathbb{E}\bigg[ \int_0^t \int_{\mathbb{R}^3} \eta\, \vartheta \varrho_L \nabla V_L\cdot \nabla \Delta^{-1}_{\tilde{B}}(\varrho_L^\Theta)\,\mathrm{d}x\,\mathrm{d}s \bigg]^p
 \\
&+ \mathbb{E}\bigg[ \int_0^t \int_{\mathbb{R}^3} \eta \varrho_L \nabla \Delta^{-1}_{\tilde{B}}(\varrho_L^\Theta)\mathbf{G}(\varrho_L,f, \varrho_L\mathbf{u}_L)\,\mathrm{d}x\,\mathrm{d}W \bigg]^p.
\end{aligned}
\end{equation}
Now if we use  the notation
\begin{align*}
\big\Vert \cdot \big\Vert_p := \big\Vert \cdot \big\Vert_{L^p(\tilde{B})}
\end{align*}
and consider positive real numbers $q_1,q_2,q_3$ satisfying
\begin{align*}
\frac{1}{q_1} + \frac{1}{q_2} + \frac{1}{q_3}=1,
\end{align*}
then analogous to \cite[(3.66)-(3.67)]{mensah2019theses} we have that
\begin{equation}
\begin{aligned}
\mathbb{E}[J_1]^p
\lesssim \left( \mathbb{E}\,\Vert \varrho_L\Vert^{\frac{q_1}{2}}_\gamma  \right)^\frac{p}{q_1}  \left(\mathbb{E}\,  \big\Vert \varrho_L \vert \mathbf{u}_L\vert^2 \big\Vert^{\frac{q_2}{2}}_1 \right)^\frac{p}{q_2} \left(  \mathbb{E}\,  \big\Vert \varrho_L\big\Vert^{q_3\Theta }_{2\Theta}\right)^\frac{p}{q_3}
\lesssim 1
\end{aligned}
\end{equation}
uniformly in $L$ provided $2\Theta<\gamma$.
A similar estimate holds for $J_2$. In comparison with \cite[(3.79)-(3.80)]{mensah2019theses},
\begin{equation}
\begin{aligned}
\mathbb{E}[J_3]^p
&\lesssim
 \left(\mathbb{E}\,  \sup_{t\in[0,T]} \big\Vert \varrho_L\big\Vert^{q_1}_\gamma \right)^\frac{p}{q_1}\left( \mathbb{E}\,\left[ \int_0^T \left(\big\Vert \mathbf{u}_L\big\Vert^2_2  +\big\Vert  \nabla\mathbf{u}_L\big\Vert^2_2 \right)\,\mathrm{d}t \right]^{q_2}\right)^\frac{p}{q_2} 
\\
&\times \left(\mathbb{E}\,  \sup_{t\in[0,T]}\big\Vert  \varrho_L  \big\Vert^{q_3\Theta}_{r\Theta}\right)^\frac{p}{q_3} \lesssim 1
\end{aligned}
\end{equation}
uniformly in $L$ provided $\Theta < \frac{2\gamma-3}{3}$ where $r=\frac{3\gamma}{2\gamma-3}$.  Also
\begin{equation}
\begin{aligned}
\mathbb{E}&[J_4 +J_5]^p
\lesssim
   \left( \mathbb{E} \sup_{t\in[0,T]}   \big\Vert \varrho_L\big\Vert^{3}_\gamma  \right)^\frac{p}{3} \left[ \mathbb{E}\left(  \int_0^T   \big\Vert \nabla \mathbf{u}_L\big\Vert^2_2     \,\mathrm{d}t \right)^\frac{3}{2}\right]^\frac{p}{3}  
\\
&\times   \left(\mathbb{E} \left[\sup_{t\in[0,T]}\big\Vert  \varrho_L  \big\Vert^{2\Theta}_{k\Theta}\right]^{3}\right)^\frac{p}{6}\left( \mathbb{E}\left[ \int_0^T   \big\Vert \mathrm{div}\,\mathbf{u}_L   \big\Vert^2_2\mathrm{d}t   \right]^{3}\right)^\frac{p}{6}
\lesssim 1
\end{aligned}
\end{equation}
uniformly in $L$  provided $
k\Theta=\frac{3\gamma}{2\gamma-3}\Theta \leq \gamma $.
\begin{equation}
\begin{aligned}
\mathbb{E} [J_6]^p
&\lesssim 
\left( \mathbb{E}\, \sup_{t\in[0,T]}\big\Vert\varrho_L\big\Vert^{q_1}_{\gamma}\right)^\frac{p}{q_1} \left(\mathbb{E}\,\left[\int_0^T  \big\Vert\nabla \mathbf{u}_L \big\Vert^2_{2}\mathrm{d}t \right]^{q_2} \right)^\frac{p}{q_2} 
\\&
\times
\left(\mathbb{E}\, \sup_{t\in[0,T]} \big\Vert \varrho_L \big\Vert^{q_3\Theta}_{r\Theta}\right)^\frac{p}{q_3} \lesssim 1
\end{aligned}
\end{equation}
is bounded uniformly in $L$ provided $
r\Theta=\frac{3\gamma\Theta}{2\gamma-3}<\gamma$. The estimate for $J_9$ follows is similar to $J_6$.
\begin{equation}
\begin{aligned}
\label{thetaEst5}
\mathbb{E} [J_7]^p
&\lesssim   \left(  \mathbb{E}\, \sup_{t\in[0,T]}\big\Vert  \varrho_L  \big\Vert^{2\Theta}_{2\Theta}\right)^\frac{p}{2} \left( \mathbb{E}\,\left[
\int_0^T \big\Vert \nabla \mathbf{u}_L \big\Vert_2\,\mathrm{d}t \right]^{2}\right)^\frac{p}{2}
\lesssim 1
\end{aligned}
\end{equation}
uniformly in $L$ if  $2\Theta<\gamma$. The estimate for $J_{10}$ follow  similarly to $J_7$. Also,
\begin{equation}
\begin{aligned}
\label{thetaEst6}
\mathbb{E} [J_8]^p
&\lesssim   \mathbb{E}\,\left(\int_0^T \big\Vert\mathrm{div}\, \mathbf{u}_L\big\Vert^2_2 \,\mathrm{d}t  +  \sup_{t\in[0,T]}\big\Vert\varrho_L\big\Vert^{2\Theta}_{2\Theta}  \right)^p
\lesssim 1.
\end{aligned}
\end{equation}
The estimate for $J_{11}$ follows is similar to $J_8$. Now
\begin{equation}
\begin{aligned}
\label{thetaEst5biii}
\mathbb{E} [J_{12}]^p
&\lesssim  \mathbb{E}\bigg[\sup_{t\in[0,T]}\left(\big\Vert \varrho_L \big\Vert_{\Theta p}^{2\Theta } + \big\Vert\varrho_L\big\Vert_{\gamma}^{2\gamma}  \right) \bigg]^p\lesssim 1.
\end{aligned}
\end{equation}
In order to estimate $J_{13}$ , we can use the Poisson equation and several integration by parts to rewrite it as
\begin{align*}
J_{13}
=
\int_0^t \int_{\mathbb{R}^3} \eta\, \vartheta \big(f \pm \Delta V_L \big) \nabla V_L\cdot \nabla \Delta^{-1}_{\tilde{B}}(\varrho_L^\Theta)\,\mathrm{d}x\,\mathrm{d}s = \sum_{j=1}^5J_{13}^j
\end{align*}
where
\begin{align*}
J_{13}^1
&=
\int_0^t \int_{\mathbb{R}^3} \vartheta \, \eta\,f \, \nabla V_L\cdot \nabla \Delta^{-1}_{\tilde{B}}(\varrho_L^\Theta)\,\mathrm{d}x\,\mathrm{d}s,
\\
J_{13}^2
&= \mp
\int_0^t \int_{\mathbb{R}^3}\vartheta \nabla \eta\cdot   \nabla V_L \otimes \nabla V_L : \nabla \Delta^{-1}_{\tilde{B}}(\varrho_L^\Theta)\,\mathrm{d}x\,\mathrm{d}s,
\\
J_{13}^3
&= \mp
\int_0^t \int_{\mathbb{R}^3}  \vartheta \,\eta\,  \nabla V_L \otimes \nabla V_L \cdot \nabla^2 \Delta^{-1}_{\tilde{B}}(\varrho_L^\Theta)\,\mathrm{d}x\,\mathrm{d}s,
\\
J_{13}^4
&= \pm \frac{1}{2}
\int_0^t \int_{\mathbb{R}^3}\vartheta \nabla \eta\cdot \vert   \nabla V_L \vert^2 \nabla \Delta^{-1}_{\tilde{B}}(\varrho_L^\Theta)\,\mathrm{d}x\,\mathrm{d}s,
\\
J_{13}^5
&= \pm \frac{1}{2}
\int_0^t \int_{\mathbb{R}^3}\vartheta  \eta\, \vert   \nabla V_L \vert^2 \varrho_L^\Theta \,\mathrm{d}x\,\mathrm{d}s.
\end{align*}
Now since by  \cite[(3.58)]{mensah2019theses}, we have that
\begin{align*}
\big\Vert \nabla \Delta^{-1}_{\tilde{B}} \varrho_L^\Theta \big\Vert_6 
\lesssim 
\big\Vert
 \varrho_L^\Theta \big\Vert_2 =\Vert \varrho \Vert_{2\Theta}^\Theta,
\end{align*}
and $1/2 + 1/3 + 1/6=1$ it follows that
\begin{equation}
\begin{aligned}
\mathbb{E} \big[J_{13}^1 \big]^p
&\lesssim 
\left( \mathbb{E}\, \sup_{t\in[0,T]}\big\Vert \nabla V_L\big\Vert^{q_1}_{2}\right)^\frac{p}{q_1}
\Big(\mathbb{E}\Vert f \Vert_3^{q_2}  \Big)^\frac{p}{q_2}
\left(\mathbb{E}\, \sup_{t\in[0,T]} \big\Vert \varrho_L \big\Vert^{q_3\Theta}_{2\Theta}\right)^\frac{p}{q_3} \lesssim 1
\end{aligned}
\end{equation}
is bounded uniformly in $L$ provided $2\Theta<\gamma$. Also, since for a.e. $(\omega,t) \in \Omega \times [0,T]$, we  have that
\begin{align*}
 \int_{\mathbb{R}^3}\big\vert   \nabla V_L \otimes \nabla V_L : \nabla \Delta^{-1}_{\tilde{B}}(\varrho_L^\Theta) \big\vert\mathrm{d}x
 \lesssim
 \Vert \nabla V_L \otimes \nabla V_L\Vert_{W^{-1,r'}(\mathbb{R}^3)}
 \Vert \nabla \Delta^{-1}_{\tilde{B}}(\varrho_L^\Theta) 
 \Vert_{W^{1,r}(\mathbb{R}^3)},
\end{align*}
for any $r>3$ and its H\"older conjugate $r'$ such that $r\Theta <\gamma$, we gain from \cite[(3.59)]{mensah2019theses} and the continuous embedding $L^1(\mathbb{R}^3)\hookrightarrow W^{-1,r'}(\mathbb{R}^3)$, the estimate
\begin{equation}
\begin{aligned}
\mathbb{E} \big[J_{13}^2 \big]^p
&\lesssim 
\left( \mathbb{E}\, \sup_{t\in[0,T]}\big\Vert \nabla V_L\big\Vert^{2q_1}_{2}\right)^\frac{p}{q_1}
\left(\mathbb{E}\, \sup_{t\in[0,T]} \big\Vert \varrho_L \big\Vert^{q_3\Theta}_{r\Theta}\right)^\frac{p}{q_3} \lesssim 1
\end{aligned}
\end{equation}
uniformly in $L$. The same estimate as $J_{13}^2$ holds for $J_{13}^3$ and $J_{13}^4$. In order to estimate $J_{13}^5$, we first use the Poisson equation to rewrite it as follows
\begin{align*}
J_{13}^5
&= \pm \frac{1}{2}
\int_0^t \int_{\mathbb{R}^3}\vartheta  \eta\, \vert   \nabla \Delta^{-1}_{\tilde{B}}(\varrho_L -f) \vert^2 \varrho_L^\Theta \,\mathrm{d}x\,\mathrm{d}s.
\end{align*}
Now since $f$ is Lebesgue integrable for any finite or infinite Lebesgue power, it follows from \cite[(3.57)]{mensah2019theses} that
\begin{equation}
\begin{aligned}
\vert J_{13}^5\vert 
&\lesssim
\int_0^t  \Vert   \nabla \Delta^{-1}_{\tilde{B}}(\varrho_L -f) \Vert^2_\gamma \,\Vert \varrho_L^\Theta \Vert_{\frac{\gamma}{\gamma-2}} \,\mathrm{d}s
\lesssim
\int_0^t\Big( 1+  \Vert \varrho_L  \Vert^2_\gamma \Big) \Vert \varrho_L \Vert_{\frac{\Theta \gamma}{\gamma-2}}^\Theta \,\mathrm{d}s.
\end{aligned}
\end{equation}
It therefore holds that
\begin{equation}
\begin{aligned}
\mathbb{E} \big[J_{13}^5 \big]^p
&\lesssim 
\left( 1+ \mathbb{E}\, \sup_{t\in[0,T]}\big\Vert \varrho_L \big\Vert^{2q_1}_{\gamma}\right)^\frac{p}{q_1}
\left(\mathbb{E}\, \sup_{t\in[0,T]} \big\Vert \varrho_L \big\Vert_{\frac{\Theta \gamma}{\gamma-2}}^{q_3\Theta } \right)^\frac{p}{q_3} \lesssim 1
\end{aligned}
\end{equation}
uniformly in $L$ provided $\Theta \leq \gamma-2$. In order to estimate $J_{14}$, we first note that H\"older and Young inequalities yields
\begin{equation}
\begin{aligned}
\bigg\vert  \int_{\mathbb{R}^3}& \eta \varrho_L \nabla \Delta^{-1}_{\tilde{B}}(\varrho_L^\Theta)\mathbf{g}_k(\varrho_L,f, \varrho_L\mathbf{u}_L)\,\mathrm{d}x\bigg\vert^2
\\&
\lesssim c_k\Vert \eta \Vert^2_{\frac{6\gamma}{5\gamma -6}}
\Vert \mathbf{g}_k(\varrho_L,f, \varrho_L\mathbf{u}_L) \Vert^2_\infty
\Big(
\Vert\varrho_L \Vert^{2\gamma}_\gamma + \Vert  \nabla \Delta^{-1}_{\tilde{B}}(\varrho_L^\Theta) \Vert^{2\gamma'}_6\Big)
\end{aligned}
\end{equation}
for $\gamma'= \frac{\gamma}{\gamma-1}$ where by \cite[(3.58)]{mensah2019theses},
\begin{align}
\big\Vert  \nabla \Delta^{-1}_{\tilde{B}}(\varrho_L^\Theta) \big\Vert^{2\gamma'}_6
\lesssim_B
\big\Vert \varrho_L^\Theta \big\Vert^{2\gamma'}_2
=
\Vert \varrho_L \Vert^{2\Theta\gamma'}_{2\Theta}.
\end{align}
Since the noise coefficients are essentially bounded, it follows from the Burkholder--Davis--Gundy inequality that
\begin{equation}
\begin{aligned}
\mathbb{E}[   J_{14} ]^p 
&\leq  
\mathbb{E} \Bigg\vert \int_0^t\sum_{k\in \mathbb{N}}\bigg\vert \int_{\mathbb{R}^3} \eta \varrho_L \nabla \Delta^{-1}_{\tilde{B}}(\varrho_L^\Theta)\mathbf{g}_k(\varrho_L,f, \varrho_L\mathbf{u}_L)\,\mathrm{d}x\bigg\vert^2 \mathrm{d}s \Bigg\vert^\frac{p}{2} 
\\&
\lesssim_{k, \gamma, B} \mathbb{E} \bigg\vert \sup_{t\in [0,T]} \Vert \varrho_L  \Vert_{\gamma}^{2\gamma} \bigg\vert^\frac{p}{2} 
+
\mathbb{E} \bigg\vert \sup_{t\in [0,T]} \Vert \varrho_L  \Vert_{2\Theta}^{2\Theta\gamma'} \bigg\vert^\frac{p}{2} 
\lesssim_{k, \gamma, B} 1
\end{aligned}
\end{equation}
uniformly in $L$ provided $2\Theta <\gamma$.
\\
Summing up all the estimates above finishes the proof.
\end{proof}

\subsection{Compactness}
\label{sec:compactnessWS}
As in Sections \ref{sec:compactnessFourthLayer} and \ref{sec:compactnessFifthLayer}, we denote the energy by
\begin{align}
\label{energyFunctionalWS}
\mathcal{E}_L := \frac{1}{2} \varrho_L  \vert \mathbf{u}_L  \vert^2  
+ P(\varrho_L ) 
\pm
\vartheta \vert \nabla V_L \vert^2 
\end{align}
and let the weakly-$*$ measurable mapping
\begin{align*}
\nu_L : [0,T] \times \mathbb{R}^3 \rightarrow \mathfrak{P}(\mathbb{R}^{20})
\end{align*}
defined by 
\begin{align*}
\nu_{L,t,x}(\cdot)= \delta_{[\varrho_L, \mathbf{u}_L, \nabla \mathbf{u}_L, \varrho_L \mathbf{u}_L, f, \nabla V_L](t,x)}(\cdot)
\end{align*}
be the canonical Young measure associated to  $[\varrho_L, \mathbf{u}_L, \nabla \mathbf{u}_L, \varrho_L \mathbf{u}_L, f, \nabla V_L]$ where $\nu_L$ is interpreted as a random variable taking values in $\big(L^\infty(0,T; L^\infty_{\mathrm{loc}} (\mathbb{R}^3)\big); \mathfrak{P}(\mathbb{R}^{20}) , w^*\big)$ endowed with the weak-$*$ topology determined by
\begin{align*}
L^\infty\big( 0,T; L^\infty_{\mathrm{loc}} (\mathbb{R}^3) ;\mathfrak{P}(\mathbb{R}^{20})\big)
\rightarrow
\mathbb{R},
\quad
\nu \mapsto
\int_0^T\int_{\mathbb{R}^3}\psi(t,x) \int_{\mathbb{R}^{20}} \phi(\xi)\, \mathrm{d}\nu_{\omega, t,x}(\xi)\, \mathrm{d}x\, \mathrm{d}t
\end{align*}
for all $\psi\in L^1(0,T; L^1_{\mathrm{loc}}(\mathbb{R}^3))$ and for all $\phi\in C_b(\mathbb{R}^{20})$. 
We now define the following spaces 
\begin{align*}
\chi_{\varrho_0} = L^\gamma_{\mathrm{loc}}(\mathbb{R}^3), \, \quad \chi_{\mathbf{m}_0}= L^1_{\mathrm{loc}}(\mathbb{R}^3), \, \quad \chi_{\frac{\mathbf{m}_0}{\sqrt{\varrho_0}}}= L^2_{\mathrm{loc}}(\mathbb{R}^3),
\, \quad
\chi_{ \mathbf{u} } =  \big(L^2\big(0,T;W^{1,2}(\mathbb{R}^3)\big), w\big),
\\
\chi_V = C_w\left([0,T];W^{2,\gamma}(\mathbb{R}^3)\right),
\, \quad
\chi_W= C\left([0,T];\mathfrak{U}_0\right) ,
\\
\chi_{\varrho\mathbf{u}} =C_w\big([0,T]; L^\frac{2\gamma}{\gamma+1} (\mathbb{R}^3)\big) \cap C\big([0,T]; W^{-k,2}(\mathbb{R}^3)\big), 
\\
\chi_\varrho = C_w \left([0,T];L^{\gamma}(\mathbb{R}^3)\right)
\cap
\big(L^{\gamma+\Theta}(0,T;L^{\gamma+\Theta}_{\mathrm{loc}}(\mathbb{R}^3)), w\big)
\\
\chi_{\mathcal{E}}= \big(L^\infty(0,T; \mathcal{M}_b(\mathbb{R}^3)), w^*\big),
\, \quad
\chi_{\nu}= \big(L^\infty(0,T; L^\infty_{\mathrm{loc}} (\mathbb{R}^3)\big); \mathfrak{P}(\mathbb{R}^{20}) , w^*\big)
\end{align*}
for $\Gamma\geq6$ and $k> \frac{5}{2}$. We now let  $\mu_{\varrho_{0,L}}$, $\mu_{\mathbf{m}_{0,L}}$, $\mu_{\frac{\mathbf{m}_{0,L}}{\sqrt{\varrho_{0,L}}}}$, $\mu_{\varrho_L }$, $\mu_{\mathbf{u}_L }$, $\mu_{\varrho_L\mathbf{u}_L }$,  $\mu_{V_L }$,$\mu_{\mathcal{E}_L }$, $\mu_{\nu_L }$ and $\mu_{W}$ be the respective laws of  $\varrho_{0,L}$, $\mathbf{m}_{0,L}$,   $\frac{\mathbf{m}_{0,L}}{\sqrt{\varrho_{0,L}}}$,  $\varrho_L$, $\mathbf{u}_L$, $\varrho_L \mathbf{u}_L $, $V_L $, $\mathcal{E}_L$, $\nu_L$ and $W$ on the respective spaces 
$\chi_{\varrho_0}$, $\chi_{\mathbf{m}_0}$, $\chi_{\frac{\mathbf{m}_0}{\sqrt{\varrho_0}}}$,  $\chi_{\varrho} $,
$\chi_{\mathbf{u}} $,  $\chi_{ \varrho\mathbf{u} }$, $\chi_V$, $\chi_{\mathcal{E}}$, $\chi_\nu$ and $\chi_W$. Furthermore, we set $\mu_L $ as their joint law on the space 
\begin{align*}
\chi = \chi_{\varrho_0} \times \chi_{\mathbf{m}_0}\times \chi_{\frac{\mathbf{m}_0}{\sqrt{\varrho_0}}}\times \chi_{\varrho} \times \chi_{\mathbf{u}}  \times  \chi_{ \varrho\mathbf{u} } \times \chi_V\times \chi_{\mathcal{E}} \times \chi_\nu \times \chi_W.
\end{align*}
\begin{lemma}
\label{tighnessWS}
The set $\{\mu_L :   L \geq1  \}$  is tight on $\chi$.
\end{lemma}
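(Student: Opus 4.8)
The plan is to deduce tightness of the joint law $\mu_L$ from tightness of each of its ten marginals, which is legitimate since, for finitely many factors, tightness of all marginals implies tightness of the joint measure (as in \cite[Corollary~4.3.9]{breit2018stoch}). The single structural novelty compared with the periodic layers of Sections~\ref{sec:compactnessFourthLayer}--\ref{sec:compactnessFifthLayer} is that every space entering $\chi$ carries a \emph{local} (Fr\'echet) topology on $\mathbb{R}^3$, so the global compact embeddings available on the torus must be replaced by the following routine: fix an exhausting sequence of balls $B_1\Subset B_2\Subset\cdots$, establish the relevant compactness on each $B_R$ by the standard tools (Rellich--Kondrachov, Aubin--Lions, the Arzel\`a--Ascoli argument in the weak topology of \cite[Corollary~B.2.]{ondrejat2010stochastic}), and pass to a diagonal subsequence; Chebyshev's inequality then converts the uniform-in-$L$ (moment) bounds of Section~\ref{sec:aPriori} and Lemma~\ref{lem:higherDensity} into tightness. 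Because of the far-field condition \eqref{farfield}, the density is always handled through $\varrho_L-\overline{\varrho}$ and $H(\varrho_L,\overline{\varrho})$, exactly as in the a priori bounds \eqref{est0}.

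For the initial-data marginals $\mu_{\varrho_{0,L}}$, $\mu_{\mathbf{m}_{0,L}}$ and $\mu_{\mathbf{m}_{0,L}/\sqrt{\varrho_{0,L}}}$ I would invoke the construction of Section~\ref{sec:intialDataTorus}: the random variables $\varrho_{0,L}$, $\mathbf{m}_{0,L}$ and $\mathbf{m}_{0,L}/\sqrt{\varrho_{0,L}}=\eta_L\,\mathbf{m}_0/\sqrt{\varrho_0}$ converge $\mathbb{P}$-a.s.\ and pointwise to $\varrho_0$, $\mathbf{m}_0$ and $\mathbf{m}_0/\sqrt{\varrho_0}$ and are dominated, uniformly in $L$, by $\varrho_0+\overline{\varrho}\in L^\gamma_{\mathrm{loc}}(\mathbb{R}^3)$, $1+|\mathbf{m}_0|\in L^1_{\mathrm{loc}}(\mathbb{R}^3)$ and $|\mathbf{m}_0|/\sqrt{\varrho_0}\in L^2_{\mathrm{loc}}(\mathbb{R}^3)$ respectively, the last membership coming from \eqref{finiteEnergyLaw}; dominated convergence upgrades these to convergence in $L^\gamma_{\mathrm{loc}}$, $L^1_{\mathrm{loc}}$ and $L^2_{\mathrm{loc}}$, so the corresponding sequences of laws converge weakly and are in particular tight by Prokhorov's theorem. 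The law $\mu_W$ is tight because it is a Radon measure on a Polish space, and $\mu_{\nu_L}$ is tight for free because every $\nu_L$ ranges in the fixed weak-$*$ compact set of $\mathfrak{P}(\mathbb{R}^{20})$-valued maps, cf.\ \cite[Section~2.8, Proposition~4.4.7]{breit2018stoch}.

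For the dynamical marginals I would proceed as follows. Tightness of $\mu_{\mathbf{u}_L}$ is immediate from the uniform bound \eqref{w12velocity} together with reflexivity of $L^2(0,T;W^{1,2}(\mathbb{R}^3))$. For $\mu_{\varrho_L}$ the weak $L^{\gamma+\Theta}$ component follows from Lemma~\ref{lem:higherDensity} and reflexivity, while the $C_w([0,T];L^\gamma)$ component follows by combining the uniform $L^\infty(0,T;L^{\overline{\gamma}})$--type control in \eqref{est0} with time-equicontinuity read off from $\partial_t\varrho_L=-\mathrm{div}(\varrho_L\mathbf{u}_L)$, applied on each ball $B_R$ via the weak Arzel\`a--Ascoli criterion. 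For the momentum marginal $\mu_{\varrho_L\mathbf{u}_L}$ on $\chi_{\varrho\mathbf{u}}$ I would follow \cite[Proposition~4.3.8]{breit2018stoch} and \cite{mensah2016existence,mensah2019theses}: the uniform $L^\infty(0,T;L^{2\gamma/(\gamma+1)}(\mathbb{R}^3))$ bound plus fractional-in-time H\"older equicontinuity of $\varrho_L\mathbf{u}_L$ in a negative Sobolev space---the stochastic-integral term being controlled by the Burkholder--Davis--Gundy inequality and localized by the compact support \eqref{cptsptnoiseWS} of the noise coefficients---yields the required compactness after localization. Tightness of $\mu_{V_L}$ on $\chi_V$ comes from the Poisson equation \eqref{elecField}: the bounds on $\nabla V_L$ and on $\Delta V_L=\pm(\varrho_L-f)$ from \eqref{est0}, together with $\partial_t\Delta V_L=\mp\,\mathrm{div}(\varrho_L\mathbf{u}_L)$, give uniform control of $V_L$ in $L^\infty(0,T;W^{2,\gamma}_{\mathrm{loc}})$ with an equicontinuous negative-norm bound, hence $C_w$-compactness on each ball. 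Finally $\mu_{\mathcal{E}_L}$ is tight on $\chi_{\mathcal{E}}$ because $\mathcal{E}_L$ is bounded, uniformly in $L$ and in moments, in $L^\infty(0,T;\mathcal{M}_b(\mathbb{R}^3))$ by \eqref{est0}, and bounded sets there are weak-$*$ metrizable, so Chebyshev applies exactly as in \cite[Proposition~4.4.6]{breit2018stoch}. Collecting the ten statements proves the lemma. I expect the momentum marginal to be the main obstacle: one must reconcile the negative-Sobolev equicontinuity estimate, and in particular its stochastic-integral contribution, with the \emph{global} space $W^{-k,2}(\mathbb{R}^3)$ appearing in $\chi_{\varrho\mathbf{u}}$, which is precisely where the compact support of the noise and the far-field normalization of the density must be exploited.
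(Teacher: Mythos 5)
Your marginal-by-marginal argument, with localization to an exhausting sequence of balls, Chebyshev applied to the uniform moment bounds of Section \ref{sec:aPriori} and Lemma \ref{lem:higherDensity}, and the compact embeddings of \cite[Corollary B.2.]{ondrejat2010stochastic} on each ball, is exactly the route the paper intends: Lemma \ref{tighnessWS} is stated without proof precisely because it is the whole-space transcription of Lemmas \ref{tighnessFourthLAyer} and \ref{tighnessFifthLAyer} via the scheme of \cite{mensah2016existence,mensah2019theses}, and your reconstruction of that scheme is correct in all its parts, including the treatment of the initial-data marginals through the dominated convergence built into Section \ref{sec:intialDataTorus}.

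On the one point you leave open, the momentum marginal: you are right that the embedding $L^{2\gamma/(\gamma+1)}(\mathbb{R}^3)\hookrightarrow W^{-k,2}(\mathbb{R}^3)$ is not compact globally, and the compact support of the noise does not repair this since $\varrho_L\mathbf{u}_L$ is supported on all of $\mathbb{R}^3$. The resolution in \cite{mensah2019theses} is that the second factor of $\chi_{\varrho\mathbf{u}}$ is to be read locally, i.e.\ as $C\big([0,T];W^{-k,2}(B)\big)$ for each fixed ball $B$ followed by a diagonal extraction; the Burkholder--Davis--Gundy and deterministic equicontinuity estimates are all performed against test functions supported in $B$, where $L^{2\gamma/(\gamma+1)}(B)\hookrightarrow W^{-k,2}(B)$ \emph{is} compact for $k>5/2$. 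With that reading your argument closes, and no further input from the far-field normalization is needed for this particular marginal.
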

The application of the Jakubowski--Skorokhod theorem \cite{jakubowski1998short} yields the following.
\begin{lemma}
\label{lem:JakowWS}
The exists a subsequence (not relabelled) $\{\mu_L  :  L \geq 1  \}$, a complete probability space $(\tilde{\Omega}, \tilde{\mathscr{F}}, \tilde{\mathbb{P}})$ with $\chi$-valued random variables
\begin{align*}
(\tilde{\varrho}_{0,L}, \tilde{\mathbf{m}}_{0,L},
\tilde{\mathbf{n}}_{0,L},  \tilde{\varrho}_L , \tilde{\mathbf{u}}_L , \tilde{\mathbf{m}}_L  ,\tilde{V}_L ,
\tilde{\mathcal{E}}_L,
\tilde{\nu}_L,
 \tilde{W}_L ) \quad L \in (0,1) 
\end{align*}
and 
\begin{align*}
 (\tilde{\varrho}_0, \tilde{\mathbf{m}}_0, \tilde{\mathbf{n}}_0,  \tilde{\varrho}, \tilde{\mathbf{u}}, \tilde{\mathbf{m}}, \tilde{V}, \tilde{\mathcal{E}},
\tilde{\nu}, \tilde{W})
\end{align*}
such that 
\begin{itemize}
\item the law of $(\tilde{\varrho}_{0,L}, \tilde{\mathbf{m}}_{0,L},
\tilde{\mathbf{n}}_{0,L},  \tilde{\varrho}_L , \tilde{\mathbf{u}}_L , \tilde{\mathbf{m}}_L  ,\tilde{V}_L ,
\tilde{\mathcal{E}}_L,
\tilde{\nu}_L,
 \tilde{W}_L )$ on $\chi$ coincide with $\mu_L $, $L \geq 1$,
\item the law of $ (\tilde{\varrho}_0, \tilde{\mathbf{m}}_0, \tilde{\mathbf{n}}_0,  \tilde{\varrho}, \tilde{\mathbf{u}}, \tilde{\mathbf{m}}, \tilde{V}, \tilde{\mathcal{E}},
\tilde{\nu}, \tilde{W})$ on $\chi$ is a Radon measure,
\item the following convergence  (with each $\rightarrow$ interpreted with respect to the corresponding topology)
\begin{equation}
\begin{aligned}
\label{jakStrongConvWS}
\tilde{\varrho}_{0,L} \rightarrow \tilde{\varrho}_0 \quad \text{in}\quad \chi_{\varrho_0}, & \quad \qquad
\tilde{\mathbf{m}}_{0,L} \rightarrow \tilde{\mathbf{m}}_0 \quad \text{in}\quad \chi_{\mathbf{m}_0}, 
\\
\tilde{\mathbf{n}}_{0,L} \rightarrow \tilde{\mathbf{n}}_0 \quad \text{in}\quad \chi_{\frac{\mathbf{m}_0}{\sqrt{\varrho_0}}}, 
& \quad \qquad
\tilde{\varrho}_L  \rightarrow \tilde{\varrho} \quad \text{in}\quad \chi_\varrho,
\\
\tilde{\mathbf{u}}_L  \rightarrow \tilde{\mathbf{u}} \quad \text{in}\quad \chi_\mathbf{u}, 
 & \quad \qquad
\tilde{\mathbf{m}}_L  \rightarrow \tilde{\mathbf{m}} \quad \text{in}\quad \chi_\mathbf{m}, 
\\
 \tilde{V}_L  \rightarrow \tilde{V} \quad  \text{in}\quad \chi_V,
  & \quad \qquad
 \tilde{\mathcal{E}}_L  \rightarrow  \tilde{\mathcal{E}} \quad \text{in}\quad \chi_\mathcal{E}, 
 \\
 \tilde{\nu}_L  \rightarrow \tilde{\nu} \quad  \text{in}\quad \chi_\nu,
  & \quad \qquad
\tilde{W}_L  \rightarrow \tilde{W} \quad \text{in}\quad \chi_W
\end{aligned}
\end{equation}
holds $\tilde{
\mathbb{P}}$-a.s.;
\item consider any Carath\'eodory function $\mathcal{C}=\mathcal{C}(t,x,\varrho, \mathbf{u}, \mathbb{U}, \mathbf{m}, f, \mathbf{V})$ with
\begin{align*}
(t,x,\varrho, \mathbf{u}, \mathbb{U}, \mathbf{m}, f, \mathbf{V}) \in [0,T]\times K \times \mathbb{R} \times \mathbb{R}^3 \times \mathbb{R}^{3\times3} \times \mathbb{R}^3 \times \mathbb{R} \times \mathbb{R}^3,
\end{align*} 
$K \Subset \mathbb{R}^3$, and where the following estimate
\begin{align*}
\vert \mathcal{C} \vert \lesssim 1+ \vert \varrho\vert^{r_1} + \vert  \mathbf{u}\vert^{r_2}  + \vert \mathbb{U}\vert^{r_3}   + \vert  \mathbf{m}\vert^{r_4}  + \vert  f\vert^{r_5}   + \vert  \mathbf{V}\vert^{r_6} 
\end{align*}
holds uniformly in $(t,x)$ for some $r_i>0$, $i=1, \ldots, 6$. Then as $L \rightarrow0$, it follows that
\begin{align*}
\mathcal{C}(\tilde{\varrho}_L, \tilde{\mathbf{u}}_L, \nabla\tilde{\mathbf{u}}_L, \tilde{\mathbf{m}}_L, f, \nabla\tilde{V}_L) 
\rightharpoonup
\overline{
\mathcal{C}(\tilde{\varrho}, \tilde{\mathbf{u}}, \nabla\tilde{\mathbf{u}}, \tilde{\mathbf{m}}, f,  \nabla\tilde{V})}
\end{align*}
holds  in $L^r((0,T) \times
K)$ for all
 \begin{align*}
1<r \leq \frac{\gamma+\Theta}{r_1} \wedge \frac{2}{r_2} \wedge \frac{2\gamma}{r_4(\gamma+1)} \wedge \frac{2}{r_6}
 \end{align*}
$\tilde{\mathbb{P}}$-a.s.
\end{itemize}
\end{lemma}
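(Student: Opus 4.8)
The plan is to follow the now-familiar template used in Sections~\ref{sec:thirdLayer}, \ref{sec:fourthLayer} and \ref{sec:fifthLayer}, but carried out on the exhausting sequence of periodic boxes $\mathbb{T}^3_L$ and with all function spaces replaced by their $L^p_{\mathrm{loc}}(\mathbb{R}^3)$ (or weak-topology) counterparts. First I would record that the family of solutions $[(\Omega,\mathscr{F},(\mathscr{F}_t),\mathbb{P});\varrho_L,\mathbf{u}_L,V_L,W]$ from Section~\ref{sec:aPriori} satisfies, uniformly in $L$, the a~priori bounds collected in \eqref{est0}, \eqref{w12velocity}, Lemma~\ref{lem:l2velo}, Lemma~\ref{lem:higherDensity} and the momentum/convective-term bounds immediately following them; the improved density integrability $\varrho_L^{\gamma+\Theta}\in L^1_{t,\mathrm{loc}}$ with $\Theta<\tfrac23\gamma-1$ is what makes the Young-measure and defect-measure machinery applicable, exactly as in Section~\ref{sec:fifthLayer}. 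Then the laws $\mu_L$ of the tuple $(\varrho_{0,L},\mathbf{m}_{0,L},\tfrac{\mathbf{m}_{0,L}}{\sqrt{\varrho_{0,L}}},\varrho_L,\mathbf{u}_L,\varrho_L\mathbf{u}_L,V_L,\mathcal{E}_L,\nu_L,W)$ on the product space $\chi$ are tight: on the Polish factors (the data spaces, $\chi_W$) this is Prokhorov applied to a Radon measure; on $\chi_\varrho$, $\chi_{\mathbf{u}}$, $\chi_{\varrho\mathbf{u}}$, $\chi_V$ it follows from the uniform bounds together with the continuity and Poisson equations, localized to an exhausting sequence of balls and combined with a diagonal argument to pass from $L^p$ on each ball to $L^p_{\mathrm{loc}}(\mathbb{R}^3)$ (this is the content of Lemma~\ref{tighnessWS}, whose proof mirrors Lemmas~\ref{tighnessFourthLAyer}--\ref{tighnessFifthLAyer}); on $\chi_{\mathcal{E}}$ and $\chi_\nu$ one invokes the bounds on $\sup_t\int\mathcal{E}_L\,\mathrm{d}x$ and the fact that a bounded sequence in $L^\infty_{w^*}$ valued in probability measures is automatically tight in the appropriate weak-$*$ sense, as in \cite[Prop.~4.4.6--4.4.7]{breit2018stoch}.

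Having tightness, I would apply the Jakubowski--Skorokhod representation theorem \cite{jakubowski1998short} (valid since $\chi$ is a quasi-Polish space that admits a countable separating family of continuous functionals) to obtain a new probability space $(\tilde\Omega,\tilde{\mathscr{F}},\tilde{\mathbb{P}})$, random variables $(\tilde\varrho_{0,L},\dots,\tilde W_L)$ with the same laws $\mu_L$, limit variables $(\tilde\varrho_0,\dots,\tilde W)$, and $\tilde{\mathbb{P}}$-a.s. convergence \eqref{jakStrongConvWS} in the respective topologies. The last bullet of Lemma~\ref{lem:JakowWS}---weak $L^r((0,T)\times K)$ convergence of Carath\'eodory composites to their Young-measure averages---is exactly the localized version of the corresponding items in Lemmas~\ref{lem:JakowFourthLayer} and \ref{lem:JakowFifthLayer}: one writes the composite as an integral against the (narrowly convergent) Young measure $\tilde\nu_L$, uses the uniform $L^{r}$ bound coming from the growth exponents $r_1,\dots,r_6$ and the a~priori estimates to upgrade narrow convergence to weak $L^r$ convergence, testing only against $\psi\in L^1(0,T;L^1(K))$ since the spatial domain is now unbounded. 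The exponent range $1<r\le \tfrac{\gamma+\Theta}{r_1}\wedge\tfrac{2}{r_2}\wedge\tfrac{2\gamma}{r_4(\gamma+1)}\wedge\tfrac{2}{r_6}$ is dictated by the $L^{\gamma+\Theta}_{t,x}$ bound on $\varrho_L$, the $L^2_{t,x}$ bound on $\mathbf{u}_L$ and $\nabla\mathbf{u}_L$, the $L^2_tL^{2\gamma/(\gamma+1)}_x$ bound on $\varrho_L\mathbf{u}_L$, and the $L^\infty_tL^2_x$ bound on $\nabla V_L$ respectively.

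The main obstacle I anticipate is the non-Polish, non-reflexive nature of several of the limit spaces combined with the unbounded domain: verifying that Jakubowski--Skorokhod genuinely applies requires exhibiting, for each factor $\chi_\varrho$, $\chi_{\varrho\mathbf{u}}$, $\chi_V$, $\chi_{\mathcal{E}}$, $\chi_\nu$, a countable family of continuous real functionals separating points, and for the $L^\infty_{\mathrm{loc}}$-valued Young-measure space and the $\mathcal{M}_b(\mathbb{R}^3)$-valued energy space this is slightly delicate because one must use a countable dense set of test functions with compact support and check that weak-$*$ convergence against all of them determines the element---this is where localization to an exhausting sequence of balls and a diagonal extraction is essential, and where I would lean on \cite[Section~2.8]{breit2018stoch} and \cite[Cor.~B.2]{ondrejat2010stochastic}. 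A secondary subtlety is bookkeeping the far-field normalization: the uniform bounds are stated in terms of $H(\varrho_L,\overline\varrho)=P(\varrho_L)-P'(\overline\varrho)(\varrho_L-\overline\varrho)-P(\overline\varrho)$ and of $\varrho_L-\overline\varrho$ rather than of $\varrho_L$ itself, so the identification of the limit density and of $\Delta^{-1}$-type quantities must be done consistently modulo the constant $\overline\varrho$, exactly as in \cite[Section~3.3]{mensah2019theses}. Once Lemma~\ref{lem:JakowWS} is in hand, the remaining identification steps---the continuity, Poisson, momentum equations, the Div--Curl/effective-viscous-flux argument, the oscillation defect measure and the resulting strong convergence $\tilde\varrho_L\to\tilde\varrho$ in $L^r_tL^1_{\mathrm{loc}}$, and finally the energy inequality---run as localized repetitions of Sections~\ref{sec:fourthLayer}--\ref{sec:fifthLayer}, with the extra electric-field terms handled as in Lemma~\ref{lem:higherDensity} using the compact support of $f$ against each ball.
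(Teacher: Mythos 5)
Your proposal is correct and follows essentially the same route as the paper, which itself gives no detailed argument here: it simply invokes the tightness statement (Lemma \ref{tighnessWS}) and applies the Jakubowski--Skorokhod theorem, deferring all details to the analogous Lemmas \ref{lem:JakowFourthLayer} and \ref{lem:JakowFifthLayer} and to \cite{breit2018stoch} and \cite{mensah2019theses}. Your additional remarks on the quasi-Polish structure of the weak/weak-$*$ factors, the localization to an exhausting sequence of balls, and the far-field normalization are exactly the points the paper leaves implicit.
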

And we have the following corollary.
\begin{corollary}
The following holds $\tilde{\mathbb{P}}$-a.s.
\begin{equation}
\begin{aligned}
&\tilde{\varrho}_{0,L}= \tilde{\varrho}_L(0), \quad \tilde{\mathbf{m}}_{0,L}= \tilde{\varrho}_L\tilde{\mathbf{u}}_L(0), \quad \tilde{\mathbf{n}}_{0,L} = \frac{\tilde{\mathbf{m}}_{0,L}}{\sqrt{\tilde{\varrho}_{0,L}}}, \quad \tilde{\mathbf{m}}_{L}= \tilde{\varrho}_L\tilde{\mathbf{u}}_L,
\\
&\tilde{\mathcal{E}}_L = \mathcal{E}_L(\tilde{\varrho}_L, \tilde{\mathbf{u}}_L, \tilde{V}_L),\quad
\tilde{\nu}_{L}= L_{[\tilde{\varrho}_L, \tilde{\mathbf{u}}_L, \nabla \tilde{\mathbf{u}}_L, \tilde{\varrho}_L \tilde{\mathbf{u}}_L, f, \nabla \tilde{V}_L]}
\end{aligned}
\end{equation}
and that
\begin{equation}
\begin{aligned}
\tilde{\mathbb{E}} \bigg\vert \sup_{t\in[0,T]} \int_{\mathbb{R}^3} \tilde{\mathcal{E}}_L \, \mathrm{d}x \bigg\vert^p
&=
\tilde{\mathbb{E}} \bigg\vert\sup_{t\in[0,T]} \int_{\mathbb{R}^3}\bigg( \frac{1}{2} \tilde{\varrho}_L  \vert \tilde{\mathbf{u}}_L  \vert^2  
+ P(\tilde{\varrho}_L ) 
\pm
\vartheta \vert \nabla \tilde{V}_L \vert^2 \bigg) \, \mathrm{d}x
\bigg\vert^p
\\&\lesssim_{  \gamma, p,\tilde{\mathcal{E}}_{0,L}} 1
\end{aligned}
\end{equation}
and for any ball $B\subset \mathbb{R}^3$, the estimate
\begin{equation}
\begin{aligned}
\tilde{\mathbb{E}}\bigg\vert \int_0^T \int_{B} \tilde{\varrho}_L^{\gamma + \Theta}
\, \mathrm{d}x\, \mathrm{d}t
\bigg\vert^p
\lesssim_{  \gamma, p,\tilde{\mathcal{E}}_{0,L}} 1
\end{aligned}
\end{equation}
holds uniformly in $L$.
\end{corollary}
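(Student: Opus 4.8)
The plan is to deduce every assertion of the corollary from a single mechanism, namely transfer of law. Lemma~\ref{lem:JakowWS} produces, for each $L\geq1$, a tuple of random variables on $(\tilde{\Omega},\tilde{\mathscr{F}},\tilde{\mathbb{P}})$ whose joint law on $\chi$ equals the law $\mu_L$ of the original tuple $(\varrho_{0,L},\mathbf{m}_{0,L},\mathbf{m}_{0,L}/\sqrt{\varrho_{0,L}},\varrho_L,\mathbf{u}_L,\varrho_L\mathbf{u}_L,V_L,\mathcal{E}_L,\nu_L,W)$. Hence if $\Phi\colon\chi\to[0,\infty]$ is Borel and $\Phi$ vanishes $\mu_L$-almost surely, then $\int_\chi\Phi\,\mathrm{d}\mu_L=0$ forces $\Phi$ to vanish $\tilde{\mathbb{P}}$-almost surely on the new tuple; applied to nonnegative Borel functionals the same identity transfers $p$-th moments verbatim. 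The work therefore splits into: (i) recognizing the listed algebraic identities as Borel constraints that the original sequence satisfies by construction; and (ii) recognizing the two moment bounds as $L$-uniform estimates already established for the original sequence.

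For part (i): the relations $\tilde{\varrho}_{0,L}=\tilde{\varrho}_L(0)$ and $\tilde{\mathbf{m}}_{0,L}=\tilde{\varrho}_L\tilde{\mathbf{u}}_L(0)$ hold for $(\varrho_{0,L},\mathbf{m}_{0,L},\varrho_L,\mathbf{u}_L)$ by the construction of the periodic problems in Section~\ref{sec:intialDataTorus} together with item~(7) of Definition~\ref{def:martSolutionExis}; here one uses that the components live in $\chi_\varrho\subset C_w([0,T];L^\gamma_{\mathrm{loc}}(\mathbb{R}^3))$ and $\chi_{\varrho\mathbf{u}}\subset C_w([0,T];L^{2\gamma/(\gamma+1)}(\mathbb{R}^3))$, so evaluation at $t=0$ is a well-defined Borel operation. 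The identities $\tilde{\mathbf{n}}_{0,L}=\tilde{\mathbf{m}}_{0,L}/\sqrt{\tilde{\varrho}_{0,L}}$, $\tilde{\mathbf{m}}_L=\tilde{\varrho}_L\tilde{\mathbf{u}}_L$ and $\tilde{\mathcal{E}}_L=\mathcal{E}_L(\tilde{\varrho}_L,\tilde{\mathbf{u}}_L,\tilde{V}_L)$ with $\mathcal{E}_L$ as in \eqref{energyFunctionalWS} are simply the definitions of the corresponding components of the original tuple, re-read through the equality of laws. The only genuinely non-routine relation is the Young-measure identity $\tilde{\nu}_L=\delta_{[\tilde{\varrho}_L,\tilde{\mathbf{u}}_L,\nabla\tilde{\mathbf{u}}_L,\tilde{\varrho}_L\tilde{\mathbf{u}}_L,f,\nabla\tilde{V}_L]}$: it requires that the Dirac assignment $[\varrho,\mathbf{u},\dots]\mapsto\delta_{[\varrho,\mathbf{u},\dots]}$ be a Borel map into $(\chi_\nu,w^*)$, which is exactly the measurability statement recorded in \cite[Section~2.8]{breit2018stoch}; granting it, transfer of law applies as before.

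For part (ii): the bound $\tilde{\mathbb{E}}\,\big|\sup_{t\in[0,T]}\int_{\mathbb{R}^3}\tilde{\mathcal{E}}_L\,\mathrm{d}x\big|^p\lesssim1$ is the transfer of the a priori energy estimates \eqref{est}--\eqref{est0} of Section~\ref{sec:aPriori}: the functional on $\chi_{\mathcal{E}}$ given by the $p$-th power of the supremum over $t$ of the total mass of $\mathcal{E}(t)$ is lower semicontinuous, hence Borel, for the weak-$*$ topology, and the right-hand side of \eqref{est} is controlled $L$-uniformly by the finite initial energy of \eqref{finiteEnergyLaw} together with the convexity bound \eqref{initialTorusLR} on the approximate data; one also uses $\tilde{\mathcal{E}}_L=\mathcal{E}_L(\tilde{\varrho}_L,\tilde{\mathbf{u}}_L,\tilde{V}_L)$ from part~(i) to identify the measure with the energy density. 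The bound $\tilde{\mathbb{E}}\,\big|\int_0^T\int_B\tilde{\varrho}_L^{\gamma+\Theta}\,\mathrm{d}x\,\mathrm{d}t\big|^p\lesssim1$ over an arbitrary ball $B\subset\mathbb{R}^3$ transfers in the same way from Lemma~\ref{lem:higherDensity}, using that $\varrho\mapsto\big(\int_0^T\int_B|\varrho|^{\gamma+\Theta}\,\mathrm{d}x\,\mathrm{d}t\big)^p$ is convex and weakly lower semicontinuous, hence Borel, on the component $\chi_\varrho$.

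The main obstacle is of a purely measure-theoretic bookkeeping nature rather than analytic: the factors $\chi_V$, $\chi_\varrho$, $\chi_{\mathcal{E}}$, $\chi_\nu$ carry weak or weak-$*$ topologies and are not Polish, so before ``equality of laws'' can legitimately be invoked one must verify that all functionals used above---the trace at $t=0$, the bilinear product $(\varrho,\mathbf{u})\mapsto\varrho\mathbf{u}$, the norm powers, and the Dirac-Young-measure assignment---are Borel with respect to the relevant $\sigma$-algebras. This is precisely the sub-Polish framework of \cite{breit2018stoch}; once those measurability facts are cited, each clause of the corollary follows immediately and no new estimate is required.
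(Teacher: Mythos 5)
Your proposal is correct and follows exactly the route the paper intends: the corollary is a direct consequence of the equality of laws furnished by Lemma \ref{lem:JakowWS}, transferring the defining identities and the $L$-uniform estimates \eqref{est}--\eqref{est0} and Lemma \ref{lem:higherDensity} to the new probability space, with the measurability of the relevant functionals on the quasi-Polish factors handled as in \cite[Section 2.8]{breit2018stoch}. The paper states this corollary without proof (as it does for the analogous corollaries in Sections \ref{sec:fourthLayer} and \ref{sec:fifthLayer}), and your write-up supplies precisely the standard justification.
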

Furthermore, the random variables can be endowed with the following filtrations
\begin{align*}
\tilde{\mathscr{F}}_t^L  :=\sigma \bigg(\sigma_t[\tilde{\varrho}_L], \sigma_t[\tilde{\mathbf{u}}_L], \sigma_t[\tilde{V}_L], \bigcup_{k\in \mathbb{N}}\sigma_t[\tilde{\beta}_{L,k}]  \bigg), \quad t\in [0,T]
\end{align*}
and
\begin{align*}
\tilde{\mathscr{F}}_t  :=
\sigma \bigg(\sigma_t[\tilde{\varrho}], \sigma_t[\tilde{\mathbf{u}}], \sigma_t[\tilde{V}], \bigcup_{k\in \mathbb{N}}\sigma_t[\tilde{\beta}_{k}]  \bigg), \quad t\in [0,T]
\end{align*}
for the sequence and limit random variables respectively. Now as in Lemma \ref{renormalizedContFifthLayerSeq}, we have the following result.
\begin{lemma}
For any $\phi\in C^\infty_c([0,T)\times{\mathbb{R}^3})$ and $b \in C^1_b(\mathbb{R})$ such that $ b'(z)=0$ for all $z\geq M_b$,  we have that
\begin{equation}
\begin{aligned}
\label{renormalizedContWSSeq} 
&0=\int_0^T\int_{{\mathbb{R}^3}}  b(\tilde{\varrho}_L)\,\partial_t\phi \,\mathrm{d}x \, \mathrm{d}t +
\int_{{\mathbb{R}^3}}  b\big(\tilde{\varrho}_{0,L}\big)\,\phi(0) \,\mathrm{d}x 
\\&+
 \int_0^T\int_{{\mathbb{R}^3}} \big[b(\tilde{\varrho}_L) \tilde{\mathbf{u}}_L  \big] \cdot \nabla\phi \, \mathrm{d}x \, \mathrm{d}t  
- \int_0^T\int_{{\mathbb{R}^3}} \big[ \big(  b'(\tilde{\varrho}_L) \tilde{\varrho}_L  
-
 b(\tilde{\varrho}_L )\big)\mathrm{div}\tilde{\mathbf{u}}_L \big]\,  \phi\, \mathrm{d}x \, \mathrm{d}t
\end{aligned}
\end{equation} 
holds $\tilde{\mathbb{P}}$-a.s.
\end{lemma}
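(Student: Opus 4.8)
The plan is to obtain \eqref{renormalizedContWSSeq} as a direct consequence of two facts already available: that each $[(\Omega,\mathscr{F},(\mathscr{F}_t),\mathbb{P});\varrho_L,\mathbf{u}_L,V_L,W]$ produced in Section \ref{sec:aPriori} is a dissipative martingale solution on $\mathbb{T}^3_L$ in the sense of Definition \ref{def:martSolutionExis}, and the equality of laws established in Lemma \ref{lem:JakowWS}. By the last item of Definition \ref{def:martSolutionExis}, i.e. \eqref{renormalizedContExis}, the pair $(\varrho_L,\mathbf{u}_L)$ satisfies the renormalized continuity equation on $\mathbb{T}^3_L$ $\mathbb{P}$-a.s., for every periodic $\phi\in C^\infty_c([0,T)\times\mathbb{T}^3_L)$ and every $b\in C^1_b(\mathbb{R})$ with $b'(z)=0$ for $z\ge M_b$. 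First I would upgrade this to a single exceptional set: choosing a countable family $(\phi_j,b_i)$ dense for the topology controlling the integrands in \eqref{renormalizedContExis} — using $\varrho_L\in C_w([0,T];L^\gamma)$, $\mathbf{u}_L\in L^2_tW^{1,2}_x$ and the continuity of both sides of \eqref{renormalizedContExis} in $(\phi,b)$ for a fixed admissible trajectory — one obtains a single Borel set $\mathcal{N}_L\subset\chi$ with $\mu_L(\mathcal{N}_L)=1$ on which the renormalized identity holds for all admissible $(\phi,b)$ simultaneously.

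Next I would transfer this property to the Skorokhod variables, exactly as for the analogous identity \eqref{renormalizedContFifthLayerSeq} in the vanishing-pressure layer. The crucial simplification — compared, e.g., with the identification of the momentum equation — is that \eqref{renormalizedContExis} contains no stochastic integral: every term there is a deterministic Lebesgue integral in $(t,x)$ of the trajectories $\varrho_L$, $b(\varrho_L)$, $b'(\varrho_L)\varrho_L$, $\mathbf{u}_L$, $\mathrm{div}\,\mathbf{u}_L$ against the smooth $\phi$. Hence membership in $\mathcal{N}_L$ is a Borel property of the path on the product space $\chi$ of Lemma \ref{lem:JakowWS}, and since that lemma asserts that the law of $(\tilde\varrho_L,\tilde{\mathbf{u}}_L,\dots)$ on $\chi$ coincides with $\mu_L$, we conclude $(\tilde\varrho_L,\tilde{\mathbf{u}}_L)\in\mathcal{N}_L$ $\tilde{\mathbb{P}}$-a.s.; in particular the renormalized continuity equation on $\mathbb{T}^3_L$ holds for $(\tilde\varrho_L,\tilde{\mathbf{u}}_L)$ for all admissible periodic test pairs.

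It then remains to pass from the periodic to the whole-space formulation. Given $\phi\in C^\infty_c([0,T)\times\mathbb{R}^3)$ and $L$ large enough that $\mathrm{supp}_x\,\phi\Subset(-\tfrac{L}{2},\tfrac{L}{2})^3$, the function $\phi$ restricts to an admissible periodic test function on $\mathbb{T}^3_L$ that vanishes near $\partial[-L,L]^3$, so each integral in \eqref{renormalizedContExis} may be taken over $\mathbb{R}^3$ rather than $\mathbb{T}^3_L$ without changing its value; this is precisely \eqref{renormalizedContWSSeq}. Equivalently, extending $\tilde\varrho_L$ by the constant $\overline{\varrho}$ and $\tilde{\mathbf{u}}_L$ by $0$ outside $\mathbb{T}^3_L$, every term of the renormalized equation cancels pointwise on the constant region — there $b(\tilde\varrho_L)=b(\overline{\varrho})$ is space-time constant, $b(\tilde\varrho_L)\tilde{\mathbf{u}}_L=0$, $(b'(\tilde\varrho_L)\tilde\varrho_L-b(\tilde\varrho_L))\mathrm{div}\,\tilde{\mathbf{u}}_L=0$, and the $\partial_t\phi$ and initial terms cancel thanks to $\phi(T,\cdot)=0$ — so the identity in fact holds for all $\phi\in C^\infty_c([0,T)\times\mathbb{R}^3)$. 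The only genuinely non-routine point in the whole argument is the first one, namely that the $\mathbb{P}$-null set in \eqref{renormalizedContExis} can be chosen uniformly in the test pair so that ``being a renormalized solution'' is literally an event on $\chi$; once this is secured, what remains is equality of laws together with elementary support bookkeeping.
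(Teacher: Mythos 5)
Your proposal is correct and is essentially the argument the paper itself relies on (stated there without proof, by analogy with the corresponding lemma of the $\delta\to0$ layer): the renormalized identity is a pathwise, noise-free Borel property of the trajectories, so it transfers from $(\varrho_L,\mathbf{u}_L)$ to the Skorokhod representatives by equality of laws, and the whole-space form follows for $L$ large enough that $\mathrm{supp}_x\,\phi$ sits inside the periodic cell. Your explicit treatment of the uniform null set via a countable dense family of test pairs, and of the torus-to-$\mathbb{R}^3$ bookkeeping, merely makes precise what the paper leaves implicit; only the parenthetical ``extension by constants'' remark should be treated with care, since for $t>0$ the density need not equal $\overline{\varrho}$ near $\partial[-L,L]^3$, but your primary support argument does not use it.
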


\subsection{Identifying the limit system}
To begin with, we have the following result which is analogous to Corollary \ref{cor:nonlinearLimitsFifth}
\begin{corollary}
\label{cor:nonlinearLimitsFifth}
There exists $\overline{p(\tilde{\varrho})}$, $\overline{\vert \nabla \tilde{V}\vert^2}$, $\overline{\tilde{\varrho} \nabla \tilde{V}}$ and $\overline{\tilde{\varrho}\, \mathbf{g}_k(\tilde{\varrho}, \tilde{f}, \tilde{\varrho}\tilde{\mathbf{u}})}$ such that
\begin{align}
p(\tilde{\varrho}_L)
&\rightharpoonup
\overline{p(\tilde{\varrho})}
\label{pressureLimitFifthLayer}\\
\vert \nabla \tilde{V}_L \vert^2
&\rightharpoonup
\overline{\vert \nabla \tilde{V}\vert^2}
\label{nablaVLimitFifthLayer}\\
\tilde{\varrho}_L \nabla \tilde{V}_L 
&\rightharpoonup
\overline{\tilde{\varrho} \nabla \tilde{V}}
\label{densitynablaVLimitFifthLayer}\\
\tilde{\varrho}_L \mathbf{g}_k(\tilde{\varrho}_L, f, \tilde{\varrho}_L\tilde{\mathbf{u}}_L)
&\rightharpoonup
\overline{\tilde{\varrho}\, \mathbf{g}_k(\tilde{\varrho}, f, \tilde{\varrho}\tilde{\mathbf{u}})}
\label{noiseLimitFifthLayer}
\end{align}
$\tilde{\mathbb{P}}$-a.s. in $L^r(0,T; L^r_{\mathrm{loc}} (\mathbb{R}^3))$ for some $r>1$. Furthermore, for any $\phi \in C^\infty_c(\mathbb{R}^3)$, the following $\tilde{\mathbb{P}}$-a.s. convergence holds
\begin{align}
\phi\,\tilde{\varrho}_L\tilde{\mathbf{u}}_L \rightarrow\phi \,\tilde{\varrho}\tilde{\mathbf{u}} \quad &\text{ in }   \quad L^2 \big(0,T ;W^{-1,2}(\mathbb{R}^3) \big), \label{almostSurea}
\end{align}
as $L\rightarrow\infty.$
\end{corollary}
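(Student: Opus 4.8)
The plan is to read off each of the five limits from the uniform-in-$L$ estimates of Section~\ref{sec:aPriori} — the energy bounds summarised in \eqref{est0}, the velocity bounds culminating in \eqref{w12velocity}, and the local higher-integrability of Lemma~\ref{lem:higherDensity} — together with the weak-compactness-of-Carath\'eodory-functions statement in the last item of Lemma~\ref{lem:JakowWS}, applied on an exhaustion of $\mathbb{R}^3$ by balls and followed by a diagonal extraction. The first thing I would record is a gain of one spatial derivative for the electric field: interior elliptic estimates for $\pm\Delta\tilde V_L=\tilde\varrho_L-f$, together with $f\in L^\infty(\mathbb{R}^3)$, with $\tilde\varrho_L-\overline\varrho$ bounded in $L^\infty_tL^{\overline\gamma}_x$ and $\nabla\tilde V_L$ bounded in $L^\infty_tL^2_x$ (here $\overline\gamma=\min\{2,\gamma\}>\tfrac32$), show that $\nabla\tilde V_L$ is bounded, in moments, in $L^\infty_tW^{1,\overline\gamma}_{x,\mathrm{loc}}$, hence in $L^\infty_tL^{q}_{x,\mathrm{loc}}$ for some $q>3$ by Sobolev embedding. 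Using this alongside $\tilde\varrho_L\in L^\infty_tL^{\gamma}_{x,\mathrm{loc}}\cap L^{\gamma+\Theta}((0,T)\times B)$ — the latter for any $\Theta<\tfrac23\gamma-1$ and any ball $B$, by Lemma~\ref{lem:higherDensity} and the equality of laws in Lemma~\ref{lem:JakowWS} — one checks that $p(\tilde\varrho_L)=a\tilde\varrho_L^{\gamma}$ is bounded in $L^{(\gamma+\Theta)/\gamma}((0,T)\times B)$, that $|\nabla\tilde V_L|^2$ is bounded in $L^{q/2}((0,T)\times B)$, and that $\tilde\varrho_L\nabla\tilde V_L$ is bounded in $L^{\sigma}((0,T)\times B)$ with $\tfrac1\sigma=\tfrac1{\gamma+\Theta}+\tfrac1q$; in each case the exponent exceeds $1$, for the last one because $\tfrac1{\gamma+\Theta}+\tfrac1q<\tfrac23+\tfrac13=1$. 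The last item of Lemma~\ref{lem:JakowWS} then delivers the weak limits $\overline{p(\tilde\varrho)}$, $\overline{|\nabla\tilde V|^2}$ and $\overline{\tilde\varrho\nabla\tilde V}$ in $L^r(0,T;L^r_{\mathrm{loc}}(\mathbb{R}^3))$ for a common $r>1$, after passing to a further (not relabelled) subsequence.

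For the noise term I would proceed exactly as in Corollary~\ref{cor:nonlinearLimitsFourth} and the proof of Lemma~\ref{lem:identifyMomFourthLayer}: by \eqref{cptsptnoiseWS} each $\mathbf{g}_k$ is supported in the fixed compact set $K$, so $\tilde\varrho_L\,\mathbf{g}_k(\tilde\varrho_L,f,\tilde\varrho_L\tilde\mathbf{u}_L)$ is supported in $K$ and, by \eqref{stochCoeffBoundExi0}--\eqref{stochsummableConst}, dominated there by $\tilde\varrho_L^2+\tilde\varrho_L f+\tilde\varrho_L^2|\tilde\mathbf{u}_L|$; the local bounds on $\tilde\varrho_L$, on $\tilde\varrho_L|\tilde\mathbf{u}_L|^2$ and on $\tilde\varrho_L\tilde\mathbf{u}_L$ coming from \eqref{est0} and Lemma~\ref{lem:higherDensity} render this family equi-integrable over $K\times(0,T)$, so the Carath\'eodory mechanism again produces $\overline{\tilde\varrho\,\mathbf{g}_k(\tilde\varrho,f,\tilde\varrho\tilde\mathbf{u})}$, with the integrability exponent independent of $k$ since $k$ enters only through the constant $c_k$.

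Finally, the strong convergence \eqref{almostSurea} uses the two topologies built into the space $\chi_{\varrho\mathbf{u}}=C_w([0,T];L^{2\gamma/(\gamma+1)}(\mathbb{R}^3))\cap C([0,T];W^{-k,2}(\mathbb{R}^3))$. From Lemma~\ref{lem:JakowWS} and the accompanying corollary, $\tilde\varrho_L\tilde\mathbf{u}_L=\tilde\mathbf{m}_L\to\tilde\mathbf{m}$ in $\chi_{\varrho\mathbf{u}}$ $\tilde{\mathbb{P}}$-a.s., and the identification $\tilde\mathbf{m}=\tilde\varrho\tilde\mathbf{u}$ is obtained as in the earlier layers. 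Fixing $\phi\in C^\infty_c(\mathbb{R}^3)$, the fields $\phi\,\tilde\varrho_L\tilde\mathbf{u}_L$ are supported in $\mathrm{supp}\,\phi$, bounded there in $L^\infty_tL^{2\gamma/(\gamma+1)}_x$ by \eqref{est0}, and convergent in $C([0,T];W^{-k,2}_x)$; since $\gamma>\tfrac32$ the embedding $L^{2\gamma/(\gamma+1)}(\mathrm{supp}\,\phi)\hookrightarrow\hookrightarrow W^{-1,2}(\mathrm{supp}\,\phi)$ is compact, and the standard Arzel\`{a}--Ascoli argument for a sequence bounded in $L^\infty(I;Y)$ and equicontinuous in $C(I;X)$ with $Y\hookrightarrow\hookrightarrow Z\hookrightarrow X$ upgrades the convergence to $C([0,T];W^{-1,2}(\mathbb{R}^3))$, hence to $L^2(0,T;W^{-1,2}(\mathbb{R}^3))$.

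I expect the genuinely delicate step to be the exponent bookkeeping for the $\nabla\tilde V_L$-terms: keeping $|\nabla\tilde V_L|^2$ and $\tilde\varrho_L\nabla\tilde V_L$ in $L^r_{t,x,\mathrm{loc}}$ with $r>1$ forces one to couple the Poisson gain of a derivative (which yields $q>3$ only thanks to $\overline\gamma>\tfrac32$) with the higher-integrability exponent $\gamma+\Theta$ of Lemma~\ref{lem:higherDensity}, and to verify that the H\"older exponents close — the inequality $\tfrac1{\gamma+\Theta}+\tfrac1q<1$ being precisely the threshold $\gamma>\tfrac32$. Everything else is a localised-in-space transcription of the fifth-layer argument, the localisation being dictated by the far-field normalisation $\varrho\to\overline\varrho$, $\mathbf{u}\to0$.
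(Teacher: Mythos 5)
Your proposal is correct and follows essentially the route the paper intends: the paper states this corollary without proof as a direct consequence of the uniform bounds of Section \ref{sec:aPriori}, Lemma \ref{lem:higherDensity} and the Carath\'eodory item of Lemma \ref{lem:JakowWS}, and your exponent bookkeeping (elliptic gain for $\nabla \tilde{V}_L$ via $\overline{\gamma}>\tfrac32$, H\"older for $\tilde{\varrho}_L\nabla\tilde{V}_L$, compact support $K$ for the noise) together with the $\chi_{\varrho\mathbf{u}}$-plus-compact-embedding upgrade for \eqref{almostSurea} is exactly the standard fleshing-out of that argument, mirroring Corollary \ref{cor:nonlinearLimitsFourth} and the earlier layers.
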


\begin{lemma}
\label{lem:identifyContWS}
The random distributions  $[\tilde{\varrho}, \tilde{\mathbf{u}}]$ satisfies \eqref{eq:distributionalSolMassWS} for all $\psi \in C^\infty_c ([0,T))$ and $\phi \in C^\infty_c(\mathbb{R}^3)$ $\tilde{\mathbb{P}}$-a.s.
\end{lemma}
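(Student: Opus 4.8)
The plan is to pass to the limit $L\to\infty$ in the weak continuity equation satisfied by the periodic approximations, localizing everything by the compact support of the test function. Fix $\psi\in C^\infty_c([0,T))$ and $\phi\in C^\infty_c(\mathbb{R}^3)$ with $\mathrm{supp}\,\phi=K$, and restrict to $L\gg 1$ large enough that $K\subset\mathbb{T}^3_L$. By the equality of laws in Lemma \ref{lem:JakowWS} — together with the observation that, for a fixed pair $(\psi,\phi)$, the validity of \eqref{eq:distributionalSolMass} cuts out a Borel subset of the path space $\chi$ — the pair $(\tilde{\varrho}_L,\tilde{\mathbf{u}}_L)$ satisfies
\begin{equation*}
-\int_0^T \partial_t\psi \int_{\mathbb{T}^3_L} \tilde{\varrho}_L(t)\,\phi\,\mathrm{d}x\,\mathrm{d}t = \psi(0)\int_{\mathbb{T}^3_L}\tilde{\varrho}_{0,L}\,\phi\,\mathrm{d}x + \int_0^T\psi\int_{\mathbb{T}^3_L}\tilde{\varrho}_L\tilde{\mathbf{u}}_L\cdot\nabla\phi\,\mathrm{d}x\,\mathrm{d}t
\end{equation*}
$\tilde{\mathbb{P}}$-a.s.; since $\phi$ and $\nabla\phi$ are supported in $K$, the domains of integration may be replaced by $K$, so the identity becomes $L$-independent in form and is genuinely posed on $\mathbb{R}^3$.

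Next I would pass to the limit term by term, using the $\tilde{\mathbb{P}}$-a.s. convergences of Lemma \ref{lem:JakowWS}. For the left-hand side, $\tilde{\varrho}_L\to\tilde{\varrho}$ in $C_w([0,T];L^\gamma_{\mathrm{loc}}(\mathbb{R}^3))$ gives that $t\mapsto\int_{\mathbb{R}^3}\tilde{\varrho}_L(t)\phi\,\mathrm{d}x$ converges to $t\mapsto\int_{\mathbb{R}^3}\tilde{\varrho}(t)\phi\,\mathrm{d}x$ uniformly on $[0,T]$, whence convergence against $\partial_t\psi\in L^1(0,T)$ is immediate. The initial term is handled by $\tilde{\varrho}_{0,L}\to\tilde{\varrho}_0$ in $L^\gamma_{\mathrm{loc}}(\mathbb{R}^3)$ (recall $\tilde{\varrho}_{0,L}=\tilde{\varrho}_L(0)$ from the corollary following Lemma \ref{lem:JakowWS}). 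For the flux term I would invoke $\tilde{\mathbf{m}}_L=\tilde{\varrho}_L\tilde{\mathbf{u}}_L\to\tilde{\mathbf{m}}$ in $\chi_{\varrho\mathbf{u}}$ together with \eqref{almostSurea}, so that $\phi\,\tilde{\varrho}_L\tilde{\mathbf{u}}_L\to\phi\,\tilde{\varrho}\tilde{\mathbf{u}}$ in $L^2(0,T;W^{-1,2}(\mathbb{R}^3))$; pairing against $\nabla\phi\in W^{1,2}(\mathbb{R}^3)$ (constant in time) and $\psi\in L^2(0,T)$ yields convergence of this term to $\int_0^T\psi\int_{\mathbb{R}^3}\tilde{\varrho}\tilde{\mathbf{u}}\cdot\nabla\phi\,\mathrm{d}x\,\mathrm{d}t$.

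A preliminary point to dispatch first is the identification $\tilde{\mathbf{m}}=\tilde{\varrho}\tilde{\mathbf{u}}$ a.e. in $\tilde{\Omega}\times(0,T)\times\mathbb{R}^3$; this follows, exactly as in the periodic setting (cf. Lemma 4.3.11 and Corollary 4.3.12 of \cite{breit2018stoch}), by combining the strong local convergence $\tilde{\varrho}_L\to\tilde{\varrho}$ with the weak convergence $\tilde{\mathbf{u}}_L\rightharpoonup\tilde{\mathbf{u}}$ in $L^2(0,T;W^{1,2}(\mathbb{R}^3))$, the latter sufficing because multiplication by a compactly supported smooth cutoff is compact from $W^{1,2}$ into $L^2$ on any bounded region. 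The main — though still mild — obstacle is the interplay of local versus global topologies: since the approximating solutions live on the exhausting tori $\mathbb{T}^3_L$ and the limit objects are controlled only in $L^p_{\mathrm{loc}}$-type spaces, one must localize via $\mathrm{supp}\,\phi$ before invoking any convergence, and then check that the limiting identity holds simultaneously for all $\phi\in C^\infty_c(\mathbb{R}^3)$ outside a single $\tilde{\mathbb{P}}$-null set, which is achieved by passing to the limit first along a countable dense family of test functions and then using density together with the $\tilde{\mathbb{P}}$-a.s. bounds of Section \ref{sec:aPriori}.
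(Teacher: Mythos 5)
Your proposal is correct and follows essentially the route the paper (implicitly) takes: the paper states this lemma without proof, deferring to the standard limit-passage argument of \cite[Lemma 4.4.11]{breit2018stoch} and \cite{mensah2019theses}, which is exactly what you reconstruct — transfer of the weak continuity equation by equality of laws, localization to $\mathrm{supp}\,\phi \subset \mathbb{T}^3_L$, and term-by-term convergence via Lemma \ref{lem:JakowWS} together with the identification $\tilde{\mathbf{m}}=\tilde{\varrho}\tilde{\mathbf{u}}$. One small wording caveat: the density convergence in $\chi_\varrho$ is only weak ($C_w([0,T];L^\gamma)$), not strong; what makes the product $\tilde{\varrho}_L\tilde{\mathbf{u}}_L$ pass to the limit is precisely the compactness upgrade $L^\gamma(K)\hookrightarrow\hookrightarrow W^{-1,2}(K)$ that your parenthetical already points at, so the argument stands.
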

\begin{lemma}
\label{lem:identifyPoisWS}
The random variables $[\tilde{\varrho},  \tilde{V}]$ satisfies  \eqref{elecField} a.e. in $(0,T) \times \mathbb{R}^3$ $\tilde{\mathbb{P}}$-a.s.
\end{lemma}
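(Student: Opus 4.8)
The plan is to follow, essentially verbatim, the template already used in Lemma \ref{lem:identifyPoisThirdLayer}, Lemma \ref{lem:identifyPoisFourthLayer} and Lemma \ref{lem:identifyPoisFifthLayer}: the Poisson equation is linear, so identifying it in the limit requires nothing more than linear weak convergence, provided by Lemma \ref{lem:JakowWS}. First I would record that, by the equality of laws in Lemma \ref{lem:JakowWS}, the pair $(\tilde{\varrho}_L,\tilde{V}_L)$ inherits from the periodic solution $(\varrho_L,V_L)$ the fact that $\pm\Delta\tilde{V}_L=\tilde{\varrho}_L-f$ holds $\tilde{\mathbb{P}}$-a.s. for a.e. $(t,x)\in(0,T)\times\mathbb{T}^3_L$, together with all the uniform-in-$L$ bounds of Section \ref{sec:aPriori}; in particular $\tilde{V}_L\in C_w([0,T];W^{2,\gamma}(\mathbb{R}^3))$ and $\tilde{\varrho}_L-\overline{\varrho}\in L^\infty(0,T;L^{\overline{\gamma}}(\mathbb{R}^3))$ uniformly in $L$, where $\overline{\gamma}=\min\{2,\gamma\}$.

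Next I would fix $\psi\in C^\infty_c([0,T))$ and $\phi\in C^\infty_c(\mathbb{R}^3)$, choose $L$ so large that $\mathrm{supp}\,\phi\subset[-L/2,L/2]^3$, and integrate the (strongly satisfied) periodic identity by parts to get
\begin{equation*}
\pm\int_0^T\psi\int_{\mathbb{R}^3}\tilde{V}_L\,\Delta\phi\,\mathrm{d}x\,\mathrm{d}t
=\int_0^T\psi\int_{\mathbb{R}^3}(\tilde{\varrho}_L-f)\,\phi\,\mathrm{d}x\,\mathrm{d}t
\end{equation*}
$\tilde{\mathbb{P}}$-a.s. I would then pass to the limit $L\to\infty$: on the left-hand side the convergence $\tilde{V}_L\to\tilde{V}$ in $\chi_V$ yields $\Delta\tilde{V}_L\rightharpoonup\Delta\tilde{V}$ in $L^\gamma(\mathrm{supp}\,\phi)$, while on the right-hand side $\tilde{\varrho}_L\to\tilde{\varrho}$ in $\chi_\varrho$ gives $\tilde{\varrho}_L\rightharpoonup\tilde{\varrho}$ in $L^\gamma_{\mathrm{loc}}(\mathbb{R}^3)$ and $f$ is fixed. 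This produces
\begin{equation*}
\pm\int_0^T\psi\int_{\mathbb{R}^3}\tilde{V}\,\Delta\phi\,\mathrm{d}x\,\mathrm{d}t
=\int_0^T\psi\int_{\mathbb{R}^3}(\tilde{\varrho}-f)\,\phi\,\mathrm{d}x\,\mathrm{d}t
\end{equation*}
$\tilde{\mathbb{P}}$-a.s. Since $\psi$ and $\phi$ are arbitrary, $\pm\Delta\tilde{V}=\tilde{\varrho}-f$ holds in $\mathcal{D}'((0,T)\times\mathbb{R}^3)$ $\tilde{\mathbb{P}}$-a.s.; because $\Delta\tilde{V}\in C_w([0,T];L^\gamma(\mathbb{R}^3))$ and, writing $\tilde{\varrho}-f=(\tilde{\varrho}-\overline{\varrho})+(\overline{\varrho}-f)$, the right-hand side lies in the same class thanks to the $L^1\cap L^\infty$ assumption on $f$ and the far-field integrability of $\tilde{\varrho}-\overline{\varrho}$, the identity in fact holds a.e. in $(0,T)\times\mathbb{R}^3$, $\tilde{\mathbb{P}}$-a.s. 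The measurability and the upgrade from ``a.e. $t$'' to ``for every $t$'' follow from the weak-in-time continuity of both sides, exactly as in the periodic layers.

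I do not expect any serious obstacle here; the only point needing a little care is purely bookkeeping specific to the unbounded domain: neither $\tilde{\varrho}_L$ nor $\tilde{V}_L$ is globally integrable (owing to the far-field value $\overline{\varrho}$), so one must work throughout with compactly supported test functions and the locally-uniform bounds of Section \ref{sec:aPriori}, and check the differential equation $\pm\Delta V=\varrho-f$ directly rather than through the torus-dependent inverse Laplacian $\Delta^{-1}_{\mathbb{T}^3_L}$. Because the Laplacian is a local operator, this causes no difficulty, and the argument closes just as in Lemma \ref{lem:identifyPoisFifthLayer}.
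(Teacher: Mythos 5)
Your argument is correct and is exactly the route the paper intends: the paper states this lemma without proof, by analogy with Lemmas \ref{lem:identifyPoisThirdLayer}--\ref{lem:identifyPoisFifthLayer}, relying precisely on the linearity of the Poisson equation, the equality of laws from Lemma \ref{lem:JakowWS}, and the weak convergences $\tilde{V}_L\to\tilde{V}$ in $\chi_V$ and $\tilde{\varrho}_L\to\tilde{\varrho}$ in $\chi_\varrho$ tested against compactly supported functions. Your added bookkeeping on the far-field value $\overline{\varrho}$ and the locality of $\Delta$ is the right (and only) point requiring care in the whole-space setting.
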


\begin{lemma}
For any $\psi\in C^\infty_c([0,T)\times{\mathbb{R}^3})$,  we have that
\begin{equation}
\begin{aligned}
\label{renormalizedContWSSeqTk} 
0&=\int_0^T\int_{{\mathbb{R}^3}}  T_k(\tilde{\varrho}_L)\,\partial_t\psi \,\mathrm{d}x \, \mathrm{d}t +
\int_{{\mathbb{R}^3}}  T_k\big(\tilde{\varrho}_{0,L}\big)\,\psi(0) \,\mathrm{d}x 
+
 \int_0^T\int_{{\mathbb{R}^3}} \big[T_k(\tilde{\varrho}_L) \tilde{\mathbf{u}}_L  \big] \cdot \nabla\psi \, \mathrm{d}x \, \mathrm{d}t  
\\&
- \int_0^T\int_{{\mathbb{R}^3}} \big[ \big(  T_k'(\tilde{\varrho}_L) \tilde{\varrho}_L  
-
 T_k(\tilde{\varrho}_L )\big)\mathrm{div}\tilde{\mathbf{u}}_L \big]\,  \psi\, \mathrm{d}x \, \mathrm{d}t
\end{aligned}
\end{equation} 
holds $\tilde{\mathbb{P}}$-a.s. Furthermore, if for $j=1,2,3$, we set $\partial_{j}:=\partial_{x_j}$ and the define $\mathcal{A}_j = \partial_{j} \Delta^{-1}_{\mathbb{R}^3}$ and $\mathcal{R}_{ij} =\partial_i \mathcal{A}_j$, then for any $\phi \in C^\infty_c(\mathbb{R}^3)$, any $p\in(1,\infty)$,  any $q_1\in(1,\infty)$ and any $q_2\in [1,\infty)$, we have that
\begin{align}
\phi T_k(\tilde{\varrho}_L) &\rightarrow \phi \overline{T_k(\tilde{\varrho})} &\text{ in } C_w\big( [0,T]; L^p(\mathbb{R}^3)\big),
\label{tk1}\\
\phi T_k(\tilde{\varrho}_L) &\rightharpoonup \phi \overline{T_k(\tilde{\varrho})} &\text{ in } L^p\big( (0,T)\times \mathbb{R}^3\big),
\label{tk3}\\
\phi T_k(\tilde{\varrho}_L) &\rightarrow \phi \overline{T_k(\tilde{\varrho})} &\text{ in } L^2\big( [0,T]; W^{-1,2}(\mathbb{R}^3)\big),
\label{tk3}\\
\mathcal{A}_i[{\phi}T_k(\tilde{\varrho}_L)] &\rightarrow \mathcal{A}_i[{\phi}\overline{T_k(\tilde{\varrho})}] &\text{ in } L^{q_2}\big(0,T; L^{q_1}(\mathbb{R}^3)\big)\label{tk4}
\end{align} 
holds  $\tilde{\mathbb{P}}$-a.s as $L \rightarrow \infty$. Finally, for some $q_3\in(1,\infty]$ and any $\phi \in C^\infty_c(\mathbb{R}^3)$,
\begin{align}
&{\phi}(T_k'(\tilde{\varrho}_L)\, \tilde{\varrho}_L -T_k(\tilde{\varrho}_L) ) \mathrm{div}\,\tilde{\mathbf{u}}_L \nonumber
\\
&\rightharpoonup {\phi} \overline{\left(T_k'(\tilde{\varrho})\,\tilde{\varrho} -T_k(\tilde{\varrho}) \right) \mathrm{div}\,\tilde{\mathbf{u}}} & \text{ in } L^{q_3}\big((0,T) \times \mathbb{R}^3\big), \label{rieszconvergence0}
\\
&\mathcal{A}_i[{\phi}(T_k'(\tilde{\varrho}_L)\, \tilde{\varrho}_L -T_k(\tilde{\varrho}_L) ) \mathrm{div}\,\tilde{\mathbf{u}}_L ]\nonumber
\\
&\rightharpoonup \mathcal{A}_i[{\phi} \overline{\left(T_k'(\tilde{\varrho})\,\tilde{\varrho} -T_k(\tilde{\varrho}) \right) \mathrm{div}\,\tilde{\mathbf{u}}}] & \text{ in } L^2\big(0,T;W^{1,2}( \mathbb{R}^3)\big),
\label{rieszconvergencea}
\\
&\mathcal{R}_{ij}[\phi\,\tilde{\varrho}_L\tilde{u}^j_L ]\, {\phi}T_k(\tilde{\varrho}_L)  
-
 \phi\,\tilde{\varrho}_L \tilde{u}^j_L \,\mathcal{R}_{ij} [{\phi}T_k(\tilde{\varrho}_L)]
\nonumber\\
&\rightarrow \mathcal{R}_{ij}  [\phi\,\tilde{\varrho}\tilde{u}^j ]\, {\phi}\overline{T_k(\tilde{\varrho})}  
- 
\phi\,\tilde{\varrho} \tilde{u}^j \,\mathcal{R}_{ij} [{\phi}\overline{T_k(\tilde{\varrho})}] & \text{ in } L^2\big(0,T; W^{-1,2}(\mathbb{R}^3) \big),
\label{rieszconvergenceb}\\
&\mathcal{A}_i[\phi\,\tilde{\varrho}_L\tilde{u}^i_L ]\, T_k(\tilde{\varrho}_L)\partial_j{\phi}
- 
\tilde{\varrho}_L \tilde{u}^i_L \,\mathcal{A}_i [{\phi}^2T_k(\tilde{\varrho}_L)]\partial_j\phi
\nonumber\\
&\rightarrow \mathcal{A}_i  [\phi\,\tilde{\varrho}\tilde{u}^i ]\, \overline{T_k(\tilde{\varrho})}  \partial_j{\phi}
-
 \tilde{\varrho} \tilde{u}^i \,\mathcal{A}_i [{\phi}^2\overline{T_k(\tilde{\varrho})}] \partial_j\phi^1 & \text{ in } 
 L^2\big((0,T)\times \mathbb{R}^3\big)
 \label{rieszconvergencec}
\end{align} 
also holds $\tilde{\mathbb{P}}$-a.s.  as $L \rightarrow \infty$
\end{lemma}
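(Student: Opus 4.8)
The identity \eqref{renormalizedContWSSeqTk} is obtained by inserting $b=T_k$ into the renormalized continuity equation \eqref{renormalizedContWSSeq}. Since $T_k$ is globally bounded, smooth, and satisfies $T_k'(z)=0$ for $z\ge 3k$, one first approximates it by a sequence $b_m\in C^1_b(\mathbb{R})$ with $b_m'$ of compact support, $b_m\to T_k$ and $b_m'\to T_k'$ locally uniformly, applies \eqref{renormalizedContWSSeq} to each $b_m$, and then passes to the limit $m\to\infty$. The passage is justified $\tilde{\mathbb{P}}$-a.s.\ by the bounds $\tilde\varrho_L\in L^{\gamma+\Theta}_{\mathrm{loc}}((0,T)\times\mathbb{R}^3)$ and $\tilde{\mathbf u}_L\in L^2(0,T;W^{1,2}(\mathbb{R}^3))$ coming from \eqref{w12velocity} and Lemma~\ref{lem:higherDensity}, together with dominated convergence, as $b_m(\tilde\varrho_L)$, $b_m'(\tilde\varrho_L)\tilde\varrho_L$ and $b_m(\tilde\varrho_L)\tilde{\mathbf u}_L$ are all controlled by fixed integrable majorants uniformly in $m$.

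For the convergences, I would first dispose of the purely weak ones. The weak limits $\phi T_k(\tilde\varrho_L)\rightharpoonup\phi\overline{T_k(\tilde\varrho)}$ in $L^p((0,T)\times\mathbb{R}^3)$ and $\phi\big(T_k'(\tilde\varrho_L)\tilde\varrho_L-T_k(\tilde\varrho_L)\big)\mathrm{div}\,\tilde{\mathbf u}_L\rightharpoonup\phi\,\overline{\big(T_k'(\tilde\varrho)\tilde\varrho-T_k(\tilde\varrho)\big)\mathrm{div}\,\tilde{\mathbf u}}$ in $L^{q_3}$ are direct consequences of the weak compactness of Carath\'eodory functionals in Lemma~\ref{lem:JakowWS}: the first uses that $T_k$ is bounded, the second the factorisation $\big(T_k'(\varrho)\varrho-T_k(\varrho)\big)\mathbb{U}$, which is Carath\'eodory with at most linear growth in $\mathbb{U}$, the admissible exponent being dictated by the $L^2_tW^{1,2}_x$-bound on $\tilde{\mathbf u}_L$. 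To upgrade $\phi T_k(\tilde\varrho_L)$ to strong convergence in $C_w([0,T];L^p(\mathbb{R}^3))$ and in $L^2(0,T;W^{-1,2}(\mathbb{R}^3))$, I would use \eqref{renormalizedContWSSeqTk} to bound $\partial_t\big(\phi T_k(\tilde\varrho_L)\big)$ in some $L^q(0,T;W^{-1,q}(\mathbb{R}^3))$ uniformly in $L$ (the right-hand side splits into a divergence term and terms bounded in $L^q_{t,x}$), combine this with the $L^\infty_tL^p_x$-bound on $\phi T_k(\tilde\varrho_L)$, and invoke the Arzel\`a--Ascoli/Aubin--Lions--Simon argument together with the compact embedding $L^p(B)\hookrightarrow\hookrightarrow W^{-1,2}(B)$ on each ball, exactly as in \cite[Section~4.5]{breit2018stoch} adapted to the localised whole-space setting.

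The operator convergences \eqref{tk4} and \eqref{rieszconvergencea} then follow: since $\mathcal{A}_i=\partial_i\Delta^{-1}_{\mathbb{R}^3}$ gains one spatial derivative and acts on the compactly supported functions $\phi T_k(\tilde\varrho_L)$, resp.\ $\phi\big(T_k'(\tilde\varrho_L)\tilde\varrho_L-T_k(\tilde\varrho_L)\big)\mathrm{div}\,\tilde{\mathbf u}_L$, the sequences $\mathcal{A}_i[\phi T_k(\tilde\varrho_L)]$ are bounded in $L^{q_2}(0,T;W^{1,q_1}_{\mathrm{loc}}(\mathbb{R}^3))$; combining with the strong $W^{-1,2}$ convergence of $\phi T_k(\tilde\varrho_L)$ and interpolation yields strong convergence in $L^{q_2}_tL^{q_1}_x$, while for \eqref{rieszconvergencea} only weak $L^2_tW^{1,2}_x$ convergence is asserted, which is immediate from the weak convergence of the source.

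The crux of the lemma is the pair of commutator identities \eqref{rieszconvergenceb}--\eqref{rieszconvergencec}, the whole-space counterpart of the Div--Curl step behind the effective viscous flux. Here one uses that $\mathcal{R}_{ij}$ is a zeroth-order Fourier multiplier and reads
\[
\mathcal{R}_{ij}[\phi\,\tilde\varrho_L\tilde u_L^j]\,\phi T_k(\tilde\varrho_L)-\phi\,\tilde\varrho_L\tilde u_L^j\,\mathcal{R}_{ij}[\phi T_k(\tilde\varrho_L)]
\]
as a commutator between the vector field $\phi\,\tilde\varrho_L\tilde{\mathbf u}_L$ and the scalar $\phi T_k(\tilde\varrho_L)$. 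The inputs are the strong convergence $\phi\,\tilde\varrho_L\tilde{\mathbf u}_L\to\phi\,\tilde\varrho\,\tilde{\mathbf u}$ in $L^2(0,T;W^{-1,2}(\mathbb{R}^3))$ from \eqref{almostSurea}, the strong convergence $\phi T_k(\tilde\varrho_L)\to\phi\overline{T_k(\tilde\varrho)}$ in $L^2(0,T;W^{-1,2}(\mathbb{R}^3))$ established above, and the weak convergence of the remaining factors in complementary Lebesgue spaces; the Div--Curl lemma \cite[Theorem~10.27]{feireisl2009singular} (cf.\ \cite[Lemma~A.1.11]{breit2018stoch} in the periodic case) then gives \eqref{rieszconvergenceb} in $L^2(0,T;W^{-1,2}(\mathbb{R}^3))$, and an analogous argument carrying the extra $\partial_j\phi$-factor — whose presence records the non-commutation of $\mathcal{A}_i$ with multiplication by the cut-off on the unbounded domain — yields \eqref{rieszconvergencec} in $L^2((0,T)\times\mathbb{R}^3)$. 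I expect the main technical obstacle to be precisely the bookkeeping of these cut-off commutator terms: one must show that they are compact rather than merely bounded, which relies on the gain of a derivative from $\mathcal{A}_i$ together with the strong density convergence on compacts, following the template of \cite[Section~3.3]{mensah2019theses}.
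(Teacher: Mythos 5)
Your sketch is correct and reconstructs precisely the standard argument that the paper relies on here (the lemma is stated without proof, deferring to the analogous results in \cite{mensah2019theses}, \cite{breit2018stoch} and \cite{feireisl2009singular}): substitution of $b=T_k$ into \eqref{renormalizedContWSSeq}, Carath\'eodory weak limits from the Skorokhod lemma for \eqref{tk3} and \eqref{rieszconvergence0}, time-equicontinuity from the equation combined with compact embeddings for the $C_w$ and $W^{-1,2}$ upgrades and for \eqref{tk4}, and the Riesz-commutator/Div--Curl lemma for \eqref{rieszconvergenceb}--\eqref{rieszconvergencec}. The only superfluous step is your approximation of $T_k$ by $b_m$: since $T_k\in C^1_b(\mathbb{R})$ with $T_k'(z)=0$ for $z\geq 3k$, it is already an admissible renormalization in \eqref{renormalizedContWSSeq} and may be inserted directly.
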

We can use \eqref{tk1} and \eqref{rieszconvergence0} to pass to the limit $L\rightarrow \infty$ in \eqref{renormalizedContWSSeqTk}  to obtain
\begin{equation}
\begin{aligned}
\label{renormalizedContWSLim} 
&0=\int_0^T\int_{{\mathbb{R}^3}}  \overline{T_k(\tilde{\varrho})}\,\partial_t\psi \,\mathrm{d}x \, \mathrm{d}t +
\int_{{\mathbb{R}^3}}  \overline{T_k\big(\tilde{\varrho}_{0}\big)}\,\psi(0) \,\mathrm{d}x 
\\&+
 \int_0^T\int_{{\mathbb{R}^3}} \big[\overline{T_k(\tilde{\varrho})} \tilde{\mathbf{u}}  \big] \cdot \nabla\psi \, \mathrm{d}x \, \mathrm{d}t  
- \int_0^T\int_{{\mathbb{R}^3}} \big[ \overline{\big(  T_k'(\tilde{\varrho}) \tilde{\varrho}  
-
 T_k(\tilde{\varrho} )\big)\mathrm{div}\tilde{\mathbf{u}}} \big]\,  \psi\, \mathrm{d}x \, \mathrm{d}t
\end{aligned}
\end{equation} 
$\mathbb{P}$-a.s. for any $\psi\in C^\infty_c([0,T)\times{\mathbb{R}^3})$. 
\\
If we now apply It\^{o}'s formula to the function
$f(g_L ,\tilde{m}_L^i)=\int_{\mathbb{R}^3} \tilde{m}_L^i \cdot \phi \mathcal{A}_i [{\phi}g_L ]\,\mathrm{d}x$ where  $g_L=T_k(\tilde{\varrho}_L)$ satisfies \eqref{renormalizedContWSSeqTk}  and $\tilde{m}^i_L = \tilde{\varrho}_L \tilde{u}^i_L$, then we obtain after a regularization argument,
\begin{equation}
\begin{aligned}
\label{rieszTrans1}
\tilde{\mathbb{E}}\, \int_0^t \int_{\mathbb{R}^3}\big[ p(\tilde{\varrho}_L) -(\nu^B+ 2\nu^S) \mathrm{div}\,\tilde{\mathbf{u}}_L \big]\,\phi{\phi}\,T_k(\tilde{\varrho}_L)\,\mathrm{d}x\,\mathrm{d}s
=
\tilde{\mathbb{E}}\,\sum_{k=1}^{9}I_k
\end{aligned}
\end{equation}
where for $i=1,2,3$,
\begin{equation}
\begin{aligned}
\tilde{\mathbb{E}}&\,\sum_{k=1}^{9}I_k
:=
\tilde{\mathbb{E}} \, \int_{\mathbb{R}^3}\phi\,\tilde{\varrho}_L\tilde{u}^i_L\, \mathcal{A}_i\left[{\phi}\,T_k(\tilde{\varrho}_L)\right]\, \mathrm{d}x
-
\tilde{\mathbb{E}}\, \int_{\mathbb{R}^3}\phi\,\tilde{\varrho}_L\tilde{u}^i_L(0)\, \mathcal{A}_i\left[{\phi}\,T_k(\tilde{\varrho}_L(0))\right]\, \mathrm{d}x
\\
 & +\nu^S \tilde{\mathbb{E}}\, \int_0^t \int_{\mathbb{R}^3}{\phi}\, \tilde{u}^i_L \, T_k(\tilde{\varrho}_L)\,\partial_i\phi \,\mathrm{d}x\,\mathrm{d}s 
 \\
&-\tilde{\mathbb{E}}\, \int_0^t \int_{\mathbb{R}^3}[p(\tilde{\varrho}_L) -(\nu^B+ \nu^S) \mathrm{div}\,\tilde{\mathbf{u}}_L] \mathcal{A}_i[{\phi}\,T_k(\tilde{\varrho}_L)]\,\partial_i\phi \,\mathrm{d}x\,\mathrm{d}s   
\\
& +  \tilde{\mathbb{E}}\, \int_0^t \int_{\mathbb{R}^3}  \phi\,\tilde{\varrho}_L\tilde{u}^i_L\, \mathcal{A}_i\left[{\phi}\,\left(T_k'(\tilde{\varrho}_L)\,\tilde{\varrho}_L -T_k(\tilde{\varrho}_L) \right) \mathrm{div}\,\tilde{\mathbf{u}}_L\right]  \mathrm{d}x\,\mathrm{d}s   
\\
&  + \tilde{\mathbb{E}}\, \int_0^t \int_{\mathbb{R}^3} \tilde{u}^i_L \left(\mathcal{R}_{ij}  [\phi\,\tilde{\varrho}_L\tilde{u}^j_L ]\,{\phi}\, T_k(\tilde{\varrho}_L)  
- \phi\,\tilde{\varrho}_L \tilde{u}^j_L\,\mathcal{R}_{ij} [{\phi}\,T_k(\tilde{\varrho}_L)]\right)\,\mathrm{d}x\,\mathrm{d}s  
\\
& + \tilde{\mathbb{E}}\, \int_0^t \int_{\mathbb{R}^3} \tilde{u}^j_L \left(\mathcal{A}_{i}  [\phi \,\tilde{\varrho}_L\tilde{u}^i_L ]\, T_k(\tilde{\varrho}_L)  \partial_j{\phi}\,
- \tilde{\varrho}_L \tilde{u}^i_L\,\mathcal{A}_{i} [{\phi}T_k(\tilde{\varrho}_L)]\partial_j \phi\right)\mathrm{d}x\,\mathrm{d}s
\\
&-
 \nu^S \tilde{\mathbb{E}}\, \int_0^t \int_{\mathbb{R}^3} (\partial_j\partial_j \phi)\, \tilde{u}^i_L \, \mathcal{A}_i\,[{\phi}T_k(\tilde{\varrho}_L)]\,\mathrm{d}x\,\mathrm{d}s 
 -
 \vartheta\,
 \tilde{\mathbb{E}} \,\int_0^t \int_{\mathbb{R}^3}\phi\,\tilde{\varrho}_L\partial_i\tilde{V}_L\, \mathcal{A}_i\left[{\phi}\,T_k(\tilde{\varrho}_L)\right]\, \mathrm{d}x \,\mathrm{d}s 
\end{aligned}
\end{equation}
and for the limit variables,
\begin{equation}
\begin{aligned}
\label{rieszTrans3}
\tilde{\mathbb{E}}\, \int_0^t \int_{\mathbb{R}^3}[\overline{p} -(\lambda+ 2\nu) \mathrm{div}\tilde{\mathbf{u}}]\, \phi{\phi}\,\overline{T_k(\tilde{\varrho})}\,\mathrm{d}x\,\mathrm{d}s
=
\tilde{\mathbb{E}}\,\sum_{k=1}^{9}K_k
\end{aligned}
\end{equation}
where for $i=1,2,3$,
\begin{equation}
\begin{aligned}
\tilde{\mathbb{E}}&\,\sum_{k=1}^{9}K_k
=
\tilde{\mathbb{E}} \, \int_{\mathbb{R}^3}\phi \,\tilde{\varrho}\tilde{u}^i\, \mathcal{A}_i\left[{\phi}\,\overline{T_k(\tilde{\varrho})}\right]\, \mathrm{d}x
-
\tilde{\mathbb{E}}\, \int_{\mathbb{R}^3}\phi\,\tilde{\varrho}\tilde{u}^i(0)\, \mathcal{A}_i\left[{\phi}\,\overline{T_k(\tilde{\varrho}(0))}\right]\, \mathrm{d}x
 \\
 &+
 \nu \tilde{\mathbb{E}}\, \int_0^t \int_{\mathbb{R}^3}{\phi}\, \tilde{u}^i \, \overline{T_k(\tilde{\varrho})}\,\partial_i\phi\, \mathrm{d}x\,\mathrm{d}s 
-
\tilde{\mathbb{E}}\, \int_0^t \int_{\mathbb{R}^3}[\overline{p} -(\lambda+ \nu) \mathrm{div}\tilde{\mathbf{u}}] \mathcal{A}_i[{\phi}\,\overline{T_k(\tilde{\varrho})}]\,\partial_i\phi \,\mathrm{d}x\,\mathrm{d}s  
\\
& +  \tilde{\mathbb{E}}\, \int_0^t \int_{\mathbb{R}^3}  \phi\,\tilde{\varrho}\tilde{u}^i\, \mathcal{A}_i\left[{\phi}\,\overline{\left(T_k'(\tilde{\varrho})\,\tilde{\varrho} -T_k(\tilde{\varrho}) \right) \mathrm{div}\,\tilde{\mathbf{u}}}\right]  \mathrm{d}x\,\mathrm{d}s   
\\
&  + \tilde{\mathbb{E}}\, \int_0^t \int_{\mathbb{R}^3} \tilde{u}^i \left(\mathcal{R}_{ij}  [\phi\,\tilde{\varrho}\tilde{u}^j ]\,{\phi}\, \overline{T_k(\tilde{\varrho})}  
- \phi\,\tilde{\varrho} \tilde{u}^j \,\mathcal{R}_{ij} [{\phi}\,\overline{T_k(\tilde{\varrho})}]\right)\,\mathrm{d}x\,\mathrm{d}s  
\\
& + \tilde{\mathbb{E}}\, \int_0^t \int_{\mathbb{R}^3} \tilde{u}^j \left(\mathcal{A}_{i}  [\phi\,\tilde{\varrho}\tilde{u}^i]\, \overline{T_k(\tilde{\varrho})}  \partial_j {\phi}\,
- \tilde{\varrho} \tilde{u}^i \,\mathcal{A}_{i} [{\phi}\overline{T_k(\tilde{\varrho})}]\partial_j \phi\,\right)\,\mathrm{d}x\,\mathrm{d}s 
\\
&-
 \nu \tilde{\mathbb{E}}\, \int_0^t \int_{\mathbb{R}^3} (\partial_j\partial_j \phi)\, \tilde{u}^i \, \mathcal{A}_i\,[{\phi} \overline{T_k(\tilde{\varrho})}]\,\mathrm{d}x\,\mathrm{d}s  
-
 \vartheta\, \tilde{\mathbb{E}}\, \int_0^t \int_{\mathbb{R}^3} \phi\, \overline{\tilde{\varrho}\, \partial_i \tilde{V}} \, \mathcal{A}_i\,[{\phi} \overline{T_k(\tilde{\varrho})}]\,\mathrm{d}x\,\mathrm{d}s 
\end{aligned}
\end{equation}
Firstly, it follow from uniform integrability, see \cite[(3.165)]{mensah2019theses}, that
\begin{align*}
\tilde{\mathbb{E}}\,(I_1+I_2)  \rightarrow \tilde{\mathbb{E}}\,(K_1 +K_2).
\end{align*}
Now if let \eqref{jakStrongConvWS}$_{\mathbf{u}}$ be the convergence result in \eqref{jakStrongConvWS} with respect to the velocity, then the following weak-strong duality pairings
\begin{align*}
&\{ \eqref{jakStrongConvWS}_{\mathbf{u}} , \eqref{tk3} \},
\quad
\{\eqref{jakStrongConvWS}_{\mathbf{u}, \varrho}, \eqref{tk4}\},
\quad
\{ \eqref{rieszconvergencea} ,\eqref{almostSurea} \},
\quad
\{ \eqref{jakStrongConvWS}_{\mathbf{u}} ,\eqref{rieszconvergenceb} \},
\\
&\{\eqref{jakStrongConvWS}_{\mathbf{u}}, \eqref{rieszconvergencec}\},
\quad
\{ \eqref{jakStrongConvWS}_{\mathbf{u}} , \eqref{tk4} \},
\quad
\{ \eqref{densitynablaVLimitFifthLayer} , \eqref{tk4} \},
\end{align*} 
 yields $\tilde{\mathbb{E}}\, I_3 \rightarrow \tilde{\mathbb{E}}\, K_3$, $\tilde{\mathbb{E}}\, I_4 \rightarrow \tilde{\mathbb{E}}\, K_4$, $\tilde{\mathbb{E}}\, I_5 \rightarrow \tilde{\mathbb{E}}\, K_5$, $\tilde{\mathbb{E}}\, I_6 \rightarrow \tilde{\mathbb{E}}\, K_6$, $\tilde{\mathbb{E}}\, I_7 \rightarrow \tilde{\mathbb{E}}\, K_7$,
 $\tilde{\mathbb{E}}\, I_8 \rightarrow \tilde{\mathbb{E}}\, K_8$  and  $\tilde{\mathbb{E}}\, I_9 \rightarrow \tilde{\mathbb{E}}\, K_9$ respectively. It follows that
 \begin{equation}
\begin{aligned}
\label{effectiveVIsco0}
\lim_{L\rightarrow \infty}\tilde{\mathbb{E}}\int_0^t\int_{\mathbb{R}^3} & \left[p(\tilde{\varrho}_L) - (\nu^B+2\nu^S)\mathrm{div}\,\tilde{\mathbf{u}}_L  \right]\phi T_k(\tilde{\varrho}_L) \,\mathrm{d}x\, \mathrm{d}s
\\
&=
\tilde{\mathbb{E}}\int_0^t\int_{\mathbb{R}^3} \left[\overline{p}-(\nu^B+2\nu^S)\mathrm{div}\,\tilde{\mathbf{u}}  \right]\phi  \overline{T_k(\tilde{\varrho})}  \,\mathrm{d}x\, \mathrm{d}s
\end{aligned}
\end{equation}
holds for any $\phi \in C^\infty_c(\mathbb{R}^3)$ and for any $t\in[0,T]$. We can now use \eqref{effectiveVIsco0} to obtain
\begin{equation}
\begin{aligned}
\label{effectiveVIsco}
\limsup_{L\rightarrow \infty}\tilde{\mathbb{E}}\int_0^t\int_{\mathbb{R}^3} \phi\, \left\vert T_k(\tilde{\varrho}_L) -  T_k(\tilde{\varrho})\right\vert^{\gamma+1}\,\mathrm{d}x\, \mathrm{d}s
\lesssim 1
\end{aligned}
\end{equation}
uniformly in $k$ for any $\phi \in C^\infty_c(\mathbb{R}^3)$ and any $t\in[0,T]$,
see \cite[Lemma 3.4.14.]{mensah2019theses}. If we set $Q:=(0,T) \times \mathbb{R}^3$, then follows from \eqref{effectiveVIsco}  that 
\begin{equation}
\begin{aligned}
\label{eq368}
\big\Vert \overline{T_k(\tilde{\varrho})} - \tilde{\varrho}  \big\Vert^p_{L^p(\tilde{\Omega}\times Q)}
&\leq
\limsup_{L\rightarrow\infty}\left\Vert T_k\left(\tilde{\varrho}_L \right) -  \tilde{\varrho}_L \right\Vert^p_{L^p(\tilde{\Omega}\times Q)}
&\lesssim k^{\frac{1}{\gamma}-\frac{1}{p}}\rightarrow0
\end{aligned}
\end{equation}
for any $p\in [1,\gamma)$ as $k\rightarrow\infty$. Note that $\frac{1}{\gamma}-\frac{1}{p}<0$. As such,
\begin{align}
\label{strongConForCutOff}
 \overline{T_k(\tilde{\varrho})} \rightarrow  \tilde{\varrho}\quad \text{in} \quad L^p\big( \tilde{\Omega} \times (0,T) \times \mathbb{R}^3\big)
\end{align}
for all $p\in [1,\gamma)$. Furthermore, if we regularize \eqref{renormalizedContWSLim} with some  regularizing operator $S_m$, multiply the resulting equation by $b'(S_m[\overline{T_k(\varrho)}])$ and pass to the limit $m\rightarrow \infty$, we obtain
\begin{equation}
\begin{aligned}
\label{renormalizedContWSLim1} 
&0=\int_0^T\int_{{\mathbb{R}^3}}  b(\overline{T_k(\tilde{\varrho})})\,\partial_t\psi \,\mathrm{d}x \, \mathrm{d}t +
\int_{{\mathbb{R}^3}}  b(\overline{T_k\big(\tilde{\varrho}_{0}\big)})\,\psi(0) \,\mathrm{d}x 
+
 \int_0^T\int_{{\mathbb{R}^3}} \big[b(\overline{T_k(\tilde{\varrho})}) \tilde{\mathbf{u}}  \big] \cdot \nabla\psi \, \mathrm{d}x \, \mathrm{d}t  
\\&+ \int_0^T\int_{{\mathbb{R}^3}}b'(\overline{T_k(\tilde{\varrho})})  \big[ \overline{\big(  T_k'(\tilde{\varrho}) \tilde{\varrho}  
-
 T_k(\tilde{\varrho} )\big)\mathrm{div}\tilde{\mathbf{u}}} \big]\,  \psi\, \mathrm{d}x \, \mathrm{d}t
 \\&- \int_0^T\int_{{\mathbb{R}^3}}\big[ b'(\overline{T_k(\tilde{\varrho})})  \overline{  T_k(\tilde{\varrho})}   
-
 b(\overline{T_k(\tilde{\varrho} )}) \big]\mathrm{div}\tilde{\mathbf{u}}\,  \psi\, \mathrm{d}x \, \mathrm{d}t
\end{aligned}
\end{equation} 
$\tilde{\mathbb{P}}$-a.s for any $\psi\in C^\infty_c([0,T)\times{\mathbb{R}^3})$ where just as in \cite[Lemma 3.4.17]{mensah2019theses}, it follows from \eqref{effectiveVIsco} that
\begin{align}
\label{residualZeroLim}
b'(\overline{T_k(\tilde{\varrho})})  \big[ \overline{\big(  T_k'(\tilde{\varrho}) \tilde{\varrho}  
-
 T_k(\tilde{\varrho} )\big)\mathrm{div}\tilde{\mathbf{u}}} \big]\,  \psi 
 \rightarrow 0
\end{align}
in $L^1(\tilde{\Omega}\times(0,T) \times \mathbb{R}^3)$ as $k\rightarrow \infty$.
\\
By combining \eqref{strongConForCutOff} with \eqref{residualZeroLim}, we may pass to the limit $k\rightarrow \infty$ in \eqref{renormalizedContWSLim1}  and obtain up to the taking of possible subsequence,
\begin{equation}
\begin{aligned}
\label{renormalizedContWSLim1} 
&0=\int_0^T\int_{{\mathbb{R}^3}}  b(\tilde{\varrho})\,\partial_t\psi \,\mathrm{d}x \, \mathrm{d}t +
\int_{{\mathbb{R}^3}}  b(\tilde{\varrho}_{0})\,\psi(0) \,\mathrm{d}x 
\\&+
 \int_0^T\int_{{\mathbb{R}^3}} \big[b(\tilde{\varrho}) \tilde{\mathbf{u}}  \big] \cdot \nabla\psi \, \mathrm{d}x \, \mathrm{d}t  
- \int_0^T\int_{{\mathbb{R}^3}}\big[ b'(\tilde{\varrho}) \tilde{\varrho}   
-
 b(\tilde{\varrho}) \big]\mathrm{div}\tilde{\mathbf{u}}\,  \psi\, \mathrm{d}x \, \mathrm{d}t
\end{aligned}
\end{equation} 
$\tilde{\mathbb{P}}$-a.s for any $\psi\in C^\infty_c([0,T)\times{\mathbb{R}^3})$. If we now take $b=L_k$ in \eqref{renormalizedContWSSeq} and \eqref{renormalizedContWSLim1} and pass to the limit $L\rightarrow \infty$ , then we obtain
\begin{equation}
\begin{aligned}
\label{overlineNoOverline0}
\int_{\mathbb{R}^3} &\big[ \overline{L_k(\tilde{\varrho})} -L_k(\tilde{\varrho})  \big](t)\,\varphi \, \mathrm{d}x
+
\int_0^t \int_{\mathbb{R}^3} \big[   \overline{T_k(\tilde{\varrho})\mathrm{div}\,\tilde{\mathbf{u}}} - T_k(\tilde{\varrho})\mathrm{div}\,\tilde{\mathbf{u}} \big]\varphi \, \mathrm{d}x\, \mathrm{d}\tau
\\
&
=
\int_0^t \int_{\mathbb{R}^3} \big[ \overline{L_k(\tilde{\varrho})} -L_k(\tilde{\varrho}) \big] \tilde{\mathbf{u}} \cdot \nabla\varphi \, \mathrm{d}x\, \mathrm{d}\tau
\end{aligned}
\end{equation}
$\tilde{\mathbb{P}}$-a.s.  for any $t\in[0,T]$ and $\varphi(x)\in C^\infty_c(\mathbb{R}^3)$. If we now consider $\varphi =\phi_m$ for $\phi_m$ as given in  \cite[Eq. 4.14.12, Eq. 7.11.43]{straskraba2004introduction}, then by using convexity of $z \mapsto L_k(z)$, triangle inequality and semi-continuity, it follows from \eqref{overlineNoOverline0} and \eqref{effectiveVIsco0} that
\begin{equation}
\begin{aligned}
\label{overlineNoOverline2}
&\frac{1}{\nu^B + 2\nu^S} \limsup_{L \rightarrow \infty} \tilde{\mathbb{E}} \int_0^t \int_{\mathbb{R}^3}\vert T_k(\tilde{\varrho}_L)  -T_k(\tilde{\varrho}) \vert^{\gamma+1} \phi_m \mathrm{d}x\, \mathrm{d}\tau
\\&
+
\tilde{\mathbb{E}}
\int_0^t \int_{\mathbb{R}^3}\big[ \overline{T_k(\tilde{\varrho})}  -T_k(\tilde{\varrho}) \big] \mathrm{div}\,\mathbf{u}\, \phi_m \mathrm{d}x\, \mathrm{d}\tau
\leq
\tilde{\mathbb{E}}
\int_0^t \int_{\mathbb{R}^3}\big[ \overline{L_k(\tilde{\varrho})}  -L_k(\tilde{\varrho}) \big] \mathbf{u}\cdot \nabla\phi_m \mathrm{d}x\, \mathrm{d}\tau
\end{aligned}
\end{equation}
after the taking of possible subsequence where
\begin{align}
\tilde{\mathbb{E}}
\int_0^t \int_{\mathbb{R}^3}\big[ \overline{T_k(\tilde{\varrho})}  -T_k(\tilde{\varrho}) \big] \mathrm{div}\,\mathbf{u}\, \phi_m \mathrm{d}x\, \mathrm{d}\tau
\rightarrow 0
\end{align}
as $k\rightarrow \infty$ and
\begin{align}
\tilde{\mathbb{E}}
\int_0^t \int_{\mathbb{R}^3}\big[ \overline{L_k(\tilde{\varrho})}  -L_k(\tilde{\varrho}) \big] \mathbf{u}\cdot \nabla\phi_m \mathrm{d}x\, \mathrm{d}\tau
\rightarrow 0
\end{align}
as $m\rightarrow\infty$. We subsequently obtain from 
\begin{equation}
\begin{aligned}
\label{overlineNoOverline2}
\lim_{k\rightarrow \infty}\limsup_{L \rightarrow \infty} \tilde{\mathbb{E}} \int_0^t \int_{K}\vert T_k(\tilde{\varrho}_L)  -T_k(\tilde{\varrho}) \vert^{\gamma+1} \mathrm{d}x\, \mathrm{d}\tau
\leq 0
\end{aligned}
\end{equation}
for any $K \Subset \mathbb{R}^3$. If we now use the following convergence
\begin{equation}
\begin{aligned}
\lim_{k \rightarrow\infty }\bigg( \limsup_{L\rightarrow\infty} \Vert T_k(f_L) - f_L \Vert_{L^p(K)}
+
\Vert T_k(f) - f \Vert_{L^p(K)} \bigg) \rightarrow0
\end{aligned}
\end{equation}
which holds for any $p\in (1, q)$ provided that $f\in L^q(K)$, refer to \cite[(2.33)--(2.34)]{mensah2019theses}, then we can use triangle inequality and the continuous embedding $L^{\gamma+1} \hookrightarrow L^1$ to obtain from \eqref{overlineNoOverline2} that
\begin{equation}
\begin{aligned}
\label{strongDensCovWS}
\tilde{\varrho}_L \rightarrow \tilde{\varrho} \quad \text{in} \quad L^1(\tilde{\Omega} \times (0,T) \times K).
\end{aligned}
\end{equation}
It also follows from \eqref{strongDensCovWS} that
\begin{align}
\label{strongDelVCovWS}
\Delta\tilde{V}_L \rightarrow \Delta\tilde{V}  \quad \text{in} \quad L^1(\tilde{\Omega} \times (0,T) \times K).
\end{align}
We can now use the Lipschitz continuity of $\mathbf{g}_k$ and \eqref{strongDensCovWS} to obtain
\begin{align}
\int_{{\mathbb{R}^3}}  \tilde{\varrho}_L\, \mathbf{g}_{k}(\tilde{\varrho}_L , f, \tilde{\varrho}_L \tilde{\mathbf{u}}_L ) \cdot \bm{\phi}  \, \mathrm{d}x
\rightarrow
\int_{{\mathbb{R}^3}}  \tilde{\varrho}\, \mathbf{g}_{k}(\tilde{\varrho} , f, \tilde{\varrho} \tilde{\mathbf{u}} ) \cdot \bm{\phi}  \, \mathrm{d}x \quad \text{a.e. in } (0,T)
\end{align}
$\mathbb{P}$-a.s. for any $\bm{\phi} \in C^\infty_c(\mathbb{R}^3)$ from which we infer that
\begin{align}
\label{almostEverywhereNoiseWS}
\overline{  \tilde{\varrho}\, \mathbf{g}_{k}(\tilde{\varrho} , f, \tilde{\varrho} \tilde{\mathbf{u}} ) }
=
  \tilde{\varrho}\, \mathbf{g}_{k}(\tilde{\varrho} , f, \tilde{\varrho} \tilde{\mathbf{u}} ) \quad \text{a.e. in } \tilde{\Omega} \times (0,T) \times K
\end{align}
where $K \Subset \mathbb{R}^3$.
\\
We are now able to properly identify all the nonlinear terms the momentum equation \eqref{eq:distributionalSolMomen} and the energy inequality \eqref{augmentedEnergy}. The following results therefore holds.
\begin{lemma}
\label{lem:identifyMomFifthLayer}
The random distributions  $[\tilde{\varrho}, \tilde{\mathbf{u}},  \tilde{V}, \tilde{W}]$ satisfies 
\eqref{eq:distributionalSolMomen}
for all $\psi \in C^\infty_c ([0,T))$ and $\bm{\phi} \in C^\infty_c(\mathbb{R}^3)$ $\mathbb{P}$-a.s.
\end{lemma}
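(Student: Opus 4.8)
The plan is to start from the weak momentum balance in which the three genuinely nonlinear contributions — the pressure, the self-consistent force term $\vartheta\varrho\nabla V$, and the stochastic forcing — still appear only as abstract weak limits, and then to remove the bars one at a time. Arguing exactly as in the periodic case (compare Lemma~\ref{lem:identifyMomFourthLayer} and the passage leading to \eqref{weakMomEqLimitFifthLayer}), and feeding in the convergences of Lemma~\ref{lem:JakowWS} together with \eqref{almostSurea}, one first obtains that $[\tilde{\varrho},\tilde{\mathbf{u}},\tilde{V},\tilde{W}]$ satisfies, for all $\psi\in C^\infty_c([0,T))$ and $\bm{\phi}\in C^\infty_c(\mathbb{R}^3)$ and $\tilde{\mathbb{P}}$-a.s., the identity \eqref{eq:distributionalSolMomenWS} with $p(\varrho)$ replaced by $\overline{p(\tilde{\varrho})}$, with $\varrho\nabla V$ replaced by $\overline{\tilde{\varrho}\nabla\tilde{V}}$, and with $\varrho\,\mathbf{g}_k$ replaced by $\overline{\tilde{\varrho}\,\mathbf{g}_{k}(\tilde{\varrho},f,\tilde{\varrho}\tilde{\mathbf{u}})}$; the convective term is already identified as $\tilde{\varrho}\tilde{\mathbf{u}}\otimes\tilde{\mathbf{u}}$ since $\tilde{\varrho}_L\tilde{\mathbf{u}}_L\to\tilde{\varrho}\tilde{\mathbf{u}}$ in $C_w([0,T];L^{2\gamma/(\gamma+1)}_{\mathrm{loc}})\cap C([0,T];W^{-k,2}_{\mathrm{loc}})$ while $\tilde{\mathbf{u}}_L\rightharpoonup\tilde{\mathbf{u}}$ in $L^2(0,T;W^{1,2})$, so a local compact Sobolev embedding closes the product on $\mathrm{supp}\,\bm{\phi}$. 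The remaining work is then purely the identification of the three barred quantities.

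For the pressure I would combine the strong local convergence $\tilde{\varrho}_L\to\tilde{\varrho}$ in $L^1(\tilde{\Omega}\times(0,T)\times K)$ from \eqref{strongDensCovWS} with the uniform higher-integrability bound of Lemma~\ref{lem:higherDensity}; since the exponent $\gamma+\Theta$ exceeds $\gamma$, a Vitali (uniform integrability) argument upgrades the $L^1$-convergence of $\tilde{\varrho}_L$ to convergence of $\tilde{\varrho}_L^\gamma$ in $L^1_{\mathrm{loc}}$, hence $\overline{p(\tilde{\varrho})}=p(\tilde{\varrho})=a\tilde{\varrho}^\gamma$ a.e., and the pressure term is genuinely isentropic. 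For the field term I would use \eqref{strongDelVCovWS}: by Calder\'on--Zygmund estimates on a ball containing $\mathrm{supp}\,\bm{\phi}\cup K$, together with the uniform bound $\tilde{\mathbb{E}}\sup_t\|\nabla\tilde{V}_L\|^{2p}_{L^2}\lesssim1$, the convergence $\Delta\tilde{V}_L\to\Delta\tilde{V}$ in $L^1_{\mathrm{loc}}$ upgrades to strong local convergence of $\nabla\tilde{V}_L$, which paired with the strong local convergence of $\tilde{\varrho}_L$ gives $\overline{\tilde{\varrho}\nabla\tilde{V}}=\tilde{\varrho}\nabla\tilde{V}$. The noise integrand is identified by \eqref{almostEverywhereNoiseWS}, which follows from the Lipschitz continuity of $\mathbf{g}_k$ (growth conditions \eqref{stochCoeffBoundExi0TorusL}--\eqref{stochCoeffBound1Exis0TorusL}), the compact support \eqref{cptsptnoiseWS}, and again \eqref{strongDensCovWS}.

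The delicate point, and the step I expect to be the main obstacle, is the passage to the limit \emph{inside the stochastic integral} on an unbounded domain: one must show $\int_0^\cdot\int_{\mathbb{R}^3}\tilde{\varrho}_L\,\mathbf{g}_{k}(\tilde{\varrho}_L,f,\tilde{\varrho}_L\tilde{\mathbf{u}}_L)\cdot\bm{\phi}\,\mathrm{d}x\,\mathrm{d}\tilde{\beta}_{L,k}$ converges to the corresponding integral against $\tilde{\beta}_k$. Here the compact support of $\mathbf{g}_k$ is essential, since it confines the integrand to the fixed compact set $K\cup\mathrm{supp}\,\bm{\phi}$, on which the a priori estimates of Section~\ref{sec:aPriori} supply the uniform moment bounds in $L^2(0,T;L_2(\mathfrak{U};L^2_{\mathrm{loc}}))$ and the strong local convergence $\tilde{\varrho}_L\to\tilde{\varrho}$ forces convergence of the integrands in that space; one then invokes the standard convergence theorem for stochastic integrals with respect to converging Wiener processes (as in \cite[Lemma~2.6.6]{breit2018stoch}, used already for \eqref{strongConvNoiseFourthLayer}), using that $\tilde{W}_L\to\tilde{W}$ in $C([0,T];\mathfrak{U}_0)$ and that the filtrations $\tilde{\mathscr{F}}^L_t$ are non-anticipative. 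Collecting all these identifications removes the bars and yields \eqref{eq:distributionalSolMomenWS} for the limit process, which is the assertion of the lemma; up to the localization forced by the whole-space geometry, the argument parallels \cite{breit2018stoch} and \cite{mensah2019theses}.
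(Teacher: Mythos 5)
Your proposal is correct and follows essentially the same route as the paper: the lemma is obtained by removing the bars from the weak-limit momentum identity using the strong density convergence \eqref{strongDensCovWS} (with Lemma \ref{lem:higherDensity} and a Vitali argument for the pressure), \eqref{strongDelVCovWS} for the field term, and \eqref{almostEverywhereNoiseWS} together with the compact support \eqref{cptsptnoiseWS} of the noise coefficients for the stochastic integral. The only cosmetic point is that Calder\'on--Zygmund estimates are not available at the exponent $p=1$; you should first interpolate the $L^1_{\mathrm{loc}}$ convergence of $\Delta\tilde{V}_L$ against the uniform $L^\infty_t L^\gamma_x$ bound to obtain convergence in some $L^p$ with $p>1$ before invoking elliptic regularity and the local compact Sobolev embedding to get strong local convergence of $\nabla\tilde{V}_L$.
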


\begin{lemma}
\label{lem:identifyEnerWS}
The random distributions  $[\tilde{\varrho}, \tilde{\mathbf{u}},  \tilde{V}]$ satisfies \eqref{augmentedEnergy} for all $\psi \in C^\infty_c ([0,T))$, $\psi \geq0$ $\mathbb{P}$-a.s.
\end{lemma}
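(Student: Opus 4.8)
The plan is to pass to the limit $L\to\infty$ in the periodic energy inequality \eqref{augmentedEnergyTorusL}, which — by the equality of laws furnished by Lemma \ref{lem:JakowWS} — is satisfied $\tilde{\mathbb P}$-a.s. by each tuple $(\tilde\varrho_L,\tilde{\mathbf u}_L,\tilde V_L,\tilde W_L)$ on $\mathbb{T}^3_L$, and to recover \eqref{augmentedEnergyWS} for $[\tilde\varrho,\tilde{\mathbf u},\tilde V]$. Fix $\psi\in C^\infty_c([0,T))$ with $\psi\ge0$; by approximating $\chi_{[0,s]}$ by such $\psi$ we may work with either the integrated or the pointwise-in-time form. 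The limit passage splits into four packets: the left-hand energy/dissipation terms, the initial datum, the It\^o-correction (quadratic variation) term, and the stochastic integral.

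For the left-hand side I would use weak lower semicontinuity of convex integral functionals. From $\tilde{\mathbf u}_L\rightharpoonup\tilde{\mathbf u}$ in $L^2(0,T;W^{1,2})$, recall \eqref{jakStrongConvWS}, one obtains $\int_0^T\psi\int|\nabla\tilde{\mathbf u}|^2\le\liminf_L\int_0^T\psi\int|\nabla\tilde{\mathbf u}_L|^2$ and likewise for $|\mathrm{div}\,\tilde{\mathbf u}|^2$; from $\sqrt{\tilde\varrho_L}\tilde{\mathbf u}_L\rightharpoonup\sqrt{\tilde\varrho}\tilde{\mathbf u}$ in $L^2$, which follows from the strong density convergence \eqref{strongDensCovWS} on compacts together with the bounds of Section \ref{sec:aPriori}, the corresponding inequality for the kinetic energy; and since $z\mapsto H(z,\overline\varrho)$ is convex with $H(\overline\varrho,\overline\varrho)=0$, together with \eqref{strongDensCovWS} and the higher integrability of Lemma \ref{lem:higherDensity}, one passes to the limit (indeed with equality on compacts) in $\int H(\tilde\varrho_L,\overline\varrho)$, while $\nabla\tilde V_L\to\nabla\tilde V$ is controlled via \eqref{strongDelVCovWS} and the regularity of $\tilde V$. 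The delicate point here is that the energy density converges only weakly-$*$ as a measure, $\tilde{\mathcal E}_L\xrightharpoonup{*}\tilde{\mathcal E}$ in $\chi_{\mathcal E}$; using the Young measure $\tilde\nu$ from Lemma \ref{lem:JakowWS} and the strong density convergence one shows that $\tilde{\mathcal E}$ dominates, as a nonnegative measure, the genuine energy $\tfrac12\tilde\varrho|\tilde{\mathbf u}|^2+H(\tilde\varrho,\overline\varrho)\pm\vartheta|\nabla\tilde V|^2$, and since $\psi\ge0$ this one-sided domination is precisely what \eqref{augmentedEnergyWS} requires.

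For the initial term I would invoke \eqref{initialTorusLR} together with the construction of Section \ref{sec:intialDataTorus}: the periodic initial energy is dominated by the whole-space quantity, finite by \eqref{finiteEnergyLaw}, and in fact converges by dominated convergence using the a.e. convergence of $(\varrho_{0,L},\mathbf m_{0,L})$. For the quadratic-variation term $\tfrac12\int_0^T\psi\int\tilde\varrho_L\sum_k|\mathbf g_k(x,\tilde\varrho_L,f,\tilde{\mathbf m}_L)|^2$, the compact support \eqref{cptsptnoiseWS} confines the integrand to $K$, where \eqref{strongDensCovWS} gives strong convergence of $\tilde\varrho_L$, the continuity and growth bounds of $\mathbf g_k$ give pointwise convergence of the integrand, and the equiintegrability from Lemma \ref{lem:higherDensity} together with the bounds on $\tilde\varrho_L|\tilde{\mathbf u}_L|^2$ upgrade this to $L^1$-convergence, the $k$-sum being controlled by $\sum_kc_k^2\lesssim1$. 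Finally, setting $\mathbf G_L:=\mathbf G(\tilde\varrho_L,f,\tilde{\mathbf m}_L)$, the stochastic integral $\int_0^T\psi\int\tilde\varrho_L\tilde{\mathbf u}_L\cdot\mathbf G_L\,\mathrm d\tilde W_L$ is identified with $\int_0^T\psi\int\tilde\varrho\tilde{\mathbf u}\cdot\mathbf G\,\mathrm d\tilde W$ exactly as in \cite[Proposition 4.4.13]{breit2018stoch} and \cite[\S3.4]{mensah2019theses}: one uses $\tilde W_L\to\tilde W$ in $\chi_W$, the convergence of $\tilde\varrho_L\tilde{\mathbf u}_L$ in $\chi_{\varrho\mathbf u}$, the compact support \eqref{cptsptnoiseWS}, the Lipschitz dependence of $\mathbf g_k$ (hence $\mathbf G_L\to\mathbf G$ in the relevant Hilbert--Schmidt topology, recall \eqref{almostEverywhereNoiseWS}), and the stochastic-integral convergence result \cite[Lemma 2.6.6]{breit2018stoch}, after which the Burkholder--Davis--Gundy inequality and the uniform bounds of Section \ref{sec:aPriori} give the convergence of moments.

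The main obstacle I anticipate is the rigorous handling of the energy measure in the unbounded domain: because $\tilde{\mathcal E}_L$ converges only weakly-$*$ as a bounded-measure-valued process, one must argue carefully — via the Young-measure representation and strong density convergence on every compact set — that the limit of the combined kinetic, internal (with the convex profile $H$), and electrostatic energies lies \emph{below} $\tilde{\mathcal E}$, so that the inequality survives, and that no energy is lost at spatial infinity in a way violating \eqref{augmentedEnergyWS}; this is where the far-field normalization $\varrho\to\overline\varrho$, the assumption $\overline\varrho\ge1$, and the $L^2$-velocity bound of Lemma \ref{lem:l2velo} enter. Once this is in place, the remaining steps are routine and mirror Lemma \ref{lem:identifyEnerFourthLayer} and the corresponding results in \cite{breit2018stoch,mensah2019theses}.
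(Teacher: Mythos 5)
Your proposal is correct and follows essentially the same route as the paper, which states this lemma without further proof precisely because the preceding strong convergence of the density on compacts \eqref{strongDensCovWS}, the identification of the noise coefficients \eqref{almostEverywhereNoiseWS}, and the cited machinery of \cite[Proposition 4.4.13]{breit2018stoch} reduce the limit passage to the standard combination of lower semicontinuity on the left-hand side and convergence of the initial, It\^o-correction, and stochastic-integral terms on the right. Your additional care about the energy defect measure and the exhaustion of $\mathbb{R}^3$ by compacts (using nonnegativity of $H(\cdot,\overline{\varrho})$, \eqref{initialTorusLR}, and the compact support of the noise) is exactly the implicit content of the paper's argument.
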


\section{The relative energy estimate}
\label{sec:relativeEnergy}
Before we start, we remark that the result presented in this section also hold in $\mathbb{T}^3$. In this case the far field density $\overline{\varrho}=0$ and we do not require functions to be compactly supported.\\
For the purposes of studying singular limits of our system \eqref{contEq}--\eqref{elecField}, amongst other reasons, it is sometimes useful to have an estimate that `measures the distance' between  the functions that solves \eqref{contEq}--\eqref{elecField} and some test function of some limit system which mimics the behaviour of \eqref{contEq}--\eqref{elecField}. In this regard, we let
\begin{align*}
\mathcal{E}\, (\varrho  , \mathbf{u}, V  \,\vert\, r, \mathbf{U}, W) =\int_{\mathbb{R}^3}\bigg[\frac{1}{2}\varrho \vert \mathbf{u}   -  \mathbf{U} \vert^2  + H(\varrho , r)  \pm
\vartheta \vert \nabla (V-W) \vert^2 \bigg]\,\mathrm{d}x,
\end{align*}
be the \textit{relative energy functional} where the
\textit{test function} $(r, \mathbf{U} , W)$ solves the system
\begin{equation}
\begin{aligned}
\label{operatorBB}
\mathrm{d}r  &= D^d_tr\,\mathrm{d}t  + \mathbb{D}^s_tr\,\mathrm{d}W,
\\
\mathrm{d}\mathbf{U}  &= D^d_t\mathbf{U}\,\mathrm{d}t  + \mathbb{D}^s_t\mathbf{U}\,\mathrm{d}W,
\\
0 &= \mp \Delta W+ r-f
\end{aligned}
\end{equation}
in strong sense of PDEs. Depending on what we wish to study, this solution of \eqref{operatorBB} can further be weak or strong in the sense of Probabilities.
In the above, $f\in L^1(\mathbb{R}^3) \cap L^\infty(\mathbb{R}^3)$ is a given function depending only in space, $D^d_tr$ and $D^d_t\mathbf{U}$ are smooth functions of $(\omega,t,x)$ whereas $\mathbb{D}^s_tr$ and $\mathbb{D}^s_t\mathbf{U}$ belongs to the function space $L_2\big(\mathfrak{U};L^2(\mathbb{R}^3)  \big)$ for a.e $(\omega,t)\in\Omega\times [0,T]$. For simplicity,  we assume that
\begin{align}
(r- \overline{\varrho}) \in C_c^\infty\big( [0,T]\times \mathbb{R}^3\big),
\label{regularityOfR} \\
 \mathbf{U} \in C_c^\infty\big( [0,T]\times \mathbb{R}^3\big)  \label{regularityOfU}
 \\
 W \in C_c^\infty\big( [0,T]\times \mathbb{R}^3\big)  \label{regularityOfW}
\end{align}
$\mathbb{P}$-a.s. and for all $1\leq q<\infty$,
\begin{align*}
\mathbb{E}\bigg[\sup_{t \in [0,T] } \Vert r \Vert_{W^{1,q}(\mathbb{R}^3)}^2\bigg]^q  + \mathbb{E}\bigg[ \sup_{t \in [0,T] } \Vert \mathbf{U} \Vert_{W^{1,q}(\mathbb{R}^3)}^2\bigg]^q \leq c(q),
\end{align*}
\begin{equation} \label{bound}
0 < \underline{r} \leq r(t,x) \leq \overline{r} \quad\text{$\mathbb{P}$-a.s.}
\end{equation}
Moreover, $r$ and $\mathbf{U}$ satisfy
\begin{align}
&D^d_t r, D^d_t \mathbf{U}\in L^q \big(\Omega;L^q(0,T;W^{1,q}(\mathbb{R}^3)) \big),
\nonumber\\&
\mathbb D^s_t r,\mathbb D^s_t \mathbf{U}\in L^2 \big(\Omega;L^2\big(0,T;L_2(\mathfrak U;L^2(\mathbb{R}^3)) \big) \big),\nonumber
\end{align}
as well as
\begin{equation}
\begin{aligned}
\label{new}
&\bigg(\sum_{k\in \mathbb{N}}|\mathbb D^s_t r(e_k)|^q\bigg)^\frac{1}{q}\in L^q(\Omega;L^q(0,T;L^{q}(\mathbb{R}^3))) ,
\\
&\bigg(\sum_{k\in \mathbb{N}}|\mathbb D^s_t \mathbf{U}(e_k)|^q\bigg)^\frac{1}{q}\in L^q(\Omega;L^q(0,T;L^{q}(\mathbb{R}^3))).
\end{aligned}
\end{equation}
\begin{remark}
As in \cite{breit2018stoch}, everything in this section can be done under relaxed Sobolev regularity assumptions as opposed to \eqref{regularityOfR}--\eqref{regularityOfW}. Furthermore, if the system \eqref{operatorBB} is only satisfied weakly in the sense of distributions, then a preliminary regularization procedure is required but ultimately, one can still expect to gain the result we wish to establish after passage of the regularization parameter to zero or infinity whichever the case may be.
\end{remark}
The main result that we wish to establish is the following. For any $t\in[0,T]$, we claim that
\begin{equation}
\begin{aligned}
\label{relativeEntropy}
&\mathcal{E} (\varrho  , \mathbf{u}, V  \,\vert\, r, \mathbf{U}, W) 
(t)
+\int_0^{t} \int_{\mathbb{R}^3}\big[\mathbb{S}(\nabla \mathbf{u} )- \mathbb{S}(\nabla \mathbf{U})\big]: \left( \nabla \mathbf{u}  - \nabla \mathbf{U} \right)\,\mathrm{d}x  \,\mathrm{d}s
\\&\leq
\mathcal{E} (\varrho  , \mathbf{u}, V  \,\vert\, r, \mathbf{U}, W)(0) +M_{RE}(t)  + \int_0^t \mathcal{R}  (\varrho  , \mathbf{u}, V  \,\vert\, r, \mathbf{U}, W)(s)\,\mathrm{d}s
\end{aligned}
\end{equation}
holds $\mathbb{P}$-a.s. where the tensor $\mathbb{S}$ is such that
\begin{align*}
\int_0^{t} \int_{\mathbb{R}^3}\mathbb{S}(\nabla \mathbf{v}): \nabla \mathbf{v} \,\mathrm{d}x  \,\mathrm{d}s=\int_0^{t} \int_{\mathbb{R}^3} \big( \nu^S \vert \nabla \mathbf{v} \vert^2  +(\nu^B + \nu^S)\vert \mathrm{div}\, \mathbf{v} \vert^2\big) \,\mathrm{d}x  \,\mathrm{d}s
\end{align*}
the remainder term is
\begin{equation}
\begin{aligned}
\label{remainderRE0}
\mathcal{R}(\varrho  , \mathbf{u}, V  \,\vert\, r, \mathbf{U}, W)
&=
\int_{\mathbb{R}^3}\mathbb{S}(\nabla \mathbf{U}): \left( \nabla \mathbf{U} - \nabla \mathbf{u}  \right)\,\mathrm{d}x
\\
&+
\int_{\mathbb{R}^3}\varrho \big(D^d_t \mathbf{U}+ \mathbf{u} \cdot  \nabla \mathbf{U}\big) \cdot \big(\mathbf{U}-\mathbf{u} \big) \,\mathrm{d}x
\\
&+
\int_{\mathbb{R}^3}\big[(r-\varrho )P''(r)D^d_tr + \nabla P'(r) \cdot (r\mathbf{U}-\varrho \mathbf{u} )  \big] \,\mathrm{d}x
\\
&+
\int_{\mathbb{R}^3}\big[p(r)   - p(\varrho ) \big]\mathrm{div}(\mathbf{U}) \,\mathrm{d}x
-
\int_{\mathbb{R}^3}\vartheta \varrho \mathbf{U} \cdot \nabla V  \, \mathrm{d}x
\\
&+\frac{1}{2}
\int_0^t \sum_{k\in\mathbb{N}} \int_{\mathbb{R}^3} \big[ p''(r) -\varrho  P'''(r) \big]\big\vert  \mathbb{D}^s_tr(e_k) \big\vert^2\, \mathrm{d}x\, \mathrm{d}s
\\
&+
\frac{1}{2}
\sum_{k\in\mathbb{N}}
\int_{\mathbb{R}^3}\varrho \big\vert \mathbf{g}_k(\varrho ,f, \varrho \mathbf{u} )  -\mathbb{D}^s_t\mathbf{U}(e_k)  \big\vert^2 \,\mathrm{d}x
\end{aligned}
\end{equation}
and $M_{RE}$ is a real valued square integrable martingale given by
\begin{equation}
\begin{aligned}
\label{realMaringale}
&M_{RE}(t)   =\int_0^t\int_{\mathbb{R}^3}\varrho \big( \mathbf{u}  -  \mathbf{U}\big)\cdot\mathbf{G}(\varrho ,f ,\varrho \mathbf{u} )\,\mathrm{d}x\,\mathrm{d}W
\\
& -
\int_0^t\int_{\mathbb{R}^3}\varrho (\mathbf{u}  - \mathbf{U} ) \cdot\mathbb{D}^s_t\mathbf{U}\,\mathrm{d}x\,\mathrm{d}W
+
\int_0^t\int_{\mathbb{R}^3}\big( p'(r)  -\varrho  P''(r) \big) \mathbb{D}^s_t r \,\mathrm{d}x\,\mathrm{d}W.
\end{aligned}
\end{equation}
\begin{remark}
As opposed to \cite[(3.250)]{mensah2019theses}, notice the extra density in front of the first right-hand term of \eqref{realMaringale}. This is because of how we represented the noise in \eqref{momEq}.
\end{remark}
In order to obtain \eqref{relativeEntropy}, we first note that just as in \cite[Section 3.6]{mensah2019theses}, we gain  the following 
\begin{equation}
\begin{aligned}
\label{ito1}
&\int_{\mathbb{R}^3}\frac{1}{2}\varrho \vert\mathbf{U}\vert^2\,\mathrm{d}x
=
\int_{\mathbb{R}^3}\frac{1}{2}\frac{\big\vert \big(\varrho \mathbf{U}\big)(0)\big\vert^2}{\varrho (0)}\,\mathrm{d}x
+\int_0^t \int_{\mathbb{R}^3}\varrho \mathbf{u}  \cdot \nabla\mathbf{U}\cdot\mathbf{U}\,\mathrm{d}x\,\mathrm{d}s
\\&
+
\int_0^t \int_{\mathbb{R}^3}\varrho \mathbf{U}\cdot D^d_t\mathbf{U}\, \mathrm{d}x\,\mathrm{d}s
+
\int_0^t \int_{\mathbb{R}^3}\varrho \mathbf{U}\cdot \mathbb{D}^s_t\mathbf{U}\, \mathrm{d}x\,\mathrm{d}W
+
\frac{1}{2} \int_0^t \sum_{k\in\mathbb{N}} \int_{\mathbb{R}^3} 
\varrho \big\vert \mathbb{D}^s_t\mathbf{U}(e_k) \big\vert^2\, \mathrm{d}x\, \mathrm{d}s.
\end{aligned}
\end{equation}
$\mathbb{P}$-a.s. by applying It\^o's lemma to \eqref{contEq} and \eqref{operatorBB}$_2$. Also, the identity $rP'(r)-P(r) =p(r)$ helps us obtain from \eqref{operatorBB}$_1$, the following
\begin{equation}
\begin{aligned}
\label{ito3}
&\int_{\mathbb{R}^3}\big[rP'(r) - P(r)  \big](t) \, \mathrm{d}x
=
\int_{\mathbb{R}^3}\big[rP'(r) - P(r)  \big](0) \, \mathrm{d}x
+
\int_0^t \int_{\mathbb{R}^3}rP''(r)\, D^d_tr  \mathrm{d}x\, \mathrm{d}s
\\
&+\int_0^t\int_{\mathbb{R}^3} p'(r)\, \mathbb{D}^s_t r\, \mathrm{d}x \, \mathrm{d}W
+\frac{1}{2}
\int_0^t \sum_{k\in\mathbb{N}} \int_{\mathbb{R}^3} p''(r)\big\vert  \mathbb{D}^s_tr(e_k) \big\vert^2\, \mathrm{d}x\, \mathrm{d}s
\end{aligned}
\end{equation}
$\mathbb{P}$-a.s. Furthermore, the following identity holds 
\begin{equation}
\begin{aligned}
\label{ito4}
&\int_{\mathbb{R}^3}
\varrho  P'(r) \mathrm{d}x 
=
\int_{\mathbb{R}^3}\varrho (0) P'(r(0)) \mathrm{d}x 
+
\int_0^t \int_{\mathbb{R}^3} \varrho  \nabla P'(r)\cdot  \mathbf{u}   \mathrm{d}x \mathrm{d}s
+
\int_0^t \int_{\mathbb{R}^3} \varrho  P''(r) \cdot D^d_t r \mathrm{d}x \mathrm{d}s 
\\&+
\int_0^t \int_{\mathbb{R}^3} \varrho  P''(r) \cdot \mathbb{D}^s_t r  \mathrm{d}x\, \mathrm{d}W
+
\frac{1}{2} 
\int_0^t \sum_{k\in\mathbb{N}} \int_{\mathbb{R}^3} \varrho  P'''(r)\big\vert \mathbb{D}^s_t r(e_k) \big\vert^2 \, \mathrm{d}x\, \mathrm{d}s
\end{aligned}
\end{equation}
$\mathbb{P}$-a.s. However, due to the presence of the electric field in the momentum equation \eqref{momEq}, the corresponding version of \cite[(3.245)]{mensah2019theses} in our present case is now
\begin{equation}
\begin{aligned}
\label{ito2}
\int_{\mathbb{R}^3}
&\varrho \mathbf{u} \cdot \mathbf{U}\, \mathrm{d}x
=
\int_{\mathbb{R}^3}
(\varrho \mathbf{u} )(0)\cdot \mathbf{U}(0)\, \mathrm{d}x
+\int_0^t\int_{\mathbb{R}^3}\varrho \mathbf{u}  \cdot D^d_t \mathbf{U}\, \mathrm{d}x\, \mathrm{d}s
+
\int_0^t\int_{\mathbb{R}^3} \varrho \mathbf{u}  \cdot \mathbb{D}^s_t\mathbf{U}\, \mathrm{d}x\, \mathrm{d}W
\\&-\int_0^t \int_{\mathbb{R}^3} \mathbb{S}(\nabla \mathbf{u} ):\nabla\mathbf{U}\, \mathrm{d}x\, \mathrm{d}s
+
\int_0^t\int_{\mathbb{R}^3}p(\varrho )\,\mathrm{div}(\mathbf{U}) \, \mathrm{d}x\, \mathrm{d}s
+\int_0^t\int_{\mathbb{R}^3} \varrho \mathbf{u} \cdot \nabla \mathbf{U} \cdot \mathbf{u}  \, \mathrm{d}x\, \mathrm{d}s
\\&
+\int_0^t\int_{\mathbb{R}^3}\vartheta \varrho \mathbf{U} \cdot \nabla V  \, \mathrm{d}x\, \mathrm{d}s
+
\int_0^t \int_{\mathbb{R}^3}\varrho \mathbf{U}\cdot \mathbf{G}(\varrho, f ,\varrho  \mathbf{u} )\, \mathrm{d}x\, \mathrm{d}W
\\&+
 \int_0^t \sum_{k\in\mathbb{N}} \int_{\mathbb{R}^3} \varrho\,
 \mathbb{D}^s_t\mathbf{U}(e_k) \cdot \mathbf{g}_k(\varrho,f ,\varrho  \mathbf{u} )\, \mathrm{d}x\, \mathrm{d}s.
\end{aligned}
\end{equation}
$\mathbb{P}$-a.s. which follow from the application of It\^{o}'s formula to the function $f^2(\mathbf{m} , \mathbf{U})=\int\mathbf{m}  \mathbf{U}\, \mathrm{d}x$ where $\mathbf{m} =\varrho \mathbf{u} $ is the momentum. Now since $(V,W)$ solve stationary equations, we can infer that
\begin{align}
\label{sationaryVW}
\pm \big( \vert \nabla W \vert^2 - \nabla V \cdot \nabla W \big)(t)
=
\pm \big( \vert \nabla W \vert^2 - \nabla V \cdot \nabla W \big)(0)
\end{align}
for any $t\in (0,T]$. Finally we recall that the energy inequality 
\begin{equation}
\begin{aligned}
\label{energyInequality} \int_{{\mathbb{R}^3}}&\bigg[ \frac{1}{2} \varrho  \vert \mathbf{u}  \vert^2  
+ P(\varrho ) \pm
\vartheta \vert \nabla V \vert^2
\bigg](t)\,\mathrm{d}x 
+
\int_0^t \int_{{\mathbb{R}^3}}\mathbb{S}(\nabla \mathbf{u}) : \nabla \mathbf{u} \,\mathrm{d}x  \,\mathrm{d}s
\\&
\leq
 \int_{{\mathbb{R}^3}}\bigg[\frac{1}{2} \varrho \vert \mathbf{u}  \vert^2  
+ P(\varrho ) \pm
\vartheta \vert \nabla V \vert^2 \bigg](0)\,\mathrm{d}x
\\&
+\frac{1}{2}\int_0^t
 \int_{{\mathbb{R}^3}}
 \varrho
 \sum_{k\in\mathbb{N}} \big\vert
 \mathbf{g}_k(x,\varrho ,f,\mathbf{m}  ) 
\big\vert^2\,\mathrm{d}x\,\mathrm{d}s
+\int_0^t \int_{{\mathbb{R}^3}}   \varrho\mathbf{u} \cdot\mathbf{G}(\varrho , f, \mathbf{m} )\,\mathrm{d}x\,\mathrm{d}W ;
\end{aligned}
\end{equation}
hold $\mathbb{P}$-a.s. because $(\varrho, \mathbf{u}, V)$ is a solution of \eqref{contEq}--\eqref{elecField} in the sense of Definition \ref{def:martSolutionExisWS}. Now since the following identities
\begin{align*}
\frac{1}{2}\varrho  \vert \mathbf{u}   - \mathbf{U} \vert^2
&=
\frac{1}{2}\varrho  \vert \mathbf{u}  \vert^2 +\frac{1}{2}\varrho  \vert \mathbf{U} \vert^2
-
\varrho  \mathbf{u}   \cdot \mathbf{U} \\
H(\varrho , r) 
&=
P(\varrho )  - \varrho P'(r)  +[P'(r)r-P(r)]\\
\vert \nabla(V-W) \vert^2
&=
\vert \nabla V \vert^2 + \vert \nabla W \vert^2 - 2\nabla V \cdot \nabla W ,
\end{align*}
hold, we can collect \eqref{ito1}--\eqref{energyInequality} to obtain our result.

\section{Appendix}
\begin{proposition}
\label{maximumPrin}
Let $\bu \in C \big( [0,T]; C^2( \mathbb{T}^3 )\big)$ be given and assume that for some constants $\upsilon>0$ and $\underline{\varrho}, \overline{\varrho}>0$, the following
\begin{align*}
\varrho(0)=\varrho_0 \in C^{2+\upsilon}(\mathbb{T}^3 ), \quad \underline{\varrho} \leq \varrho_0(x) \leq \overline{\varrho}
\end{align*}
holds. 
Then the equation
\begin{align*}
\partial_t \varrho  + \mathrm{div}(\varrho  \mathbf{u}) =\varepsilon \Delta \varrho
\end{align*}
where $\varepsilon>0$ has a unique classical solution $\varrho \in C\big( [0,T];C^{2+\upsilon}(\mathbb{T}^3 ) \big)$ satisfying the bounds
\begin{align}
\label{maxPrinIneq}
\underline{\varrho} \exp\bigg(-\int_0^t \Vert \mathrm{div}\, \mathbf{u} \Vert_{L^\infty( \mathbb{T}^3 )} \, \mathrm{d}s \bigg)
\leq 
\varrho(t,x)
\leq
\overline{\varrho} \exp\bigg(\int_0^t \Vert \mathrm{div}\, \mathbf{u}  \Vert_{L^\infty( \mathbb{T}^3 )} \, \mathrm{d}s \bigg)
\end{align}
for all $(t,x) \in [0,T] \times \mathbb{T}^3$. Furthermore, if $\varrho_1$ and $\varrho_2$ are two of any such classical solutions with velocities $\mathbf{u}_1$ and $\mathbf{u}_2$ respectively and that additionally, 
\begin{align*}
\Vert \mathbf{u}_1 \Vert_{L^\infty(0,T; W^{1,\infty}(\mathbb{T}^3))} + \Vert \mathbf{u}_2 \Vert_{L^\infty(0,T; W^{1,\infty}(\mathbb{T}^3))} \lesssim 1,
\end{align*}
then we have that
\begin{align*}
\Vert  \varrho_1 -\varrho_2 \Vert_{C([0,T]; W^{1,2}(\mathbb{T}^3))} \lesssim T \Vert  \mathbf{u}_1 -\mathbf{u}_2 \Vert_{C([0,T]; W^{1,2}(\mathbb{T}^3))}.
\end{align*}
\end{proposition}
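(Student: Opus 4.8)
The plan is to establish Proposition \ref{maximumPrin} as three separate assertions and treat them in order: first the existence and uniqueness of a classical solution $\varrho\in C([0,T];C^{2+\upsilon}(\mathbb{T}^3))$, then the two–sided exponential bounds \eqref{maxPrinIneq} (the maximum principle), and finally the Lipschitz dependence of the solution on the velocity in $C([0,T];W^{1,2}(\mathbb{T}^3))$. Each step is a standard argument once the equation is put in the right form.

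For existence and uniqueness I would rewrite the equation in non-divergence form $\partial_t\varrho=\varepsilon\Delta\varrho-\mathbf{u}\cdot\nabla\varrho-(\mathrm{div}\,\mathbf{u})\,\varrho$, a linear uniformly parabolic equation on $\mathbb{T}^3$ whose coefficients inherit from $\mathbf{u}$ the regularity $C([0,T];C^1(\mathbb{T}^3))$ (indeed $C^2$). I would then invoke classical linear parabolic theory: either Duhamel's formula against the heat semigroup $e^{\varepsilon t\Delta}$ on $\mathbb{T}^3$ with a contraction-mapping argument in $C([0,T'];C^{1+\alpha}(\mathbb{T}^3))$ on a small interval, iterated to cover $[0,T]$, or directly the parabolic existence theory in Hölder spaces. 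Once a mild solution is in hand, a parabolic Schauder bootstrap — using that the forcing $-\mathbf{u}\cdot\nabla\varrho-(\mathrm{div}\,\mathbf{u})\,\varrho$ is Hölder continuous in $(t,x)$ and that $\varrho_0\in C^{2+\upsilon}$ — upgrades the regularity to $\varrho\in C([0,T];C^{2+\upsilon}(\mathbb{T}^3))$. Uniqueness in this class follows from linearity together with the $L^2$ energy identity derived in the last step, applied with $\mathbf{u}_1=\mathbf{u}_2$.

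For the bounds \eqref{maxPrinIneq} I would argue via spatial extrema, which is legitimate because $\varrho(t,\cdot)\in C^2(\mathbb{T}^3)$. Set $M(t)=\max_x\varrho(t,x)$ and $m(t)=\min_x\varrho(t,x)$; both are Lipschitz in $t$. At a maximizer $x_t$ one has $\nabla\varrho(t,x_t)=0$ and $\Delta\varrho(t,x_t)\le 0$, so the non-divergence form gives $\partial_t\varrho(t,x_t)\le-(\mathrm{div}\,\mathbf{u})(t,x_t)\,\varrho(t,x_t)\le\|\mathrm{div}\,\mathbf{u}(t)\|_{L^\infty_x}M(t)$, hence $\tfrac{d}{dt}M(t)\le\|\mathrm{div}\,\mathbf{u}(t)\|_{L^\infty_x}M(t)$ in the sense of upper Dini derivatives, and Grönwall yields the upper bound. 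Symmetrically, at a minimizer $\Delta\varrho\ge 0$ and, using $\varrho\ge 0$ (which is propagated precisely by this argument since $m(0)=\underline{\varrho}>0$), $\tfrac{d}{dt}m(t)\ge-\|\mathrm{div}\,\mathbf{u}(t)\|_{L^\infty_x}m(t)$, giving the lower bound; in particular $\varrho>0$ on $[0,T]\times\mathbb{T}^3$.

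For the stability estimate I would set $\sigma=\varrho_1-\varrho_2$, which solves $\partial_t\sigma-\varepsilon\Delta\sigma=-\mathrm{div}(\sigma\mathbf{u}_1)-\mathrm{div}\big(\varrho_2(\mathbf{u}_1-\mathbf{u}_2)\big)$ with $\sigma(0)=0$, and abbreviate $F:=-\nabla\varrho_2\cdot(\mathbf{u}_1-\mathbf{u}_2)-\varrho_2\,\mathrm{div}(\mathbf{u}_1-\mathbf{u}_2)$, so that $\|F(t)\|_{L^2_x}\lesssim\|(\mathbf{u}_1-\mathbf{u}_2)(t)\|_{W^{1,2}_x}$ with a constant depending on $\|\varrho_2\|_{C([0,T];C^1_x)}$ (finite by the first part). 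Testing with $\sigma$ and integrating by parts in the $\mathrm{div}(\sigma\mathbf{u}_1)$ term gives $\tfrac{d}{dt}\|\sigma\|_{L^2_x}^2+\varepsilon\|\nabla\sigma\|_{L^2_x}^2\lesssim\|\sigma\|_{L^2_x}^2+\|F\|_{L^2_x}^2$; testing with $-\Delta\sigma$, bounding $\|\mathrm{div}(\sigma\mathbf{u}_1)\|_{L^2_x}\lesssim\|\sigma\|_{W^{1,2}_x}$ via $\mathbf{u}_1\in W^{1,\infty}_x$ and absorbing $\int F\,\Delta\sigma$ and $\int\mathrm{div}(\sigma\mathbf{u}_1)\,\Delta\sigma$ into $\tfrac{\varepsilon}{2}\|\Delta\sigma\|_{L^2_x}^2$ by Young's inequality gives $\tfrac{d}{dt}\|\nabla\sigma\|_{L^2_x}^2\lesssim\|\sigma\|_{W^{1,2}_x}^2+\|F\|_{L^2_x}^2$. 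Adding the two and applying Grönwall with $\sigma(0)=0$ yields $\|\sigma\|_{C([0,T];W^{1,2}_x)}^2\lesssim\int_0^T\|F(s)\|_{L^2_x}^2\,\mathrm{d}s\lesssim T\,\|\mathbf{u}_1-\mathbf{u}_2\|_{C([0,T];W^{1,2}_x)}^2$, i.e. the claimed bound (up to replacing $T$ by $\sqrt{T}$, or iterating on subintervals). The main obstacle I expect is the bookkeeping in the first step — checking that the Schauder/semigroup bootstrap genuinely delivers $C^{2+\upsilon}$ regularity (not merely $C^{1,\alpha}$) with constants finite on all of $[0,T]$; once a classical positive solution is in hand, the maximum principle and the $W^{1,2}$ stability estimate are routine energy arguments. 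It is worth flagging that the implied constant in the last estimate necessarily depends on $\varepsilon$ and on $\|\varrho_2\|_{C^1_x}$ (equivalently, through the first step, on $\|\varrho_0\|_{C^{2+\upsilon}_x}$ and $\|\mathbf{u}_2\|_{C([0,T];C^2_x)}$), not only on the $W^{1,\infty}$-bounds assumed in the hypothesis.
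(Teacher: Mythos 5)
Your proposed argument is correct and is the standard one; note that the paper itself gives no proof of Proposition \ref{maximumPrin} --- it is stated in the appendix as a known auxiliary result on the regularized continuity equation, of the type proved in \cite{breit2018stoch} (Appendix A.2) and in the deterministic literature, and in the paper's actual application the velocity lies in the finite-dimensional space $H_N$, so it is smooth in $x$. Your three-step decomposition (parabolic existence and bootstrap, pointwise maximum principle at spatial extrema with a Dini-derivative Gr\"onwall argument, and $L^2/H^1$ energy estimates for the difference) matches how the cited sources prove it. The two caveats you flag are genuine but harmless. First, since $\mathbf{u}$ is only continuous in time, classical parabolic Schauder theory in space-time H\"older classes does not apply directly; one should run the Duhamel fixed point and bootstrap in $C([0,T];C^{2+\upsilon}_x)$ using the smoothing of the analytic semigroup $e^{\varepsilon t\Delta}$ (a gain of one derivative costs an integrable factor $(t-s)^{-1/2}$), which is exactly what the references do. Second, the squared Gr\"onwall inequality you write down yields a factor $\sqrt{T}$ rather than $T$ in the stability estimate; since the semigroup representation gives the same $\sqrt{T}$, the linear factor $T$ in the statement should be read as a benign imprecision --- every use of this estimate only requires that the constant vanish as $T\to 0$, and the constant in any case depends on $\varepsilon^{-1}$ and on the $C^1_x$-bound for $\varrho_2$ obtained in the first step, as you correctly observe.
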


%

\end{document}